\documentclass[
11pt,  reqno]{amsart}
\usepackage[margin=1.2in,marginparwidth=1.5cm, marginparsep=0.5cm]{geometry}
\pdfoutput=1

\usepackage{booktabs} 
\usepackage{microtype}
\usepackage{amssymb}
\usepackage{mathrsfs}
\usepackage{stmaryrd}

\usepackage{upgreek}

\usepackage{color}
\usepackage[implicit=true]{hyperref}

\usepackage{xsavebox}

\usepackage{cases}

\allowdisplaybreaks[3]

\definecolor{gr}{rgb}   {0.,   0.69,   0.23 }
\definecolor{bl}{rgb}   {0.,   0.5,   1. }
\definecolor{mg}{rgb}   {0.85,  0.,    0.85}
\definecolor{yl}{rgb}   {0.8,  0.7,   0.}
\definecolor{or}{rgb}  {0.7,0.2,0.2}

\usepackage{tikz}

\usepackage{marginnote}
\usepackage{scalerel} 

\usetikzlibrary{shapes.misc}
\usetikzlibrary{shapes.symbols}
\usetikzlibrary{decorations}
\usetikzlibrary{decorations.markings}

\tikzset{
	dot/.style={circle,fill=black,draw=black,inner sep=0pt,minimum size=0.5mm},
	>=stealth,
	}

\tikzset{
	dot2/.style={circle,fill=black,draw=black,inner sep=0pt,minimum size=0.2mm},
	>=stealth,
	}

\tikzset{
	ddot/.style={circle,fill=black,draw=black,inner sep=0pt,minimum size=0.8mm},
	>=stealth,
	}

\tikzset{decision/.style={ 
        draw,
        diamond,
        aspect=1.5
    }}

\tikzset{dia2/.style
={diamond,fill=white,draw=black,inner sep=0pt,minimum size=1mm},
	>=stealth,
	}

\tikzset{dia/.style
={star,fill=black,draw=black,inner sep=0pt,minimum size=1mm},
	>=stealth,
	}

\tikzset{dia/.style
={diamond,fill=black,draw=black,inner sep=0pt,minimum size=1.3mm},
	>=stealth,
	}

\makeatletter
\def\DeclareSymbol#1#2#3{\xsavebox{#1}{\tikz[baseline=#2,scale=0.15]{#3}}}
\def\<#1>{\xusebox{#1}}
\makeatother

\newsavebox{\peA}
\newsavebox{\pneA}
\newsavebox{\plA}
\newsavebox{\pgA}
\newsavebox{\pleA}
\newsavebox{\pgeA}
\newsavebox{\pezA}

\savebox{\peA}{\tikz \draw (0,0) node[shape=circle,draw,inner sep=0pt,minimum size=8.5pt] {\scriptsize  $=$};}
\savebox{\pneA}{\tikz \draw (0,0) node[shape=circle,draw,inner sep=0pt,minimum size=8.5pt] {\footnotesize $\neq$};}
\savebox{\plA}{\tikz \draw (0,0) node[shape=circle,draw,inner sep=0pt,minimum size=8.5pt] {\scriptsize $<$};}
\savebox{\pgA}{\tikz \draw (0,0) node[shape=circle,draw,inner sep=0pt,minimum size=8.5pt] {\scriptsize $>$};}
\savebox{\pleA}{\tikz \draw (0,0) node[shape=circle,draw,inner sep=0pt,minimum size=8.5pt] {\scriptsize $\leqslant$};}
\savebox{\pgeA}{\tikz \draw (0,0) node[shape=circle,draw,inner sep=0pt,minimum size=8.5pt] {\scriptsize $\geqslant$};}
\savebox{\pezA}{\tikz \draw (0,0) node[shape=circle,draw,
fill=white, 
inner sep=0pt,minimum size=8.5pt]{} ;}

\def \peB{\mathchoice
{\scalebox{.7}{{\usebox{\peA}}}}
{\scalebox{.7}{{\usebox{\peA}}}}
{\scalebox{.7}{{\usebox{\peA}}}}
{}
}

\def \pezB{\mathchoice
{\scalebox{.7}{{\usebox{\pezA}}}}
{\scalebox{.7}{{\usebox{\pezA}}}}
{\scalebox{.7}{{\usebox{\pezA}}}}
{}
}

\newcommand{\pe}{\mathbin{{\peB}}}

\newcommand{\pez}{\mathbin{{\pezB}}}

\usetikzlibrary{decorations.pathmorphing, patterns,shapes}

\DeclareSymbol{dot}{-2.7}{\draw (0,0) node[dot]{};}

 \DeclareSymbol{1}{0}{\clip (-.7,0) rectangle (.7,1.6); \draw (0,0)  -- (0,1.2) node[dot] {};}

\DeclareSymbol{2}{0}{\draw (-0.5,1.2) node[dot] {} -- (0,0) -- (0.5,1.2) node[dot] {};}
\DeclareSymbol{3}{0}{\draw (0,0) -- (0,1.2) node[dot] {}; \draw (-.7,1) node[dot] {} -- (0,0) -- (.7,1) node[dot] {};}
\DeclareSymbol{30}{-3}{\draw (0,0) -- (0,-1); \draw (0,0) -- (0,1.2) node[dot] {}; \draw (-.7,1) node[dot] {} -- (0,0) -- (.7,1) node[dot] {};}
\DeclareSymbol{31}{-3}{\draw (0,0) -- (0,-1) -- (1,0) node[dot] {}; \draw (0,0) -- (0,1.2) node[dot] {}; \draw (-.7,1) node[dot] {} -- (0,0) -- (.7,1) node[dot] {};}
\DeclareSymbol{32}{-3}{\draw (0,0) -- (0,-1) -- (1,0) node[dot] {}; \draw (0,0) -- (0,-1) -- (-1,0) node[dot] {}; \draw (0,0) -- (0,1.2) node[dot] {}; \draw (-.7,1) node[dot] {} -- (0,0) -- (.7,1) node[dot] {};}
\DeclareSymbol{20}{-3}{\draw (0,0) -- (0,-1);\draw (-.7,1) node[dot] {} -- (0,0) -- (.7,1) node[dot] {};}
\DeclareSymbol{22}{-3}{\draw (0,0.3) -- (0,-1) -- (1,0) node[dot] {}; \draw (0,0.3) -- (0,-1) -- (-1,0) node[dot] {};\draw (-.7,1) node[dot] {} -- (0,0.3) -- (.7,1) node[dot] {};}
\DeclareSymbol{31p}{-3}{\draw (0,0) -- (0,-1) -- (1,0) node[dot] {}; \draw (0,0) -- (0,1.2) node[dot] {}; \draw (-.7,1) node[dot] {} -- (0,0) -- (.7,1) node[dot] {};
\draw (0,-1) node{\scaleobj{0.5}{\pez}};
\draw (0,-1) node{\scaleobj{0.5}{\pe}}; }
\DeclareSymbol{32p}{-3}{\draw (0,0) -- (0,-1) -- (1,0) node[dot] {}; \draw (0,0) -- (0,-1) -- (-1,0) node[dot] {}; \draw (0,0) -- (0,1.2) node[dot] {}; \draw (-.7,1) node[dot] {} -- (0,0) -- (.7,1) node[dot] {};
\draw (0,-1) node{\scaleobj{0.5}{\pez}};
\draw (0,-1) node{\scaleobj{0.5}{\pe}};}
\DeclareSymbol{22p}{-3}{\draw (0,0.3) -- (0,-1) -- (1,0) node[dot] {}; \draw (0,0.3) -- (0,-1) -- (-1,0) node[dot] {};\draw (-.7,1) node[dot] {} -- (0,0.3) -- (.7,1) node[dot] {};
\draw (0,-1) node{\scaleobj{0.5}{\pez}};
\draw (0,-1) node{\scaleobj{0.5}{\pe}};}
\DeclareSymbol{21p}{-3}{\draw (0,0.3) -- (0,-1) -- (1,0) node[dot] {};
\draw (-.7,1) node[dot] {} -- (0,0.3) -- (.7,1) node[dot] {};
\draw (0,-1) node{\scaleobj{0.5}{\pez}};
\draw (0,-1) node{\scaleobj{0.5}{\pe}};}

\DeclareSymbol{21}{-3}{\draw (0,0.3) -- (0,-1) -- (1,0) node[dot] {};
\draw (-.7,1) node[dot] {} -- (0,0.3) -- (.7,1) node[dot] {};
}

\DeclareSymbol{70}{-3}{
\draw (0,-0.4) -- (0,-1.2);
\draw (0,0.8) node[dot] {} -- (0,-0.4) -- (0.8,0.6);
\draw (0.8, .6) -- (1.4, 1.35)  node[dot] {};
\draw (0.8, .6) -- (1.8, .8)  node[dot] {};
\draw (0.8, .6) -- (0.85, 1.6)  node[dot] {};
\draw (0,.6) -- (0,-0.4) -- (-0.8,0.6) ;
\draw (-0.8, 0.6) -- (-1.4, 1.35)  node[dot] {};
\draw (-0.8, .6) -- (-1.8, .8)  node[dot] {};
\draw (-0.8, .6) -- (-0.85, 1.6)  node[dot] {};
}

\DeclareSymbol{7}{-3}{
\draw (0,0.2) node[dot] {} -- (0,-1) -- (0.8,0);
\draw (0.8, 0) -- (1.4, 0.75)  node[dot] {};
\draw (0.8, 0) -- (1.8, 0.2)  node[dot] {};
\draw (0.8, 0) -- (0.85, 1)  node[dot] {};
\draw (0,0) -- (0,-1) -- (-0.8,0) ;
\draw (-0.8, 0) -- (-1.4, 0.75)  node[dot] {};
\draw (-0.8, 0) -- (-1.8, 0.2)  node[dot] {};
\draw (-0.8, 0) -- (-0.85, 1)  node[dot] {};
}

\DeclareSymbol{11p}{-3}{\draw (0,0) node[dot2] {} -- (0,-1) -- (1,0) node[dot] {};
\draw (0,0) -- (0,1.2) node[dot] {};  
\draw (0,-1) node{\scaleobj{0.5}{\pez}};
\draw (0,-1) node{\scaleobj{0.5}{\pe}}; }
\DeclareSymbol{100}{-3}
{\draw[white] (-.4,0) -- (.4,0);
\draw (0,0) node[dot2] {} -- (0,-1)  ;
\draw (0,0) -- (0,1.2) node[dot] {};}  

\DeclareSymbol{320}{-3}{
\draw (0,0.4) -- (0,-1.2);
\draw (0,0.6) -- (0,-0.4) -- (1,0.6) node[dot] {};
\draw (0,0.6) -- (0,-0.4) -- (-1,0.6) node[dot] {};
\draw (0,0.6) -- (0,1.8) node[dot] {};
 \draw (-.7, 1.6) node[dot] {} -- (0,0.6) -- (.7,1.6) node[dot] {};}

\usetikzlibrary{mindmap,backgrounds,trees,arrows,decorations.pathmorphing,decorations.pathreplacing,positioning,shapes.geometric,shapes.misc,decorations.markings,decorations.fractals,calc,patterns}

\tikzset{>=stealth',
         cvertex/.style={circle,draw=black,inner sep=1pt,outer sep=3pt},
         vertex/.style={circle,fill=black,inner sep=1pt,outer sep=3pt},
         star/.style={circle,fill=yellow,inner sep=0.75pt,outer sep=0.75pt},
         tvertex/.style={inner sep=1pt,font=\scriptsize},
         gap/.style={inner sep=0.5pt,fill=white}}
\tikzstyle{mybox} = [draw=black, fill=blue!10, very thick,
    rectangle, rounded corners, inner sep=10pt, inner ysep=20pt]
\tikzstyle{boxtitle} =[fill=blue!50, text=white,rectangle,rounded corners]

\tikzstyle{decision} = [diamond, draw, fill=blue!20,
    text width=4.5em, text badly centered, node distance=2.5cm, inner sep=0pt]
\tikzstyle{block} = [rectangle, draw, fill=blue!20,
    text width=2.7cm, text centered, rounded corners, minimum height=3em]

\tikzstyle{line} = [draw, very thick, color=black!50, -latex']

\tikzstyle{cloud} = [draw, ellipse,fill=red!40,
node distance=2.5cm,
    minimum height=1em]

\tikzstyle{cloud2} = [draw, ellipse,fill=red!30, text=white,text width=10em, node distance=2.5cm, text centered, minimum height=4em]

\tikzstyle{cloud3} = [draw, ellipse, fill=cyan!30,
node distance=2.5cm,
    minimum height=3em,
    text width=1.7cm]

\tikzstyle{cloud4} = [draw, ellipse,fill=orange!70, node distance=2.5cm,
   text width=2.1cm,
           minimum height=3em]

\tikzstyle{cloud5} = [draw, ellipse,fill=red!20, node distance=2.5cm,
    text centered,
    text width=3.0cm,
    minimum height=3em]

\tikzstyle{cloud6} = [draw, ellipse,fill=red!20, node distance=2.5cm,
    text width=1.5cm,
    text centered,
    minimum height=3em]

\tikzset{
    position/.style args={#1:#2 from #3}{
        at=(#3.#1), anchor=#1+180, shift=(#1:#2)
    }
}

\newtheorem{theorem}{Theorem} [section]

\newtheorem{lemma}[theorem]{Lemma}
\newtheorem{proposition}[theorem]{Proposition}
\newtheorem{remark}[theorem]{Remark}

\newtheorem{definition}[theorem]{Definition}

\DeclareMathOperator{\Id}{Id}

\newcommand{\II}{\text{I \hspace{-2.8mm} I} }

\newcommand{\noi}{\noindent}
\newcommand{\Z}{\mathbb{Z}}
\newcommand{\R}{\mathbb{R}}
\newcommand{\T}{\mathbb{T}}

\newcommand{\wick}[1]{:\!{#1}\!:}

\newcommand{\E}{\mathbb{E}}

\renewcommand{\L}{\mathcal{L}}

\newcommand{\F}{\mathcal{F}}

\newcommand{\al}{\alpha}
\newcommand{\be}{\beta}
\newcommand{\dl}{\delta}

\newcommand{\nb}{\nabla}

\newcommand{\Dl}{\Delta}
\newcommand{\eps}{\varepsilon}

\newcommand{\g}{\gamma}
\newcommand{\G}{\Gamma}
\newcommand{\ld}{\lambda}
\newcommand{\Ld}{\Lambda}
\newcommand{\s}{\sigma}

\newcommand{\ft}{\widehat}

\newcommand{\wt}{\widetilde}
\newcommand{\cj}{\overline}

\newcommand{\dt}{\partial_t}

\newcommand{\ud}{\underline}

\newcommand{\ta}{\theta}

\renewcommand{\l}{\ell}
\renewcommand{\o}{\omega}
\newcommand{\Om}{\Omega}

\newcommand{\les}{\lesssim}

\newcommand{\jb}[1]
{\langle #1 \rangle}

\newcommand{\deff}{\stackrel{\textup{def}}{=}}

\newcommand{\jbb}[1]
{[\hspace{-0.6mm}[ #1 ]\hspace{-0.6mm}]}

\newcommand{\ind}{\mathbf 1}

\newcommand{\N}{\mathbb{N}}

\renewcommand{\H}{\mathcal{H}}

\newtheorem*{ackno}{Acknowledgements}

\newcommand{\I}{\mathcal{I}}

\newcommand{\If}{\mathfrak{I}}

\newcommand{\C}{\mathcal{C}}
\numberwithin{equation}{section}
\numberwithin{theorem}{section}

\newcommand{\PP}{\mathbb{P}}

\DeclareMathOperator{\Law}{Law}

\newcommand{\ZZ}{\mathfrak{Z}}

\newcommand{\Dr}{\Theta}

\newcommand{\dia}{\diamond}

\newcommand{\Ups}{\Upsilon}
\newcommand{\UUps}{{\ud \Upsilon}}

\usepackage{comment}

\makeatletter
\@namedef{subjclassname@2020}{%
  \textup{2020} Mathematics Subject Classification}
\makeatother

\begin{document}
\baselineskip = 13.5pt

\title[Weak universality of global dynamics for fractional hyperbolic $\Phi^4_3$-model]
{Weak universality of global dynamics for the fractional hyperbolic $\Phi^4_3$-model}

\author[R.~Liu]{Ruoyuan Liu}

\address{
Ruoyuan Liu,  School of Mathematics\\
The University of Edinburgh\\
and The Maxwell Institute for the Mathematical Sciences\\
James Clerk Maxwell Building\\
The King's Buildings\\
Peter Guthrie Tait Road\\
Edinburgh\\ 
EH9 3FD\\
United Kingdom\\
and Mathematical Institute\\
University of Bonn\\
Endenicher Allee 60\\
53115\\
Bonn\\
Germany}

\email{ruoyuanl@math.uni-bonn.de}

\subjclass[2020]{60H15, 81T08, 35L71, 35R60}
\keywords{weak universality; stochastic fractional nonlinear wave equation; fractional nonlinear wave equation; Gibbs measure}
%


\begin{abstract}
We continue the study of weak universality for the fractional hyperbolic $\Phi^4_3$-model initiated by Tzvetkov, Wang, and the author (2023). In this work, we establish weak universality in a regime where the associated Gibbs measure is mutually singular with respect to the underlying Gaussian measure. We first investigate the weak convergence of the Gibbs measures associated with macroscopic models to the fractional $\Phi^4_3$-measure constructed by Tzvetkov, Wang, and the author (2023). We then obtain a structured representation for these Gibbs measures via the stochastic quantization method. Finally, we prove dynamical weak universality for the fractional hyperbolic $\Phi^4_3$-model by using an extended second order expansion and exploiting the invariance of the Gibbs measures.
\end{abstract}

\maketitle

\tableofcontents

\section{Introduction}
\label{SEC:intro}

\subsection{Overview}
\label{SUB:intro}
In this paper, we continue our investigation in \cite{LTzW} on weak universality of the cubic stochastic damped fractional nonlinear wave equation on the three-dimensional torus $\T^3 = (\R / 2 \pi \Z)^3$, formally written as:
\begin{align}
\dt^2 u + \dt u + (1 - \Dl)^\al u + u^3 = \sqrt{2} \xi,
\label{SdNLW0}
\end{align}

\noi
where $\al \geq 1$ and $\xi$ is a space-time white noise on $\R_+ \times \T^3$. From our previous study \cite{LTzW} on \eqref{SdNLW0}, the equation is also referred to as the {\it fractional hyperbolic $\Phi^4_3$-model}.

Let us first briefly discuss the history of weak universality. The term {\it weak universality} refers to the phenomenon that a given model arises as a universal scaling limit of microscopic processes describing the large-scale behavior of solutions to a broad class of more general equations. In particular, although the microscopic processes may involve many parameters, only a few of them remain nontrivial in the limiting model. This is due to the fact that, under an appropriate scaling, most nonlinear interactions become ``weak'' as a result of the scaling factors, which explains the term ``weak universality''. An especially interesting case arises when the model is a singular stochastic PDE, i.e.~a stochastic PDE whose solution is not a function but merely a distribution due to the roughness of the stochastic forcing. In this setting, a renormalization procedure is required to give meaning to the corresponding nonlinear interactions, leading to additional mathematical challenges.

The first weak universality result for singular stochastic PDEs was established by Hairer-Quastel~\cite{HQ} for the Kardar-Parisi-Zhang equation. In this work, the authors studied the convergence of solutions to microscopic models with even polynomial nonlinearities under two different scaling regimes, using the theory of regularity structures developed by Hairer in \cite{Hai13, Hai14-1, Hai14-2, Hai15}. Subsequently, Hairer-Xu~\cite{HX18} applied the theory of regularity structures to prove weak universality for three-dimensional reaction-diffusion equations, including the well-studied parabolic $\Phi^4_3$-model (i.e.~the three-dimensional stochastic quantization equation). In particular, the authors in \cite{HX18} considered the weakly nonlinear regime (corresponding to small nonlinearity) for polynomial nonlinearities and the weak noise regime (corresponding to small noise) for more general nonlinearities with sufficient regularity. The case of more general nonlinearities in the weakly nonlinear regime for the parabolic $\Phi^4_3$-model was then studied by Furlan-Gubinelli in \cite{FG}, which is the first weak universality result in the framework of paracontrolled distributions developed by Gubinelli-Imkeller-Perkowski~\cite{GIP} and Catellier-Chouk~\cite{CC}. See also \cite{GP16, GP17, SX, HX19, MP, Duch21, EX, ZZ, KWX, Yang} for further results on weak universality of singular stochastic parabolic equations in various settings and generalizations.

Compared to parabolic equations, weak universality for (stochastic) hyperbolic equations is more challenging due to the weaker smoothing properties of the underlying equations. Recently, there has been several important developments in the study of weak universality for (stochastic) hyperbolic equations. In the weak noise regime, Gubinelli-Koch-Oh \cite{GKO} established a weak universality result for the two-dimensional cubic stochastic nonlinear wave equation (NLW) as a consequence of their local well-posedness argument. More precisely, under an appropriate choice of scaling parameters, they showed local-in-time convergence of solutions to microscopic models with a general odd, bounded nonlinearity possessing a sufficient number of bounded derivatives. See also \cite{GKO2} for a similar weak universality result for the three-dimensional quadratic stochastic NLW.

Later, in the setting of the two-dimensional cubic fractional NLW with order $\al < 1$ (corresponding to weaker dispersion than that of the usual NLW with order $\al = 1$), Sun-Tzvetkov-Xu \cite{STzX} established a global-in-time weak universality result with Gaussian initial data. In this work, the authors considered the weakly nonlinear regime with an arbitrary even polynomial satisfying certain structural conditions. A remarkable feature of the setting in \cite{STzX} is that the high-degree polynomial nonlinearity is energy super-critical.
An additional challenge, compared to parabolic equations, is the lack of $L^\infty$-based estimates (or, more generally, $L^p$-based estimates for $p \gg 1$), which are crucial for controlling the high-degree terms in the nonlinearities.
To overcome these difficulties, the authors of \cite{STzX} exploited the invariance of the Gibbs measures in two essential ways. Firstly, by using the invariance of the Gibbs measure associated with the limiting model, they obtained an a priori bound on the growth of the limiting solution in the relevant solution space, which allowed them to establish global-in-time convergence via a bootstrap argument. Secondly, inspired by Bourgain-Bulut~\cite{BB14}, they exploited the invariant Gibbs measures of the macroscopic models to obtain an a priori bound on the $L^p$-norms of the solutions for $p \gg 1$, thereby overcoming the aforementioned difficulty in controlling the high-degree nonlinearities.

The result in \cite{STzX} was subsequently extended to the three-dimensional setting and to the stochastic setting in \cite{LTzW}, where Tzvetkov, Wang, and the author established a weak universality result for the three-dimensional stochastic cubic stochastic (damped) fractional NLW, i.e.~the fractional hyperbolic $\Phi^4_3$-model \eqref{SdNLW0} with order $\al > \frac 98$, corresponding to stronger dispersion than that of the usual NLW with order $\al = 1$. The main novelty in \cite{LTzW} is the use of space-time analysis based on the $X^{s,b}$-spaces (see Subsection~\ref{SUB:Xsb}), which allowed the authors to establish weak universality throughout the entire regime in which the  Gibbs measure of the limiting model is equivalent to its underlying Gaussian measure, namely, for all $\al > \frac 98$.  
We emphasize that, in both \cite{STzX} and \cite{LTzW}, the Gibbs measures of the limiting models are equivalent to their underlying Gaussian measures. Moreover, the convergence of the Gibbs measures associated with the macroscopic models is sufficiently strong to allow for some flexibility in switching between Gibbs measures and the corresponding underlying Gaussian measures.

The situation becomes drastically different and more challenging when the Gibbs measure of the limiting model is singular with respect to its underlying Gaussian measure. This is the main focus of this paper. More precisely, we prove weak universality for the fractional hyperbolic $\Phi^4_3$-model with order $1 < \al \leq \frac 98$. As proved in \cite[Theorem~1.1~(ii)]{LTzW} (see also \cite[Corollary~4.4]{HKN}), the Gibbs measure and the underlying Gaussian measure are mutually singular in the range $1 < \al \leq \frac 98$. Moreover, as we discuss in Subsection~\ref{SUB:Gibbs} below, we only have weak convergence of the Gibbs measures associated with the macroscopic models to the limiting Gibbs measure. This prevents us from directly using the more convenient underlying Gaussian measures to obtain a priori bounds for the solutions. The main goal of this paper is to overcome these difficulties. To the best of our knowledge, this is the first work on weak universality for (stochastic) hyperbolic equations in a regime where the Gibbs measure is singular with respect to the underlying Gaussian measures. Although the range $1 < \al \leq \frac 98$ does not include the case of the usual NLW ($\al = 1$), we believe that the methods developed in this paper provide a step towards this ultimate goal; see Remark~\ref{RMK:meas1} and Remark~\ref{RMK:al1} below.

\subsection{Weak universality for the fractional hyperbolic $\Phi^4_3$-model}
\label{SUB:wuwave}
Let us now be more precise on the models that we work on in this paper. We first recall that in the fractional hyperbolic $\Phi^4_3$-model \eqref{SdNLW0}, the damping term $\dt u$ is required for the invariance of the associated Gibbs measure. This is due to the fact that \eqref{SdNLW0} can be viewed as the superposition of the fractional NLW dynamics for the component $u$: 
\begin{align}
\dt^2 u + (1 - \Dl)^\al u + u^3 = 0
\label{fNLW0}
\end{align}

\noi
and the Ornstein-Uhlenbeck process for the component $\dt u$:
\begin{align*}
\dt (\dt u) + \dt u = \sqrt{2} \xi,
\end{align*}

\noi
both of which preserve the Gibbs measure; see the discussion in \cite[Section~6.1]{LTzW}. For fractional NLW \eqref{fNLW0}, we see that one derivative in time corresponds to $\al$ derivatives in space. Thus, after fixing the correct scaling parameters for the damping term and the stochastic forcing, we consider
the microscopic process $U_N$ satisfying the following stochastic damped fractional NLW on the dilated torus $\T_N^3 = (\R / 2 \pi N \Z)^3$:
\begin{align}
\dt^2 U_N + N^{- \al} \dt U_N + (1 - \Dl_N)^\al U_N + N^{- \ta} \Pi_N V' (\Pi_N U_N) = \sqrt{2} N^{- \frac{\al}{2}} \xi_N.
\label{micro}
\end{align}

\noi
Here, $1 < \al \leq \frac 98$, $\ta > 0$ is a parameter to be fixed later, $\xi_N$ is a space-time white noise on $\R_+ \times \T_N^3$, $\Pi_N$ is the Fourier projection on $\T_N^3$ given by
\begin{align*}
\Pi_N f (x) \deff \frac{1}{(2 \pi N)^{\frac 32}} \sum_{\substack{ n \in \Z^3 \\ |n| \leq N}} (\F_N f) (n) e^{i n \cdot \frac{x}{N}} = \frac{1}{(2 \pi N)^3} \sum_{\substack{ n \in \Z^3 \\ |n| \leq N}} \bigg( \int_{\T_N^3} f(x) e^{- i n \cdot \frac{x}{N}} dx \bigg) e^{i n \cdot \frac{x}{N}},
\end{align*}

\noi
and the differential operator $(1 - \Dl_N)^\al$ is defined by (see also Remark~\ref{RMK:Lap} below)
\begin{align}
\F_N \big( (1 - \Dl_N)^\al f \big) (n) \deff \Big( \frac{1 + |n|^2}{N^2} \Big)^\al (\F_N f) (n),
\label{dif_op}
\end{align}

\noi
and $V$ is an even polynomial of the form
\begin{align*}
V (z) = \sum_{j = 1}^m a_j z^{2 j}
\end{align*}

\noi
with $m \geq 2$. 
To avoid confusion of all the parameters appearing in \eqref{micro}, one can view it as the superposition of the deterministic fractional NLW for the component $U_N$:
\begin{align}
\dt^2 U_N + (1 - \Dl_N)^\al U_N + N^{- \ta} \Pi_N V' (\Pi_N U_N) = 0
\label{micro0}
\end{align}

\noi
and the Ornstein-Uhlenbeck process for the component $\dt U_N$:
\begin{align*}
\dt (\dt U_N) + N^{-\al} \dt U_N = \sqrt{2} N^{- \frac{\al}{2}} \xi_N.
\end{align*}

\noi
Although the model \eqref{micro} suggests that we are in the weak noise regime, we refer to \eqref{micro} as the weakly nonlinear regime just as in \cite{STzX}. The main reason is that the same analysis in this paper applies to the deterministic fractional NLW \eqref{micro0} in the weakly nonlinear regime.
Nevertheless, in this paper, we choose to focus on the stochastic model \eqref{micro} in order to keep consistency with our previous study in \cite{LTzW}.

A direct but tedious computation yields that the dynamics of the linear part of the equation \eqref{micro}:
\begin{align*}
\dt^2 \Phi_N + N^{- \al} \dt \Phi_N + (1 - \Dl_N)^{\al} \Phi_N = \sqrt{2} N^{- \frac{\al}{2}} \xi_N
\end{align*}

\noi
preserves the Gaussian measure
\begin{align*}
\text{``}Z_N^{-1} e^{- \frac 12 \| (\Phi, \dt \Phi) \|_{H^{\al} \times L^2 (\T_N^3)} } d \Phi d (\dt \Phi)\text{''}
\end{align*}

\noi
induced by the random distributions $(\Phi_{0, N}^{\o}, \Phi_{1, N}^{\o})$ given by
\begin{align*}
\Phi_{0, N}^{\o} (x) &\deff \frac{N^\al}{(2 \pi N)^{\frac 32}} \sum_{n \in \Z^3} \frac{g_n (\o)}{\jb{n}^{\al}} e^{i n \cdot \frac{x}{N}}, \\
\Phi_{1, N}^{\o} (x) &\deff \frac{1}{(2 \pi N)^{\frac 32}}  \sum_{n \in \Z^3} h_n (\o) e^{i n \cdot \frac{x}{N}}.
\end{align*}

\noi
Here, $\{g_n\}_{n \in \Z^3}$ and $\{h_n\}_{n \in \Z^3}$ are independent standard complex Gaussian random variables conditioned such that $g_{-n} = \cj{g_n}$ and $h_{-n} = \cj{h_n}$ for all $n \in \Z^3$ and are independent with the space-time white noise $\xi_N$.
Let us define the variance
\begin{align}
\begin{split}
\wt{\s}_N &\deff \E \big[ |\Pi_N \Phi_{0, N}^{\o} (x)|^2 \big] = \frac{1}{(2 \pi)^{3} N^{3 - 2 \al}} \sum_{\substack{n \in \Z^3 \\ |n| \leq N}} \frac{1}{\jb{n}^{2 \al}} \\
&\,= \frac{1}{(2 \pi)^3} \int_{\{|\xi| \leq 1\}} \frac{1}{|\xi|^{2 \al}} d \xi + O(N^{- (3 - 2 \al - \eps)}),
\end{split}
\label{sigmaN}
\end{align}

\noi
for any $\eps > 0$ arbitrarily small and we define
\begin{align}
\wt{\s} \deff \frac{1}{(2 \pi)^3} \int_{\{|\xi| \leq 1\}} \frac{1}{|\xi|^{2 \al}} d \xi.
\label{sigmat}
\end{align}

\noi
Inspired by \cite{HX18, STzX}, we define the averaged potential (or the effective potential) $\jb{V}$ as 
\begin{align*}
\jb{V} (z) \deff \frac{1}{\sqrt{2 \pi \wt{\s}}} \int_\R V (z + y) e^{- \frac{y^2}{2 \wt{\s}}} dy,
\end{align*}

\noi
so that $\jb{V}$ is of the form
\begin{align*}
\jb{V} (z) = \sum_{j = 0}^m \cj{a}_j z^{2j},
\end{align*}

\noi
where one can compute that
\begin{align}
\cj{a}_{j} = \frac{1}{(2j)!} \E \big[ V^{(2j)} (\mathcal{N} (0, \wt{\s})) \big] = \frac{1}{(2j)!} \sum_{k = j}^m \frac{(2k)!}{(2k - 2j)!!} a_k \wt{\s}^{(k - j)}
\label{aj}
\end{align}

\noi
for each $0 \leq j \leq m$ with $\mathcal{N} (0, \wt{\s})$ being the centered Gaussian distribution with variance $\wt{\s}$. In this paper, we make the following assumptions on $V$:
\begin{enumerate}
\item[(i)] $\jb{V}'' (0) = 0$ or equivalently 
\begin{align}
\cj a_1 = 0;
\label{a1}
\end{align}

\item[(ii)] $\jb{V} (z) - \jb{V} (0) > 0$ for all $z \neq 0$ or equivalently (along with assumption (i))
\begin{align}
\sum_{j = 2}^m \cj{a}_j z^{2 (j - 2)} > 0
\label{pos0}
\end{align}

\noi
for all $z \neq 0$; \smallskip

\item[(iii)] $\jb{V}^{(4)} (z) - \jb{V}^{(4)} (0) > 0$ for all $z \neq 0$ or equivalently
\begin{align}
\sum_{j = 3}^m \cj{a}_j z^{2 (j - 2)} > 0
\label{pos}
\end{align}

\noi
for all $z \neq 0$.
\end{enumerate}

\noi
We will mention in detail in the later part of the introduction how these assumptions work. Briefly speaking, the assumption~(i) ensures that the divergent parts of the renormalization terms in both the dynamics and the measures are cancelled out (see \eqref{a1N} below); the assumption~(ii) is an almost necessary condition for the boundedness of the densities for the truncated Gibbs measures (see Proposition~\ref{PROP:nonconv} below); the assumption~(iii) guarantees that the truncated Gibbs measure converges to the desired ultimate Gibbs measure (see the discussion below Proposition~\ref{PROP:nonconv}). Note that if we fix any $a_2, \dots, a_m > 0$, we can always find $a_1 < 0$ using \eqref{aj} such that our assumptions on $V$ are satisfied.

We now perform the following rescaling:
\begin{align*}
u_N (t, x) \deff N^{\frac 32 - \al} U_N (N^\al t, N x)
\end{align*}

\noi
for $(t, x) \in \R_+ \times \T^3$, which leads to the following macroscopic model on $\T^3$:
\begin{align*}
\dt^2 u_N + \dt u_N + (1 - \Dl)^\al u_N + N^{\frac 32 + \al - \ta} \pi_N V' \Big( \frac{\pi_N u_N}{N^{\frac 32 - \al}} \Big) = \sqrt{2} \xi,
\end{align*}

\noi
where $\xi$ is a space-time white noise on $\R_+ \times \T^3$ and $\pi_N$ denotes the frequency projection onto frequencies $\{ |n| \leq N \}$. In order for $u_N$ to converge to the solution of a cubic equation as $N \to \infty$, we need $N^{\frac 32 + \al - \ta}$ to cancel out $N^{-3 (\frac 32 - \al)}$. This gives $\frac 32 + \al - \ta = 3 (\frac 32 - \al)$, so that we choose $\ta = 4 \al - 3$. For each $N \in \N$, we define the polynomial $V_N$ by
\begin{align}
V_N (z) \deff N^{4 (\frac 32 - \al)} V \Big( \frac{z}{N^{\frac 32 - \al}} \Big) = \sum_{j = 0}^m \cj{a}_{j, N} N^{-(2j - 4) (\frac 32 - \al)} H_{2j} (z; N^{3 - 2 \al} \wt{\s}_N),
\label{defVN}
\end{align}

\noi
where $H_k (z; \s)$ is the Hermite polynomial of degree $k$ with variance parameter $\s > 0$ as defined in \eqref{hermite} below, $\wt{\s}_N$ is as defined in \eqref{sigmaN}, and the coefficients $\cj{a}_{j, N}$ can be explicitly computed as
\begin{align}
\cj{a}_{j, N} = \frac{1}{(2j)!} \E \big[ V^{(2j)} (\mathcal{N} (0, \wt{\s}_N)) \big] = \frac{1}{(2j)!} \sum_{k = j}^m \frac{(2k)!}{(2k - 2j)!!} a_k \wt{\s}_N^{(k - j)}.
\label{ajN}
\end{align}

\noi
We will disregard the 0-degree term (i.e.~the term with $j = 0$) in $V_N(z)$ in our later discussion in this paper, since it does not play any important role in our analysis. From \eqref{sigmaN} and \eqref{sigmat}, we have
\begin{align}
\wt{\s}_N - \wt{\s} = O(N^{-(3 - 2 \al - \eps)}),
\label{sigma_diff}
\end{align}

\noi
so that for $2 \leq j \leq m$, we have $\cj{a}_{j, N} \to \cj{a}_j$ as $N \to \infty$ in view of \eqref{aj} and \eqref{ajN}. For $j = 1$, we can use similar steps from \cite[Proposition~1.4]{STzX} to obtain the following more precise approximation
\begin{align}
\cj{a}_{1, N} N^{2 (\frac 32 - \al)} = \cj{a}_1 N^{2 (\frac 32 - \al)} + \frac{a}{2} + O(N^{2 - 2 \al}) + O(N^{-2 (\frac 32 - \al)})
\label{a1N0}
\end{align}

\noi
for some $a \in \R$. It is in \eqref{a1N0} that we see the necessity of the assumption $\cj{a}_1 = 0$ in \eqref{a1}, which cancels out the divergent part and leads us to 
\begin{align}
\cj{a}_{1, N} N^{2 (\frac 32 - \al)} = \frac{a}{2} + O(N^{2 - 2 \al}) + O(N^{-2 (\frac 32 - \al)}).
\label{a1N}
\end{align}

Using the notation $V_N$ above, we now write the macroscopic model for $u_N$ as
\begin{align}
\dt^2 u_N + \dt u_N + (1 - \Dl)^\al u_N + \pi_N V_N' (\pi_N u_N) = \sqrt{2} \xi.
\label{macro}
\end{align}

\noi
Our goal in this paper is to show that, under suitable initial conditions and in some suitable function space, $u_N$ converges as $N \to \infty$ to a process $u$ which formally satisfies the following fractional hyperbolic $\Phi^4_3$-model:
\begin{align}
\dt^2 u + \dt u + (1 - \Dl)^\al u + a u + 4 \cj{a}_2 \wick{u^3} \, = \sqrt{2} \xi.
\label{SdfNLW}
\end{align}

\noi
In fact, the process $u$ is obtained as a limit of the process $u_N^\dagger$ as $N \to \infty$, where $u_N^\dagger$ satisfies the following truncated fractional hyperbolic $\Phi^4_3$-model:
\begin{align}
\dt^2 u_N^\dagger + \dt u_N^\dagger + (1 - \Dl)^\al u_N^\dagger + a \pi_N u_N^\dagger + 4 \cj{a}_2 \pi_N \big( \wick{ (\pi_N u_N^\dagger)^3 } \big) = \sqrt{2} \xi.
\label{SdfNLW_N}
\end{align}

\noi
The precise meaning of the Wick power $\wick{ (\pi_N u_N^\dagger)^3 }$ will be provided in Subsection~\ref{SUB:intro_dyn} below.

\begin{remark} \rm
\label{RMK:Lap}
In this paper, we choose to work with the differential operator $(1 - \Dl_N)^\al$ of the form \eqref{dif_op} in order to be consistent with our previous work \cite{LTzW}. Alternatively, one can start with a homogeneous operator $(- \Dl_N)^\al$ (the Fourier multiplier with symbol $\frac{|n|^{2 \al}}{N^{2 \al}}$) as in \cite{STzX} and insert a linear correction term coming from the nonlinearity. In this setting, the proof of weak universality follows from essentially the same way as the proof we present in this paper.
\end{remark}

\subsection{Gibbs measure for the macroscopic model}
\label{SUB:Gibbs}
As mentioned in Subsection~\ref{SUB:intro}, our method relies on convergence and invariance of the corresponding Gibbs measures, which we define as follows. 

Let us first start with the base Gaussian measures. Given $s \in \R$, we let $\mu_s$ be a Gaussian measure with the Cameron-Martin space $H^s (\T^3)$, formally written as
\begin{align*}
d \mu_s = Z_s^{-1} e^{- \frac 12 \| u \|_{H^s}^2} du = Z_s^{-1} \prod_{n \in \Z^3} e^{- \frac 12 \jb{n}^{2 s} |\ft u (n)|^2} d \ft u (n),
\end{align*}

\noi
where $\ft u$ denotes the Fourier transform of $u$. In this paper, we are mainly concerned with the case when $s = \al$ with $\al \geq 1$ and $s = 0$, where in the later case $\mu_0$ is the white noise measure on $\T^3$. For a more precise definition of $\mu_s$, we let $\{g_n\}_{n \in \Z^3}$ and $\{h_n\}_{n \in \Z^3}$ be two sequences of independent standard complex-valued Gaussian random variables conditioned such that $g_{-n} = \cj{g_n}$ and $h_{-n} = \cj{h_n}$ for all $n \in \Z^3$. In particular, $g_0$ and $h_0$ are real-valued Gaussian random variables. We then define the random distributions
\begin{align}
u_0^{\o} = \frac{1}{(2 \pi)^{\frac 32}} \sum_{n \in \Z^3} \frac{g_n (\o)}{\jb{n}^\al} e^{in \cdot x} \quad \text{and} \quad u_1^{\o} = \frac{1}{(2 \pi)^{\frac 32}} \sum_{n \in \Z^3} h_n (\o) e^{in \cdot x},
\label{gauss_init}
\end{align}

\noi
and so we can now define
\begin{align}
\mu_\al \deff \Law (u_0^\o) \quad \text{and} \quad \mu_0 \deff \Law (u_1^\o).
\label{mu_al}
\end{align}

\noi
Note that the measure $\vec \mu_\al = \mu_\al \otimes \mu_0$ is supported on
\begin{align*}
\vec H^s (\T^3) \deff H^s (\T^3) \times H^{s - \al} (\T^3)
\end{align*}

\noi
for $s < \al - \frac 32$ but not for $s \geq \al - \frac 32$; see \cite[Lemma~B.1]{BTz08}.

We now restrict our attention to $1 < \al \leq \frac 98$ and move on to the Gibbs measures, starting with the fractional hyperbolic $\Phi_3^4$-measure $\rho$ corresponding to the fractional hyperbolic $\Phi_3^4$-model. Given $1 < \al \leq \frac 98$, a typical element in the support of the Gaussian measure $\mu_\al$ is merely a distribution, so that a proper renormalization is needed to make sense of powers of the element in the support of the Gibbs measures. Given $N \in \N$, we define
\begin{align}
\s_N = \E \big[ |\pi_N u_0^\o (x)|^2 \big] = \frac{1}{(2 \pi)^3} \sum_{\substack{n \in \Z^3 \\ |n| \leq N}} \frac{1}{\jb{n}^{2 \al}}
\label{sigma}
\end{align}

\noi
for any $x \in \T^3$, where $\pi_N$ denotes the frequency projection onto $\{ |n| \leq N \}$. For any distribution $u$ and $k \in \N$, we define the Wick power
\begin{align*}
\wick{(\pi_N u)^k} \, \deff H_k (\pi_N u; \s_N),
\end{align*}

\noi
where $H_k (x; \s)$ is the Hermite polynomial of degree $k$ with variance parameter $\s > 0$ as defined in \eqref{hermite} below.
We now define the renormalized potential energy
\begin{align*}
R_N (u) \deff  \int_{\T^3} \Big( \frac{a}{2} \wick{(\pi_N u)^2} +  \cj{a}_2  \wick{(\pi_N u)^{4}} \Big) dx,
\end{align*}

\noi
and we want to construct the fractional hyperbolic $\Phi_3^4$-measure $\rho$ as the limit of the truncated measures
\begin{align*}
\text{``} d \rho_N (u) = Z_N^{-1} e^{- R_N (u)} d \mu_\al (u) \text{''}
\end{align*}

\noi
as $N \to \infty$. This is essentially achieved in \cite[Theorem~1.1~(ii)]{LTzW},\footnote{In \cite[Theorem~1.1~(ii)]{LTzW}, the authors only considered the case when $a = 0$. Nevertheless, the renormalized quadratic term in the case $a \neq 0$ does not pose any additional difficulty.} which allows us to define
\begin{align}
\rho = \substack{\text{{\small w-lim}} \\ N \to \infty} \, \rho_N
\label{rho}
\end{align}

\noi
with
\begin{align}
d\rho_N (u) \deff Z_N^{-1} e^{- R_N^\dia (u)} d \mu_\al (u),
\label{rhoN}
\end{align}

\noi
where w-lim refers to the weak convergence of measures, the potential energy $R_N^\dia$ is defined by
\begin{align}
R_N^\dia (u) \deff R_N (u) + \al_N' = \int_{\T^3} \Big( \frac{a}{2} \wick{(\pi_N u)^2} +  \cj{a}_2  \wick{(\pi_N u)^{4}} \Big) dx + \al_N'
\label{deffRN}
\end{align}

\noi
for some diverging constant $\al_N' > 0$, and
\begin{align}
Z_N \deff \int e^{- R_N^\dia (u)} d \mu_\al (u).
\label{deffZN}
\end{align}

We now consider the Gibbs measures for the macroscopic models \eqref{macro}. Recalling the polynomial $V_N$ in \eqref{defVN}, we define the following potential energy:
\begin{align}
\mathcal{R}_N (u) \deff \int_{\T^3} V_N (\pi_N u) dx.
\label{defRN}
\end{align}

\noi
Then, similar to \eqref{rhoN}, we define the following truncated Gibbs measure $\varrho_N$ associated to the macroscopic model \eqref{macro}:
\begin{align}
d \varrho_N (u) = \mathcal{Z}_N^{-1} e^{- \mathcal{R}^\dia_N (u)} d \mu_\al (u),
\label{nuN}
\end{align}

\noi
where
\begin{align}
\mathcal{R}_N^\dia (u) \deff \mathcal{R}_N (u) + \al_N = \int_{\T^3} V_N (\pi_N u) dx + \al_N
\label{defRNd}
\end{align}

\noi
for some diverging constant $\al_N > 0$ defined in \eqref{alphaN} below, and
\begin{align}
\mathcal{Z}_N \deff \int e^{- \mathcal{R}_N^\dia (u)} d \mu_\al (u).
\label{defZN}
\end{align}

\noi
Our first result in this paper is about the convergence of the truncated Gibbs measure $\varrho_N$ to the fractional $\Phi_3^4$-measure $\rho$ as $N \to \infty$.
\begin{theorem}
\label{THM:meas}
Let $1 < \al \leq \frac 98$. Then, there exists $0 < C < \infty$ such that
\begin{align}
\sup_{N \in \N} \big\| e^{- \mathcal{R}_N^\dia (u)} \big\|_{L^1 (\mu_\al)} \leq C.
\label{L1bdd}
\end{align}

\noi
Moreover, the truncated Gibbs measure $\varrho_N$ in \eqref{nuN} converges weakly to the fractional $\Phi^4_3$-measure $\rho$ in \eqref{rho} as $N \to \infty$.
\end{theorem}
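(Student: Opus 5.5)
We will use the Barashkov--Gubinelli variational approach (Boué--Dupuis formula), as in \cite{LTzW} for the construction of $\rho$. Write $Y = \pi_N u_0^\o$ evaluated at the end of the Brownian interpolation, with drift $\Theta$ and $\Theta_N=\pi_N I(\Theta)$. Then
\begin{align*}
-\log \E\big[e^{-\mathcal R_N^\dia(Y+I(\Theta))}\big] = \inf_{\Theta}\E\Big[\mathcal R_N^\dia(Y+I(\Theta)) + \tfrac12\int_0^1\|\Theta(t)\|_{L^2}^2dt\Big].
\end{align*}
The key is to expand $V_N$ via \eqref{defVN} in Hermite polynomials with variance $N^{3-2\al}\wt\s_N$. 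This variance coincides with $\s_N$, since $\wt\s_N N^{3-2\al}$ equals \eqref{sigma} up to the cutoff convention. We then expand $H_{2j}(Y+\Theta_N)=\sum_\ell \binom{2j}{\ell}H_{2j-\ell}(Y)\Theta_N^\ell$.

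The terms split into three groups:
\begin{itemize}
\item The $j=2$ term is exactly $\cj a_{2,N}\wick{(\pi_N u)^4}$.
\item The $j=1$ term is $\tfrac{a}{2}\wick{(\pi_Nu)^2}$ up to the errors in \eqref{a1N}, which vanish as $N\to\infty$.
\item The terms with $j\ge3$ carry the small prefactor $N^{-(2j-4)(\frac32-\al)}$.
\end{itemize}
For the quartic and quadratic parts we reuse verbatim the analysis of \cite[Theorem~1.1~(ii)]{LTzW}. This includes the change of drift $\Theta\mapsto\Theta-\mathfrak Z_N$, which removes the divergent cubic interaction $\int \wick{Y^3}\Theta_N$, with $\mathfrak Z_N$ a second-order stochastic object built from $\wick{Y^3}$. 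It also produces the renormalization constant. We choose $\al_N=\al_N'+(\text{contribution of }\mathfrak Z_N)$ so that the minimizing problem is uniformly bounded below. This gives the lower bound, and hence \eqref{L1bdd}, provided the higher-order terms are controlled.

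The main obstacle is the $j\ge3$ terms. The pure drift contribution $N^{-(2j-4)(\frac32-\al)}\int\Theta_N^{2j}$ is positive by assumption (iii), \eqref{pos}, after we collect it together with $\cj a_2\int\Theta_N^4$. More precisely, \eqref{pos0} and \eqref{pos} give $\sum_{j\ge2}\cj a_{j,N}N^{-(2j-4)(\frac32-\al)}\Theta_N^{2j}\gtrsim \Theta_N^4+\sum_{j\ge3}N^{-(2j-4)(\frac32-\al)}\Theta_N^{2j}$ for large $N$.

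The mixed terms $N^{-(2j-4)(\frac32-\al)}\int H_{2j-\ell}(Y)\Theta_N^\ell$ with $1\le\ell<2j$ are handled as follows. We use that $\|H_k(Y)\|_{L^p(\Omega;W^{-\eps,\infty})}\lesssim N^{k(\frac32-\al)+\eps}$, the scaling of a $k$-th Wick power of $Y$. Combining this with Young's inequality against the coercive term $N^{-(2j-4)(\frac32-\al)}\|\Theta_N\|_{L^{2j}}^{2j}$, plus $\|\Theta\|_{L^2H^\al}$ and fractional Leibniz to move the $-\eps$ derivative, the powers of $N$ should balance exactly in the critical counting. They then leave a small gain in $N$ from the negative-regularity $\eps$ loss compensated by the extra smoothing. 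The delicate cases are $\ell$ small, e.g.\ $\ell=1,2$, where $H_{2j-1}(Y)$ is very rough. For these, when $2j-\ell\ge3$, we may need to absorb part of the interaction into the shifted drift as well, checking that $\mathfrak Z_N$ also interacts acceptably with the higher terms.

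For weak convergence, we apply the same variational formula to $e^{-\mathcal R_N^\dia(u)-F(u)}$ for bounded Lipschitz $F$. We show $\lim_N$ of the infimum equals the corresponding limit for $\rho_N$. The difference $\mathcal R_N^\dia-R_N^\dia$ consists of the $j\ge3$ terms, which tend to zero uniformly on drifts with bounded cost in the previous estimates, together with the coefficient errors $\cj a_{2,N}-\cj a_2$ and those in \eqref{a1N}. Hence
\begin{align*}
\lim_{N\to\infty}\int e^{-F}d\varrho_N=\lim_{N\to\infty}\int e^{-F}d\rho_N,
\end{align*}
and weak convergence to $\rho$ in \eqref{rho} follows.
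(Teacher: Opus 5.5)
Your architecture (Bou\'e--Dupuis, a drift shift, coercivity from the positivity conditions, Young's inequality for the mixed terms, comparison of variational problems) is the paper's. Two steps fail as written.

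\textbf{The linear terms with $j\ge3$.} The terms with $\ell=1$ and $j\ge3$ are not perturbative, so your claim that the powers of $N$ balance with a small gain fails there. By Lemma~\ref{LEM:Yreg}~(i), $N^{-(2j-4)(\frac32-\al)}\wick{W_N^{2j-1}}$ has regularity $3\al-\frac92-\eps$ uniformly in $N$, exactly like $\wick{W_N^3}$: the prefactor only brings it down to the size of $\wick{W_N^3}$. Correspondingly, $\E\|N^{-(2j-4)(\frac32-\al)}\wick{W_N^{2j-1}}\|_{H^{-\al}}^2\sim N^{9-8\al}$, which diverges for $\al<\frac98$. The coercive $L^{2j}$ terms carry no derivatives, so no Young argument can absorb the pairing of this object with $\Dr_N$ against $\frac12\int_0^1\|\dot{\Dr}\|_{H^\al}^2dt$ and those terms. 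Hence the shift must be the full $\ZZ^N$ of \eqref{defZZ}, summed over all $2\le j\le m$, and the constant is \eqref{alphaN} for this $\ZZ_N$. Its $j\ge3$ part is itself divergent for $\al<\frac98$, so $\al_N$ is not $\al_N'$ plus a harmless correction.

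Your hedge about absorbing more into the shift points the right way, but what follows is the main new work, not a check. After substituting $\Dr_N=\Ups_N-\ZZ_N$ you must control $N^{-(2j-4)(\frac32-\al)}\wick{W_N^k}\ZZ_N^{k_1}$ for $2\le k\le 2j-2$. In particular, $\wick{W_N^k}\ZZ_N$ needs a chaos/pairing analysis: its mean is a divergent constant for odd $k\ge3$, tamed only by the prefactor. You must also bound $\E\int\wick{W_N^k}\ZZ_N^{2j-k}\,dx$. This is the content of Lemmas~\ref{LEM:RNsto1}, \ref{LEM:RNstoh} and~\ref{LEM:RNsto2}.

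\textbf{The convergence step.} It is not true that $\mathcal{R}_N^\dia-R_N^\dia$ tends to zero on drifts of bounded cost. It contains $\al_N-\al_N'$, which diverges for $\al<\frac98$, together with the linear $j\ge3$ terms above. Near-optimal drifts for $\varrho_N$ have cost of order $\al_N$ in the unshifted variable. You must compare the two variational problems after their own shifts, at the same $\Ups$: $\Dr=\Ups-\ZZ_N$ for $\varrho_N$, and $\Dr=\Ups-\zeta_N$ with $\zeta_N$ as in \eqref{zeta} for $\rho_N$. This uses that $\ZZ_N-\zeta_N\to0$ in $\mathcal{C}^{5\al-\frac92-\eps}$, but not in $H^\al$.

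Even then, along near-optimizers of the $\varrho_N$-problem, the pure-drift terms $\sum_{j\ge3}\cj{a}_{j,N}N^{-(2j-4)(\frac32-\al)}\int\Ups_N^{2j}\,dx$ are only known to be bounded, since they sit exactly at the scale of the coercive term. The inequality $\lim\mathcal{Z}_{N_k}\le\lim Z_N$, and its analogue with the weight $e^{F}$, needs their sign; this is where \eqref{pos} enters, through \eqref{RNbdd1-1.5}. The reverse inequality is obtained by inserting a $\Phi^4_3$ near-optimizer at a sparser index $N_k''\ge N_k^{1+\g}$, where these terms become small. In your sketch \eqref{pos} is used only for coercivity, where \eqref{pos0} already suffices, so the place where it is actually needed is missing.
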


One may compare the situation in Theorem~\ref{THM:meas} with that in \cite[Proposition~4.3]{LTzW}, where the authors proved the convergence of the truncated Gibbs measure in the regime $\frac 98 < \al < \frac 32$. The main difference in the case $\frac 98 < \al < \frac 32$ is that (i) the diverging constant $\al_N$ is not needed in the potential energy \eqref{defRNd}, (ii) the uniform bound \eqref{L1bdd} holds in $L^p (\mu_\al)$ for any finite $p \geq 1$, and (iii) the convergence of the truncated Gibbs measure $\varrho_N$ to the fractional $\Phi_3^4$-measure $\rho$ holds in total variation. In addition, the fractional $\Phi_3^4$-measure $\rho$ is singular with respect to the base Gaussian measure $\mu_\al$, whereas when $\frac 98 < \al < \frac 32$, the two measures $\rho$ and $\mu_\al$ are equivalent; see \cite[Theorem~1.1~(ii)]{LTzW}. 

Our proof of Theorem~\ref{THM:meas} is based on the variational approach by Barashkov-Gubinelli \cite{BG} and is presented in Section~\ref{SEC:meas}. Similar to \cite{OOT1, OOT2, LTzW}, we identify a divergent stochastic term and perform a change of variable, where the divergent part is canceled out by the diverging constant $\al_N$; see Subsection~\ref{SUB:meas}. This change of variable then allows us to show the tightness of the truncated Gibbs measures $\{\varrho_N\}_{N \in \N}$ and then prove the convergence of $\varrho_N$ to $\rho$; see Subsection~\ref{SUB:tight} and Subsection~\ref{SUB:meas_conv}, respectively. Note that the appearance of this divergent stochastic term is the main reason that causes the phase transition of the behaviors of the Gibbs measures at $\al = \frac 98$.

The main difficulty in our setting compared to \cite{OOT1, OOT2, LTzW} is that the potential energy $\mathcal{R}_N^\dia$ and the divergent stochastic term (see \eqref{defZZ}) has a more general form containing high-degree nonlinearities, which make it highly non-trivial to show the desired regularity properties of certain complicated products of stochastic objects. Nevertheless, we are able to control all the high order terms effectively when showing the $L^1$-boundedness of the densities of the truncated Gibbs measures $\{\varrho_N\}_{N \in \N}$, thanks to the negative powers of $N$ in front of high-degree terms and, more importantly, the positivity condition~\eqref{pos0}. In fact, we show that the positivity condition~\eqref{pos0} is almost a necessary assumption for showing the $L^1$-boundedness of the densities of $\{\varrho_N\}_{N \in \N}$ as in \eqref{L1bdd}, in a spirit similar to \cite[Proposition~1.12]{STzX}.

\begin{proposition}
\label{PROP:nonconv}
Let $1 < \al \leq \frac 98$. Suppose that there exists $z' \in \R$ such that
\begin{align*}
\sum_{j = 2}^m \cj{a}_j (z')^{2 (j - 2)} < 0.
\end{align*}

\noi
Then, there exists a constant $C > 0$ such that $\log \mathcal{Z}_N > C N^{4 (\frac 32 - \al)}$ for all $N \in \N$. Consequently, the densities $\{ e^{- \mathcal{R}_N^\dia (u)} \}_{N \in \N}$ for the truncated Gibbs measures $\{\varrho_N\}_{N \in \N}$ is not bounded in $L^1 (\mu_\al)$.
\end{proposition}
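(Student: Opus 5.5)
We will use the Boué--Dupuis variational formula in the Barashkov--Gubinelli framework, as in the proof of Theorem~\ref{THM:meas}, but now to produce a \emph{lower} bound on $\log \mathcal{Z}_N$. Write $Y$ for the Gaussian process (with $Y(1) \sim \mu_\al$) and $\Theta$ for an adapted drift with $I(\Theta)(1) = \int_0^1 \jb{\nabla}^{-\al}\Theta(t)\,dt$. Then
\begin{align*}
\log \mathcal{Z}_N = \sup_{\Theta} \E\Big[ - \mathcal{R}_N^\dia\big(Y(1) + I(\Theta)(1)\big) - \frac12 \int_0^1 \|\Theta(t)\|_{L^2}^2 dt \Big].
\end{align*}
Any single choice of $\Theta$ gives a lower bound. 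We take a deterministic drift such that $I(\Theta)(1) = \pi_{N} f_N$ is a constant-in-space profile of large amplitude. The simplest choice is the zero mode $f_N \equiv z' N^{\frac 32 - \al}$, possibly multiplied by a harmless constant factor to be tuned. Its cost is $\frac12\|\Theta\|^2 \les \|f_N\|_{H^\al}^2 \sim N^{2(\frac 32 - \al)}$, which is of lower order than $N^{4(\frac32-\al)}$.

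Next we expand the potential. Using the Hermite representation \eqref{defVN} and the addition formula $H_k(x+y;\s) = \sum_\l \binom{k}{\l} H_\l(x;\s) y^{k-\l}$, we get
\begin{align*}
\E\big[\mathcal{R}_N(Y(1)+f_N)\big] = \int_{\T^3} \sum_{j} \cj a_{j,N} N^{-(2j-4)(\frac32-\al)} f_N^{2j}\, dx.
\end{align*}
The variance parameter of $H_{2j}$ is $N^{3-2\al}\wt\s_N$, and this equals the variance $\s_N$ of $\pi_N Y(1)$ by \eqref{sigmaN}--\eqref{sigma}. Hence the Wick terms in $Y$ have zero mean, which is why the expectation reduces to the displayed sum. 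Substituting $f_N = c\,z' N^{\frac32-\al}$, this becomes
\begin{align*}
|\T^3|\, N^{4(\frac32-\al)} \sum_j \cj a_{j,N} c^{2j} (z')^{2j}.
\end{align*}
The $j=1$ term is $O(N^{2(\frac32-\al)})$ by \eqref{a1N}, and $\cj a_{j,N}\to\cj a_j$ for $j \ge 2$. By hypothesis, $\sum_{j\ge2}\cj a_j (z')^{2j} = (z')^4\sum_{j \geq 2}\cj a_j (z')^{2(j-2)} <0$, so the leading term of $-\E\mathcal{R}_N$ is at least $c_0 N^{4(\frac32-\al)}$ with $c_0>0$.

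It remains to handle the renormalization constant. The main obstacle is that $-\al_N$ enters with a bad sign and diverges. From the construction in Section~\ref{SEC:meas} (definition \eqref{alphaN}), we expect $\al_N$ to be of order $\log N$ or $N^{\g}$ with $\g$ strictly smaller than $4(\frac32-\al)$, since it arises from a second-order (quartic-times-quartic) stochastic term. We will check from its explicit formula that $\al_N = o(N^{4(\frac32-\al)})$. If the correction in the singular regime is larger, we will instead add the same change-of-variable drift $-\ZZ_N$ used in the upper bound, which produces $-\al_N$ exactly canceled, plus the constant profile. The cross terms between this drift and $f_N$ are then controlled by Cauchy--Schwarz with lower powers of $N$.

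Collecting the three pieces — the profile contribution $\ge c_0 N^{4(\frac32-\al)}$, the cost $O(N^{2(\frac32-\al)})$, and $-\al_N = o(N^{4(\frac32-\al)})$ — gives $\log\mathcal Z_N > CN^{4(\frac32-\al)}$ for large $N$. Small $N$ are handled by shrinking $C$, since each $\mathcal Z_N>0$ is finite and nonzero; if some $\log\mathcal Z_N\le 0$ for small $N$, we restrict the statement to $N$ large, which is all the consequence needs. Finally, $\|e^{-\mathcal R_N^\dia}\|_{L^1(\mu_\al)} = \mathcal Z_N \to\infty$, which shows the densities are not bounded in $L^1(\mu_\al)$.
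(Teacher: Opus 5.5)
Your route is valid and genuinely different from the paper's. The paper inserts $\dot{\Ups}^N = N^{\frac 32 - \al} z'$ into the already shifted formula \eqref{BD_RNtil}, i.e.\ it uses the random drift $N^{\frac 32 - \al} z' - \ZZ_N$. This cancels $\al_N$ exactly, but then all mixed terms $\E \int \wick{W_N^k} \ZZ_N^{\l} dx$ must be estimated. That is done in \eqref{npos3}--\eqref{npos7} using the stochastic estimates of Section~\ref{SEC:meas} (Lemmas~\ref{LEM:RNstoh} and~\ref{LEM:RNsto2}).

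You instead keep the drift deterministic and constant in space. Since the Hermite variance parameter $N^{3 - 2 \al} \wt{\s}_N$ in \eqref{defVN} equals $\s_N$, the expectation of the potential is then exact and there are no stochastic cross terms. The only price is the size of $\al_N$. Your route is more elementary: with a deterministic shift you do not even need the Bou\'e--Dupuis formula, since the Cameron--Martin theorem and Jensen's inequality directly give $\log \mathcal{Z}_N \geq - \E [ \mathcal{R}_N^\dia (W_N + f_N) ] - \frac 12 \| f_N \|_{H^\al}^2$. The paper's route, by contrast, needs no information on the growth of $\al_N$ and reuses machinery already built for Theorem~\ref{THM:meas}. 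Your fallback is essentially the paper's argument and is not needed.

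Two points must be fixed.

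\textbf{The bound on $\al_N$.} The estimate $\al_N = o(N^{4 (\frac 32 - \al)})$ is the crux of your route, so it has to be proved rather than anticipated. Your heuristic for it is also off: $\al_N$ comes from the \emph{cubic} Wick power and its rescaled higher analogues, not from a quartic one. The verification is short:
\begin{itemize}
\item From \eqref{alphaN} and \eqref{defZZ}, apply the variance bound in the proof of Lemma~\ref{LEM:Yreg}~(i) with $k = 2j - 1$ and $\be = (2j - 4)(\frac 32 - \al)$. This gives $\jb{n}^{6 - 6 \al}$ at frequency $n$, uniformly in $N$, for each $2 \leq j \leq m$.
\item Hence $\al_N \les \sum_{|n| \leq N} \jb{n}^{- 2 \al} \jb{n}^{6 - 6 \al}$, which is $\les N^{9 - 8 \al}$ for $\al < \frac 98$ and $\les \log N$ for $\al = \frac 98$.
\item Since $9 - 8 \al < 6 - 4 \al$ exactly when $\al > \frac 34$, this is indeed $o(N^{4 (\frac 32 - \al)})$.
\end{itemize}

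\textbf{The case $z' = 0$.} If the point given by the hypothesis is $z' = 0$ (i.e.\ $\cj{a}_2 < 0$), your leading term $(z')^4 \sum_{j \geq 2} \cj{a}_j (z')^{2 (j - 2)}$ vanishes. You must first move to a nearby $z' \neq 0$ by continuity, as the paper does.

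Your restriction to large $N$ is consistent with what the paper's proof actually establishes.
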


We present the proof of Proposition~\ref{PROP:nonconv} in Subsection~\ref{SUB:nonconv}.

We encounter another difficulty when showing that the truncated Gibbs measure $\varrho_N$ actually converges to the fractional $\Phi^4_3$-measure $\rho$, which is again due to the high-degree terms in the potential energy $\mathcal{R}_N^\dia$. Briefly speaking, we need to show that the quadratic term and the quadruple term in $\mathcal{R}_N^\dia$ converges to corresponding renormalized powers, while the higher order terms should simply vanish. However, the assumption \eqref{pos0} does not seem strong enough to separate the two distinct convergence behaviors. This justifies the need of the stronger positivity assumption \eqref{pos}, which allows us to show that all terms with degree higher than the quadruple term indeed vanishes as $N \to \infty$.

\begin{remark} \rm
\label{RMK:alN}
The renormalization constant $\al_N$ in \eqref{defRNd} appears only at the level of the Gibbs measure and does not affect the dynamical problem \eqref{macro}, which is going to be discussed in Subsection~\ref{SUB:intro_dyn}.
\end{remark}

\begin{remark} \rm
\label{RMK:meas1}
Let us briefly discuss the difficulty for extending the measure convergence result in Theorem~\ref{THM:meas} to the case $\al = 1$. In this case, the limiting Gibbs measure is the well-studied $\Phi^4_3$-measure constructed in \cite{GJ, BG}, which was one of the early achievements in the constructive quantum field theory. See also the introductions in \cite{AK, GH21}. Compared to the case $\al > 1$, the construction of the $\Phi^4_3$-measure for $\al = 1$ requires an additional renormalization of the form
\begin{align*}
\g_N \int_{\T^3} \wick{(\pi_N u)^2} dx
\end{align*}

\noi
in the potential energy $R_N^\dia$ in \eqref{deffRN}, where $\g_N > 0$ diverges logarithmically as $N \to \infty$; see~\cite{BG}. Thus, when $\al = 1$, in order to obtain the convergence of the truncated Gibbs measure $\varrho_N$ in \eqref{nuN} to the $\Phi^4_3$-measure, we expect that a similar renormalization is also needed in the potential energy $\mathcal{R}_N^\dia$ in \eqref{defRNd}. This requires more delicate analysis on the high order terms coming from $\mathcal{R}_N^\dia$.
\end{remark}

\subsection{Representation of the Gibbs measure initial data}
\label{SUB:rep_intro}
As mentioned in previous subsections, the main difficulty in our setting, compared to weak universality in \cite{STzX, LTzW}, is the singularity of the Gibbs measure $\rho$ in \eqref{rho} with respect to the base Gaussian measure $\mu_\al$ in \eqref{mu_al}. Thus, in this case, to exploit the invariance of the Gibbs measure for the macroscopic model~\eqref{macro}, we cannot simply choose the initial data to be the Gaussian distributions $(u_0^\o, u_1^\o)$ in \eqref{gauss_init} as in \cite{STzX, LTzW}, but we need to work directly with initial data distributed by the truncated Gibbs measure $\varrho_N$ in \eqref{nuN}.

The singularity issue of the Gibbs measure with respect to the underlying Gaussian measure has already been considered in (stochastic) wave-type equations with various nonlinearities; see \cite{Bring2, OOT2, BDNY, LTzW}. In these works, the main goal was to prove global well-posedness of the corresponding models and invariance of the associated Gibbs measures. In particular, in the context of the (deterministic) hyperbolic $\Phi^4_3$-model, Bringmann-Deng-Nahmod-Yue \cite{BDNY} represented the truncated Gibbs measure initial data as a structured solution of the truncated parabolic $\Phi^4_3$-model. This is valid thanks to the fact that the parabolic $\Phi^4_3$-model is the Langevin dynamics for the $\Phi^4_3$-measure and thus leaves this measure invariant. The advantage of this structured solution is that it separates the Gaussian part and the remaining part, where the Gaussian part can be easily dealt with using tools from stochastic analysis and the remaining part has good enough regularity for the dynamical problem. See also \cite{OTWZ, BS} on similar uses of this strategy, which is referred to as the stochastic quantization method.

In this paper, we follow the idea from \cite{BDNY, OTWZ} and show a useful representation of the Gibbs measures $\varrho_N$ and $\rho$. Our result reads as follows.

\begin{theorem}
\label{THM:rep}
Let $1 < \al \leq \frac 98$. Then, there exist a probability space $(\Sigma, \mathcal{G}, \mathbb{Q})$ and random variables $z_0, v_{0, N}, v_0$ on $(\Sigma, \mathcal{G}, \mathbb{Q})$ such that
\begin{align*}
\Law (z_0) = \mu_\al, \quad \Law (z_0 + v_{0, N}) = \varrho_N, \quad \Law (z_0 + v_0) = \rho,
\end{align*}

\noi
where $\mu_\al$ is the Gaussian measure in \eqref{mu_al}, $\varrho_N$ is the truncated Gibbs measure in \eqref{nuN}, and $\rho$ is the fractional $\Phi_3^4$-measure in \eqref{rho}. Moreover, there exists $\eps > 0$ small such that $v_{0, N}$ converges in probability to $v_0$ in the space $\mathcal{C}^{\al - \frac 12 + \eps}$, and for any $\eta > 0$, there exists $K = K(\eta) \geq 1$ such that
\begin{align}
\mathbb{Q} \big( \| v_0 \|_{\mathcal{C}^{\al - \frac 12 + \eps}} > K \big) < \eta.
\label{v0bdd_goal}
\end{align}
\end{theorem}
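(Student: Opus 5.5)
We will use the stochastic quantization method of \cite{BDNY, OTWZ}: realize $\varrho_N$ and $\rho$ as invariant measures of parabolic (Langevin) dynamics, and read off the decomposition from the structured solution at a fixed large time. Concretely, on a probability space $(\Sigma,\mathcal G,\mathbb Q)$ carrying a space-time white noise $\zeta$ on $\R\times\T^3$, we consider the truncated fractional parabolic equations
\begin{align*}
\partial_t X_N + (1-\Dl)^\al X_N + \pi_N V_N'(\pi_N X_N) = \sqrt 2\,\zeta,
\end{align*}
which are the Langevin dynamics for $\varrho_N$ (the constant $\al_N$ plays no role here). The limiting analogue has $a\pi_N X + 4\cj a_2\pi_N\wick{(\pi_N X)^3}$ in place of the nonlinearity. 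We take the stationary Ornstein--Uhlenbeck process $Z$ solving $\partial_t Z+(1-\Dl)^\al Z=\sqrt2\zeta$ started in the infinite past, so that $\Law(Z(t))=\mu_\al$ for every $t$. We then write $X_N=Z+Y_N$ and set $z_0\deff Z(0)$, $v_{0,N}\deff Y_N(0)$.

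To obtain exact laws, we first show that the finite-dimensional (in high frequencies linear) truncated dynamics is globally well posed and leaves $\varrho_N$ invariant; this is standard, since $\pi_N$ makes the nonlinear part finite dimensional. We start $X_N$ at time $-T$ from data distributed by $\varrho_N$, independent of the noise on $[-T,0]$. Then $\Law(X_N(0))=\varrho_N$ for each $T$. To remove the dependence on initial data, we instead run from $-\infty$: the strong coercivity of the parabolic dynamics (coming from the positivity \eqref{pos0}, which makes $V_N'$ essentially monotone at large amplitude) gives a coming-down-from-infinity / contraction estimate. This produces a unique stationary solution whose time-zero law is $\varrho_N$.

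The core of the argument is a Da Prato--Debussche / second order expansion for $Y_N$. Since $\al\le\frac98$, the cubic term $\wick{(\pi_N Z)^3}$ (and the high-degree Hermite terms $N^{-(2j-4)(\frac32-\al)}H_{2j-1}(\pi_N Z)$) have regularity roughly $3(\al-\frac32)$, and $2\al$ smoothing gives $Y_N$ regularity near $5\al-\frac92$, which is below $\al-\frac12$ when $\al\le\frac98$. We therefore subtract the explicit second-order term $\Psi_N=\I(\wick{(\pi_N Z)^3})$ plus the high-degree analogues, built as stationary stochastic objects. We then write $Y_N=-4\cj a_2\Psi_N+\text{(high-order stochastic terms)}+W_N$, where $W_N$ lies in $\C^{\al-\frac12+\eps}$ via paracontrolled or resonant products such as $\wick{(\pi_NZ)^2}\cdot\Psi_N$, renormalized as in the measure construction. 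The stochastic objects of order $\ge2$ in $Y_N$ themselves must have regularity above $\al-\frac12$. Since $\I$ gains $2\al$ and $\wick{Z^3}\in\C^{3\al-\frac92-}$, we get $\Psi_N\in\C^{5\al-\frac92-}$, and $5\al-\frac92>\al-\frac12$ iff $\al>1$. So $v_{0,N}$ itself is in $\C^{\al-\frac12+\eps}$ for small $\eps$, and no extra splitting is needed. The stochastic objects converge in $L^p(\mathbb Q)$ by Wick/hypercontractivity bounds. The higher-order terms carry negative powers of $N$ and vanish by the same moment estimates used in Section~\ref{SEC:meas}, with \eqref{pos} ensuring that the residual high-degree pieces of the remainder equation are controlled.

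The main obstacle is obtaining bounds on $Y_N(0)$ in $\C^{\al-\frac12+\eps}$ that are uniform in $N$ and hold globally in time, which is needed for stationarity from $-\infty$ and hence for \eqref{v0bdd_goal}. We would attempt an energy estimate for the remainder in $L^p$ using the good sign of the top-degree term $\cj a_m N^{-(2m-4)(\frac32-\al)}(\pi_NX_N)^{2m-1}$ (via \eqref{pos0}), absorbing the high-degree cross terms with $Z$ by Young's inequality. This gives an $N$-uniform coming-down bound on $[-1,0]$ independent of the data, which Schauder estimates then bootstrap to $\C^{\al-\frac12+\eps}$.

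Convergence in probability of $v_{0,N}$ to $v_0$ then follows from a stability estimate on $[-1,0]$, given convergence of the stochastic objects and the vanishing of high-degree terms. The limit $X=Z+Y$ at time $0$ has law equal to the weak limit of $\varrho_N$, which is $\rho$ by Theorem~\ref{THM:meas}. Tightness of $\|v_{0,N}\|$ passes to $v_0$, giving \eqref{v0bdd_goal}.
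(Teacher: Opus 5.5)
Your proposal has two genuine gaps, and both sit exactly where the paper argues differently.

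\emph{The pathwise stationary solution from $-\infty$.} A coming-down-from-infinity estimate only gives bounds at time $0$ that do not depend on the initial data. It does not say that two solutions driven by the same noise merge, and that is what a pullback limit requires, hence also a well-defined $X_N(0)$ coupled across different $N$.

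Nothing here provides such a contraction. The drift is far from monotone: already for $m=2$, the condition $\cj a_1=0$ forces $a_1=-6a_2\wt{\s}<0$. The linearization of $-\pi_NV_N'(\pi_N\,\cdot)$ at $0$ is then $-2a_1N^{3-2\al}\pi_N$, which beats the dissipation $\jb{n}^{2\al}$ on all modes $|n|\ll N^{(3-2\al)/(2\al)}$. Moreover, for $\al>1$ the semigroup $e^{-t(1-\Dl)^\al}$ is not positivity preserving, so comparison-principle (synchronization-by-noise) arguments are unavailable.

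The paper needs none of this. It applies the Skorokhod representation theorem to $\varrho_N\rightharpoonup\rho$ (Theorem~\ref{THM:meas}) to get $X_{0,N}\to X_0$ almost surely on $(\Om_0,\F_0,\PP_0)$, and takes an independent noise $\xi^h$. It then solves \eqref{SNLHN} and \eqref{SNLH} on $[0,1]$ and reads off $z_0,v_{0,N},v_0$ at time $1$. Only the linear part and the cubic stochastic objects are stationary from $-\infty$.

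\emph{The $N$-uniform control of the remainder.} This is the crux. Your plan for it, testing against $Y^{p-1}$, is the Mourrat--Weber route that Remark~\ref{RMK:heatMW} deliberately avoids, and as stated it lacks its basic ingredient. For $\al>1$ the kernel of $e^{-t(1-\Dl)^\al}$ changes sign and the semigroup is not an $L^\infty$-contraction. Consequently (Lumer--Phillips) $\int Y^{p-1}(1-\Dl)^\al Y\,dx$ is not even nonnegative in general for large $p$. It certainly does not give a coercive gradient-type term that could absorb the cross terms $N^{-(2j-4)(\frac32-\al)}\wick{(\pi_NZ)^k}\,Y^{\l}$, whose Wick powers are genuine distributions.

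The missing idea is that no a priori bound for the general macroscopic model is needed. The paper proceeds in three steps:
\begin{itemize}
\item It proves bounds only for the cubic truncated model \eqref{SNLHNd}, via Bourgain's invariant-measure argument with $\rho_N$ (Lemma~\ref{LEM:unibdd_h}).
\item It passes these bounds to the limiting remainder $Y$ (Proposition~\ref{PROP:Y_bdd}).
\item It controls $Y_N$ through the stability estimate of Lemma~\ref{LEM:stb_h2}, comparing directly with $Y$. Every contribution with $j\ge3$ carries a factor $N^{-\ta'}$ (Lemma~\ref{LEM:sto_h2}~(ii) and Bernstein), so a continuity argument gives the bound on $Y_N$ and $\|Y_N(1)-Y(1)\|_{\mathcal{C}^{1+\eps}}\to0$ in probability at the same time.
\end{itemize}
Your identification of the limiting law through Theorem~\ref{THM:meas} is fine, and even spares the invariance part of Proposition~\ref{PROP:GWP_h} for identifying laws, but it presupposes this convergence.

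A minor point: $5\al-\frac92>\al-\frac12$ for every $\al>1$, so your first regularity remark is a slip. The real obstruction is the product $\wick{(\pi_NZ)^2}\cdot\Psi_N$, which is classically defined only when $7\al>\frac{15}{2}$. The paper treats it as a stochastic object (Lemma~\ref{LEM:sto_h}~(iv) and Lemma~\ref{LEM:sto_h2}~(v)), with no renormalization beyond Wick ordering.
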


We present the proof of Theorem~\ref{THM:rep} in Section~\ref{SEC:heat}, which relies on the following stochastic fractional nonlinear heat equation on $\T^3$:
\begin{align}
\partial_t X_N + (1 - \Dl)^\al X_N + \pi_N V_N' (\pi_N X_N) = \sqrt{2} \xi^h
\label{SNLHN0}
\end{align}

\noi
and the following fractional parabolic $\Phi^4_3$-model:
\begin{align}
\partial_t X + (1 - \Dl)^\al X + a X + 4 \cj{a}_2 \wick{X^3} \, = \sqrt{2} \xi^h.
\label{SNLH0}
\end{align}

\noi
Here, $\xi^h$ is a space-time white noise independent of $\xi$ in \eqref{macro} and the Wick power $\wick{X^3}$ can be viewed as a limit of the Hermite polynomial $H_3 (\pi_N X_N; \s_N)$ as $N \to \infty$, where $\s_N$ is as defined in \eqref{sigma}; see also the discussion after Proposition~\ref{PROP:GWP_h}. 
The equations \eqref{SNLHN0} and \eqref{SNLH0} are precisely the Langevin dynamics for the Gibbs measures $\varrho_N$ and $\rho$, respectively.
Briefly speaking, we want to solve the equations \eqref{SNLHN0} and \eqref{SNLH0} from time $t = 0$ to time $t = 1$ and the solutions at time $t = 1$ have the desired laws, structures, and regularities.
The stochastic fractional nonlinear heat equations \eqref{SNLHN0} and \eqref{SNLH0} have been previously studied in \cite{Duch21}, where Duch introduced the flow equation approach and proved convergence of $X_N$ to $X$ in a suitable function space. In this paper, however, in order to establish all the probabilistic properties in Theorem~\ref{THM:rep} in a more unified manner and not to complicate our presentation with more technical notations, we choose to proceed in a way that is more self-contained and rely on tools that are more compatible to those in this paper.

At a formal level, the fractional $\Phi^4_3$-measure $\rho$ is invariant under the flow of \eqref{SNLH0}. Our first task is to show global well-posedness of \eqref{SNLH0} and verify the invariance of $\rho$ under the flow of \eqref{SNLH0}. For well-posedness of \eqref{SNLH0}, we use a second order expansion of the Da~Prato-Debussche trick (see \cite{McKean, Bour96, DPD}), which separates out a linear stationary solution and a cubic stationary solution. The linear stationary solution contributes to the random variable $z_0$ in Theorem~\ref{THM:rep}, while the cubic stationary solution has good enough regularity and is incorporated in the remainder term $v_0$ in Theorem~\ref{THM:rep}. This gives the structure $X = z_0 + v_0$ for the solution to \eqref{SNLH0}. To prove global well-posedness of \eqref{SNLH0}, instead of following the deterministic approach as in \cite{MW2d, MW3d} for solving stochastic parabolic equations (see Remark~\ref{RMK:heatMW}), we choose to proceed in a  probabilistic approach similar to that in \cite{OOT2, LTzW} exploiting the invariance of the truncated fractional $\Phi^4_3$-measure $\rho_N$ in \eqref{rhoN} under the flow of a truncated version of \eqref{SNLH0}. This approach is more adapted for later purposes of proving weak universality and also yields interesting comparisons between the fractional parabolic $\Phi^4_3$-model and the fractional hyperbolic $\Phi^4_3$-model; see Subsection~\ref{SUB:SNLH} (in particular, Remark~\ref{RMK:heat_wave}) for more details.

Similarly, for the equation \eqref{SNLHN0} for $X_N$, we separate out a linear stationary solution $z_0$ and a generalized stochastic term which has the same regularity property as the cubic stationary solution mentioned above. This generalized stochastic term is incorporated in the remainder term $v_{0, N}$ in Theorem~\ref{THM:rep}, and so we have the structure $X_N = z_0 + v_{0, N}$. To prove the convergence of $v_{0, N}$ to $v_0$, we establish a key stability estimate and exploit the invariance of the truncated Gibbs measure $\varrho_N$ in \eqref{nuN} under the flow of \eqref{SNLHN0} and also the invariance of the fractional $\Phi^4_3$-measure $\rho$ under the flow of \eqref{SNLH0}; see Subsection~\ref{SUB:heat_wu} for more details. The proof of Theorem~\ref{THM:rep} is then given in Subsection~\ref{SUB:rep}.

\begin{remark} \rm
As mentioned above, the idea of the representation in Theorem~\ref{THM:rep} comes from \cite{BDNY} and \cite{OTWZ}. Let us compare the differences between the representation in Theorem~\ref{THM:rep} and those in \cite{BDNY} and \cite{OTWZ}.

In \cite{BDNY}, Bringmann-Deng-Nahmod-Yue proved global well-posedness and invariance of the Gibbs measure for the (deterministic) hyperbolic $\Phi^4_3$-model (i.e. formally the equation \eqref{fNLW0} with $\al = 1$), where they essentially used the local-in-time theory of the frequency truncated parabolic $\Phi^4_3$-model. 
With the frequency truncated, the models are known to be globally well-posed and have the invariance property from standard tools on finite dimensional spaces. In our case in this paper, however, we need global well-posedness of the parabolic model \eqref{SNLH0}, i.e.~the equation without the frequency truncation. This is due to the fact that, for the later purpose of proving dynamical weak universality, we need more concrete information from the convergence of Gibbs measures $\varrho_N$ in \eqref{nuN} to $\rho$ in \eqref{rho}, which requires us to work on the limiting parabolic model \eqref{SNLH0}. Moreover, we need to prove the invariance of $\rho$ under the flow of \eqref{SNLH0}, for which a random local existence time of the equation will not be useful. In this sense, our representation in Theorem~\ref{THM:rep} is closer to that in \cite{OTWZ} in the study of the hyperbolic $\Phi^{k + 1}_2$-model ($k \in 2 \mathbb{N} + 1$) on the plane (i.e.~formally the equation \eqref{fNLW0} on $\R^2$ with $\al = 1$ and $u^3$ replaced by $u^k$).
\end{remark}

\begin{remark} \rm
\label{RMK:heatMW}
In \cite{MW2d} and \cite{MW3d}, the authors studied (a renormalized version of) the following the parabolic $\Phi^4_2$-model and the parabolic $\Phi^4_3$-model, respectively:
\begin{align}
\dt X - \Dl X + X^3 = \xi,
\label{heat}
\end{align}

\noi
where $\xi$ denotes a space-time white noise on $\R_+ \times \T^d$ for $d = 2, 3$. To prove global well-posedness of \eqref{heat}, they tested the equation \eqref{heat}\footnote{More precisely, in \cite{MW2d, MW3d}, the testing was performed on another nonlinear heat equation with a smoother unknown. However, we omit details here and only discuss ideas.} against $X^{p - 1}$ for some suitable even integer $p \geq 2$ and derived an energy inequality to control the growth of $\| X \|_{L^p}$, which allowed them to iterate the local well-posedness argument. In particular, by using an integration by parts, the Laplacian term $- \Dl X$ in \eqref{heat} contributes to a term $\| X^{p - 2} |\nb X|^2 \|_{L^1}$ with a good sign, which is essential for establishing an a priori bound for $\| X \|_{L^p}$.

For the fractional parabolic $\Phi^4_3$-model \eqref{SNLH0} in our setting, however, the same method in \cite{MW2d, MW3d} leads to a term $\jb{\nb}^\al (X^{p - 1})$, which creates some subtleties since $1 < \al \leq \frac 98$ is not an integer. While it may be possible to perform some analysis on $\jb{\nb}^\al (X^{p - 1})$, we choose to solve \eqref{SNLH0} using a more probabilistic approach in this paper.
\end{remark}

\subsection{Dynamical convergence}
\label{SUB:intro_dyn}

After discussing the convergence of the truncated Gibbs measures and the representation of the Gibbs measures, we are now ready to state our dynamical convergence result for the macroscopic model \eqref{macro}.

Let us first introduce the limiting equation \eqref{SdfNLW} (i.e.~the fractional hyperbolic $\Phi^4_3$-model) in a more rigorous manner. As mentioned earlier, the process $u$ satisfying \eqref{SdfNLW} is obtained as a limit of the process $u_N^\dagger$ as $N \to \infty$, where $u_N^\dagger$ satisfies the truncated fractional hyperbolic $\Phi^4_3$-model \eqref{SdfNLW_N}. The Wick power $\wick{(\pi_N u_N^\dagger)^3}$ in \eqref{SdfNLW_N} is defined by the Hermite polynomial:
\begin{align*}
\wick{(\pi_N u_N^\dagger)^3} \, \deff H_3 (\pi_N u_N^\dagger; \s_N) = (\pi_N u_N^\dagger)^3 - 3 \s_N \pi_N u_N^\dagger
\end{align*}

\noi
with $\s_N$ being defined as in \eqref{sigma}.
The precise statement is given by the following theorem in \cite{LTzW}. Note that in \cite{LTzW}, the theorem was proved in the case $a = 0$, but the case $a \neq 0$ does not pose any additional difficulty. Here, we recall the fractional $\Phi^4_3$-measure $\rho$ in \eqref{rho} and the Gaussian white noise measure $\mu_0$ in \eqref{mu_al}.
\begin{theorem}[Theorem~1.3 in \cite{LTzW}]
\label{THM:GWP}
Let $1 < \al \leq \frac 98$ and $a \in \R$. Then, the fractional hyperbolic $\Phi^4_3$-model \eqref{SdfNLW} is almost surely globally well-posed with initial data distributed by the Gibbs measure $\vec \rho = \rho \otimes \mu_0$, and the Gibbs measure $\vec \rho$ is invariant under the resulting dynamics.
More precisely, for any $\eps > 0$, there exists a non-trivial stochastic process $\vec u = (u, \dt u) \in C(\R_+; \vec H^{\al - \frac 32 - \eps} (\T^3))$ such that the solution $\vec u_N^\dagger = (u_N^\dagger, \dt u_N^\dagger)$ to the truncated fractional hyperbolic $\Phi^4_3$-model \eqref{SdfNLW_N} converges $\vec \rho$-almost surely to $\vec u$ in $C(\R_+; \vec H^{\al - \frac 32 - \eps} (\T^3))$ (endowed with the compact-open topology in time). Furthermore, we have $\Law (\vec u (t)) = \vec \rho$ for any $t \in \R_+$.
\end{theorem}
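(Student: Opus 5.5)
The statement is Theorem~1.3 of \cite{LTzW}, recalled here with $a \in \R$ allowed. The plan is to follow the Bourgain-type invariant measure argument of \cite{OOT2, LTzW}. The only modification is to carry the linear term $a \pi_N u_N^\dagger$ along; it is smooth and harmless.

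\textbf{Step 1: second order expansion.} Write $u_N^\dagger = \Psi + \Ups_N + w_N$. Here $\Psi$ is the stationary linear solution of the damped fractional wave equation with initial data $(u_0^\o, u_1^\o)$ distributed by $\vec\mu_\al$. The second term $\Ups_N$ is the cubic stochastic object, obtained by applying the Duhamel operator to $\pi_N \wick{(\pi_N \Psi)^3}$. The remainder $w_N$ solves a Duhamel equation. Its nonlinearity involves
\begin{itemize}
\item the Wick powers $\wick{\Psi^2}$ and $\Psi$,
\item $\Ups_N$,
\item the random resonant operator $w \mapsto \pi_N(\wick{(\pi_N\Psi)^2}\,\pi_N w)$, treated in $X^{s,b}$-spaces,
\item the linear term $a \pi_N(\Psi + \Ups_N + w_N)$.
\end{itemize}
Standard Wiener chaos and $X^{s,b}$ multilinear estimates from \cite{LTzW} show that the enhanced data set converges as $N \to \infty$ almost surely in the relevant norms. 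They also give local well-posedness of the $w_N$ equation in $X^{s,b}$ with $s > \frac32 - \al$ (more precisely, just above the regularity of $\Ups_N$), uniformly in $N$. The linear term $a\pi_N$ only adds a bounded perturbation.

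\textbf{Step 2: singular initial data.} Because $\rho$ is singular with respect to $\mu_\al$ when $\al \le \frac98$, initial data cannot simply be drawn from $\vec\mu_\al$. Following \cite{OOT2}, use the change of variables from the variational (Barashkov--Gubinelli) construction of $\rho_N$. A $\rho_N$-typical function is $Y + \mathfrak{Z}_N + \text{(smoother)}$, where $\mathfrak{Z}_N$ is a cubic-type drift term built from the Gaussian. So the enhanced data set must be taken with respect to $\vec\rho_N$-distributed data rather than Gaussian data. One shows that the law of the enhanced data set under $\vec\rho_N \otimes \PP$ is absolutely continuous, uniformly in $N$ in a suitable sense, with respect to the corresponding shifted Gaussian objects. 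I expect this step to be the \emph{main obstacle}: the stochastic objects must be controlled under a measure only weakly, not strongly, close to a Gaussian one. The plan is to prove the required uniform tail bounds under $\rho_N$ by a Boué--Dupuis representation, bounding $\E_{\rho_N}[e^{\lambda F}]$ for the relevant norms $F$.

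\textbf{Step 3: globalization and passage to the limit.} The truncated dynamics~\eqref{SdfNLW_N} is globally well-posed for each $N$, since the high frequencies are linear and the low frequencies form a finite-dimensional SDE with coercive energy. It leaves $\vec\rho_N$ invariant by the Liouville property combined with Ornstein--Uhlenbeck invariance. Invariance together with the uniform tail bounds of Step~2 and Bourgain's iteration gives a set of $\vec\rho_N$-probability at least $1-\eta$ on which, on $[0,T]$, all enhanced data norms are bounded by $K(\eta,T)$ along the flow.

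Combining this with a stability estimate for the $w$-equation, comparing $w_N$ and $w_M$ through the difference of enhanced data sets, gives that $\{u_N^\dagger\}$ is Cauchy in $C([0,T]; \vec H^{\al-\frac32-\eps})$ outside a set of small probability. This holds under a common coupling in which the data converge, using weak convergence $\rho_N \to \rho$ via Skorokhod's theorem as in \cite{OOT2}. Letting $\eta\to0$ and $T\to\infty$ yields the almost sure limit $\vec u$. Invariance $\Law(\vec u(t)) = \vec\rho$ then follows by passing to the limit in $\Law(\vec u_N^\dagger(t)) = \vec\rho_N$, using weak convergence.
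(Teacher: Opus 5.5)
Your skeleton (second order expansion, enhanced data set, Bourgain's invariance argument, stability, limit) is the right one, and the paper quotes this theorem from LTzW rather than reproving it. The gap is in Step~3, in the passage from the $\vec\rho_N$-invariant truncated flows to the statement. You couple $\vec\rho_N$-distributed data by Skorokhod's theorem and then compare $w_N$ and $w_M$ through the difference of the enhanced data sets. By your Step~2, these sets are built from the two different data $\vec u_{0,N}$ and $\vec u_{0,M}$. Skorokhod only gives $\vec u_{0,N}\to\vec u_0$ almost surely in $\vec H^{\al-\frac32-\eps}$, and the renormalized objects are not continuous in the data for this topology. Already $\wick{\Psi_N^2}-\wick{\Psi_M^2}$ contains $(\Psi_N-\Psi_M)(\Psi_N+\Psi_M)$, a product of distributions of regularity $\al-\frac32-\eps$, which is not controlled by the size of $\Psi_N-\Psi_M$. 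The abstract coupling carries no information on the joint law of the two enhanced data sets. You cannot apply Skorokhod to the enhanced data sets either, since part of them lives in non-separable operator-norm spaces; this is exactly the obstruction in Remark~\ref{RMK:OTWZ}. So your stability estimate receives no small input. A second problem is that the statement concerns the truncated flow started from $\vec\rho$-distributed data, the same for every $N$. Since $\rho_N$ is equivalent to $\mu_\al$ while $\rho\perp\mu_\al$, we have $\vec\rho\perp\vec\rho_N$. Hence a set of small $\vec\rho_N$-measure says nothing about $\vec\rho$-data, and your final identity $\Law(\vec u_N^\dagger(t))=\vec\rho_N$ does not hold in the setting of the statement; it fails already at $t=0$.

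What is missing is a coupling with structure. In this paper it is supplied by Theorem~\ref{THM:rep}. There the data are realized as $(z_0+v_{0,N},z_1)$ and $(z_0+v_0,z_1)$, with a common Gaussian $z_0\sim\mu_\al$ and $v_{0,N}\to v_0$ in probability in $\mathcal{C}^{\al-\frac12+\eps}$. With this coupling, your Step~1 choice of a Gaussian linear part is the right one, and the absolute-continuity analysis of Step~2 is replaced by plain Gaussian stochastic analysis. All stochastic objects are built from $(z_0,z_1)$ and the noise, so they are common to both flows. The data discrepancy then enters only the initial datum of the smoother remainder. A stability estimate as in Proposition~\ref{PROP:stb} transfers the invariance-based a priori bounds from the flow with data $z_0+v_{0,N}$ to the flow with $\vec\rho$-distributed data $z_0+v_0$.

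Separately, your list of terms in the $w_N$-equation is incomplete for $1<\al\le\frac98$. The terms $\I(\wick{\Psi^2}\,\Ups_N)$, $\I(\Psi\,\Ups_N\,w)$ and $\I(\Psi\,\Ups_N^2)$, with $\Ups_N$ your cubic object, cannot be handled deterministically: the regularity counts are $5\al-6<0$ and $4\al-\frac92\le 0$. The enhanced data set must therefore also contain the quintic object, the cubic-linear random operator and the septic object, cf.~\eqref{sto_lim}.
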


The proof of Theorem~\ref{THM:GWP} was based on the following second order expansion:
\begin{align}
u_N^\dagger = \<1> - 4 \cj{a}_2 \<30>_N + v_N^\dagger.
\label{sec0}
\end{align}

\noi
Here, $\<1>$ is the solution to the linear stochastic damped wave equation:
\begin{align}
\begin{cases}
\dt^2 \<1> + \dt \<1> + (1 - \Dl)^\al \<1> = \sqrt{2} \xi \\
(\<1>, \dt \<1>)|_{t = 0} = (u_0, u_1)
\end{cases}
\label{lin_eqn}
\end{align}

\noi
with initial data $(u_0, u_1)$ distributed by the Gibbs measure $\vec \rho = \rho \otimes \mu_0$. Denoting $\I$ as the Duhamel integral operator of the damped wave equation (see \eqref{defI} below), the cubic object $\<30>_N$ is defined by
\begin{align}
\<30>_N \deff \pi_N \I \big( \wick{\<1>_N^3} \big),
\label{def30}
\end{align}

\noi
where $\<1>_N = \pi_N \<1>$ and the Wick power $\wick{\<1>_N^k}$ for any integer $k \geq 1$ is defined by
\begin{align*}
\wick{\<1>_N^k} \, \deff H_k (\<1>_N; \s_N)
\end{align*}

\noi
with $H_k (x; \s)$ being the Hermite polynomial of degree $k$ with variance parameter $\s > 0$ as defined in \eqref{hermite} below and $\s_N$ being defined in \eqref{sigma}. From \cite[Proposition~6.4]{LTzW}, we know that $\<1>$ has regularity $\al - \frac 32 - \eps$ and $\<30>_N$ has regularity $3 \al - 3 - \eps$ for any $\eps > 0$. The remainder term $v_N^\dagger$ then satisfies a nonlinear damped wave equation with more complicated but smoother terms, and $v_N^\dagger$ has better regularity than $\<1>$ and $\<30>_N$. As $N \to \infty$, it has been shown in \cite{LTzW} that $v_N^\dagger$ converges to $v$, which satisfies the equation \eqref{SdNLWv4b} below.

We now switch our attention back to the macroscopic model \eqref{macro}. Let us state our main result in this paper in the following theorem.

\begin{theorem}
\label{THM:dyn}
Let $1 < \al \leq \frac 98$. Given $N \in \N$, let $\vec u_N = (u_N, \dt u_N)$ be the solution to the macroscopic model \eqref{macro} with initial data distributed by $\varrho_N \otimes \mu_0$, where $\varrho_N$ is the truncated Gibbs measure in \eqref{nuN} and $\mu_0$ is the Gaussian white noise measure in \eqref{mu_al}. Then, there exists $a \in \R$ such that $\vec u_N$ converges in probability as $N \to \infty$ in the space $C(\R_+; \vec H^{\al - \frac 32 - \eps} (\T^3))$ for any $\eps > 0$ (endowed with the compact-open topology in time) to the stochastic process $\vec u$ given by Theorem~\ref{THM:GWP}.
\end{theorem}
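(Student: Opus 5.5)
We plan to couple the initial data through Theorem~\ref{THM:rep}. On the probability space $(\Sigma,\mathcal G,\mathbb Q)$, enlarged by an independent white noise $u_1$ and an independent space-time white noise $\xi$, we take $(u_N(0),\dt u_N(0)) = (z_0+v_{0,N},u_1)$ and $(u(0),\dt u(0))=(z_0+v_0,u_1)$. Their laws are then $\varrho_N\otimes\mu_0$ and $\vec\rho$, respectively. Since convergence in probability is a statement about joint laws once the limit is specified, it suffices to prove convergence in this coupling. The macroscopic solution $u_N$ solves a finite-dimensional system in its low modes coupled to a linear equation, so it is globally defined, and $\varrho_N\otimes\mu_0$ is invariant under \eqref{macro}. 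For each $T>0$ we will show that $\sup_{t\le T}\|\vec u_N(t)-\vec u(t)\|_{\vec H^{\al-\frac32-\eps}}\to0$ in probability.

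\textbf{Step 1: extended second order expansion.} Let $\<1>$ solve \eqref{lin_eqn} with data $(z_0,u_1)$. This is Gaussian with law $\vec\mu_\al$ and is independent of $v_{0,N}$ only in distribution, so we must work with the joint law coming from the heat flow. We write
\[
u_N = \<1> + S(t)(v_{0,N},0) + \Psi_N + w_N,
\]
where $S(t)$ is the linear damped wave propagator. The generalized cubic stochastic term $\Psi_N$ is defined by
\[
\Psi_N = -\pi_N\I\Big(\sum_{j\ge2}2j\,\cj a_{j,N}N^{-(2j-4)(\frac32-\al)}H_{2j-1}(\<1>_N;\s_N')\Big),
\]
with the appropriate variance $\s_N' = N^{3-2\al}\wt\s_N$, which matches $\s_N$ up to negligible error. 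The linear part, via \eqref{a1N}, is $a\,\pi_N\<1>$ plus an error. Using Wiener chaos and $X^{s,b}$ estimates as in \cite{LTzW}, we show that $\Psi_N\to -4\cj a_2\<30>$ in $X^{3\al-3-\eps,b}$. For the terms with $j\ge3$, the high chaos degree is compensated by the factor $N^{-(2j-4)(\frac32-\al)}$. The remainder $w_N$ then satisfies an equation with data $0$ whose nonlinearity is
\[
\pi_NV_N'(\pi_N u_N)-\text{(the subtracted pure-Gaussian terms)}.
\]
We expand this via the binomial identity for Hermite polynomials, $H_k(x+y)=\sum\binom{k}{l}H_l(x)y^{k-l}$. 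This produces quadratic paracontrolled terms such as $\<1>^2\cdot(\cdot)$, handled by the random operator and resonant-product objects $\<31>$, $\<32>$ of \cite{LTzW}, together with high-degree terms carrying negative powers of $N$.

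\textbf{Step 2: local convergence and high-degree terms.} The difficulty is that the high-degree terms $N^{-(2j-4)(\frac32-\al)}(\pi_N u_N)^{2j-1}$ cannot be controlled by $X^{s,b}$ contraction alone, since we lack $L^p$ bounds for large $p$. Following \cite{STzX,LTzW} and Bourgain--Bulut, we use the invariance of $\varrho_N\otimes\mu_0$. With the representation $u_N(t)\overset{d}{=}z_0+v_{0,N}$ at each time, the quantity $N^{-(\frac32-\al)}\|\pi_Nu_N(t)\|_{L^p}$ is small in probability uniformly in $t$ on a time grid. This follows because $\pi_Nz_0$ has size $N^{\frac32-\al}$ times a small Gaussian factor, and $v_{0,N}$ is bounded in $\mathcal C^{\al-\frac12+\eps}$ by \eqref{v0bdd_goal} together with convergence in probability. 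We extend this to all $t\in[0,T]$ by a union bound over a grid of mesh $N^{-\kappa}$ combined with local-in-time continuity.

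\textbf{Step 3: globalization.} We then run a stability argument comparing $w_N$ with the limiting remainder $v$ of \eqref{SdNLWv4b}. The initial discrepancy is $S(t)(v_{0,N}-v_0,0)$, which tends to zero in $\mathcal C^{\al-\frac12+\eps}$. Since $\vec u$ is global with law $\vec\rho$ for all times by Theorem~\ref{THM:GWP}, $\|v\|$ is bounded on $[0,T]$ with high probability. We iterate over intervals of deterministic length $\tau$, bootstrapping the smallness of $w_N-v$ in the space $X^{s,b}$ on each interval.

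We expect Step 2 to be the main obstacle. The Gibbs measure is singular, so the high-degree bounds cannot be transferred to the Gaussian measure as was done in \cite{LTzW}. They must come from the structured representation, uniformly in $N$, and must be compatible with the Hermite-renormalized expansion. If this fails, we would first try to establish moment bounds for $\int N^{-(2j-4)(\frac32-\al)}|\pi_Nu|^{2j}\,dx$ under $\varrho_N$ directly from the variational bound of Theorem~\ref{THM:meas} and the positivity condition \eqref{pos}.
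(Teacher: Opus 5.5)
Your architecture matches the paper's. You couple the data via Theorem~\ref{THM:rep} and expand $u_N=\<1>-\Xi_N^{(3)}+v_N$, with $\<1>$ carrying the Gaussian data $(z_0,z_1)$; your $S(t)(v_{0,N},0)+w_N$ is the paper's $v_N$. You use invariance of $\varrho_N\otimes\mu_0$ for the high-degree terms and close by stability. However, two steps do not go through as written.

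\textbf{Step~1.} You treat everything beyond the $j=2$ objects of \cite{LTzW} as ``high-degree terms carrying negative powers of $N$''. Consider the terms
$N^{(2j-4)(\al-\frac32)}\Xi_N^{(3)}\wick{\<1>_N^{2j-2}}$,
$N^{(2j-4)(\al-\frac32)}\Xi_N^{(3)}\wick{\<1>_N^{2j-3}}\,v$,
and $N^{(2j-4)(\al-\frac32)}(\Xi_N^{(3)})^2\wick{\<1>_N^{2j-3}}$.
Through the $j'\ge3$ pieces of $\Xi_N^{(3)}$, such terms already arise for $j=2$. These terms are critically normalized: the power of $N$ only offsets the growth of the chaos. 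For example, $N^{(2j-4)(\al-\frac32)}\wick{\<1>_N^{2j-2}}$ is bounded uniformly in $N$ only in $\mathcal C^{2\al-3-\eps}$, like $\wick{\<1>_N^2}$. Since $(2\al-3)+(3\al-3)<0$, its product with $\Xi_N^{(3)}$ is not deterministically defined. These are new stochastic objects: the extended cubic-linear operator, quintic and septic terms $\vec\Xi_N^{(3,1)},\vec\Xi_N^{(5)},\vec\Xi_N^{(7)}$ of \eqref{sto_all}. Their boundedness and convergence in $\mathcal X_T^{\al,\eps,\dl}$ of \eqref{XTaed} require Wiener chaos estimates built on the extended counting Lemmas~\ref{LEM:count_quin}, \ref{LEM:count_quin2}, \ref{LEM:count_sep}; this is Lemma~\ref{LEM:data}~(v)--(vii). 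No invariance-based $L^p$ bound on $u_N$ substitutes for this.

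\textbf{Steps~2 and~3 (a priori bounds).} Step~2 is incorrect as stated. $N^{-(\frac32-\al)}\pi_Nz_0(x)$ is a Gaussian of variance $\wt\s_N\to\wt\s>0$; this is exactly what produces $\jb{V}$. So $N^{-(\frac32-\al)}\|\pi_Nu_N(t)\|_{L^p}$ is of order one, not small. Moreover, a union bound over $\sim N^\kappa$ grid times needs $N$-uniform quantitative tails. Neither \eqref{v0bdd_goal}, which is only a tightness statement, nor convergence in probability of $v_{0,N}$ can supply these. A sup in time is not needed, because the multilinear estimates use only $L^p_T$ norms. Lemma~\ref{LEM:uN_bdd} bounds $\|u_N\|_{L^p_TW^{\al-\frac32-\eps,\infty}}$ by Chebyshev, Fubini and invariance, reducing to $\int\|u\|^p_{W^{\al-\frac32-\eps,\infty}}d\varrho_N\les1$. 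That moment bound comes from the Bou\'e--Dupuis formula and \eqref{RNtil_est}.

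In Step~3, a high-probability bound on $v$ in $X^{\al-\frac12,\frac12+\dl}_T$ does not follow from Theorem~\ref{THM:GWP}. That theorem only gives $\Law(\vec u(t))=\vec\rho$ in $\vec H^{\al-\frac32-\eps}$. Its proof in \cite{LTzW} also uses a different decomposition, with the Gibbs data inside the linear solution and zero datum for the remainder. The paper instead proceeds as follows:
\begin{itemize}
\item It proves a uniform global bound for $\vec v_N$ in $\vec X_T^{\al-\frac12,\frac12+\dl}$ (Proposition~\ref{PROP:vN_bddX}), via a nonlinear smoothing bound globalized by invariance of $\vec\varrho_N$.
\item It feeds this, together with the $L^\infty$-based bound, into Proposition~\ref{PROP:stb}, which constructs $v$ and bounds it.
\item It identifies $\<1>-4\cj a_2\<30>+v$ with the process of Theorem~\ref{THM:GWP} by comparing with the truncated remainders $v_N^\dagger$.
\end{itemize}
Your proposal also omits that final identification.
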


To prove Theorem~\ref{THM:dyn}, we first use the representation of Gibbs measures provided by Theorem~\ref{THM:rep}. Let $(\Sigma, \mathcal{G}, \mathbb{Q})$ be the probability space and $z_0, v_{0, N}, v_0$ be random variables given by Theorem~\ref{THM:rep}, and we let $z_1$ be a random variable on $(\Sigma, \mathcal{G}, \mathbb{Q})$ independent with $z_0, v_{0, N}, v_0$ and satisfying $\Law (z_1) = \mu_0$.
As before, we define $\<1>$ as the solution to linear stochastic damped wave equation \eqref{lin_eqn}, but now with Gaussian initial data $(u_0, u_1) = (z_0, z_1)$ instead of the Gibbs measure initial data in \eqref{lin_eqn}. Here, we notice that the Gaussian part of the Gibbs measure can be separated out thanks to the representation of Gibbs measures provided by Theorem~\ref{THM:rep}.

From \cite[Lemma~7.2]{LTzW}, we know that the following term
\begin{align*}
N^{- (2j - 4) (\frac 32 - \al)} \I \big( \wick{\<1>_N^{2 j - 1}} \big)
\end{align*}

\noi
for $2 \leq j \leq m$ has regularity $3 \al - 3 - \eps$, which is the same as $\<30>_N$ mentioned in \eqref{def30}. Thus, inspired by the second order expansion \eqref{sec0}, we proceed with the following ansatz
\begin{align}
u_N = \<1> - \Xi_N^{(3)} + v_N,
\label{sec_exp}
\end{align}

\noi
where $\Xi_N^{(3)}$ is defined by the following with correct coefficients:
\begin{align}
\Xi_N^{(3)} \deff \sum_{j = 2}^m 2j \cj{a}_{j, N} N^{-(2j - 4) (\frac 32 - \al)} \pi_N \I \big( :\! \<1>_N^{2j - 1} \!: \big).
\label{cubic}
\end{align}

\noi
The equation for the remainder term $v_N$, endowed with initial data $v_{0, N}$, then involves many complicated products of stochastic terms; see Subsection~\ref{SUB:ansatz} for details. Note that the separation of the Gaussian part performed in the previous paragraph largely simplifies the analysis required for various stochastic terms formed by $\<1>$; see Subsection~\ref{SUB:reg} below as compared to \cite[Subsection~6.2]{LTzW}. The regularity properties of these stochastic terms are then reduced to certain counting estimates, which are presented in Section~\ref{SEC:count}. 

Our goal then becomes showing that $v_N$ converges as $N \to \infty$ in some suitable function space to some process $v$. The convergence relies on some key probabilistic a priori bounds for $v_N$ using invariance of the truncated Gibbs measure $\varrho_N$ in \eqref{rhoN}, especially an $L^\infty$-based bound which cannot be provided by the wave dynamics. The idea is similar to that in \cite{STzX} and goes back to Bourgain-Bulut \cite{BB14}. However, due the singularity of the fractional $\Phi^4_3$-measure $\rho$ and the base Gaussian measure $\mu_\al$ in \eqref{mu_al}, we cannot proceed as in \cite{STzX, LTzW} by directly using the Gaussian measure $\mu_\al$ to establish the probabilistic a priori bounds for $v_N$. This issue is again resolved by our representation of the (truncated) Gibbs measures in Theorem~\ref{THM:rep}, which provides us with a common underlying probability space for the probabilistic a priori bounds. 
See also Remark~\ref{RMK:OTWZ} below.

\medskip
We conclude the introduction by stating several remarks.

\begin{remark} \rm
\label{RMK:OTWZ}
In this paper, we show the convergence of the invariant dynamics of the macroscopic model \eqref{macro} to the invariant dynamics of the fractional hyperbolic $\Phi^4_3$-model \eqref{SdfNLW}. This resembles the situation in \cite{OTWZ}, where the authors also only have weak convergence of the relevant Gibbs measures. In \cite{OTWZ}, the authors resolved this issue by studying a stronger convergence property for the so-called enhanced Gibbs measures, i.e.~the distributions of the stochastic objects. However, the approach in \cite{OTWZ} does not seem to be applicable to our setting in this paper. This is due to the fact that the function spaces for the stochastic objects in \cite{OTWZ} are separable (so that the spaces are polish), whereas in our paper, certain stochastic objects are equipped with the operator-norms, which are in general not separable. Although one may consider the weak topology for these operator-norms and proceed in a similar way to \cite{OTWZ}, we choose not to pursue this approach in this paper.
\end{remark}

\begin{remark} \rm
\label{RMK:al1}
Let us briefly discuss the situation for the dynamical problem in the case $\al = 1$. In \cite{BDNY}, Bringmann-Deng-Nahmod-Yue proved global well-posedness and invariance of the Gibbs measure for the (deterministic) hyperbolic $\Phi^4_3$-model (i.e. formally the equation \eqref{fNLW0} with $\al = 1$). To establish well-posedness, compared to the expansion in \eqref{sec0}, the authors separated out an additional quintic stochastic object and further imposed a paracontrolled structure on the remainder term. This ansatz led to almost a hundred nonlinear terms, and the authors in \cite{BDNY} grouped together similar terms and estimated all of them appropriately with some new tools: bilinear random operator estimates and molecule counting estimates. For the hyperbolic $\Phi^4_3$-model \eqref{SdfNLW} (with $\al = 1$) with a space-time white noise forcing, we expect that a similar paracontrolled ansatz is required to study the well-posedness problem.

As for the dynamical weak universality for the hyperbolic $\Phi^4_3$-model with $\al = 1$, due to the more general nonlinearity in the macroscopic model \eqref{macro}, we need to proceed with an extended version of the paracontrolled ansatz in \cite{BDNY}. Even with the help of scaling factors (i.e.~the negative powers of $N$), analyzing all these general nonlinear terms or stochastic terms becomes very challenging. Nevertheless, we believe that the steps in this paper (along with our previous work \cite{LTzW}) can provide a guidance on how to adapt all the analysis on those general terms in the case $\al = 1$.
\end{remark}

\begin{remark} \rm
At this point, we only consider weak universality (in the weakly nonlinear regime) for hyperbolic equations with an even polynomial nonlinearity and a Gaussian space-time white noise forcing. It would be of interest to study such weak universality for hyperbolic equations with a more general nonlinearity or a more general stochastic forcing as in parabolic equations \cite{SX, FG, HX19, ZZ, KWX}. 

For weak universality of local-in-time dynamics for hyperbolic equations, one can, for example, adapt the local well-posedness argument to general nonlinearities and general stochastic forcings as in \cite{GKO, GKO2, OTh}. When the equation gets more singular, one may need to make some efforts in making sense of the general nonlinearities (such as introducing renormalizations). For global-in-time dynamics, however, since the currently known methods rely heavily on the invariance of Gibbs measures, the adaptations to general nonlinearities and general stochastic forcings become more challenging and remain to be explored. 
\end{remark}

\begin{remark} \rm
Except for wave equations, weak universality problems for other dispersive equations, such as the Schr\"odinger equations or the Korteweg-de Vries equations, remain widely open. For these equations, smoothing from the dispersion is sometimes manifested in different manners. The analysis that deals with a general polynomial nonlinearity, such as that in \cite{ORTh, CK, DNY3, DNY2}, can be helpful for establishing weak universality for these models. It would be of interest to see if weak universality can be shown given different smoothing ingredients from various models.
\end{remark}

\section{Notations and preliminary lemmas}

In this section, we go over notations and some preliminary lemmas.

For two positive quantities $a$ and $b$, we use $a \les b$ to denote $a \leq C b$ for some constant $C > 0$ independent of the set where $a$ and $b$ are allowed to vary. We also use subscripts such as $\les_{s, t}$ to emphasize dependence on parameters. We write $a \sim b$ if $a \les b$ and $b \les a$. We also write $a \ll b$ if $a < cb$ for some small $c > 0$ independent of the set where $a$ and $b$ are allowed to vary. If $N \geq 2$ is a dyadic number, we write $|n| \sim N$ if $\frac{N}{2} < |n| \leq N$; if $N = 1$, we write $|n| \sim N$ if $|n| \leq 1$. For a dyadic number $N \geq 1$, we denote $\pi_N$ to be the frequency projection onto $\{|n| \leq N\}$.

For a space-time function $u$, we denote by $\ft u$ the spatial Fourier transform or the space-time Fourier transform of $u$. To avoid ambiguity, we use $\tau$ and $n$ as the temporal and spatial frequency variables, respectively; and we use $t$ and $x$ as the temporal and spatial physical variables, respectively. We use $\F^{-1}$ to denote the inverse Fourier transform. Unless specified, we use the notation $\vec u = (u, \dt u)$, which is used frequently in the discussion of fractional wave dynamics.

Given Banach spaces $B_1$ and $B_2$, we denote by $\L (B_1; B_2)$ the space of bounded linear operators from $B_1$ to $B_2$. When writing space-time function spaces, we sometimes use short-hand notations such as $L_T^q L_x^r = L^q ([0, T]; L^r (\T^3))$ for simplicity.


\subsection{Sobolev and H\"older-Besov spaces}
Let us first define Sobolev spaces. Let $d \in \N$, $s \in \R$, and $1 \leq p \leq \infty$. We define the $L^p$-based Sobolev space $W^{s, p} (\T^d)$ by the norm
\begin{align*}
\| f \|_{W^{s, p}} \deff \| \jb{\nb}^s f \|_{L^p} = \big\| \F^{-1} [\jb{n}^s \ft f (n)] \big\|_{L^p},
\end{align*}

\noi
where $\jb{\cdot} = (1 + |\cdot|^2)^{\frac 12}$.
When $p = 2$, we denote $H^s (\T^d) \deff W^{s, 2} (\T^d)$. We also denote $\vec H^{s} (\T^d) = H^s (\T^d) \times H^{s - \al} (\T^d)$.

We now define H\"older-Besov spaces. Let $\phi : \R \to [0, 1]$ be a smooth cutoff function such that $\phi \equiv 1$ on $[- \frac 54, \frac 54]$ and $\phi \equiv 0$ outside of $[- \frac 85, \frac 85]$. For $\xi \in \R^d$, we set $\varphi_0 (\xi) = \phi (|\xi|)$ and
\begin{align*}
\varphi_j (\xi) = \phi \big( \tfrac{|\xi|}{2^j} \big) - \phi \big( \tfrac{|\xi|}{2^{j - 1}} \big)
\end{align*}

\noi
for $j \in \N$. Then, for $j \in \Z_{\geq 0} \deff \N \cup \{0\}$, we define the Littlewood-Paley projector $\mathbf{P}_j$ as the Fourier multiplier operator with the symbol $\varphi_j$. We now define the H\"older-Besov space $\C^s (\T^d)$ by the norm
\begin{align*}
\| f \|_{\C^s} \deff \big\| 2^{s j} \| \mathbf{P}_j f \|_{L_x^\infty (\T^d)} \big\|_{\l_j^\infty (\Z_{\geq 0})}.
\end{align*}

We now recall some basic properties of Sobolev and H\"older-Besov spaces. The proofs of these estimates can be found in, for example, \cite{BCD, GOTW}.
\begin{lemma}
\label{LEM:Bes}
Let $d \in \N$. The following properties hold.

\smallskip \noi
\textup{(i)}
Let $s_1, s_2 \in \R$ with $s_1 < s_2$ and $1 \leq p \leq \infty$. Then, we have the following embedding
\begin{align*}
\| f \|_{W^{s_1, p}} \les \| f \|_{\C^{s_2}}.
\end{align*}

\smallskip \noi
\textup{(ii)} Let $s_1, s_2 \in \R$ be such that $s_2 \geq s_1 + \frac{d}{2}$. Then, we have the following Besov embedding
\begin{align*}
\| f \|_{\C^{s_1}} \les \| f \|_{H^{s_2}}.
\end{align*}

\smallskip \noi
\textup{(iii)} 
Let $s, s_1, s_2 \in \R$ be such that $s = \ta s_1 + (1 - \ta) s_2$ for some $0 < \ta < 1$. Then, we have the following interpolation
\begin{align*}
\| f \|_{\mathcal{C}^{s}} \les \| f \|_{\mathcal{C}^{s_1}}^\ta \| f \|_{\mathcal{C}^{s_2}}^{1 - \ta}.
\end{align*}

\smallskip \noi
\textup{(iv)}
Let $s_1, s_2 \in \R$ with $s_1 + s_2 > 0$ and let $s = \min \{ s_1, s_2, s_1 + s_2 \}$. Then, we have the following product estimate
\begin{align*}
\| f g \|_{\mathcal{C}^s} \les \| f \|_{\mathcal{C}^{s_1}} \| g \|_{\mathcal{C}^{s_2}}.
\end{align*}
\end{lemma}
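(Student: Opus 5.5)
These are standard properties of Hölder-Besov spaces on $\T^d$, and I would prove each from the Littlewood-Paley definition of $\C^s$. Three facts are used throughout. The first is that $\mathbf{P}_j$ is convolution with an $L^1$-bounded kernel, uniformly in $j$. The second is Bernstein's inequality, $\|\mathbf{P}_j f\|_{L^\infty}\les 2^{jd/p}\|\mathbf{P}_j f\|_{L^p}$. The third is the almost-orthogonality $\mathbf{P}_j=\mathbf{P}_j\sum_{|k-j|\le1}\mathbf{P}_k$.

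For (i), write $f=\sum_j \mathbf{P}_j f$. Then $\|\jb{\nb}^{s_1}\mathbf{P}_j f\|_{L^p}\les 2^{js_1}\|\mathbf{P}_jf\|_{L^p}$, because on the annulus $\jb{\nb}^{s_1}$ acts as $2^{js_1}$ times a multiplier with $L^1$ kernel. On the torus the $L^p$ norm is controlled by the $L^\infty$ norm, so
\[
\|f\|_{W^{s_1,p}}\les\sum_j 2^{js_1}\|\mathbf{P}_jf\|_{L^\infty}\les \sum_j 2^{-j(s_2-s_1)}\|f\|_{\C^{s_2}},
\]
and the geometric series converges since $s_2>s_1$.

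For (ii), apply Bernstein with $p=2$:
\[
\|\mathbf{P}_jf\|_{L^\infty}\le\sum_{|n|\sim2^j}|\ft f(n)|\les 2^{jd/2}\|\mathbf{P}_jf\|_{L^2}
\]
by Cauchy-Schwarz. This gives $2^{js_1}\|\mathbf{P}_jf\|_{L^\infty}\les 2^{j(s_1+d/2-s_2)}\|f\|_{H^{s_2}}\le\|f\|_{H^{s_2}}$.

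For (iii), I would argue blockwise. We have $2^{js}\|\mathbf{P}_jf\|_{L^\infty}=(2^{js_1}\|\mathbf{P}_jf\|_{L^\infty})^\ta(2^{js_2}\|\mathbf{P}_jf\|_{L^\infty})^{1-\ta}$, and taking the supremum over $j$ gives the bound with constant $1$.

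For (iv), which is the only genuinely nontrivial item, use Bony's paraproduct decomposition:
\[
fg=\sum_{k<j-1}\mathbf{P}_kf\,\mathbf{P}_jg+\sum_{j<k-1}\mathbf{P}_kf\,\mathbf{P}_jg+\sum_{|j-k|\le1}\mathbf{P}_kf\,\mathbf{P}_jg.
\]

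1. Consider the paraproduct $\sum_j S_{j-1}f\,\mathbf{P}_jg$, where $S_{j-1}=\sum_{k<j-1}\mathbf{P}_k$. Each term has Fourier support in an annulus of size $2^j$. We have $\|S_{j-1}f\|_{L^\infty}\les \sum_{k<j}2^{-ks_1}\|f\|_{\C^{s_1}}$, which is $\les 2^{j\max(-s_1,0)}$ up to a factor $j$ when $s_1=0$; that factor is absorbed since $s\le s_1+s_2$ is not sharp then. This yields regularity $\min(s_2,s_1+s_2)$.

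2. The symmetric paraproduct $\sum_k \mathbf{P}_kf\,S_{k-1}g$ gives regularity $\min(s_1,s_1+s_2)$ in the same way.

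3. The resonant term $\sum_{|j-k|\le1}\mathbf{P}_kf\,\mathbf{P}_jg$ has Fourier support in balls of radius $\sim 2^j$, not annuli. The block $\mathbf{P}_i$ of it is therefore bounded by $\sum_{j\gtrsim i}2^{-j(s_1+s_2)}\|f\|_{\C^{s_1}}\|g\|_{\C^{s_2}}$. This sum converges to $\les 2^{-i(s_1+s_2)}$ precisely because $s_1+s_2>0$, and this is the one step where the hypothesis $s_1+s_2>0$ is needed.

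Summing the three contributions gives $\|fg\|_{\C^s}\les\|f\|_{\C^{s_1}}\|g\|_{\C^{s_2}}$ with $s=\min\{s_1,s_2,s_1+s_2\}$.

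The main obstacle is the resonant term in (iv); all other steps are routine bookkeeping with Bernstein's inequality and the uniform $L^1$ bounds on the Littlewood-Paley kernels.
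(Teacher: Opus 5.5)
Your proposal is correct. Parts (i)--(iii) follow blockwise from Bernstein's inequality and the definition of the $\mathcal{C}^s$-norm, as you describe, and your Bony paraproduct argument for (iv) is the standard one: the logarithmic loss at $s_1 = 0$ or $s_2 = 0$ is absorbed by the strict gap to the target regularity, and $s_1 + s_2 > 0$ is used only for the resonant term. The paper does not prove this lemma itself but cites \cite{BCD, GOTW}, where these estimates are established by essentially the same Littlewood--Paley and paraproduct arguments.
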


We now recall the following product estimates in Sobolev spaces. For proofs, see, for example, \cite[Lemma~3.4]{GKO} (see also \cite{BOZ} for the endpoint case).
\begin{lemma}
\label{LEM:prod}
Let $d \in \N$ and $s > 0$.

\smallskip \noi
\textup{(i)} Let $1 < p_1, p_2, q_1, q_2 \leq \infty$ and $1 \leq r \leq \infty$ be such that $\frac{1}{r} = \frac{1}{p_1} + \frac{1}{q_1} = \frac{1}{p_2} + \frac{1}{q_2}$. Then, we have
\begin{align*}
\| \jb{\nb}^s (f g) \|_{L^r (\T^d)} \les \| \jb{\nb}^s f \|_{L^{p_1} (\T^d)} \| g \|_{L^{q_1} (\T^d)} + \| f \|_{L^{p_2} (\T^d)} \| \jb{\nb}^s g \|_{L^{q_2} (\T^d)}.
\end{align*}

\smallskip \noi
\textup{(ii)} Let $1 < p, q \leq \infty$ and $1 < r \leq p$ be such that $\frac{s}{d} + \frac 1r = \frac 1p + \frac 1q \leq 1$. Then, we have
\begin{align*}
\| \jb{\nb}^{-s} (f g) \|_{L^r (\T^d)} \les \| \jb{\nb}^{-s} f \|_{L^p (\T^d)} \| \jb{\nb}^s g \|_{L^q (\T^d)}.
\end{align*}
\end{lemma}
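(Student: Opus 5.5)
We treat the two parts in order: part (i) by a paraproduct decomposition, and part (ii) by duality from part (i) combined with Sobolev embedding.

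For part (i), the fractional Leibniz (Kato–Ponce) rule, we use Bony's paraproduct decomposition with respect to the Littlewood–Paley projectors $\mathbf{P}_j$:
\begin{align*}
fg = \sum_{j} \Big(\sum_{k \le j-2} \mathbf{P}_k f\Big) \mathbf{P}_j g + \sum_{j} \Big(\sum_{k \le j-2} \mathbf{P}_k g\Big) \mathbf{P}_j f + \sum_{|j-k|\le 1} \mathbf{P}_j f\, \mathbf{P}_k g .
\end{align*}
\begin{itemize}
\item[(a)] \emph{Paraproduct terms.} In the first paraproduct the output is localized at frequency $\sim 2^j$. The weight $\jb{\nb}^s$ can therefore be moved onto $\mathbf{P}_j g$ up to a symbol of Coifman–Meyer type. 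The Coifman–Meyer bilinear multiplier theorem then gives the bound $\| g\|$-free estimate $\|\jb{\nb}^s g\|_{L^{q_2}}\|f\|_{L^{p_2}}$. The second paraproduct is symmetric and gives $\|\jb{\nb}^s f\|_{L^{p_1}}\|g\|_{L^{q_1}}$.
\item[(b)] \emph{Resonant term.} Here the output frequency is $\lesssim 2^j$, so $s>0$ lets us sum the weight $2^{(k-j)s}$. Littlewood–Paley square-function estimates combined with Hölder's inequality then bound it by either right-hand side.
\end{itemize}
On $\T^d$ we either transfer the $\R^d$ multiplier theorems via periodization (de Leeuw–type transference) or run the square-function argument directly on the torus.

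For $1<p_i,q_i<\infty$ this argument is standard. The endpoints are where the work lies: $p_i=\infty$ or $q_i=\infty$, and also $r=\infty$ or $r=1$. There the square-function characterization fails, and we must invoke the refined endpoint analysis of Bourgain–Li / \cite{BOZ}. I expect handling these endpoint cases to be the main obstacle. My first attempt would be to estimate the $L^\infty$ factor directly through $\sup_k |\mathbf{P}_{\le k} f|\le \|f\|_{L^\infty}$ and to keep square functions only on the factor with finite exponent.

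For part (ii) we argue by duality. Take $h\in L^{r'}$ with $r'<\infty$, which holds since $r>1$, and set $H=\jb{\nb}^{-s}h$. Then
\begin{align*}
\big\langle \jb{\nb}^{-s}(fg), h\big\rangle = \big\langle \jb{\nb}^{-s} f,\ \jb{\nb}^{s}(g H)\big\rangle ,
\end{align*}
so it suffices to show $\|\jb{\nb}^s(gH)\|_{L^{p'}}\lesssim \|\jb{\nb}^s g\|_{L^q}\|h\|_{L^{r'}}$. We apply part (i) with the exponent choices
\begin{align*}
\tfrac1{p'}=\tfrac1q+\tfrac1a=\tfrac1b+\tfrac1{r'}, \qquad \tfrac1a=\tfrac1{r'}-\tfrac sd, \qquad \tfrac1b=\tfrac1q-\tfrac sd .
\end{align*}
These are consistent: $\tfrac1q+\tfrac1{r'}-\tfrac sd=1-\tfrac1p$ is exactly the hypothesis $\tfrac sd+\tfrac1r=\tfrac1p+\tfrac1q$. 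The two resulting terms are then controlled by the Sobolev embeddings $\|H\|_{L^a}\lesssim\|h\|_{L^{r'}}$ and $\|g\|_{L^b}\lesssim\|\jb{\nb}^sg\|_{L^q}$, together with $\|\jb{\nb}^sH\|_{L^{r'}}=\|h\|_{L^{r'}}$.

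The remaining hypotheses serve the following roles:
\begin{itemize}
\item[(1)] $\tfrac1p+\tfrac1q\le 1$ and $r\le p$ guarantee that $a,b\in[1,\infty]$ and that $p'\ge 1$.
\item[(2)] The restrictions $p,q>1$ ensure that the Sobolev embeddings used above are non-endpoint, except possibly when $a$ or $b$ equals $\infty$.
\item[(3)] The case where $a$ or $b$ equals $\infty$ is again covered by the endpoint version of part (i).
\end{itemize}
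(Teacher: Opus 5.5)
Your route is the standard one. The paper gives no proof here and simply cites \cite[Lemma~3.4]{GKO}, with \cite{BOZ} for endpoints. Your plan is exactly that argument: paraproducts and Coifman--Meyer for (i), then for (ii) duality, (i) applied to $gH$ with $H=\jb{\nb}^{-s}h$, and Sobolev embedding. Your exponent bookkeeping is correct, and it proves (ii) whenever $\frac1p+\frac1q<1$ and $r<p$. In that range $1<q<b<\infty$ and $1<r'<a<\infty$, so both Sobolev embeddings are classical and every exponent fed into (i) lies in $(1,\infty)$.

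The gap is your item (3). If $\frac1p+\frac1q=1$ then $a=\infty$, and if $r=p$ then $b=\infty$. The latter includes $r=p=\infty$, where in addition $q_2=r'=1$ lies outside the range of (i). What breaks in these cases is not the product estimate but the embedding you need afterwards:
\begin{align*}
\|\jb{\nb}^{-s}h\|_{L^\infty}\les\|h\|_{L^{d/s}}, \qquad \text{resp.} \qquad \|g\|_{L^\infty}\les\|\jb{\nb}^s g\|_{L^{d/s}}.
\end{align*}
Both say $W^{s,d/s}(\T^d)\hookrightarrow L^\infty$, which is false for $0<s<d$. No endpoint version of (i) can supply it.

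In fact these boundary cases of (ii) are false, so the fix is to exclude them, not to find a better argument. Read your duality identity backwards. With $\frac1p+\frac1q=1$ (so $q=p'$ and $r'=d/s$), (ii) is equivalent to
\begin{align*}
\|\jb{\nb}^s(gH)\|_{L^{p'}}\les\|\jb{\nb}^s g\|_{L^{p'}}\,\|\jb{\nb}^s H\|_{L^{d/s}}.
\end{align*}
That is, every $H\in W^{s,d/s}$ multiplies $W^{s,p'}$ boundedly with norm $\les\|H\|_{W^{s,d/s}}$. Iterating from $g=1$ gives
\begin{align*}
\|H^k\|_{L^{p'}}\les\|H^k\|_{W^{s,p'}}\le (C\|H\|_{W^{s,d/s}})^k\|1\|_{W^{s,p'}}.
\end{align*}
Hence $\|H\|_{L^{kp'}}\le C'\|H\|_{W^{s,d/s}}$ uniformly in $k$, and letting $k\to\infty$ yields the false embedding $W^{s,d/s}\hookrightarrow L^\infty$. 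The case $r=p$, $q=d/s$ is the same argument with $g$ as the multiplier.

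For a concrete instance, take $d=3$, $s=1$, $p=q=2$, $r=\frac32$, which satisfies all the stated hypotheses. Then (ii) would give $\|\phi u\|_{H^1}\les\|\phi\|_{W^{1,3}}\|u\|_{H^1}$. This fails for a cut-off $\phi=\log\log(1/|x|)$ and $u$ concentrated at the origin.

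So you should prove (ii) under $\frac1p+\frac1q<1$ and $r<p$ and drop item (3). As far as I can see the paper only ever invokes part (i), so nothing downstream is affected. Your sketch of (i) is the standard Kato--Ponce argument. Deferring its endpoints ($r\in\{1,\infty\}$ or an $L^\infty$ factor) to Bourgain--Li and \cite{BOZ} matches what the paper does.
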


We will also need the following Schauder estimate. The proof follows from minor modifications of \cite[Lemma~A.5]{GIP}.
\begin{lemma}
\label{LEM:Schau}
Let $\al \geq 1$, $t > 0$, and $s_1, s_2 \in \R$ be such that $s_1 > s_2$. Then, we have
\begin{align*}
\big\| e^{- t \jb{\nb}^{2 \al}} f \big\|_{\mathcal{C}^{s_1}} \les t^{- \frac{s_1 - s_2}{2 \alpha}} \| f \|_{\mathcal{C}^{s_2}}.
\end{align*}
\end{lemma}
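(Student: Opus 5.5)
The statement is the Schauder estimate in Lemma~\ref{LEM:Schau}. I will prove it blockwise using the Littlewood--Paley decomposition that defines $\C^s$. Since $e^{-t\jb{\nb}^{2\al}}$ is a Fourier multiplier, it commutes with each $\mathbf{P}_j$. So it suffices to show, for every $j \geq 0$,
\begin{align*}
2^{s_1 j}\big\| \mathbf{P}_j e^{-t\jb{\nb}^{2\al}} f\big\|_{L^\infty}
\les t^{-\frac{s_1-s_2}{2\al}}\, 2^{s_2 j}\|\mathbf{P}_j f\|_{L^\infty},
\end{align*}
and then take the supremum over $j$.

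The key tool is a uniform $L^\infty$ bound for the localized multiplier. Let $\tilde\varphi_j$ be a slightly fattened cutoff equal to $1$ on the support of $\varphi_j$, so that $\mathbf{P}_j e^{-t\jb{\nb}^{2\al}} f = K_{j,t} * \mathbf{P}_j f$. Here $K_{j,t}$ is the kernel with symbol $\tilde\varphi_j(n)e^{-t\jb{n}^{2\al}}$. I will show
\begin{align*}
\|K_{j,t}\|_{L^1(\T^d)} \les e^{-c t 2^{2\al j}}
\end{align*}
uniformly in $j$ and $t$. By Young's inequality this gives
\[
\|\mathbf{P}_j e^{-t\jb{\nb}^{2\al}}f\|_{L^\infty}\les e^{-ct2^{2\al j}}\|\mathbf{P}_jf\|_{L^\infty}.
\]
The elementary inequality $\lambda^{\g}e^{-c\lambda}\les_\g 1$, applied with $\lambda = t2^{2\al j}$ and $\g=\frac{s_1-s_2}{2\al}>0$, then yields $2^{(s_1-s_2)j}e^{-ct2^{2\al j}}\les t^{-\frac{s_1-s_2}{2\al}}$, which is the blockwise bound.

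For the kernel bound I will work on $\R^d$ and periodize, using Poisson summation; the periodization of an $L^1(\R^d)$ kernel has no larger $L^1(\T^d)$ norm.

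\emph{Case $j=0$.} The symbol $\tilde\varphi_0(\xi)e^{-t\jb{\xi}^{2\al}}$ is smooth and compactly supported, with all derivatives bounded uniformly in $t$ on any bounded range of $t$. If one wants the full range $t>0$, note that $e^{-t\jb{\xi}^{2\al}} = e^{-t}e^{-t(\jb{\xi}^{2\al}-1)}$. The factor $e^{-t}$ supplies the decay, and the second factor has derivatives bounded by a constant independent of $t$ on the compact support.

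\emph{Case $j\geq1$.} Rescale $\xi = 2^j\eta$. The symbol becomes $\tilde\varphi(\eta)e^{-t\jb{2^j\eta}^{2\al}}$, supported on the annulus $|\eta|\sim1$. On that annulus $\jb{2^j\eta}^{2\al}\sim 2^{2\al j}$. Each $\eta$-derivative of $\jb{2^j\eta}^{2\al}$ is $\les 2^{2\al j}$, because $\jb{2^j\eta}^{2\al}$ is a smooth symbol of order $2\al$ evaluated at $2^j\eta$. Hence by the chain rule
\[
|\partial_\eta^\beta(\cdots)|\les_\beta (1+t2^{2\al j})^{|\beta|}e^{-2ct2^{2\al j}}\les e^{-ct2^{2\al j}}.
\]
Integrating by parts $d+1$ times then gives an $L^1(\R^d)$ bound $\les e^{-ct2^{2\al j}}$ for the inverse Fourier transform of the rescaled symbol. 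The $L^1$ norm of the kernel is invariant under the scaling.

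The only mildly delicate step is this uniform symbol estimate. It is where $\al\geq1$, or in fact any $\al>0$, enters, and where the absorption of the polynomial factors $(t2^{2\al j})^k$ into half of the exponential happens. Everything else is routine, following \cite[Lemma~A.5]{GIP} with $\jb{\nb}^{2\al}$ in place of $-\Dl$.
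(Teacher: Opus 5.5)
Your proof is correct and is essentially the argument the paper relies on. The paper only says the lemma follows from minor modifications of \cite[Lemma~A.5]{GIP}, whose proof is exactly your blockwise bound $\|\mathbf{P}_j e^{-t\jb{\nb}^{2\al}} f\|_{L^\infty} \les e^{-ct2^{2\al j}}\|\mathbf{P}_j f\|_{L^\infty}$ combined with $\lambda^{\g}e^{-c\lambda}\les 1$. One small slip: for $j=0$ the derivatives of $e^{-t(\jb{\xi}^{2\al}-1)}$ are not bounded uniformly in $t$ (the second derivative at $\xi=0$ equals $-2\al t$), but they grow only polynomially in $t$ and are absorbed by the factor $e^{-t}$; more simply, on $\T^d$ the $j=0$ block involves only finitely many frequencies, so that kernel bound is immediate.
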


\subsection{On convolution inequalities}
We first record the following discrete convolution inequalities.

\begin{lemma}
\label{LEM:conv}
Let $d \in \N$.

\smallskip \noi
\textup{(i)} Let $\be, \g \in \R$ be such that $\be + \g > d$ and $\be < d$. Then, for any $n \in \Z^d$, we have
\begin{align*}
\sum_{n_1 \in \Z^d} \frac{1}{\jb{n_1}^\be \jb{n - n_1}^\g} \les \jb{n}^{- \be + \ld},
\end{align*}

\noi
where $\ld = \max (d - \g, 0)$ when $\g \neq d$ and $\ld = \eps$ when $\g = d$ for any $\eps > 0$.

\smallskip \noi
\textup{(ii)} Let $k \in \N$ and $\be \in \R$ be such that
\begin{align*}
\frac{(k - 1)d}{k} < \be < d.
\end{align*}

\noi
Then, we have
\begin{align*}
\sum_{\substack{n_1, \dots, n_k \in \Z^d \\ n = n_1 + \cdots + n_k}} \frac{1}{\jb{n_1}^\be \cdots \jb{n_k}^\be} \les \jb{n}^{(k - 1) d - k \be}.
\end{align*}
\end{lemma}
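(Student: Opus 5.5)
This is the standard discrete convolution lemma, and I will prove it by splitting the sum according to the relative sizes of $|n_1|$, $|n - n_1|$ and $|n|$. Part (ii) then follows from part (i) by induction on $k$.

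For part (i), fix $n$ and split $\Z^d$ into three regions. The first is $A_1 = \{|n_1| \le \frac12|n|\}$. The second is $A_2 = \{|n - n_1| \le \frac12 |n|\}$. The third, $A_3$, is the complement of $A_1 \cup A_2$, on which $|n_1| \sim |n-n_1| \gtrsim |n|$, or both are much larger than $|n|$.

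On $A_1$ we have $\jb{n - n_1} \sim \jb{n}$. The contribution is therefore $\jb{n}^{-\g}\sum_{|n_1|\le |n|/2} \jb{n_1}^{-\be} \les \jb{n}^{-\g + d - \be}$, using $\be < d$. This is bounded by $\jb{n}^{-\be + \ld}$, since $d - \g \le \ld$.

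On $A_2$ we have $\jb{n_1}\sim\jb{n}$, so the contribution is $\jb{n}^{-\be}\sum_{|m|\le|n|/2}\jb{m}^{-\g}$. This sum is $\les \jb{n}^{\max(d-\g,0)}$ when $\g\neq d$ (bounded when $\g>d$), and $\les \log\jb{n} \les \jb{n}^{\eps}$ when $\g = d$. That gives exactly $\jb{n}^{-\be+\ld}$.

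On $A_3$ we have $\jb{n-n_1}\gtrsim \jb{n_1}$ and $|n_1|\gtrsim|n|$. The contribution is $\les \sum_{|n_1|\gtrsim |n|}\jb{n_1}^{-\be-\g} \les \jb{n}^{d-\be-\g}$, which converges because $\be+\g>d$. Since $d-\g\le\ld$, this is again $\les \jb{n}^{-\be+\ld}$.

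For part (ii), induct on $k$. The case $k=1$ is trivial, and the hypothesis $0<\be<d$ is consistent. For the inductive step, write $n = m + n_k$ with $m = n_1+\cdots+n_{k-1}$. The range condition for $k$, namely $\be > \frac{(k-1)d}{k}$, implies the condition for $k-1$, since $\frac{(k-2)d}{k-1} < \frac{(k-1)d}{k}$. The induction hypothesis therefore bounds the inner sum over $(n_1,\dots,n_{k-1})$ with fixed $m$ by $\jb{m}^{-\g}$, where $\g \deff (k-1)\be-(k-2)d$. It remains to apply part (i) to $\sum_{n_k}\jb{n_k}^{-\be}\jb{n-n_k}^{-\g}$.

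The only real check is that the exponents are admissible for part (i).
\begin{itemize}
\item Convergence: $\be+\g = k\be-(k-2)d > d$ is equivalent to $\be > \frac{(k-1)d}{k}$, which is our hypothesis.
\item Size of $\g$: $\g<d$ is equivalent to $(k-1)\be<(k-1)d$, which holds since $\be<d$.
\end{itemize}
Hence $\ld = d-\g > 0$, with no logarithmic case, and part (i) gives $\jb{n}^{-\be+d-\g} = \jb{n}^{(k-1)d-k\be}$, as claimed.

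There is no genuine obstacle here. The only point needing care is the bookkeeping of exponents in the induction: verifying that the condition $\be+\g>d$ is preserved, and that $\g\neq d$, so that no $\eps$-loss enters part (ii).
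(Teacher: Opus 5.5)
Your proof is correct and follows the paper's route. The paper cites the literature for part (i), whose standard proof is exactly your three-region splitting (with $\gamma>d-\beta>0$ implicitly justifying your bound in $A_3$), and it obtains part (ii) by repeated application of part (i), which is your induction; your exponent checks $\beta+\gamma>d$ and $\gamma<d$ are precisely what make that iteration work without an $\varepsilon$-loss.
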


For a proof of part (i), see, for example, \cite[Lemma~4.2]{GTV} and \cite[Lemma~4.1]{MWX}. Part (ii) follows from repetitive applications of part (i).

We also record the following convolution inequality with integration. For a proof, see \cite[Lemma~4.2]{GTV}.
\begin{lemma}
\label{LEM:conv_int}
Let $0 \leq \be \leq \g$ be such that $\g > 1$. Then, for any $y \in \R$, we have
\begin{align*}
\int_\R \frac{1}{\jb{x}^\be \jb{x - y}^\gamma} dx \les \frac{1}{\jb{y}^\be}.
\end{align*}
\end{lemma}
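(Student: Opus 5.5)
The plan is to prove the bound by a direct decomposition of the real line according to which of the two weights is the small one. Nothing from the earlier lemmas is needed beyond the elementary facts $\jb{a} \sim \jb{b}$ when $|a| \sim |b|$ and $\int_\R \jb{x}^{-\g} dx < \infty$ for $\g > 1$.

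\emph{Step 1: bounded $y$.} First I dispose of $|y| \leq 1$. There $\jb{y} \sim 1$ and $\jb{x}^{-\be} \leq 1$ because $\be \geq 0$. The integral is therefore bounded by $\int_\R \jb{x - y}^{-\g} dx$, which is a finite constant since $\g > 1$. So from now on I assume $|y| \geq 1$, so that $\jb{y} \sim |y|$.

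\emph{Step 2: the region $A = \{ |x| \geq |y|/2 \}$.} On $A$ we have $\jb{x}^{-\be} \les \jb{y}^{-\be}$, again using $\be \geq 0$. Pulling this factor out and integrating $\jb{x-y}^{-\g}$ over all of $\R$ gives a contribution $\les \jb{y}^{-\be}$, once more because $\g > 1$.

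\emph{Step 3: the region $B = \{ |x| < |y|/2 \}$.} On $B$ we have $|x - y| \geq |y|/2$, hence $\jb{x-y}^{-\g} \les \jb{y}^{-\g}$. What remains is $\int_{|x| < |y|/2} \jb{x}^{-\be} dx$, which I estimate in three cases:
\begin{align*}
\int_{|x| < |y|/2} \jb{x}^{-\be} dx \les
\begin{cases}
\jb{y}^{1 - \be}, & \be < 1,\\
\log \jb{y}, & \be = 1,\\
1, & \be > 1.
\end{cases}
\end{align*}
The contribution of $B$ is then bounded case by case:
\begin{itemize}
\item If $\be < 1$, it is $\les \jb{y}^{1 - \be - \g} \leq \jb{y}^{-\be}$, since $\g > 1$.
\item If $\be = 1$, it is $\les \jb{y}^{-\g} \log \jb{y} \les \jb{y}^{-1} = \jb{y}^{-\be}$, since $\g > 1$.
\item If $\be > 1$, it is $\les \jb{y}^{-\g} \leq \jb{y}^{-\be}$, which is exactly where the hypothesis $\be \leq \g$ enters.
\end{itemize}
Summing the contributions of $A$ and $B$ gives the claimed bound $\jb{y}^{-\be}$.

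The only point requiring care is the region $B$ when $\be \geq 1$: there the integral of $\jb{x}^{-\be}$ over $B$ no longer yields a decaying power of $\jb{y}$. The decay must then come entirely from $\jb{y}^{-\g}$, which requires either the strict inequality $\g > 1$ (when $\be = 1$, to absorb the logarithm) or the assumption $\be \leq \g$ (when $\be > 1$). Otherwise the argument is routine.
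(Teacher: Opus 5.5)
Your proof is correct: splitting at $|x| = |y|/2$ and treating $\be<1$, $\be=1$, $\be>1$ separately on the inner region is the standard argument. You also locate the hypotheses correctly: $\g>1$ absorbs the logarithm when $\be=1$, and $\be\le\g$ is needed only when $\be>1$. The paper gives no proof of its own here and simply cites \cite[Lemma~4.2]{GTV}, which is proved by essentially this decomposition.
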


\subsection{Fourier restriction norm method}
\label{SUB:Xsb}
In this section, we recall the Fourier restriction norm method, which was first introduced by Klainerman-Machedon \cite{KM93} for wave equations and by Bourgain \cite{Bour93} for Schr\"odinger equations. For this purpose, we consider the following inhomogeneous linear damped fractional wave equation:
\begin{align}
\begin{cases}
\dt^2 u + \dt u + (1 - \Dl)^\al u = F \\
(u, \dt u)|_{t = 0} = (u_0, u_1).
\end{cases}
\label{SdLW}
\end{align}

\noi
The solution to \eqref{SdLW} is given by
\begin{align*}
u (t) = S(t) (u_0, u_1) + \int_0^t \mathcal{D} (t - t') F(t') dt'
\end{align*}

\noi
for $t \geq 0$. Here, with
\begin{align*}
\jbb{\nb} = \sqrt{(1 - \Dl)^\al - \frac 14},
\end{align*}

\noi
the linear operators $\mathcal{D} (t)$ and $S(t)$ are defined by
\begin{align}
\mathcal{D} (t) \deff e^{- \frac{t}{2}} \frac{\sin (t \jbb{\nb})}{\jbb{\nb}}, 
\label{defD} 
\end{align}
\begin{align}
S(t) (f, g) \deff \dt \mathcal{D} (t) f + \mathcal{D} (t) (f + g).
\label{defS}
\end{align}

\noi
We also define the Duhamel integral operator
\begin{align}
\I_{t_0} (F) (t) = \int_{t_0}^t \mathcal{D} (t - t') F(t') dt' = \int_{t_0}^t e^{- \frac{t - t'}{2}} \frac{\sin ((t - t') \jbb{\nb})}{\jbb{\nb}} F (t') dt'
\label{defI}
\end{align}

\noi
given $0 \leq t_0 \leq t$, and we write $\I (F) \deff \I_{0} (F)$. 
Moreover, we denote
\begin{align}
\vec S (t) (f, g) = (S (t) (f, g), \dt S (t) (f, g)) \quad \text{and} \quad \vec \I_{t_0} (F) = (\I_{t_0} (F), \dt \I_{t_0} (F)).
\label{vecSI}
\end{align}

As in \cite{LTzW}, we define the $X^{s, b}$-space for the fractional wave equation via the norm
\begin{align*}
\| u \|_{X^{s, b}} \deff \| \jb{n}^s \jb{|\tau| - \jbb{n}}^b \ft u (\tau, n) \|_{\l_n^2 L_\tau^2}.
\end{align*}

\noi
We also denote $\vec X^{s, b} = X^{s, b} \times X^{s - \al, b}$. 
Given an interval $I \subset \R$, we define the local-in-time version of the $X^{s, b}$-space as
\begin{align*}
\| u \|_{X_I^{s, b}} \deff \inf \big\{ \| v \|_{X^{s, b}} : v|_I = u \big\}.
\end{align*}

\noi
If $I = [0, T]$, we write $X_T^{s, b} = X_{[0, T]}^{s, b}$.

We now recall some useful linear estimates for the $X_I^{s, b}$-norm, starting with the following  embedding lemma. See, for example, \cite[Corollary~2.10]{Tao}.
\begin{lemma}
\label{LEM:HsXsb}
Let $s \in \R$, $b > \frac 12$, and $I \subset \R$ be a closed interval. Then, we have the following embedding:
\begin{align*}
\| u \|_{C_I H^s} \les \| u \|_{X^{s, b}_I}.
\end{align*}
\end{lemma}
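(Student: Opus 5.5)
This embedding is standard and I would reduce it to the one-dimensional Sobolev embedding $H^b(\R) \hookrightarrow C_b(\R)$ for $b > \frac12$, applied frequency by frequency. By definition of the local-in-time norm, it suffices to prove the global bound $\| v \|_{C_\R H^s} \les \| v \|_{X^{s,b}}$ for any extension $v$ of $u$, since $\|u\|_{C_I H^s} \le \|v\|_{C_\R H^s}$, and then take the infimum over extensions.

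For the global bound I would write $v$ through its space-time Fourier transform,
\[
v(t) = \frac{1}{(2\pi)^{\frac 32}} \sum_{n \in \Z^3} \Big( \int_\R \ft v(\tau, n) e^{it\tau} d\tau \Big) e^{in\cdot x},
\]
with harmless normalising constants. For fixed $n$, Cauchy--Schwarz in $\tau$ gives
\[
\Big| \int_\R \ft v(\tau,n) e^{it\tau} d\tau \Big|
\le \Big( \int_\R \jb{|\tau| - \jbb{n}}^{-2b} d\tau \Big)^{\frac 12}
\big\| \jb{|\tau| - \jbb{n}}^{b} \ft v(\cdot, n) \big\|_{L^2_\tau}.
\]
Because $2b > 1$, the first factor is bounded uniformly in $n$: split $\tau > 0$ and $\tau < 0$, and each half is a translate of $\int \jb{\sigma}^{-2b} d\sigma < \infty$. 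Multiplying by $\jb{n}^s$, squaring and summing in $n$ yields $\sup_t \| v(t)\|_{H^s} \les \|v\|_{X^{s,b}}$.

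Continuity in time comes from dominated convergence. The same Cauchy--Schwarz shows $\jb{n}^s \ft v(\cdot, n) \in L^1_\tau$ with $\ell^2_n$-summable norms. Hence $\|v(t) - v(t')\|_{H^s}^2$ is a sum over $n$ of terms
\[
\jb{n}^{2s} \Big| \int \ft v(\tau,n)\big(e^{it\tau} - e^{it'\tau}\big) d\tau \Big|^2,
\]
each tending to $0$ as $t' \to t$ and dominated by $4\jb{n}^{2s} \|\ft v(\cdot,n)\|_{L^1_\tau}^2$, which is summable. So $v \in C(\R; H^s)$.

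There is no real obstacle here. The only point needing care is the uniform-in-$n$ bound on $\int \jb{|\tau| - \jbb{n}}^{-2b} d\tau$, which holds because of the symmetric modulus weight, and which is exactly where $b > \frac12$ is used.
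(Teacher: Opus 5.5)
Your proof is correct. It is essentially the standard argument the paper defers to: the paper gives no proof and cites Tao's Corollary~2.10. That proof writes $u$ as a superposition of modulated free solutions and uses Cauchy--Schwarz in the modulation variable with $b > \frac12$. Your Fourier-side Cauchy--Schwarz against the weight $\jb{|\tau| - \jbb{n}}^{-2b}$, split into $\tau > 0$ and $\tau < 0$, is the same mechanism adapted to the two-sheeted wave weight. The reduction to extensions, the uniform-in-$n$ integrability bound, and the dominated-convergence argument for time continuity are all sound.
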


We then record the following homogeneous linear estimate. For a proof, see \cite[Lemma~2.8]{LTzW}.
\begin{lemma}
\label{LEM:homo}
Let $s \in \R$, $b < \frac 32$, and $I \subset \R$ be a closed interval. Then, we have
\begin{align*}
\big\| \vec S(t) (f, g) \big\|_{\vec X_I^{s, b}} \les (1 + |I|) \| (f, g) \|_{\vec H^s}.
\end{align*}
\end{lemma}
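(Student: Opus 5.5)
This lemma is the standard homogeneous linear estimate in $X^{s,b}$ spaces, adapted to the damped fractional wave propagator. The plan is to reduce to a single frequency $n$ and compute explicitly.

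First, split $\vec S(t)(f,g)$ via \eqref{defD}--\eqref{defS}. On the Fourier side, at frequency $n$, $S(t)(f,g)$ is a combination of
\[
e^{-\frac t2}\cos(t\jbb{n})\ft f(n)
\quad\text{and}\quad
e^{-\frac t2}\frac{\sin(t\jbb{n})}{\jbb{n}}\big(\tfrac12\ft f(n)+\ft g(n)\big).
\]
The time derivative $\dt S(t)(f,g)$ has the same structure with one extra factor of $\jbb{n}\sim\jb{n}^\al$ (for $|n|\ge1$; the finitely many low modes are harmless). This matches the shift $s\mapsto s-\al$ in the second component of $\vec X^{s,b}$. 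Since $\jbb{n}^{-1}\les\jb{n}^{-\al}$ and $\jb{n}^{-\al}\|g\|_{H^s}$ is controlled by $\|g\|_{H^{s-\al}}$ with weight $\jb{n}^s$, it suffices to prove, uniformly in $n$,
\[
\big\| \chi(t)\, e^{-\frac t2} e^{\pm i t\jbb{n}}\big\|_{H^b_t}\les 1+|I|
\]
for a suitable extension.

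Second, handle localization. By translation invariance we may take $I=[t_0,t_0+|I|]$. The norm $X^{s,b}_I$ is an infimum over extensions, so it suffices to exhibit one. Pick a smooth cutoff $\chi_I$ equal to $1$ on $I$ and supported in a $1$-neighbourhood of $I$, with $|\chi_I'|,|\chi_I''|\les1$. Replace $e^{-t/2}$ by $\psi(t)=\chi_I(t)e^{-t/2}$, which is smooth and compactly supported for $t$ in the relevant region. Then
\[
\ft{(\psi e^{\pm it\jbb n})}(\tau)=\ft\psi(\tau\mp\jbb n),
\]
and since $\jb{|\tau|-\jbb{n}}\le\jb{\tau\mp\jbb n}$, we get
\[
\big\|\jb{|\tau|-\jbb n}^b\ft{(\psi e^{\pm it\jbb n})}\big\|_{L^2_\tau}\le\|\psi\|_{H^b}.
\]

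Third, bound $\|\psi\|_{H^b}$ for $b<\frac32$; this is the only real point. I would interpolate between $\|\psi\|_{L^2}\les|I|^{1/2}+1$ and $\|\psi\|_{H^2}$. The concern is that for $t\ge0$ the factor $e^{-t/2}$ decays, but if $I$ extends to negative times it grows; so I would restrict to $I\subset[0,\infty)$, the only case used. For negative times, an alternative is to replace $e^{-t/2}$ off $[0,\infty)$ by a smooth extension.

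A cleaner route avoids $H^2$ and explains the restriction $b<\frac32$. Write $\psi=\chi_I\cdot e^{-t/2}$ with $I\subset[0,\infty)$, extending $e^{-t/2}$ to the left by a smooth bounded function. Then $\|\psi\|_{H^b}\les\|\psi\|_{H^1}^{1-\theta}\|\psi\|_{H^2}^{\theta}\les(1+|I|)^{1/2}$, which is even better than $1+|I|$. Alternatively, one can use a non-smooth truncation $\mathbf 1_{[0,\infty)}$, whose Fourier transform decays like $\jb{\tau}^{-1}$ and hence lies in $H^b$ only for $b<\frac12$. This is not what we want, and it is why we use smooth cutoffs. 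The condition $b<\frac32$ presumably comes from the authors using the one-sided exponential $e^{-t/2}\mathbf 1_{t\ge0}$ times a smooth cutoff. That function has a kink, so $\ft{}\sim\jb\tau^{-2}$, and it lies in $H^b$ exactly when $b<\frac32$. I would adopt that version, since it avoids any extension to negative times: $\|\chi_I e^{-t/2}\mathbf 1_{t\ge0}\|_{H^b}\les 1+|I|$ for $b<\frac32$. Summing over $n$ with weights $\jb{n}^{2s}$ then gives the claim.
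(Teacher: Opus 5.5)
Your reduction is the standard one and is sound. At each frequency the $\vec X^{s,b}$ weights match the $\vec H^s$ weights, using $\jbb{n}\sim\jb{n}^\al$ for all $n$, including $n=0$. For an extension of the form $\psi(t)e^{\pm it\jbb{n}}$, the inequality $\jb{|\tau|-\jbb{n}}\le\jb{\tau\mp\jbb{n}}$ reduces everything to bounding $\|\psi\|_{H^b_t}$. That inequality only helps for $b\ge0$; for $b<0$, use $\|u\|_{X^{s,b}_I}\le\|u\|_{X^{s,0}_I}$. You are also right to restrict to $I\subset[0,\infty)$: for intervals far into negative time the growth of $e^{-t/2}$ makes the estimate false, and the paper only uses $I\subset\R_+$.

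The genuine problem is the route you say you would adopt at the end. The function $e^{-t/2}\mathbf 1_{\{t\ge0\}}$ does not have a kink at $t=0$; it has a \emph{jump} of size $1$. Its Fourier transform is $(\frac12+i\tau)^{-1}$, of size $\jb{\tau}^{-1}$, so it lies in $H^b$ only for $b<\frac12$. This is the same obstruction you had just identified for $\mathbf 1_{[0,\infty)}$. Multiplying by a smooth $\chi_I$ does not remove the jump when $\chi_I(0)\neq0$, which is the case for $I=[0,T]$. So your claimed bound $\|\chi_Ie^{-t/2}\mathbf 1_{\{t\ge0\}}\|_{H^b}\les1+|I|$ fails for all $\frac12\le b<\frac32$. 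In particular it fails for $b=\frac12+\dl$ and $I=[0,T]$, which is exactly how the lemma is used in the paper.

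The fix is to keep your smooth route and drop both the indicator and any extension or replacement of $e^{-t/2}$. Take $I=[t_0,t_1]$ with $t_0\ge0$ and $\chi_I$ supported in $[t_0-1,t_1+1]$. Then $\psi=\chi_Ie^{-t/2}$ is smooth, $e^{-t/2}\le e^{1/2}$ on its support, and all its derivatives are bounded. Hence $\|\psi\|_{H^k}\les_k(1+|I|)^{\frac12}$ for every integer $k$, and so $\|\psi\|_{H^b}\les(1+|I|)^{\frac12}$ for every $b\ge0$. On this route no restriction on $b$ appears at all.

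The threshold $\frac32$ instead comes from the extension $e^{-|t|/2}$, which does have a kink: $\int_\R e^{-|t|/2}e^{-it\tau}\,dt=(\frac14+\tau^2)^{-1}$, and this lies in $H^b$ if and only if $b<\frac32$. That extension needs no cutoff and gives a bound independent of $|I|$ for $I\subset[0,\infty)$. Either of these two routes closes the argument; the one-sided exponential does not.
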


Next, we recall the following inhomogeneous linear estimate. For a proof, see \cite[Lemma~2.9]{LTzW}.
\begin{lemma}
\label{LEM:nhomo}
Let $\al \geq 0$, $s \in \R$, $\frac 12 < b < 1$, and $I = [t_0, t_1] \subset \R$ be a closed interval with $t_0 \geq 0$. Then, we have
\begin{align*}
\big\| \vec \I_{t_0} (F) \big\|_{\vec X_I^{s, b}} \les \| F \|_{X_I^{s - \al, b - 1}}.
\end{align*}
\end{lemma}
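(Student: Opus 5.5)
This is the inhomogeneous linear estimate, and the plan is to reduce it to the standard one-dimensional \(X^{s,b}\) estimate for the Duhamel operator on each spatial Fourier mode.

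\textbf{Step 1: localize in time.} By the definition of the local-in-time norm, choose an extension \(\wt F\) of \(F|_I\) with
\[
\|\wt F\|_{X^{s-\al,b-1}} \le 2\|F\|_{X_I^{s-\al,b-1}}.
\]
Fix a smooth cutoff \(\eta\) with \(\eta\equiv 1\) on \([0,|I|]\), supported in \([-1,|I|+1]\). Since \(\vec\I_{t_0}(F)=\vec\I_{t_0}(\wt F)\) on \(I\), it suffices to bound \(\eta(t-t_0)\,\vec\I_{t_0}(\wt F)\) in \(\vec X^{s,b}\) by \(\|\wt F\|_{X^{s-\al,b-1}}\). By time translation we may take \(t_0=0\).

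\textbf{Step 2: diagonalize on each mode.} For each frequency \(n\), the operator \(\mathcal D(t-t')\) of \eqref{defD} acts as
\[
e^{-\frac{t-t'}{2}}\,\frac{e^{i(t-t')\jbb n}-e^{-i(t-t')\jbb n}}{2i\jbb n}.
\]
The two pieces are handled identically, so we treat the \(+\) sign. Write
\[
e^{-\frac{t-t'}{2}}=e^{-\frac t2}e^{\frac{t'}{2}}.
\]
The factor \(e^{t'/2}\) is harmless for \(t'\in[0,t]\subset[0,|I|+1]\), and \(\eta(t)e^{-t/2}\) is smooth on the support of \(\eta\). Both can therefore be absorbed by multiplying by smooth compactly supported functions, which are bounded on \(X^{s,b}\) for \(|b|<1\). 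This gives the standard form
\[
e^{it\jbb n}\int_0^t e^{-it'\jbb n}\,G_n(t')\,dt'.
\]
Conjugating by \(e^{it\jbb n}\) reduces the problem to the one-dimensional estimate
\[
\Big\| \eta(t)\int_0^t g(t')\,dt' \Big\|_{H^b_t}\les \|g\|_{H^{b-1}_t},\qquad \tfrac12<b<1.
\]
This is \cite[Lemma~2.11]{Tao}, proved by splitting \(\ft g\) into \(|\tau|\le1\) (Taylor expanding \(\frac{e^{it\tau}-1}{i\tau}\)) and \(|\tau|>1\). The weight \(\jb{|\tau|-\jbb n}\) is comparable to \(\min(\jb{\tau-\jbb n},\jb{\tau+\jbb n})\), so each sign piece is controlled by its own modulation.

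\textbf{Step 3: recover the weights and the time derivative.} The factor \(1/\jbb n\sim\jb n^{-\al}\) produces the gain of \(\al\) derivatives, which accounts for \(s-\al\mapsto s\) in the first component. The second component \(\dt\I(F)\) has no boundary term, because \(\mathcal D(0)=0\). It equals the same type of integral with kernel \(\dt\mathcal D\), which has symbol of size \(O(1)\), and this matches the \(X^{s-\al,b}\) target norm. Summing in \(n\) with weight \(\jb n^{2s}\) and taking the infimum over extensions finishes the proof.

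\textbf{Main obstacle.} The main technical point is the comparison between the modulation weight \(\jb{|\tau|-\jbb n}\) and the separate weights \(\jb{\tau\mp\jbb n}\), together with the bounded multiplication by the damping exponential. Both are routine, with constants depending only on \(|I|\). The stated bound carries no \(|I|\)-dependence, so presumably intervals of bounded length are intended, or the damping \(e^{-t/2}\) is used to obtain uniformity.
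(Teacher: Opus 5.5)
Your per-mode reduction is correct as far as it goes. You use the comparison of $\jb{|\tau|-\jbb n}$ with $\jb{\tau\mp\jbb n}$ in the right direction on each side ($b>0$ on the output, $b-1<0$ on the input), and treating $\partial_t\I_{t_0}$ via $\mathcal D(0)=0$ is fine. But it only proves the estimate with a constant depending on $|I|$. In the paper's convention for $\les$, the lemma asserts a constant independent of $I$; contrast Lemmas~\ref{LEM:homo}, \ref{LEM:Str}, and \ref{LEM:time}, where the $|I|$-dependence is written out.

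Your route cannot deliver uniformity. Factoring $e^{-\frac{t-t'}{2}}=e^{-\frac t2}e^{\frac{t'}{2}}$ and absorbing $e^{t'/2}$ into a multiplier on an interval of length $\sim|I|$ costs a factor of order $e^{|I|/2}$. What remains is the undamped one-dimensional Duhamel bound with a cutoff adapted to $[0,|I|]$, and its constant genuinely grows with $|I|$: a resonant forcing $e^{it\jbb n}\ind_{[0,|I|]}(t)$ produces the secular term $t\,e^{it\jbb n}$. So the question you leave open in your last sentence is exactly the missing content. Also, \cite[Lemma~2.11]{Tao} is the time-localization lemma (cf.\ Lemma~\ref{LEM:time}), not the Duhamel estimate.

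The fix is to keep the damping inside the kernel and to localize with a sharp cutoff instead of a smooth one.

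\emph{Sharp cutoff.} Given an extension $\wt F$, set $G=\ind_{[t_0,\infty)}\wt F$. Since $-\frac12<b-1<0$, you have $\jb{|\tau|-\jbb n}^{2(b-1)}\sim\jb{\tau-\jbb n}^{2(b-1)}+\jb{\tau+\jbb n}^{2(b-1)}$. Hence the $X^{s-\al,b-1}$-norm is equivalent to the sum of the two conjugated $H^{b-1}_t$-norms. The Heaviside cutoff commutes with the conjugation and is bounded on $H^{b-1}_t$ uniformly in $t_0$, so $\|G\|_{X^{s-\al,b-1}}\les\|\wt F\|_{X^{s-\al,b-1}}$.

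\emph{Symbol of the damped kernel.} On the $n$th mode and for $t\in I$, $\I_{t_0}(F)(t)=(k_n*G)(t)$, where $k_n(t)=\ind_{\{t\ge0\}}e^{-\frac t2}\frac{\sin(t\jbb n)}{\jbb n}$ is an integrable causal kernel with
\begin{align*}
\ft{k_n}(\tau)=\frac{1}{\big(\frac12+i(\tau-\jbb n)\big)\big(\frac12+i(\tau+\jbb n)\big)}=\frac{1}{\jb n^{2\al}-\tau^2+i\tau}.
\end{align*}
One factor has modulus $\gtrsim\jb{|\tau|-\jbb n}$ and the other has modulus $\gtrsim\jb{|\tau|+\jbb n}\gtrsim\max\{\jb n^{\al},|\tau|\}$. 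Therefore $|\ft{k_n}(\tau)|\les\jb n^{-\al}\jb{|\tau|-\jbb n}^{-1}$, and, because $k_n(0)=0$, also $|\tau\,\ft{k_n}(\tau)|\les\jb{|\tau|-\jbb n}^{-1}$.

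\emph{Conclusion.} These two symbol bounds control $k_n*G$ in $X^{s,b}$ and $(\partial_t k_n)*G$ in $X^{s-\al,b}$ by $\|G\|_{X^{s-\al,b-1}}$, with an absolute constant. This pair agrees with $\vec \I_{t_0}(F)$ on $I$, so the lemma follows uniformly in $I$, with no smooth cutoff and no undamped one-dimensional lemma. The paper does not reprove the statement itself; it cites \cite[Lemma~2.9]{LTzW}.
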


We also record the following time localization estimate. For a proof, see \cite[Lemma~2.11]{Tao}.
\begin{lemma}
\label{LEM:time}
Let $s \in \R$, $- \frac 12 < b_1 < b_2 < \frac 12$, and $I \subset \R$ be a closed interval with $0 < |I| \leq 1$. Then, we have
\begin{align*}
\| u \|_{X_I^{s, b_1}} \les |I|^{b_2 - b_1} \| u \|_{X_I^{s, b_2}}.
\end{align*}
\end{lemma}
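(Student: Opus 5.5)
This time localization estimate is classical, and the plan is to reduce it to its global-in-time counterpart on $\R$ using the extension definition of the $X_I^{s,b}$-norm.

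\textbf{Step 1: reduction to a one-dimensional weighted estimate.} Since $\| u \|_{X^{s,b}}$ is an $\ell^2_n$-sum of weighted $L^2_\tau$ norms, I fix $n$ and set $f_n(t) = e^{\mp i t \jbb{n}}$-modulated pieces. Because the weight $\jb{|\tau| - \jbb{n}}$ involves $|\tau|$, I first split $\ft u(\tau,n)$ into the parts with $\tau \ge 0$ and $\tau<0$. For $\tau\ge0$, $\jb{|\tau|-\jbb{n}} = \jb{\tau - \jbb{n}}$, and after modulating by $e^{-it\jbb{n}}$ the weight becomes $\jb{\tau}$, i.e.\ the standard $H^b_t$ norm. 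The case $\tau<0$ is symmetric. The splitting by the sign of $\tau$ is harmless in $H^b_t$ for $|b|<\frac12$, since the half-line Fourier projection is bounded there (the symbol $\mathbf 1_{\tau\ge0}$ is a Hilbert-transform type multiplier, bounded on $L^2(\jb{\tau}^{2b}d\tau)$ because $\jb{\tau}^{2b}$ is an $A_2$ weight exactly when $|b|<\frac12$). This reduces the claim to the scalar inequality
\begin{align*}
\| \eta_I f \|_{H^{b_1}_t} \les |I|^{b_2-b_1} \| f\|_{H^{b_2}_t}
\end{align*}
for a suitable cutoff $\eta_I$ adapted to $I$.

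\textbf{Step 2: choosing the extension.} Given $u$ on $I$, I take an extension $v$ with $\|v\|_{X^{s,b_2}}\le 2\|u\|_{X^{s,b_2}_I}$ and a smooth cutoff $\eta_I(t)=\eta((t-t_I)/|I|)$ equal to $1$ on $I$. Then $\eta_I v$ is also an extension, so $\|u\|_{X^{s,b_1}_I}\le \|\eta_I v\|_{X^{s,b_1}}$. Multiplication by $\eta_I$ commutes with the spatial and modulation structure, so it suffices to prove the scalar inequality from Step 1.

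\textbf{Step 3: the scalar estimate, which is the main obstacle.} I will argue by duality and interpolation, treating the cases $0\le b_1<b_2$, $b_1<b_2\le 0$, and $b_1<0<b_2$ (the last by composing the other two through $b=0$).

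For $0\le b_1<b_2<\frac12$, I first show
\begin{align*}
\|\eta_I f\|_{L^2} \les |I|^{b_2}\|f\|_{H^{b_2}}.
\end{align*}
To prove it, I split $f$ into low frequencies $|\tau|\le |I|^{-1}$ and high frequencies. The low part uses $\|\eta_I\|_{L^2}\sim |I|^{1/2}$ together with $\|P_{\text{low}}f\|_{L^\infty}\le |I|^{-1/2+b_2}\|f\|_{H^{b_2}}$, which holds by Cauchy–Schwarz since $b_2<\frac12$. The high part is trivial. Next I show $\|\eta_I f\|_{H^{b_2}}\les\|f\|_{H^{b_2}}$ uniformly in $|I|\le1$. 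This holds for $b_2<\frac12$ (by the fractional Leibniz rule plus a Sobolev embedding $H^{b_2}\subset L^{2/(1-2b_2)}$ paired with $\|D^{b_2}\eta_I\|_{L^{1/b_2}}\les 1$). Interpolating between these two bounds gives the $H^{b_1}$ estimate with factor $|I|^{b_2-b_1}$. The case $b_1<b_2\le0$ follows by duality, since multiplication by $\eta_I$ is self-adjoint.

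The step I expect to need the most care is the uniform boundedness of multiplication by $\eta_I$ on $H^{b}$ for $|b|<\frac12$, where the restriction $b_2<\frac12$ is genuinely used. Summing the scalar inequality over $n$ with weights $\jb{n}^{2s}$ then concludes the proof.
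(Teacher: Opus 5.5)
Your Step~3 is the standard scalar argument (essentially the proof of Tao's Lemma~2.11, which the paper simply cites), and the extension/cutoff reduction in Step~2 is fine. There is, however, a genuine gap in Step~1 when $b_1<0$: the reduction from the two-sheeted weight $\jb{|\tau|-\jbb{n}}$ to the scalar $H^b_t$ estimate is not valid as stated.

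The justification you give addresses the wrong issue. Multiplying $\ft u(\tau,n)$ by $\ind_{\{\tau\ge 0\}}$ is trivially a contraction on every weighted $L^2_\tau$ space, and no $A_2$ condition is involved. The $A_2$/Hilbert-transform fact you have in mind concerns the sharp \emph{time} cutoff $\ind_{\{t\ge0\}}$ on $H^b_t$.

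The real difficulty is on the output side. Let $u_+$ denote the $\tau\ge0$ piece of $u$. Then $\eta_I u_+$ is no longer supported in $\tau\ge0$. To apply your scalar estimate you therefore need $\|\eta_I u_+\|_{X^{s,b_1}}\lesssim \|\jb{n}^{s}\jb{\tau-\jbb{n}}^{b_1}\F_{t,x}(\eta_I u_+)\|_{\ell^2_nL^2_\tau}$; the right-hand side is exactly your modulated $H^{b_1}_t$ norm summed in $n$.

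Since $\jb{|\tau|-\jbb{n}}\le\jb{\tau-\jbb{n}}$, this inequality holds for $b_1\ge0$. For $b_1<0$ the weights compare the other way, and the inequality fails for general $v$: if $\F_{t,x}v$ is concentrated near $\tau=-\jbb{n}$, the ratio of the two norms is $\sim\jbb{n}^{-b_1}$. So your reduction does not cover $b_1<0$. In particular, your scalar-level duality (the case $b_1<b_2\le0$) and the negative half of your composition (the case $b_1<0<b_2$) do not transfer back to the space $X^{s,b}$ used in the paper.

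The fix is to perform duality and composition on $X^{s,b}$ itself rather than on $H^b_t$.
\begin{itemize}
\item \emph{Case $b_1<b_2\le0$.} Since $X^{s,b}$ is a weighted $L^2$ space on the Fourier side, its dual under the $L^2_{t,x}$ pairing is $X^{-s,-b}$. Multiplication by the real cutoff $\eta_I$ is self-adjoint, so this case follows from $0\le-b_2<-b_1<\frac12$.
\item \emph{Alternative for $b_2\le0$.} You can instead split the \emph{output} $\F_{t,x}(\eta_I u)$ by the sign of $\tau$, and on the input side use $\jb{\tau\mp\jbb{n}}^{b_2}\le\jb{|\tau|-\jbb{n}}^{b_2}$.
\item \emph{Case $b_1<0<b_2$.} Compose through $X^{s,0}=L^2_tH^s_x$, which does not see the dispersion, using $\eta_I^2$ (still equal to $1$ on $I$).
\end{itemize}

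After this reordering, splitting the input by the sign of $\tau$ is only needed for $0\le b_1<b_2<\frac12$, where it is legitimate. There $\|u\|_{X^{s,b_2}}^2$ splits exactly into the one-sided norms of $u_\pm$, and the pointwise bound $\jb{|\tau|-\jbb{n}}\le\jb{\tau\mp\jbb{n}}$ controls the output. The rest of your argument then goes through: the low/high-frequency $L^2$ bound, the uniform $H^{b}$ bound via fractional Leibniz and Sobolev, and the interpolation.
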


The following gluing lemma will be useful for establishing some global-in-time bounds. For a proof, see \cite[Lemma~4.5]{Bring2}.
\begin{lemma}
\label{LEM:glue}
Let $s \in \R$ and $\frac 12 < b < 1$. Let $I_1, I_2 \subset \R$ be two closed intervals such that $|I_1 \cap I_2| > 0$. Then, we have
\begin{align*}
\| u \|_{X_{I_1 \cup I_2}^{s, b}} \les |I_1 \cap I_2|^{\frac 12 - b} \Big( \| u \|_{X_{I_1}^{s, b}} + \| u \|_{X_{I_2}^{s, b}} \Big).
\end{align*}
\end{lemma}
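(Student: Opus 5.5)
The gluing estimate in Lemma~\ref{LEM:glue} is the last statement, and I would prove it by an explicit smooth partition of unity in time. Write $I_1 = [a_1, b_1]$ and $I_2 = [a_2, b_2]$, and assume without loss of generality that $a_1 \leq a_2 \leq b_1 \leq b_2$. If one interval contains the other, the claim is trivial, because $|I_1 \cap I_2|^{\frac 12 - b}$ is then bounded below by a constant times $|I_1\cap I_2|^{\frac12 - b} \geq$ a harmless factor; in the case $|I_1\cap I_2|\geq 1$ we simply drop the factor. So the real case is $\delta \deff |I_1 \cap I_2| = b_1 - a_2$, with $0 < \delta$ small.

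Choose extensions $v_1, v_2$ with $v_j|_{I_j} = u$ and $\| v_j \|_{X^{s,b}} \leq 2 \| u \|_{X^{s,b}_{I_j}}$. Next fix a smooth $\psi$ with $\psi \equiv 1$ on $(-\infty, 0]$, $\psi \equiv 0$ on $[1, \infty)$, and set $\psi_\delta(t) = \psi((t - a_2)/\delta)$. The glued function
\begin{align*}
w = \psi_\delta v_1 + (1 - \psi_\delta) v_2
\end{align*}
agrees with $u$ on $I_1 \cup I_2$: on $[a_1,a_2]$ it equals $v_1$; on $I_1\cap I_2$ both $v_1$ and $v_2$ equal $u$; on $[b_1,b_2]$ it equals $v_2$. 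Hence
\begin{align*}
\| u \|_{X^{s,b}_{I_1\cup I_2}} \leq \| \psi_\delta v_1 \|_{X^{s,b}} + \| (1-\psi_\delta) v_2 \|_{X^{s,b}}.
\end{align*}
Because $1 - \psi_\delta$ is a reflected cutoff of the same type, it suffices to prove that multiplication by a rough time cutoff $\eta(t/\delta)$ (a function equal to $1$ on a half-line and transitioning over a length $\delta$) is bounded on $X^{s,b}$ with norm $\les \delta^{\frac12 - b}$ for $\frac12 < b < 1$.

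The multiplier bound is the main step. I would reduce it to the scalar case: since $X^{s,b}$ is built from $H^b_t$ after conjugating by the linear flow on each half-frequency piece $\pm\jbb{n}$, and time multiplication commutes with that conjugation, it suffices to show $\| \eta(t/\delta) f \|_{H^b(\R)} \les \delta^{\frac12 - b} \| f \|_{H^b(\R)}$. Write $\eta(t/\delta) = \mathbf 1_{t<0} + \chi(t/\delta)$ with $\chi$ a compactly supported bump (up to smooth tails). The sharp cutoff $\mathbf 1_{t<0}$ is \emph{not} bounded on $H^b$ for $b > \frac12$, so this decomposition must be organized with care. I would instead use the fractional Leibniz rule $\| \eta_\delta f \|_{H^b} \les \| \eta_\delta \|_{L^\infty} \| f \|_{H^b} + \| |\partial_t|^b \eta_\delta \|_{L^{p}} \| f \|_{L^{q}}$, with $\frac1p + \frac1q = \frac12$. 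Scaling gives $\| |\partial_t|^b \eta_\delta \|_{L^p} \sim \delta^{\frac1p - b}$, because $|\partial_t|^b\eta$ decays like $|t|^{-b}$ (only the derivative of $\eta$ contributes, and it is integrable), so it lies in $L^p$ when $bp>1$. Taking $q = \infty$, $p = 2$ and using $\| f \|_{L^\infty} \les \| f \|_{H^b}$ (valid since $b > \frac12$) yields the factor $\delta^{\frac12 - b}$, which dominates the $O(1)$ first term when $\delta \leq 1$.

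The delicate point is the Kato--Ponce step with $q = \infty$ together with checking that $|\partial_t|^b\eta \in L^2$. The latter holds because $\eta'$ is Schwartz-like, so $|\partial_t|^b \eta = |\partial_t|^{b-1}\mathcal{H}\eta'$, and this decays like $|t|^{-b}$, which is in $L^2$ exactly when $b > \frac12$. If the endpoint Leibniz estimate causes trouble, I would prove the scalar bound directly on the Fourier side by splitting $|\tau|\lesssim \delta^{-1}$ from $|\tau| \gtrsim \delta^{-1}$, using $|\ft{\eta'_\delta}(\tau)|\les \jb{\delta\tau}^{-10}$ together with Lemma~\ref{LEM:conv_int}.
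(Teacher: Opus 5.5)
Your smooth partition-of-unity argument is the standard route and is correct when $|I_1\cap I_2|\les 1$; the paper gives no proof of its own and only cites Bringmann's gluing lemma. Two steps deserve a precise line.

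First, $\jb{|\tau|-\jbb{n}}=\min_\pm\jb{\tau\mp\jbb{n}}$, and multiplication by $\eta_\delta=\eta(\cdot/\delta)$ does not preserve the sign of $\tau$. So the transfer from $H^b_t$ to $X^{s,b}$ should go as follows: split $u=u_++u_-$ according to the sign of $\tau$, and bound $\eta_\delta u_\pm$ in the space with weight $\jb{\tau\mp\jbb{n}}^b$, where conjugation by the half-wave propagator really does reduce matters to $H^b_t$. Then use that this weight dominates $\jb{|\tau|-\jbb{n}}^b$.

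Second, the scalar bound is cleanest through the Gagliardo seminorm $[g]_b^2=\iint|g(t)-g(t')|^2|t-t'|^{-1-2b}\,dt\,dt'$, which is a constant multiple of $\||\partial_t|^b g\|_{L^2}^2$ for $0<b<1$; this is where $b<1$ enters. It gives $[\eta_\delta f]_b\le[f]_b+\|f\|_{L^\infty}[\eta_\delta]_b$ with $[\eta_\delta]_b=\delta^{\frac12-b}[\eta]_b<\infty$ for $b>\frac12$. This avoids the endpoint Kato--Ponce inequality for the non-decaying multiplier $\eta_\delta$, which was the step you flagged as delicate.

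What is wrong is your handling of large overlaps. When $|I_1\cap I_2|\ge1$ the factor $|I_1\cap I_2|^{\frac12-b}$ is at most $1$, so it cannot be dropped. Likewise, the nested case is trivial only when $|I_1\cap I_2|\les1$. In fact the inequality as printed is false in that regime: for $I_1=I_2=[0,L]$ it would give $\|u\|_{X^{s,b}_{[0,L]}}\les L^{\frac12-b}\|u\|_{X^{s,b}_{[0,L]}}$, which is impossible for $L\gg1$.

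The lemma has to be read with $|I_1\cap I_2|\les1$, which is the only regime in which the paper uses it (all overlaps there have length at most $1$). Alternatively, run your construction with transition length $\min\{\delta,1\}$ inside the overlap. That proves the correct general statement, with $\min\{|I_1\cap I_2|,1\}^{\frac12-b}$ in place of $|I_1\cap I_2|^{\frac12-b}$.
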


Lastly, we record the following lemma on the continuity of the $X_T^{s, b}$-norm with respect to $T$. For a proof, see \cite[Lemma~2.7]{BLLZ}
\begin{lemma}
\label{LEM:time_cty}
Let $s \in \R$, $b \in \R$, and $u \in X^{s, b}$. Then, the function
\begin{align*}
t \mapsto \| u \|_{X_t^{s, b}}
\end{align*}

\noi
is continuous on $\R_+$.
\end{lemma}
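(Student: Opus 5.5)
Throughout, I take $u\in X^{s,b}$ (the statement's hypothesis) and $\varepsilon>0$ arbitrary. The map $t\mapsto \|u\|_{X_t^{s,b}}$ is nondecreasing, because any extension of $u|_{[0,t']}$ is also an extension of $u|_{[0,t]}$ when $t\le t'$. It is also finite, being bounded by $\|u\|_{X^{s,b}}$. So it suffices to prove left-continuity and right-continuity at each $t_0$ separately.

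\emph{Left-continuity.} Take $t_k\uparrow t_0$ and choose extensions $v_k$ with $v_k|_{[0,t_k]}=u$ and $\|v_k\|_{X^{s,b}}\le \|u\|_{X^{s,b}_{t_k}}+\varepsilon$. The $v_k$ are bounded in the Hilbert space $X^{s,b}$, so after passing to a subsequence $v_k\rightharpoonup v$ weakly. Since $v_k=u$ on $[0,t_k]$, pairing with test functions supported in $(0,t_0)$ shows $v=u$ on $(0,t_0)$ as distributions, hence $v$ is an admissible extension on $[0,t_0]$ (boundary points carry no mass). Weak lower semicontinuity then gives
\[
\|u\|_{X^{s,b}_{t_0}}\le \|v\|_{X^{s,b}}\le \liminf_k \|u\|_{X^{s,b}_{t_k}}+\varepsilon .
\]
Combined with monotonicity, this proves left-continuity.

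\emph{Right-continuity.} Fix a near-optimal extension $v$ with $v|_{[0,t_0]}=u$ and $\|v\|_{X^{s,b}}\le\|u\|_{X^{s,b}_{t_0}}+\varepsilon$. Then $f:=u-v$ vanishes on $[0,t_0]$; after multiplying by a fixed smooth cutoff $\psi$ equal to $1$ away from $t=0$, we may assume $f\in X^{s,b}$ is supported in $[t_0,\infty)$. Let $\chi_\delta$ be a smooth function equal to $1$ on $[t_0-1,t_0+\delta]$ and vanishing beyond $t_0+2\delta$. Then $w_\delta:=v+\chi_\delta f$ coincides with $u$ on $[0,t_0+\delta]$, so
\[
\|u\|_{X^{s,b}_{t_0+\delta}}\le \|u\|_{X^{s,b}_{t_0}}+\varepsilon+\|\chi_\delta f\|_{X^{s,b}} .
\]
It therefore suffices to show $\|\chi_\delta f\|_{X^{s,b}}\to0$ as $\delta\to0$.

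To prove this, I conjugate each spatial frequency by $e^{\pm it\jbb{n}}$ after splitting $\ft f$ into its $\tau>0$ and $\tau<0$ parts. This reduces the claim to a statement in $H^b_t(\R)$, uniformly in $n$ and weighted by $\jb{n}^{2s}$, which can be summed by dominated convergence over $n$. The one-dimensional claim is: if $g\in H^b(\R)$ is supported in $[t_0,\infty)$, then $\|\chi_\delta g\|_{H^b}\to0$.

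For $|b|<\frac12$, I argue by density. Time translations are unitary on $H^b$, so $g$ is approximated by $g(\cdot-\eta)$, which is supported in $[t_0+\eta,\infty)$ and is annihilated by $\chi_\delta$ once $2\delta<\eta$. Multiplication by $\chi_\delta$ is uniformly bounded on $H^b$ in this range, since rough cutoffs are bounded multipliers for $|b|<\frac12$. This closes the argument.

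\emph{Main obstacle.} The main obstacle is the range $b\ge\frac12$, and more generally arbitrary $b$. There the multiplier norm of $\chi_\delta$ grows like $\delta^{\frac12-b}$, as in Lemma~\ref{LEM:glue}, so the density argument alone fails. My first attempt would use the one-sided vanishing of $g$. By Hardy's inequality, $g\in H^b$ supported in $[t_0,\infty)$ satisfies $|t-t_0|^{-b}g\in L^2$ for $b<\frac32$, $b\neq\frac12$; the case $b=\frac12$ would be handled by interpolation or a slightly different endpoint argument. Applying the Leibniz rule to $\chi_\delta g$, the derivatives falling on $\chi_\delta$ cost $\delta^{-b}$ on $[t_0,t_0+2\delta]$, and this loss is absorbed by $|t-t_0|^{-b}g$ restricted to a shrinking interval, which tends to $0$ by dominated convergence. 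Higher $b$ would be handled by iterating the same Hardy argument with the corresponding higher-order inequalities, and negative $b\le-\frac12$ by duality with the positive case. Failing that, I would fall back on the argument in \cite[Lemma~2.7]{BLLZ}.
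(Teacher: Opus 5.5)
The paper does not prove this lemma; it cites \cite[Lemma~2.7]{BLLZ}, so your argument has to stand on its own. Your monotonicity observation and your left-continuity argument are correct: weak compactness in the Hilbert space $X^{s,b}$, identification of the weak limit on $(0,t_0)$ by pairing with test functions, and lower semicontinuity all go through. This requires reading the constraint $v|_{[0,t]}=u$ on $(0,t)$. Your remark that ``boundary points carry no mass'' is false for $b<-\frac12$, since then $\delta_{t_0}(t)e^{in\cdot x}\in X^{s,b}$.

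The gap is in right-continuity, in two places.
\begin{itemize}
\item Splitting $\ft f$ into its $\{\tau>0\}$ and $\{\tau<0\}$ parts is a Fourier multiplier in $t$ and destroys $\operatorname{supp} f\subset[t_0,\infty)$. Indeed, a nonzero $L^2$ function with one-sided Fourier support cannot vanish on a half-line. So your one-dimensional claim, whose hypothesis is exactly this support property, cannot be applied to the pieces. For $|b|<\frac12$ this is repairable: run the translation argument on $f$ itself in $X^{s,b}$, where $\chi_\delta$ is uniformly bounded. For $b\ge\frac12$, however, the Hardy/Leibniz sketch is left with nothing to act on. You would need $\sup_\delta\|\chi_\delta f\|_{X^{s,b}}\lesssim\|f\|_{X^{s,b}}$ for $f$ supported in $[t_0,\infty)$, uniformly in the modulation frequencies $\jbb{n}$, and your sketch does not provide this.
\item For $b<-\frac12$ the one-dimensional claim is simply false. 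The distribution $g=\delta_{t_0}\in H^b(\R)$ is supported in $[t_0,\infty)$, yet $\chi_\delta g=g$ for every $\delta$. Correspondingly, take $u=\delta_{t_0}(t)e^{in\cdot x}$ and $v=0$. Your extension $w_\delta=v+\chi_\delta f$ equals $u$ for all $\delta$, although in fact $\|u\|_{X^{s,b}_{t_0+\delta}}\to0$. So no duality argument can save the $\chi_\delta$ route.
\end{itemize}

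The missing idea is to replace the cutoff by a translation. Let $\tilde f=\psi f$ with $\psi=0$ on $(-\infty,\frac{t_0}{3}]$ and $\psi=1$ on $[\frac{2t_0}{3},\infty)$, and set $w=v+\tilde f-\tilde f(\cdot-\eta)$ with $\eta=2\delta$. Since $\tilde f=f$ on $(0,\infty)$ and $\tilde f(\cdot-\eta)$ vanishes on $(-\infty,t_0+\eta)$, we get $w=u$ on $(0,t_0+\delta)$. Moreover,
\[
\|\tilde f-\tilde f(\cdot-\eta)\|_{X^{s,b}}^2=\sum_{n\in\Z^3}\int_\R\jb{n}^{2s}\jb{|\tau|-\jbb{n}}^{2b}\,|1-e^{-i\eta\tau}|^2\,|\ft{\tilde f}(\tau,n)|^2\,d\tau\longrightarrow0
\]
by dominated convergence, for every $s,b\in\R$. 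Hence $\limsup_{\delta\to0}\|u\|_{X^{s,b}_{t_0+\delta}}\le\|u\|_{X^{s,b}_{t_0}}+\varepsilon$. This needs no multiplier bounds, no Hardy inequality, and no case distinction in $b$.

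The only input is that multiplication by $\psi(t)$ is bounded on $X^{s,b}$. This holds for all $b$ because $\jb{|\tau|-\jbb{n}}=\min_\pm\jb{\tau\mp\jbb{n}}$. That identity exhibits $X^{s,b}$ as the sum ($b\ge0$) or intersection ($b<0$) of two spaces conjugate via $e^{\pm it\jbb{n}}$ to weighted $\ell^2_nH^b_t$, and multiplication by a function of $t$ commutes with these modulations.

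Finally, both your cutoff and this translation need $t_0>0$. At $t_0=0$ the statement depends on how $X^{s,b}_{[0,0]}$ is defined, and can fail: for $b>\frac12$ one has $\|u\|_{X^{s,b}_t}\gtrsim\|u(0)\|_{H^s}$ for all $t>0$. So continuity should be understood on $(0,\infty)$.
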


\subsection{Strichartz estimate and multilinear estimates}
In this subsection, we recall the Strichartz estimate and prove some useful multilinear estimates. We first record the following Strichartz estimate from \cite[Lemma 2.12]{LTzW}, whose proof is based on the $\l^2$-decoupling theorem in \cite{BD15}. See also \cite[Proposition 1.1]{Sch}.
\begin{lemma}
\label{LEM:Str}
Let $1 < \al \leq 2$, $p \geq \frac{10}{3}$, and $s \geq 0$, and $I \subset \R$ be a closed interval. Suppose that
\begin{align*}
s > \frac 32 - \frac{\al + 3}{p}.
\end{align*}

\noi
Then, there exists $\dl > 0$ small such that
\begin{align*}
\| u \|_{L_I^p L_x^p} \les (1 + |I|)^{\frac{1}{p}} \| u \|_{X_I^{s, \frac 12 - \dl}}.
\end{align*}
\end{lemma}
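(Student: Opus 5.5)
This Strichartz estimate is quoted from \cite[Lemma 2.12]{LTzW}; I would reprove it by transference from a free Strichartz estimate for the half-wave propagators $e^{\pm it\jbb{\nb}}$, followed by interpolation to reach $b = \frac 12 - \dl$.

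\textbf{Step 1: free estimate.} For a frequency-localized datum $\mathbf{P}_j f$ with $|n| \sim 2^j$, I aim to show
\begin{align*}
\big\| e^{\pm it\jbb{\nb}} \mathbf{P}_j f \big\|_{L^p_{I} L^p_x} \les (1+|I|)^{\frac1p} \, 2^{j(\frac32 - \frac{\al+3}{p} + \eps)} \| \mathbf{P}_j f\|_{L^2}
\end{align*}
for $p \geq \frac{10}{3}$. Split $I$ into unit intervals, which produces the factor $(1+|I|)^{1/p}$. On a unit time interval, rescale time by $2^{j(\al-1)}$ so that the phase $\jbb{n} \approx |n|^\al$ becomes a function on the dilated frequency set. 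Its graph, the surface $\{(\xi, |\xi|^\al)\}$ over the annulus, has nonvanishing Gaussian curvature since $\al > 1$. The Bourgain--Demeter $\l^2$-decoupling theorem \cite{BD15} for such hypersurfaces in $\R^4$ has critical exponent $\frac{2(3+1)}{3-1+1} = \frac{8}{3}$; it gives the sharp estimate at $p = \frac{10}{3}$ (the Tomas--Stein endpoint for curved surfaces in $\R^{1+3}$) and beyond. Undoing the time rescaling yields the stated power $\frac32 - \frac{\al+3}{p}$, up to $\eps$-losses. For $p$ large, where this numerology is no longer favourable, I would instead interpolate with the trivial Sobolev bound $L^\infty_x \les 2^{3j/2} L^2$.

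\textbf{Step 2: $X^{s,b}$ with $b > \frac12$.} Write $\ft u(\tau,n)$ with $\tau = \pm\jbb{n} + \lambda$, which represents $u$ as a superposition in $\lambda$ of modulated free waves $e^{it\lambda} e^{\pm it\jbb{\nb}} f_\lambda$. Applying Step 1 to each piece and using Cauchy--Schwarz in $\lambda$ with weight $\jb{\lambda}^{-b}$ gives
\begin{align*}
\| u \|_{L^p_I L^p_x} \les (1+|I|)^{\frac1p} \| u\|_{X^{s,b}_I}
\end{align*}
for any $b > \frac12$. The strict inequality $s > \frac32 - \frac{\al+3}{p}$ absorbs the $\eps$-losses and makes the dyadic sum in $j$ converge. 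Passing to the restriction norm $X^{s,b}_I$ is done by taking extensions.

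\textbf{Step 3: lowering $b$ (main obstacle).} To obtain $b = \frac12 - \dl$, I would interpolate the Step 2 bound at a slightly larger $p_1 > p$, with $s_1$ chosen just above $\frac32 - \frac{\al+3}{p_1}$, against a trivial estimate at a smaller exponent $p_0$ with $b = 0$: Plancherel gives $L^2_I L^2_x$ with $s = 0$, and Sobolev plus Plancherel gives $L^{p_0}$ bounds in time for $X^{s_0,0}$. Because the hypothesis on $s$ is strict, there is slack to choose $p_0$, $p_1$, and the interpolation parameter so that the resulting $b$ equals $\frac12 - \dl$ with $\dl > 0$ small, while the regularity stays at or below the given $s$. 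The delicate point is bookkeeping the exponents so that the required regularity still satisfies $s > \frac32 - \frac{\al+3}{p}$ uniformly. One also has to check that the interpolation is legitimate for the local norms $X_I^{s,b}$; complex interpolation of weighted $L^2$ spaces handles this after extension.
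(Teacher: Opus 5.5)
Your outline is correct and follows the same route as the proof behind the cited Lemma~2.12 of \cite{LTzW}: a frequency-localized unit-time bound from Bourgain--Demeter decoupling, then transference to $X^{s,b}$ with $b>\frac12$, then complex interpolation with the Plancherel bound $\|u\|_{L^2_IL^2_x}\le\|u\|_{X^{0,0}_I}$. In the decoupling step, working on $\T^3$ requires enlarging $[0,1]$ to a time interval of length $N^{2-\alpha}\ge 1$; this is where $\alpha\le 2$ enters, and it turns the decoupling loss $N^{\frac32-\frac5p}$ into $N^{\frac32-\frac{\alpha+3}{p}}$. In the interpolation step, the relation $\frac1p=\frac{1-\theta}{2}+\frac{\theta}{p_1}$ forces $p_1>p\ge\frac{10}{3}$ and costs only $(1-\theta)\frac{\alpha}{2}$ of regularity, which the strict inequality on $s$ absorbs.

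There are two slips: the decoupling critical exponent for a curved hypersurface in $\R^{1+3}$ is $\frac{2(4+1)}{4-1}=\frac{10}{3}$, not $\frac83$, and this is exactly the origin of the hypothesis $p\ge\frac{10}{3}$. Also, $X^{s_0,0}=L^2_tH^{s_0}_x$ gives no $L^{p_0}_t$ control for $p_0>2$, so only the $L^2$ endpoint should be used in your Step~3.
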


Let us now mention some useful multilinear estimates. We start by recording the following trilinear estimate. For a proof, see \cite[Lemma~2.14]{LTzW}.
\begin{lemma}
\label{LEM:Str_mul1}
Let $1 < \al \leq \frac 98$ and $I \subset \R_+$ be a closed interval. Then, there exists $r \geq \frac{10}{3}$ satisfying $\al - \frac 12 > \frac 32 - \frac{\al + 3}{r}$ and $\eps, \dl > 0$ small such that the following estimate holds: 
\begin{align}
\| u v^2 \|_{X_I^{- \frac 12 + \eps, - \frac 12 + 2 \dl}} \les (1 + |I|) \| u \|_{L_I^\infty W_x^{\al - \frac 32 - \eps, \infty}} \| v \|_{L_I^r L_x^r} \| v \|_{X_I^{\al - \frac 12, \frac 12 + \dl}}.
\label{str_mul1-1}
\end{align}

\noi
Furthermore, we have
\begin{align}
\| u_1 u_2 u_3 \|_{X_I^{- \frac 12 + \eps, - \frac 12 + 2 \dl}} \les (1 + |I|) \| u_1 \|_{L_I^\infty W_x^{\al - \frac 32 - \eps, \infty}} \| u_2 \|_{X_I^{\al - \frac 12 - \eps, \frac 12 + \dl}} \| u_3 \|_{X_I^{\al - \frac 12 - \eps, \frac 12 + \dl}}.
\label{str_mul1-2}
\end{align}
\end{lemma}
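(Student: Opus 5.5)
By duality, proving \eqref{str_mul1-1} amounts to bounding
\begin{align*}
\Big| \int_I \int_{\T^3} u \, v^2 \, w \, dx dt \Big|
\end{align*}
for every test function $w$ with $\| w \|_{X_I^{\frac 12 - \eps, \frac 12 - 2\dl}} \le 1$, by the right-hand side of \eqref{str_mul1-1}. I would first reduce to global-in-time functions. Take extensions of $v$ and $w$ that nearly attain the $X_I$-norms, and note that $u$ enters only through an $L^\infty_I$-norm.

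Set $\s \deff \frac 32 - \al + \eps$, which satisfies $\s < \frac 12 + \eps$ since $\al > 1$. For each fixed $t$, I would put $\jb{\nb}^{\al - \frac 32 - \eps}$ on $u$ and bound the pairing by
\begin{align*}
\| u(t) \|_{W^{\al - \frac 32 - \eps, \infty}} \, \| \jb{\nb}^{\s} (v^2 w)(t) \|_{L^1_x}.
\end{align*}
To make this rigorous, decompose dyadically in frequency with the Littlewood--Paley projectors $\mathbf P_j$. The factor $\jb{\nb}^{\s}$ then transfers to the factor among $v, v, w$ carrying the maximal frequency; high--high interactions are summable because of the $\eps$-room. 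Integrating in time with Hölder, it remains to estimate $\| \jb{\nb}^{\s}(v^2 w) \|_{L^1_{t,x}}$ via the fractional Leibniz rule, Lemma~\ref{LEM:prod}~(i), in space-time $L^p$-spaces. This produces two types of terms.

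\emph{Derivative on $w$.} Here $\jb{\nb}^{\s} w \in X^{\al - 1 - 2\eps, \frac 12 - 2\dl}$. I would place this factor in $L^{q}_{t,x}$ with $q$ close to $\frac{10}3$ and control it by Lemma~\ref{LEM:Str}, after shrinking $\dl$ so that $\frac 12 - 2\dl$ fits the lemma. The two copies of $v$ then go into $L^r_{t,x}$ and $L^{p}_{t,x}$, with $\frac 1r + \frac 1p + \frac 1q = 1$. The $L^r$-norm is part of the data. The $L^{p}$-norm, with $p$ large, comes from $X^{\al - \frac12, \frac12 + \dl} \subset C_I H^{\al - \frac 12}$ (Lemma~\ref{LEM:HsXsb}) together with the Sobolev embedding $H^{\al - \frac 12} \subset L^{p}$, valid for $p < \frac{6}{4 - 2\al}$, which is large for $\al$ near $1$. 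The time integration on $I$ costs a power of $1 + |I|$.

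\emph{Derivative on one copy of $v$.} Placing $\jb{\nb}^{\s} v$ in $L^\infty_t L^2_x$ gives $\| v \|_{X^{\al - \frac 12, \frac 12 + \dl}}$. I would then put $w \in L^{q}_{t,x}$ using Lemma~\ref{LEM:Str}; this requires $\frac 12 - \eps > \frac 32 - \frac{\al + 3}{q}$, i.e.\ $q$ slightly below $\al + 3$. The other copy of $v$ goes into $L^r_{t,x}$, and Hölder closes in space-time: the spatial exponents satisfy $\frac 12 + \frac 1q + \frac 1r = 1$ exactly when $\frac 1q + \frac 1r = \frac 12$. The time exponents are then adjusted using the bounded length of $I$, which accounts for the factor $(1 + |I|)$. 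The exponent $r \ge \frac{10}{3}$ must be chosen so that both this balance and the hypothesis $\al - \frac 12 > \frac 32 - \frac{\al+3}{r}$ hold. I expect this to be possible with $r$ around $4$, because $\frac 32 - \frac{\al+3}{4} < \al - \frac 12$ is equivalent to $5\al > 5$, which holds since $\al > 1$.

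The estimate \eqref{str_mul1-2} is proved the same way. The difference is that there is no $L^r$-factor, so both $u_2$ and $u_3$ are controlled through Lemma~\ref{LEM:Str} and the Sobolev embedding. The loss $\eps$ in their regularity is absorbed by the margin in the Strichartz exponents.

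The main obstacle is the exponent bookkeeping when $\al$ is near $1$. The factor $\jb{\nb}^{\s}$ costs almost half a derivative, while Strichartz for low-regularity factors forces exponents near $\frac{10}{3}$. The frequency-localized argument is essential here, so that the derivative always falls on a factor that can absorb it through the $L^\infty_t H^{\al - \frac 12}$ bound for $v$ rather than through Strichartz. If a global Hölder split fails in some frequency configuration, I would treat that configuration separately by dyadic summation, using the $\eps$-gains.
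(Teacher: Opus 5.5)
Your overall strategy is the right one, and your term with $\jb{\nb}^{\s}$ on a copy of $v$ is correct with $r = q = 4$. That strategy is: duality, moving $\jb{\nb}^{\s}$ with $\s = \frac32 - \al + \eps$ off $u$ via $\int u F\,dx = \int \jb{\nb}^{-\s}u\,\jb{\nb}^{\s}F\,dx$, the fractional Leibniz rule of Lemma~\ref{LEM:prod}~(i), then H\"older and Strichartz. This is also how the parallel Lemma~\ref{LEM:Str_mul2} is proved, and no Littlewood--Paley decomposition is needed.

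The step that fails is the term with $\jb{\nb}^{\s}$ on $w$. There $\jb{\nb}^{\s} w$ has regularity only $\al - 1 - 2\eps < \frac18$. Lemma~\ref{LEM:Str} at any exponent $q \geq \frac{10}{3}$ needs regularity strictly above $\frac32 - \frac{\al+3}{q} \geq \frac{6 - 3\al}{10} \geq \frac{21}{80}$ on $1 < \al \le \frac98$; placing $\jb{\nb}^{\s} w$ in $L^q_{t,x}$ with $q$ near $\frac{10}{3}$ would require $\al > \frac{16}{13}$. So this step cannot be carried out. Nor can the term be dodged by routing the derivative elsewhere. It is exactly the configuration where $w$ carries the top frequency, so your closing fallback does not address it. Moreover, $H^{\al - \frac12}(\T^3) \subset L^p$ holds only for $p \le \frac{3}{2 - \al} \le \frac{24}{7}$, so the Sobolev embedding does not give ``large'' $p$.

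The repair is as follows.
\begin{itemize}
\item Use the trivial bound $\| \jb{\nb}^{\s} w \|_{L^2_{t,x}} \le \| w \|_{X^{\frac12 - \eps, \frac12 - 2\dl}}$, valid since $\s \le \frac12 - \eps$ once $\al \geq 1 + 2\eps$. This term is then at most $\| v \|_{L^4_I L^4_x}^2 \| w \|_{X^{\frac12 - \eps, \frac12 - 2\dl}}$.
\item Take $r = 4$ and keep one factor as $\| v \|_{L^r_I L^r_x}$. Bound the other by $(1 + |I|)^{\frac14} \| v \|_{X_I^{\al - \frac12, \frac12 + \dl}}$ through Lemma~\ref{LEM:Str}, which is admissible exactly because $\al - \frac12 > \frac32 - \frac{\al+3}{4} \iff \al > 1$.
\item This second copy must come from Strichartz, not Sobolev, since near $\al = 1$ you need an exponent near $4 > \frac{3}{2 - \al}$.
\item With $r = 4$ the other term closes as you describe. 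Indeed $\frac12 - \eps > \frac{3 - \al}{4}$ puts $w$ in $L^4_{t,x}$, and $\s \le \al - \frac12$ puts $\jb{\nb}^{\s} v$ in $L^\infty_I L^2_x$.
\end{itemize}
The same correction handles \eqref{str_mul1-2}. Put $u_2, u_3 \in L^4_{t,x}$ by Lemma~\ref{LEM:Str}, which applies because $\al - \frac12 - \eps > \frac{3-\al}{4}$ for small $\eps$. Put $\jb{\nb}^{\s} w \in L^2_{t,x}$ as above. Finally, $\| \jb{\nb}^{\s} u_2 \|_{L^\infty_I L^2_x} \les \| u_2 \|_{X_I^{\al - \frac12 - \eps, \frac12 + \dl}}$ because $\s \le \al - \frac12 - \eps$.
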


We now show the following multilinear estimates.
\begin{lemma}
\label{LEM:Str_mul2}
Let $1 < \al \leq \frac 98$ and $I \subset \R_+$ be a closed interval. Then, there exist $\eps, \dl > 0$ small such that
\begin{align}
\| u_1 u_2 u_3 v \|_{X_I^{- \frac 12 + \eps, - \frac 12 + 2 \dl}} \les (1 + |I|)^{\frac 14} \bigg( \prod_{j = 1}^3 \| u_j \|_{L_I^4 W_x^{\eps, 4}} \bigg) \| v \|_{L_I^\infty W_x^{-\eps, \infty}}.
\label{str00}
\end{align}

\noi
Furthermore, for any $r > 4$ satisfying $\frac 12 > \frac 32 - \frac{\al + 3}{r}$, there exist $\dl, \ta > 0$ small such that
\begin{align}
\begin{split}
\| u_1^2 &u_2 u_3 v \|_{X_I^{- \frac 12, - \frac 12 + 2 \dl}} \\
&\les (1 + |I|)^{\frac{1 - 2 \ta}{r}} \| u_1 \|_{L_I^r L_x^r}^{1 + \ta} \| u_1 \|_{L_I^{\frac{r (1 - \ta)}{\ta}} L_x^{\frac{r (1 - \ta)}{\ta}}}^{1 - \ta} \| u_2 \|_{L_I^r L_x^r} \| u_3 \|_{L_I^r L_x^r} \| v \|_{L_I^{\frac{r}{r - 4}} L_x^{\frac{r}{r - 4}}} .
\end{split}
\label{str0}
\end{align}
\end{lemma}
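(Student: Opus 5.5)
We prove both estimates by duality, reducing each to a space-time H\"older inequality combined with the Strichartz estimate (Lemma~\ref{LEM:Str}) applied to the dual function. By definition of the local-in-time norm and duality of $X^{s,b}$-spaces,
\begin{align*}
\| F \|_{X_I^{- \frac 12 + \eps, - \frac 12 + 2 \dl}} \les \sup \bigg\{ \Big| \int_I \int_{\T^3} F w \, dx dt \Big| : \| w \|_{X_I^{\frac 12 - \eps, \frac 12 - 2 \dl}} \leq 1 \bigg\},
\end{align*}
up to a harmless extension/restriction of $w$. So it suffices to bound the space-time pairing of $u_1 u_2 u_3 v$ (respectively $u_1^2 u_2 u_3 v$) against such a $w$.

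For \eqref{str00}, the factor $v$ has negative regularity $-\eps$, so we cannot place it directly in $L^\infty$. We will instead move $\jb{\nb}^{\eps}$ onto the product $u_1 u_2 u_3 w$. Concretely, we write
\[\int u_1u_2u_3 v w = \int \jb{\nb}^{-\eps} v \cdot \jb{\nb}^{\eps}(u_1u_2u_3w)\]
and bound this by $\|v\|_{L^\infty_I W^{-\eps,\infty}_x}\|\jb{\nb}^\eps(u_1u_2u_3 w)\|_{L^1_{I,x}}$. The fractional Leibniz rule (Lemma~\ref{LEM:prod}(i), iterated) then splits the latter into terms where $\jb{\nb}^\eps$ falls on a single factor. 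Each such term is controlled by H\"older with exponents $(4,4,4,\infty)$ in space-time. This requires $\|\jb{\nb}^{\eps}w\|_{L^4_{I,x}}$ (and $\|w\|_{L^4_{I,x}}$) to be bounded by $(1+|I|)^{1/4}\|w\|_{X^{\frac12-\eps,\frac12-2\dl}_I}$. Since the exponent $4$ is not allowed at $\infty$ in Leibniz, we will take $4^-$ and $\infty^-$ exponents, absorbing the loss into the $\eps$ margin.

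To get that $w$ bound we apply Lemma~\ref{LEM:Str} with $p=4$. It needs $s>\frac32-\frac{\al+3}{4}=\frac{3-\al}{4}$, and since $\al>1$ we have $\frac{3-\al}4<\frac12$. Hence there is room for $s=\frac12-2\eps$ (accounting for the $\eps$ derivative) with $b=\frac12-2\dl$, once $\eps$ is small relative to $\al-1$ and $\dl$ is chosen as in Lemma~\ref{LEM:Str}. This also yields the factor $(1+|I|)^{1/4}$.

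For \eqref{str0}, there is no derivative loss, and the target regularity is $-\frac12$, so we pair against $w\in X_I^{\frac12,\frac12-2\dl}$. We apply H\"older in space-time with exponents $u_1^{1+\ta}\in L^{r/(1+\ta)}$, $u_1^{1-\ta}\in L^{r/\ta}$ (coming from $u_1\in L^{r(1-\ta)/\ta}$), $u_2,u_3\in L^r$, $v\in L^{r/(r-4)}$, and $w\in L^{q}$. The reciprocals must sum to one, which gives $\frac1q=\frac{1+\ta}{r}+\frac{\ta}{r}+\frac2r+\frac{r-4}{r}-1+\dots$. This bookkeeping needs checking. The intended outcome is $w\in L^{r/(1-2\ta)}$ or thereabouts, chosen so that $q$ is slightly larger than $r$. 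Lemma~\ref{LEM:Str} then gives $\|w\|_{L^q}\les(1+|I|)^{1/q}\|w\|_{X^{\frac12,\frac12-\dl'}_I}$, provided $\frac12>\frac32-\frac{\al+3}{q}$. By the assumption on $r$ this holds strictly for $q=r$, and by continuity it persists for $q$ near $r$ once $\ta$ is small. This also produces the power $(1+|I|)^{(1-2\ta)/r}$.

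The main obstacle is this exponent bookkeeping. We must make sure the H\"older exponents close exactly with the stated norms, and that the perturbations in $\ta,\eps,\dl$ keep us inside the Strichartz admissible range $p\ge\frac{10}3$, $s>\frac32-\frac{\al+3}{p}$. Handling the endpoint $L^\infty$ issue in the fractional Leibniz step for \eqref{str00} is also delicate.
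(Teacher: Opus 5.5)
Your proposal is correct and follows essentially the paper's own argument. For \eqref{str00} this means duality, trading $\jb{\nb}^{\eps}$ between $v$ and $u_1u_2u_3w$, Lemma~\ref{LEM:prod}~(i) with $L^4$ exponents, and Lemma~\ref{LEM:Str} at $p=4$ with regularity $\frac12-2\eps$. For \eqref{str0} it means H\"older with $w\in L^{r/(1-2\ta)}$ (your bookkeeping does close exactly to $\frac1q=\frac{1-2\ta}{r}$), the interpolation $\|u_1^2\|_{L^{r/(1+2\ta)}}\le\|u_1\|_{L^r}^{1+\ta}\|u_1\|_{L^{r(1-\ta)/\ta}}^{1-\ta}$, and Strichartz with $s=\frac12$. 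The endpoint worry in the Leibniz step is unnecessary: once $v$ is paired off in $L^\infty_I W^{-\eps,\infty}_x$, only the exponents $4$, $2$, $\frac43$ (all $>1$) appear, so no $4^-$ or $\infty^-$ perturbation is needed.
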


\begin{proof}
Let us first consider \eqref{str00}. We let $\eps > 0$ be small enough such that
\begin{align}
\frac 12 - 2 \eps > \frac 32 - \frac{\al + 3}{4}.
\label{eps_bdd}
\end{align}

\noi
By duality, H\"older's inequalities, Lemma \ref{LEM:prod} (i), and Lemma \ref{LEM:Str} along with \eqref{eps_bdd}, we have
\begin{align*}
\| u_1 u_2 u_3 v \|_{X_I^{- \frac 12 + \eps, - \frac 12 + 2 \dl}} &\leq \sup_{\| w \|_{X^{\frac 12 - \eps, \frac 12 - 2 \dl}} \leq 1} \bigg| \int_I \int_{\T^3} u_1 u_2 u_3 v w dx dt \bigg| \\
&\leq \sup_{\| w \|_{X^{\frac 12 - \eps, \frac 12 - 2 \dl}} \leq 1} \int_I \int_{\T^3} |\jb{\nb}^\eps (u_1 u_2 u_3 w)| dx dt \cdot \| v \|_{L_I^\infty W_x^{- \eps, \infty}} \\
&\les \bigg( \prod_{j = 1}^3 \| \jb{\nb}^\eps u_j \|_{L_I^4 L_x^4} \bigg) \| v \|_{L_I^\infty W_x^{- \eps, \infty}} \sup_{\| w \|_{X^{\frac 12 - \eps, \frac 12 - 2 \dl}} \leq 1} \| \jb{\nb}^\eps w \|_{L_I^4 L_x^4} \\
&\les (1 + |I|)^{\frac 14} \bigg( \prod_{j = 1}^3 \| u_j \|_{L_I^4 W_x^{\eps, 4}} \bigg) \| v \|_{L_I^\infty W_x^{-\eps, \infty}},
\end{align*}

\noi
which proves the desired inequality.

For \eqref{str0}, we let $\ta > 0$ be small enough such that
\begin{align}
\frac 12 > \frac 32 - \frac{(\al + 3) (1 - 2 \ta)}{r}.
\label{nu_bdd}
\end{align}

\noi
By duality, H\"older's inequalities, and Lemma \ref{LEM:Str} along with \eqref{nu_bdd}, we have
\begin{align*}
\| u_1^2 &u_2 u_3 v \|_{X_I^{- \frac 12, - \frac 12 + 2 \dl}} \\
&\leq \sup_{\| w \|_{X^{\frac 12, \frac 12 - 2 \dl}} \leq 1} \bigg| \int_I \int_{\T^3} u_1^2 u_2 u_3 v w dx dt \bigg| \\
&\leq \| u_1^2 \|_{L_I^{\frac{r}{1 + 2 \ta}} L_x^{\frac{r}{1 + 2 \ta}}} \| u_2 \|_{L_I^r L_x^r} \| u_3 \|_{L_I^r L_x^r} \| v \|_{L_I^{\frac{r}{r - 4}} L_x^{\frac{r}{r - 4}}} \\
&\quad \times \sup_{\| w \|_{X^{\frac 12, \frac 12 - 2 \dl}} \leq 1} \| w \|_{L_I^{\frac{r}{1 - 2 \ta}} L_x^{\frac{r}{1 - 2 \ta}}} \\
&\les (1 + |I|)^{\frac{1 - 2 \ta}{r}} \| u_1 \|_{L_I^r L_x^r}^{1 + \ta} \| u_1 \|_{L_I^{\frac{r (1 - \ta)}{\ta}} L_x^{\frac{r (1 - \ta)}{\ta}}}^{1 - \ta} \| u_2 \|_{L_I^r L_x^r} \| u_3 \|_{L_I^r L_x^r} \| v \|_{L_I^{\frac{r}{r - 4}} L_x^{\frac{r}{r - 4}}},
\end{align*}

\noi
as desired.
\end{proof}

\subsection{Tools from stochastic analysis}
\label{SUB:sto}

In this subsection, we recall some useful lemmas from stochastic analysis. 

We define the Hermite polynomials $H_k (x ; \s)$ with variance $\s > 0$ via the following generating function:
\begin{align}
e^{tx - \frac 12 \s t^2} = \sum_{k = 0}^\infty \frac{t^k}{k!} H_k (x ; \s)
\label{hermite}
\end{align}

\noi
for $t, x \in \R$. From \eqref{hermite}, we have the following binomial expansion
\begin{align}
H_k (x + y; \s_1 + \s_2) = \sum_{\l = 0}^k \binom{k}{\l} H_{k - \l} (x ; \s_1) H_\l (y ; \s_2).
\label{her_binom}
\end{align}

Let us first record the following lemma about the orthogonality relation for the Hermite polynomials; see \cite[Lemma 1.1.1]{Nua}.
\begin{lemma}
\label{LEM:wick}
Let $X$ and $Y$ be jointly Gaussian random variables with mean zero and variances $\s_X$ and $\s_Y$, respectively. Then, for any $k, \l \in \N$, we have
\begin{align*}
\E \big[ H_k (X; \s_X) H_\l (Y; \s_Y) \big] = \ind_{\{k = \l\}} k! (\E [XY])^k.
\end{align*}
\end{lemma}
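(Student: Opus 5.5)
The statement to prove is Lemma~\ref{LEM:wick}: for jointly Gaussian $X,Y$ with mean zero and variances $\s_X,\s_Y$, we have $\E[H_k(X;\s_X)H_\l(Y;\s_Y)] = \ind_{\{k=\l\}}k!(\E[XY])^k$. I will prove it with the generating function \eqref{hermite}: compute one exponential moment and compare Taylor coefficients.

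Fix $s,t\in\R$. Since $sX+tY$ is a centered Gaussian with variance $s^2\s_X+2st\,\E[XY]+t^2\s_Y$, its moment generating function gives
\begin{align*}
\E\Big[e^{sX-\frac12\s_X s^2}\,e^{tY-\frac12 \s_Y t^2}\Big] = e^{-\frac12(\s_X s^2+\s_Y t^2)}\,e^{\frac12(s^2\s_X+2st\E[XY]+t^2\s_Y)} = e^{st\,\E[XY]}.
\end{align*}
The right-hand side expands as $\sum_{k\ge0}\frac{(st)^k}{k!}(\E[XY])^k$.

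On the left-hand side, I insert the expansion \eqref{hermite} for each factor. This gives
\begin{align*}
\sum_{k,\l\ge0}\frac{s^k t^\l}{k!\,\l!}\,\E\big[H_k(X;\s_X)H_\l(Y;\s_Y)\big].
\end{align*}
To exchange expectation and summation, I need a dominating bound. Each Hermite polynomial satisfies $|H_k(x;\s)|\le k!\,C^k e^{c|x|}$-type growth; alternatively, $\sum_k \frac{|s|^k}{k!}|H_k(x;\s)|\le e^{|s||x|+\frac12\s s^2}$ up to harmless adjustments. The cleanest route uses $H_k(x;\s)=\s^{k/2}H_k(x/\sqrt\s;1)$ and the standard Cauchy-estimate bound $|H_k(x;1)|\le k!\,r^{-k}e^{r|x|+r^2/2}$ for any $r>0$. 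This gives absolute convergence, dominated by an integrable exponential of $|X|+|Y|$, for $|s|,|t|$ small. So both sides are real-analytic in $(s,t)$ near the origin.

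Comparing coefficients of $s^kt^\l$ then yields $\frac{1}{k!\l!}\E[H_kH_\l]=\ind_{\{k=\l\}}\frac{(\E[XY])^k}{k!}$, which is the claim. The only point needing care is the justification of the interchange, handled by the domination above. The degenerate case $\s_X=0$ or $\s_Y=0$ is excluded, since the variances are positive by assumption.
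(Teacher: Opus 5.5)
Your argument is correct and is essentially the proof the paper relies on: the paper gives no proof of its own but cites \cite[Lemma~1.1.1]{Nua}. There the identity $\E[e^{sX-\frac12 s^2}e^{tY-\frac12 t^2}]=e^{st\,\E[XY]}$ is differentiated at $s=t=0$, and your version with general variances and comparison of Taylor coefficients is the same computation.

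Your justification of the interchange is also sound. The bound $\sum_k \frac{|s|^k}{k!}|H_k(x;\s)|\le e^{|s||x|+\frac12\s s^2}$ in fact holds exactly, with no adjustments: replace every coefficient of $H_k$ by its absolute value. The domination $\E[e^{|s||X|+|t||Y|}]<\infty$ then holds for all $s,t$, so smallness of $s,t$ is not needed.
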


We now consider an abstract Wiener space $(H, B, \mu)$, where $\mu$ is a Gaussian measure on a separable Banach space $B$ with $H \subset B$ as its Cameron-Martin space. Given a complete orthonormal system $\{e_j\}_{j \in \N} \subset B^*$ of $H^* = H$, we define a polynomial chaos of order $k$ to be an element of the form $\prod_{j = 1}^\infty H_{k_j} (e_j (x))$, where $x \in B$, $H_{k_j}$ is the Hermite polynomial of degree $k_j$, $k_j \neq 0$ for only finitely many $j$'s, and $k = \sum_{j = 1}^\infty k_j$. We then denote by $\H_k$ the closure of polynomial chaoses of order $k$ under $L^2 (B, \mu)$. We also set
\begin{align*}
\H_{\leq k} = \bigoplus_{j = 0}^k \H_j
\end{align*}

\noi
for $k \in \N$.

Let us record the Wiener chaos estimate \cite[Theorem I.22]{Simon}, which is a consequence of the hypercontractivity of the Ornstein-Uhlenbeck semigroup due to Nelson \cite{Nel}. See also \cite[Proposition 2.4]{TTz}.
\begin{lemma}
\label{LEM:hyp}
Let $k \in \N$. Then, for any $2 \leq p < \infty$ and $X \in \H_{k}$, we have
\begin{align*}
\| X \|_{L^p (\Om)} \leq (p - 1)^{\frac{k}{2}} \| X \|_{L^2 (\Om)}.
\end{align*}
\end{lemma}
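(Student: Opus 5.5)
The plan is to derive the estimate from Nelson's hypercontractivity of the Ornstein--Uhlenbeck semigroup, applied on one chaos at a time.

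\textbf{Setup.} Let $P_t = e^{tL}$ be the Ornstein--Uhlenbeck semigroup on $L^2(B,\mu)$, where $L$ is the number-operator generator. The multiplication rule for Hermite polynomials gives $L\prod_j H_{k_j}(e_j(x)) = -\big(\sum_j k_j\big)\prod_j H_{k_j}(e_j(x))$. Hence $L$ acts as $-k$ on polynomial chaoses of order $k$, and by closure $P_t X = e^{-kt} X$ for every $X\in\H_k$.

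\textbf{Key input (Nelson's theorem).} For $1<p\le q<\infty$ and $e^{2t}\ge \frac{q-1}{p-1}$,
\begin{align*}
\| P_t F \|_{L^q(\mu)} \le \| F \|_{L^p(\mu)}.
\end{align*}
I would first reduce to finitely many coordinates. A polynomial chaos involves only finitely many $e_j$, so it is a function of a standard Gaussian vector in $\R^n$, and $P_t$ is given there by Mehler's formula. On $\R^n$, I would prove hypercontractivity via Gross's argument, in three steps:
\begin{enumerate}
\item[(a)] Establish the Gaussian logarithmic Sobolev inequality $\int f^2\log f^2\,d\gamma_n - \|f\|_2^2\log\|f\|_2^2 \le 2\int|\nabla f|^2 d\gamma_n$. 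This holds in dimension one (e.g.\ via the CLT from the two-point inequality, or via a semigroup/Bakry--\'Emery computation), and it tensorizes to dimension $n$ with a constant independent of $n$.
\item[(b)] Set $q(t)=1+(p-1)e^{2t}$ and differentiate $t\mapsto \log\|P_tF\|_{L^{q(t)}}$ for positive smooth $F$.
\item[(c)] Use the log-Sobolev inequality to show this derivative is $\le 0$, and conclude by monotonicity.
\end{enumerate}
This step is the main obstacle, namely the derivative computation and the justification of the regularity needed for it. In the paper I would simply cite \cite{Nel} and \cite{Simon}, since the constant does not depend on $n$.

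\textbf{Conclusion.} Take $p=2$, target exponent $q=p\ge 2$ in Nelson's theorem, and choose $t$ with $e^{2t}=p-1$. For $X$ a finite linear combination of polynomial chaoses of order $k$,
\begin{align*}
\|X\|_{L^p} = e^{kt}\|P_tX\|_{L^p} \le e^{kt}\|X\|_{L^2} = (p-1)^{\frac k2}\|X\|_{L^2}.
\end{align*}
For general $X\in\H_k$, pick such combinations $X_m\to X$ in $L^2$ and pass to an a.s.\ convergent subsequence. Fatou's lemma then gives $\|X\|_{L^p}\le\liminf_m\|X_m\|_{L^p}\le (p-1)^{k/2}\|X\|_{L^2}$. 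Finally, $\|\cdot\|_{L^p(\Om)}$ in the statement is just the $L^p$ norm under the law $\mu$, so the estimate holds as stated.
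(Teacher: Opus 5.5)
Your proposal is correct and follows essentially the paper's route: the paper gives no argument beyond citing Simon's Theorem I.22 as a consequence of Nelson's hypercontractivity, and your proof is exactly the standard argument behind that citation, namely $P_t = e^{-kt}$ on $\mathcal{H}_k$, Nelson's bound from $L^2$ to $L^p$ at $e^{2t} = p-1$, then density in $L^2$ plus Fatou. One small addition is worth making, since the paper later applies the lemma to complex-valued Fourier coefficients: the Mehler kernel is positive, so $|P_t F| \le P_t |F|$ pointwise, and hence Nelson's inequality and your conclusion hold verbatim for complex-valued $X$.
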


Lastly, we consider regularities of stochastic objects. A stochastic process $X : \R \to \mathcal{D}' (\T^d)$ is \textit{stationary in space} if $\{X(t, \cdot)\}_{t \in \R}$ and $\{X(t, x_0 + \cdot)\}_{t \in \R}$ have the same law for any $x_0 \in \T^d$. Given $t_0 \in \R$, we define the difference operator $\dl_{t_0}$ by
\begin{align}
\dl_{t_0} X (t) = X(t + t_0) - X(t).
\label{dh}
\end{align}

\begin{lemma}
\label{LEM:reg}
Let $d \in \N$. Given $N \in \N$, let $X_N$ and $X$ be stochastic processes from $\R$ to $\mathcal{D}' (\T^d)$ that are stationary in space. Suppose that there exists $k \in \N$ such that for each $t \in \R$, $X_N (t)$ and $X(t)$ belong to $\H_{\leq k}$.

\smallskip \noi
\textup{(i)} Let $t \in \R$. If there exists $s \in \R$ such that
\begin{align*}
\sum_{n \in \Z^d} \E \big[ | \jb{n}^s \ft{X} (t, n)|^2 \big] < \infty ,
\end{align*}

\noi
then we have almost surely $X(t) \in W^{s, \infty} (\T^d)$ and the tail estimate
\begin{align*}
\mathbb{P} \big( \| X(t) \|_{W^{s, \infty}} > \ld \big) < C \exp (-c \ld^{\frac{2}{k}})
\end{align*}

\noi
for any $\ld > 0$ and some constants $C, c > 0$.

\smallskip \noi
\textup{(ii)} Let $t \in \R$. If there exists $s \in \R$ and $\g > 0$ such that
\begin{align*}
\sum_{n \in \Z^d} \E \big[ |\jb{n}^s \ft{X}_M (t, n) - \jb{n}^s \ft{X}_N (t, n) |^2 \big] \les N^{- \g}
\end{align*}

\noi
for any $M \geq N \geq 1$, then $\{ X_N (t) \}_{N \in \N}$ is almost surely a Cauchy sequence in $W^{s, \infty} (\T^d)$. Furthermore, we have the tail estimate
\begin{align*}
\mathbb{P} \big( N^{\frac{\gamma}{2}} \| X_M (t) - X_N (t) \|_{W^{s, \infty}} > \ld \big) < C \exp (-c \ld^{\frac{2}{k}})
\end{align*}

\noi
for any $\ld > 0$ and some constants $C, c > 0$.

\smallskip \noi
\textup{(iii)} Let $I \subset \R$ be a closed interval. If there exist $0 < s' < 1$ and $s \in \R$ such that
\begin{align*}
\sum_{n \in \Z^d} \E \big[ |\jb{n}^s \dl_{t_0} \ft{X} (t, n)|^2 \big] \les |t_0|^{s'}
\end{align*}

\noi
for any $t, t+t_0 \in I$, then we have almost surely $X \in C(I; W^{s, \infty} (\T^d))$ and the tail estimate
\begin{align*}
\mathbb{P} \big( \| X \|_{C_I W_x^{s, \infty}} > \ld \big) < C(I) \exp (-c (I) \ld^{\frac{2}{k}})
\end{align*}

\noi
for any $\ld > 0$ and some constants $C(I), c(I) > 0$.

\smallskip \noi
\textup{(iv)} Let $I \subset \R$ be a closed interval. If there exist $0 < s' < 1$, $s \in \R$, and $\g > 0$ such that
\begin{align*}
\sum_{n \in \Z^d} \E \big[ | \jb{n}^s \dl_{t_0} \ft X_M (t, n) - \jb{n}^s \dl_{t_0} \ft X_N (t, n) |^2 \big] \les N^{- \g} |t_0|^{s'}
\end{align*}

\noi
for any $t, t + t_0 \in I$ and $M \geq N \geq 1$, then $\{ X_N \}_{N \in \N}$ is almost surely a Cauchy sequence in $C (I; W^{s, \infty} (\T^d))$. Furthermore, we have the tail estimate
\begin{align*}
\mathbb{P} \big( N^{\frac{\gamma}{2}} \| X_M - X_N \|_{C_I W_x^{s, \infty}} > \ld \big) < C(I) \exp (- c(I) \ld^{\frac{2}{k}}).
\end{align*}
\end{lemma}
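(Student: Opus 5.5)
The plan is the standard Kolmogorov-type argument: reduce everything to second-moment bounds on Fourier coefficients, upgrade them to $L^p$ bounds by Wiener chaos, and pass from $W^{s-\eps,p}$ to $W^{s,\infty}$ by Sobolev embedding. Throughout, $\H_{\leq k}$-membership lets us apply Lemma~\ref{LEM:hyp} to any linear functional of $X(t)$, in particular to $\jb{\nb}^{s-\eps}X(t,x)$ for fixed $x$, with constant $(p-1)^{k/2}$. Orthogonal decomposition into the finitely many chaoses handles $\H_{\leq k}$ instead of $\H_k$.

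\textbf{Part (i).} Fix $\eps>0$ small and $p$ large. Stationarity in space gives that $\E|\jb{\nb}^{s-\eps}X(t,x)|^2$ is independent of $x$. By Parseval it equals $(2\pi)^{-d}\sum_n \jb{n}^{2(s-\eps)}\E|\ft X(t,n)|^2$, which is finite by hypothesis. Lemma~\ref{LEM:hyp} then gives
\begin{align*}
\E\|X(t)\|_{W^{s-\eps,p}}^p\les p^{kp/2}\Big(\sum_n\jb{n}^{2s}\E|\ft X(t,n)|^2\Big)^{p/2}.
\end{align*}
For $p>d/\eps$, Sobolev embedding $W^{s-\eps,p}\subset W^{s-2\eps,\infty}$ then controls $\|X(t)\|_{W^{s-2\eps,\infty}}$. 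The loss of $2\eps$ is absorbed by slightly relabeling $s$: in the paper's applications one always has room, or one simply applies the hypothesis at $s+2\eps$. Chebyshev with the optimal choice $p\sim\ld^{2/k}$ gives the stretched-exponential tail $C\exp(-c\ld^{2/k})$.

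\textbf{Part (ii).} Apply the same argument to $X_M-X_N$. This gives $\E\|X_M(t)-X_N(t)\|^p\les p^{kp/2}N^{-\g p/2}$, hence the tail bound for $N^{\g/2}\|X_M-X_N\|$. The almost sure Cauchy property follows by restricting to dyadic $N$ and applying Borel--Cantelli, then filling in the intermediate indices with the uniform-in-$M$ bound.

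\textbf{Parts (iii) and (iv).} Apply the same moment bound to the time differences $\dl_{t_0}X(t)$. This yields
\begin{align*}
\E\|X(t+t_0)-X(t)\|_{W^{s-\eps,p}}^p\les p^{kp/2}|t_0|^{s'p/2}.
\end{align*}
For $p$ large enough that $s'p/2>1$, the Kolmogorov continuity criterion, or equivalently the Garsia--Rodemich--Rumsey lemma in the Banach space $W^{s-\eps,p}$, gives a continuous version. It also gives $\E\|X\|_{C_IW^{s-\eps,p}}^p\les C(I)p^{kp/2}$, after adding the single-time bound from (i) at an endpoint of $I$. Sobolev embedding and Chebyshev then conclude as in (i). Part (iv) combines this with the $N^{-\g}$ factor exactly as in (ii).

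The main point needing care is tracking the constants' $p$-dependence through the Kolmogorov/GRR step. We need them to grow at most like $p^{k/2}$, up to $C(I)^{1/p}$, to retain the $\exp(-c\ld^{2/k})$ tail. Since GRR only loses a factor depending on $I$, $s'$ and $p$ polynomially, this should be fine.
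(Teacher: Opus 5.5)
You follow the standard strategy, which is what the references cited after the statement carry out; the paper gives no proof of its own. That strategy is: Wiener chaos applied to the pointwise values $\jb{\nb}^{s}X(t,x)$, stationarity and Parseval to identify their variance, integration in $x$, Sobolev embedding, Chebyshev with $p\sim\ld^{2/k}$, Borel--Cantelli along dyadic $N$, and Kolmogorov/Garsia--Rodemich--Rumsey in time.

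The genuine gap is the sentence where you absorb the loss of regularity. The hypotheses are given at one fixed $s$, so you cannot ``apply the hypothesis at $s+2\eps$''. Your argument only yields $X(t)\in W^{s-\eps,\infty}$ for every $\eps>0$. One $\eps$ suffices here: the hypothesis at $s$ directly gives $\E\|X(t)\|_{W^{s,p}}^p\le C^p(p-1)^{kp/2}$, and $W^{s,p}\subset W^{s-\eps,\infty}$ for $p>d/\eps$, with a constant uniform in larger $p$.

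This gap cannot be closed, because the endpoint claim is false. Let $g_j$ be i.i.d.\ standard complex Gaussians and set $X(t,x)=\sum_{j\ge1}j^{-1}\big(g_je^{i2^jx_1}+\cj{g_j}e^{-i2^jx_1}\big)$. This field is stationary in space, lies in $\H_1$, and satisfies $\sum_n\E|\ft X(t,n)|^2<\infty$. However, $\{\pm2^j\}$ is a Sidon set, so $X(t)\in L^\infty$ would force $\sum_j j^{-1}|g_j|<\infty$, which fails almost surely.

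You should therefore state and prove all four parts with an arbitrary $\eps$-loss. This is how \cite[Lemma~2.3]{GKOT} is formulated: a bound at level $s_0$ yields conclusions for $s<s_0$. Nothing in the paper is affected, since every application of the lemma has a strict inequality to spare, for example \eqref{skab} or the $-\eps$ in the target regularities.

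Two smaller points. First, in (iii) and (iv) the hypotheses control only time increments, so by themselves they say nothing about the size of $X$; take $X(t)\equiv Y$ with $Y$ arbitrarily rough. You correctly add the single-time bound of (i) (resp.\ (ii)) at one point of $I$, but this is an extra hypothesis and must be stated, as the cited references do.

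Second, a Garsia--Rodemich--Rumsey constant that is merely polynomial in $p$ is not enough. A factor $p^{a}$ in the $L^p(\Om)$-norm would degrade the tail to $\exp(-c\ld^{2/(k+2a)})$. What you need, and what is true, is the following. Fix a H\"older exponent $0<\beta<s'/2$ and take $p>(s'/2-\beta)^{-1}$, $\Psi(u)=u^p$ and modulus $u^{\beta+2/p}$. Then the GRR constant stays bounded as $p\to\infty$. This gives $\big(\E\|X\|_{C_IW^{s,p}_x}^p\big)^{1/p}\les_I p^{k/2}$, and the tail bound then follows exactly as in (i).
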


The regularity properties in Lemma~\ref{LEM:reg} follows from a direct application of the Wiener chaos estimate (Lemma \ref{LEM:hyp}). For a proof of the almost sure convergence, see \cite[Proposition 3.6]{MWX} and \cite[Appendix]{OOTz}. For a proof of the tail estimates, see \cite[Lemma~2.3]{GKOT}. Note that from Lemma~\ref{LEM:Bes}~(i), the $W^{s, \infty}$-norm in the statement of Lemma~\ref{LEM:reg} can be replaced by the $\mathcal{C}^s$-norm. 

\medskip
We will also need the Prokhorov theorem. Let $\mathcal{J}$ be an index set and $\mathcal{M}$ be a metric space. A family $\{\mu_n\}_{n \in \mathcal{J}}$ of probability measures on $\mathcal{M}$ is said to be {\it tight} if, for any $\eps > 0$, there exists a compact set $K_\eps \subset \mathcal{M}$ such that $\sup_{n \in \mathcal{J}} \mu_n (K_\eps^c) \leq \eps$. The following theorem can be found in \cite{Bill}.
\begin{lemma}[Prokhorov theorem]
\label{LEM:prok}
Let $\mathcal{J}$ be an index set, $\mathcal{M}$ be a metric space, and $\{\mu_n\}_{n \in \mathcal{J}}$ be a family of probability measures on $\mathcal{M}$. If $\{\mu_n\}_{n \in \mathcal{J}}$ is tight, then it is relatively compact, i.e. every sequence in $\{\mu_n\}_{n \in \mathcal{J}}$ contains a weakly convergent subsequence.
\end{lemma}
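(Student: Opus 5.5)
We prove only the direction stated, namely that a tight family is relatively compact. The plan is to reduce to probability measures on a compact metric space, where weak compactness follows from the Riesz representation theorem and Banach--Alaoglu. Then we use tightness to pull the limit back to $\mathcal{M}$.

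\emph{Step 1: reduction to a $\sigma$-compact separable subspace.} Fix a sequence $\{\mu_n\}$ from the family. By tightness, choose compact sets $K_k \subset \mathcal{M}$ with $\sup_n \mu_n(K_k^c) \leq \frac 1k$. Replacing $K_k$ by $K_1 \cup \cdots \cup K_k$, we may assume they increase. Set $S = \bigcup_k K_k$. Then $S$ is $\sigma$-compact, hence Borel and separable, and $\mu_n(S) = 1$ for all $n$.

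\emph{Step 2: embedding into a compact space.} Since $S$ is a separable metric space, it embeds homeomorphically into the Hilbert cube $Q = [0,1]^{\N}$ via $h(x) = (\min\{d(x, x_j), 1\})_{j}$, where $\{x_j\}$ is dense in $S$. Each $h(K_k)$ is compact, hence closed in $Q$, and $h(S)$ is Borel. Let $\nu_n = \mu_n \circ h^{-1}$, which are Borel probability measures on $Q$. The space $C(Q)$ is separable, so a diagonal argument gives a subsequence with $\int f \, d\nu_{n} \to \Lambda(f)$ for all $f$ in a countable dense set, and hence for all $f \in C(Q)$. The functional $\Lambda$ is positive with $\Lambda(1) = 1$, so by Riesz it is integration against a Borel probability measure $\nu$ on $Q$. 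Thus $\nu_n \to \nu$ weakly along the subsequence.

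\emph{Step 3: the limit charges $h(S)$, and pulling back.} Since $h(K_k)$ is closed, the portmanteau theorem gives
\begin{align*}
\nu(h(K_k)) \geq \limsup_n \nu_n(h(K_k)) \geq 1 - \tfrac 1k .
\end{align*}
Hence $\nu(h(S)) = 1$. Define $\mu(A) = \nu(h(A \cap S))$ for Borel $A \subset \mathcal{M}$. This is a Borel probability measure, because $h$ is a Borel isomorphism of $S$ onto $h(S)$.

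We verify the portmanteau criterion for closed sets. Let $F \subset \mathcal{M}$ be closed. Then $F \cap S$ is closed in $S$, so $h(F\cap S) = G \cap h(S)$ for some closed $G \subset Q$. Therefore
\begin{align*}
\limsup_n \mu_n(F) = \limsup_n \nu_n(G \cap h(S)) \leq \limsup_n \nu_n(G) \leq \nu(G) = \nu(G \cap h(S)) = \mu(F).
\end{align*}
This shows $\mu_n \to \mu$ weakly along the subsequence, which proves relative compactness.

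The main obstacle is Step 3. One must check that the limit does not lose mass outside the embedded copy of $S$, which is exactly where tightness enters. One must also check that the closed-set inequality transfers correctly through the embedding; this uses that $h$ is a homeomorphism onto its image, so that relatively closed sets come from closed sets of $Q$.
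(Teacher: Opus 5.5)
Your proof is correct. Note that the paper gives no proof of this lemma; it quotes it from Billingsley's monograph, so the relevant comparison is with the textbook argument there.

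Billingsley works directly in $\mathcal{M}$. He makes the same reduction to the separable set $\bigcup_k K_k$ and then fixes a countable class $\mathcal{H}$ of compact sets: finite unions of sets $\overline{A}\cap K_k$, with $A$ ranging over a countable family of open sets adapted to $\bigcup_k K_k$. A diagonal argument gives a subsequence along which $\mu_n(H)$ converges for every $H\in\mathcal{H}$. He then builds the limit measure from this set function by an inner-approximation and outer-measure (Carath\'eodory) construction, arranged so that $\mu(G)=\sup_{H\subset G}\lim_n\mu_n(H)\le\liminf_n\mu_n(G)$ for open $G$.

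You instead compactify. The Hilbert-cube embedding, the separability of $C(Q)$ and the Riesz representation theorem give the limit $\nu$ on $Q$ for free. Tightness is then used exactly once: the closed-set portmanteau inequality applied to the compact sets $h(K_k)$ shows that no mass escapes to $Q\setminus h(S)$. The transfer back to $\mathcal{M}$ only needs $h$ to be a homeomorphism onto a Borel set.

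Your route is shorter and shows transparently where tightness enters, but it imports Riesz representation, Stone--Weierstrass and the Urysohn-type embedding. Billingsley's route is longer and more hands-on, but it is purely measure-theoretic and self-contained.
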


Lastly, we recall the following Skorokhod representation theorem. See \cite[Chapter~31]{Bass}.
\begin{lemma}[Skorokhod representation theorem]
\label{LEM:skorok}
Let $\mathcal{M}$ be a complete separable metric space (i.e.~a Polish space). Suppose that a sequence $\{\mu_n\}_{n \in \N}$ of probability measures on $\mathcal{M}$ converges weakly to a probability measure $\mu$ as $n \to \infty$. Then, there exist a probability space $(\wt \Om, \wt \F, \wt{\mathbb{P}})$ and random variables $X_n, X: \wt \Om \to \mathcal{M}$ such that
\begin{align*}
\Law (X_n) = \mu_n \quad \text{and} \quad \Law (X) = \mu,
\end{align*}

\noi
and $X_n$ converges $\wt{\mathbb{P}}$-almost surely to $X$ as $n \to \infty$.
\end{lemma}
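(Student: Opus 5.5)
The plan is to build all the random variables on the single probability space $\wt \Om = [0,1)$ with Lebesgue measure. The idea is to generalize the quantile-function construction from $\mathcal{M} = \R$ by using a nested family of partitions of $\mathcal{M}$. I will use the portmanteau theorem in the form $\mu_n(A) \to \mu(A)$ for every Borel $A$ with $\mu(\partial A) = 0$.

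\emph{Step 1: partitions.} Fix a countable dense set $\{y_i\}_{i \in \N} \subset \mathcal{M}$. For each $i$, the spheres $\{d(\cdot, y_i) = r\}$ with $r>0$ are disjoint, so at most countably many of them carry positive $\mu$-mass. Hence for each $k$ I can choose radii $r_{i,k} \in (2^{-k-1}, 2^{-k})$ such that the balls $B(y_i, r_{i,k})$ are $\mu$-continuity sets. Disjointifying these balls, $B(y_1, r_{1,k}),\ B(y_2, r_{2,k}) \setminus B(y_1, r_{1,k}), \dots$, gives a countable partition of $\mathcal{M}$ into $\mu$-continuity sets of diameter at most $2^{1-k}$. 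Intersecting the partitions for levels $1, \dots, k$ produces nested partitions $\mathcal{P}_k$, still made of $\mu$-continuity sets of diameter $\le 2^{1-k}$, with each cell of $\mathcal{P}_{k+1}$ contained in a cell of $\mathcal{P}_k$. I index the cells by finite words $(j_1, \dots, j_k)$ and order them lexicographically, and I pick a point $x_C \in C$ in every nonempty cell $C$.

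\emph{Step 2: interval assignment.} For a probability measure $\nu$ (either $\mu_n$ or $\mu$), I assign to each cell $C = C_{j_1 \dots j_k}$ the half-open interval $J^\nu_C \subset [0,1)$ of length $\nu(C)$. Its left endpoint is the $\nu$-mass of all level-$k$ cells that are lexicographically smaller. Consistent lexicographic ordering makes these intervals nested: $J^\nu_{C'} \subset J^\nu_C$ whenever $C' \subset C$. I then define $X^{(k)}_\nu(\omega) = x_C$ for $\omega \in J^\nu_C$. Since the intervals are nested and the diameters shrink, the sequence $(X^{(k)}_\nu(\omega))_k$ is Cauchy for every $\omega$; by completeness it has a limit $X_\nu(\omega)$, and $d(X_\nu, X^{(k)}_\nu) \le 2^{1-k}$. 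I set $X_n = X_{\mu_n}$ and $X = X_\mu$.

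To identify the law, note that $X_\nu(\omega) \in \overline{C}$ whenever $\omega \in J^\nu_C$, so $\Law(X_\nu)(\overline C) \ge \nu(C)$ for every cell $C$ at every level. Now let $F$ be closed, and let $F_k$ be the union of the closures of the level-$k$ cells that meet $F$. Summing this inequality over those cells gives $\Law(X_\nu)(F_k) \ge \nu(F)$, and $F_k$ lies in the $2^{1-k}$-neighbourhood of $F$. Letting $k \to \infty$ yields $\Law(X_\nu)(F) \ge \nu(F)$ for all closed $F$. Applying this to the closures of the cells of a single partition, whose $\nu$-masses sum to one, should upgrade it to equality; I expect this bookkeeping, which is essentially a monotone-class or regularity argument, to be the most fiddly part, and I would handle it as just described.

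\emph{Step 3: almost sure convergence.} Each level-$k$ cell $C$ is a finite intersection of differences of $\mu$-continuity balls, hence itself a $\mu$-continuity set. By the portmanteau theorem, $\mu_n(C) \to \mu(C)$. Each endpoint of $J^{\mu_n}_C$ is a countable sum of masses of earlier cells; convergence of the endpoints follows by approximating the tail of the countable partition by a finite union, which is again a $\mu$-continuity set. Thus the endpoints of $J^{\mu_n}_C$ converge to those of $J^{\mu}_C$.

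Let $E$ be the countable, hence Lebesgue-null, set of all endpoints of the intervals $J^\mu_C$ over all levels. Fix $\omega \notin E$ and a level $k$. Then $\omega$ lies in the interior of some $J^\mu_C$ with $\mu(C) > 0$, so $\omega \in J^{\mu_n}_C$ for all $n \ge n_k(\omega)$, and therefore $X_n^{(k)}(\omega) = X^{(k)}(\omega)$ for those $n$. Consequently
\[
\limsup_{n \to \infty} d(X_n(\omega), X(\omega)) \le 2 \cdot 2^{1-k},
\]
and since $k$ is arbitrary, $X_n(\omega) \to X(\omega)$. This holds for every $\omega \notin E$, which proves the almost sure convergence.
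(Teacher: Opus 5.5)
The paper does not prove this lemma; it quotes it as a standard tool with a pointer to \cite[Chapter~31]{Bass}, so there is no argument in the paper to compare yours against. What you propose is the classical Skorokhod--Dudley construction on $[0,1)$ with Lebesgue measure, using nested partitions into $\mu$-continuity sets. It is correct, but three steps should be made explicit or tightened.

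\emph{The law of $X_\nu$.} This is easier than the bookkeeping you anticipate. The route through closures of the cells of one partition is awkward, because closures overlap and, when $\nu=\mu_n$, need not carry the same $\nu$-mass as the cell. Instead, set $\lambda=\Law(X_\nu)$. From $\lambda(F)\ge\nu(F)$ for all closed $F$, complementation gives $\lambda(G)\le\nu(G)$ for all open $G$. Applying this to the open sets $G_m=\{d(\cdot,F)<1/m\}$, which decrease to $F$, gives $\lambda(F)\le\nu(F)$. So $\lambda=\nu$ on the $\pi$-system of closed sets, hence on all Borel sets. Alternatively, for bounded Lipschitz $f$ you have $\E f(X^{(k)}_\nu)=\sum_C\nu(C)f(x_C)$, so $|\E f(X_\nu)-\int f\,d\nu|\le 2^{2-k}\,\text{Lip}(f)$ for every $k$.

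\emph{Convergence of the endpoints.} No tail approximation is needed. Write $B_{i,l}=B(y_i,r_{i,l})$ and $C_\varnothing=\mathcal{M}$. With the lexicographic order on $\N^k$, the union of the level-$k$ cells preceding $C_{j_1\cdots j_k}$ is exactly
\[
\bigcup_{l=1}^{k}\Big(C_{j_1\cdots j_{l-1}}\cap\bigcup_{i<j_l}B_{i,l}\Big).
\]
This is a finite Boolean combination of $\mu$-continuity balls, hence itself a $\mu$-continuity set. The left endpoints of $J^{\mu_n}_C$ therefore converge directly by the portmanteau theorem.

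\emph{Tiling of $[0,1)$.} State explicitly that the level-$k$ intervals $J^\nu_C$ tile $[0,1)$. This follows from the nesting together with the fact that the lexicographic order on $\N^k$ is a well-order. It is what makes $X^{(k)}_\nu$ defined at every $\omega\in[0,1)$, which your Cauchy argument and the almost-sure convergence step both rely on.
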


\subsection{Multiple stochastic integrals}
\label{SUB:mult}
In this subsection, we recall basic definitions and properties of multiple stochastic integrals.
For the notations, we mainly follow \cite[Appendix~B]{OWZ}. See \cite{Nua} and also \cite{Bring2, OWZ, BDNY} for further discussions.

Let $\{B_n\}_{n \in \Z^3}$ be a sequence of Gaussian processes on a probability space $(\Om, \F, \mathbb{P})$ with the following properties:
\begin{enumerate}
\item[(i)] $B_0$ is a real-valued two-sided standard Brownian motion and $B_n$ is a complex-valued two-sided standard Brownian motion for $n \in \Z^3 \setminus \{0\}$.  Specifically, for all $t \in \R$ and $n \in \Z^3$, we have $\E [|B_n(t)|^2] = |t|$.

\item[(ii)] For all $n_1, n_2 \in \Z^3$ with $n_1 \neq \pm n_2$, the processes $B_{n_1}$ and $B_{n_2}$ are independent.

\item[(iii)] For all $n \in \Z^3$, $\cj{B_n} = B_{-n}$.
\end{enumerate}

\noi
Although in this subsection we only present lemmas for two-sided Brownian motions, analogous results also hold for one-sided Brownian motions.

We let $\nu$ be the measure on $\mathsf{Z} \deff \R \times \Z^3$ defined by $d \nu = d t d n$, where $d t$ denotes the Lebesgue measure on $\R$ and $d n$ denotes the counting measure on $\Z^3$. 
For $k \in \N$, we write $\nu^{\otimes k} = \bigotimes_{j=1}^k \nu$ and $L^2 (\mathsf{Z}^k) = L^2((\R \times \Z^3)^k, \nu^{\otimes k})$. Given $f \in L^2 (\mathsf{Z}^k)$, we define the multiple stochastic integral $I_k[f]$ by
\begin{align}
I_k [f] \deff \sum_{n_1, \dots, n_k \in \Z^3} \int_{\R^k} f(t_1, n_1, \dots, t_k, n_k) d B_{n_1} (t_1) \cdots d B_{n_k} (t_k).
\label{defIk}
\end{align} 

\noi
Given $f \in L^2 (\mathsf{Z}^k)$, we also define the symmetrization $\text{Sym} (f)$ by
\begin{align*}
\text{Sym} (f) (z_1, \dots, z_k) = \frac{1}{k!} \sum_{\phi \in S_k} f (z_{\phi(1)}, \dots, z_{\phi (k)}),
\end{align*}

\noi
where $z_j = (t_j, n_j)$ and $S_k$ is the symmetric group on $\{ 1, \dots, k \}$. By the Cauchy-Schwarz inequality, we have
\begin{align}
| \text{Sym} (f) (z_1, \dots, z_k) |^2 \leq \frac{1}{k!} \sum_{\phi \in S_k} | f (z_{\phi(1)}, \dots, z_{\phi (k)}) |^2.
\label{jen}
\end{align}

\noi
We say that $f$ is symmetric if $\text{Sym} (f) = f$. 
If $f$ is not symmetric, we define $I_k [f] \deff I_k [\text{Sym} (f)]$.

Let us now recall some basic properties of multiple stochastic integrals.

\begin{lemma}
\label{LEM:sto_ito}
Let $k, \l \in \N$. Let $f \in L^2(\mathsf{Z}^k)$ and $g \in L^2 (\mathsf{Z}^\l)$.

\smallskip \noi
\textup{(i)} $I_k : L^2 (\mathsf{Z}^k) \to \H_k \subset L^2(\Om)$ is linear, where $\H_k$ denotes the $k$th Wiener chaos as defined in Subsection \ref{SUB:sto}. Moreover, for any $h \in L^2 (\mathsf{Z})$ with $\s_h = \| h \|_{L^2 (\mathsf{Z})}^2$, we have
\begin{align}
I_k [h^{\otimes k}] = H_k ( I_1[h]; \s_h ),
\label{IkHk}
\end{align}

\noi
where $h^{\otimes k}$ denotes the $k$-tensor product of $h$ with itself and $H_k$ is the $k$th Hermite polynomial as in \eqref{hermite}. 

\smallskip \noi
\textup{(ii)} The following Ito isometry holds:
\begin{align*}
\E \big[ I_k [f] \cj{I_\l [g]} \big] = \ind_{\{ k = \l \}} \cdot k! \int_{\mathsf{Z}^k} \textup{Sym} (f) \cj{\textup{Sym} (g)} d \nu^{\otimes k}.
\end{align*}
\end{lemma}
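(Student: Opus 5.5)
The statement to prove is Lemma~\ref{LEM:sto_ito}: linearity of $I_k$, the fact that $I_k$ maps $L^2(\mathsf{Z}^k)$ into the $k$th chaos $\H_k$, the identity \eqref{IkHk}, and the It\^o isometry. These are classical, so the plan is to reduce everything to simple (elementary) tensors. Concretely, I work on the dense class $\mathcal{E}_k \subset L^2(\mathsf{Z}^k)$ of finite linear combinations of indicator tensors $\ind_{A_1 \times \cdots \times A_k}$. Here the $A_j = [a_j,b_j) \times \{n_j\}$ are pairwise disjoint, except that I pair the index $n$ with $-n$ so that the reality condition $\cj{B_n} = B_{-n}$ is respected. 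On such $f$, the integral $I_k[f]$ in \eqref{defIk} is a finite sum of products of independent Gaussian increments.

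\textbf{Step 1: linearity, isometry, and extension.} Linearity on $\mathcal{E}_k$ is immediate. For the isometry on $\mathcal{E}_k$, I expand $\E[I_k[f]\cj{I_\l[g]}]$ into products of increments. Since the rectangles are disjoint and the increments are centered, independent, and satisfy $\E[|B_n(A)|^2] = |A|$ (with $\E[B_n B_{n'}] = 0$ unless $n' = -n$), only pairings matching each factor of $f$ with a factor of $g$ survive. This forces $k = \l$, and summing over the $k!$ matchings gives $k!\int \textup{Sym}(f)\cj{\textup{Sym}(g)}\,d\nu^{\otimes k}$.

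Combined with the bound \eqref{jen}, this yields $\|I_k[f]\|_{L^2(\Om)} \le \sqrt{k!}\,\|f\|_{L^2}$. Hence $I_k$ extends by density to a bounded linear map on $L^2(\mathsf{Z}^k)$, and both linearity and the isometry pass to the limit.

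\textbf{Step 2: the image lies in $\H_k$.} On $\mathcal{E}_k$ with disjoint rectangles, $I_k[f]$ is a sum of products $\prod_j B_{n_j}(A_j)$ of distinct independent Gaussians, up to normalization. Each such product is a polynomial chaos of order $k$ in the sense of Subsection~\ref{SUB:sto}, using an orthonormal system that contains the normalized $B_n(A_j)$. Since $\H_k$ is closed in $L^2(\Om)$, the density argument of Step 1 gives $I_k(L^2(\mathsf{Z}^k)) \subset \H_k$.

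\textbf{Step 3: the Hermite identity \eqref{IkHk}.} I use the generating function \eqref{hermite}. For $h \in L^2(\mathsf{Z})$, I first take $h$ simple, i.e.\ a linear combination of indicators of disjoint cells $A_1,\dots,A_M$. I expand $h^{\otimes k}$ and drop the diagonal terms, which carry measure zero in $L^2(\mathsf{Z}^k)$ once the cells are refined. This identifies $I_k[h^{\otimes k}]$ with the $k$th Wick power of $I_1[h]$.

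The cleanest way to verify this identity is to compare the exponential generating functions: I want to show
\begin{align*}
\sum_{k \ge 0} \frac{t^k}{k!}\, I_k[h^{\otimes k}] = e^{tI_1[h] - \frac12 t^2 \s_h}.
\end{align*}
The right-hand side is the stochastic exponential, which factorizes over the independent cells. On each cell, the one-dimensional identity is exactly the generating relation \eqref{hermite}, and this is then glued across cells using the binomial formula \eqref{her_binom}.

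Finally, I pass from simple $h$ to general $h \in L^2(\mathsf{Z})$ by approximation. The left-hand side of \eqref{IkHk} converges in $L^2(\Om)$ by Step 1, since $h_m^{\otimes k} \to h^{\otimes k}$ in $L^2(\mathsf{Z}^k)$. The right-hand side converges because $I_1[h_m] \to I_1[h]$ in every $L^p$ and $\s_{h_m} \to \s_h$, while $H_k$ is a polynomial.

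\textbf{Main obstacle.} The only delicate point is the bookkeeping of the diagonals and of the complex/real structure in Step 3 (and in the pairing count of Step 1). Specifically, terms with $n_i = -n_j$ on overlapping time sets produce exactly the $\s_h$ corrections in the Hermite polynomial. This is handled by refining the partitions and checking that the diagonal contribution is $O(\max_j |A_j|)$, which vanishes in the limit. For this bookkeeping I would follow the standard treatment in \cite[Section~1.1]{Nua}.
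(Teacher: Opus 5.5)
Your route (elementary tensors, isometry on them, extension by density, then \eqref{IkHk} via the binomial formula \eqref{her_binom} and refinement of partitions) is the standard construction in \cite[Section~1.1]{Nua}, which is all the paper invokes here. Steps 1--2 work once the elementary class is required to have zero coefficient whenever a cell $A=[a,b)\times\{n\}$ occurs in a term together with itself \emph{or} with its reflection $\tilde A=[a,b)\times\{-n\}$. This is needed because $B(\tilde A)=\cj{B(A)}$, so $B(A)B(\tilde A)=|B(A)|^2\notin\H_2$.

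The genuine gap is in Step 3. The identity you set out to prove, \eqref{IkHk} with $\s_h=\|h\|_{L^2(\mathsf{Z})}^2$ for \emph{every} $h\in L^2(\mathsf{Z})$, is false, and your argument breaks exactly at the bookkeeping you postponed. Take $h=\ind_{[0,1]\times\{n\}}$ with $n\neq 0$. Then $I_1[h]=B_n(1)$ with $\E[B_n(1)^2]=0$. Hence $I_2[h\otimes h]$, being in $\H_2$, has mean zero; in fact Lemma~\ref{LEM:sto_prod} gives $I_2[h\otimes h]=B_n(1)^2-h\otimes_1 h=B_n(1)^2$. By contrast, $H_2(B_n(1);\s_h)=B_n(1)^2-1$ has mean $-1$.

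The reason is that contracting $(t,n)$ against $(t,-n)$ produces the bilinear quantity $\int_{\mathsf{Z}}h(t,n)h(t,-n)\,d\nu=\E[I_1[h]^2]$, not the sesquilinear $\|h\|_{L^2}^2=\E|I_1[h]|^2$. So $e^{tI_1[h]-\frac12 t^2\s_h}$ is not the stochastic exponential of $I_1[h]$. Likewise, the diagonal contribution you plan to show is $O(\max_j|A_j|)$ is not small. In the example, refining $[0,1]$ into cells $A_i$, the terms of your cell-by-cell binomial expansion with some $k_i\ge 2$ sum to $\sum_i H_2(B_n(A_i);|A_i|)=\sum_i B_n(A_i)^2-1\to -1$ in $L^2(\Om)$. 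This happens because Hermite orthogonality (Lemma~\ref{LEM:wick}) requires the variance parameter to equal $\E[X^2]$ for a \emph{real} Gaussian $X$.

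The two quantities coincide exactly when $h(t,-n)=\cj{h(t,n)}$, i.e.\ when $I_1[h]$ is real. This is the only case the paper uses: in \eqref{lin_hk} and \eqref{sto_phi2} the kernels have the form $\phi_n(s)e^{in\cdot x}$ with $\phi_n=\phi_{-n}$ real-valued. So you should:
\begin{enumerate}
\item add this hypothesis, or alternatively replace $\s_h$ by $\int_{\mathsf{Z}}h(t,n)h(t,-n)\,d\nu$;
\item approximate $h$ by simple functions that also satisfy it, replacing $h_m$ by $\frac12\big(h_m+\cj{h_m(\cdot,-\,\cdot)}\big)$;
\item group each cell with its reflection, so that each group contributes an independent real Gaussian whose variance is its $L^2$-mass.
\end{enumerate}
With these changes, your factorization and binomial-gluing argument goes through as planned.
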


For proofs of the above properties, see \cite[Chapter~1]{Nua}. Note that \eqref{IkHk} is a rescaled version of \cite[Proposition~1.1.4]{Nua}.

We now record the following Fubini's theorem for multiple stochastic integrals. For a proof, see \cite[Lemma~B.2]{OWZ} and also \cite[Theorem~4.33]{DaZ}.
\begin{lemma}
\label{LEM:sto_fub}
Let $k \in \N$. Given a finite $T > 0$, let $f \in L^2 ([0, T] \times ( [0, T] \times \Z^3)^k, dt \otimes d \nu^{\otimes k} )$. Then, we have
\begin{align*}
\int_0^T I_k [f (t, \cdot)] dt = I_k \bigg[ \int_0^T f(t, \cdot) dt \bigg]
\end{align*}

\noi
in $L^2 (\Om)$.
\end{lemma}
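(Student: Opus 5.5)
The statement immediately above is Lemma~\ref{LEM:sto_fub}, the stochastic Fubini theorem for multiple stochastic integrals. I would prove it by density, reducing to elementary integrands for which the identity is a finite algebraic rearrangement.

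First, consider elementary $f$ of the form
\begin{align*}
f(t, z_1, \dots, z_k) = \sum_{i} c_i \, \ind_{A_i}(t) \, \ind_{E_{i,1}}(z_1) \cdots \ind_{E_{i,k}}(z_k),
\end{align*}
a finite sum with disjoint product sets $E_{i,j} = [a,b) \times \{n\}$ whose time intervals are pairwise disjoint off the diagonal. For such $f$, the multiple integral $I_k[f(t,\cdot)]$ is an explicit finite linear combination of products of Brownian increments $B_n(b) - B_n(a)$, with coefficients depending on $t$ only through $\ind_{A_i}(t)$. Integrating in $t$ multiplies each coefficient by $|A_i \cap [0,T]|$, and this is exactly $I_k$ applied to $\int_0^T f(t,\cdot)\,dt$. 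So the identity holds pathwise.

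Second, extend to general $f$ by an $L^2$ approximation. The two sides define linear maps $f \mapsto \int_0^T I_k[f(t,\cdot)]\,dt$ and $f \mapsto I_k[\int_0^T f\,dt]$ from $L^2([0,T]\times([0,T]\times\Z^3)^k)$ into $L^2(\Om)$, and I will show both are bounded. For the right-hand side, Lemma~\ref{LEM:sto_ito}~(ii) together with \eqref{jen} gives
\begin{align*}
\big\| I_k[g] \big\|_{L^2(\Om)}^2 \le k! \, \|g\|_{L^2}^2 .
\end{align*}
By Cauchy--Schwarz in $t$,
\begin{align*}
\Big\| \int_0^T f \, dt \Big\|_{L^2}^2 \le T \, \|f\|_{L^2}^2 .
\end{align*}
For the left-hand side, Minkowski's integral inequality, then the isometry, then Cauchy--Schwarz give
\begin{align*}
\Big\| \int_0^T I_k[f(t)] \, dt \Big\|_{L^2(\Om)} \le \int_0^T \big\| I_k[f(t)] \big\|_{L^2(\Om)} \, dt \le \sqrt{k!\,T} \, \|f\|_{L^2}.
\end{align*}
Elementary functions are dense, so both maps agree everywhere.

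The main obstacle is a technical one on the left-hand side: the Bochner integral $\int_0^T I_k[f(t)]\,dt$ in $L^2(\Om)$ must make sense. This requires $t \mapsto I_k[f(t,\cdot)]$ to be strongly measurable as an $L^2(\Om)$-valued map. I would obtain this by noting that $t \mapsto f(t,\cdot)$ is strongly measurable into the separable Hilbert space $L^2(\mathsf{Z}^k)$ (Pettis), and that $I_k$ is bounded and linear, so the composition is strongly measurable. One also has to check that the elementary approximants can be chosen off the diagonal, which holds because the diagonals are $\nu^{\otimes k}$-null.
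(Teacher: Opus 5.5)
Your proof is correct, but it takes a different route from the one the paper points to. The paper gives no argument of its own. It refers to \cite[Lemma~B.2]{OWZ} and to the classical stochastic Fubini theorem \cite[Theorem~4.33]{DaZ}, which is formulated for Ito integrals with predictable integrands and yields an almost sure identity for the pathwise $dt$-integral.

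Your argument is purely Hilbert-space theoretic. The only stochastic input is the bound $\|I_k[g]\|_{L^2(\Om)}^2 \le k!\,\|g\|_{L^2(\mathsf{Z}^k)}^2$ from Lemma~\ref{LEM:sto_ito}~(ii) and \eqref{jen}. Everything else is the general fact that a bounded linear operator commutes with a Bochner integral.

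Seen this way, your Step~1 can be dropped. Vector-valued simple functions $f(t) = \sum_i \ind_{A_i}(t)\, g_i$ with arbitrary $g_i \in L^2(\mathsf{Z}^k)$ are already dense in $L^2([0,T]\times([0,T]\times\Z^3)^k)$. For these the identity is just linearity of $I_k$, so you need neither the representation through Brownian increments nor the off-diagonal approximation you flagged as the main obstacle.

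One thing your version does not give directly is the pathwise reading of the left-hand side, i.e.\ the $dt$-integral of a jointly measurable version $X$ of $t \mapsto I_k[f(t,\cdot)]$. This is how the lemma is actually used, e.g.\ in \eqref{cub_hn}. It follows in one more step:
\begin{itemize}
\item Since $\int_0^T \E|X(t)|\,dt < \infty$, Fubini gives $\E[\ind_A \int_0^T X(t)\,dt] = \int_0^T \E[\ind_A\, I_k[f(t,\cdot)]]\,dt$ for every $A \in \F$.
\item The right-hand side equals $\E[\ind_A Z]$, with $Z$ your Bochner integral, because $\E[\ind_A\,\cdot\,]$ is a bounded functional on $L^2(\Om)$.
\end{itemize}
Hence the pathwise integral and your Bochner integral coincide almost surely.
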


Lastly, we record the product formula for multiple stochastic integrals. We recall that, for $k, \l, r \in \N$ with $0 \leq r \leq \min(k, \l)$ and $f \in L^2 (\mathsf{Z}^k)$ and $g \in L^2 (\mathsf{Z}^\l)$, the contraction $f \otimes_r g$ of $r$ indices is defined by
\begin{align*}
(f \otimes_r g) (z_1, \dots, z_{k + \l - 2r}) &\deff \sum_{m_1, \dots, m_r \in \Z^3} \int_{\R^r} f(z_1, \dots, z_{k-r}, \zeta_1, \dots, \zeta_r) \\
&\quad \times g (z_{k + 1 - r}, \dots, z_{k + \l - 2r}, \wt{\zeta}_1, \dots, \wt{\zeta}_r) ds_1 \dots ds_r,
\end{align*}

\noi
where $\zeta_j = (m_j, s_j)$ and $\wt{\zeta}_j = (- m_j, s_j)$. Whenever two elements $m_1, m_2 \in \Z^3$ are contracted, we call $(m_1, m_2)$ a \textit{pair}, for which we must have $m_1 + m_2 = 0$. 

For a proof of the following product formula, see \cite[Proposition~1.1.3]{Nua}.
\begin{lemma}
\label{LEM:sto_prod}
Let $k, \l \in \N$. Let $f \in L^2 (\mathsf{Z}^k)$ and $g \in L^2 (\mathsf{Z}^\l)$ be symmetric functions. Then, we have
\begin{align*}
I_k[f] I_\ell[g] = \sum_{r = 0}^{\min \{k, \l\}} r! \binom{k}{r} \binom{\l}{r} I_{k + \l - 2r} [f \otimes_r g].
\end{align*}
\end{lemma}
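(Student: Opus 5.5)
We establish the formula first on a total family of elementary tensors, where it reduces to an identity for Hermite polynomials, and then extend it by density. By multilinearity of both sides in $(f, g)$, the key case is $f = h^{\otimes k}$ and $g = \tilde h^{\otimes \l}$ with $h, \tilde h \in L^2(\mathsf{Z})$. The symmetrized tensors $\textup{Sym}(h_1 \otimes \cdots \otimes h_k)$ are obtained from pure powers $h^{\otimes k}$ by polarization. Hence pure powers span a dense subspace of the symmetric part of $L^2(\mathsf{Z}^k)$.

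\textbf{Pure powers.} Set $X = I_1[h]$ and $Y = I_1[\tilde h]$. These are jointly Gaussian, and by the Ito isometry in Lemma~\ref{LEM:sto_ito}~(ii) we have
\begin{align*}
\E[XY] = \sum_{m} \int_\R h(s, m)\, \tilde h(s, -m)\, ds = h \otimes_1 \tilde h .
\end{align*}
Here the sign flip $-m$ comes from $\cj{B_m} = B_{-m}$, i.e.~$\E[dB_m(s)\, dB_{m'}(s')] = \ind_{\{m + m' = 0\}}\dl(s - s')$. This is exactly why the contraction pairs $m_j$ with $-m_j$. By \eqref{IkHk}, the left-hand side becomes $H_k(X; \s_h) H_\l(Y; \s_{\tilde h})$. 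The contraction is $h^{\otimes k} \otimes_r \tilde h^{\otimes \l} = c^r\, h^{\otimes(k - r)} \otimes \tilde h^{\otimes(\l - r)}$ with $c = \E[XY]$.

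It therefore suffices to prove the Hermite identity
\begin{align*}
H_k(X;\s_h) H_\l(Y;\s_{\tilde h}) = \sum_{r} r!\binom{k}{r}\binom{\l}{r} c^r\, I_{k+\l-2r}\big[\textup{Sym}(h^{\otimes(k-r)} \otimes \tilde h^{\otimes(\l-r)})\big].
\end{align*}
We verify it through generating functions. Multiply by $\frac{s^k t^\l}{k!\,\l!}$ and sum, using \eqref{hermite}. The left side becomes
\begin{align*}
\exp\big(sX + tY - \tfrac12 s^2\s_h - \tfrac12 t^2 \s_{\tilde h}\big).
\end{align*}
This equals
\begin{align*}
e^{stc}\exp\big(I_1[sh + t\tilde h] - \tfrac12\|sh + t\tilde h\|_{L^2(\mathsf{Z})}^2\big),
\end{align*}
since the variance of $I_1[sh+t\tilde h]$ is $s^2\s_h + 2stc + t^2\s_{\tilde h}$. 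By \eqref{hermite} and \eqref{IkHk}, the second factor expands as $\sum_n \frac{1}{n!} I_n[(sh + t\tilde h)^{\otimes n}]$. We expand $(sh+t\tilde h)^{\otimes n}$ binomially inside $I_n$, which is legitimate since $I_n$ sees only the symmetrization. Multiplying by $e^{stc} = \sum_r \frac{(stc)^r}{r!}$ and matching the coefficient of $s^k t^\l$ gives exactly the coefficient $r!\binom{k}{r}\binom{\l}{r}$ after elementary factorial bookkeeping.

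\textbf{Extension by density.} Both sides are continuous bilinear maps from $L^2(\mathsf{Z}^k) \times L^2(\mathsf{Z}^\l)$ into $L^1(\Om)$, and in fact into $L^2(\Om)$:
\begin{itemize}
\item[(a)] For the left side, H\"older's inequality together with Lemma~\ref{LEM:hyp} gives
\begin{align*}
\|I_k[f] I_\l[g]\|_{L^2} \le \|I_k[f]\|_{L^4}\|I_\l[g]\|_{L^4} \les \|f\|_{L^2}\|g\|_{L^2}.
\end{align*}
\item[(b)] For the right side, Cauchy--Schwarz in the contracted variables gives $\|f \otimes_r g\|_{L^2} \le \|f\|_{L^2}\|g\|_{L^2}$. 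Combined with the Ito isometry and \eqref{jen}, this bounds each term.
\end{itemize}
Since the identity holds on the dense span of symmetrized tensors, it extends to all symmetric $f, g$.

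The only delicate point is checking that this span is dense. I would handle it by noting that simple functions (finite sums of indicators of products of disjoint time intervals and single frequencies) are dense in $L^2(\mathsf{Z}^k)$, and that their symmetrizations are obtained by polarization from pure powers of simple functions in $L^2(\mathsf{Z})$. One also has to track the reality constraint on the Brownian motions consistently. The main computational step, which I expect to be the bulk of the work though not conceptually hard, is the coefficient matching in the generating function argument.
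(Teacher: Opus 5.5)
Your proof is correct, but it takes a different route from the one behind the paper's statement. The paper simply quotes \cite[Proposition~1.1.3]{Nua}, where the formula is first proved for $\l = 1$, i.e.\ $I_k[f]\, I_1[g] = I_{k+1}[f \otimes g] + k\, I_{k-1}[f \otimes_1 g]$. That case is handled with elementary functions vanishing on the diagonals, and the contraction term comes from a partition argument: sums of squared increments over refining partitions of a set converge to its measure. The general case then follows by induction on $\l$, writing $g$ as a symmetrized product of a function of $\l - 1$ variables and a function of one variable.

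You instead test the identity on pure tensor powers. There, the product of two Wick exponentials generates all contractions at once through the factor $e^{stc}$. You then extend by bilinearity, density of the span of pure powers in the symmetric subspace, and continuity of both sides. This makes the coefficient $r! \binom{k}{r} \binom{\l}{r}$ transparent and avoids both the induction and the diagonal bookkeeping.

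The price is that \eqref{IkHk} becomes an input. In Nualart's development \eqref{IkHk} is itself derived from the case $\l = 1$, so the diagonal analysis is hidden inside \eqref{IkHk} rather than eliminated. Since the paper records \eqref{IkHk} before this lemma, there is no circularity here.

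The reality constraint you mention should be settled explicitly rather than just tracked. With $\s_h = \| h \|_{L^2(\mathsf{Z})}^2$, both \eqref{IkHk} and your identity $\E\big[ I_1[sh + t \tilde h]^2 \big] = \| sh + t \tilde h \|_{L^2(\mathsf{Z})}^2$ hold when $h(t', -n) = \cj{h(t', n)}$ for all $(t', n)$, so that $I_1[h]$ is real. They fail for general complex $h$, where the correct Hermite parameter is the bilinear quantity $\sum_n \int_\R h(t', n)\, h(t', -n)\, dt'$.

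The fix is to restrict to $h, \tilde h$ of this real type and to real $s, t$; then $X$, $Y$, $c$ are real and your computation is literally correct. Every element of $L^2(\mathsf{Z})$ can be written as $h_1 + i h_2$ with $h_1, h_2$ of real type. So by polarization and complex bilinearity, the complex span of pure powers of real-type functions is still dense in the symmetric part of $L^2(\mathsf{Z}^k)$, and your density argument goes through unchanged.
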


\subsection{Bou\'e-Dupuis variational formula}
In this subsection, we recall the Bou\'e-Dupuis variational formula in \cite{BD}, which will play a central role in proving the convergence of the Gibbs measures. 

Let $W (t)$ be a Gaussian process (with respect to the underlying probability measure $\mathbb{P}$) defined by
\begin{align}
W (t) = \frac{1}{(2 \pi)^{\frac 32}} \sum_{n \in \Z^3} \frac{B_n (t)}{\jb{n}^\al} e^{in \cdot x},
\label{defWt}
\end{align}

\noi
where $\{ B_n \}_{n \in \Z^3}$ is a family of Brownian motions satisfying the three properties listed at the beginning of Subsection~\ref{SUB:mult}. Note that we have $\Law (W (1)) = \mu_\al$, where $\mu_\al$ is the Gaussian measure defined in \eqref{mu_al}. Given $N \in \N$, we set $W_N = \pi_N W$.

We also define the spaces of drifts. For $s \in \R$, let $\mathbb{H}_a^s$ be the space of progressively measurable processes belonging to $L^2 ([0, 1]; H^s (\T^3))$, $\mathbb{P}$-almost surely. When $s = 0$, we set $\mathbb{H}_a = \mathbb{H}_a^0$.

We can now state the Bou\'e-Dupuis variational formula \cite{BD, Ust}; see \cite[Theorem 7]{Ust} and also \cite[Theorem 2]{BG}.

\begin{lemma}
\label{LEM:BD}
Let $N \in \N$ and $W$ be a Gaussian process as defined in \eqref{defWt}. Let $F: C^\infty (\T^3) \to \R$ be a measurable function such that $\E \big[ |F (W_N (1))|^p \big] < \infty$ and $\E \big[ |e^{- F(W_N (1))}|^q \big] < \infty$ for some $1 < p, q < \infty$ with $\frac 1p + \frac 1q = 1$. Thus, we have
\begin{align*}
- \log \E \big[ e^{- F(W_N (1))} \big] = \inf_{\ta \in \mathbb{H}_a} \E \bigg[ F(W_N (1) + \pi_N I(\ta) (1)) + \frac 12 \int_0^1 \| \ta (t) \|_{L_x^2}^2 dt \bigg],
\end{align*}

\noi
where $I (\ta)$ is defined by
\begin{align}
I (\ta) (t) \deff \int_0^t \jb{\nb}^{- \al} \ta (t') dt'.
\label{defIta}
\end{align}
\end{lemma}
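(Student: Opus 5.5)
The plan is to derive the formula from the Gibbs (Donsker--Varadhan) variational principle for relative entropy, combined with Girsanov's theorem. First I would reduce to finite dimensions. Since $W_N(1)=\pi_N W(1)$ only involves the finitely many Brownian motions $\{B_n\}_{|n|\le N}$, set $\mathbf B=(B_n)_{|n|\le N}$, a finite-dimensional Brownian motion (after splitting real and imaginary parts and using $\cj{B_n}=B_{-n}$). Then
\begin{align*}
W_N(t)+\pi_N I(\ta)(t)=\jb{\nb}^{-\al}\pi_N\Big(\mathbf B(t)+\int_0^t \ta(t')dt'\Big),
\end{align*}
so the drift $\ta$ enters exactly as a Cameron--Martin shift of $\mathbf B$. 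Only $\pi_N\ta$ matters for the $F$-term, while the remaining modes only increase the cost $\frac12\int_0^1\|\ta\|_{L^2}^2dt$. Hence it suffices to prove the formula for $G(\mathbf B):=F(W_N(1))$ on Wiener space over $\R^{d_N}$.

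\textbf{Upper bound (``$\le$'').} Let $\ta\in\mathbb H_a$. By localization, it suffices to treat $\ta$ with $\int_0^1\|\ta\|^2dt\le K$, then let $K\to\infty$ using the integrability assumptions. Let $Q_\ta$ denote the law of $\mathbf B+\int_0^\cdot\ta$ under $\mathbb P$. The standard Girsanov entropy estimate gives $H(Q_\ta\,|\,\mathbb P)\le \E[\frac12\int_0^1\|\ta\|^2dt]$. The Gibbs variational inequality $-\log\E[e^{-G}]\le \E_{Q}[G]+H(Q\,|\,\mathbb P)$ then yields the claim. The integrability hypotheses ($F\in L^p$, $e^{-F}\in L^q$) make every quantity finite and justify this step.

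\textbf{Lower bound (``$\ge$'').} Assume first that $G$ is bounded and continuous. Take the optimal measure $dQ^*=e^{-G}d\mathbb P/\E[e^{-G}]$. By martingale representation, $\E[dQ^*/d\mathbb P\,|\,\F_t]=\exp(\int_0^t u\,d\mathbf B-\frac12\int_0^t|u|^2)$ for a progressive $u$ (the F\"ollmer drift). Under $Q^*$, the process $\mathbf B-\int_0^\cdot u$ is a Brownian motion, and $H(Q^*|\mathbb P)=\E_{Q^*}\frac12\int|u|^2$. The difficulty is that this drift is realized on the space with measure $Q^*$, not $\mathbb P$. Following Bou\'e--Dupuis and \"Ust\"unel, I would approximate $u$ by simple, bounded, Lipschitz-in-the-path drifts $u^k$. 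For these, the SDE $X=\mathbf B+\int u^k(X)$ has a strong solution under $\mathbb P$, which yields admissible $\ta^k\in\mathbb H_a$ whose costs converge to $-\log\E[e^{-G}]$.

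Finally, for general $G$, I would truncate $G_M=\max(\min(G,M),-M)$, mollify, and pass $M\to\infty$. The left side converges by dominated convergence using $e^{-F}\in L^q$. For the infimum, I would use the fact that near-optimizers have uniformly bounded entropy cost. By the entropy inequality and H\"older with $F\in L^p$, $\E[|G(\mathbf B+I(\ta))-G_M(\mathbf B+I(\ta))|]$ is then small uniformly in $M$. I expect this last step to be the main obstacle: the approximation of the F\"ollmer drift by strong-solution drifts, together with the uniform control of the truncation error under the shifted laws, when $F$ is unbounded (here a polynomial).
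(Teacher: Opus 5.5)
The paper does not prove this lemma; it quotes it from Bou\'e--Dupuis and \"Ust\"unel (Theorem~7), cf.\ Barashkov--Gubinelli (Theorem~2).

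\textbf{The ``$\le$'' half.} This part is essentially right, with two repairs.
\begin{itemize}
\item The Girsanov entropy bound $H(Q_\ta\,|\,\mathbb P)\le\frac12\E\int_0^1\|\ta\|_{L^2}^2dt$ must be used for drifts progressive in the full filtration of $W$. Elements of $\mathbb H_a$ may depend on the modes $|n|>N$; the bound does hold in that generality.
\item Do the localization at the level of entropy, by lower semicontinuity along $Q_{\ta^K}\to Q_\ta$. Letting $K\to\infty$ inside $\E[G(\mathbf B+\int_0^\cdot\ta^K)]$ does not work, because your hypotheses give no uniform integrability there.
\end{itemize}

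\textbf{The gap: your final truncation step.} A uniform bound on the entropy cost does not make $\E[|G-G_M|(\mathbf B+\int_0^\cdot\ta)]$ small when $G$ is only in $L^p(\mathbb P)$.
\begin{itemize}
\item The entropy inequality would need exponential moments of $|G|$, which you do not have.
\item H\"older against $\|G-G_M\|_{L^p(\mathbb P)}$ needs $dQ_\ta/d\mathbb P$ bounded in $L^q(\mathbb P)$. Finite entropy does not give this: in general there are $Q$ with $H(Q\,|\,\mathbb P)<\infty$ and $\E_Q|G|=\infty$.
\item Near-optimality does not rescue it, since it only yields $H(Q_\ta\,|\,Q^*_M)\le\eps$ with $Q^*_M\propto e^{-G_M}\mathbb P$.
\end{itemize}

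\textbf{What the hypotheses actually control, and the fix.} The conjugate pair $(p,q)$ controls the \emph{exact} Gibbs measures. For $Q^*\propto e^{-G}\mathbb P$ one has $\E_{Q^*}|G|\le\|G\|_{L^p}\|e^{-G}\|_{L^q}/\E e^{-G}<\infty$. The same holds uniformly for $Q^*_M$, since $e^{-G_M}\le e^{-G}+1$. So you need a drift whose shifted law is exactly $Q^*$. In this lemma such a drift exists, which also makes your simple-drift approximation unnecessary (you only asserted its cost convergence).

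\begin{itemize}
\item Since $G(\mathbf B)=g(\mathbf B(1))$ depends only on the terminal value, set $f=e^{-g}/\E e^{-G}\in L^q(\gamma)$ with $\gamma=\Law(\mathbf B(1))$.
\item The density martingale is then $P_{1-t}f(\mathbf B(t))$, and the F\"ollmer drift is the Markovian field $u(t,x)=\nb_x\log P_{1-t}f(x)$. This field is smooth in $x$ for $t<1$.
\item Hence $dX=d\mathbf B+u(t,X)\,dt$ is pathwise unique. The weak solution furnished by the canonical process under $Q^*$ is therefore strong, by Yamada--Watanabe.
\item So $\ta_t=u(t,X_t)$ is an admissible drift with $\Law(X)=Q^*$ and $\frac12\E\int_0^1|\ta|^2dt=H(Q^*\,|\,\mathbb P)$.
\item Its cost is exactly $\E_{Q^*}G+H(Q^*\,|\,\mathbb P)=-\log\E e^{-G}$.
\end{itemize}
The infimum is thus attained, directly for unbounded $F$ and without any truncation.
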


Given $k \in \N$, we define
\begin{align*}
\wick{W_N^k (t)} \deff H_k (W_N (t); t \s_N)
\end{align*}

\noi
with $\s_N = \E [|W_N (1)|^2]$ given in \eqref{sigma}.
We now show the following lemma on some useful properties and regularity estimates.
\begin{lemma}
\label{LEM:Yreg}
Let $1 < \al \leq \frac 98$ and $T > 0$. Let $W$ be a Gaussian process as defined in \eqref{defWt}.

\smallskip \noi
\textup{(i)} Let $0 < s < \frac 32$, $\be \geq 0$, and $k \geq 1$ be an integer satisfying
\begin{align}
s > k \Big( \frac 32 - \al \Big) - \be > 0.
\label{skab}
\end{align}

\noi
Then, for any $1 \leq p < \infty$ and $t \in [0, T]$, there exists a constant $C > 0$ such that
\begin{align}
\sup_{N \geq 1} \Big( N^{-p \be} \E \big[ \big\| \wick{W_N^k (t)} \big\|_{\mathcal{C}^{-s}}^p \big] \Big) < C.
\label{YNk_bdd}
\end{align}

\smallskip \noi
\textup{(ii)} Let $k = 1, 2, 3$. Then, for any $1 \leq p < \infty$, $t \in [0, T]$, and $\eps > 0$, there exists a small number $\g > 0$ and a constant $C > 0$ such that
\begin{align*}
\sup_{M > N \geq 1} \Big( N^{p \g} \E \big[ \big\| \wick{W_M^k (t)} - \wick{W_N^k (t)} \big\|_{\mathcal{C}^{k (\al - \frac 32 - \eps)}}^p \big] \Big) < C.
\end{align*}

\smallskip \noi
\textup{(iii)} For any integer $k \geq 1$ and $N \in \N$, we have
\begin{align*}
\E \bigg[ \int_{\T^3} \wick{W_N^k (1)} dx \bigg] = 0.
\end{align*}

\smallskip \noi
\textup{(iv)} For any $\ta \in \mathbb{H}_a$, we have
\begin{align*}
\| I (\ta) (1) \|_{H^\al}^2 \leq \int_0^1 \| \ta (t) \|_{L^2}^2 dt.
\end{align*}
\end{lemma}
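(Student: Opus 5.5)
The plan is to treat the four parts of Lemma~\ref{LEM:Yreg} separately. Each reduces to a second-moment computation followed by the Wiener chaos estimate, except (iv), which is deterministic.

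\textbf{Part (i).} By Lemma~\ref{LEM:sto_ito}~(i), $\wick{W_N^k(t)} = H_k(W_N(t); t\s_N)$ lies in the $k$th Wiener chaos. Its Fourier coefficients are
\begin{align*}
\ft{\wick{W_N^k(t)}}(n) = c\, t^{k/2} \sum_{\substack{n = n_1 + \cdots + n_k \\ |n_j| \leq N}} \prod_j \jb{n_j}^{-\al} \times (\text{normalized } k\text{-fold stochastic integral}).
\end{align*}
By Lemma~\ref{LEM:wick}, or equivalently the It\^o isometry, this gives
\begin{align*}
\E\big[|\ft{\wick{W_N^k(t)}}(n)|^2\big] \les t^k \sum_{\substack{n = n_1 + \cdots + n_k \\ |n_j| \leq N}} \prod_{j=1}^k \jb{n_j}^{-2\al}.
\end{align*}
The key point is to extract the factor $N^{2\be}$ with $\be \geq 0$. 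Since $|n_j| \leq N$, I would bound $\jb{n_j}^{-2\al} \leq N^{2\be/k}\jb{n_j}^{-2\al - 2\be/k}$, i.e.\ sacrifice a power of the cutoff to gain decay. After that I would apply Lemma~\ref{LEM:conv}~(ii) with exponent $2\al'$, where $\al' := \al + \be/k$ (or a slightly smaller exponent if $2\al' \geq 3$, in which case the sum is simply bounded). This yields the bound
\begin{align*}
N^{2\be}\jb{n}^{(k-1)3 - 2k\al - 2\be} = N^{2\be}\jb{n}^{2k(\frac32 - \al) - 2\be - 3}.
\end{align*}
The lower bound $(k-1)3/k < 2\al'$ of Lemma~\ref{LEM:conv}~(ii) holds because $k(\frac32 - \al) - \be < \frac32$, which in turn follows from $s < \frac32$ together with \eqref{skab}; the positivity in \eqref{skab} is what makes the sum divergent rather than bounded.

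Next I would sum against $\jb{n}^{-2s'}$ for some $s' < s$ still satisfying $s' > k(\frac32 - \al) - \be$. This sum converges precisely because of \eqref{skab}. Spatial stationarity plus Lemma~\ref{LEM:reg}~(i), with hypercontractivity (Lemma~\ref{LEM:hyp}) and Lemma~\ref{LEM:Bes}~(i), upgrades the bound to all $p$-th moments of the $\mathcal{C}^{-s}$ norm. This gives \eqref{YNk_bdd}.

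I expect the only real bookkeeping difficulty to be the edge cases of Lemma~\ref{LEM:conv}, when some exponent reaches $3$. I would avoid them by perturbing $\be$ slightly downward and absorbing the loss into the strict inequality $s > \cdots$.

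\textbf{Part (ii).} Here $k \leq 3$ and no $N$-loss is allowed. The difference $\wick{W_M^k} - \wick{W_N^k}$ has Fourier variance bounded by the same convolution sum restricted to frequency configurations with $\max_j |n_j| > N$. On that region I would insert $\jb{n_j}^{-2\g'} N^{2\g'} \geq 1$ for the large $n_j$. The convolution sum is then bounded by $N^{-2\g'}\jb{n}^{3(k-1) - 2k\al + 2\g'}$. This gain is summable against $\jb{n}^{2k(\al - \frac32 - \eps)}$ once $\g' \ll \eps$. Lemma~\ref{LEM:reg}~(ii) then concludes.

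\textbf{Part (iii).} The integral picks out the zero Fourier mode. For $k \geq 1$ this mode is a $k$th chaos element, hence has mean zero. Equivalently, one can apply Lemma~\ref{LEM:wick} with $\ell = 0$.

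\textbf{Part (iv).} By Minkowski's inequality and then Cauchy--Schwarz on $[0,1]$,
\begin{align*}
\|I(\ta)(1)\|_{H^\al} = \Big\|\int_0^1 \ta(t)\,dt\Big\|_{L^2} \leq \int_0^1 \|\ta(t)\|_{L^2}\,dt \leq \Big(\int_0^1 \|\ta(t)\|_{L^2}^2\,dt\Big)^{1/2}.
\end{align*}
The first equality uses $\jb{\nb}^\al \jb{\nb}^{-\al} = \Id$.
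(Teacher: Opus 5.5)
Your proposal is correct and follows the paper's proof essentially step for step: in (i) you use $N^{-2\be} \les \prod_j \jb{n_j}^{-2\be/k}$ on $\{|n_j| \leq N\}$ together with Lemma~\ref{LEM:conv}~(ii) and Lemma~\ref{LEM:reg}; in (ii) you gain $N^{-\g}$ from the open condition on the region where some $|n_j| > N$; and (iii), (iv) are the same one-line arguments. There are two cosmetic slips: in (ii) the inserted factor should read $N^{-2\g'}\jb{n_j}^{2\g'} \geq 1$, which holds because $|n_j| > N$, not its reciprocal (your resulting bound is nonetheless correct), and in (i) the case $2\al' \geq 3$ you set aside never occurs, since the positivity in \eqref{skab} is exactly the condition $2\al + 2\be/k < 3$.
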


\begin{proof}
(i) By Lemma~\ref{LEM:wick} and Lemma \ref{LEM:conv}~(ii) along with the condition \eqref{skab}, we have that for any $n \in \Z^3$,
\begin{align*}
\E \big[ \big| N^{- \be} \F_x \big( \wick{W_N^k} \big) (t, n) \big|^2 \big] 
&\les N^{-2 \be} \sum_{\substack{|n_1|, \dots, |n_k| \leq N \\ n_1 + \cdots + n_k = n}} \frac{1}{\jb{n_1}^{2 \al} \cdots \jb{n_k}^{2 \al}} \\
&\les \sum_{\substack{n_1, \dots, n_k \in \Z^3 \\ n_1 + \cdots + n_k = n}} \frac{1}{\jb{n_1}^{2\al + \frac{2\be}{k}} \cdots \jb{n_k}^{2\al + \frac{2\be}{k}}} \\
&\les \jb{n}^{(3 - 2 \al) k - 2 \be - 3}.
\end{align*}

\noi
Thus, we can conclude the bound \eqref{YNk_bdd} by Lemma \ref{LEM:reg} along with the condition \eqref{skab}.

\smallskip \noi
(ii) The proof is similar to part (i). The only additional argument is that in view of the open condition in Lemma~\ref{LEM:conv}~(ii), we can use
\begin{align*}
\ind_{\{N < |n_j| \leq M\}} \frac{1}{\jb{n_j}^{2 \al}} \les N^{-\g} \frac{1}{\jb{n_j}^{2 \al - \g}}
\end{align*}

\noi
to create the $N^{-\g}$ term, where $\g > 0$ is sufficiently small so that one can still apply Lemma~\ref{LEM:conv}~(ii).

\smallskip \noi
(iii) This follows directly from the definition $\wick{W_N^k (1)} \, = H_k (W_N (1); \s_N)$ with $\s_N$ defined in \eqref{sigma} and Lemma~\ref{LEM:wick} with $\l = 0$.

\smallskip \noi
(iv) The inequality follows directly from Minkowski's integral inequality and the Cauchy-Schwarz inequality.
\end{proof}

\section{Integer lattice-counting estimates}
\label{SEC:count}

In this section, we recall and present some integer lattice-counting estimates. These counting estimates are essential for showing the regularity properties of the stochastic objects in Subsection \ref{SUB:reg} below. In the following, we denote
\begin{align*}
\jbb{n} = \jbb{n}_\al \deff \sqrt{\jb{n}^{2 \al} - \frac 14}
\end{align*}

\noi
for any $n \in \Z^3$ and $\al \geq 0$. Note that we have $\jbb{n} \sim \jb{n}^\al$.

\subsection{Basic counting estimates}
In this subsection, we recall some counting estimates from previous literature. For convenience, we will use the notation $n_{12\cdots k} = n_1 + n_2 + \cdots + n_k$ given $k \in \N$ and $n_1, \dots, n_k \in \Z^3$.

We first recall the following ``two-ball'' counting lemma. For a proof, see \cite[Lemma~3.2]{LTzW} and \cite[Lemma~4.17]{Bring2}.
\begin{lemma}
\label{LEM:count_twoball}
Let $1 < \al \leq 2$ and $N, A, B \geq 1$ be dyadic numbers. Let $a \in \Z^3$ satisfying $|a| \sim A$. Then, we have
\begin{align*}
\sup_{\zeta \in \R} \, &\# \big\{ n \in \Z^3 : |n| \sim N, |n + a| \sim B, \big| \jbb{a + n} \pm \jbb{n} - \zeta \big| \les 1 \big\} \\
&\les \min \{A, B, N\}^{-1} \min \{B, N\}^3.
\end{align*}
\end{lemma}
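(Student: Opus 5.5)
The plan is to start from the trivial bound and then gain a factor $\min\{A,B,N\}$ from the phase constraint. Since $|n|\sim N$ and $|n+a|\sim B$, the point $n$ lies in the intersection of two balls of radii $\sim N$ and $\sim B$. So the set has at most $\les\min\{B,N\}^3$ elements. If $\min\{A,B,N\}\les 1$ there is nothing more to prove, so I assume $A,B,N\gg1$. The task is then to show that the constraint $|\jbb{a+n}\pm\jbb{n}-\zeta|\les1$ confines $n$ to a region whose volume is smaller by a factor $\min\{A,B,N\}$.

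\textbf{Step 1: slicing in $|n|$.} I decompose $n$ according to the integer part of $|n|$, i.e.\ into unit-thickness shells $\{r\le|n|<r+1\}$ with $r\sim N$. Since $\jbb{n}\sim\jb{n}^\al$ and $\partial_r\jbb{n}\sim r^{\al-1}$, on a fixed shell $\jbb{n}$ varies by $O(N^{\al-1})$. Hence the constraint forces $\jbb{a+n}$ into an interval of length $\les N^{\al-1}$. Inverting $\rho\mapsto\jb{\rho}^\al$ near $\rho\sim B$, this puts $|n+a|$ in an interval of length $\les (N/B)^{\al-1}+B^{1-\al}$.

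\textbf{Step 2: reduction to a slab.} Using the identity $2\,n\cdot a=|n+a|^2-|n|^2-|a|^2$, the component $n\cdot\hat a$ (with $\hat a=a/|a|$) then lies in an interval of length
\begin{align*}
\les \frac{B\big((N/B)^{\al-1}+B^{1-\al}\big)+N}{A}.
\end{align*}
The points of the shell in such a slab number $\les N\cdot\max\{\text{slab width},1\}$, using lattice counting on a unit-thickness shell cut by a slab orthogonal to a fixed direction. Summing over the relevant shells then has to produce $\min\{B,N\}^3/\min\{A,B,N\}$.

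\textbf{Step 3: case analysis.} Summing naively over all $\sim N$ shells may lose too much. I will therefore split into the regimes $B\ll N$ (so $A\sim N$), $N\ll B$ (so $A\sim B$), and $B\sim N$ with $A\les N$. In each regime I slice along whichever of $|n|$, $|n+a|$ corresponds to the smaller ball, so that the number of shells is $\min\{B,N\}$. If the slab width turns out too large, I instead fix the coordinate $n\cdot\hat a$ and the two transverse coordinates separately and use monotonicity of the phase along $\hat a$.

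\textbf{Main obstacle.} I expect the hardest case to be $B\sim N\gg A$ with the "$-$" sign. There $\jbb{a+n}-\jbb{n}\approx \al|n|^{\al-1}\,\hat n\cdot a$, and the constraint is essentially a condition on the angle between $n$ and $a$ at scale $1/(AN^{\al-1})$. For that case I would first try a direct change of variables to (radius, $n\cdot\hat a$, transverse), where the phase has $\hat a$-derivative $\gtrsim A N^{\al-2}$. Combining this with the shell thickness then gives a gain of $A$ over $N^3$. The "$+$" sign is easier, since the phase is then monotone in $|n|$ with derivative $\gtrsim \max\{B,N\}^{\al-1}\ge1$. This is the computation behind \cite[Lemma~3.2]{LTzW} and \cite[Lemma~4.17]{Bring2}, and I would follow it.
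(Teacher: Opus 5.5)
Your Steps~1--2 are sufficient, provided you always slice along the smaller of the two radii (as the first sentence of your Step~3 says). With that choice they prove the lemma for both signs. The paper gives no argument of its own here and refers to \cite[Lemma~3.2]{LTzW} and \cite[Lemma~4.17]{Bring2}.

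What you have not done is the bookkeeping you left as ``has to produce''. Doing it shows that the case you call the main obstacle is no harder than the others. The only input from the phase is that, once the smaller radius is fixed to unit accuracy, the other radius lies in a window of length $\les 1$; this does not depend on the sign.

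If $B\sim N\gtrsim A$, slice in $|n|$. Then $(N/B)^{\al-1}+B^{1-\al}\les 1$, so your slab width is $w\les (N+B)/A\les N/A$, and $N/A\gtrsim 1$. Each of the $\les N$ shells carries $\les N\cdot N/A$ points, so the total is $\les N^3/A$.

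If $N\ll B\sim A$, slice in $|n|$. Then $w\les (N+B^{2-\al}N^{\al-1}+B^{2-\al})/A\les 1$, which gives $\les N$ points per shell and $\les N^2$ in total.

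If $B\ll N\sim A$, slice in $\rho\le|n+a|<\rho+1$ with $\rho\sim B$. Now $\jbb{n}$ lies in a window of length $\les B^{\al-1}$, hence $|n|$ lies in one of length $\les (B/N)^{\al-1}\le 1$. The identity $2(n+a)\cdot a=|n+a|^2+|a|^2-|n|^2$ then puts $(n+a)\cdot\hat a$ in a window of width $\les (B+N)/A\les 1$. This gives $\les B$ points per shell and $\les B^2$ in total.

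Your per-shell bound $\les r\max\{w,1\}$ needs one line of justification. The disjoint unit cubes at the counted lattice points lie in $\{r-\sqrt3\le|x|\le r+1+\sqrt3\}\cap\{x\cdot\hat a\in I+[-\sqrt3,\sqrt3]\}$. By Archimedes' formula for spherical zones, this set has volume $\les (r+1)(|I|+1)$.

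Consequently the angular change of variables you propose for $B\sim N\gg A$ with the ``$-$'' sign is unnecessary. You should also drop the two auxiliary claims made in its place, since neither is correct as stated.

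First, the ``$+$'' phase is not monotone in $|n|$ with derivative $\gtrsim\max\{B,N\}^{\al-1}$. Take $A\sim B\sim N$ and $n$ near the segment joining $0$ and $-a$. Then $\hat n\cdot(a+n)\approx-|a+n|$, so the radial derivative of $\jbb{a+n}+\jbb{n}$ is $\approx\al\big(|n|^{\al-1}-|n+a|^{\al-1}\big)$. This vanishes at $|n|=|n+a|$, e.g.\ at $n=-a/2$.

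Second, fixing $n\cdot\hat a$ together with two transverse coordinates is not a parametrization of $\Z^3$ unless $\hat a$ is a lattice direction. So monotonicity of the phase along $\hat a$ does not by itself count lattice points; you would need a neighbourhood-volume or coarea argument on top of it.

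The slab argument never uses the sign and avoids both issues.
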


We also record the following summation counting estimate. For a proof, see \cite[Lemma 3.5]{LTzW}.
\begin{lemma}
\label{LEM:count_sum}
Let $k \geq 3$ be an integer, $1 < \al \leq \frac 32$, $s \geq \al - 1$, and $N_1, \dots, N_k \geq 1$ be dyadic numbers. Let $\eps_{\textup{sum}}, \eps_1, \dots, \eps_k \in \{\pm 1\}$ and define the phase function
\begin{align*}
\kappa = \kappa (n_1, \dots, n_k) \deff \eps_{\textup{sum}} \bigg\llbracket \sum_{j = 1}^k n_j \bigg\rrbracket + \sum_{j = 1}^k \eps_j \jbb{n_j}.
\end{align*}

\noi
Then, we have
\begin{align*}
\sup_{\zeta \in \R} &\sum_{n_1, \dots, n_k \in \Z^3} \bigg[ \bigg( \prod_{j = 1}^k \ind_{\{|n_j| \sim N_j\}} \jbb{n_j}^{-2} \bigg) \bigg\langle \sum_{j = 1}^k n_j \bigg\rangle^{2(s - \al)} \ind_{\{|\kappa - \zeta| \leq 1\}} \bigg] \\
&\les N_{\textup{max}}^{(k - 3) (3 - 2 \al) + 2s - 6 \al + 6},
\end{align*}

\noi
where $N_{\textup{max}} = \max \{ N_1, \dots, N_k \}$.
\end{lemma}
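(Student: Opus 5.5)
We would prove the estimate by first reducing to the case $k = 3$ and then handling $k = 3$ with the two-ball counting bound of Lemma~\ref{LEM:count_twoball}.

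\emph{Reduction to $k=3$.} Relabel so that $N_1 = N_{\max}$, and freeze $n_4, \dots, n_k$. The terms $\eps_j \jbb{n_j}$ for $j \geq 4$ are then constants and can be absorbed into $\zeta$. The output frequency becomes $n_{123} + c$ with $c \deff n_4 + \cdots + n_k$ fixed. Hence it suffices to prove the $k=3$ estimate in a form uniform in the shift $c \in \Z^3$:
\begin{align*}
\sup_{\zeta \in \R,\, c \in \Z^3} \sum_{n_1, n_2, n_3} \bigg( \prod_{j = 1}^3 \ind_{\{|n_j| \sim N_j\}} \jbb{n_j}^{-2} \bigg) \jb{n_{123} + c}^{2(s - \al)} \ind_{\{ |\eps_{\textup{sum}} \jbb{n_{123} + c} + \textstyle\sum_{j=1}^3 \eps_j \jbb{n_j} - \zeta| \leq 1 \}} \les N_{\max}^{2s - 6\al + 6}.
\end{align*}
The remaining sum over $n_4, \dots, n_k$ gives at most $\prod_{j \geq 4} N_j^{3 - 2\al} \leq N_{\max}^{(k-3)(3-2\al)}$, since $3 - 2\al > 0$. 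Putting the two together yields the claimed exponent.

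\emph{The case $k=3$.} Decompose dyadically $|n_{123} + c| \sim B$. Because $2(s-\al) \geq -2$ may be positive, we have to use the support constraint $B \les N_{\max} + |c|$. For $k \geq 4$ this constraint is really $|n_1 + \cdots + n_k| \les N_{\max}$, so we keep track of it in the original variables and restrict the sum to $B \les N_{\max}$.

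Fix $n_2, n_3$ and set $a = n_2 + n_3 + c$ with $|a| \sim A$. The phase condition then reads $|\jbb{a + n_1} \pm \jbb{n_1} - \zeta'| \leq 1$. Lemma~\ref{LEM:count_twoball} bounds the number of admissible $n_1$ by $\min\{A, B, N_1\}^{-1} \min\{B, N_1\}^3$.

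Summing over $n_2, n_3$ with their weights $N_2^{-2\al} N_3^{-2\al}$ gives at most $N_2^{3-2\al} N_3^{3-2\al}$, up to the dependence on $A$. We then collect
\begin{align*}
N_1^{-2\al} B^{2(s-\al)} \min\{A,B,N_1\}^{-1} \min\{B,N_1\}^3 \, N_2^{3-2\al} N_3^{3-2\al}
\end{align*}
and sum over dyadic $A, B \les N_{\max}$.

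\emph{Expected main obstacle.} The delicate regime is when $A$ is small. There the two-ball bound loses its $\min\{A, B, N_1\}^{-1}$ gain, and the set $\{|n_2 + n_3 + c| \sim A\}$ must itself be counted, costing $\min\{A, N_2, N_3\}^3 \max\{N_2, N_3\}^{3}$ rather than $N_2^3 N_3^3$. We would balance these two counts against each other. We would also compare against applying Lemma~\ref{LEM:count_twoball} to a different pair of variables (for instance $n_2$ against $n_1 + n_3 + c$) and use whichever ordering is better. The condition $s \geq \al - 1$ should enter exactly when $B$ is small and the factor $B^{2(s-\al)}$ is large, and we expect to need $B^{2(s-\al)+3} \les N_{\max}^{2(s-\al)+3}$ to close the sum over $B$.
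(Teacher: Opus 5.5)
Your argument is correct. The paper does not prove this lemma; it imports it from \cite[Lemma~3.5]{LTzW}. Your route gives a valid self-contained proof: sum the $k-3$ frozen frequencies trivially at cost $N_{\max}^{(k-3)(3-2\al)}$, then apply Lemma~\ref{LEM:count_twoball} in the largest frequency against $a = n_2 + n_3 + c$. Two remarks on the setup:
\begin{itemize}
\item The shifted $k=3$ estimate is only needed for $|c| \les_k N_{\max}$, which is all the reduction uses. For $s > \al$ it actually fails as $|c| \to \infty$, so the $\sup$ over all $c \in \Z^3$ should be dropped.
\item The balancing you postpone closes with exactly the counts you name. There is no need to switch the variable in which the two-ball bound is applied.
\end{itemize}

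Here is the casework. Write $N = N_1 = N_{\max}$, $|n_2+n_3+c| \sim A$, $|n_1+n_2+n_3+c| \sim B$, $m = \min\{N_2,N_3\}$ and $M = \max\{N_2,N_3\}$. Then $A, B \les_k N$, and at least one of them is $\sim N$.

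\emph{Case $A \sim B \sim N$.} The two-ball count is $N^2$, and the contribution is $\les N^{2s-8\al+8}$.

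\emph{Case $B \ll N \sim A$.} The two-ball count is $B^2$, and the contribution is
\[
N^{-2\al} N_2^{3-2\al} N_3^{3-2\al} \sum_{B \ll N} B^{2(s-\al)+2} \les N^{2s-8\al+8} \log N .
\]
This is where $s \geq \al - 1$ enters: through the exponent $2(s-\al)+2$ left after the two-ball gain $B^{-1}$, not through $B^{2(s-\al)+3}$ as you suggest. With the trivial count $B^3$ you would get $N^{2s-8\al+9}$, which misses the target by $N^{3-2\al}$ when $N_2 \sim N_3 \sim N$.

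\emph{Case $A \ll N \sim B$.} The two-ball count is $A^{-1} N^3$. Combine it with your count $M^3 \min\{A, m\}^3$ of the pairs $(n_2, n_3)$ and with $\sum_A A^{-1} \min\{A, m\}^3 \les m^2$. This gives
\[
N^{2s-4\al+3}\, m^{2-2\al} M^{3-2\al} \les N^{2s-6\al+6},
\]
with equality at $m = 1$, $M = N$. The $A$-dependent count is essential here. With the $A$-independent factor $N_2^{3-2\al} N_3^{3-2\al}$ from your collected expression, the sum $\sum_A A^{-1}$ is only $O(1)$, and you again get $N^{2s-8\al+9}$.

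Since $\al > 1$, the first two contributions are also $\les N^{2s-6\al+6}$. This finishes the case $k=3$, and hence the lemma.
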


We will also need the following frequency-scale estimate. For a proof, see \cite[Lemma A.1]{Bring2}.
\begin{lemma}
\label{LEM:count_scale}
Let $N_2, N_3, N_{12}, N_{123} \geq 1$ be dyadic numbers and let $n_1, n_2, n_3 \in \Z^3$ be such that
\begin{align*}
\ind_{\{|n_2| \sim N_2\}} \cdot \ind_{\{|n_3| \sim N_3\}} \cdot \ind_{\{|n_{12}| \sim N_{12}\}} \cdot \ind_{\{|n_{123}| \sim N_{123}\}} \not\equiv 0.
\end{align*}

\noi
Then, we have
\begin{align*}
\frac{\min \{N_3, N_{123}\}^2 \min \{N_2, N_{12}\}}{\min \{N_{123}, N_{12}, N_3\}} \les N_3 \cdot N_{123}.
\end{align*}
\end{lemma}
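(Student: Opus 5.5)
This is a frequency-scale inequality (Lemma~\ref{LEM:count_scale}), and the plan is elementary case analysis on which of the dyadic scales is smallest, using only the triangle-type constraints forced by the non-vanishing of the indicator product. Since $n_{123} = n_{12} + n_3$, one of $|n_{123}|, |n_{12}|, |n_3|$ is at most a constant multiple of another, and the two largest are comparable. Hence among $N_{123}, N_{12}, N_3$ the two largest are comparable, up to the dyadic convention with $N=1$ meaning $|n|\le 1$. I would not use $n_1$ at all beyond $n_{12} = n_1 + n_2$, and in fact the estimate should follow only from the triple $(N_3, N_{12}, N_{123})$ together with the trivial bound $\min\{N_2, N_{12}\} \le N_{12}$.

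Writing $m = \min\{N_{123}, N_{12}, N_3\}$, I first replace $\min\{N_2, N_{12}\}$ by $N_{12}$, so it suffices to prove
\begin{align*}
\min\{N_3, N_{123}\}^2 \, N_{12} \les m \, N_3 \, N_{123}.
\end{align*}
I then split into three cases according to which scale attains $m$.

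\emph{Case $m = N_{12}$.} The $N_{12}$ factors cancel, and the claim reduces to $\min\{N_3,N_{123}\}^2 \le N_3 N_{123}$, which is trivial.

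\emph{Case $m = N_3$.} Then $N_{12} \sim N_{123}$ (the two largest are comparable), and $\min\{N_3, N_{123}\} \les N_3$. The left side is $\les N_3^2 N_{123}$, which matches the right side $N_3 \cdot N_3 N_{123}$.

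\emph{Case $m = N_{123}$.} This is symmetric: $N_{12} \sim N_3$ and $\min\{N_3, N_{123}\} \les N_{123}$, so the left side is $\les N_{123}^2 N_3$, matching the right side $N_{123}\, N_3 N_{123}$.

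The only point requiring care is the dyadic bookkeeping. Specifically, "minimum attained" should be read up to constants, and the comparability of the two largest scales must be justified from $|n_{123}| \le |n_{12}| + |n_3|$ and its permutations, with scale $1$ covering $|n| \le 1$. This is routine, so I expect no real obstacle. The loss from discarding $N_2$ is harmless because the inequality already holds with $N_{12}$ in its place.
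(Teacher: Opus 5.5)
Your argument is correct and complete. The paper gives no proof of its own here and simply refers to \cite[Lemma~A.1]{Bring2}; your elementary case analysis on the smallest of $N_{12}, N_3, N_{123}$ is the natural way to prove it, and it needs only three ingredients: the bound $\min\{N_2,N_{12}\}\le N_{12}$, the identity $n_{123}=n_{12}+n_3$, and the dyadic comparison $N/2<\max\{|n|,1\}\le N$. In the two nontrivial cases you actually need only $N_{12}\lesssim\max\{N_3,N_{123}\}$, which follows directly from $|n_{12}|\le|n_{123}|+|n_3|$.
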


\subsection{Extended non-resonant and resonant counting estimates}

In this subsection, we show some counting estimates that are useful for the extended quintic stochastic terms and the extended cubic-linear random operators in Subsection \ref{SUB:reg} below. We first show the following extended non-resonant counting estimate, which is an extended version of \cite[Lemma~3.7]{LTzW}.
\begin{lemma}
\label{LEM:count_quin}
Let $1 < \al \leq \frac 98$. Let $k_1 \geq 3$, $k_2 \geq 1$ be integers, and $k_{12} = k_1 + k_2$. Let $N_1, \dots, N_{k_{12}} \geq 1$ be dyadic numbers. Furthermore, we let $\eps_{\textup{sum}_1}, \eps_{\textup{sum}}, \eps_1, \dots, \eps_{k_{12}} \in \{\pm 1\}$ and define three phase functions by 
\begin{align}
\kappa_1 (n_1, \dots, n_{k_1}) &\deff \eps_{\textup{sum}_1} \bigg\llbracket \sum_{j = 1}^{k_1} n_j \bigg\rrbracket + \sum_{j = 1}^{k_1} \eps_j \jbb{n_j}, \label{kappa1} \\
\kappa_2 (n_1, \dots, n_{k_{12}}) &\deff \eps_{\textup{sum}} \bigg\llbracket \sum_{j = 1}^{k_{12}} n_j \bigg\rrbracket + \eps_{\textup{sum}_1} \bigg\llbracket \sum_{j = 1}^{k_1} n_j \bigg\rrbracket + \sum_{j = k_1 + 1}^{k_{12}} \eps_j \jbb{n_j}, \notag \\
\kappa_3 (n_1, \dots, n_{k_{12}}) &\deff \eps_{\textup{sum}} \bigg\llbracket \sum_{j = 1}^{k_{12}} n_j \bigg\rrbracket + \sum_{j = 1}^{k_{12}} \eps_j \jbb{n_j}. \notag
\end{align}

\smallskip \noi
\textup{(i)} If $k_2 \geq 2$, we have
\begin{align}
\begin{split}
\sup_{\zeta, \zeta' \in \R} &\sum_{n_1, \dots, n_{k_{12}} \in \Z^3} \bigg[ \bigg( \prod_{j = 1}^{k_{12}} \ind_{\{|n_j| \sim N_j\}} \jbb{n_j}^{-2} \bigg) \bigg\langle \sum_{j = 1}^{k_{12}} n_j \bigg\rangle^{-1}  \bigg\llbracket \sum_{j = 1}^{k_1} n_j \bigg\rrbracket^{-2} \bigg] \\
&\times \ind_{\{ |\kappa_1 - \zeta| \leq 1 \}} \cdot \big( \ind_{\{|\kappa_2 - \zeta'| \leq 1\}} + \ind_{\{|\kappa_3 - \zeta'| \leq 1\}} \big) \bigg] \les N_{\textup{max}}^{(k_{12} - 5) (3 - 2\al) - \ta}
\end{split}
\label{count_nr1}
\end{align}

\noi
for some $\ta > 0$, where $N_{\textup{max}} = \max \{N_1, \dots, N_{k_{12}}\}$.

\smallskip \noi
\textup{(ii)} If $k_2 \geq 1$, then for any $\eps > 0$, we have
\begin{align}
\begin{split}
\sup_{\zeta, \zeta' \in \R} &\sum_{n_1, \dots, n_{k_{12}} \in \Z^3} \bigg[ \bigg( \prod_{j = 1}^{k_{12}} \ind_{\{|n_j| \sim N_j\}} \jbb{n_j}^{-2} \bigg) \bigg\langle \sum_{j = 1}^{k_{12}} n_j \bigg\rangle^{2 \al - 3 - \eps} \bigg\llbracket \sum_{j = 1}^{k_1} n_j \bigg\rrbracket^{-2} \bigg] \ind_{\{ |\kappa_1 - \zeta| \leq 1 \}}  \\
&\les N_{\textup{max}}^{(k_{12} - 4) (3 - 2\al) - \ta}
\end{split}
\label{count_nr2}
\end{align}

\noi
for some $\ta > 0$.
\end{lemma}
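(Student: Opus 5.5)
The plan is to reduce both estimates to the basic counting lemmas already recorded, Lemma~\ref{LEM:count_sum}, Lemma~\ref{LEM:count_twoball} and the convolution bounds in Lemma~\ref{LEM:conv}, by splitting the sum into an inner block $(n_1,\dots,n_{k_1})$ and an outer block $(n_{k_1+1},\dots,n_{k_{12}})$. Write $m = n_1 + \cdots + n_{k_1}$ and decompose dyadically $|m|\sim N_{(1)}$ and $|n_{1\cdots k_{12}}|\sim N_{\textup{sum}}$. For a fixed value of $m$, the constraint $|\kappa_1 - \zeta|\le 1$ involves only the inner block. Since $\kappa_1$ depends on $m$ only through $\jbb{m}$, freezing $m$ leaves a sum over $n_1,\dots,n_{k_1}$ with $n_{1\cdots k_1}=m$ fixed and $\sum_j \eps_j\jbb{n_j}$ in a unit window. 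Lemma~\ref{LEM:count_sum} with $s$ chosen so that $2(s-\al) = -2\al$, together with summing over $m$ weighted by $\jbb{m}^{-2}$, gives the inner-block bound $N_{\max,1}^{(k_1-3)(3-2\al)+2s-6\al+6}$. For $k_1 = 3$ this is the cubic bound from \cite[Lemma~3.7]{LTzW}. For larger $k_1$, I would instead peel off $k_1-3$ of the inner frequencies by Lemma~\ref{LEM:conv}~(ii). Each peeled factor $\jbb{n_j}^{-2}\sim\jb{n_j}^{-2\al}$ costs at most $N_j^{3-2\al}$, which gives exactly the factor $(3-2\al)$ per extra frequency.

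For the outer block in part~(i), I would treat the case $k_2=2$ first, since that is the quintic configuration of \cite[Lemma~3.7]{LTzW}. With $m$ (hence $\jbb{m}$) fixed, the remaining variables are $n_{k_1+1}, n_{k_1+2}$, with weight $\jb{m + n_{k_1+1}+n_{k_1+2}}^{-1}$ and the unit-window constraint from $\kappa_2$ or $\kappa_3$. Under $\kappa_2$, the phase is $\pm\jbb{m+n_{k_1+1}+n_{k_1+2}} \pm \jbb{m} \pm \jbb{n_{k_1+1}} \pm \jbb{n_{k_1+2}}$, so fixing $n_{k_1+1}$ and applying Lemma~\ref{LEM:count_twoball} in $n_{k_1+2}$ with $a = m+n_{k_1+1}$ gains $\min\{\cdot\}^{-1}$. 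Under $\kappa_3$, the inner phase $\sum_{j\le k_1}\eps_j\jbb{n_j}$ is not controlled by $\jbb{m}$. Here I would use $\kappa_3 - \kappa_2$ together with the $\kappa_1$ window to replace $\kappa_3$ by $\kappa_2$ at the cost of enlarging the window to a bounded size, which reduces this case to the previous one. For $k_2>2$, the additional outer frequencies are again peeled off with Lemma~\ref{LEM:conv}~(ii), each contributing $N_j^{3-2\al}$. That leaves $N_{\max}^{(k_{12}-5)(3-2\al)}$ times the quintic gain $N_{\max}^{-\ta}$. The quintic gain is available because $\al>1$: the two-ball bound and Lemma~\ref{LEM:count_scale} make the $(n_{k_1+1},n_{k_1+2})$ sum strictly better than the trivial $N^{2(3-2\al)}$ by a power.

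Part~(ii) is simpler, since there is no outer phase constraint. The outer sum $\sum \prod\jb{n_j}^{-2\al}\jb{m+n_{k_1+1}+\cdots}^{2\al-3-\eps}$ is bounded by Lemma~\ref{LEM:conv}: $k_2$ factors of $\jb{\cdot}^{-2\al}$ convolved against $\jb{\cdot}^{2\al-3-\eps}$ give $\jb{m}^{2\al-3-\eps}$ times $N^{(k_2-1)(3-2\al)}$ up to $\eps$-losses. The combined $\jb{m}$-weight is then $\jb{m}^{2\al-3-\eps-2\al}=\jb{m}^{-3-\eps}$, which is summable, and the inner block is handled as above. The extra $\eps$ provides the $\ta>0$, yielding $N_{\max}^{(k_{12}-4)(3-2\al)-\ta}$.

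The main obstacle is the bookkeeping of where $N_{\max}$ sits, that is, whether it is in the inner or the outer block. When a peeled frequency is the maximal one, I must make sure the negative power $\ta$ is not lost. I would handle this by always peeling the smallest frequencies first, so that the resonance gain is extracted from the configuration containing $N_{\max}$.
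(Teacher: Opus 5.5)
\textbf{Part (ii) contains a false step.} You claim that the $k_2$ outer sums produce $\jb{m}^{2\al-3-\eps}$ times $N^{(k_2-1)(3-2\al)}$. That is false: you cannot both save one factor $N^{3-2\al}$ on the outer block and keep the full decay $\jb{m}^{2\al-3-\eps}$.

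Take already $k_2=1$, with $N'=N_{k_1+1}$ and $|m|\approx\frac34 N'$. Every $n'$ with $|m+n'|\le N'/10$ satisfies $|n'|\sim N'$, so
\[
\sum_{|n'|\sim N'}\jb{n'}^{-2\al}\jb{m+n'}^{2\al-3-\eps}\gtrsim N'^{-2\al}\sum_{|k|\le N'/10}\jb{k}^{2\al-3-\eps}\sim N'^{-\eps}.
\]
This exceeds $\jb{m}^{2\al-3-\eps}\sim N'^{2\al-3-\eps}$ by the power $N'^{3-2\al}$, which is not an $\eps$-loss. Equivalently, Lemma~\ref{LEM:conv}~(i) applied to $\jb{\cdot}^{-2\al}$ and $\jb{\cdot}^{-(3-2\al+\eps)}$ gives only $\jb{m}^{-\eps}$, because the two exponents add up to just $3+\eps$.

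So the weight left on $m$ is essentially $\jbb{m}^{-2}\sim\jb{m}^{-2\al}$, not $\jb{m}^{-3-\eps}$. It is not summable over $\Z^3$. Estimating the inner block without the $\kappa_1$-window would miss the target by $N_{\max}^{9-8\al}\ge1$, exactly in the range $\al\le\frac98$.

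The repair, which is the paper's proof, transfers no decay to $m$:
\begin{itemize}
\item Let $N_{k_{12}}$ be the largest outer frequency and localize $|n_1+\cdots+n_{k_{12}}|\sim N_{\textup{sum}}$.
\item Count $n_{k_{12}}$ trivially with all other variables frozen. This gives $N_{\textup{sum}}^{2\al-3-\eps}N_{k_{12}}^{-2\al}\min\{N_{\textup{sum}},N_{k_{12}}\}^3$, which is $\les N_{k_{12}}^{-\eps}$ after summing over dyadic $N_{\textup{sum}}$.
\item Sum the other $k_2-1$ outer frequencies freely, giving $N_{\max}^{(k_2-1)(3-2\al)}$.
\item Bound the inner block, with its own weight $\jbb{m}^{-2}\le\jb{m}^{-2}$ and the $\kappa_1$-window, by Lemma~\ref{LEM:count_sum} with $s=\al-1$.
\end{itemize}
The exponents add up to $(k_{12}-4)(3-2\al)$. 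The gain $\ta$ comes from $N_{k_{12}}^{-\eps}$ when $N_{\max}$ is an outer frequency, and from the factor $N_{\max}^{4-4\al}$ in Lemma~\ref{LEM:count_sum} otherwise.

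\textbf{Part (i) is essentially the paper's argument.} You apply Lemma~\ref{LEM:count_twoball} in the largest outer frequency with everything else frozen, use Lemma~\ref{LEM:count_scale} for the dyadic bookkeeping, sum the remaining outer frequencies freely, and finish the inner block with Lemma~\ref{LEM:count_sum}. A few points need correcting:
\begin{itemize}
\item Lemma~\ref{LEM:count_sum} requires $s\ge\al-1$, so your choice $s=0$ is not admissible. Bound $\jbb{m}^{-2}\le\jb{m}^{-2}$ and take $s=\al-1$; this still leaves the gain $\max_{j\le k_1}N_j^{4-4\al}$.
\item Lemma~\ref{LEM:count_sum} covers every $k\ge3$, so you do not need to peel inner frequencies. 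Peeling them is also not clean, since $\kappa_1$ couples all of them through $\jbb{m}$.
\item The outer sum is performed with each $n_j$, $j\le k_1$, individually frozen. So $\kappa_3$ is already of the form $\pm\jbb{a+n}\pm\jbb{n}+\textup{const}$ in the last variable. Your $\kappa_3\to\kappa_2$ reduction via the $\kappa_1$-window is valid but unnecessary.
\item The resonant pair must contribute a negative power of the frequencies, not merely improve on $N^{2(3-2\al)}$. It does: Lemma~\ref{LEM:count_twoball} and Lemma~\ref{LEM:count_scale} give $N_{k_{12}-1}^{2-2\al}N_{k_{12}}^{2-2\al}$, uniformly in $m$.
\end{itemize}
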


\begin{proof}
We first consider \eqref{count_nr1}. For convenience, we let $n_{\text{sum}} = \sum_{j = 1}^{k_{12}} n_j$. Without loss of generality, we can assume that $N_{k_{12}} = \max \{ N_{k_1 + 1}, \dots, N_{k_{12}} \}$. Let $N_{\text{sum}}$ be the dyadic size of $n_{\text{sum}}$ and let $N_{\text{sum}}'$ be the dyadic size of $n_{\text{sum}} - n_{k_{12}}$. By using Lemma \ref{LEM:count_twoball} for the summation in $n_{k_{12}}$ and then summing over $n_{k_{12} - 1}, \dots, n_{k_1 + 1}$, we obtain
\begin{align}
\begin{split}
&\sum_{n_1, \dots, n_{k_{12}} \in \Z^3} \bigg[ \bigg( \prod_{j = 1}^{k_{12}} \ind_{\{|n_j| \sim N_j\}} \jbb{n_j}^{-2} \bigg) \ind_{\{|n_{\text{sum}}| \sim N_{\text{sum}}\}} \ind_{\{|n_{\text{sum}} - n_{k_{12}}| \sim N_{\text{sum}}'\}} \jb{n_{\text{sum}}}^{-1} \\
&\quad \times \bigg\llbracket \sum_{j = 1}^{k_1} n_j \bigg\rrbracket^{-2} \ind_{\{ |\kappa_1 - \zeta| \leq 1 \}} \cdot \big( \ind_{\{|\kappa_2 - \zeta'| \leq 1\}} + \ind_{\{|\kappa_3 - \zeta'| \leq 1\}} \big) \bigg] \\
&\les N_{\text{sum}}^{-1} N_{k_{12}}^{-2 \al} \min \{N_{\text{sum}}, N_{\text{sum}}', N_{k_{12}}\}^{-1} \min \{N_{\text{sum}}, N_{k_{12}}\}^3 \\
&\quad \times \sum_{n_1, \dots, n_{k_{12} - 1} \in \Z^3} \bigg( \prod_{j = 1}^{k_{12} - 1} \ind_{\{|n_j| \sim N_j\}} \jbb{n_j}^{-2} \bigg) \ind_{\{|n_{\text{sum}} - n_{k_{12}}| \sim N_{\text{sum}}'\}} \bigg\langle \sum_{j = 1}^{k_1} n_j \bigg\rangle^{-2} \ind_{\{|\kappa_1 - \zeta| \leq 1\}} \\
&\les N_{\text{sum}}^{-1} N_{k_{12} - 1}^{-2\al} N_{k_{12}}^{-2\al} \min \{N_{\text{sum}}, N_{\text{sum}}', N_{k_{12}}\}^{-1} \min \{N_{\text{sum}}, N_{k_{12}}\}^3 \min \{N_{\text{sum}}', N_{k_{12} - 1}\}^3 \\
&\quad \times N_{\text{max}}^{(k_2 - 2) (3 - 2\al)} \sum_{n_1, \dots, n_{k_1} \in \Z^3} \bigg( \prod_{j = 1}^{k_1} \ind_{\{|n_j| \sim N_j\}} \jbb{n_j}^{-2} \bigg) \bigg\langle \sum_{j = 1}^{k_1} n_j \bigg\rangle^{-2} \ind_{\{|\kappa_1 - \zeta| \leq 1\}}.
\end{split}
\label{count_quin1}
\end{align}

\noi
By Lemma \ref{LEM:count_sum} with $k = k_1$ and $s = \al - 1$ and Lemma \ref{LEM:count_scale}, we obtain that
\begin{align*}
\eqref{count_quin1} &\les N_{\text{sum}}^{-1} N_{k_{12} - 1}^{-2\al} N_{k_{12}}^{-2\al} \frac{\min \{N_{\text{sum}}, N_{k_{12}}\}^3 \min \{N_{\text{sum}}', N_{k_{12} - 1}\}^3}{\min \{N_{\text{sum}}, N_{\text{sum}}', N_{k_{12}}\}} \\ 
&\quad \times N_{\text{max}}^{(k_{12} - 5) (3 - 2\al)} \max \{N_1, \dots, N_{k_1}\}^{-2\ta} \\
&\les N_{k_{12} - 1}^{-2\al} N_{k_{12}}^{1-2\al} \min \{N_{\text{sum}}, N_{k_{12}}\} \min \{N_{\text{sum}}', N_{k_{12} - 1}\}^2 \\ 
&\quad \times N_{\text{max}}^{(k_{12} - 5) (3 - 2\al)} \max \{N_1, \dots, N_{k_1}\}^{-2\ta} \\
&\les N_{\text{max}}^{(k_{12} - 5) (3 - 2\al)} N_{k_{12} - 1}^{2 - 2 \al} N_{k_{12}}^{2 - 2 \al} \max \{N_1, \dots, N_{k_1}\}^{-2\ta}
\end{align*}

\noi
for some $\ta > 0$. Since $\al > 1$, we obtain the desired estimate \eqref{count_nr1} by summing up $1 \leq N_{\text{sum}}, N_{\text{sum}}' \les N_{\text{max}}$.

We now consider \eqref{count_nr2}. We again let $n_{\text{sum}} = \sum_{j = 1}^{k_{12}} n_j$ and assume that $N_{k_{12}} = \max \{ N_{k_1 + 1}, \dots, N_{k_{12}} \}$. Let $N_{\text{sum}}$ be the dyadic size of $n_{\text{sum}}$. By summing over $n_{k_{12} - 1}, \dots, n_{k_1 + 1}$ and using Lemma~\ref{LEM:count_sum} with $k = k_1$ and $s = \al - 1$, we obtain
\begin{align*}
&\sum_{n_1, \dots, n_{k_{12}} \in \Z^3} \bigg[ \bigg( \prod_{j = 1}^{k_{12}} \ind_{\{ |n_j| \sim N_j \}} \jbb{n_j}^{-2} \bigg) \ind_{\{ |n_{\text{sum}}| \sim N_{\text{sum}} \}} \jb{n_{\text{sum}}}^{2 \al - 3 - \eps} 
\bigg\llbracket \sum_{j = 1}^{k_1} n_j \bigg\rrbracket^{-2} \ind_{\{ |\kappa_1 - \zeta| \leq 1 \}} \bigg] \\
&\les N_{\text{sum}}^{2 \al - 3 - \eps} N_{k_{12}}^{-2 \al} \min \{N_{\text{sum}}, N_{k_{12}}\}^3 N_{\text{max}}^{(k_2 - 1) (3 - 2 \al)} \\
&\quad \times \sum_{n_1, \dots, n_{k_1} \in \Z^3} \bigg( \prod_{j = 1}^{k_1} \ind_{\{ |n_j| \sim N_j \}} \jbb{n_j}^{-2} \bigg) \bigg\langle \sum_{j = 1}^{k_1} n_j \bigg\rangle^{-2} \ind_{\{ |\kappa_1 - \zeta| \leq 1 \}} \\
&\les N_{\text{max}}^{(k_{12} - 4)(3 - 2 \al) - \ta} 
\end{align*}

\noi
for some $\ta > 0$, which gives the desired estimate \eqref{count_nr2}.
\end{proof}

We now show some extended resonant estimates. For this purpose, we need to recall the following definitions from \cite[Definition 4.30]{Bring2}.

\begin{definition} \rm
\label{DEF:pair}
Let $k \geq 1$. We call a set of unordered pairs $\mathcal{P} \subset \{1, \dots, k\}^2$ \textit{a pairing} if
for each $1 \leq j_1 \leq k$, $\{j_1, j_2\} \in \mathcal{P}$ for at most one $1 \leq j_2 \leq k$ with $j_2 \neq j_1$.
If $\{(j_1, j_2\} \in \mathcal{P}$, the tuple $\{j_1, j_2\}$ is called \textit{a pair}. If $1 \leq j \leq k$ is contained in a pair, we say that $j$ is \textit{paired}. With a slight abuse of notation, we also write $j \in \mathcal{P}$ if $j$ is paired. If $j$ is not paired, we also say that $j$ is \textit{unpaired} and write $j \notin \mathcal{P}$. Furthermore, given a partition $\mathcal{A} = \{ A_\l \}_{\l = 1}^L$ of $\{1, \dots, k\}$, we say that $\mathcal{P}$ \textit{respects} $\mathcal{A}$ if $\mathcal{P}$ does not pair elements of the same set $A_\l \in \mathcal{A}$, i.e.~$j_1, j_2 \in A_\l$ for some $1 \leq \l \leq L$ implying that $\{j_1, j_2\} \notin \mathcal{P}$. We say that $(n_1, \dots, n_k) \in (\Z^3)^k$ is \textit{admissible} if $\{j_1, j_2\} \in \mathcal{P}$ implies that $n_{j_1} + n_{j_2} = 0$.
\end{definition}

We can now state and prove our extended resonant estimates, which are extended versions of \cite[Lemma 3.9 and Lemma 3.10]{LTzW}. These estimates will be used when dealing with a product of two stochastic terms which contains pairings among the frequencies. 
\begin{lemma}
\label{LEM:count_quin2}
Let $1 < \al \leq \frac 98$ and $\eps > 0$. Let $k_1 \geq 3$ and $k_2 \geq 1$ be integers and denote $k_{12} = k_1 + k_2$. Let $N_1, \dots, N_{k_{12}} \geq 1$ be dyadic numbers. Let $\mathcal{P}$ be a pairing of $\{1, \dots, k_{12}\}$ which respects the partition $\mathcal{K}_1 = \{1, \dots, k_1\}$ and $\mathcal{K}_2 = \{k_1 + 1, \dots, k_{12}\}$, and we assume that $\mathcal{P} \neq \varnothing$. We also let $\eps_{\textup{sum}}, \eps_1, \dots, \eps_{k_1} \in \{\pm 1\}$ and define 
\begin{align*}
\kappa \deff \eps_{\textup{sum}} \bigg\llbracket \sum_{j \in \mathcal{K}_1} n_j \bigg\rrbracket + \sum_{j \in \mathcal{K}_1} \eps_j \jbb{n_j}.
\end{align*}

\smallskip \noi
\textup{(i)} If $k_2 \geq 2$ and $k_{12} > 2 |\mathcal P|$, we have
\begin{align}
\begin{split}
&\sup_{\zeta \in \R} \sum_{(n_j)_{j \notin \mathcal{P}}} \bigg[ \bigg( \prod_{j \notin \mathcal{P}} \ind_{\{|n_j| \sim N_j\}} \jbb{n_j}^{-2} \bigg) \bigg\langle \sum_{j \notin \mathcal{P}} n_j \bigg\rangle^{2 \al - 3 - \eps} \\
&\quad \times \bigg| \sum_{(n_j)_{j \in \mathcal{P}}} \ind_{\substack{(n_1, \dots, n_{k_{12}}) \\ \textup{admissible}}} \bigg( \prod_{j \in \mathcal{P}} \ind_{\{ |n_j| \sim N_j \}} \jbb{n_j}^{-1} \bigg) \bigg\llbracket \sum_{j \in \mathcal{K}_1} n_j \bigg\rrbracket^{-1} \ind_{\{|\kappa - \zeta| \leq 1\}} \bigg|^2 \bigg] \\
&\les N_{\textup{max}}^{(k_{12} - 5) (3 - 2 \al) - \ta}
\end{split}
\label{count_quin}
\end{align}

\noi
for some $\ta > 0$, where $N_{\textup{max}} = \max \{ N_1, \dots, N_{k_{12}} \}$. If $k_2 \geq 2$ and $k_1 = k_2 = |\mathcal P|$, we have
\begin{align}
\begin{split}
&\sup_{\zeta \in \R} \bigg| \sum_{(n_j)_{j \in \mathcal{P}}} \ind_{\substack{(n_1, \dots, n_{k_{12}}) \\ \textup{admissible}}} \bigg( \prod_{j \in \mathcal{P}} \ind_{\{ |n_j| \sim N_j \}} \jbb{n_j}^{-1} \bigg) \bigg\llbracket \sum_{j \in \mathcal{K}_1} n_j \bigg\rrbracket^{-1} \ind_{\{|\kappa - \zeta| \leq 1\}} \bigg| \\
&\les N_{\textup{max}}^{(|\mathcal P| - 2) (3 - 2 \al) - \ta}
\end{split}
\label{count_quin-}
\end{align}

\noi
for some $\ta > 0$.

\smallskip \noi
\textup{(ii)} If $k_2 = 1$, we have
\begin{align}
\begin{split}
&\sup_{\zeta \in \R} \sum_{(n_j)_{j \notin \mathcal{P}}} \bigg[ \bigg( \prod_{j \notin \mathcal{P}} \ind_{\{|n_j| \sim N_j\}} \jbb{n_j}^{-2} \bigg) \bigg\langle \sum_{j \notin \mathcal{P}} n_j \bigg\rangle^{2\al - 3 - \eps} \\
&\quad \times \bigg| \sum_{(n_j)_{j \in \mathcal{P}}} \ind_{\substack{(n_1, \dots, n_{k_{12}}) \\ \textup{admissible}}} \bigg( \prod_{j \in \mathcal{P}} \ind_{\{ |n_j| \sim N_j \}} \jbb{n_j}^{-1} \bigg) \bigg\llbracket \sum_{j \in \mathcal{K}_1} n_j \bigg\rrbracket^{-1} \ind_{\{|\kappa - \zeta| \leq 1\}} \bigg|^2 \bigg] \\
&\les N_{\textup{max}}^{(k_1 - 3) (3 - 2 \al) - \ta}
\end{split}
\label{count_quin+}
\end{align}

\noi
for some $\ta > 0$.
\end{lemma}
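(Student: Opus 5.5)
The idea is to reduce Lemma~\ref{LEM:count_quin2} to the non-resonant estimates of Lemma~\ref{LEM:count_quin} and Lemma~\ref{LEM:count_sum}, following the route of \cite[Lemmas~3.9 and~3.10]{LTzW}.

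First we treat the inner sum over paired frequencies. Since $\mathcal P$ respects $\mathcal K_1,\mathcal K_2$, each pair $\{j_1,j_2\}$ has $j_1\in\mathcal K_1$ and $j_2\in\mathcal K_2$, with $n_{j_2}=-n_{j_1}$. So the inner sum runs over $(n_j)_{j\in\mathcal K_1\cap\mathcal P}$, with weight $\prod_{j\in \mathcal K_1\cap\mathcal P}\jbb{n_j}^{-2}$ (each pair contributes two factors $\jbb{n_j}^{-1}$). Note that $\sum_{j\notin\mathcal P}n_j=\sum_{j\in\mathcal K_1}n_j+\sum_{j\in\mathcal K_2\setminus\mathcal P}n_j$, because the paired frequencies cancel.

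Next we expand the square. Apply Cauchy--Schwarz to the inner sum:
\[
\Big|\sum_{(n_j)_{j\in\mathcal P}}\cdots\Big|^2\le\Big(\sum_{(n_j)_{j\in\mathcal K_1\cap\mathcal P}}\prod\jbb{n_j}^{-2}\Big)\Big(\sum_{(n_j)_{j\in\mathcal K_1\cap\mathcal P}}\prod\jbb{n_j}^{-2}\jbb{n_{\mathcal K_1}}^{-2}\ind_{\{|\kappa-\zeta|\le1\}}\Big).
\]
The first factor is bounded by $\prod N_j^{3-4\al}\les 1$. It can even be kept as $N_j^{3-4\al}$ to gain decay, since $4\al>3$. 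This is lossy, however, when many pairs are present. The better route is to keep one copy and bound the other by its supremum. The remaining expression then has exactly the form of the left-hand side of \eqref{count_nr2}, with $k_1$ frequencies carrying $\jbb{\cdot}^{-2}$ weights, the $\kappa_1$ constraint, and $|\mathcal K_2\setminus\mathcal P|$ free frequencies with outer weight $\jb{n_{\rm sum}}^{2\al-3-\eps}$. The price is the extra factor $\prod_{j\in\mathcal K_1\cap \mathcal P} N_j^{3-2\al}$ left over from summing the unpaired copy.

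For part (i), the case $k_{12}>2|\mathcal P|$, the target exponent $(k_{12}-5)(3-2\al)$ counts each free frequency once. Each paired frequency of $\mathcal K_1$ is counted twice in $k_{12}$ but contributes only one factor of $3-2\al$ in the resulting count. This leaves a surplus of $|\mathcal P|(3-2\al)$, which covers the leftover factor. Concretely:
\begin{itemize}
\item If $|\mathcal K_2\setminus\mathcal P|\ge1$, apply \eqref{count_nr2} with $k_2$ replaced by $|\mathcal K_2\setminus\mathcal P|$. This gives $N_{\max}^{(k_1+|\mathcal K_2\setminus \mathcal P|-4)(3-2\al)-\ta}$. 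Multiplying by $N_{\max}^{|\mathcal P|(3-2\al)}$ gives at most the claimed bound, since $k_{12}-5\ge k_1+k_2-|\mathcal P|-4+|\mathcal P|-1$. One must check the bookkeeping carefully; the spare $-1$ comes from $k_2\ge2$.
\item If $\mathcal K_2\subset\mathcal P$, use Lemma~\ref{LEM:count_sum} with $k=k_1$ and $s=\al-1$ (or $s$ adjusted to absorb the outer weight $\jb{n_{\mathcal K_1}}^{2\al-3-\eps}$). This gives $N_{\max}^{(k_1-3)(3-2\al)+2s-6\al+6}$.
\end{itemize}
For \eqref{count_quin-}, where everything is paired, there is no outer sum. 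We bound the inner sum directly. Sum over $n_{j}$ for $j\in\mathcal K_1$ of maximal size using Lemma~\ref{LEM:count_twoball}, applied with the $\kappa$ constraint and $\jbb{n_{\mathcal K_1}}^{-1}$. Then sum the others crudely, which gives $N_{\max}^{(|\mathcal P|-2)(3-2\al)-\ta}$. Part (ii), with $k_2=1$, is the same argument with at most one pair, and reduces to Lemma~\ref{LEM:count_sum} with $k=k_1$.

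The main obstacle is part (i) when many frequencies are paired, e.g.\ $|\mathcal P|$ close to $k_2$ with $k_2$ large. In that regime the naive Cauchy--Schwarz loses a power $N_{\max}^{|\mathcal P|(3-2\al)}$, and one must verify that the gain from \eqref{count_nr2} or Lemma~\ref{LEM:count_sum} (an extra $-\ta$ and the shift $-4$ versus $-5$) still beats it. If the bookkeeping fails, I would first try a dyadic decomposition in $N_{\mathcal K_1}=|n_{\mathcal K_1}|$. I would then use the constraint $|\kappa-\zeta|\le1$ only once, via Lemma~\ref{LEM:count_twoball} on the largest paired frequency, and handle the remaining paired sums by $\sum_{|n|\sim N}\jbb{n}^{-2}\les N^{3-2\al}$.
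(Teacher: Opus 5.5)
\textbf{This is a genuine gap.} Your Cauchy--Schwarz step on the inner paired sum is where the argument breaks, and the bookkeeping after it does not close.

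\textbf{The main route.} The first Cauchy--Schwarz factor is $\prod_{j\in\mathcal K_1\cap\mathcal P}\sum_{|n_j|\sim N_j}\jbb{n_j}^{-2}\sim\prod_{j\in\mathcal K_1\cap\mathcal P}N_j^{3-2\al}$, not $N_j^{3-4\al}$. So that copy of the inner sum gains nothing from $\ind_{\{|\kappa-\zeta|\le1\}}$ or from $\jbb{\sum_{j\in\mathcal K_1}n_j}^{-1}$.
\begin{itemize}
\item Since $|\mathcal K_2\setminus\mathcal P|=k_2-|\mathcal P|$, the argument of \eqref{count_nr2} gives at best $N_{\textup{max}}^{(k_{12}-|\mathcal P|-4)(3-2\al)-\ta}$. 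Multiplying by $N_{\textup{max}}^{|\mathcal P|(3-2\al)}$ produces $N_{\textup{max}}^{(k_{12}-4)(3-2\al)-\ta}$. This is a full factor $N_{\textup{max}}^{3-2\al}$ worse than \eqref{count_quin}; there is no spare $-1$ coming from $k_2\ge2$.
\item Structurally, the trivial count of the left-hand side of \eqref{count_quin} is $N_{\textup{max}}^{k_{12}(3-2\al)}$. Five units of $3-2\al$ must be gained, while \eqref{count_nr2} supplies only four, so both copies of the inner sum have to gain.
\item The same loss defeats your other subcases. Lemma~\ref{LEM:count_sum} with $s=\al-1$ (the smallest admissible value) gives $N_{\textup{max}}^{(k_1-3)(3-2\al)-4(\al-1)}$. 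After the extra $N_{\textup{max}}^{|\mathcal P|(3-2\al)}$, you would need $\al>\frac54$ when $\mathcal K_2\subset\mathcal P$ and $\al>\frac76$ in part (ii). Both are excluded by $\al\le\frac98$.
\item Separately, $\sum_{j\notin\mathcal P}n_j=\sum_{j\in\mathcal K_1\setminus\mathcal P}n_j+\sum_{j\in\mathcal K_2\setminus\mathcal P}n_j$. The paired frequencies of $\mathcal K_1$ do not enter the outer weight, so it is not $\jb{\sum_{j\in\mathcal K_1}n_j}^{2\al-3-\eps}$ when $\mathcal K_2\subset\mathcal P$.
\end{itemize}

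\textbf{The fallback.} Your fallback does not repair this. Suppose you bound the inner sum uniformly, using Lemma~\ref{LEM:count_twoball} only for the largest paired frequency $n_1$ and summing the other paired frequencies crudely. Set $n^1_{\textup{sum}}=\sum_{j\in\mathcal K_1\setminus\{1\}}n_j$. In the regime $|n^1_{\textup{sum}}|\ll N_1$, the two-ball count yields only $N_1^{3-3\al}$, a gain of $N_1^{-\al}$ over the trivial $N_1^{3-2\al}$. The inner sum must gain $N_1^{-2(3-2\al)}$, which would need $\al>\frac65$. The same objection applies to your argument for \eqref{count_quin-}.

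\textbf{The missing idea.} You need to gain inside the absolute value before squaring, while keeping track of $n^1_{\textup{sum}}$. This is what the paper does.
\begin{itemize}
\item For $|\mathcal P|=1$, the paper proves the pointwise bound \eqref{count_quin2}: the inner sum is $\les\jb{n^1_{\textup{sum}}}^{-\al}N_1^{-\ta}$. The resulting factor $\jb{n^1_{\textup{sum}}}^{-2\al}$ is spent in the outer sum via Lemma~\ref{LEM:conv}~(i) on the largest frequency of $\mathcal K_1\setminus\{1\}$. The outer weight is used on the largest unpaired frequency of $\mathcal K_2$. The same computation with the outer weight discarded gives part (ii).
\item For $|\mathcal P|\ge2$, the paper also localizes $|n^1_{\textup{sum}}|\sim N^1_{\textup{sum}}$. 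It sums a second paired frequency $n_2$ under this localization, which allows at most $\min\{N^1_{\textup{sum}},N_2\}^3$ choices, and applies Lemma~\ref{LEM:count_scale}. This gives the uniform bound $N_{\textup{sum}}^{1-\al}N_1^{2-2\al}N_2^{2-2\al}N_{\textup{max}}^{(|\mathcal P|-2)(3-2\al)}$. That is two units gained inside, four after squaring, and one more from the outer weight via Lemma~\ref{LEM:conv}~(i). This uniform bound also gives \eqref{count_quin-} directly.
\end{itemize}
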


\begin{proof}
We first show the proof of \eqref{count_quin} by considering the following two cases.

\medskip \noi
\textbf{Case 1:} $|\mathcal{P}| = 1$.

By symmetry, we can assume without loss of generality that $\mathcal{P} = \{ \{1, k_{12}\} \}$. 
We claim that
\begin{align}
\sum_{n_1 \in \Z^3} \bigg[ \Big( \ind_{\{|n_1| \sim N_1\}} \jbb{n_1}^{-2} \Big) \bigg\llbracket \sum_{j \in \mathcal{K}_1} n_j \bigg\rrbracket^{-1} \ind_{\{|\kappa - \zeta| \leq 1\}} \bigg] \les \bigg\langle \sum_{j \in \mathcal{K}_1 \setminus \{1\}} n_j \bigg\rangle^{-\al} N_1^{- \ta}
\label{count_quin2}
\end{align}

\noi
for some $\ta > 0$. Let us first take \eqref{count_quin2} for granted and prove \eqref{count_quin}. We can assume without loss of generality that $N_2 = \max_{j \in \mathcal{K}_1 \setminus \{1\}} \{ N_j \}$ and $N_{k_{12} - 1} = \max_{j \in \mathcal{K}_2 \setminus \{k_{12}\}} \{ N_j \}$. We can then use \eqref{count_quin2} and Lemma~\ref{LEM:conv}~(i) three times to obtain that the left-hand-side of \eqref{count_quin} is bounded by
\begin{align*}
&N_1^{- 2\ta} \sum_{(n_j)_{j \notin \mathcal{P}}} \bigg( \prod_{j \notin \mathcal{P}} \ind_{\{|n_j| \sim N_j\}} \jbb{n_j}^{-2} \bigg) \bigg\langle \sum_{j \notin \mathcal{P}} n_j \bigg\rangle^{2 \al - 3 - \eps} \bigg\langle \sum_{j \in \mathcal{K}_1 \setminus \{1\}} n_j \bigg\rangle^{-2 \al} \\
&\les N_1^{- 2\ta} N_{k_{12} - 1}^{- \ta} \sum_{(n_j)_{j \notin \mathcal{P} \wedge j \neq k_{12} - 1}} \bigg( \prod_{j \notin \mathcal{P} \wedge j \neq k_{12} - 1} \ind_{\{|n_j| \sim N_j\}} \jbb{n_j}^{-2} \bigg) \bigg\langle \sum_{j \in \mathcal{K}_1 \setminus \{1\}} n_j \bigg\rangle^{-2 \al} \\
&\les N_1^{- 2\ta} N_{k_{12} - 1}^{- \ta} N_2^{- \ta} \sum_{(n_j)_{j \notin \mathcal{P} \wedge j \neq 2, k_{12} - 1}} \bigg( \prod_{j \notin \mathcal{P} \wedge j \neq 2, k_{12} - 1} \ind_{\{|n_j| \sim N_j\}} \jbb{n_j}^{-2} \bigg) \bigg\langle \sum_{j \in \mathcal{K}_1 \setminus \{1, 2\}} n_j \bigg\rangle^{- \al} \\
&\les N_{\text{max}}^{(k_{12} - 5)(3 - 2 \al) - \ta}
\end{align*}

\noi
for some $\ta > 0$, where we used $\al > 1$. This proves \eqref{count_quin}.

It remains to show \eqref{count_quin2}. For simplicity, we write $n_{\text{sum}}^1 = \sum_{j \in \mathcal{K}_1 \setminus \{1\}} n_j$. We also dyadically localize $|n_{\text{sum}}^1 + n_1| \sim N_{\text{sum}}$. By Lemma \ref{LEM:count_twoball}, we have
\begin{align*}
\sum_{n_1 \in \Z^3} &\bigg[ \Big( \ind_{\{|n_1| \sim N_1\}} \jbb{n_1}^{-2} \Big) \jbb{n_{\text{sum}}^1 + n_1}^{-1} \ind_{\{ |n_{\text{sum}}^1 + n_1| \sim N_{\text{sum}} \}} \ind_{\{|\kappa - \zeta| \leq 1\}} \bigg] \\
&\les N_1^{-2 \al} N_{\text{sum}}^{-\al} \min \{\jb{n_{\text{sum}}^1}, N_{\text{sum}}, N_1\}^{-1} \min \{N_{\text{sum}}, N_1\}^3.
\end{align*}

\noi
If $N_{\text{sum}} \ll N_1$, then $\jb{n_{\text{sum}}^1} \sim N_1$, so that the left-hand-side of \eqref{count_quin2} is bounded by
\begin{align*}
\sum_{\substack{N_{\text{sum}} \ll N_1 \\ N_{\text{sum}} \geq 1 \text{ dyadic}}} N_1^{-2 \al} N_{\text{sum}}^{2-\al} \les \jb{n_{\text{sum}}^1}^{-\al} N_1^{2 - 2 \al}.
\end{align*}

\noi
If $N_{\text{sum}} \sim N_1$, then $\jb{n_{\text{sum}}^1} \les N_{\text{sum}} \sim N_1$, so that the left-hand-side of \eqref{count_quin2} is bounded by a constant multiple of
\begin{align*}
\jb{n_{\text{sum}}^1}^{-1} N_1^{3 - 3\al} \les \jb{n_{\text{sum}}^1}^{-\al} N_1^{2 - 2 \al} ,
\end{align*}

\noi
where we used the fact that $\al > 1$. If $N_{\text{sum}} \gg N_1$, then $\jb{n_{\text{sum}}^1} \sim N_{\text{sum}} \gg N_1$, so that the left-hand-side of \eqref{count_quin2} is bounded by
\begin{align*}
\sum_{\substack{N_{\text{sum}} \sim \jb{n_{\text{sum}}^1} \\ N_{\text{sum}} \geq 1 \text{ dyadic}}} N_{\text{sum}}^{- \al} N_1^{2 - 2 \al} \les \jb{n_{\text{sum}}^1}^{-\al} N_1^{2 - 2 \al}.
\end{align*}

\noi
In all cases, we obtain \eqref{count_quin2} since $\al > 1$.

\medskip \noi
\textbf{Case 2:} $|\mathcal{P}| \geq 2$.

By symmetry, we can assume without loss of generality that $\{1, k_{12}\}, \{2, k_{12} - 1\} \in \mathcal{P}$. For convenience, we let $n_{\text{sum}} = \sum_{j \in \mathcal{K}_1} n_j$ and $n_{\text{sum}}^1 = \sum_{j \in \mathcal{K}_1 \setminus \{1\}} n_j$. Let $N_{\text{sum}}$ be the dyadic size of $n_{\text{sum}}$ and let $N_{\text{sum}}^1$ be the dyadic size of $n_{\text{sum}}^1$. Also, we can assume that $N_1 = \max_{j \in \mathcal{P}} \{ N_j \}$. By Lemma \ref{LEM:count_twoball} for the summation in $n_1$, a summation over $(n_j)_{j \in \mathcal{P} \setminus \{\{1, k_{12}\}\}}$, and Lemma \ref{LEM:count_scale}, we obtain
\begin{align}
\begin{split}
&\sum_{(n_j)_{j \in \mathcal{P}}} \bigg[ \ind_{\substack{(n_1, \dots, n_{k_{12}}) \\ \textup{admissible}}} \bigg( \prod_{j \in \mathcal{P}} \ind_{\{ |n_j| \sim N_j \}} \jbb{n_j}^{-1} \bigg) \jbb{n_{\text{sum}}}^{-1} \\
&\quad \times \ind_{\{|n_{\text{sum}}| \sim N_{\text{sum}}\}} \ind_{\{|n_{\text{sum}}^1| \sim N_{\text{sum}}^1\}} \ind_{\{|\kappa - \zeta| \leq 1\}} \bigg] \\
&\les N_{\text{sum}}^{-\al} N_1^{-2 \al} \min \{N_{\text{sum}}, N_{\text{sum}}^1, N_1\}^{-1} \min \{ N_{\text{sum}}, N_1 \}^3 \\
&\quad \times \sum_{(n_j)_{j \in \mathcal{P} \setminus \{\{1, k_{12}\}\}}} \ind_{\substack{(n_2, \dots, n_{k_{12} - 1}) \\ \textup{admissible}}} \bigg( \prod_{j \in \mathcal{P} \setminus \{\{1, k_{12}\}\}} \ind_{\{ |n_j| \sim N_j \}} \jbb{n_j}^{-1} \bigg) \ind_{\{ |n_{\text{sum}}^{1}| \sim N_{\text{sum}}^{1} \}} \\
&\les N_{\text{sum}}^{-\al} N_{1}^{-2 \al} N_{2}^{-2 \al} \frac{\min \{ N_{\text{sum}}, N_{1} \}^3 \min \{N_{\text{sum}}^{1}, N_{2}\}^3}{\min \{N_{\text{sum}}, N_{\text{sum}}^{1}, N_{1}\}} N_{\text{max}}^{(|\mathcal{P}| - 2) (3 - 2\al)} \\
&\les N_{\text{sum}}^{1 - \al} N_{1}^{2 - 2 \al} N_{2}^{2 - 2 \al} N_{\text{max}}^{(|\mathcal{P}| - 2) (3 - 2\al)}.
\end{split}
\label{count_quin3}
\end{align}

\noi
To deal with the summation over $(n_j)_{j \notin \mathcal{P}}$, we use Lemma~\ref{LEM:conv}~(i) and the fact that $1 < \al \leq \frac 98$ to obtain
\begin{align}
\begin{split}
\sum_{(n_j)_{j \notin \mathcal{P}}} \bigg( \prod_{j \notin \mathcal{P}} \ind_{\{|n_j| \sim N_j\}} \jbb{n_j}^{-2} \bigg) \bigg\langle \sum_{j \notin \mathcal{P}} n_j \bigg\rangle^{2 \al - 3 - \eps} \les \max_{j \notin \mathcal{P}} \{N_j\}^{-\ta} N_{\text{max}}^{(k_{12} - 2|\mathcal{P}| - 1) (3 - 2 \al)}
\end{split}
\label{count_quin4}
\end{align}

\noi
for some $\ta > 0$. The desired estimate then follows from \eqref{count_quin3} and \eqref{count_quin4} given that $\al > 1$.

Combining the above two cases, we obtain \eqref{count_quin}. For \eqref{count_quin-}, we simply use \eqref{count_quin3} in Case 2 above along with the fact that $\al > 1$ to conclude. For \eqref{count_quin+}, we just need to use similar steps in Case 1 above to get the desired estimate. Note that $\mathcal{K}_2 \setminus \{k_{12}\} = \varnothing$, so that there is no frequency in $\mathcal{K}_2 \setminus \{k_{12}\}$ to be summed up and we can simply bound $\jb{\sum_{j \notin \mathcal{P}} n_j}^{2 \al - 3 - \eps}$ by 1, which is the only difference we need for proving \eqref{count_quin+}.
\end{proof}

\subsection{Extended septic counting estimate}

In this subsection, we show an extended septic counting estimate, which is relevant to the extended septic stochastic terms in Subsection~\ref{SUB:reg} below. Recalling Definition \ref{DEF:pair}, we state and prove the following lemma, which is an extended version of \cite[Lemma 3.12]{LTzW}.

\begin{lemma}
\label{LEM:count_sep}
Let $1 < \al \leq \frac 98$. For $k \geq 3$, we define
\begin{align*}
\Ld^k (n_1, \dots, n_k) \deff \sum_{\eps_{\textup{sum}}, \eps_1, \dots, \eps_k \in \{\pm 1\}} \sum_{\zeta \in \Z} \jb{\zeta}^{-1}
\bigg\llbracket \sum_{j = 1}^k n_j \bigg\rrbracket^{-1} \Big( \prod_{j = 1}^k \jbb{n_j}^{-1} \Big) \ind_{\{|\kappa_k - \zeta| \leq 1\}},
\end{align*}

\noi
where
\begin{align*}
\kappa_k \deff \eps_{\textup{sum}} \bigg\llbracket \sum_{j = 1}^k n_j \bigg\rrbracket + \sum_{j = 1}^k \eps_j \jbb{n_j}.
\end{align*}

\noi
Let $k_1 \geq 3$, $k_2 \geq 3$, and $k_3 \geq 1$ be integers and denote $k_{12} = k_1 + k_2$, $k_{23} = k_2 + k_3$, and $k_{123} = k_1 + k_2 + k_3$. Let $N_1, \dots, N_{k_{123}} \geq 1$ be dyadic numbers. Furthermore, we let $\mathcal{P}$ be a pairing of $\{1, \dots, k_{123}\}$ which respects the partition $\mathcal{K}_1 = \{1, \dots, k_1\}$, $\mathcal{K}_2 = \{k_1 + 1, \dots, k_{12}\}$, $\mathcal{K}_3 = \{k_{12} + 1, \dots, k_{123}\}$.
Then, we have
\begin{align}
\begin{split}
\sum_{(n_j)_{j \notin \mathcal{P}}} &\bigg\langle \sum_{j \notin \mathcal{P}} n_j \bigg\rangle^{-1} \bigg( \sum_{(n_j)_{j \in \mathcal{P}}} \ind_{\substack{(n_1, \dots, n_{k_{123}}) \\ \textup{admissible}}} \Big( \prod_{j = 1}^{k_{123}} \ind_{\{|n_j| \sim N_j\}} \Big) \Ld^{k_1} (n_1, \dots, n_{k_1}) \\
&\times \Ld^{k_2} (n_{k_1 + 1}, \dots, n_{k_{12}}) \prod_{j \in \mathcal{K}_3} \jbb{n_j}^{-1}  \bigg)^2 \les N_{\textup{max}}^{(k_{123} - 7)(3 - 2 \al) - \ta}
\end{split}
\label{count_sep}
\end{align}

\noi
for some $\ta > 0$, where $N_{\textup{max}} = \max \{ N_1, \dots, N_{k_{123}} \}$.
\end{lemma}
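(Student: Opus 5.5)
We follow the strategy of \cite[Lemma~3.12]{LTzW}, treating the extra frequencies as in Lemma~\ref{LEM:count_quin} and Lemma~\ref{LEM:count_quin2}. The first step is to reduce $\Ld^{k}$ to a one-phase quantity. For fixed signs the sum over $\zeta \in \Z$ carries the weight $\jb{\zeta}^{-1}$. When we expand the square in \eqref{count_sep}, the two copies of $\Ld^{k_1}$ (and of $\Ld^{k_2}$) each produce a sum $\sum_{\zeta}\jb{\zeta}^{-1}$. We pull out $\sup_\zeta$ of the indicator-weighted sum and pay only a logarithmic loss $\log N_{\max}$, since $|\kappa_k| \les N_{\max}^{\al}$ and so only $|\zeta|\les N_{\max}^\al$ contribute. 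This loss is absorbed into $N_{\max}^{-\ta}$ at the end. By Cauchy--Schwarz in the $\zeta$-sums and the finite sign sums, it therefore suffices to bound, uniformly in $\zeta_1,\zeta_2$ and in the signs, the analogous expression with $\Ld^{k_1}$ replaced by $\jbb{n_{\mathcal{K}_1}}^{-1}\prod_{j\in\mathcal{K}_1}\jbb{n_j}^{-1}\ind_{\{|\kappa_{k_1}-\zeta_1|\le1\}}$, and similarly for $\Ld^{k_2}$.

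Next we split into cases according to the pairing. If $\mathcal{P}=\varnothing$, the square is $\prod_j\jbb{n_j}^{-2}\,\jbb{n_{\mathcal K_1}}^{-2}\jbb{n_{\mathcal K_2}}^{-2}\ind\ind$. We first sum over the $\mathcal{K}_3$ variables using Lemma~\ref{LEM:count_twoball} against $\jb{n_{\rm sum}}^{-1}$, exactly as in the proof of \eqref{count_nr1}. We then apply Lemma~\ref{LEM:count_sum} with $s=\al-1$ separately to the $\mathcal{K}_1$ block and the $\mathcal{K}_2$ block. Each block gains $N^{(k_i-3)(3-2\al)-\ta}$, and counting powers gives $(k_{123}-7)(3-2\al)-\ta$; Lemma~\ref{LEM:count_scale} handles the dyadic bookkeeping.

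If $\mathcal{P}\neq\varnothing$, we distinguish pairs inside $\mathcal K_1\times\mathcal K_2$ from pairs involving $\mathcal K_3$. For the paired variables inside the modulus we apply Lemma~\ref{LEM:count_twoball} to one paired frequency and sum the remaining paired frequencies trivially, as in \eqref{count_quin2}--\eqref{count_quin3}. Each pair then contributes at most $N_{\max}^{3-2\al}$ times a factor $N_j^{2-2\al}$, which produces the gain $\ta$. For the unpaired variables we use Lemma~\ref{LEM:conv}~(i) and Lemma~\ref{LEM:count_sum} on the block containing at least three unpaired elements. When a block $\mathcal K_i$ is entirely paired, its phase constraint is spent inside the modulus; in that case we use \eqref{count_quin-}-type bounds for that block.

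The main obstacle is the configuration where a block has fewer than three unpaired elements, so that Lemma~\ref{LEM:count_sum} does not apply, while the phase factor $\jbb{n_{\mathcal K_i}}^{-1}$ must still be converted into decay. In that case we will use the claim \eqref{count_quin2} to trade the phase restriction for $\jb{\cdot}^{-\al}N^{-\ta}$ of the residual sum. The remaining work is to check, case by case on $|\mathcal P|$, that the exponents close to $(k_{123}-7)(3-2\al)-\ta$ for all $1<\al\le\frac98$. This uses $\al>1$ for the gain and $\al\le\frac98$ in Lemma~\ref{LEM:conv}, as in \eqref{count_quin4}.
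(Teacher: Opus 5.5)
\textbf{Verdict: the proposal has a genuine gap, in the cases with pairs joining $\mathcal K_1$ and $\mathcal K_2$.}

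Your reduction to fixed $\zeta_1,\zeta_2$ with a logarithmic loss is fine. So is your treatment of $\mathcal P=\varnothing$, with one correction. No phase constraint involves the $\mathcal K_3$ frequencies, so the $\mathcal K_3$ sum is done with Lemma~\ref{LEM:conv}~(i) against $\jb{\sum_{j\notin\mathcal P}n_j}^{-1}$, not with Lemma~\ref{LEM:count_twoball}.

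\textbf{Why your paired-case scheme fails.} Lemma~\ref{LEM:count_sum} applies only to a full block, with phase $\eps_{\textup{sum}}\jbb{\sum_j n_j}+\sum_j\eps_j\jbb{n_j}$ summed over exactly those variables. It cannot be applied to the unpaired members of $\mathcal K_i$: the phase $\kappa_{k_i}$ also involves the paired frequencies, and those are summed inside the square. Summing a $\mathcal K_1$--$\mathcal K_2$ paired frequency by two-ball counting, as in \eqref{count_quin2}, spends one block's phase and discards the other's for good.

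Concretely, take $k_1=k_2=3$, $k_3=1$, $\mathcal P=\{\{1,4\}\}$, $N_1,\dots,N_6\sim N$, $N_7\sim 1$; the target is $N^{-\ta}$. Your scheme gives the following factors:
\begin{itemize}
\item The inner $n_1$-sum is bounded by $\jb{n_2+n_3}^{-\al}N^{2-2\al}$, contributing $N^{4-4\al}$ after squaring.
\item The $(n_2,n_3)$-sum then gives $N^{6-6\al}$.
\item The $(n_5,n_6)$-sum, whose phase is gone, gives $N^{2(3-2\al)}$.
\end{itemize}
The total is $N^{16-14\al}\ge N^{1/4}$ for $1<\al\le\frac98$. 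By symmetry, spending $\kappa_{k_2}$ instead does no better.

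\textbf{The missing idea.} You need Cauchy--Schwarz in the paired frequencies $(n_j)_{j\in\mathcal P}$, which decouples the blocks. If no index of $\mathcal K_3$ is paired,
\[
\Big(\sum_{(n_j)_{j\in\mathcal P}}\Ld^{k_1}\Ld^{k_2}\Big)^2\le\Big(\sum_{(n_j)_{j\in\mathcal P\cap\mathcal K_1}}(\Ld^{k_1})^2\Big)\Big(\sum_{(n_j)_{j\in\mathcal P\cap\mathcal K_2}}(\Ld^{k_2})^2\Big).
\]
The outer sum then reassembles the full-block quantities
\[
\sum_{n_1,\dots,n_{k_1}}\Big(\prod_{j\in\mathcal K_1}\ind_{\{|n_j|\sim N_j\}}\Big)\Ld^{k_1}(n_1,\dots,n_{k_1})^2\les\big(\max_{j\in\mathcal K_1}N_j\big)^{(k_1-3)(3-2\al)-3\ta},
\]
and likewise for $\Ld^{k_2}$. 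These do follow from Cauchy--Schwarz and H\"older in $\zeta$ together with Lemma~\ref{LEM:count_sum} for $s=\al-1$.

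If some $j\in\mathcal K_3$ is paired, say with $1\in\mathcal K_1$, your use of \eqref{count_quin2} for that one pair is correct. Afterwards you still need three further steps:
\begin{itemize}
\item The same Cauchy--Schwarz over the remaining pairs, now possibly across three blocks, via $\sum_{x,y,z}A(x,y)B(x,z)C(y,z)\le\|A\|_{\ell^2}\|B\|_{\ell^2}\|C\|_{\ell^2}$.
\item The full-block $L^2$ bound for $\Ld^{k_2}$.
\item To keep the $N_{\max}^{-\ta}$ gain, a split on the largest remaining $\mathcal K_3$ frequency. If it is paired, it is at most $\max\{N_2,\dots,N_{k_{12}}\}$. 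If it is unpaired, use $\jb{\sum_{j\notin\mathcal P}n_j}^{-1}$ via Lemma~\ref{LEM:conv}~(i).
\end{itemize}
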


\begin{proof}
By the Cauchy-Schwarz inequality in $\zeta$, H\"older's inequality in $\zeta$, and Lemma \ref{LEM:count_sum} with $s = \al - 1$ and $k = k_1$, we know that 
\begin{align}
\begin{split}
&\sum_{n_1, \dots, n_{k_1} \in \Z^3} \Big( \prod_{j \in \mathcal{K}_1} \ind_{\{|n_j| \sim N_j\}} \Big) \Ld^{k_1} (n_1, \dots, n_{k_1})^2 \\
&\leq \Big( \sum_{\zeta \in \Z} \jb{\zeta}^{-1} \ind_{\{ |\zeta| \les \max \{ N_1, \dots, N_{k_1} \}^\al \}} \Big) \sum_{n_1, \dots, n_{k_1} \in \Z^3} \Big( \prod_{j \in \mathcal{K}_1} \ind_{\{|n_j| \sim N_j\}} \Big) \sum_{\zeta \in \Z} \jb{\zeta}^{-1}  \\
&\quad \times \bigg( \sum_{\eps_{\textup{sum}}, \eps_1, \dots, \eps_{k_1} \in \{\pm 1\}}
\bigg\llbracket \sum_{j = 1}^{k_1} n_j \bigg\rrbracket^{-1} \Big( \prod_{j = 1}^{k_1} \jbb{n_j}^{-1} \Big) \ind_{\{|\kappa_{k_1} - \zeta| \leq 1\}} \bigg)^2 \\
&\les \max \{N_1, \dots, N_{k_1}\}^{(k_1 - 3)(3 - 2 \al) - 3\ta} 
\end{split}
\label{count_sep1}
\end{align}

\noi
for some $\ta > 0$, where we used the fact that $\alpha > 1$. Similarly, we have
\begin{align}
\begin{split}
&\sum_{n_{k_1 + 1}, \dots, n_{k_{12}} \in \Z^3} \Big( \prod_{j \in \mathcal{K}_2} \ind_{\{|n_j| \sim N_j\}} \Big) \Ld^{k_2} (n_{k_1 + 1}, \dots, n_{k_{12}})^2 \\
&\les \max \{N_{k_1 + 1}, \dots, N_{k_{12}}\}^{(k_2 - 3)(3 - 2 \al) - 3\ta}
\end{split}
\label{count_sep2}
\end{align}

\noi
for some $\ta > 0$.

We now consider the following cases.

\smallskip \noi
\textbf{Case 1:} None of the indices in $\mathcal{K}_3$ is paired.

We can assume without loss of generality that $N_{k_{123}} = \max \{ N_{k_{12} + 1}, \dots, N_{k_{123}} \}$. By the Cauchy-Schwarz inequality in $(n_j)_{j \in \mathcal{P}}$, Lemma~\ref{LEM:conv}~(i) for the summation in $n_{k_{123}}$ (along with $\al > 1$), \eqref{count_sep1}, and \eqref{count_sep2}, we obtain that the left-hand-side of \eqref{count_sep} is bounded by
\begin{align*}
&\sum_{(n_j)_{j \notin \mathcal{P}}} \bigg\langle \sum_{j \notin \mathcal{P}} n_j \bigg\rangle^{-1} \Big( \prod_{j \in \mathcal{K}_3} \ind_{\{|n_j| \sim N_j\}} \jbb{n_j}^{-2} \Big) \\
&\quad \times \bigg( \sum_{(n_j)_{j \in \mathcal{P} \wedge j \in \mathcal{K}_1}} \Big( \prod_{j \in \mathcal{K}_1} \ind_{\{|n_j| \sim N_j\}} \Big) \Ld^{k_1} (n_1, \dots, n_{k_1})^2 \bigg) \\
&\quad \times \bigg( \sum_{(n_j)_{j \in \mathcal{P} \wedge j \in \mathcal{K}_2}} \Big( \prod_{j \in \mathcal{K}_2} \ind_{\{|n_j| \sim N_j\}} \Big) \Ld^{k_2} (n_{k_1 + 1}, \dots, n_{k_{12}})^2 \bigg) \\
&\les N_{k_{123}}^{- \ta} N_{\text{max}}^{(k_3 - 1) (3 - 2 \al)} \bigg( \sum_{(n_j)_{j \in \mathcal{K}_1}} \Big( \prod_{j \in \mathcal{K}_1} \ind_{\{|n_j| \sim N_j\}} \Big) \Ld^{k_1} (n_1, \dots, n_{k_1})^2 \bigg) \\
&\quad \times \bigg( \sum_{(n_j)_{j \in \mathcal{K}_2}} \Big( \prod_{j \in \mathcal{K}_2} \ind_{\{|n_j| \sim N_j\}} \Big) \Ld^{k_2} (n_{k_1 + 1}, \dots, n_{k_{12}})^2 \bigg) \\
&\les N_{\text{max}}^{(k_{123} - 7) (3 - 2 \al) - \ta}, 
\end{align*}

\noi
which proves \eqref{count_sep}.

\smallskip \noi
\textbf{Case 2:} At least one index in $\mathcal{K}_3$ is paired.

By symmetry, we can assume without loss of generality that $\{1, k_{123}\} \in \mathcal{P}$. By using \eqref{count_quin2}, we have
\begin{align}
\begin{split}
\sum_{n_{1} \in \Z^3} &\ind_{\{ |n_{1}| \sim N_{1} \}} \jbb{n_{1}}^{-1} \Ld^{k_1} (n_1, \dots, n_{k_1}) \\
&\les N_{1}^{- \ta} N_{\text{max}}^{\frac{\ta}{2}} \Big\langle \sum_{j \in \mathcal{K}_1 \setminus \{1\}} n_j \Big\rangle^{- \al} \Big( \prod_{j \in \mathcal{K}_1 \setminus \{1\}} \jbb{n_j}^{-1} \Big)
\end{split}
\label{count_sep3}
\end{align}

\noi
for some $\ta > 0$ sufficiently small. We let $\mathcal{P}'$ be the pairing obtained by removing the pair $\{1, k_{123}\}$ from $\mathcal{P}$. 
When $k_3 = 1$, by \eqref{count_sep3}, the Cauchy-Schwarz inequality in $(n_j)_{j \in \mathcal{P}'}$, Lemma~\ref{LEM:conv}~(i) twice (along with $\al > 1$), and \eqref{count_sep2}, we have that the left-hand-side of \eqref{count_sep} is bounded by
\begin{align*}
&N_{1}^{-2 \ta} N_{\text{max}}^{\ta} \sum_{(n_j)_{j \notin \mathcal{P}}} \bigg( \sum_{(n_j)_{j \in \mathcal{P}'}} \ind_{\substack{(n_2, \dots, n_{k_{123} - 1}) \\ \textup{admissible}}} \Big( \prod_{j \notin \{1, k_{123}\}} \ind_{\{|n_j| \sim N_j\}} \Big)  \\
&\quad \times \Big\langle \sum_{j \in \mathcal{K}_1 \setminus \{1\}} n_j \Big\rangle^{- \al} \Big( \prod_{j \in \mathcal{K}_1 \setminus \{1\}} \jbb{n_j}^{-1} \Big) \Ld^{k_2} (n_{k_1 + 1}, \dots, n_{k_{12}}) \bigg)^2 \\
&\leq N_1^{- 2 \ta} N_{\text{max}}^\ta \sum_{(n_j)_{j \notin \mathcal{P}}} \bigg( \sum_{(n_j)_{j \in \mathcal{P}' \wedge j \in \mathcal{K}_1 \setminus \{1\}}} \Big\langle \sum_{j \in \mathcal{K}_1 \setminus \{1\}} n_j \Big\rangle^{- 2 \al} \Big( \prod_{j \in \mathcal{K}_1 \setminus \{1\}} \ind_{\{|n_j| \sim N_j\}} \jbb{n_j}^{-2} \Big) \bigg) \\
&\quad \times \bigg( \sum_{(n_j)_{j \in \mathcal{P}' \wedge j \in \mathcal{K}_2}} \Big( \prod_{j \in \mathcal{K}_2} \ind_{\{|n_j| \sim N_j\}} \Big) \Ld^{k_2} (n_{k_1 + 1}, \dots, n_{k_{12}})^2 \bigg) \\
&\les N_1^{-2 \ta} N_{\text{max}}^\ta \max \{ N_2, \dots, N_{k_1} \}^{(k_1 - 3) (3 - 2 \al) - 3 \ta} \max \{ N_{k_1 + 1}, \dots, N_{k_{12}} \}^{(k_2 - 3) (3 - 2 \al) - 3 \ta} \\
&\les N_{\text{max}}^{(k_{12} - 6) (3 - 2 \al) - \ta} ,
\end{align*}

\noi
which gives the desired estimate.

When $k_3 \geq 2$, we assume  $N_{k_{123} - 1} = \max \{ N_{k_{12} + 1}, \dots, N_{k_{123} - 1} \}$ without loss of generality. If $k_{123} - 1 \in \mathcal{P}'$, then we have $N_{k_{123} - 1} \leq \max \{ N_2, \dots, N_{k_{12}} \}$, so that by \eqref{count_sep3}, the Cauchy-Schwarz inequality in $(n_j)_{j \in \mathcal{P}'}$,
Lemma~\ref{LEM:conv}~(i) twice (along with $\al > 1$), \eqref{count_sep2}, and the fact that $N_{k_{123} - 1} \leq \max \{ N_2, \dots, N_{k_{12}} \}$, we have that the left-hand-side of \eqref{count_sep} is bounded by
\begin{align*}
&N_{1}^{-2 \ta} N_{\text{max}}^{\ta} \sum_{(n_j)_{j \notin \mathcal{P}}} \bigg( \sum_{(n_j)_{j \in \mathcal{P}'}} \ind_{\substack{(n_1, \dots, n_{k_{123}}) \\ \textup{admissible}}} \Big( \prod_{j \notin \{1, k_{123}\}} \ind_{\{|n_j| \sim N_j\}} \Big)  \\
&\quad \times \Big\langle \sum_{j \in \mathcal{K}_1 \setminus \{1\}} n_j \Big\rangle^{- \al} \Big( \prod_{j \in \mathcal{K}_1 \setminus \{1\}} \jbb{n_j}^{-1} \Big) \Big( \prod_{j \in \mathcal{K}_3 \setminus \{k_{123}\}} \jbb{n_j}^{-1} \Big) \Ld^{k_2} (n_{k_1 + 1}, \dots, n_{k_{12}}) \bigg)^2 \\
&\les N_{1}^{-2 \ta} N_{\text{max}}^{\ta} \sum_{(n_j)_{j \notin \mathcal{P}}} \bigg( \sum_{(n_j)_{j \in \mathcal{P}' \wedge j \in \mathcal{K}_1 \setminus \{1\}}} \Big\langle \sum_{j \in \mathcal{K}_1 \setminus \{1\}} n_j \Big\rangle^{- 2 \al} \Big( \prod_{j \in \mathcal{K}_1 \setminus \{1\}} \ind_{\{|n_j| \sim N_j\}} \jbb{n_j}^{-2} \Big) \bigg) \\
&\quad \times \bigg( \sum_{(n_j)_{j \in \mathcal{P}' \wedge j \in \mathcal{K}_2}} \Big( \prod_{j \in \mathcal{K}_2} \ind_{\{|n_j| \sim N_j\}} \Big) \Ld^{k_2} (n_{k_1 + 1}, \dots, n_{k_{12}})^2 \bigg) \\
&\quad \times \bigg( \sum_{(n_j)_{j \in \mathcal{P}' \wedge j \in \mathcal{K}_3 \setminus \{k_{123}\}}} \Big( \prod_{j \in \mathcal{K}_3 \setminus \{k_{123}\}} \ind_{\{|n_j| \sim N_j\}} \jbb{n_j}^{-2} \Big) \bigg) \\
&\les N_{1}^{-2 \ta} N_{\text{max}}^{\ta} \max\{ N_2, \dots, N_{k_1} \}^{(k_1 - 3) (3 - 2 \al) - 3 \ta} \\
&\quad \times \max \{ N_{k_1 + 1}, \dots, N_{k_{12}} \}^{(k_2 - 3) (3 - 2 \al) - 3 \ta} N_{k_{123} - 1}^{(k_3 - 1) (3 - 2\al)} \\
&\les N_{\text{max}}^{(k_{123} - 7) (3 - 2 \al) - \ta},
\end{align*}

\noi
which gives the desired estimate. If $k_{123} - 1 \notin \mathcal{P}'$, we need to use the term $\jb{\sum_{j \notin \mathcal{P}} n_j}^{-1}$ and Lemma~\ref{LEM:conv}~(i) to obtain
\begin{align}
\sum_{n_{k_{123} - 1} \in \Z^3} \bigg\langle \sum_{j \notin \mathcal{P}} n_j \bigg\rangle^{-1} \jbb{n_{k_{123} - 1}}^{-2} \ind_{\{ |n_{k_{123} - 1}| \sim N_{k_{123} - 1} \}} \les N_{k_{123} - 1}^{- 2\ta} .
\label{count_sep4}
\end{align}

\noi
Thus, by \eqref{count_sep3}, \eqref{count_sep4}, the Cauchy-Schwarz inequality in $(n_j)_{j \in \mathcal{P}'}$,
Lemma~\ref{LEM:conv}~(i) twice (along with $\al > 1$), and \eqref{count_sep2}, we have that the left-hand-side of \eqref{count_sep} is bounded by
\begin{align*}
&N_{1}^{-2 \ta} N_{\text{max}}^{\ta} N_{k_{123} - 1}^{- 2 \ta} \sum_{(n_j)_{j \notin \mathcal{P}}} \bigg( \sum_{(n_j)_{j \in \mathcal{P}'}} \ind_{\substack{(n_1, \dots, n_{k_{123}}) \\ \textup{admissible}}} \Big( \prod_{j \notin \{1, k_{123} - 1, k_{123}\}} \ind_{\{|n_j| \sim N_j\}} \Big)  \\
&\quad \times \Big\langle \sum_{j \in \mathcal{K}_1 \setminus \{1\}} n_j \Big\rangle^{- \al} \Big( \prod_{j \in \mathcal{K}_1 \setminus \{1\}} \jbb{n_j}^{-1} \Big) \Big( \prod_{j \in \mathcal{K}_3 \setminus \{k_{123} - 1, k_{123}\}} \jbb{n_j}^{-1} \Big) \Ld^{k_2} (n_{k_1 + 1}, \dots, n_{k_{12}}) \bigg)^2 \\
&\les N_{1}^{-2 \ta} N_{\text{max}}^{\ta} N_{k_{123} - 1}^{- 2 \ta} \sum_{(n_j)_{j \notin \mathcal{P}}} \bigg( \sum_{(n_j)_{j \in \mathcal{P}' \wedge j \in \mathcal{K}_1 \setminus \{1\}}} \Big\langle \sum_{j \in \mathcal{K}_1 \setminus \{1\}} n_j \Big\rangle^{- 2 \al} \Big( \prod_{j \in \mathcal{K}_1 \setminus \{1\}} \ind_{\{|n_j| \sim N_j\}} \jbb{n_j}^{-2} \Big) \bigg) \\
&\quad \times \bigg( \sum_{(n_j)_{j \in \mathcal{P}' \wedge j \in \mathcal{K}_2}} \Big( \prod_{j \in \mathcal{K}_2} \ind_{\{|n_j| \sim N_j\}} \Big) \Ld^{k_2} (n_{k_1 + 1}, \dots, n_{k_{12}})^2 \bigg) \\
&\quad \times \bigg( \sum_{(n_j)_{j \in \mathcal{P}' \wedge j \in \mathcal{K}_3 \setminus \{k_{123} - 1, k_{123}\}}} \Big( \prod_{j \in \mathcal{K}_3 \setminus \{k_{123} - 1, k_{123}\}} \ind_{\{|n_j| \sim N_j\}} \jbb{n_j}^{-2} \Big) \bigg) \\
&\les N_{1}^{-2 \ta} N_{\text{max}}^{\ta} N_{k_{123} - 1}^{- 2 \ta} \max\{ N_2, \dots, N_{k_1} \}^{(k_1 - 3) (3 - 2 \al) - 3 \ta} \\
&\quad \times \max \{ N_{k_1 + 1}, \dots, N_{k_{12}} \}^{(k_2 - 3) (3 - 2 \al) - 3 \ta} N_{k_{123} - 1}^{(k_3 - 2) (3 - 2\al)} \\
&\les N_{\text{max}}^{(k_{123} - 7) (3 - 2 \al) - \ta} ,
\end{align*}

\noi
which gives the desired estimate.
\end{proof}

\section{Convergence of the Gibbs measures}
\label{SEC:meas}
In this section, we prove Theorem \ref{THM:meas} for the $L^1$-boundedness of density of the truncated Gibbs measure $\varrho_N$ in \eqref{nuN} and its convergence to the fractional $\Phi^4_3$-measure $\rho$ in \eqref{rho} as $N \to \infty$. 

\subsection{Main setup and estimates}
\label{SUB:meas}
Let us first do some setup steps for proving the convergence of $\varrho_N$. As mentioned in Section \ref{SEC:intro}, we need to introduce a further renormalization for the potential energy $\mathcal{R}_N$ in \eqref{defRN}, which we explain in the following.

By the Bou\'e-Dupuis variational formula (Lemma \ref{LEM:BD}), we have
\begin{align*}
- \log \big\| e^{- \mathcal{R}_N (u)} \big\|_{L^1 (\mu_\al)} = \inf_{\ta \in \mathbb{H}_a} \E \bigg[ \mathcal{R}_N (W_N + \Dr_N) + \frac 12 \int_0^1 \| \ta (t) \|_{L^2_x}^2 dt \bigg],
\end{align*}

\noi
where $W_N = W_N (1) = \pi_N W (1)$ with $W$ being a Gaussian process as defined in \eqref{defWt} and $\Dr_N = \Dr_N (1) = \pi_N I (\ta) (1)$ with $I$ being defined in \eqref{defIta}. Note that
\begin{align}
\mathcal{R}_N (W_N + \Dr_N) = \int_{\T^3} \sum_{j = 1}^m \cj{a}_{j, N} N^{- (2j - 4) (\frac 32 - \al)} \sum_{k = 0}^{2j} \binom{2j}{k} \wick{W_N^k} \Dr_N^{2j - k} dx.
\label{RN_exp}
\end{align}

\noi
Let us consider the following term in $\mathcal{R}_N (W_N + \Dr_N)$:
\begin{align}
\int_{\T^3} \bigg( \sum_{j = 2}^m 2j \cj{a}_{j, N} N^{- (2j - 4) (\frac 32 - \al)} \wick{W_N^{2j - 1}} \bigg) \Dr_N dx.
\label{problem}
\end{align}

\noi
From Lemma~\ref{LEM:Yreg}~(i), we know that $\sum_j 2j \cj{a}_{j, N} N^{- (2j - 4) (\frac 32 - \al)} \wick{W_N^{2j - 1}}$ has regularity $3 (\al - \frac 32) - \eps$ with $\eps > 0$ arbitrarily small, uniformly in $N \in \N$. Also, the drift term $\Dr_N$ has regularity $\al$ uniformly in $N \in \N$. Thus, by standard product analysis in Besov spaces, the product of these two terms converges as $N \to \infty$ only when 
\begin{align*}
3 \Big( \al - \frac 32 \Big) - \eps + \al > 0,
\end{align*}

\noi
which gives $\al > \frac 98$. Since we assumed $1 < \al \leq \frac 98$, one cannot make sense of the limit of \eqref{problem} as $N \to \infty$ in a classical manner.

To overcome this issue, we define $\ZZ^N$ with $\ZZ^N (0) = 0$ by its time derivative:
\begin{align}
\dot{\ZZ}^N (t) \deff \sum_{j = 2}^m 2j \cj{a}_{j, N} N^{- (2j - 4) (\frac 32 - \al)} \jb{\nb}^{-2 \al} \wick{W_N^{2j - 1} (t)},
\label{defZZ}
\end{align}

\noi
and we set $\ZZ_N = \ZZ_N (1) = \pi_N \ZZ^N (1)$. We now perform a change of variable
\begin{align}
\dot{\Ups}^N (t) \deff \dot{\Dr} (t) + \dot{\ZZ}_N (t)
\label{defUps}
\end{align}

\noi
and we set $\Ups_N = \pi_N \Ups^N = \pi_N \Ups^N (1)$. By the Ito product formula along with the fact that $\Dr_N (t)$ is a finite variation process, \eqref{defZZ}, and \eqref{defUps}, we have
\begin{align}
\begin{split}
&\E \bigg[ \int_{\T^3} \bigg( \sum_{j = 2}^m 2j \cj{a}_{j, N} N^{- (2j - 4) (\frac 32 - \al)} \wick{W_N^{2j - 1}} \bigg) \Dr_N dx + \frac 12 \int_0^1 \| \ta (t) \|_{L_x^2}^2 dt \bigg] \\
&= \E \bigg[ \int_0^1 \int_{\T^3} \bigg( \sum_{j = 2}^m 2j \cj{a}_{j, N} N^{- (2j - 4) (\frac 32 - \al)} \wick{W_N^{2j - 1} (t)} \bigg) \dot{\Dr}_N (t) dx dt + \frac 12 \int_0^1 \| \ta (t) \|_{L_x^2}^2 dt \bigg] \\
&= \frac 12 \E \bigg[ \int_0^1 \big\| \dot{\Ups}^N (t) \big\|_{H_x^\al}^2 dt \bigg] - \al_N,
\end{split}
\label{RN_div}
\end{align}

\noi
where the constant $\al_N$ is defined by
\begin{align}
\al_N \deff \frac 12 \E \bigg[ \int_0^1 \big\| \dot{\ZZ}_N (t) \big\|_{H_x^\al}^2 dt \bigg].
\label{alphaN}
\end{align}

\noi
In view of the spatial regularity $5\al - \frac 92 - \eps$ of $\dot{\ZZ}_N (t)$ (Lemma \ref{LEM:Yreg} (i)) and the fact that $5\al - \frac 92 - \eps < \al$ for $1 < \al \leq \frac 98$, we see that $\al_N \to \infty$ as $N \to \infty$. Thus, we consider the renormalized potential energy $R_N^\dia$ as defined in \eqref{defRNd}. As mentioned in Remark~\ref{RMK:alN}, this extra renormalization appears only at the level of the measure and will not change its invariance property as stated in Lemma \ref{LEM:invN} below.

Let us now replace $\mathcal{R}_N$ by the renormalized potential energy $\mathcal{R}_N^\dia$. By the Bou\'e-Dupuis variational formula (Lemma \ref{LEM:BD}), \eqref{RN_exp}, the change of variable \eqref{defUps}, \eqref{RN_div}, and Lemma~\ref{LEM:Yreg}~(iii), we obtain
\begin{align}
\begin{split}
- \log \big\| e^{- \mathcal{R}_N^\dia (u)} \big\|_{L^1 (\mu_\al)} &= \inf_{\theta \in \mathbb{H}_a} \E \bigg[ \mathcal{R}_N (W_N + \Dr_N) + \al_N + \frac 12 \int_0^1 \| \ta (t) \|_{L_x^2}^2 dt \bigg] \\
&= \inf_{\dot{\Ups}^N \in \mathbb{H}_a^\al} \E \bigg[ \wt{\mathcal{R}}_N^\dia (W_N, \Ups_N - \ZZ_N) + \frac 12 \int_0^1 \big\| \dot{\Ups}^N (t) \big\|_{H_x^\al}^2 dt \bigg],
\end{split}
\label{BD_RNtil}
\end{align}

\noi
where
\begin{align}
\begin{split}
\wt{\mathcal{R}}_N^\dia (W_N, \Ups_N - \ZZ_N) &= \int_{\T^3} \sum_{j = 2}^m \cj{a}_{j, N} N^{- (2j - 4) (\frac 32 - \al)} \sum_{k = 0}^{2j - 2} \binom{2j}{k} \wick{W_N^k} \Dr_N^{2j - k} dx \\
&\quad + \int_{\T^3} \cj{a}_{1, N} N^{2 (\frac 32 - \al)} \Dr_N^2 dx + 2 \int_{\T^3} \cj{a}_{1, N} N^{2 (\frac 32 - \al)} W_N \Dr_N dx
\end{split}
\label{RNtil}
\end{align}

\noi
with $\Dr_N = \Ups_N - \ZZ_N$.

\medskip
We now show some estimates that deal with each term in $\wt{\mathcal{R}}_N^\dia$, starting with the case when $j \geq 2$ and there is no $\wick{W_N^k}$ term.
\begin{lemma}
\label{LEM:RNbdd1}
Let $1 < \al \leq \frac 98$. Then, with $\Dr_N = \Ups_N - \ZZ_N$, there exist small enough $\eps, \eps', \eps'' > 0$ and constants $C, c > 0$ such that
\begin{align}
\begin{split}
\int_{\T^3} &\sum_{j = 2}^m \cj{a}_{j, N} N^{- (2j - 4) (\frac 32 - \al)} \Dr_N^{2j} dx \\
&\geq c  \sum_{j = 2}^m N^{- (2j - 4) (\frac 32 - \al)} \| \Ups_N \|_{L^{2j}}^{2j} - C \sum_{j = 2}^m N^{- (2j - 4) (\frac 32 - \al)} \| \ZZ_N \|_{\mathcal{C}^{5 \al - \frac 92 - \eps}}^{2j} .
\end{split}
\label{RNbdd1-1}
\end{align}

\noi
Also, for any $\dl > 0$, there exist small enough $\eps, \eps', \eps'' > 0$ and a constant $C > 0$ such that
\begin{align}
\begin{split}
\int_{\T^3} &\sum_{j = 3}^m \cj{a}_{j, N} N^{- (2j - 4) (\frac 32 - \al)} \Dr_N^{2j} dx \\
&\geq - \dl \sum_{j = 3}^m N^{- (2j - 4) (\frac 32 - \al) - \eps'} \| \Ups_N \|_{L^{2j}}^{2j} - C \sum_{j = 3}^m N^{- (2j - 4) (\frac 32 - \al) + \eps''} \| \ZZ_N \|_{\mathcal{C}^{5 \al - \frac 92 - \eps}}^{2j}.
\end{split}
\label{RNbdd1-1.5}
\end{align}

\noi
Moreover, there exist $\eps > 0$ and a constant $C > 0$ such that
\begin{align}
\begin{split}
\bigg| \int_{\T^3} &\sum_{j = 2}^m \cj{a}_{j, N} N^{- (2j - 4) (\frac 32 - \al)} \Dr_N^{2j} dx \bigg| \\
&\leq C \sum_{j = 2}^m N^{- (2j - 4) (\frac 32 - \al)} \| \Ups_N \|_{L^{2j}}^{2j} + C \sum_{j = 2}^m N^{- (2j - 4) (\frac 32 - \al)} \| \ZZ_N \|_{\mathcal{C}^{5 \al - \frac 92 - \eps}}^{2j} 
\end{split}
\label{RNbdd1-2}
\end{align}

\noi
uniformly in sufficiently large $N \in \N$.
\end{lemma}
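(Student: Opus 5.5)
Since $\ZZ_N$ is a bounded function (in $\mathcal{C}^{5\al - \frac 92 - \eps}$ with $5\al-\frac92-\eps>0$ for $\al>1$ and $\eps$ small), we view $\Dr_N = \Ups_N - \ZZ_N$ as a perturbation of $\Ups_N$ by a controlled term and compare the polynomial $\sum_j \cj a_{j,N} N^{-(2j-4)(\frac32-\al)} \Dr_N^{2j}$ pointwise with the same expression in $\Ups_N$. A convenient device is to rescale: with $y = N^{-(\frac32-\al)} \Dr_N(x)$, the integrand equals $N^{4(\frac32-\al)} \sum_{j\ge2} \cj a_{j,N} y^{2j} = N^{4(\frac32-\al)} y^4 P_N(y)$, where $P_N(y) = \sum_{j\ge2}\cj a_{j,N} y^{2(j-2)}$. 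By \eqref{ajN} and \eqref{sigma_diff}, $\cj a_{j,N}\to \cj a_j$, and by \eqref{pos0} the limit polynomial $P(y)=\sum_{j\ge2}\cj a_j y^{2(j-2)}$ is strictly positive on $\R$ with leading coefficient $\cj a_m>0$. Hence $P(y)\ge c_0\sum_{j\ge2} y^{2(j-2)}$ for some $c_0>0$, since both sides are positive with comparable growth at $\infty$ and at $0$. For $N$ large, the coefficient convergence transfers this to $P_N$ with $c_0/2$. Undoing the scaling gives the pointwise bound
\[
\sum_{j\ge2}\cj a_{j,N}N^{-(2j-4)(\frac32-\al)}\Dr_N^{2j} \ge c\sum_{j\ge2} N^{-(2j-4)(\frac32-\al)}\Dr_N^{2j},
\]
and the matching upper bound with $C$ holds trivially. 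For the finitely many small $N$, one either absorbs them into constants or notes that the statement is only needed uniformly for large $N$.

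Next, we pass from $\Dr_N$ to $\Ups_N$ in each monomial. Pointwise, $|\Ups_N|^{2j}\le C_j(|\Dr_N|^{2j}+|\ZZ_N|^{2j})$, so $\Dr_N^{2j}\ge C_j^{-1}\Ups_N^{2j}-|\ZZ_N|^{2j}$. Likewise $\Dr_N^{2j}\le C_j(\Ups_N^{2j}+\ZZ_N^{2j})$. Integrating over $\T^3$ and using $\|\ZZ_N\|_{L^\infty}\les\|\ZZ_N\|_{\mathcal C^{5\al-\frac92-\eps}}$ (positive regularity) yields \eqref{RNbdd1-1} and \eqref{RNbdd1-2}.

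For \eqref{RNbdd1-1.5}, the sum starts at $j=3$, so positivity now comes from \eqref{pos}: $Q(y)=\sum_{j\ge3}\cj a_j y^{2(j-3)}$ is positive, and the same argument gives $\sum_{j\ge3}\cj a_{j,N}N^{-(2j-4)(\frac32-\al)}\Dr_N^{2j}\ge0$ pointwise for large $N$. This is stronger than needed, with $\dl$ and $\eps''$ arbitrary. If only nonnegativity were unavailable, for instance if one wanted to use merely the weaker hypothesis, the fallback would be to split $\cj a_{j,N}=\cj a_j+O(N^{-(3-2\al-\eps)})$ and bound the error coefficients times $\Dr_N^{2j}$ by $\dl N^{-\eps'}\|\Ups_N\|^{2j}_{L^{2j}}+C N^{\eps''}\|\ZZ_N\|^{2j}$ via Young's inequality.

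The main obstacle is the uniform positivity step: making sure the lower bound $P_N\ge c\sum y^{2(j-2)}$ holds uniformly in $N$ both near $y=0$, where it rests on $\cj a_{2,N}\to\cj a_2>0$, and at infinity, where it rests on $\cj a_{m,N}\to\cj a_m>0$. This follows by a compactness argument on $|y|\le R$ together with leading-order domination for $|y|>R$.
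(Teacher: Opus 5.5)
Your proof is correct and differs from the paper's mainly in where the positivity is applied.

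The paper applies uniform positivity to $\sum_j \cj{a}_{j,N} N^{-(2j-4)(\frac32-\al)}\Ups_N^{2j}$. It then expands $(\Ups_N-\ZZ_N)^{2j}$ binomially and absorbs each cross term $\int_{\T^3}\Ups_N^{2j-\l}\ZZ_N^{\l}\,dx$ by H\"older and a weighted Young inequality. For \eqref{RNbdd1-1.5}, it is exactly this Young step, with weight $\dl N^{-\eps'}$ on $\Ups_N$, that produces the factors $N^{-\eps'}$ and $N^{\eps''}$.

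You instead apply positivity pointwise to the polynomial evaluated at $\Dr_N$ itself; the rescaling $y=N^{-(\frac32-\al)}\Dr_N$ makes the uniformity in $N$ transparent. Only afterwards do you pass to $\Ups_N$ via $\Dr_N^{2j}\ge 2^{1-2j}\Ups_N^{2j}-\ZZ_N^{2j}$. This removes all cross terms. For \eqref{RNbdd1-1.5} it gives the stronger lower bound $0$, which shows that the $\dl,\eps',\eps''$ structure there is an artifact of the paper's bookkeeping.

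There are two caveats, and the paper shares both. First, the bound $P\ge c_0\sum_j y^{2(j-2)}$ needs $P(0)=\cj{a}_2>0$, whereas \eqref{pos0} only gives $P(y)>0$ for $y\ne 0$. This is implicit in the paper too, and \eqref{RNbdd1-1} cannot hold uniformly in $N$ without it.

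Second, and more substantively, your ``same argument'' for $Q_N(y)=\sum_{j\ge3}\cj{a}_{j,N}y^{2(j-3)}\ge 0$ needs $Q(0)=\cj{a}_3>0$, which \eqref{pos} does not provide. If $\cj{a}_3=0$, then $Q_N(0)=\cj{a}_{3,N}$ can be negative for all large $N$. In that case the pointwise nonnegativity fails near $y=0$, both in your version and in the paper's claim for $\Ups_N$.

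Your fallback is the robust argument and should be the main one. By \eqref{pos}, $\sum_{j\ge3}\cj{a}_j y^{2j}=y^4\sum_{j\ge3}\cj{a}_j y^{2(j-2)}\ge 0$ holds exactly for the limiting coefficients. By \eqref{aj}, \eqref{ajN} and \eqref{sigma_diff}, $|\cj{a}_{j,N}-\cj{a}_j|\les N^{-(3-2\al-\eps)}$. Hence for large $N$ the error is bounded by $\dl N^{-(2j-4)(\frac32-\al)-\eps'}\|\Ups_N\|_{L^{2j}}^{2j}+CN^{-(2j-4)(\frac32-\al)}\|\ZZ_N\|_{L^\infty}^{2j}$, with no Young inequality needed. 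Contrary to your remark, this fallback still uses \eqref{pos} for the limiting coefficients and is not available under \eqref{pos0} alone.
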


\begin{proof}
We first consider \eqref{RNbdd1-1}. As in \cite[Proposition 3.5]{STzX}, by the positivity condition \eqref{pos0} and the convergence $\cj{a}_{j, N} \to \cj{a}_j$, we obtain
\begin{align}
\sum_{j = 2}^m \cj{a}_{j, N} N^{- (2j - 4) (\frac 32 - \al)} \Ups_N^{2j} \geq c_0 \sum_{j = 2}^m N^{- (2j - 4) (\frac 32 - \al)} \Ups_N^{2j} 
\label{RNbd1}
\end{align}

\noi
for some constant $c_0 > 0$ and sufficiently large $N \in \mathbb{N}$. We also observe that for any $1 \leq \l \leq 2j$, by H\"older's inequality, Young's inequality, and the embedding $\mathcal{C}^{5 \al - \frac 92 - \eps} \subset L^{2j}$ (Lemma~\ref{LEM:Bes}~(i)), we have
\begin{align}
\int_{\T^3} \Ups_N^{2j - \l} \ZZ_N^\l dx \leq \| \Ups_N \|_{L^{2j}}^{2j - \l} \| \ZZ_N \|_{L^{2j}}^\l \leq c_1 \| \Ups_N \|_{L^{2j}}^{2j} + C(c_1) \| \ZZ_N \|_{\mathcal{C}^{5 \al - \frac 92 - \eps}}^{2j}
\label{RNbd2}
\end{align}

\noi
for some $c_1 > 0$ arbitrarily small and some constant $C(c_1) > 0$. The estimate \eqref{RNbdd1-1} then follows directly from \eqref{RNbd1} and \eqref{RNbd2} by choosing $c_1 = c_1 (c_0)$ sufficiently small.

As for \eqref{RNbdd1-1.5}, we use the positivity condition \eqref{pos} to obtain directly
\begin{align*}
\sum_{j = 3}^m \cj{a}_{j, N} N^{- (2j - 4) (\frac 32 - \al)} \Ups_N^{2j} \geq 0
\end{align*}

\noi
for sufficiently large $N$. Then, similar to \eqref{RNbd2}, we obtain that for any $1 \leq \l \leq 2j$,
\begin{align*}
\int_{\T^3} \Ups_N^{2j - \l} \ZZ_N^\l dx 
\leq \dl N^{- \eps'} \| \Ups_N \|_{L^{2j}}^{2j} + C(\dl) N^{\eps''} \| \ZZ_N \|_{\mathcal{C}^{5 \al - \frac 92 - \eps}}^{2j}
\end{align*}

\noi
for some small enough $\eps', \eps'' > 0$. This proves \eqref{RNbdd1-1.5}. 

The proof for \eqref{RNbdd1-2} is similar and so we omit details.
\end{proof}

We now deal with the case when $j = 1$.

\begin{lemma}
\label{LEM:RNbdd1.5}
Let $1 < \al \leq \frac 98$ and $\eps, \dl > 0$ be small. Then, with $\Dr_N = \Ups_N - \ZZ_N$, there exists a constant $C > 0$ such that
\begin{align*}
\bigg| \int_{\T^3} \cj{a}_{1, N} N^{2 (\frac 32 - \al)} \Dr_N^2 dx \bigg| &\leq C + C \| \ZZ_N \|_{\mathcal{C}^{5 \al - \frac 92 - \eps}}^2 + \dl \| \Ups_N \|_{L^4}^4, \\
\bigg| \int_{\T^3} \cj{a}_{1, N} N^{2 (\frac 32 - \al)} W_N \Dr_N dx \bigg| &\leq C \| W_N \|_{\mathcal{C}^{\al - \frac 32 - \eps}}^2 + C \| \ZZ_N \|_{\mathcal{C}^{5 \al - \frac 92 - \eps}}^2 + \dl \| \Ups_N \|_{H^\al}^2
\end{align*}

\noi
uniformly in sufficiently large $N \in \N$.
\end{lemma}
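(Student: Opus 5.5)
The key input is \eqref{a1N}: $\cj{a}_{1,N} N^{2(\frac 32 - \al)} = \frac a2 + o(1)$, so $|\cj{a}_{1,N} N^{2(\frac 32-\al)}| \les 1$ uniformly in large $N$. We may therefore drop this coefficient and bound $\int \Dr_N^2\,dx$ and $\int W_N \Dr_N\,dx$ with $\Dr_N = \Ups_N - \ZZ_N$. Throughout we use the embedding $\mathcal{C}^{5\al - \frac 92 - \eps} \subset L^2$ (Lemma~\ref{LEM:Bes}~(i)). Note that $5\al - \frac 92 - \eps$ may be negative, so $\ZZ_N$ is not a function uniformly in $N$ in the Besov sense; it is $L^2$ only through $\pi_N$. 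This is why we will put $\ZZ_N$ in the Hölder norm and pair it by duality rather than in $L^p$.

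\textbf{First estimate.} We expand $\Dr_N^2 = \Ups_N^2 - 2\Ups_N\ZZ_N + \ZZ_N^2$. The pure terms $\Ups_N^2$ and $\ZZ_N^2$ are handled by Hölder and Young on $\T^3$:
\begin{align*}
\|\Ups_N\|_{L^2}^2 \les \|\Ups_N\|_{L^4}^2 \le \dl\|\Ups_N\|_{L^4}^4 + C(\dl).
\end{align*}
The square $\ZZ_N^2$ is bounded by $\|\ZZ_N\|_{L^2}^2 \les \|\ZZ_N\|_{\mathcal{C}^{5\al-\frac92-\eps}}^2$. I expect this step to be the delicate point, because when $5\al - \frac92 - \eps < 0$, membership in $\mathcal{C}^{s}$ with $s<0$ does not give $L^2$.

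To remedy this, I would use the actual regularity of $\ZZ_N$. From \eqref{defZZ}, $\ZZ_N$ is $\jb{\nb}^{-2\al}$ applied to a term of regularity $3\al - \frac92-$, so it has regularity $5\al - \frac 92 - \eps$. Since $\al>1$, we have $5\al - \frac92 > \frac12 > 0$, so the space is in fact of positive regularity and the embedding into $L^2$ (indeed $L^\infty$) is legitimate. With this, the cross term is controlled by
\begin{align*}
|\int \Ups_N \ZZ_N\,dx| \le \|\Ups_N\|_{L^2}\|\ZZ_N\|_{L^2},
\end{align*}
followed by Young's inequality again.

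\textbf{Second estimate.} We write $\int W_N \Dr_N = \int W_N \Ups_N - \int W_N \ZZ_N$. For the first piece we use duality between $\mathcal{C}^{\al - \frac32 - \eps}$ and $B^{\frac32-\al+\eps}_{1,1}$, together with $H^\al \subset B^{\frac32 - \al + \eps}_{1,1}$, which holds since $\al > \frac32 - \al + \eps$ on $\T^3$. This gives
\begin{align*}
|\int W_N\Ups_N| \les \|W_N\|_{\mathcal{C}^{\al-\frac32-\eps}}\|\Ups_N\|_{H^\al} \le C(\dl)\|W_N\|^2 + \dl \|\Ups_N\|_{H^\al}^2.
\end{align*}
For the second piece, the regularities sum to $(\al - \frac32 - \eps) + (5\al - \frac92 - \eps) = 6\al - 6 - 2\eps > 0$, so the pairing is bounded by $\|W_N\|_{\mathcal{C}^{\al-\frac32-\eps}}\|\ZZ_N\|_{\mathcal{C}^{5\al-\frac92-\eps}}$. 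Applying AM-GM to this product completes the proof. Both estimates hold uniformly in large $N$ because every constant used above is independent of $N$.
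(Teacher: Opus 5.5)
Your proposal is correct and follows essentially the same route as the paper: it combines the uniform boundedness of $\cj{a}_{1,N}N^{2(\frac32-\alpha)}$ from \eqref{a1N}, duality pairings justified by $(\alpha-\frac32-\varepsilon)+(5\alpha-\frac92-\varepsilon)>0$ and $(\alpha-\frac32-\varepsilon)+\alpha>0$, the embedding of Lemma~\ref{LEM:Bes}~(i), and Cauchy--Schwarz/Young (your $B^{s}_{1,1}$ duality is a loss-free variant of the Sobolev duality used in the paper). The only blemish is your opening worry that $5\alpha-\frac92-\varepsilon$ might be negative: this is false, since $5\alpha-\frac92>\frac12$ for $\alpha>1$, as you yourself then note, so that detour can simply be deleted.
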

\begin{proof}
The estimates follow easily from \eqref{a1N}, duality, Lemma~\ref{LEM:Bes}~(i), the fact that $(\al - \frac 32 - \eps) + (5 \al - \frac 92 - \eps) > 0$ and $(\al - \frac 32 - \eps) + \al > 0$ given $\al > 1$ and $\eps > 0$ sufficiently small, and the Cauchy-Schwarz inequalities.
\end{proof}

We now need to deal with the terms
\begin{align}
\int_{\T^3} \sum_{j = 2}^m N^{- (2j - 4) (\frac 32 - \al)} \wick{W_N^k} \Dr_N^{2j - k} dx
\label{RNmixed}
\end{align}

\noi
with $1 \leq k \leq 2j - 2$. This is the main task of following four lemmas.

\begin{lemma}
\label{LEM:RNbdd2}
Let $1 < \al \leq \frac 98$, $\eps, \dl > 0$ be small, and $j, k, k_1, k_2 \geq 0$ be integers satisfying $2 \leq j \leq m$, $1 \leq k \leq 2j - 2$, $k_2 \geq 1$, and $k + k_1 + k_2 = 2j$. Let $\eps' = 0$ if $j = 2$ and $\eps' > 0$ be sufficiently small if $j \geq 3$. 

\smallskip \noi
\textup{(i)} For $k_2 = 1, 2, 3$, there exist constants $C, c > 0$ such that
\begin{align*}
\bigg| &\int_{\T^3} \cj{a}_{j, N} N^{- (2j - 4) (\frac 32 - \al) + \eps'} \wick{W_N^k} \ZZ_N^{k_1} \Ups_N^{k_2} dx \bigg| \\
&\leq C \big( N^{- (2j - 4) (\frac 32 - \al) + \eps'} \big\| \wick{W_N^k} \ZZ_N^{k_1} \big\|_{\mathcal{C}^{2\al - 3 - \eps}} \big)^{c} + \dl \big( \| \Ups_N \|_{H^\al}^2 + \| \Ups_N \|_{L^4}^4 \big).
\end{align*}

\smallskip \noi
\textup{(ii)} For $k_2 \geq 4$, we let $k_2' = \lfloor \frac{k_2}{2} \rfloor + 1$, $s > 0$, and $\g \in \R$ be such that
\begin{align*}
\frac{2 k_2' s}{(2 k_2' - k_2) \al + s} < 2 \quad \text{and} \quad \g = \frac{(k_2' - 2) (k_2 \al - s) (\frac 32 - \al)}{k_2' \al}.
\end{align*}

\noi
Then, there exist constants $C, c > 0$ such that
\begin{align*}
\begin{split}
\bigg| &\int_{\T^3} \cj{a}_{j, N} N^{- (2j - 4) (\frac 32 - \al) + \eps'} \wick{W_N^k} \ZZ_N^{k_1} \Ups_N^{k_2} dx \bigg| \\
&\leq C \big( N^{- (2j - 4) (\frac 32 - \al) + \g + \eps'} \big\| \wick{W_N^k} \ZZ_N^{k_1} \big\|_{\mathcal{C}^{-s}} \big)^c + \dl \bigg( \| \Ups_N \|_{H^\al}^2 + \sum_{\l' = 2}^{k_2'} N^{- (2\l' - 4) (\frac 32 - \al)} \| \Ups_N \|_{L^{2 \l'}}^{2 \l'} \bigg).
\end{split}
\end{align*}
\end{lemma}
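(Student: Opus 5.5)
Both parts rest on one mechanism. We pair the rough factor $F_N \deff \wick{W_N^k}\ZZ_N^{k_1}$ against $\Ups_N^{k_2}$ by duality, absorb the resulting powers of $\Ups_N$ with interpolation and Young's inequality, and split off a power of the rough factor's norm. Write $A_N$ for the coefficient $\cj a_{j,N} N^{-(2j-4)(\frac32-\al)+\eps'}$; it is bounded since $\cj a_{j,N}\to\cj a_j$.

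For part (i), duality and Lemma~\ref{LEM:Bes}~(i) give
\[
\Big|\int_{\T^3} F_N \Ups_N^{k_2}\,dx\Big| \les \|F_N\|_{\mathcal C^{2\al-3-\eps}} \|\Ups_N^{k_2}\|_{W^{3-2\al+2\eps,1}}.
\]
We then estimate $\|\Ups_N^{k_2}\|_{W^{3-2\al+2\eps,1}}$ using the fractional Leibniz rule (Lemma~\ref{LEM:prod}~(i)). For $k_2=1$ it is at most $\|\Ups_N\|_{H^\al}$, since $3-2\al+2\eps<\al$. For $k_2=2,3$ it is at most $\|\Ups_N\|_{W^{3-2\al+2\eps,p}}\|\Ups_N\|_{L^q}^{k_2-1}$. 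Next we interpolate $\|\Ups_N\|_{W^{3-2\al+2\eps,p}}$ between $H^\al$ and $L^4$. The smoothness is low, since $3-2\al<1$ and $\al>1$, so the interpolation parameter on the $H^\al$ side is below $1$. The result is a bound of the form $\|\Ups_N\|_{H^\al}^{a}\|\Ups_N\|_{L^4}^{b}$. Matching it with $\delta(\|\Ups_N\|_{H^\al}^2+\|\Ups_N\|_{L^4}^4)$ requires $\frac a2+\frac b4<1$. That condition is what must be checked. For $k_2=3$ it is the tight case and uses $3-2\al$ small (i.e.\ $\al>1$) and $\eps$ small; we choose $p,q$ by Sobolev embedding. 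Young's inequality then yields the constant times $(|A_N|\|F_N\|)^{c}$ with $c=(1-\frac a2-\frac b4)^{-1}$.

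For part (ii) the $L^4$ norm is too weak to control $\Ups_N^{k_2}$, so we use the higher norms $\|\Ups_N\|_{L^{2k_2'}}$, each weighted by $N^{-(2k_2'-4)(\frac32-\al)}$. Here $2k_2'>k_2$, so $L^{2k_2'}$ controls powers above $k_2$. Duality gives
\[
\Big|\int_{\T^3} F_N\Ups_N^{k_2}\,dx\Big| \le \|F_N\|_{\mathcal C^{-s}}\|\Ups_N^{k_2}\|_{W^{s,1}}.
\]
We bound $\|\Ups_N^{k_2}\|_{W^{s,1}}$ by $\|\Ups_N\|_{W^{s,r}}\|\Ups_N\|_{L^{2k_2'}}^{k_2-1}$ with suitable $r$. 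The factor $\|\Ups_N\|_{W^{s,r}}$ we interpolate between $H^\al$ (fraction $s/\al$ of smoothness) and $L^{2k_2'}$. After Hölder the total is $\|\Ups_N\|_{H^\al}^{\theta}\|\Ups_N\|_{L^{2k_2'}}^{k_2-\theta}$ with $\theta\approx s/\al$ (up to Sobolev-exponent adjustments). Young's inequality then needs
\[
\frac{\theta}{2}+\frac{k_2-\theta}{2k_2'}<1,
\]
and solving this for $\theta$ in terms of $s$ is exactly the hypothesis $\frac{2k_2's}{(2k_2'-k_2)\al+s}<2$. Inserting the weight $N^{-(2k_2'-4)(\frac32-\al)}$ on the $L^{2k_2'}$ term produces the compensating power $N^{(2k_2'-4)(\frac32-\al)\cdot\frac{k_2-\theta}{2k_2'}}$. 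Carried through Young, this factor is what gives $\g=\frac{(k_2'-2)(k_2\al-s)(\frac32-\al)}{k_2'\al}$; the intermediate $L^{2\l'}$ norms appear only if one interpolates further. Bookkeeping the exponents so that $\g$ comes out exactly is routine.

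I expect the main obstacle to be the exponent bookkeeping in (ii). Specifically, the fractional Leibniz and interpolation step must give the exponent $\theta=s/\al$ exactly, with no Sobolev loss, since otherwise the stated $\g$ and the stated condition do not match. To handle this I would first try Lemma~\ref{LEM:prod}~(i) together with Gagliardo–Nirenberg interpolation, $\|f\|_{W^{s,r}}\les\|f\|_{H^\al}^{s/\al}\|f\|_{L^{p}}^{1-s/\al}$, where $\frac1r=\frac{s/\al}{2}+\frac{1-s/\al}{p}$ and $p=2k_2'$. Then $r$ is fixed, and Hölder closes with the remaining $k_2-1$ factors in $L^{2k_2'}$ because $\frac1r+\frac{k_2-1}{2k_2'}\le1$.
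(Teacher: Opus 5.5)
Your proposal is correct and is the same duality, fractional Leibniz, interpolation and Young argument that the paper imports from \cite[Lemmas~4.5 and~4.6]{LTzW}; in particular, your bookkeeping with $\theta = s/\al$ and the weight $N^{-(2k_2'-4)(\frac 32 - \al)}$ on $\| \Ups_N \|_{L^{2k_2'}}^{2k_2'}$ reproduces exactly the stated condition on $s$ and the exponent $\g$. The only slip is that the duality step in (ii) costs an $\eps_0$ of regularity just as in (i) (you need $W^{s+\eps_0,1}$, not $W^{s,1}$), but this is harmless and your worry about exactness is unfounded: the hypothesis on $s$ is open, and taking $\theta = (s+\eps_0)/\al$ only lowers the resulting power of $N$ below $\g$, which the stated bound allows since $N \geq 1$.
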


\begin{proof}
For the proof of part (i), see \cite[Lemma~4.5]{LTzW}. For the proof of part (ii), see \cite[Lemma~4.6]{LTzW}. Note that the proofs are valid for the range $1 < \al \leq \frac 98$.
\end{proof}

We now deal with the stochastic terms appearing in the statement of Lemma~\ref{LEM:RNbdd2}.
\begin{lemma}
\label{LEM:RNsto1}
Let $1 < \al \leq \frac 98$, $\eps > 0$ be small, $1 \leq p < \infty$, and $j, k, k_1, k_2 \geq 0$ be integers satisfying $2 \leq j \leq m$, $1 \leq k \leq 2j - 2$, $k_2 \geq 1$, and $k + k_2 \leq 2 j$. Let $\eps' = 0$ if $j = 2$ and $\eps' > 0$ be sufficiently small if $j \geq 3$. 

\smallskip \noi
\textup{(i)} For $k_2 = 1, 2, 3$, we have
\begin{align*}
\sup_{N \geq 1} \E \Big[ \big( N^{- (2j - 4) (\frac 32 - \al) + \eps'} \big\| \wick{W_N^k} \ZZ_N^{2j - k - k_2} \big\|_{\mathcal{C}^{2 \al - 3 - \eps}} \big)^p \Big] < C
\end{align*}

\noi
for some constant $C > 0$.

\smallskip \noi
\textup{(ii)} For $k_2 \geq 4$, we let $k_2' = \lfloor \frac{k_2}{2} \rfloor + 1$. Then, there exist $s > 0$ and $\g \in \R$ satisfying
\begin{align}
\frac{2 k_2' s}{(2 k_2' - k_2) \al + s} < 2 \quad \text{and} \quad \g = \frac{(k_2' - 2) (k_2 \al - s) (\frac 32 - \al)}{k_2' \al}
\label{cond0}
\end{align}

\noi
such that 
\begin{align*}
\sup_{N \geq 1} \E \Big[ \big( N^{- (2j - 4) (\frac 32 - \al) + \g + \eps'} \big\| \wick{W_N^k} \ZZ_N^{2j - k - k_2} \big\|_{\mathcal{C}^{-s}} \big)^p \Big] < C
\end{align*}

\noi
for some constant $C > 0$.
\end{lemma}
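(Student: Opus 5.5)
The plan is to reduce everything to Lemma~\ref{LEM:Yreg}~(i) and Lemma~\ref{LEM:Bes}~(iv), after expanding $\ZZ_N$ into its chaos components. By \eqref{defZZ},
\[
\ZZ_N=\sum_{i=2}^m 2i\,\cj a_{i,N}N^{-(2i-4)(\frac32-\al)}\pi_N\jb{\nb}^{-2\al}\int_0^1\wick{W_N^{2i-1}(t)}\,dt .
\]
Lemma~\ref{LEM:Yreg}~(i), applied with $k=2i-1$ and $\be=(2i-4)(\frac32-\al)$ (the time integral handled by Minkowski), gives the following: each summand has $\mathcal C^{5\al-\frac92-\eps}$-moments bounded uniformly in $N$. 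Moreover, if we spare a factor $N^{-\ta}$, the summand with $i\ge3$ even gains a small amount of extra regularity. Since $5\al-\frac92<0$, $\ZZ_N$ is a genuine negative-regularity distribution, so a direct product estimate between $\wick{W_N^k}$ (regularity $k(\al-\frac32)-$) and $\ZZ_N^{k_1}$ is not available. Hence I would not estimate $\ZZ_N^{k_1}$ as a power of a distribution. Instead I would expand $\wick{W_N^k}\,\ZZ_N^{k_1}$ as a multilinear expression in the Brownian motions $B_n$. Each factor of $\ZZ_N$ is a multiple stochastic integral of order $2i-1$. I would then use the product formula (Lemma~\ref{LEM:sto_prod}) to write the full product as a finite sum of multiple stochastic integrals $I_\ell[f]$ with contracted kernels. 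The generic term is given by a pairing $\mathcal P$ respecting the partition into the blocks coming from $\wick{W_N^k}$ and from each $\ZZ_N$-factor.

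Each resulting term lies in a fixed finite Wiener chaos $\H_{\le K}$, with $K\le k+(2m-1)k_1$. Hence Lemma~\ref{LEM:reg}~(i) (with Lemma~\ref{LEM:Bes}~(i) to pass to $\mathcal C^s$) and hypercontractivity (Lemma~\ref{LEM:hyp}) reduce the $p$-th moment bound to a second moment estimate,
\[
\sum_{n}\jb{n}^{2\sigma}\,\E\big|\F_x(\text{term})(n)\big|^2\lesssim N^{2(2j-4)(\frac32-\al)-2\eps'}\quad(\text{resp. with }\g),
\]
with $\sigma=2\al-3-\eps$ in (i) and $\sigma=-s$ in (ii), the factor $\jb{n}^{-3-}$ being absorbed. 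By Ito's isometry (Lemma~\ref{LEM:sto_ito}~(ii)) and \eqref{jen}, the left side becomes a lattice sum over frequencies. The $\ZZ$-blocks carry the weight $\jb{\sum n}^{-2\al}$ and each leg carries $\jb{n_\ell}^{-\al}$; the time variables are integrated out, and time plays no role since $W$ here has no wave propagator. This is the parabolic/elliptic analogue of the counting in Section~\ref{SEC:count}, but without the phase constraints $\kappa$. I expect it to follow from repeated applications of Lemma~\ref{LEM:conv}, in the style of the proof of Lemma~\ref{LEM:Yreg}~(i). Each unpaired $\ZZ$-block of order $2i-1$ behaves like a function of regularity $(2i-1)(\al-\frac32)+2\al$ with a weight $N^{-(2i-4)(\frac32-\al)}$. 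The total power of $N$ then matches the prefactor $N^{-(2j-4)(\frac32-\al)}$ up to $\eps'$, once we use $k+k_1\le 2j-1$ and convert the excess $\frac32-\al$ per leg beyond the threshold via $\ind_{|n|\le N}\jb n^{-2\al}\le N^{\g'}\jb n^{-2\al-\g'}$. In (ii), the weaker target $\mathcal C^{-s}$ and the extra $N^{\g}$ are exactly what is needed to compensate when $k$ is large. I would choose $s$ slightly larger than the sum of the regularity deficits, namely $s\approx k(\frac32-\al)-\be$ as in \eqref{skab}, and then verify \eqref{cond0} for this $s$ using $k_2\ge4$.

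The main obstacle will be the paired (resonant) contributions. There, a leg of $\wick{W_N^k}$ is contracted with a leg inside some $\ZZ_N$, or two $\ZZ_N$ factors are contracted with each other. This produces sums of the form $\sum_{n_1}\jb{n_1}^{-2\al}\jb{n_1+m}^{-2\al}$ and nested variants inside $\jb{\cdot}^{-2\al}$. These behave like the estimates \eqref{count_quin2} and \eqref{count_quin3}, but without the dispersive gain. I would first check that $\al>1$ alone makes each contracted loop summable with a spare $N^{-\ta}$, as in Case~1 of Lemma~\ref{LEM:count_quin2}. Then I would bound the remaining unpaired sum by Lemma~\ref{LEM:conv}~(ii). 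The worst case is $j=2$ and $\eps'=0$, with no room in $N$. There, I would check by hand that every pairing still yields regularity at least $2\al-3-\eps$. This works because a contraction removes two legs, each of regularity $\al-\frac32$, while costing at most a logarithm.
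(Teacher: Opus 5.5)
\emph{Verdict: there is a genuine gap. The proposal rests on a sign error, and as a result it replaces a mostly deterministic argument with a full chaos expansion that is never carried out.}

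\medskip\noindent\textbf{The sign error.} For $1<\al\le\frac98$ one has $5\al-\frac92\ge\frac12>0$. So $\ZZ_N$ is not a negative-regularity distribution but a H\"older function, bounded in $L^p(\Omega;\mathcal C^{5\al-\frac92-\eps})$ uniformly in $N$ by exactly the application of Lemma~\ref{LEM:Yreg}~(i) you describe. Consequently $\ZZ_N^{k_1}$ needs no stochastic analysis, and almost all of the lemma is deterministic:
\begin{itemize}
\item For $k=1$, Lemma~\ref{LEM:Bes}~(iv) applies directly, since $(\al-\frac32-\eps)+(5\al-\frac92-\eps)>0$.
\item For $k\ge2$, the product $\mathcal C^{2\al-3-\eps}\cdot\mathcal C^{5\al-\frac92-\eps}$ is admissible only when $7\al-\frac{15}{2}>2\eps$. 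The paper bypasses this with Bernstein, $\|\ZZ_N\|_{\mathcal C^{3-2\al+2\eps}}\les N^{\frac32-\al-\eps}\|\ZZ_N\|_{\mathcal C^{\frac32-\al+3\eps}}$, combined with the moment bound on $N^{-(k-2)(\frac32-\al-\frac{\eps}{k-1})}\|\wick{W_N^k}\|_{\mathcal C^{2\al-3-\eps}}$ from Lemma~\ref{LEM:Yreg}~(i). The net power of $N^{\frac32-\al}$ is $-(2j-4)+(k-2)+(2j-k-k_2)=2-k_2$, which is $\le0$ for $k_2=2,3$.
\item Part~(ii) is deterministic as well. Impose $s<5\al-\frac92-\eps$ so the product estimate holds, and apply Lemma~\ref{LEM:Yreg}~(i) with $\be=(2j-4)(\frac32-\al)-\g-\eps'$. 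The real work is checking that the resulting lower bound on $s$, the cap $s<5\al-\frac92-\eps$, \eqref{cond0} (where $\g$ depends on $s$), and $\be>0$ are simultaneously satisfiable. Your ``choose $s$ slightly larger than $k(\frac32-\al)-\be$'' does not address this.
\end{itemize}

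\medskip\noindent\textbf{The step that does need stochastic input.} You neither isolate nor correctly describe it. For $k_2=1$ and $k\ge2$, the count above is off by exactly one factor $N^{\frac32-\al}$. The paper then treats only the single product $\wick{W_N^k}\ZZ_N$, via the product formula (Lemma~\ref{LEM:RNstoh}):
\begin{itemize}
\item Its mean-zero part lies in $\mathcal C^{2\al-3-\eps}$ with weight $N^{(k-2)(\frac32-\al-\eps')}$, so that $\ZZ_N$ factor costs nothing.
\item Its mean is a constant, nonzero only for odd $k\ge3$ (which forces $j\ge3$ and $k\le 2j-3$), and bounded by $N^{(k-1)(\frac32-\al)-\eps'}$.
\item The remaining factors $\ZZ_N^{2j-k-2}$ are handled by Bernstein again.
\end{itemize}
This mean genuinely grows. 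Already $\E[\wick{W_N^3}\ZZ_N]$ is comparable to $\cj a_{2,N}\sum\prod_{\ell=1}^3\jb{n_\ell}^{-2\al}\jb{n_1+n_2+n_3}^{-2\al}$ (summed over $|n_\ell|\le N$, $|n_1+n_2+n_3|\le N$), which is of order $N^{9-8\al}$. So your heuristics that contracted loops are summable with a spare $N^{-\ta}$, or cost at most a logarithm, are false. Such terms are absorbed only by the prefactor $N^{-(2j-4)(\frac32-\al)}$ together with the parity constraint $k\le 2j-3$.

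Your global expansion of $\wick{W_N^k}\ZZ_N^{k_1}$ over all pairings of $k_1+1$ blocks would have to redo this bookkeeping for contractions spread over several $\ZZ_N$ factors at once. Compare the case analysis needed just for the two-$\ZZ_N$ expectation in Lemma~\ref{LEM:RNsto2}. The proposal only says this is ``expected'' to follow from Lemma~\ref{LEM:conv}. As written it is an unexecuted plan, not a proof, and it is unnecessary once the sign of $5\al-\frac92$ is corrected.
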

\begin{proof}
(i) When $k = 1$, by the product estimate (Lemma~\ref{LEM:Bes}~(iv)), the fact that $- \al + \frac 32 + \eps < 5 \al - \frac 92 - \eps$ given $\al > 1$ and $\eps > 0$ sufficiently small, and Lemma~\ref{LEM:Yreg}~(i), we have
\begin{align*}
\big\| W_N \ZZ_N^{2j - 1 - k_2} \big\|_{\mathcal{C}^{\al - \frac 32 - \eps}} \les \| W_N \|_{\mathcal{C}^{\al - \frac 32 - \eps}} \| \ZZ_N \|_{\mathcal{C}^{5 \al - \frac 92 - \eps}}^{2j - 1 - k_2} \leq C (\o),
\end{align*}

\noi
where $C(\o) > 0$ has finite $p$th moment. Since $2 \al - 3 - \eps < \al - \frac 32 - \eps$ given $1 < \al \leq \frac 98$, this yields the desired estimate in this case. 

We now consider the case when $k \geq 2$. If $k_2 = 2$ or $k_2 = 3$, by the product estimate (Lemma~\ref{LEM:Bes}~(iv)), Bernstein's inequality, the fact that $- \al + \frac 32 + 3 \eps < 5 \al - \frac 92 - \eps$, and Lemma~\ref{LEM:Yreg}~(i), we have
\begin{align*}
&N^{- (2j - 4) (\frac 32 - \al) + \eps'} \big\| \wick{W_N^k} \ZZ_N^{2j - k - k_2} \big\|_{\mathcal{C}^{2 \al - 3 - \eps}} \\
&\quad \les N^{- (2j - 4) (\frac 32 - \al) + \eps'} \| \wick{W_N^k} \|_{\mathcal{C}^{2 \al - 3 - \eps}} \| \ZZ_N \|_{\mathcal{C}^{- 2 \al + 3 + 2 \eps}}^{2j - k - k_2} \\
&\quad \les N^{- (2j - 4) (\frac 32 - \al) + \eps' + (k - 2) (\frac 32 - \al - \frac{\eps}{k - 1})} \big( N^{- (k - 2) (\frac 32 - \al - \frac{\eps}{k - 1})} \| \wick{W_N^k} \|_{\mathcal{C}^{2 \al - 3 - \eps}} \big) \\
&\quad \quad \times N^{(2j - k - k_2) (\frac 32 - \al - \eps)} \| \ZZ_N \|_{\mathcal{C}^{- \al + \frac 32 + 3 \eps}}^{2j - k - k_2} \\
&\quad \les C(\o)
\end{align*}

\noi
with $C(\o) > 0$ having finite $p$th moment, where we used
\begin{align*}
- (2j - 4) \Big( \frac 32 - \al \Big) + \eps' + (k - 2) \Big( \frac 32 - \al - \frac{\eps}{k - 1} \Big) + (2j - k - k_2) \Big( \frac 32 - \al - \eps \Big) \leq 0
\end{align*}

\noi
given $\eps' > 0$ sufficiently small. This yields the desired estimate in this case.

It remains to consider the case when $k \geq 2$ and $k_2 = 1$. Since $k \leq 2j - 2$, we must have $2j - k - 1 \geq 1$. Let us write
\begin{align*}
\wick{W_N^k} \ZZ_N = \big( \wick{W_N^k} \ZZ_N - \E [\wick{W_N^k} \ZZ_N] \big) + \E [\wick{W_N^k} \ZZ_N].
\end{align*}

\noi
For the $\wick{W_N^k} \ZZ_N - \E [\wick{W_N^k} \ZZ_N]$ term, by using similar steps as in previous cases along with Lemma~\ref{LEM:RNstoh}~(i) below, we obtain
\begin{align*}
&N^{- (2j - 4) (\frac 32 - \al) + \eps'} \big\| \big( \wick{W_N^k} \ZZ_N - \E [\wick{W_N^k} \ZZ_N] \big) \ZZ_N^{2j - k - 2} \big\|_{\mathcal{C}^{2 \al - 3 - \eps}} \\
&\quad \les N^{- (2j - 4) (\frac 32 - \al) + \eps'} \big\| \wick{W_N^k} \ZZ_N - \E [\wick{W_N^k} \ZZ_N] \big\|_{\mathcal{C}^{2 \al - 3 - \eps}} \| \ZZ_N \|_{\mathcal{C}^{- 2 \al + 3 + 2 \eps}}^{2j - k - 2} \\
&\quad \les N^{- (2j - 4) (\frac 32 - \al) + \eps' + (k - 2) (\frac 32 - \al - \frac{\eps}{k - 1})} \cdot N^{(2j - k - 2) (\frac 32 - \al - \eps)} \| \ZZ_N \|_{\mathcal{C}^{- \al + \frac 32 + 3 \eps}}^{2j - k - 2} \\
&\quad \quad  \times N^{- (k - 2) (\frac 32 - \al - \frac{\eps}{k - 1})} \big\| \wick{W_N^k} \ZZ_N - \E [\wick{W_N^k} \ZZ_N] \big\|_{\mathcal{C}^{2 \al - 3 - \eps}} \\
&\quad \les \| \ZZ_N \|_{\mathcal{C}^{5 \al - \frac 92 - \eps}}^{2j - k - 2}  \big( N^{- (k - 2) (\frac 32 - \al - \frac{\eps}{k - 1})} \big\| \wick{W_N^k} \ZZ_N - \E [\wick{W_N^k} \ZZ_N] \big\|_{\mathcal{C}^{2 \al - 3 - \eps}} \big) \\
&\quad \leq C(\o)
\end{align*}

\noi
with $C(\o) > 0$ having finite $p$th moment. For the $\E [\wick{W_N^k} \ZZ_N]$ term, we note from \eqref{defZZ}, the product formula in Lemma~\ref{LEM:sto_prod}, and Lemma~\ref{LEM:sto_ito}~(ii) that $\E [\wick{W_N^k} \ZZ_N] = 0$ unless $k$ is odd, so that we only need to consider $k \geq 3$. Since $k \leq 2j - 2$ and $2j - 2$ is even, we must have $j \geq 3$ and $k \leq 2j - 3$. Thus, by Lemma~\ref{LEM:RNstoh}~(ii) below, the product estimate (Lemma~\ref{LEM:Bes}~(iv)), and Lemma~\ref{LEM:Yreg}~(i), we obtain
\begin{align*}
&\big| N^{- (2j - 4) (\frac 32 - \al) + \eps'} \E [\wick{W_N^k} \ZZ_N] \big| \cdot \big\| \ZZ_N^{2j - k - 2} \big\|_{\mathcal{C}^{2 \al - 3 - \eps}} \les \| \ZZ_N \|_{\mathcal{C}^{5 \al - \frac 92 - \eps}}^{2j - k - 2} \leq C(\o)
\end{align*}

\noi
with $C(\o) > 0$ having finite $p$th moment. This finishes the proof of part (i).

\smallskip \noi
(ii) We first note that the case $2j - k - k_2 = 0$ has been covered by \cite{LTzW} (see the proof of Proposition~4.3 after Lemma~4.6 in \cite{LTzW}). Let us assume that $2j - k - k_2 \geq 1$ and impose the condition $s < 5 \al - \frac 92 - \eps$. Then, by the product estimate (Lemma~\ref{LEM:Bes}~(iv)), we have
\begin{align*}
&N^{- (2j - 4) (\frac 32 - \al) + \g + \eps'} \big\| \wick{W_N^k} \ZZ_N^{2j - k - k_2} \big\|_{\mathcal{C}^{-s}} \\
&\quad \les N^{- (2j - 4) (\frac 32 - \al) + \g + \eps'} \| \wick{W_N^k} \|_{\mathcal{C}^{- s}} \| \ZZ_N \|_{\mathcal{C}^{5 \al - \frac 92 - \eps}}^{2j - k - k_2}.
\end{align*}

\noi
In order to apply Lemma~\ref{LEM:Yreg}~(i) for the $\wick{W_N^k}$ term to obtain the desired estimate, we require
\begin{align}
s > k \Big( \frac 32 - \al \Big) - (2j - 4) \Big( \frac 32 - \al \Big) + \g + \eps' > 0
\label{cond1}
\end{align}

\noi
and
\begin{align}
(2j - 4) \Big( \frac 32 - \al \Big) - \g > 0.
\label{cond2}
\end{align}

\noi
As long as the above conditions are satisfied, we obtain the desired moment bound by Lemma~\ref{LEM:Yreg}~(i).

We first consider the condition \eqref{cond1}. By plugging in the value of $\g$, from conditions \eqref{cond0} and \eqref{cond1} along with $s < 5 \al - \frac 92 - \eps$, we require 
\begin{align}
\frac{(k_2' (k - 2j + k_2 + 4) - 2 k_2) \al (\frac 32 - \al)}{\frac 32 k_2' - 2 (\frac 32 - \al)} < s < \min \Big\{ \frac{(2 k_2' - k_2) \al}{k_2' - 1}, 5 \al - \frac 92 - \eps \Big\}.
\label{cond3}
\end{align}

\noi
Since $k - 2j + k_2 < 0$, we can easily verify
\begin{align*}
\frac{(k_2' (k - 2j + k_2 + 4) - 2 k_2) \al (\frac 32 - \al)}{\frac 32 k_2' - 2 (\frac 32 - \al)} < \frac{(2 k_2' - k_2) \al}{k_2' - 1}
\end{align*}

\noi
given $\al > 1 > \frac 34$. Recalling $k_2' = \lfloor \frac{k_2}{2} \rfloor + 1$, we know that if $k_2 \geq 5$ is odd or $k_2 \geq 8$ is even, we have
\begin{align*}
\frac{(2 k_2' - k_2) \al}{k_2' - 1} \leq \frac{\al}{2} < 5 \al - \frac 92 - \eps
\end{align*}

\noi
given $\al > 1$ and $\eps > 0$ sufficiently small. Thus, to show that \eqref{cond3} can be satisfied, we only need to show
\begin{align*}
\frac{(k_2' (k - 2j + k_2 + 4) - 2 k_2) \al (\frac 32 - \al)}{\frac 32 k_2' - 2 (\frac 32 - \al)} < 5 \al - \frac 92 - \eps
\end{align*}

\noi
for $k_2 = 4$ or $k_2 = 6$, which can be checked easily due to $k - 2j + k_2 \leq - 1$ and $1 < \al \leq \frac 98$.

It remains to consider \eqref{cond2}, which is equivalent to
\begin{align*}
s > \frac{(4 k_2' - 2 k_2 - (2j - k_2) k_2' ) \al}{k_2' - 2}.
\end{align*}

\noi
This can be satisfied easily since
\begin{align*}
\frac{(4 k_2' - 2 k_2 - (2j - k_2) k_2' ) \al}{k_2' - 2} < 0
\end{align*}

\noi
in view of $2j - k_2 \geq k + 1 \geq 2$ and $k_2' = \lfloor \frac{k_2}{2} \rfloor + 1 \leq k_2 - 1 < k_2$. Thus, we have finished the proof.
\end{proof}

The following lemma was used in the proof of part~(i) in Lemma~\ref{LEM:RNsto1}.

\begin{lemma}
\label{LEM:RNstoh}
Let $1 < \al \leq \frac 98$, $\eps, \eps' > 0$ be small, $1 \leq p < \infty$, and $k \geq 2$.  

\smallskip \noi
\textup{(i)} We have
\begin{align*}
\sup_{N \geq 1} \E \Big[ \big( N^{- (k - 2) (\frac 32 - \al - \eps')} \big\| \wick{W_N^k} \ZZ_N - \E [\wick{W_N^k} \ZZ_N] \big\|_{\mathcal{C}^{2 \al - 3 - \eps}} \big)^p \Big] < C
\end{align*}

\noi
for some constant $C > 0$.

\smallskip \noi
\textup{(ii)} If $k \geq 3$ is odd, we have
\begin{align*}
\sup_{N \geq 1} N^{- (k - 1) (\frac 32 - \al) + \eps'} \big| \E [\wick{W_N^k} \ZZ_N] \big| < C
\end{align*}

\noi
for some constant $C > 0$.
\end{lemma}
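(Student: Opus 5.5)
We write everything in the Wiener chaos of the Brownian motions $\{B_n\}$ from \eqref{defWt}. By \eqref{defZZ}, $\ZZ_N = \pi_N \int_0^1 \dot\ZZ^N(t)\,dt$, and $\wick{W_N^{2\l-1}(t)} = I_{2\l-1}[f_{t}^{\otimes(2\l-1)}]$ with $f_t(t',n') = \ind_{[0,t]}(t')\ind_{|n'|\le N}\jb{n'}^{-\al}e_{n'}$. By the stochastic Fubini theorem (Lemma~\ref{LEM:sto_fub}), $\ZZ_N$ is a finite sum over $2\le \l\le m$ of elements of $\H_{2\l-1}$, with coefficient $2\l\,\cj a_{\l,N}N^{-(2\l-4)(\frac32-\al)}$. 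Similarly, $\wick{W_N^k}=\wick{W_N^k(1)}\in\H_k$. We then expand the product $\wick{W_N^k}\ZZ_N$ with the product formula (Lemma~\ref{LEM:sto_prod}), term by term in $\l$. This gives contractions of order $r$ for $0\le r\le\min\{k,2\l-1\}$.

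Part (ii) is the full-contraction term $r=k=2\l-1$. By Lemma~\ref{LEM:sto_ito}~(ii) only the chaos $\l=\frac{k+1}2$ contributes, and spatial stationarity makes the expectation a constant. Explicitly, it is
\begin{align*}
k!\,(k+1)\,\cj a_{\frac{k+1}2,N}N^{-(k-3)(\frac32-\al)}\sum_{n_1,\dots,n_k,\,|n_j|\le N}\jb{n_{1\cdots k}}^{-2\al}\prod_{j=1}^k\jb{n_j}^{-2\al}\int_0^1\!\!\int_0^t\cdots\, dt\, .
\end{align*}
Here the time integrals are bounded. The lattice sum is bounded by Lemma~\ref{LEM:conv}~(ii), which gives $\jb{n}^{(k-1)3-2k\al}$ for the inner convolution, followed by a sum against $\jb{n}^{-2\al}$. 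The result is $\les N^{(k-1)(3-2\al)-3+\eps}$, or $O(1)$ if that exponent is negative. Multiplying by $N^{-(k-3)(\frac32-\al)}$ gives $N^{(k+1)(\frac32-\al)-3}$. Comparing with $N^{(k-1)(\frac32-\al)}$, the gap is $2(\frac32-\al)-3<0$, so the bound holds with room for $\eps'$.

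For part (i), the mean-zero remainder $\wick{W_N^k}\ZZ_N-\E[\cdot]$ consists of chaoses of order $k+2\l-1-2r\ge1$, of bounded order. So Lemma~\ref{LEM:reg}~(i) (in $\mathcal C^{2\al-3-\eps}$ form) together with hypercontractivity reduces the claim to the second-moment bound
\begin{align*}
\sum_{n}\jb{n}^{2(2\al-3-\eps)}\E\big|\F_x(\cdot)(n)\big|^2\les N^{2(k-2)(\frac32-\al-\eps')}.
\end{align*}
We bound each contracted kernel via Lemma~\ref{LEM:sto_ito}~(ii) and \eqref{jen}. Each uncontracted leg carries $\jb{n_j}^{-2\al}$. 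Each contracted pair gives an inner sum over paired frequencies. The $\ZZ$ factor contributes $\jb{\sum_{j\in\mathcal K_2}n_j}^{-2\al}$ from $\jb\nb^{-2\al}$, with $\mathcal K_2$ its block of $2\l-1$ legs. This is exactly a simplified, time-free analogue of the resonant sums in Lemma~\ref{LEM:count_quin2}, and we would handle it by repeated use of Lemma~\ref{LEM:conv}~(i),(ii).

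We split into two cases.
\begin{enumerate}
\item[(a)] $r=0$: the kernel factorizes. The $\ZZ$ part is bounded by $N^{(2\l-4)(\frac32-\al)}$ times a function decaying like regularity $5\al-\frac92$, and its prefactor cancels that power. The $\wick{W_N^k}$ part is bounded as in Lemma~\ref{LEM:Yreg}~(i) with $\be=(k-2)(\frac32-\al-\eps')$ and $s=3-2\al+\eps$. This requires $3-2\al+\eps>k(\frac32-\al)-\be=2(\frac32-\al)+\eps'(k-2)$, which holds for small $\eps'$ relative to $\eps$.
\item[(b)] $r\ge1$: each contracted pair $(n,-n)$ produces $\sum_n\jb{n}^{-2\al}\cdot\jb{\cdot+n}^{-2\al}$. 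This gains regularity compared with the uncontracted product, because of the $\jb\nb^{-2\al}$ smoothing on the $\ZZ$ side. The count is that each free leg beyond the first two costs at most $N^{3-2\al}$. We therefore get $N^{(k+2\l-1-2r-3)(\frac32-\al)\cdot 2/2}$-type growth, and the prefactor $N^{-(2\l-4)(\frac32-\al)}$ absorbs the $\ZZ$-side legs, leaving at most $N^{(k-2)(\frac32-\al)}$ up to $\eps$ losses.
\end{enumerate}

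The main obstacle is case (b) with $r=1$ when $k$ is small, for instance $k=2$. There we need the output in $\mathcal C^{2\al-3-\eps}$ with only an $N^0$ budget. The dangerous region is the contracted frequency being high while the output is low, since the pair sum $\sum_{n}\jb{n}^{-2\al}\jb{m+n}^{-2\al}$ only gives $\jb{m}^{3-4\al}$. We would handle it as in Case~1 of Lemma~\ref{LEM:count_quin2}, via a bound like \eqref{count_quin2} with the $\jb\nb^{-2\al}$ weight in place of $\jbb{\cdot}^{-1}$. That bound supplies a factor $\jb{\sum_{\mathcal K_2\setminus\{1\}}n_j}^{-\al}N_1^{-\ta}$ which, for $\al>1$, makes the remaining sums converge with the required power.
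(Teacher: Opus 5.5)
Your overall plan matches the paper's proof: expand $\wick{W_N^k}\ZZ_N$ with the product formula, bound each contraction pattern through the It\^o isometry, \eqref{jen} and Lemma~\ref{LEM:conv}, and conclude with Lemma~\ref{LEM:reg}. Your case (a) is exactly the paper's no-pair computation, including the constraint $(k-2)\eps'<\eps$.

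\textbf{Part (ii).} The count you give is false. The sum
\[
S_N=\sum_{|n_1|,\dots,|n_k|\le N}\jb{n_1+\cdots+n_k}^{-2\al}\prod_{j=1}^k\jb{n_j}^{-2\al}
\]
has size $N^{k(3-2\al)-2\al}$ up to logarithms, not $N^{(k-1)(3-2\al)-3+\eps}$. In particular, for $k=3$ it is unbounded when $\al\le\frac98$, so your ``$O(1)$'' is wrong. Up to constants it equals $\E\|\wick{W_N^3}\|_{H^{-\al}}^2$, which is the same divergence that forces $\al_N\to\infty$. There is a second problem: Lemma~\ref{LEM:conv}~(ii) with $\be=2\al$ requires $2\al>3-\frac3k$, and this fails for every odd $k\ge5$ when $\al\le\frac98$. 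The fix is the paper's device. Use $|n_j|\le N$ to move $N^{-(k-2)(3-2\al-\eps'')}$ onto the frequency weights. Then Lemma~\ref{LEM:conv}~(ii) applies with $\be=2\al+\frac{k-2}{k}(3-2\al-\eps'')$ and leaves $\sum_{n'}\jb{n'}^{3-6\al+(k-2)\eps''}<\infty$. The correct net power is $N^{6-6\al+\eps'}$. So the claim holds, but only because $\al>1$ (you need $\eps'<6\al-6$). It does not hold with the comfortable margin $-2\al$ that your arithmetic produced.

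\textbf{Part (i)(b).} You place the difficulty in the wrong case. For a single pair, Lemma~\ref{LEM:conv}~(i) gives $\sum_m\jb{m}^{-2\al}\jb{\sigma-m}^{-2\al}\les\jb{\sigma}^{3-4\al}$. Squaring gives $\jb{\sigma}^{6-8\al}$, and combined with the remaining legs the variance at frequency $n$ is essentially $\jb{n}^{-2\al}$, far better than needed. No analogue of \eqref{count_quin2}, a wave estimate built around the phase $\kappa$, is required; your variant would also close, but it is weaker than this direct bound.

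What your heuristic does not address is the case of many pairs. With $r\ge3$ contractions, the contracted kernel $\sum_{m_1,\dots,m_r}\prod_i\jb{m_i}^{-2\al}\jb{\sigma-m_1-\cdots-m_r}^{-2\al}$ itself grows with $N$; for $r=3$ it is essentially the divergent sum $S_N$ above. So contractions do not automatically ``gain'' regularity. The paper handles this by distributing the whole prefactor, of order $N^{-(k+2\l-6)(3-2\al)}$, onto individual frequencies. The unpaired legs of $\wick{W_N^k}$, the paired indices, and the unpaired legs of $\ZZ_N$ each receive a share, and this spends the budget exactly. It then runs Lemma~\ref{LEM:conv} in four explicit cases:
\begin{enumerate}
\item $|\mathcal P|=1$;
\item $2\le|\mathcal P|\le\min\{k-1,2\l-1\}$;
\item $|\mathcal P|=k\le 2\l-2$;
\item full pairing, which the expectation removes.
\end{enumerate}
Your rule ``each free leg beyond the first two costs at most $N^{3-2\al}$'', and the garbled exponent after it, must be replaced by this explicit allocation before case (b) is a proof.
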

\begin{proof}
(i) Recalling \eqref{defZZ}, we need to deal with the term
\begin{align}
\begin{split}
&N^{- (k - 2) (\frac 32 - \al - \eps') - (2 k' - 4) (\frac 32 - \al)} \\
&\quad \times \big( \wick{W_N^k} \jb{\nb}^{-2 \al} \wick{W_N^{2 k' - 1}} - \,\E [\wick{W_N^k} \jb{\nb}^{-2 \al} \wick{W_N^{2 k' - 1}}] \big)
\end{split}
\label{YYN}
\end{align}

\noi
for any $2 \leq k' \leq m$, where we ignore the time variable for simplicity. Let $n \in \Z^3$ be fixed, and we denote $n_1, \dots, n_k$ to be the frequencies of the nodes of $\wick{W_N^k}$ and $n_{k + 1}, \dots, n_{k + 2k' - 1}$ to be the frequencies of the nodes of $\wick{W_N^{2 k' - 1}}$. In view of the product formula in Lemma~\ref{LEM:sto_prod}, we need to discuss the number of pairs between $\wick{W_N^k}$ and $\wick{W_N^{2 k' - 1}}$. 

If there are no pairs, by the Ito isometry (Lemma~\ref{LEM:sto_ito}~(ii)), \eqref{jen}, and Lemma~\ref{LEM:conv}~(ii) and then (i) along with the fact that $1 < \al \leq \frac 98$, we the following contribution for the variance of \eqref{YYN} at frequency $n$:
\begin{align*}
&N^{- (k - 2) (3 - 2 \al - 2 \eps') - (2 k' - 4) (3 - 2 \al)} \\
&\quad \times \sum_{\substack{|n_1|, \dots, |n_{k + 2 k' - 1}| \leq N \\ n_1 + \cdots + n_{k + 2 k' - 1} = n}} \bigg( \prod_{\l = 1}^k \frac{1}{\jb{n_\l}^{2 \al}} \cdot \frac{1}{\jb{ \sum_{\l' = k + 1}^{k + 2 k' - 1} n_{\l'} }^{4 \al}} \prod_{\l' = k + 1}^{k + 2k' - 1} \frac{1}{\jb{n_{\l'}}^{2 \al}} \bigg) \\
&\les \sum_{\substack{n_1, \dots, n_{k + 2 k' - 1} \in \Z^3 \\ n_1 + \cdots + n_{k + 2 k' - 1} = n}} \bigg( \prod_{\l = 1}^k \frac{1}{\jb{n_\l}^{2 \al + \frac{k - 2}{k} (3 - 2 \al - 2 \eps')}} \cdot \frac{1}{\jb{ \sum_{\l' = k + 1}^{k + 2k' - 1} n_{\l'} }^{4 \al}} \\
&\quad \times \prod_{\l' = k + 1}^{k + 2 k' - 1} \frac{1}{\jb{n_{\l'}}^{2 \al + \frac{2 k' - 4}{2k' - 1} (3 - 2 \al)}} \bigg) \\
&\les \sum_{\substack{n', n'' \in \Z^3 \\ n' + n'' = n}} \frac{1}{\jb{n'}^{4 \al - 3 - 2 (k - 2) \eps'} \jb{n''}^{10 \al - 6}} \\
&\les \jb{n}^{3 - 4 \al + 2 (k - 2) \eps'},
\end{align*}

\noi
so that the desired estimate follows from Lemma~\ref{LEM:reg} given $\eps' > 0$ sufficiently small. 

If there are pairs, recalling Definition~\ref{DEF:pair}, we let $\mathcal P \neq \varnothing$ be a pairing of $\{ 1, \dots, k + 2 k' - 1 \}$ which respects the partition $\{1, \dots, k\}$ and $\{ k + 1, \dots, k + 2 k' - 1 \}$. We consider the following cases.

\smallskip \noi
\textbf{Case 1:} $|\mathcal P| = 1$. 

By the Ito isometry (Lemma~\ref{LEM:sto_ito}~(ii)), \eqref{jen}, and Lemma~\ref{LEM:conv}~(ii) and then (i) along with the fact that $1 < \al \leq \frac 98$, we obtain the following contribution for the variance of \eqref{YYN} at frequency $n$:
\begin{align*}
&\sum_{\substack{(n_\l)_{\l \notin \mathcal P} \\ \sum_{\l \notin \mathcal P} n_\l = n}}  \prod_{\substack{\l \in \{1, \dots, k\} \\ \l \notin \mathcal P}} \frac{1}{\jb{n_\l}^{2 \al + \frac{k - 2}{k - 1} (3 - 2 \al - \eps'')}} \prod_{\substack{\l' \in \{k + 1, \dots, k + 2k' - 1\} \\ \l' \notin \mathcal P}} \frac{1}{\jb{n_{\l'}}^{2 \al + \frac{2k' - 4}{2k' - 2} (3 - 2 \al - \eps'')}} \\
&\quad \times \bigg( \sum_{(n_\l)_{\l \in \mathcal P}} \ind_{\substack{(n_1, \dots, n_{k + 2k' - 1}) \\ \text{admissible}}} \frac{1}{\jb{\sum_{\l' = k + 1}^{k + 2k' - 1} n_{\l'}}^{2 \al}} \prod_{\l \in \mathcal P} \frac{1}{\jb{n_\l}^{\al}} \bigg)^2 \\
&\les \sum_{\substack{(n_\l)_{\l \notin \mathcal P} \\ \sum_{\l \notin \mathcal P} n_\l = n}}  \prod_{\substack{\l \in \{1, \dots, k\} \\ \l \notin \mathcal P}} \frac{1}{\jb{n_\l}^{2 \al + \frac{k - 2}{k - 1} (3 - 2 \al - \eps'')}} \prod_{\substack{\l' \in \{k + 1, \dots, k + 2k' - 1\} \\ \l' \notin \mathcal P}} \frac{1}{\jb{n_{\l'}}^{2 \al + \frac{2k' - 4}{2k' - 2} (3 - 2 \al - \eps'')}} \\
&\quad \times \frac{1}{\jb{\sum_{\l' = k + 1}^{k + 2k' - 1} \ind_{\{\l' \notin \mathcal P\}} n_{\l'}}^{8 \al - 6}} \\
&\les \sum_{\substack{n', n'' \in \Z^3 \\ n' + n'' = n}} \frac{1}{\jb{n'}^{2 \al - (k - 2) \eps''}\jb{n''}^{12 \al - 9 - (2k' - 4) \eps''}} \\
&\les \jb{n}^{- 2 \al + (k - 2) \eps''}
\end{align*}

\noi
for some small $\eps'' > 0$, which is enough by Lemma~\ref{LEM:reg}.

\smallskip \noi
\textbf{Case 2:} $2 \leq |\mathcal P| \leq k - 1$ and $2 \leq |\mathcal P| \leq 2k' - 1$.

By the Ito isometry (Lemma~\ref{LEM:sto_ito}~(ii)), \eqref{jen}, and Lemma~\ref{LEM:conv}~(ii) and then (i) along with the fact that $1 < \al \leq \frac 98$, we obtain the following contribution for the variance of \eqref{YYN} at frequency $n$:
\begin{align*}
&\sum_{\substack{(n_\l)_{\l \notin \mathcal P} \\ \sum_{\l \notin \mathcal P} n_\l = n}}  \prod_{\substack{\l \in \{1, \dots, k\} \\ \l \notin \mathcal P}} \frac{1}{\jb{n_\l}^{2 \al + \frac{k - 1 - |\mathcal P|}{k - |\mathcal P|} (3 - 2 \al - \eps'')}} \prod_{\substack{\l' \in \{k + 1, \dots, k + 2k' - 1\} \\ \l' \notin \mathcal P}} \frac{1}{\jb{n_{\l'}}^{3 - \eps''}} \\
&\quad \times \bigg( \sum_{(n_\l)_{\l \in \mathcal P}} \ind_{\substack{(n_1, \dots, n_{k + 2k' - 1}) \\ \text{admissible}}} \frac{1}{\jb{\sum_{\l' = k + 1}^{k + 2k' - 1} n_{\l'}}^{2 \al}} \prod_{\l \in \mathcal P} \frac{1}{\jb{n_\l}^{\al + \frac{|\mathcal P| - 2}{2 |\mathcal P|} (3 - 2 \al - \eps'')}} \bigg)^2 \\
&\les \sum_{\substack{n', n'' \in \Z^3 \\ n' + n'' = n}} \frac{1}{\jb{n'}^{2 \al - (k - 1 - |\mathcal P|) \eps''} \jb{n''}^{12 \al - 9 - (2 k' - 5 + |\mathcal P|) \eps''}} \\
&\les \jb{n}^{- 2 \al + (k - 1 - |\mathcal P|) \eps''}
\end{align*}

\noi
for some small $\eps'' > 0$, which is enough by Lemma~\ref{LEM:reg}. 

\smallskip \noi
\textbf{Case 3:} $|\mathcal P| = k$ and $2 \leq |\mathcal P| \leq 2k' - 2$.

By the Ito isometry (Lemma~\ref{LEM:sto_ito}~(ii)), \eqref{jen}, and Lemma~\ref{LEM:conv}~(ii) and then (i) along with the fact that $1 < \al \leq \frac 98$, we obtain the following contribution for the variance of \eqref{YYN} at frequency $n$:
\begin{align*}
&\sum_{\substack{(n_\l)_{\l \notin \mathcal P} \\ \sum_{\l \notin \mathcal P} n_\l = n}}   \prod_{\substack{\l' \in \{k + 1, \dots, k + 2k' - 1\} \\ \l' \notin \mathcal P}} \frac{1}{\jb{n_{\l'}}^{2 \al + \frac{2k' - 2 - |\mathcal P|}{2 k' - 1 - |\mathcal P|} (3 - 2 \al - \eps'')}} \\
&\quad \times \bigg( \sum_{(n_\l)_{\l \in \mathcal P}} \ind_{\substack{(n_1, \dots, n_{k + 2k' - 1}) \\ \text{admissible}}} \frac{1}{\jb{\sum_{\l' = k + 1}^{k + 2k' - 1} n_{\l'}}^{2 \al}} \prod_{\l \in \mathcal P} \frac{1}{\jb{n_\l}^{\al + \frac{|\mathcal P| - 2}{2 |\mathcal P|} (3 - 2 \al - \eps'')}} \bigg)^2 \\
&\les \frac{1}{\jb{n}^{2 \al - (2 k' - 2 - |\mathcal P|) \eps''} \jb{n}^{12 \al - 12 - (2 |\mathcal P| - 4) \eps''}} \\
&\les \jb{n}^{- 14 \al + 12 + (2k' - 6 - |\mathcal P|) \eps''}
\end{align*}

\noi
for some small $\eps'' > 0$, which is enough by Lemma~\ref{LEM:reg}.

\smallskip \noi
\textbf{Case 4:} $|\mathcal P| = k = 2k' - 1$.

In this case, all frequencies are paired, which is canceled by the expectation in \eqref{YYN}. Thus, we have finished the proof of part (i).

\smallskip \noi
(ii) From \eqref{defZZ} and the Ito isometry (Lemma~\ref{LEM:sto_ito}~(ii)), we only need to deal with the following non-zero term
\begin{align*}
N^{- (k - 1) (\frac 32 - \al) + \eps' - (k - 3) (\frac 32 - \al)} \E [\wick{W_N^k} \jb{\nb}^{-2 \al} \wick{W_N^{k}}].
\end{align*}

\noi
By Lemma~\ref{LEM:conv}~(ii) along with the fact that $1 < \al \leq \frac 98$, we obtain
\begin{align*}
&N^{- (k - 1) (\frac 32 - \al - \eps') - (k - 3) (\frac 32 - \al)} \E [\wick{W_N^k} \jb{\nb}^{-2 \al} \wick{W_N^{k}}] \\
&\quad \les \sum_{n_{1}, \dots, n_{k} \in \Z^3} \frac{1}{\jb{\sum_{\l = 1}^{k} n_\l}^{2 \al}} \prod_{\l = 1}^{k} \frac{1}{\jb{n_\l}^{2 \al + \frac{k - 2}{k} (3 - 2 \al - \eps'')}} \\
&\quad \les \sum_{n' \in \Z^3} \frac{1}{\jb{n'}^{6 \al - 3 - (k - 2) \eps''}} \\
&\quad \les 1
\end{align*}

\noi
for some $\eps'' > 0$ small. This finishes the proof.
\end{proof}

The following lemma deals with the remaining terms in \eqref{RNmixed} that have not been covered by Lemma~\ref{LEM:RNbdd2} and Lemma~\ref{LEM:RNsto1}.

\begin{lemma}
\label{LEM:RNsto2}
Let $1 < \al \leq \frac 98$, $\eps > 0$ be small, $2 \leq j \leq m$, and $1 \leq k \leq 2j - 2$. Let $\eps' = 0$ if $j = 2$ and $\eps' > 0$ be sufficiently small if $j \geq 3$. Then, we have
\begin{align*}
\sup_{N \geq 1} \bigg| \E \bigg[ N^{- (2j - 4) (\frac 32 - \al) + \eps'} \int_{\T^3} \wick{W_N^k} \ZZ_N^{2j - k} dx \bigg] \bigg| < C
\end{align*}

\noi
for some constant $C > 0$.
\end{lemma}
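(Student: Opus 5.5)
Plan: expand $\ZZ_N^{2j-k}$ using \eqref{defZZ}, so that $\ZZ_N = \sum_{j'=2}^m 2j'\cj a_{j',N} N^{-(2j'-4)(\frac32-\al)} \int_0^1 \jb{\nb}^{-2\al} \wick{W_N^{2j'-1}(t)}\,dt$ (projected by $\pi_N$). The quantity to bound is then a finite sum of expectations
\[
N^{-(2j-4)(\frac 32-\al)+\eps'} \prod_{i=1}^{2j-k} N^{-(2j_i'-4)(\frac32-\al)}\, \E\bigg[\int_{\T^3} \wick{W_N^k}\prod_{i=1}^{2j-k} \jb{\nb}^{-2\al}\wick{W_N^{2j_i'-1}}\,dx\bigg],
\]
with the time integrals pulled outside using Lemma~\ref{LEM:sto_fub}. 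By spatial stationarity the $x$-integral contributes $(2\pi)^3$ times the value at a point. Each factor is a multiple stochastic integral (Lemma~\ref{LEM:sto_ito}~(i)). By iterating the product formula (Lemma~\ref{LEM:sto_prod}), or equivalently Wick's theorem, the expectation becomes a sum over complete pairings of all the frequencies that respect the block partition, because no self-pairings occur inside a Wick power. A complete pairing forces the total frequency to vanish.

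Next, I will bound each pairing diagram by a lattice sum. Every leaf pair $\{n,-n\}$ contributes $\jb{n}^{-2\al}$, and each $\ZZ$-block whose leaves have frequencies summing to $m_i$ contributes $\jb{m_i}^{-2\al}$. The $N$-powers are spread over the leaves as in the proof of Lemma~\ref{LEM:RNstoh}. I restrict to $|n|\le N$ and trade $N^{-\beta}$ for $\jb{n}^{-\beta}$ on leaves, which uses the bound $N^{-a}\le \jb{n}^{-a}$ for $|n|\le N$. The sum then collapses through repeated applications of Lemma~\ref{LEM:conv}~(i),(ii). The sub-sums internal to each $\ZZ$-block behave like the variance of $\wick{W_N^{2j'-1}}$, producing factors $\jb{m_i}^{(2j_i'-2)(3-2\al)-3}$ times $\jb{m_i}^{-2\al}$. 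These reproduce the $\mathcal C^{5\al-\frac92}$-type regularity of $\ZZ_N$ recorded after \eqref{alphaN}. The $\wick{W_N^k}$ block likewise behaves like a factor of regularity $k(\al-\frac32)$.

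The main counting check is power counting. Heuristically, the diagram is the pairing $\jb{\wick{W_N^k}\prod \ZZ}$. It converges whenever the "effective" regularities sum to a positive number, i.e.\ $k(\al-\frac32)-(\text{$N$-compensated part}) + (2j-k)(5\al-\frac92) > -3$ in the appropriate sense, or whenever the prefactor $N^{-(2j-4)(\frac32-\al)}$ absorbs the logarithmic or power divergence. The count I need is $-(2j-4)(\frac32-\al)+\eps'$ against the divergence $(k-2)(\frac32-\al)$ from $\wick{W_N^k}$ plus at most $(2j-k)(\frac32-\al)$-type growth from the $\ZZ$ factors. Since $k\le 2j-2$, the net exponent should be $\le -\theta<0$ when $j\ge3$, which leaves room for $\eps'$. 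For $j=2$ we have $k\in\{1,2\}$ and $\eps'=0$. In that case there are only two or three $\ZZ$ factors, and I verify convergence directly: the worst diagram is $k=2$ paired with two $\ZZ$'s. There each $\ZZ$ block pairs one leaf with $W_N^2$ and the rest internally between the two $\ZZ$'s, giving sums like $\sum \jb{m}^{-4\al}\jb{m}^{\ldots}$ that are summable because $\al>1$. This mirrors Case~3 and part~(ii) of Lemma~\ref{LEM:RNstoh}.

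The main obstacle is organizing the combinatorics of general pairings across up to $2j-k+1$ blocks with uniform estimates. I would handle it by induction on the number of $\ZZ$ factors. I contract one $\ZZ$ block at a time: I sum out its internal and external pair frequencies with Lemma~\ref{LEM:conv}, which reduces it to an effective weight $\jb{\cdot}^{-\gamma}$ on the frequency it transmits, with $\gamma>3$ after the $\jb{\cdot}^{-2\al}$ gain, using $12\al-9>3$ in the same way as in Lemma~\ref{LEM:RNstoh}. I then proceed to the remaining blocks, checking at each stage that the leftover $N$-power is nonpositive.
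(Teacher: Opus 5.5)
Your proposal has a genuine gap. The uniform diagrammatic bound is only sketched, and the power count you give to justify it fails exactly in the critical case.

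\textbf{The induction does not define a summation scheme.} The step ``contract one $\ZZ$-block into an effective weight $\jb{\cdot}^{-\gamma}$ on the frequency it transmits'' presupposes a tree-like pairing structure. Wick powers have no self-contractions, so a block has no internal pairs. Moreover, a $\ZZ$-block is typically paired with several other blocks. Already for $k=2j-2$ the generic configuration is a triangle $\wick{W_N^{2j-2}}$--$\ZZ$--$\ZZ$. For small $k$ (e.g.\ $k=1$, with $2j-1$ factors $\ZZ_N$) the pairing multigraphs are arbitrary and contain many cycles. Such a block does not transmit a single frequency, so your induction gives no valid order in which to perform the sums.

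\textbf{The power count is wrong at $k=2j-2$.} We have $-(2j-4)(\frac32-\al)+(k-2)(\frac32-\al)=-(2j-2-k)(\frac32-\al)$, which vanishes at $k=2j-2$. So no $-\theta$ is left to absorb $N^{\eps'}$. Adding the ``$(2j-k)(\frac32-\al)$-type growth'' you allow for the $\ZZ$-factors would even make the exponent positive. In fact $\ZZ_N$ is bounded in $\mathcal{C}^{5\al-\frac92-\eps}$ uniformly in $N$ and produces no growth at all. At $k=2j-2$ the $N$-powers exactly compensate the divergence of $\wick{W_N^{2j-2}}$. Both the uniform bound and the room for $\eps'$ must therefore come from convergence, with slack, of the resonant lattice sums (e.g.\ a final weight $\jb{n}^{-(12\al-9)}$, summable since $\al>1$), not from a leftover negative power of $N$. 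Your $j=2$ check also misses the configuration $k_2=k_1+1$, in which both leaves of $\wick{W_N^2}$ pair with the same $\ZZ$-block.

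\textbf{The missing idea: pairings are needed in only one case.}

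For $k=1$, no expansion is needed. Duality, Lemma~\ref{LEM:Bes}~(i),(iv) and Lemma~\ref{LEM:Yreg}~(i) give $|\E\int W_N\ZZ_N^{2j-1}dx|\les\E\big[\|W_N\|_{\mathcal{C}^{\al-\frac32-\eps}}\|\ZZ_N\|_{\mathcal{C}^{5\al-\frac92-\eps}}^{2j-1}\big]$, since $(\al-\frac32)+(5\al-\frac92)>0$.

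For $2\le k\le 2j-3$, Lemma~\ref{LEM:Yreg}~(i) shows that $N^{-(2j-4)(\frac32-\al)+\eps'}\wick{W_N^k}$ has bounded moments in $\mathcal{C}^{\al-\frac32-\eps}$. This is precisely where $k\le 2j-3$ and the smallness of $\eps'$ are used. The same product estimate and Cauchy--Schwarz then close the bound.

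Only for $k=2j-2$ is stochastic cancellation needed. There the compensated $\wick{W_N^{2j-2}}$ lies only in $\mathcal{C}^{2\al-3-\eps}$, and $(2\al-3)+(5\al-\frac92)$ may be negative. But in this case there are exactly two $\ZZ$-factors. After expanding \eqref{defZZ} one has just three blocks: $\wick{W_N^{2j-2}}$, $\jb{\nb}^{-2\al}\wick{W_N^{2k_1-1}}$ and $\jb{\nb}^{-2\al}\wick{W_N^{2k_2-1}}$. The numbers $a,b,c$ of pairs between them are forced by $a+b=2j-2$, $a+c=2k_1-1$, $b+c=2k_2-1$. This leaves four explicit configurations, each summed directly with Lemma~\ref{LEM:conv}, so no general induction over pairing graphs is required.
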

\begin{proof}
When $k = 1$, since $- \al + \frac 32 + \eps < 5 \al - \frac 92 - \eps$ given $\al > 1$ and $\eps > 0$ small enough, by duality, the embedding in Lemma~\ref{LEM:Bes}~(i), the product estimate in Lemma~\ref{LEM:Bes}~(iv), the Cauchy-Schwarz inequality, and Lemma~\ref{LEM:Yreg}~(i), we have
\begin{align*}
\bigg| \E \bigg[ \int_{\T^3} W_N \ZZ_N^{2j - 1} dx \bigg] \bigg| \les  \E \Big[ \| W_N \|_{\mathcal{C}^{\al - \frac 32 - \eps}} \| \ZZ_N \|_{\mathcal{C}^{5 \al - \frac 92 - \eps}}^{2j - 1}  \Big] \leq C
\end{align*}

\noi
for some constant $C > 0$. This yields the desired estimate in this case. 

We now consider the case when $k \geq 2$. If $2 \leq k \leq 2j - 3$, we use similar steps as in the previous case and Lemma~\ref{LEM:Yreg}~(i) to obtain
\begin{align*}
\bigg| &\E \bigg[ N^{- (2j - 4) (\frac 32 - \al) + \eps'} \int_{\T^3} \wick{W_N^k} \ZZ_N^{2j - k} dx \bigg] \bigg| \\
&\les \E \Big[ N^{- (2j - 4) (\frac 32 - \al) + \eps'} \| \wick{W_N^k} \|_{\mathcal{C}^{\al - \frac 32 - \eps}} \| \ZZ_N \|_{\mathcal{C}^{5 \al - \frac 92 - \eps}}^{2j - k} \Big] \\
&\les \E \Big[ \big( N^{- (2j - 4) (\frac 32 - \al) + \eps'} \| \wick{W_N^k} \|_{\mathcal{C}^{\al - \frac 32 - \eps}} \big)^2 \Big]^{\frac 12} \E \Big[ \| \ZZ_N \|_{\mathcal{C}^{5 \al - \frac 92 - \eps}}^{2 (2j - k)} \Big]^{\frac 12} \\
&\leq C
\end{align*}

\noi
for some constant $C > 0$, which is valid since
\begin{align*}
\frac 32 - \al + \eps > k \Big( \frac 32 - \al \Big) - (2j - 4) \Big( \frac 32 - \al \Big) + \eps'
\end{align*}

\noi
thanks to $k \leq 2j - 3$ and $\eps' > 0$ sufficiently small.

It remains to consider the case when $k = 2j - 2$. Recalling \eqref{defZZ}, we need to estimate
\begin{align}
\begin{split}
&N^{- (2j - 4) (\frac 32 - \al) + \eps' - (2 k_1 - 4) (\frac 32 - \al) - (2 k_2 - 4) (\frac 32 - \al)} \\ 
&\quad \times \E \bigg[ \int_{\T^3} \wick{W_N^{2 j - 2}} \jb{\nb}^{-2 \al} \wick{W_N^{2 k_1 - 1}} \jb{\nb}^{-2 \al} \wick{W_N^{2 k_2 - 1}} dx \bigg]
\end{split}
\label{YYYN}
\end{align}

\noi
for any $2 \leq k_1 \leq k_2 \leq m$, where we ignore the time variable for simplicity. We denote $\{ n_\l \}_{1 \leq \l \leq 2j - 2}$ to be the frequencies of the nodes of $\wick{W_N^{2 j - 2}}$, $\{ n_\l^{(1)} \}_{1 \leq \l \leq 2 k_1 - 1}$ to be the frequencies of the nodes of $\wick{W_N^{2 k_1 - 1}}$, and $\{ n_\l^{(2)} \}_{1 \leq \l \leq 2 k_2 - 1}$ to be the frequencies of the nodes of $\wick{W_N^{2 k_2 - 1}}$. Recalling Definition~\ref{DEF:pair}, we see that \eqref{YYYN} is non-zero only if all frequencies are paired, so that we must have $2j - 2 \geq (2 k_2 - 1) - (2 k_1 - 1)$ and $2j - 2 \leq (2 k_1 - 1) + (2 k_2 - 1)$, or equivalently $k_2 - k_1 + 1 \leq j \leq k_1 + k_2$.

\smallskip \noi
\textbf{Case 1:} $k_2 - k_1 + 2 \leq j \leq k_1 + k_2 - 2$.

In this case, the only possibility (up to permutation) is that $\{ n_\l \}_{1 \leq \l \leq j + k_1 - k_2 - 1}$  are paired with $\{ n_\l^{(1)} \}_{1 \leq \l \leq j + k_1 - k_2 - 1}$, $\{ n_\l \}_{j + k_1 - k_2 \leq \l \leq 2j - 2}$ are paired with $\{ n_\l^{(2)} \}_{1 \leq \l \leq j - k_1 + k_2 - 1}$, and $\{ n_\l^{(1)} \}_{j + k_1 - k_2 \leq \l \leq 2 k_1 - 1}$ are paired with $\{ n_\l^{(2)} \}_{j - k_1 + k_2 \leq \l \leq 2 k_2 - 1}$. Thus, by Lemma~\ref{LEM:conv} several times along with the fact that $1 < \al \leq \frac 98$, we obtain the following contribution for the bound of $|\eqref{YYYN}|$:
\begin{align*}
&\sum_{n_1, \dots, n_{2j - 2} \in \Z^3 } \sum_{n_{j + k_1 - k_2}^{(1)}, \dots, n_{2 k_1 - 1}^{(1)} \in \Z^3} \bigg( \prod_{\l = 1}^{j + k_1 - k_2 - 1} \frac{1}{\jb{n_\l}^{2 \al + \frac{j + k_1 - k_2 - 2}{j + k_1 - k_2 - 1} (3 - 2 \al - \eps'')}} \bigg) \\
&\quad \times \bigg( \prod_{\l = j + k_1 - k_2}^{2j - 2} \frac{1}{\jb{n_\l}^{2 \al + \frac{j - k_1 + k_2 - 2}{j - k_1 + k_2 - 1} (3 - 2 \al - \eps'')}} \bigg) \bigg( \prod_{\l' = j + k_1 - k_2}^{2 k_1 - 1} \frac{1}{\jb{n_{\l'}^{(1)}}^{2 \al + \frac{- j + k_1 + k_2 - 2}{-j + k_1 + k_2} (3 - 2 \al - \eps'')}} \bigg) \\
&\quad \times \frac{1}{\jb{\sum_{\l = 1}^{j + k_1 - k_2 - 1} n_\l + \sum_{\l' = j + k_1 - k_2}^{2 k_1 - 1} n_{\l'}^{(1)}}^{2 \al}  \jb{\sum_{\l = j + k_1 - k_2}^{2j - 2} n_\l + \sum_{\l' = j + k_1 - k_2}^{2 k_1 - 1} n_{\l'}^{(1)}}^{2 \al}} \\
&\les \sum_{n'_1, n'_2, n'_3 \in \Z^3} \frac{1}{\jb{n'_1}^{2 \al - (j + k_1 - k_2 - 2) \eps''} \jb{n'_2}^{2 \al - (j - k_1 + k_2 - 2) \eps''} \jb{n'_3}^{4 \al - 3 - (- j + k_1 + k_2 - 2)\eps''}} \\
&\quad \times \frac{1}{\jb{n'_1 + n'_3}^{2 \al} \jb{n'_2 + n'_3}^{2 \al}} \\
&\les \sum_{n'_3 \in \Z^3} \frac{1}{\jb{n'_3}^{8 \al - 6 - (2j - 4) \eps''}} \cdot \frac{1}{\jb{n'_3}^{4 \al - 3 - (-j + k_1 + k_2 - 2) \eps''}} \\
&\les 1
\end{align*}

\noi
for some $\eps'' > 0$ small, as desired.

\smallskip \noi
\textbf{Case 2:} $j = k_1 + k_2 - 1$.

In this case, we must have $j \geq k_2 - k_1 + 3$. Similar as in Case 1, we obtain the following contribution for the bound of $|\eqref{YYYN}|$: 
\begin{align*}
&\sum_{n_1, \dots, n_{2j - 2} \in \Z^3 } \sum_{n_{j + k_1 - k_2}^{(1)}, \dots, n_{2 k_1 - 1}^{(1)} \in \Z^3} \bigg( \prod_{\l = 1}^{j + k_1 - k_2 - 1} \frac{1}{\jb{n_\l}^{2 \al + \frac{j + k_1 - k_2 - 3}{j + k_1 - k_2 - 1} (3 - 2 \al - \eps'')}} \bigg) \\
&\quad \times \bigg( \prod_{\l = j + k_1 - k_2}^{2j - 2} \frac{1}{\jb{n_\l}^{2 \al + \frac{j - k_1 + k_2 - 2}{j - k_1 + k_2 - 1} (3 - 2 \al - \eps'')}} \bigg) \bigg( \prod_{\l' = j + k_1 - k_2}^{2 k_1 - 1} \frac{1}{\jb{n_{\l'}^{(1)}}^{2 \al + \frac{- j + k_1 + k_2 - 1}{-j + k_1 + k_2} (3 - 2 \al - \eps'')}} \bigg) \\
&\quad \times \frac{1}{\jb{\sum_{\l = 1}^{j + k_1 - k_2 - 1} n_\l + \sum_{\l' = j + k_1 - k_2}^{2 k_1 - 1} n_{\l'}^{(1)}}^{2 \al}  \jb{\sum_{\l = j + k_1 - k_2}^{2j - 2} n_\l + \sum_{\l' = j + k_1 - k_2}^{2 k_1 - 1} n_{\l'}^{(1)}}^{2 \al}} \\
&\les \sum_{n'_1, n'_2, n'_3 \in \Z^3} \frac{1}{\jb{n'_1}^{4 \al - 3 - (j + k_1 - k_2 - 3) \eps''} \jb{n'_2}^{2 \al - (j - k_1 + k_2 - 2) \eps''} \jb{n'_3}^{2 \al - (- j + k_1 + k_2 - 1)\eps''}} \\
&\quad \times \frac{1}{\jb{n'_1 + n'_3}^{2 \al} \jb{n'_2 + n'_3}^{2 \al}} \\
&\les \sum_{n_3' \in \Z^3} \frac{1}{\jb{n_3'}^{10 \al - 9 - (2j - 5) \eps''}} \cdot \frac{1}{\jb{n_3'}^{2 \al - (-j + k_1 + k_2 - 1) \eps''}} \\
&\les 1
\end{align*}

\noi
for some $\eps'' > 0$ small, as desired.

\smallskip \noi
\textbf{Case 3:} $j = k_2 - k_1 + 1$.

In this case, we can assume without loss of generality that $\{ n_\l \}_{1 \leq \l \leq 2j - 2}$ are paired with $\{ n_\l^{(2)} \}_{1 \leq \l \leq j - k_1 + k_2 - 1}$ and $\{ n_\l^{(1)} \}_{1 \leq \l \leq 2 k_1 - 1}$ are paired with $\{ n_\l^{(2)} \}_{j - k_1 + k_2 \leq \l \leq 2 k_2 - 1}$. Thus, by Lemma~\ref{LEM:conv} several times along with the fact that $1 < \al \leq \frac 98$, we obtain the following contribution for the bound of $|\eqref{YYYN}|$:
\begin{align*}
&\sum_{n_1, \dots, n_{2j - 2} \in \Z^3} \sum_{n_1^{(1)}, \dots, n_{2 k_1 - 1}^{(1)} \in \Z^3} \bigg( \prod_{\l = 1}^{2j - 2} \frac{1}{\jb{n_\l}^{2 \al + \frac{2j - 4}{2j - 2} (3 - 2 \al - \eps'')}} \bigg) \bigg( \prod_{\l' = 1}^{2 k_1 - 1} \frac{1}{\jb{n_{\l'}^{(1)}}^{2 \al + \frac{2 k_1 - 3}{2 k_1 - 1} (3 - 2 \al - \eps'')}} \bigg) \\
&\quad \times \frac{1}{\jb{\sum_{\l' = 1}^{2 k_1 - 1} n_{\l'}^{(1)}}^{2 \al} \jb{\sum_{\l = 1}^{2j - 2} n_\l + \sum_{\l' = 1}^{2 k_1 - 1} n_{\l'}^{(1)}}^{2 \al}} \\
&\les \sum_{n'_1, n'_2 \in \Z^3} \frac{1}{\jb{n'_1}^{4 \al - 3 - (2j - 4) \eps''} \jb{n'_2}^{6 \al - 3 - (2k_1 - 3) \eps''} \jb{n'_1 + n'_2}^{2 \al}} \\
&\les 1
\end{align*}

\noi
for some $\eps'' > 0$ small, as desired.

\smallskip \noi
\textbf{Case 4:} $j = k_1 + k_2$.

In this case, we can assume without loss of generality that $\{n_\l\}_{1 \leq \l \leq 2 k_1 - 1}$ are paired with $\{ n_\l^{(1)} \}_{1 \leq \l \leq 2 k_1 - 1}$ and $\{n_\l\}_{2 k_1 \leq \l \leq 2 j - 2}$ are paired with $\{ n_\l^{(2)} \}_{1 \leq \l \leq 2 k_2 - 1}$. Thus, by Lemma~\ref{LEM:conv} several times along with the fact that $1 < \al \leq \frac 98$, we obtain the following contribution for the bound of $|\eqref{YYYN}|$:
\begin{align*}
&\sum_{n_1, \dots, n_{2j - 2} \in \Z^3} \bigg( \prod_{\l = 1}^{2 k_1 - 1} \frac{1}{\jb{n_\l}^{2 \al + \frac{2 k_1 - 3}{2 k_1 - 1} (3 - 2 \al - \eps'')}} \prod_{\l = 2 k_1}^{2j - 2} \frac{1}{\jb{n_\l}^{2 \al + \frac{2j - 2 k_1 - 3}{2j - 2 k_1 - 1} (3 - 2 \al - \eps'')}} \bigg) \\
&\quad \times \frac{1}{\jb{\sum_{\l = 1}^{2 k_1 - 1} n_\l}^{2 \al} \jb{\sum_{\l = 2 k_1}^{2j - 2} n_\l}^{2 \al}} \\
&\les \sum_{n'_1, n'_2 \in \Z^3} \frac{1}{\jb{n'_1}^{6 \al - 3 - (2 k_1 - 3) \eps''} \jb{n'_2}^{6 \al - 3 - (2j - 2 k_1 - 3) \eps''}} \\
&\les 1
\end{align*}

\noi
for some $\eps'' > 0$ small, as desired. Thus, we have finished the proof.
\end{proof}

Recalling $\wt{\mathcal{R}}_N^\dia$ in \eqref{RNtil}, by using \eqref{RNbdd1-1} in Lemma \ref{LEM:RNbdd1}, Lemma~\ref{LEM:RNbdd1.5}, Lemma \ref{LEM:RNbdd2}, Lemma~\ref{LEM:Yreg}~(i) and (iv), Lemma~\ref{LEM:RNsto1}, and Lemma~\ref{LEM:RNsto2}, we obtain the following useful estimate for sufficiently large $N \in \N$:
\begin{align}
\begin{split}
\E \bigg[ &\wt{\mathcal{R}}_N^\dia (W_N, \Ups_N - \ZZ_N) + \frac 12 \int_0^1 \big\| \dot{\Ups}^N (t) \big\|_{H_x^\al}^2 dt \bigg] \\
&\geq \frac 14 \E \big[ \| \Ups^N \|_{H^\al}^2 \big] + c \sum_{j = 2}^m N^{- (2j - 4) (\frac 32 - \al)} \E \big[ \| \Ups_N \|_{L^{2j}}^{2j} \big]  - C
\end{split}
\label{RNtil_est}
\end{align}

\noi
for some constants $C > 0$ and $0 < c \ll 1$ independent of $N$. We note that \eqref{RNtil_est} along with \eqref{BD_RNtil} already gives a proof of the uniform bound \eqref{L1bdd} in the statement of Theorem~\ref{THM:meas}.

\subsection{Tightness of the truncated Gibbs measures}
\label{SUB:tight}
In this subsection, we show the following tightness result of the truncated Gibbs measures $\{\varrho_N\}_{N \in \N}$ defined in \eqref{nuN}. 
\begin{proposition}
\label{PROP:tight}
Let $1 < \al \leq \frac 98$, $1 \leq p \leq \infty$, and $\eps > 0$ sufficiently small. Then, there exists $N_0 \in \N$ such that the set of truncated Gibbs measures $\{\varrho_N\}_{N \geq N_0}$ is tight as a sequence of measures on $W^{\al - \frac 32 - \eps, p} (\T^3)$.
\end{proposition}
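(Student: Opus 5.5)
Following Barashkov--Gubinelli (and \cite{OOT1, OOT2, LTzW}), I would prove tightness by bounding an exponential moment of a norm in a slightly smoother space, using the Bou\'e--Dupuis formula once more.

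Fix $0 < \eps_0 < \eps$, so that the embedding $W^{\al - \frac 32 - \eps_0, p} \hookrightarrow W^{\al - \frac 32 - \eps, p}$ is compact on $\T^3$ (Rellich; for $p = \infty$ pass through $\mathcal{C}^{\al - \frac 32 - \eps_0}$ via Lemma~\ref{LEM:Bes}~(i)). By Markov's inequality, tightness follows from
\begin{align*}
\sup_{N \geq N_0} \int e^{\| u \|_{W^{\al - \frac 32 - \eps_0, p}}} d \varrho_N (u) < \infty,
\end{align*}
since then the closed balls of radius $K$ in $W^{\al - \frac 32 - \eps_0, p}$ are relatively compact in $W^{\al - \frac 32 - \eps, p}$ and have $\varrho_N$-mass at least $1 - C e^{-K}$. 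For finite $p$ the $L^p$-based norm is dominated by the $\mathcal{C}^{\al - \frac 32 - \eps_0/2}$ norm, so it suffices to treat $F(u) = \| u \|_{\mathcal{C}^{\al - \frac 32 - \eps_0}}$. It is also convenient to work with $\pi_N u$ and handle $(1 - \pi_N) u$ separately: under $\varrho_N$ this piece has the Gaussian law of $(1 - \pi_N) W(1)$, independent of $\pi_N u$, so its exponential moments are uniformly bounded by Lemma~\ref{LEM:reg}.

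Since $\mathcal{Z}_N$ is bounded below uniformly in $N$ (Jensen's inequality plus Lemma~\ref{LEM:Yreg}~(iii) and the same change of variables), it is enough to show that
\begin{align*}
- \log \int e^{F(\pi_N u) - \mathcal{R}_N^\dia (u)} d\mu_\al (u)
\end{align*}
is bounded from below uniformly in $N$. Applying Lemma~\ref{LEM:BD} and the change of variables \eqref{defUps} exactly as in \eqref{BD_RNtil}, this quantity equals
\begin{align*}
\inf_{\dot{\Ups}^N} \E \Big[ - F(W_N + \Ups_N - \ZZ_N) + \wt{\mathcal{R}}_N^\dia (W_N, \Ups_N - \ZZ_N) + \tfrac 12 \int_0^1 \| \dot{\Ups}^N (t) \|_{H^\al_x}^2 dt \Big].
\end{align*}
By the triangle inequality,
\begin{align*}
F(W_N + \Ups_N - \ZZ_N) \leq \| W_N \|_{\mathcal{C}^{\al - \frac 32 - \eps_0}} + \| \ZZ_N \|_{\mathcal{C}^{5 \al - \frac 92 - \eps}} + C \| \Ups_N \|_{H^\al}.
\end{align*}
Here the last step uses the Sobolev embedding $H^\al \hookrightarrow \mathcal{C}^{\al - \frac 32}$ (Lemma~\ref{LEM:Bes}~(ii)). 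The first two terms have expectations bounded uniformly in $N$ by Lemma~\ref{LEM:Yreg}~(i). For the third, Young's inequality gives $C \| \Ups_N \|_{H^\al} \leq \frac 18 \| \Ups_N \|_{H^\al}^2 + C'$, which is absorbed by the $\frac 14 \E \| \Ups^N \|_{H^\al}^2$ coercive term in \eqref{RNtil_est}. Together with \eqref{RNtil_est}, this yields the uniform lower bound $-C$ for all $N \geq N_0$, where $N_0$ is the threshold beyond which \eqref{RNtil_est} holds (this is where the restriction to $N \geq N_0$ comes from).

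The main obstacle is the lower bound on $\wt{\mathcal{R}}_N^\dia$, that is, \eqref{RNtil_est}. That work is already done, so the remaining delicate points are only bookkeeping:
\begin{enumerate}
\item[(a)] the perturbation $-F$ must be at most linear in $\Ups_N$, so that the coercivity in $H^\al$ still absorbs it;
\item[(b)] $\ZZ_N$ must be handled through its deterministic-in-$\dot{\Ups}$ moment bound, and $\pi_N$ versus $\pi_N^2$ must be tracked;
\item[(c)] the Bou\'e--Dupuis integrability hypotheses must hold for finite $N$, which is immediate since $N$ is fixed.
\end{enumerate}
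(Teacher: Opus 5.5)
Your argument is correct. It uses the same engine as the paper: the Bou\'e--Dupuis formula after the change of variables \eqref{defUps}, the coercive bound \eqref{RNtil_est}, and a uniform lower bound on $\mathcal{Z}_N$. The route to tightness is different, though.

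\textbf{The two routes.} The paper takes a bounded smooth $F \geq 0$ equal to $M$ on the ball of radius $R/2$ and vanishing outside radius $R$. It applies Lemma~\ref{LEM:BD} to $e^{-F - \mathcal{R}_N^\dia}$ and uses Chebyshev to get $\E[F] \geq \frac M2 - \frac 14 \E\|\Ups^N\|_{H^\al}^2$ with $M = R^2/64$. This yields $\varrho_N (B_R^c) \les e^{C - M/2}$. You instead prove a uniform exponential moment and apply Markov.

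\textbf{What each buys.} The paper's test functional is bounded and nonnegative, so nothing unbounded has to be absorbed. Yours is unbounded but only linear in $\Ups_N$, so Young's inequality lets \eqref{RNtil_est} absorb it. Your route gives the stronger uniform tail $\varrho_N(\|u\| > K) \les e^{-K}$. The paper in fact proves exactly this later, by your argument, in \eqref{initbdd} and \eqref{uNbdd2}--\eqref{uNbdd5}. So your version would give tightness and those moment bounds at once. Splitting $u = \pi_N u + (1-\pi_N) u$ is also a clean choice. It uses that the density depends only on $\pi_N u$ and that the high modes are independent Gaussians. This avoids the footnoted limiting argument the paper needs to apply Lemma~\ref{LEM:BD} to a functional of the full rough field $W$.

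\textbf{The step to fix: the lower bound on $\mathcal{Z}_N$.} Jensen's inequality with respect to $\mu_\al$, together with Lemma~\ref{LEM:Yreg}~(iii), only gives $\mathcal{Z}_N \geq e^{-\al_N}$. This is useless because $\al_N \to \infty$. The correct argument is \eqref{ZNupbdd}.
\begin{itemize}
\item Take $\dot{\Ups}^N = 0$ in \eqref{BD_RNtil}, which gives $-\log \mathcal{Z}_N \leq \E\big[\wt{\mathcal{R}}_N^\dia (W_N, -\ZZ_N)\big]$.
\item The pure $\ZZ_N^{2j}$ terms are handled by moment bounds on $\ZZ_N$, and the $j = 1$ terms by Lemma~\ref{LEM:RNbdd1.5}.
\item The mixed terms $N^{-(2j-4)(\frac 32 - \al)} \E \int_{\T^3} \wick{W_N^k} \ZZ_N^{2j-k}\, dx$ with $1 \leq k \leq 2j-2$ are the real issue. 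They do not vanish, and they are controlled only through the pairing analysis of Lemma~\ref{LEM:RNsto2}; Lemma~\ref{LEM:Yreg}~(iii) says nothing about them.
\end{itemize}
With that replacement your proof goes through.

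A cosmetic point: your embedding step controls the $W^{\al - \frac 32 - \eps_0, p}$-norm by the $\mathcal{C}^{\al - \frac 32 - \eps_0/2}$-norm, not the $\mathcal{C}^{\al - \frac 32 - \eps_0}$-norm. Relabel $\eps_0$ accordingly.
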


\begin{proof}
Let $B_R \subset W^{\al - \frac 32 - \frac{\eps}{2}, p} (\T^3)$ be the closed ball in $W^{\al - \frac 32 - \frac{\eps}{2}, p} (\T^3)$ of radius $R > 0$ centered at the origin. By Rellich's compactness lemma, we know that $B_R$ is compact in $W^{\al - \frac 32 - \eps, p} (\T^3)$. Our goal is to show that given any $\eta > 0$, there exists $R = R(\eta) > 1$ such that
\begin{align}
\sup_{N \geq N_0} \varrho_N (B_R^c) < \eta.
\label{tight_goal}
\end{align}

Recalling $\mathcal{Z}_N$ from \eqref{defZN}, by \eqref{BD_RNtil}, \eqref{RNbdd1-2} in Lemma \ref{LEM:RNbdd1}, Lemma \ref{LEM:RNbdd1.5}, Lemma \ref{LEM:RNbdd2}, Lemma~\ref{LEM:Yreg}~(i), Lemma~\ref{LEM:RNsto1}, and Lemma~\ref{LEM:RNsto2}, we have
\begin{align}
\begin{split}
- \log \mathcal{Z}_N &= \inf_{\dot{\Ups}^N \in \mathbb{H}_a^\al} \E \bigg[ \wt{\mathcal{R}}_N^\dia (W_N, \Ups_N - \ZZ_N) + \frac 12 \int_0^1 \big\| \dot{\Ups}^N (t) \big\|_{H_x^\al}^2 dt \bigg] \\
&\leq \E \big[ \wt{\mathcal{R}}_N^\dia (W_N, 0 - \ZZ_N) \big] \leq C
\end{split}
\label{ZNupbdd}
\end{align}

\noi
for some constant $C > 0$ independent of $N \geq N_0$ for some $N_0 \in \mathbb{N}$ sufficiently large. Thus, we have a positive uniform lower bound for $\mathcal{Z}_N$ with $N \geq N_0$.

We now focus on showing \eqref{tight_goal}. Given $M > 1$, we let $F$ be a bounded smooth non-negative function such that
\begin{align}
F(u) =
\begin{cases}
M, & \text{if } \| u \|_{W^{\al - \frac 32 - \frac{\eps}{2}, p}} \leq \frac{R}{2}, \\
0, & \text{if } \| u \|_{W^{\al - \frac 32 - \frac{\eps}{2}, p}} > R.
\end{cases}
\label{FMu}
\end{align}

\noi
From \eqref{ZNupbdd}, we know that $Z_N$ is uniformly bounded from below, so that we have
\begin{align}
\varrho_N (B_R^c) \leq \mathcal{Z}_N^{-1} \int e^{- F(u) - \mathcal{R}_N^\dia (u)} d \mu_\al \les \int e^{- F(u) - \mathcal{R}_N^\dia (u)} d \mu_\al \deff \ft{\mathcal{Z}}_N.
\label{tight0}
\end{align}

\noi
where the underlying constant is independent of $N \geq N_0$. By the Bou\'e-Dupuis variational formula (Lemma \ref{LEM:BD})\footnote{Here, we applied Lemma \ref{LEM:BD} for $F$ on rough functions, which can be easily justified by a limiting argument. This will also be used at a later point.} and \eqref{BD_RNtil}, we have
\begin{align}
\begin{split}
- &\log \ft{\mathcal{Z}}_N \\
&= \inf_{\dot{\Ups}^N \in \mathbb{H}_a^\al} \E \bigg[ F (W + \Ups^N - \ZZ_N) + \wt{\mathcal{R}}_N^\dia (W_N, \Ups_N - \ZZ_N) + \frac 12 \int_0^1 \big\| \dot{\Ups}^N (t) \big\|_{H_x^\al}^2 dt \bigg].
\end{split}
\label{logZNhat}
\end{align}

\noi
By the Sobolev embedding, Lemma \ref{LEM:Yreg} (i), and Chebyshev's inequality, there exists a constant $c > 0$ such that
\begin{align}
\begin{split}
\mathbb{P} \Big( \big\| &W + \Ups^N - \ZZ_N \big\|_{W^{\al - \frac 32 - \frac{\eps}{2}, p}} > \frac{R}{2} \Big) \\
&\leq \mathbb{P} \Big( \| W - \ZZ_N \|_{W^{\al - \frac 32 - \frac{\eps}{2}, p}} > \frac{R}{4} \Big) + \mathbb{P} \Big( \| \Ups^N \|_{H^{\al}} > \frac{R}{4} \Big) \\
&\leq e^{- c R^{\frac{2}{2m - 1}}} + \frac{16}{R^2} \E \big[ \| \Ups^N \|_{H^\al}^2 \big]
\end{split}
\label{tight1}
\end{align}

\noi
uniformly in $N \in \N$ and $R \gg 1$. By setting $M = \frac{R^2}{64} \gg 1$ and using \eqref{FMu} and \eqref{tight1}, we obtain
\begin{align}
\begin{split}
\E \big[ F (W + \Ups^N - \ZZ_N) \big] &\geq M \E \Big[ \ind_{\big\{ \| W + \Ups^N - \ZZ_N \|_{W^{\al - \frac 32 - \frac{\eps}{2}, p}} \leq \frac{R}{2} \big\}} \big] \\
&\geq \frac{M}{2} - \frac{16M}{R^2} \E \big[ \| \Ups^N \|_{H^\al}^2 \big] \\
&\geq \frac{M}{2} - \frac 14 \E \big[ \| \Ups^N \|_{H^\al}^2 \big].
\end{split}
\label{tight2}
\end{align}

\noi
Thus, for $M > 0$ sufficiently large, by \eqref{logZNhat}, \eqref{tight2}, and \eqref{RNtil_est}, we obtain
\begin{align}
\begin{split}
- \log \ft{\mathcal{Z}}_N &\geq \frac{M}{2} - \frac 14 \E \big[ \| \Ups^N \|_{H^\al}^2 \big] \\
&\quad + \inf_{\dot{\Ups}^N \in \mathbb{H}_a^\al} \E \bigg[ \wt{\mathcal{R}}_N^\dia (W_N, \Ups_N - \ZZ_N) + \frac 12 \int_0^1 \big\| \dot{\Ups}^N (t) \big\|_{H_x^\al}^2 dt \bigg] \\
&\geq \frac{M}{2} - C
\end{split}
\label{tight3}
\end{align}

\noi
for some constant $C > 0$ independent of $N \geq N_0$. Thus, given $\eta > 0$, by letting $R = R(\eta) \gg 1$ sufficiently large and choosing $M = \frac{R^2}{64} \gg 1$, the bound \eqref{tight_goal} follows from \eqref{tight0} and \eqref{tight3}. This finishes the proof of tightness of $\{\varrho_N\}_{N \geq N_0}$.
\end{proof}

\begin{remark} \rm
\label{RMK:pos0}
In the proof of tightness of the truncated Gibbs measures $\{\varrho_N\}_{N \geq N_0}$ in Proposition~\ref{PROP:tight}, we only used the estimates \eqref{RNbdd1-1} and \eqref{RNbdd1-2} in Lemma~\ref{LEM:RNbdd1}, whose proof relies only on the weaker positivity condition \eqref{pos0}. This means that the weaker positivity condition \eqref{pos0} is enough to guarantee tightness of the truncated Gibbs measures $\{\varrho_N\}_{N \geq N_0}$. On the other hand, the estimate \eqref{RNbdd1-1.5} in Lemma~\ref{LEM:RNbdd1}, whose proof relies on the stronger positivity condition \eqref{pos}, will be used in the next subsection.
\end{remark}

\subsection{Convergence of the truncated Gibbs measures}
\label{SUB:meas_conv}
We are now ready to prove Theorem~\ref{THM:meas} in this subsection. By the tightness result in Proposition \ref{PROP:tight} and the Prokhorov theorem (Lemma \ref{LEM:prok}), there exists a weakly convergent subsequence of $\{\varrho_N\}_{N \in \N}$. To show that the whole sequence $\{\varrho_N\}_{N \in \N}$ converges weakly to the fractional $\Phi^4_3$-measure $\rho$ as defined in \eqref{rho}, we only need to show the following result.

\begin{proposition}
\label{PROP:meas_conv}
Let $1 < \al \leq \frac 98$. Let $\{ \varrho_{N_k} \}_{k \in \N}$ be any weakly convergent subsequence of the truncated Gibbs measures $\{\varrho_N\}_{N \in \N}$ defined in \eqref{nuN}, and we denote the limiting measure as~$\varrho$. Then, we have $\varrho = \rho$.
\end{proposition}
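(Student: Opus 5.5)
The identification $\varrho = \rho$ will go through Laplace-type functionals and the Boué-Dupuis formula, as in \cite{BG, OOT2, LTzW}. A probability measure on $W^{\al - \frac 32 - \eps, p}(\T^3)$ is determined by the values $\int e^{-F} d\mu$ over bounded Lipschitz $F$. So it suffices to show that, for every such $F$ and along the subsequence $N_k$,
\begin{align*}
\lim_{k \to \infty} \int e^{- F(u) - \mathcal{R}_{N_k}^\dia (u)} d \mu_\al (u) = \lim_{k \to \infty} \int e^{- F(u) - R_{N_k}^\dia (u)} d \mu_\al (u),
\end{align*}
after possibly adjusting the constants $\al_N'$ by a convergent amount. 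Applying this with $F = 0$ gives $\mathcal{Z}_{N_k}/Z_{N_k} \to 1$ along a further subsequence. The general case then gives $\int e^{-F} d\varrho = \int e^{-F} d\rho$. It is convenient to set $Z_N' = \int e^{-F - R_N^\dia}\, d\mu_\al$ and write the right-hand integral in the same variational form.

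\textbf{Variational representation.} By Lemma~\ref{LEM:BD} and the change of variables \eqref{defUps}, $-\log$ of the left-hand integral equals the infimum over drifts of
\begin{align*}
\E \Big[ F(W + \Ups^N - \ZZ_N) + \wt{\mathcal{R}}_N^\dia (W_N, \Ups_N - \ZZ_N) + \tfrac 12 \textstyle\int_0^1 \| \dot\Ups^N \|_{H^\al}^2 dt \Big].
\end{align*}
For the $\Phi^4_3$ functional $R_N^\dia$, the analogous formula holds with the cubic object $\ZZ^{N,\text{lim}}$ defined by \eqref{defZZ} but keeping only the $j=2$ term, with coefficient $4\cj a_2$. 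This is the change of variables from \cite{LTzW}.

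I would then decompose $\wt{\mathcal{R}}_N^\dia$ into three groups of terms:
\begin{enumerate}
\item[(a)] the $j = 1$ and $j = 2$ terms, which match the $\Phi^4_3$ functional up to errors: $\cj a_{2,N} - \cj a_2 = O(N^{-\dl})$ by \eqref{sigma_diff}, and $\cj a_{1,N}N^{2(\frac32-\al)} - \frac a2 \to 0$ by \eqref{a1N};
\item[(b)] the pure $\Dr_N^{2j}$ terms with $j \geq 3$;
\item[(c)] the mixed terms $N^{-(2j-4)(\frac32-\al)} \wick{W_N^k} \Dr_N^{2j-k}$ with $j \geq 3$.
\end{enumerate}
The difference $\ZZ_N - \ZZ_N^{\text{lim}}$ consists of the $j \geq 3$ Wick powers, which carry negative powers of $N$. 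I expect their contribution to vanish as a random variable, up to the deterministic part absorbed in $\al_N - \al_N'$. The mixed covariance between the $j=2$ and $j\geq 3$ parts has to be checked to converge.

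\textbf{Two-sided bounds.} For the upper bound on $-\log$ of the left-hand integral, take an almost-optimizer for the $\Phi^4_3$ problem and mollify it ($\pi_M$ with $M$ fixed, then let $M \to \infty$ at the end), so that it lies in $L^\infty$ with good moments. Plugged into (b) and (c), each term carries a factor $N^{-\eps'}$ via Lemmas~\ref{LEM:RNbdd2}, \ref{LEM:RNsto1} and \ref{LEM:RNsto2}, which hold with the extra $+\eps'$ for $j\geq 3$. So these contributions tend to $0$. For the lower bound, take any drift with bounded cost; \eqref{RNtil_est} gives uniform bounds on $\E\|\Ups^N\|_{H^\al}^2$ and on the weighted $L^{2j}$ norms. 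The key input is \eqref{RNbdd1-1.5}, which uses the stronger positivity \eqref{pos}: the $j\ge3$ pure terms are bounded below by $-\dl N^{-\eps'}\|\Ups_N\|_{L^{2j}}^{2j} - o(1)$. This lets us drop (b) from below at vanishing cost while still controlling (c) through the $\eps'$-gain versions of Lemma~\ref{LEM:RNbdd2}. Together with stability of the $j \leq 2$ part under the perturbations in (a), proved as in \cite{LTzW}, this yields the two-sided estimate $\liminf \geq$ and $\limsup \leq$ for the $\Phi^4_3$ infimum.

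\textbf{Main obstacle.} I expect the hardest step to be the lower bound's handling of (b) and (c) with a genuinely vanishing error. One must check that every mixed term with $j \geq 3$ admits an $N^{-\eps'}$ gain uniformly over drifts whose only control is the weighted norms $N^{-(2j-4)(\frac32-\al)}\|\Ups_N\|^{2j}_{L^{2j}}$. A second delicate point is the renormalization constants: showing that $\al_N$ minus the $\Phi^4_3$ constant, together with the cross terms of $\ZZ_N - \ZZ_N^{\text{lim}}$, converges to a finite limit. That limit normalizes away, since both measures are probability measures.
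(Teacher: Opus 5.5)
Your overall route is the paper's: Boué--Dupuis for both $\mathcal{Z}_N$ and $Z_N$ (with and without a test functional $F$), plugging a near-optimal drift for one problem into the other, using \eqref{RNtil_est} for a priori bounds, and using \eqref{RNbdd1-1.5} (that is, \eqref{pos}) to discard the $j\ge3$ pure terms from below.

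For the opposite inequality the paper does not mollify. It plugs the near-optimizer $\UUps_{N_k}$ of the level-$N_k$ $\Phi^4_3$ problem into the macroscopic problem at a much larger level $N_k''\ge N_k^{1+\g}$. The weight $(N_k'')^{-(2j-4)(\frac32-\al)}$ then beats the Bernstein loss coming from the frequency support $N_k$ of the drift. In the other direction it compares $\mathcal{Z}_{N_k}$ with $Z_{N_k'}$, $N_k'\ge N_k$. Your $\pi_M$ device uses the same mechanism. However, if you mollify a near-optimizer of the level-$N$ problem, you must also prove that the $\Phi^4_3$ functional changes by $o_{M\to\infty}(1)$ uniformly in $N$, and you do not address this. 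It is cleaner to take the near-optimizer of the level-$M$ problem directly and send $N\to\infty$ before $M\to\infty$. That is the paper's argument written as a double limit.

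The renormalization-constant step you list as a ``delicate point'' does not arise, and as stated it is false. In \eqref{BD_RNtil}, the change of variables \eqref{RN_div} cancels $\al_N$ exactly inside its own variational problem, and the same happens to $\al_N'$ in the $Z_N$ problem. Comparing the two infima at a common $\Ups$ therefore involves no constants, and the paper obtains $\lim_k \mathcal{Z}_{N_k}=\lim_N Z_N$ without adjusting $\al_N'$.

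Moreover, $\al_N-\al_N'$ does not converge for $1<\al<\frac98$. The cross terms between the $j=2$ and $j\ge3$ parts of $\dot\ZZ_N$ vanish identically by Lemma~\ref{LEM:wick}, since they lie in different chaoses. But each $j\ge3$ diagonal part of $\frac12\E\int_0^1\|\dot\ZZ_N\|_{H^\al}^2dt$ is of order $N^{9-8\al}$, the same order as $\al_N'$. This is because $N^{-(2j-4)(\frac32-\al)}\wick{W_N^{2j-1}}$ is as large as $\wick{W_N^3}$ at frequencies $|n|\sim N$.

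So handle the contribution of $\ZZ_N-\zeta_N$ only inside the nonlinear terms of $\wt{\mathcal{R}}^\dia_N$. There it does yield $N^{-\eps'}$ factors, as in the paper's treatment of \eqref{zz_diff}. Do not attempt a separate comparison of the constants.
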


\begin{proof}
We recall from Section \ref{SEC:intro} that the fractional $\Phi^4_3$-measure is constructed as a weak limit of truncated $\Phi^4_3$-measures $\rho_N$ as $N \to \infty$. Moreover, $Z_N$ defined in \eqref{deffZN} converges as $N \to \infty$. We also observe that by considering the case when $\cj{a}_{1, N} N^{2(\frac 32 - \al)} = \frac{a}{2}$, $\cj{a}_{2, N} = \cj{a}_2$, and $\cj{a}_{j, N} = 0$ for all $3 \leq j \leq m$ and $N \in \N$, all estimates in Subsection \ref{SUB:meas} hold for $\rho_N$ correspondingly.

We recall $\mathcal{Z}_N$ from \eqref{defZN}. By \eqref{BD_RNtil}, \eqref{RNtil_est}, and \eqref{ZNupbdd}, we see that $\{\mathcal{Z}_N\}_{N \in \N}$ is bounded both from above and from below. Thus, by taking a further subsequence, we can assume that $\lim_{k \to \infty} \mathcal{Z}_{N_k}$ exists. Below, we also assume that $N_k$ and $N$ are sufficiently large.

\smallskip \noi
\textbf{Step 1:}
Let us first show that
\begin{align}
\lim_{k \to \infty} \mathcal{Z}_{N_k} = \lim_{N \to \infty} Z_N.
\label{limZN}
\end{align}

\noi
For convenience of indexing, we take a subsequence $\{N_k'\}_{k \in \N}$ such that $N_k \leq N_k'$ for all $k \in \N$. By \eqref{BD_RNtil}, we have
\begin{align}
- \log \mathcal{Z}_{N_k} &= \inf_{\dot{\Ups}^{N_k} \in \mathbb{H}_a^\al} \E \bigg[ \wt{\mathcal{R}}_{N_k}^\dia (W_{N_k}, \Ups_{N_k} - \ZZ_{N_k}) + \frac 12 \int_0^1 \big\| \dot{\Ups}^{N_k} (t) \big\|_{H_x^\al}^2 dt \bigg], \label{conv_inf} \\
- \log Z_{N_k'} &= \inf_{\dot{\Ups}^{N_k'} \in \mathbb{H}_a^\al} \E \bigg[ \wt{R}_{N_k'}^\dia (W_{N_k'}, \Ups_{N_k'} - \zeta_{N_k'}) + \frac 12 \int_0^1 \big\| \dot{\Ups}^{N_k'} (t) \big\|_{H_x^\al}^2 dt \bigg], \notag
\end{align}

\noi
where $\wt{\mathcal{R}}_{N}^\dia$ is as defined in \eqref{RNtil}, $\zeta_N = \pi_N \zeta^N = \pi_N \zeta^N (1)$ is defined by its time derivative
\begin{align}
\dot{\zeta}^N (t) \deff 4 \cj{a}_{2} (1 - \Dl)^{- \al} \wick{W_N^3}
\label{zeta}
\end{align}

\noi
with $\zeta^N (0) = 0$, and $\wt{R}_N^\dia$ is defined by
\begin{align}
\begin{split}
\wt{R}_N^\dia (W_N, \Ups_N - \zeta_N) &\deff \int_{\T^3} \frac{a}{2} (\Ups_N - \zeta_N)^2 dx + \int_{\T^3} a W_N (\Ups_N - \zeta_N) dx \\
&\quad + \int_{\T^3} \cj{a}_2 (\Ups_N - \zeta_N)^4 dx + \int_{\T^3} 4\cj{a}_2 W_N (\Ups_N - \zeta_N)^3 dx \\
&\quad + \int_{\T^3} 6\cj{a}_2 \wick{W_N^2} (\Ups_N - \zeta_N)^2 dx.
\end{split}
\label{defRNd2}
\end{align}

\noi
Given $\eta > 0$, let $\UUps^{N_k} \in \mathbb{H}_a^\al$ be such that
\begin{align}
- \log \mathcal{Z}_{N_k} \geq \E \bigg[ \wt{\mathcal{R}}_{N_k}^\dia (W_{N_k}, \UUps_{N_k} - \ZZ_{N_k}) + \frac 12 \int_0^1 \big\| \dot{\UUps}^{N_k} (t) \big\|_{H_x^\al}^2 dt \bigg] - \eta,
\label{conv_approx}
\end{align}

\noi
where $\UUps_{N_k} = \pi_{N_k} \UUps^{N_k}$. Then, by choosing $\Ups^{N_k'} = \UUps_{N_k}$, we obtain
\begin{align}
\begin{split}
- \log Z_{N_k'} + \log \mathcal{Z}_{N_k} &\leq \E \bigg[ \wt{R}_{N_k'}^\dia (W_{N_k'}, \UUps_{N_k} - \zeta_{N_k'}) + \frac 12 \int_0^1 \big\| \dot{\UUps}_{N_k} (t) \big\|_{H_x^\al}^2 dt \bigg] \\
&\quad - \E \bigg[ \wt{\mathcal{R}}_{N_k}^\dia (W_{N_k}, \UUps_{N_k} - \ZZ_{N_k}) + \frac 12 \int_0^1 \big\| \dot{\UUps}^{N_k} (t) \big\|_{H_x^\al}^2 dt \bigg] + \eta \\
&\leq \E \Big[ \wt{R}_{N_k'}^\dia (W_{N_k'}, \UUps_{N_k} - \zeta_{N_k'}) - \wt{\mathcal{R}}_{N_k}^\dia (W_{N_k}, \UUps_{N_k} - \ZZ_{N_k}) \Big] + \eta,
\end{split}
\label{log_diff}
\end{align}

\noi
where we used the fact that $\pi_{N_k'} \UUps_{N_k} = \UUps_{N_k}$ under the assumption $N_k' \geq N_k$. Note that
\begin{align}
\begin{split}
\wt{R}_{N_k'}^\dia &(W_{N_k'}, \UUps_{N_k} - \zeta_{N_k'}) - \wt{\mathcal{R}}_{N_k}^\dia (W_{N_k}, \UUps_{N_k} - \ZZ_{N_k})  \\
&= \int_{\T^3} \Big( \frac a2 (\UUps_{N_k} - \zeta_{N_k'})^2 - \cj{a}_{1, N_k} N_k^{2 (\frac 32 - \al)} (\UUps_{N_k} - \ZZ_{N_k})^2 \Big) dx \\
&\quad + \int_{\T^3} \Big( a W_{N_k'} (\UUps_{N_k} - \zeta_{N_k'}) - 2 \cj{a}_{1, N_k} N_k^{2 (\frac 32 - \al)} W_{N_k} (\UUps_{N_k} - \ZZ_{N_k}) \Big) dx \\
&\quad + \int_{\T^3} \Big( \cj{a}_2 (\UUps_{N_k} - \zeta_{N_k'})^4 - \cj{a}_{2, N_k} (\UUps_{N_k} - \ZZ_{N_k})^4 \Big) dx \\
&\quad + 4 \int_{\T^3} \Big( \cj{a}_2 W_{N_k'} (\UUps_{N_k} - \zeta_{N_k'})^3 - \cj{a}_{2, N_k} W_{N_k} (\UUps_{N_k} - \ZZ_{N_k})^3 \Big) dx \\
&\quad + 6 \int_{\T^3} \Big( \cj{a}_2 \wick{W_{N_k'}^2} (\UUps_{N_k} - \zeta_{N_k'})^2 - \cj{a}_{2, N_k} \wick{W_{N_k}^2} (\UUps_{N_k} - \ZZ_{N_k})^2 \Big) dx \\
&\quad - \int_{\T^3} \sum_{j = 3}^m \cj{a}_{j, N_k} N_k^{- (2j - 4) (\frac 32 - \al)} \sum_{\l = 0}^{2j - 2} \binom{2j}{\l} \wick{W_{N_k}^\l} (\UUps_{N_k} - \ZZ_{N_k})^{2j - \l} dx \\
&\deff \textup{I}_1 + \textup{I}_2 + \textup{I}_3 + \textup{I}_4 + \textup{I}_5 + \textup{I}_6.
\end{split}
\label{conv_main}
\end{align}

\noi
For $\textup{I}_6$, we use \eqref{RNbdd1-1.5} in Lemma \ref{LEM:RNbdd1}, Lemma \ref{LEM:RNbdd2}, 
Lemma~\ref{LEM:RNsto1}, and Lemma~\ref{LEM:RNsto2} to obtain
\begin{align}
\begin{split}
\E [ \textup{I}_6 ] &\leq C N_k^{- \eps'} + \dl N_k^{- \eps'} \Big( \E \big[ \| \UUps_{N_k} \|_{H^\al}^2 \big] + \E \big[ \| \UUps_{N_k} \|_{L^4}^4 \big] \Big) \\
&\quad + \dl \sum_{j = 3}^m N_k^{- (2j - 4) (\frac 32 - \al) - \eps'} \E \big[ \| \UUps_{N_k} \|_{L^{2j}}^{2j} \big] 
\end{split}
\label{I6}
\end{align}

\noi
for any $0 < \dl < 1$ and some constant $C > 0$, $\eps' > 0$ sufficiently small. 
As for $\textup{I}_1, \dots, \textup{I}_5$, we want to apply Lemma \ref{LEM:RNbdd1.5}, Lemma \ref{LEM:RNbdd2}, Lemma \ref{LEM:Yreg}, Lemma~\ref{LEM:RNsto1}, and Lemma~\ref{LEM:RNsto2} with some additional treatment. We note that they contain the following difference terms: $\frac a2 - \cj{a}_{1, N_k} N_k^{2 (\frac 32 - \al)}$, $\cj{a}_2 - \cj{a}_{2, N_k}$, $W_{N_k'} - W_{N_k}$, $\wick{W_{N_k'}^2} - \wick{W_{N_k}^2}$, and $\zeta_{N_k'} - \ZZ_{N_k}$. For $\frac a2 - \cj{a}_{1, N_k} N_k^{2 (\frac 32 - \al)}$, we use \eqref{a1N} to create a $N_k^{- \eps'}$ factor from this difference. For $\cj{a}_2 - \cj{a}_{2, N_k}$, we use \eqref{aj}, \eqref{ajN}, and \eqref{sigma_diff} to create a $N_k^{- \eps'}$ factor from this difference. For $W_{N_k'} - W_{N_k}$ and $\wick{W_{N_k'}^2} - \wick{W_{N_k}^2}$, in view of the proof of Lemma \ref{LEM:Yreg} (ii), we can also create a $N_k^{- \eps'}$ factor given $N_k' \geq N_k$. For $\zeta_{N_k'} - \ZZ_{N_k}$, we have
\begin{align}
\begin{split}
\zeta_{N_k'} - \ZZ_{N_k} &= \int_0^1 \bigg( \big( 4 \cj{a}_2 \jb{\nb}^{-2 \al} \wick{W_{N_k'}^3} - 4 \cj{a}_{2, N_k} \jb{\nb}^{-2 \al} \wick{W_{N_k}^3} \big) \\
&\quad - \sum_{j = 3}^m 2j \cj{a}_{j, N_k} N_k^{- (2j - 4) (\frac 32 - \al)} \jb{\nb}^{-2 \al} \wick{W_N^{2j - 1}} \bigg) dt
\end{split}
\label{zz_diff}
\end{align}

\noi
For the difference term on the right-hand-side of \eqref{zz_diff}, we can create a $N_k^{- \eps'}$ factor from $\cj{a}_2 - \cj{a}_{2, N_k}$ and $\wick{W_{N_k'}^3} - \wick{W_{N_k}^3}$ by the same reasoning before. For the summation term on the right-hand-side of \eqref{zz_diff}, we can create a $N_k^{- \eps'}$ factor in view of the open condition in Lemma~\ref{LEM:Yreg}~(i) or the extra $N^{\eps'}$ factor in Lemma~\ref{LEM:RNsto1} and Lemma~\ref{LEM:RNsto2}. To sum up, we obtain
\begin{align}
\E [\textup{I}_1 + \textup{I}_2 + \textup{I}_3 + \textup{I}_4 + \textup{I}_5] \leq C N_k^{- \eps'} + \dl N_k^{- \eps'} \Big( \E \big[ \| \UUps_{N_k} \|_{H^\al}^2 \big] + \E \big[ \| \UUps_{N_k} \|_{L^4}^4 \big] \Big)
\label{I12345}
\end{align}

\noi
for some constant $C > 0$ and $\dl > 0$ arbitrarily small. Thus, by \eqref{log_diff}, \eqref{conv_main}, \eqref{I6}, and \eqref{I12345}, we obtain
\begin{align}
\begin{split}
- \log Z_{N_k'} + \log \mathcal{Z}_{N_k} &\leq 2C N_k^{- \eps'} + 2 \dl N_k^{- \eps'} \Big( \E \big[ \| \UUps_{N_k} \|_{H^\al}^2 \big] + \E \big[ \| \UUps_{N_k} \|_{L^4}^4 \big] \Big) \\
&\quad + \dl \sum_{j = 3}^m N_k^{- (2j - 4) (\frac 32 - \al) - \eps'} \E \big[ \| \UUps_{N_k} \|_{L^{2j}}^{2j} \big] + \eta
\end{split}
\label{log_diff2}
\end{align}

\noi
By \eqref{RNtil_est}, \eqref{conv_approx}, \eqref{conv_inf}, and \eqref{ZNupbdd}, we have
\begin{align}
\begin{split}
2 \dl &\Big( \E \big[ \| \UUps_{N_k} \|_{H^\al}^2 \big] + \E \big[ \| \UUps_{N_k} \|_{L^{4}}^{4} \big] \Big) + \dl \sum_{j = 3}^m N_k^{- (2j - 4) (\frac 32 - \al)} \E \big[ \| \UUps_{N_k} \|_{L^{2j}}^{2j} \big] \\
&\leq C + C \E \bigg[ \wt{\mathcal{R}}_{N_k}^\dia (W_{N_k}, \UUps_{N_k} - \ZZ_{N_k}) + \frac 12 \int_0^1 \big\| \dot{\UUps}^{N_k} (t) \big\|_{H_x^\al}^2 dt \bigg] \\
&\leq C + C \eta + \E \big[ \wt{\mathcal{R}}_{N_k}^\dia (W_{N_k}, 0 - \ZZ_{N_k}) \big] \\
&\leq 3C.
\end{split}
\label{log_diff3}
\end{align}

\noi
Thus, from \eqref{log_diff2}, \eqref{log_diff3}, and the fact that $\eta > 0$ can be arbitrarily small, we obtain
\begin{align}
\lim_{k \to \infty} \mathcal{Z}_{N_k} \leq \lim_{k \to \infty} Z_{N_k'} = \lim_{N \to \infty} Z_N.
\label{limZN1}
\end{align}

We now consider the other direction. We pick a subsequence $\{ N_k'' \}_{k \in \N} \subset \{ N_k \}_{k \in \N}$ such that $N_k'' > N_k^{1 + \g}$ for some $\g > 0$ for all $k \in \N$. To show that \eqref{limZN} hold, in view of \eqref{limZN1}, we only need to show that
\begin{align}
\lim_{k \to \infty} \mathcal{Z}_{N_k''} \geq \lim_{k \to \infty} Z_{N_k}.
\label{limZN2}
\end{align}

\noi
Similar as above, given $\eta > 0$, we let $\UUps^{N_k} \in \mathbb{H}_a^\al$ be such that
\begin{align*}
-\log Z_{N_k} \geq \E \bigg[ \wt R_{N_k}^\dia (W_{N_k}, \UUps_{N_k} - \zeta_{N_k}) + \frac 12 \int_0^1 \big\| \dot{\UUps}^{N_k} (t) \big\|_{H_x^\al}^2 dt \bigg] - \eta,
\end{align*}

\noi
where $\UUps_{N_k} = \pi_{N_k} \UUps^{N_k}$. Then, we obtain
\begin{align}
\begin{split}
- \log &\mathcal{Z}_{N_k''} + \log Z_{N_k} \\
&\leq \E \Big[ \wt{\mathcal{R}}_{N_k''}^\dia (W_{N_k''}, \UUps_{N_k} - \ZZ_{N_k''}) - \wt R_{N_k}^\dia (W_{N_k}, \UUps_{N_k} - \zeta_{N_k}) \Big] + \eta \\
&= \E \bigg[ \int_{\T^3} \Big( \cj{a}_{1, N_k''} (N_k'')^{2 (\frac 32 - \al)} (\UUps_{N_k} - \ZZ_{N_k''})^2 - \frac a2 (\UUps_{N_k} - \zeta_{N_k})^2 \Big) dx \\
&\quad + \int_{\T^3} \Big( 2 \cj{a}_{1, N_k''} (N_k'')^{2 (\frac 32 - \al)} W_{N_k''} (\UUps_{N_k} - \ZZ_{N_k''}) - a W_{N_k} (\UUps_{N_k} - \zeta_{N_k}) \Big) dx \\
&\quad + \int_{\T^3} \Big( \cj{a}_{2, N_k''} (\UUps_{N_k} - \ZZ_{N_k''})^4 - \cj{a}_2 (\UUps_{N_k} - \zeta_{N_k})^4 \Big) dx \\
&\quad + 4 \int_{\T^3} \Big( \cj{a}_{2, N_k''} W_{N_k''} (\UUps_{N_k} - \ZZ_{N_k''})^3 - \cj{a}_2 W_{N_k} (\UUps_{N_k} - \zeta_{N_k})^3 \Big) dx \\
&\quad + 6 \int_{\T^3} \Big( \cj{a}_{2, N_k''} \wick{W_{N_k''}^2} (\UUps_{N_k''} - \ZZ_{N_k''})^2 - \cj{a}_2 \wick{W_{N_k}^2} (\UUps_{N_k} - \zeta_{N_k})^2 \Big) dx \\
&\quad + \int_{\T^3} \sum_{j = 3}^m \cj{a}_{j, N_k''} (N_k'')^{- (2j - 4) (\frac 32 - \al)} \sum_{k = 0}^{2j - 2} \binom{2j}{k} \wick{W_{N_k''}^k} (\UUps_{N_k} - \ZZ_{N_k''})^{2j - k} dx \bigg] + \eta \\
&\deff \E [ \II_1 + \II_2 + \II_3 + \II_4 + \II_5 + \II_6 ] + \eta.
\end{split}
\label{log2}
\end{align}

\noi
The treatment of $\II_1, \dots, \II_5$ is the same as above, and we can obtain
\begin{align}
\E [ \II_1 + \II_2 + \II_3 + \II_4 + \II_5 ] \leq C N_k^{- \eps'} + \dl N_k^{- \eps'} \Big( \E \big[ \| \UUps_{N_k} \|_{H^\al}^2 \big] + \E \big[ \| \UUps_{N_k} \|_{L^4}^4 \big] \Big)
\label{log3}
\end{align}

\noi
for some constant $C > 0$, $\eps' > 0$ sufficiently small, and $\dl > 0$ arbitrarily small. For $\II_6$, we use Lemma \ref{LEM:RNbdd2},  
Lemma~\ref{LEM:RNsto1}, Lemma~\ref{LEM:RNsto2}, and \eqref{RNbdd1-2} in Lemma~\ref{LEM:RNbdd1} (with $j \geq 3$) to obtain
\begin{align}
\begin{split}
\E [ \II_6 ] &\leq C (N_k'')^{- \eps'} + \dl (N_k'')^{- \eps'} \Big( \E \big[ \| \UUps_{N_k} \|_{H^\al}^2 \big] + \E \big[ \| \UUps_{N_k} \|_{L^4}^4 \big] \Big) \\
&\quad + C \sum_{j = 3}^m (N_k'')^{- (2j - 4) (\frac 32 - \al)} \E \big[ \| \UUps_{N_k} \|_{L^{2j}}^{2j} \big] \\
&\leq C N_k^{- \eps'} + \dl N_k^{- \eps'} \Big( \E \big[ \| \UUps_{N_k} \|_{H^\al}^2 \big] + \E \big[ \| \UUps_{N_k} \|_{L^4}^4 \big] \Big) \\
&\quad + C \sum_{j = 3}^m N_k^{- (2j - 4) (\frac 32 - \al) - \eps'} \E \big[ \| \UUps_{N_k} \|_{L^{2j}}^{2j} \big]
\end{split}
\label{log4}
\end{align}

\noi
where we used the fact that $N_k'' \geq N_k^{1 + \g}$ for some $\g > 0$ for all $k \in \N$. Combining \eqref{log2}, \eqref{log3}, \eqref{log4}, and \eqref{log_diff3}, we obtain the desired inequality \eqref{limZN2}.

\medskip \noi
\textbf{Step 2:}
We now show that $\varrho = \rho$, which will follow similarly from the argument in Step 1 above. In view of \eqref{limZN} and \eqref{ZNupbdd} (and an analogous bound for $- \log Z_N$), we know that it suffices to show that for every bounded Lipschitz continuous function $F: \mathcal{C}^{-100} (\T^3) \to \R$, we have
\begin{align*}
\lim_{k \to \infty} \mathcal{Z}_{N_k} \int \exp (F(u)) d \varrho_{N_k} (u) = \lim_{N \to \infty} Z_N \int \exp (F(u)) d \rho_N (u),
\end{align*}

\noi
which will follow from
\begin{align}
\lim_{k \to \infty} \mathcal{Z}_{N_k} \int \exp (F(u)) d \varrho_{N_k} (u) \leq \lim_{N \to \infty} Z_N \int \exp (F(u)) d \rho_N (u), \label{conv_goal1} \\
\lim_{k \to \infty} \mathcal{Z}_{N_k} \int \exp (F(u)) d \varrho_{N_k} (u) \geq \lim_{N \to \infty} Z_N \int \exp (F(u)) d \rho_N (u). \label{conv_goal2}
\end{align}

For \eqref{conv_goal1}, we take a subsequence $\{N_k'\}_{k \in \N}$ such that $N_k \leq N_k'$ for all $k \in \N$. By the Bou\'e-Dupuis variational formula (Lemma \ref{LEM:BD}) and \eqref{BD_RNtil}, we have
\begin{align}
\begin{split}
- &\log \bigg( \mathcal{Z}_{N_k} \int \exp (F(u)) d \varrho_{N_k} (u) \bigg) \\
&= \inf_{\dot{\Ups}^{N_k} \in \mathbb{H}_a^\al} \E \bigg[ -F (W + \Ups^{N_k} - \ZZ_{N_k}) + \wt{\mathcal{R}}_{N_k}^\dia (W_{N_k}, \Ups_{N_k} - \ZZ_{N_k}) \\
&\qquad + \frac 12 \int_0^1 \big\| \dot{\Ups}^{N_k} (t) \big\|_{H_x^\al}^2 dt \bigg],
\end{split}
\label{uni1}
\end{align}

\noi
and
\begin{align*}
- &\log \bigg( Z_{N_k'} \int \exp (F(u)) d \rho_{N_k'} (u) \bigg) \\
&= \inf_{\dot{\Ups}^{N_k'} \in \mathbb{H}_a^\al} \E \bigg[ -F (W + \Ups^{N_k'} - \zeta_{N_k'}) + \wt{R}_{N_k'}^\dia (W_{N_k'}, \Ups_{N_k'} - \zeta_{N_k'}) + \frac 12 \int_0^1 \big\| \dot{\Ups}^{N_k'} (t) \big\|_{H_x^\al}^2 dt \bigg].
\end{align*}

\noi
Given $\eta > 0$, let $\UUps^{N_k} \in \mathbb{H}_a^\al$ be such that
\begin{align}
\begin{split}
- &\log \bigg( \mathcal{Z}_{N_k} \int \exp (F(u)) d \varrho_{N_k} (u) \bigg) \\
&\geq \E \bigg[ -F (W + \UUps^{N_k} - \ZZ_{N_k}) + \wt{\mathcal{R}}_{N_k}^\dia (W_{N_k}, \UUps_{N_k} - \ZZ_{N_k}) + \frac 12 \int_0^1 \big\| \dot{\UUps}^{N_k} (t) \big\|_{H_x^\al}^2 dt \bigg] - \eta,
\end{split}
\label{uni2}
\end{align}

\noi
where $\UUps_{N_k} = \pi_{N_k} \UUps^{N_k}$. Then, by choosing $\Ups^{N_k'} = \UUps_{N_k}$, we obtain
\begin{align}
\begin{split}
&- \log \bigg( Z_{N_k'} \int \exp (F(u)) d \rho_{N_k'} (u) \bigg) + \log \bigg( \mathcal{Z}_{N_k} \int \exp (F(u)) d \varrho_{N_k} (u) \bigg) \\
&\leq \E \bigg[ -F (W + \UUps_{N_k} - \zeta_{N_k'}) + \wt{R}_{N_k'}^\dia (W_{N_k'}, \UUps_{N_k} - \zeta_{N_k'}) + \frac 12 \int_0^1 \big\| \dot{\UUps}_{N_k} (t) \big\|_{H_x^\al}^2 dt \bigg] \\
&\quad - \E \bigg[ -F (W + \UUps^{N_k} - \ZZ_{N_k}) + \wt{\mathcal{R}}_{N_k}^\dia (W_{N_k}, \UUps_{N_k} - \ZZ_{N_k}) + \frac 12 \int_0^1 \big\| \dot{\UUps}^{N_k} (t) \big\|_{H_x^\al}^2 dt \bigg] + \eta \\
&\leq \text{Lip} (F) \cdot \E \Big[ \big\| \pi_{N_k}^{\perp} \UUps^{N_k} + \zeta_{N_k'} - \ZZ_{N_k} \big\|_{\mathcal{C}^{-100}} \Big] \\
&\quad + \E \Big[ \wt{R}_{N_k'}^\dia (W_{N_k'}, \UUps_{N_k} - \zeta_{N_k'}) - \wt{\mathcal{R}}_{N_k}^\dia (W_{N_k}, \UUps_{N_k} - \ZZ_{N_k}) \Big] + \eta,
\end{split}
\label{log_diff4}
\end{align}

\noi
where $\pi_N^\perp = \Id - \pi_N$. For the second term on the right-hand-side of \eqref{log_diff4}, we can use the same steps from Step 1 to show that this term goes to 0 as $k \to \infty$. For the first term on the right-hand-side of \eqref{log_diff4}, we write
\begin{align}
\E \Big[ \big\| \pi_{N_k}^{\perp} \UUps^{N_k} + \zeta_{N_k'} - \ZZ_{N_k} \big\|_{\mathcal{C}^{-100}} \Big] \leq \E \Big[ \big\| \pi_{N_k}^{\perp} \UUps^{N_k} \big\|_{\mathcal{C}^{-100}} \Big] + \E \Big[ \big\| \zeta_{N_k'} - \ZZ_{N_k} \big\|_{\mathcal{C}^{-100}} \Big].
\label{log_diff5}
\end{align}

\noi
Note that the second term on the right-hand-side of \eqref{log_diff5} can be bounded easily by $C N_k^{- \eps'}$ for some $\eps' > 0$ in view of the discussion for dealing with \eqref{zz_diff}. For the first term on the right-hand-side of \eqref{log_diff5}, by the Besov embedding (Lemma~\ref{LEM:Bes}~(ii)), \eqref{RNtil_est}, \eqref{uni2}, \eqref{uni1}, the fact that $F$ is bounded, and \eqref{ZNupbdd}, we obtain
\begin{align}
\begin{split}
\E \Big[ &\big\| \pi_{N_k}^{\perp} \UUps^{N_k} \big\|_{\mathcal{C}^{-100}} \Big] \\
&\les N_k^{- \eps'} \E \Big[ \big\| \UUps^{N_k} \big\|_{H^\al}^2 \Big] \\
&\leq N_k^{- \eps'} \bigg( C + C \E \bigg[ \wt{\mathcal{R}}_{N_k}^\dia (W_{N_k}, \UUps_{N_k} - \ZZ_{N_k}) + \frac 12 \int_0^1 \big\| \dot{\UUps}^{N_k} (t) \big\|_{H_x^\al}^2 dt \bigg] \bigg) \\
&\leq N_k^{- \eps'} \Big( C + C \eta + \E \big[ \wt{\mathcal{R}}_{N_k}^\dia (W_{N_k}, 0 - \ZZ_{N_k}) \big] \Big) \\
&\leq 3 C N_k^{- \eps'}.
\end{split}
\label{log_diff6}
\end{align}

\noi
Thus, combining \eqref{log_diff4}, \eqref{log_diff5}, and \eqref{log_diff6}, we obtain
\begin{align*}
\lim_{k \to \infty} \mathcal{Z}_{N_k} \int \exp (F(u)) d \varrho_{N_k} (u) &\leq \lim_{k \to \infty} Z_{N_k'} \int \exp (F(u)) d \rho_{N_k'} (u) \\
&= \lim_{N \to \infty} Z_N \int \exp (F(u)) d \rho_N (u),
\end{align*}

\noi
which proves \eqref{conv_goal1}.

As for \eqref{conv_goal2}, we pick a subsequence $\{ N_k'' \}_{k \in \N} \subset \{ N_k \}_{k \in \N}$ such that $N_k'' > N_k^{1 + \g}$ for some $\g > 0$ for all $k \in \N$. Proceeding as above, we are able to obtain that given $\eta > 0$,
\begin{align}
\begin{split}
- &\log \bigg( \mathcal{Z}_{N_k''} \int \exp (F(u)) d \varrho_{N_k''} (u) \bigg) + \log \bigg( Z_{N_k} \int \exp (F(u)) d \rho_{N_k} (u) \bigg) \\
&\leq \text{Lip} (F) \cdot \E \Big[ \big\| \pi_{N_k}^\perp \UUps^{N_k} + \ZZ_{N_k''} - \zeta_{N_k} \big\|_{\mathcal{C}^{- 100}} \Big] \\
&\quad + \E \Big[ \wt{\mathcal{R}}_{N_k''}^\dia (W_{N_k''}, \UUps_{N_k} - \ZZ_{N_k''}) - \wt R_{N_k}^\dia (W_{N_k}, \UUps_{N_k} - \zeta_{N_k}) \Big] + \eta.
\end{split}
\label{log_diff7}
\end{align}

\noi
The second term on the right-hand-side of \eqref{log_diff7} goes to 0 as $k \to \infty$, which can be shown by the same steps from Step 1. The first term on the right-hand-side of \eqref{log_diff7} can be estimated similarly as above and also goes to 0 as $k \to \infty$. This proves \eqref{conv_goal2}, and so we have finished the proof of $\varrho = \rho$.
\end{proof}

\subsection{Necessity of the positivity condition}
\label{SUB:nonconv}
In this subsection, we prove Proposition~\ref{PROP:nonconv}, which shows that the positivity assumption \eqref{pos0} is an almost necessary condition for the boundedness of the densities for the truncated Gibbs measures $\{ \varrho_N \}_{N \in \N}$ defined in \eqref{nuN}. 
\begin{proof}[Proof of Proposition~\ref{PROP:nonconv}]
We are given $z' \in \R$ such that
\begin{align*}
\sum_{j = 2}^m \cj{a}_j (z')^{2 (j - 2)} < 0.
\end{align*}

\noi
By continuity, we may assume that $z' \neq 0$.
In view of the convergence of $\cj{a}_{j, N}$ to $\cj{a}_j$ for each $2 \leq j \leq m$, we have
\begin{align}
\sum_{j = 2}^m \cj{a}_{j, N} (z')^{2 (j - 2)} < 0
\label{npos}
\end{align}

\noi
for all sufficiently large $N \in \N$. Recalling \eqref{BD_RNtil} and \eqref{RNtil}, by letting $\dot{\Ups}^N = N^{\frac 32 - \al} z'$, we obtain
\begin{align}
- \log \big\| e^{- \mathcal{R}_N^\dia (u)} \big\|_{L^1 (\mu_\al)} \leq \E \bigg[ \wt{\mathcal{R}}_N^\dia (W_N, N^{\frac 32 - \al} z' - \ZZ_N) + \frac 12 (2 \pi)^3 N^{2 (\frac 32 - \al)} (z')^2 \bigg]
\label{npos1}
\end{align}

\noi
with
\begin{align}
\begin{split}
&\wt{\mathcal{R}}_N^\dia (W_N, N^{\frac 32 - \al} z' - \ZZ_N) \\
&= \sum_{j = 2}^m \sum_{k = 0}^{2 j - 2} \sum_{\l = 0}^{2 j - k} \binom{2j}{k} \binom{2j - k}{\l} (-1)^\l \cj{a}_{j, N} N^{- (k + \l - 4) (\frac 32 - \al)} (z')^{2j - k - \l} \int_{\T^3} \wick{W_N^k} \ZZ_N^{\l} dx \\
&\quad + 2 \cj{a}_{1, N} N^{2 (\frac 32 - \al)} \int_{\T^3} W_N  (N^{\frac 32 - \al} z' - \ZZ_N) dx + \cj{a}_{1, N} N^{2 (\frac 32 - \al)} \int_{\T^3} (N^{\frac 32 - \al} z' - \ZZ_N)^2 dx .
\end{split}
\label{npos2}
\end{align}

We first consider the summation term on the right-hand-side of \eqref{npos2}. Note that when $k = 0$ and $\l \geq 1$, from the product estimate (Lemma~\ref{LEM:Bes}~(iv)), \eqref{defZZ}, and Lemma~\ref{LEM:Yreg}~(i), we have
\begin{align}
N^{- (\l - 4) (\frac 32 - \al)} \E \bigg[ \int_{\T^3} \ZZ_N^\l dx \bigg] \les N^{3 (\frac 32 - \al)} \E \big[ \| \ZZ_N \|_{\mathcal{C}^{\frac 12}}^\l \big] = O (N^{3 (\frac 32 - \al)}).
\label{npos3}
\end{align}

\noi
When $k \geq 1$ and $\l = 0$, from Lemma~\ref{LEM:Yreg}~(iii), we have
\begin{align}
N^{- (k - 4) (\frac 32 - \al)} \E \bigg[ \int_{\T^3} \wick{W_N^k} dx \bigg] = 0.
\label{npos4}
\end{align}

\noi
When $k = 1$ and $\l \geq 1$, from duality, the embedding in Lemma~\ref{LEM:Bes}~(i), the product estimate in Lemma~\ref{LEM:Bes}~(iv), \eqref{defZZ}, and Lemma~\ref{LEM:Yreg}~(i), we have
\begin{align}
\begin{split}
N^{- (\l - 3) (\frac 32 - \al)} \E \bigg[ \int_{\T^3} W_N \ZZ_N^\l dx \bigg] 
&\les N^{- (\l - 3) (\frac 32 - \al)} \| W_N \|_{\mathcal{C}^{\al - \frac 32 - \eps}} \| \ZZ_N \|_{\mathcal{C}^{5 \al - \frac 92 - \eps}}^\l \\
&= O(N^{2 (\frac 32 - \al)}),
\end{split}
\label{npos5}
\end{align}

\noi
where we used the fact that $(\al - \frac 32 - \eps) + (5 \al - \frac 92 - \eps) > 0$ given $\al > 1$ and $\eps > 0$ sufficiently small. When $k \geq 2$ and $\l \geq 1$, we use Lemma~\ref{LEM:RNstoh} (for $\l = 1$) and Lemma~\ref{LEM:RNsto2} (for $\l \geq 2$) to obtain\footnote{As it appears, Lemma~\ref{LEM:RNsto2} covers the case when $k + \l$ is even. If $k + \l$ is odd, using the language in the proof of Lemma~\ref{LEM:RNsto2}, we see that the number of frequency nodes of the product $\wick{W_N^k} \ZZ_N^{\l}$ is odd. Since the expectation $\E [\int \wick{W_N^k} \ZZ_N^{\l}]$ is non-zero only if all frequencies in the product $\wick{W_N^k} \ZZ_N^{\l}$ are paired and it is impossible to pair up an odd number of frequencies, the expectation $\E [\int \wick{W_N^k} \ZZ_N^{\l}]$ must be 0 in this case.}
\begin{align}
N^{- (k + \l - 4) (\frac 32 - \al)} \E \bigg[ \int_{\T^3} \wick{W_N^k} \ZZ_N^{\l} dx \bigg] = O(N^{2 (\frac 32 - \al) - \eps'})
\label{npos6}
\end{align}

\noi
for any $\eps' > 0$ arbitrarily small. For the second and the third term on the right-hand-side of \eqref{npos2}, we can use \eqref{a1N} and similar steps as above to obtain
\begin{align}
\begin{split}
&\cj{a}_{1, N} N^{2 (\frac 32 - \al)} \E \bigg[ \int_{\T^3} W_N  (N^{\frac 32 - \al} z' - \ZZ_N) dx \bigg] = O (N^{\frac 32 - \al}) , \\
&\cj{a}_{1, N} N^{2 (\frac 32 - \al)} \E \bigg[ \int_{\T^3} (N^{\frac 32 - \al} z' - \ZZ_N)^2 dx \bigg] = O (N^{2 (\frac 32 - \al)}) .
\end{split}
\label{npos7}
\end{align}

Thus, combining \eqref{npos1}, \eqref{npos2}, \eqref{npos3}, \eqref{npos4}, \eqref{npos5}, \eqref{npos6}, and \eqref{npos7} and then using \eqref{npos}, we obtain
\begin{align*}
- &\log \big\| e^{- \mathcal{R}_N^\dia (u)} \big\|_{L^1 (\mu_\al)} \\
&\leq (2 \pi)^3 \sum_{j = 2}^m \cj{a}_{j, N} N^{4 (\frac 32 - \al)} (z')^{2j} + O (N^{3 (\frac 32 - \al)}) \\
&< - C N^{4 (\frac 32 - \al)}
\end{align*}

\noi
for some constant $C > 0$ and for all $N \in \N$ sufficiently large. This finishes the proof of Proposition~\ref{PROP:nonconv}.
\end{proof}

\section{Representation of the Gibbs measures and the fractional parabolic $\Phi_3^4$-model}
\label{SEC:heat}
In this section, our goal is to prove Theorem~\ref{THM:rep}, which provides good representations of both the truncated Gibbs measure $\varrho_N$ in \eqref{nuN} and the fractional $\Phi_3^4$-measure $\rho$ in \eqref{rho}. For this purpose, we study the Langevin dynamics for the measures $\varrho_N$ and $\rho$, which are given respectively by the stochastic fractional nonlinear heat equation on $\T^3$:
\begin{align}
\partial_t X_N + (1 - \Dl)^\al X_N + \pi_N V_N' (\pi_N X_N) = \sqrt{2} \xi^h
\label{SNLHN}
\end{align}

\noi
and its formal limit
\begin{align}
\partial_t X + (1 - \Dl)^\al X + a X + 4 \cj{a}_2 \wick{X^3} \, = \sqrt{2} \xi^h.
\label{SNLH}
\end{align}

\noi
Here, $\xi^h$ is a (two-sided) space-time white noise on $\R \times \T^3$ independent from $\xi$ in the equation \eqref{macro} (the superscript ``$h$'' stands for ``heat'') and $V_N$ is the nonlinearity given by \eqref{defVN}. To avoid confusion, we denote by $(\Om^h, \F^h, \mathbb{P}^h)$ the underlying probability space for $\xi^h$. The Wick ordering $\wick{X^3}$ in \eqref{SNLH} will be defined in a more precise manner in following subsections; see \eqref{defX3} below. We refer to \eqref{SNLHN} as the fractional parabolic macroscopic model and \eqref{SNLH} as the fractional parabolic $\Phi_3^4$-model.

\subsection{Global well-posedness of the fractional parabolic $\Phi_3^4$-model}
\label{SUB:SNLH}
In this subsection, we aim to prove global well-posedness for the fractional parabolic $\Phi_3^4$-model \eqref{SNLH}. For this purpose, we consider the following frequency truncated fractional parabolic $\Phi_3^4$-model:
\begin{align}
\dt X_N^\dagger + (1 - \Dl)^\al X_N^\dagger + a \pi_N X_N^\dagger + 4 \cj{a}_2 \pi_N \wick{(\pi_N X_N^\dagger)^3} \, = \sqrt{2} \xi^h.
\label{SNLHNd}
\end{align}

Let us define the following energy function:
\begin{align*}
E_N (u) = \int_{\T^3} \frac 12 |\jb{\nb}^\al u|^2 + \frac{a}{2} \wick{ (\pi_N u)^2 } + \, \cj{a}_2 \wick{ (\pi_N u)^4 } dx.
\end{align*}

\noi
Note that the truncated equation \eqref{SNLHNd} can be written as
\begin{align*}
\dt X_N^\dagger = - E_N' (X_N^\dagger) + \sqrt{2} \xi^h ,
\end{align*}

\noi
where $E_N'$ is the Fr\'echet derivative of $E_N$ with respect to the $L^2 (\T^3)$ norm.
In addition, the energy $E_N$ yields a truncated version of the fractional $\Phi_3^4$-measure given exactly by $\rho_N$ in \eqref{rhoN}. Thus, by classical results (see, for example, \cite{KS, Oks, DaP}), we have the following lemma.

\begin{lemma}
\label{LEM:inv_h}
Let $\al > 1$ and $N \in \N$. Then, the truncated fractional parabolic $\Phi_3^4$-model \eqref{SNLHNd} is almost surely globally well-posed with the random initial data distributed by the truncated Gibbs measure $\rho_N$ in \eqref{rhoN}. Moreover, $\rho_N$ is invariant under the resulting dynamics of \eqref{SNLHNd}.
\end{lemma}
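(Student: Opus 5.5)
Since \eqref{SNLHNd} only acts nonlinearly on the finitely many Fourier modes $\{|n|\leq N\}$, I would reduce it to a finite dimensional SDE coupled with infinitely many decoupled linear Ornstein--Uhlenbeck equations. Writing $X_N^\dagger = \pi_N X_N^\dagger + \pi_N^\perp X_N^\dagger$, the high-frequency part satisfies the linear equation $\dt Y + (1-\Dl)^\al Y = \sqrt 2\, \pi_N^\perp \xi^h$. This is solved explicitly by the stochastic convolution. Since each Fourier mode is an independent OU process with invariant law $\mathcal N(0,\jb{n}^{-2\al})$, the law $\pi_N^\perp{}_\# \mu_\al$ is invariant and the solution lives in $C(\R_+; \mathcal{C}^{\al-\frac32-\eps})$ almost surely. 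The low-frequency part is an SDE on $\pi_N L^2(\T^3)\cong \R^{d_N}$ (real coordinates respecting $\ft u(-n)=\cj{\ft u(n)}$): $dx = -\nabla E_N(x)\,dt + \sqrt 2\, dB$, with drift given by a polynomial $E_N$. Since the quartic term in $E_N$ enters with coefficient $\cj a_2>0$ (a consequence of \eqref{pos0} at $z=0$), the two components decouple completely.

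For the finite dimensional SDE, local existence and uniqueness follow since the drift is locally Lipschitz. For global existence I would use the Lyapunov function $E_N(x)+1$, which is bounded below and coercive because $\cj a_2\wick{(\pi_N u)^4}$ dominates (the Wick correction only adds lower-order terms with finite $N$-dependent constants). Applying It\^o's formula gives
\[
d E_N(x) = \bigl(-|\nabla E_N|^2 + \Delta E_N\bigr)dt + \sqrt2\, \nabla E_N\cdot dB .
\]
Here $\Delta E_N$ is polynomial of degree $2$ and is bounded by $C_N(1+E_N)$, so Gronwall with a stopping-time argument (Khasminskii's test) rules out explosion for every deterministic initial datum. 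Hence for every initial datum in $\pi_N$-space there is a unique global solution. Combining with the explicit high-frequency part gives global well-posedness for $\rho_N$-almost every initial datum, which is the statement in \cite{KS, DaP}.

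For invariance, I would factor $\rho_N = \nu_N \otimes \pi_N^\perp{}_\#\mu_\al$, where $\nu_N(dx) = Z^{-1} e^{-E_N(x)}dx$ on $\R^{d_N}$. This factorization holds because $R_N^\dia$ depends only on $\pi_N u$, and the Gaussian part of $\mu_\al$ on low modes contributes exactly the quadratic term $\frac12\|\jb{\nb}^\al x\|^2$ in $E_N$. The generator $L = \Delta - \nabla E_N\cdot\nabla$ is symmetric in $L^2(\nu_N)$: for test functions $\int Lf\, d\nu_N = 0$ by integration by parts. Together with non-explosion this gives $\nu_N$ invariance (for instance via the Fokker--Planck equation, or by approximating with a cut-off drift and using the Echeverría criterion). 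The high modes are invariant by the OU computation, and independence of $\pi_N\xi^h$ and $\pi_N^\perp\xi^h$ then gives invariance of the product $\rho_N$.

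The only genuinely delicate point is non-explosion (coercivity of $E_N$ and the Lyapunov bound). Everything else is standard. In particular the bound need not be uniform in $N$ here, which is what makes the statement classical.
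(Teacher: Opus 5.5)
Your proposal is correct and takes the same route as the paper. The paper only observes that \eqref{SNLHNd} is the gradient system $\dt X_N^\dagger = -E_N'(X_N^\dagger) + \sqrt{2}\,\xi^h$, whose Gibbs measure is exactly $\rho_N$, and cites the classical theory \cite{KS, Oks, DaP}; you have unpacked that theory into decoupled Ornstein--Uhlenbeck high modes, a finite-dimensional gradient SDE with a Lyapunov non-explosion bound, and infinitesimal invariance of $\nu_N$, which gives invariance of $\nu_N \otimes (\pi_N^\perp)_\#\mu_\al = \rho_N$.

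One small correction: \eqref{pos0} is only assumed for $z \neq 0$, so by continuity it gives just $\cj a_2 \geq 0$. The strict positivity $\cj a_2 > 0$ you use for coercivity of $E_N$ is an implicit standing assumption of the setting, not a consequence of \eqref{pos0}. Also, the decoupling of the two components comes from $\pi_N^\perp \pi_N = 0$, not from the sign of $\cj a_2$.
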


We now consider the situation as $N \to \infty$. 
Let us first define some auxiliary terms. We let $\<1>^h$ be a stationary solution (i.e. with law constant in time) to the linear stochastic heat equation
\begin{align*}
\dt \<1>^h + (1 - \Dl)^\al \<1>^h = \sqrt{2} \xi^h.
\end{align*}

\noi
More precisely, for $t \geq 0$, we define
\begin{align}
\<1>^h (t) \deff \sqrt{2} \int_{- \infty}^t e^{- (t - t') \jb{\nb}^{2 \al}} d W^h (t') = \sum_{n \in \Z^3} \frac{\sqrt{2}}{(2 \pi)^{\frac 32}} \int_{- \infty}^t e^{- (t - t') \jb{n}^{2 \al}} d B_n^h (t') e^{in \cdot x},
\label{lin_h}
\end{align}

\noi
where $W^h$ is a cylindrical Wiener process on $L^2 (\T^3)$ given by
\begin{align*}
W^h (t) = \frac{1}{(2 \pi)^{\frac 32}} \sum_{n \in \Z^3} B_n^h (t) e^{i n \cdot x}
\end{align*}

\noi
with $B_n^h (t) = (2 \pi)^{- 3/2} \jb{\xi^h, \ind_{[0, t]} \cdot e^{in \cdot x}}_{t, x}$. Here, $\jb{\cdot, \cdot}_{t, x}$ denotes the duality pairing on $\R_+ \times \T^3$ and $\ind_{[0, t]} = - \ind_{[t, 0]}$ for $t < 0$. Note that $\{B_n^h\}_{n \in \Z^3}$ is a family of Gaussian processes satisfying the three properties at the beginning of Subsection~\ref{SUB:mult}. For any $N \in \N$, we denote $\<1>^h_N = \pi_N \<1>^h$. By independence and the Ito isometry, we obtain that
\begin{align}
\E \big[ |\<1>^h_N (t, x)|^2 \big] = \frac{1}{(2 \pi)^3} \sum_{\substack{n \in \Z^3 \\ |n| \leq N}} \frac{1}{\jb{n}^{2 \al}} = \s_N
\label{lin_hvar}
\end{align}

\noi
for any $t \geq 0$ and $x \in \T^3$, where $\s_N$ is as defined in \eqref{sigma}. For each $k \in \N$, we define the Wick power
\begin{align}
\begin{split}
\wick{(\<1>^h_N)^k} (t) &\deff H_k \big( \<1>^h_N (t); \s_N \big) \\
&\,= \sum_{\substack{n \in \Z^3 \\ |n| \leq kN}} \sum_{\substack{n_1 + \cdots + n_k = n \\ |n_1|, \dots, |n_k| \leq N}} \frac{2^{\frac k2}}{(2 \pi)^{\frac{3 k}{2}}} \\
&\quad \times \int_{- \infty}^t \prod_{\l = 1}^k e^{- (t - t_\l') \jb{n_\l}^{2 \al}} d B_{n_1}^h (t_1') \cdots d B_{n_k}^h (t_k') e^{in \cdot x},
\end{split}
\label{lin_hk}
\end{align}

\noi
where $H_k$ is the Hermite polynomial as given be \eqref{hermite} and the equality in the second line follows from Lemma~\ref{LEM:sto_ito}~(i). In particular, we denote
\begin{align}
\<2>_N^{\,h} \deff \, \wick{ (\<1>_N^h)^2 } \quad \text{and} \quad \<3>_N^{\,h} \deff \, \wick{ (\<1>_N^h)^3 }.
\label{quad_h}
\end{align}

\noi
We then define the cubic stochastic object $\<30>_N^{\,h}$ as
\begin{align}
\<30>_N^{\,h} (t) \deff \pi_N \int_{- \infty}^t e^{- (t - t') \jb{\nb}^{2 \al}} \<3>_N^{\,h} (t') dt',
\label{cub_h}
\end{align}

\noi
with $t \geq 0$.

We now proceed with the following second order expansion
\begin{align}
X_N^\dagger = \<1>^h - 4 \cj{a}_2 \<30>_N^{\,h} + Y_N^\dagger,
\label{sec_XN}
\end{align}

\noi
where $Y_N^\dagger$ is a remainder term satisfying
\begin{align}
\begin{cases}
\dt Y_N^\dagger + (1 - \Dl)^\al Y_N^\dagger \\
\qquad = - a \big( \<1>_N^h - 4 \cj{a}_2 \<30>_N^{\,h} + \pi_N Y_N^\dagger \big) - 12 \cj{a}_2 \pi_N \Big( \<2>_N^{\,h} \big( \pi_N Y_N^\dagger - 4 \cj{a}_2 \<30>_N^{\,h} \big) \Big) \\
\qquad \hphantom{=}- 12 \cj{a}_2 \pi_N \Big( \<1>_N^h \big( \pi_N Y_N^\dagger - 4 \cj{a}_2 \<30>_N^{\,h} \big)^2 \Big) - 4 \cj{a}_2 \pi_N \Big( \big( \pi_N Y_N^\dagger - 4 \cj{a}_2 \<30>_N^{\,h} \big)^3 \Big) \\
Y_N^\dagger|_{t = 0} = X_{0, N}^\dagger - \<1>^h (0) + 4 \cj{a}_2 \<30>_N^{\,h} (0),
\end{cases}
\label{SNLHYNd1}
\end{align}

\noi
where $X_{0, N}^\dagger = X_N^\dagger (0)$. Most of the terms on the right-hand-side of \eqref{SNLHNd} can be treated by deterministic estimates, except for $\<2>_N^{\,h} \cdot \<30>_N^{\,h}$. To deal with this term, we introduce the following quintic stochastic object:
\begin{align}
\<320>_N^{h} (t) \deff \pi_N \I^h \big( \<2>_N^{\,h} \cdot \<30>_N^{\,h} \big) (t) \deff \pi_N \int_0^t e^{- (t - t') \jb{\nb}^{2 \al}} \big( \<2>_N^{\,h} (t') \<30>_N^{\,h} (t') \big) dt'
\label{quin_h}
\end{align}

\noi
with $t \geq 0$. Then, we can write the \eqref{SNLHYNd1} in the following Duhamel formulation:
\begin{align}
\begin{split}
Y_N^\dagger &= e^{- t \jb{\nb}^{2 \al}}  Y_{0, N} - a \I^h \big( \<1>_N^h - 4 \cj{a}_2 \<30>_N^{\,h} + \pi_N Y_N^\dagger \big) + 48 \cj{a}_2^2 \<320>_N^h \\
&\quad - 12 \cj{a}_2 \pi_N \I^h \big( \<2>_N^{\,h} \cdot \pi_N Y_N^\dagger \big) 
- 12 \cj{a}_2 \pi_N \I^h \big( \<1>_N^h (\pi_N Y_N^\dagger)^2 \big) \\
&\quad + 96 \cj{a}_2^2 \pi_N \I^h \big( \<1>_N^h \cdot \<30>_N^{\,h} \cdot \pi_N Y_N^\dagger \big) 
- 192 \cj{a}_2^3 \pi_N \I^h \big( \<1>_N^h (\<30>_N^{\,h})^2 \big) \\
&\quad - 4 \cj{a}_2 \pi_N \I^h \big( (\pi_N Y_N^\dagger)^3 \big) + 48 \cj{a}_2^2 \pi_N \I^h \big( \<30>_N^{\,h} (\pi_N Y_N^\dagger)^2 \big) 
\\
&\quad - 192 \cj{a}_2^3 \pi_N \I^h \big( (\<30>_N^{\,h})^2 \pi_N Y_N^\dagger \big) 
+ 256 \cj{a}_2^4 \pi_N \I^h \big( (\<30>_N^{\,h})^3 \big),
\end{split}
\label{SNLHYNd}
\end{align}

\noi
where $Y_{0, N} = X_{0, N}^\dagger - \<1>^h (0) + \<30>_N^{\,h} (0)$.

We now establish the regularity properties of the above stochastic objects.
\begin{lemma}
\label{LEM:sto_h}
Let $1 < \al \leq \frac 98$, $\eps > 0$ be sufficiently small, and $T \geq 1$. We consider the symbols $Z_N$ and $Z$ along with their regularity $s \in \R$ as follows:
\begin{enumerate}
\item[(i)] $Z_N = \<1>_N^h$, $Z = \<1>^h$ as in \eqref{lin_h}, $s = \al - \frac 32 - \eps$; \smallskip

\item[(ii)] $Z_N = \<2>_N^{\,h}$ as in \eqref{quad_h}, $Z = \<2>^{\,h}$, $s = 2\al - 3 - \eps$; \smallskip

\item[(iii)] $Z_N = \<30>_N^{\,h}$ as in \eqref{cub_h}, $Z = \<30>^{\,h}$, $s = 5 \al - \frac 92 - \eps$; \smallskip

\item[(iv)] $Z_N = \<320>_N^h$ as in \eqref{quin_h}, $Z = \<320>^h$, $s = 4 \al - 3 - \eps$.
\end{enumerate} 

\noi
Then, $\{Z_N\}_{N \in \N}$ is $\mathbb{P}^h$-almost surely a Cauchy sequence in the space $C([0, T]; \C^s (\T^3))$, and we denote the limit by $Z$. We also have the tail estimate
\begin{align}
\mathbb{P}^h \big( \| Z_N \|_{C_T \C_x^s} > \ld \big) \leq C(T) e^{- c(T) \ld^\be}
\label{tail_h}
\end{align}

\noi
for any $\ld > 0$ and some constants $C(T), c(T), \be > 0$, uniformly in $N \in \N \cup \{\infty\}$, with the understanding that $Z_\infty = Z$. Moreover, there exists small $\ta > 0$ such that
\begin{align}
\mathbb{P}^h \big( N_2^\ta \| Z_{N_1} - Z_{N_2} \|_{C_T \C_x^s} > \ld \big) \leq C(T) e^{- c(T) \ld^\be}
\label{taildiff_h}
\end{align}

\noi
for any $\ld > 0$ and some constants $C(T), c(T), \be > 0$, uniformly in $N_1 \geq N_2 \geq 1$.
\end{lemma}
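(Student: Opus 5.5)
The plan is to reduce everything to second-moment Fourier computations and then invoke Lemma~\ref{LEM:reg}~(iii)--(iv), which upgrade such bounds to almost sure convergence in $C([0,T];W^{s,\infty})$ together with the tail estimates \eqref{tail_h} and \eqref{taildiff_h}. Since all objects lie in finite chaoses of order $\le 5$, we may take $\be = 2/5$. By Lemma~\ref{LEM:Bes}~(i), we may also replace $W^{s,\infty}$ by $\C^{s}$ after losing an arbitrarily small $\eps$. For each object we therefore need three ingredients. First, a bound $\E[|\ft Z_N(t,n)|^2]\les \jb{n}^{-3-2s-2\eps_0}$ that is uniform in $t$ and $N$. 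Second, a version of this bound with a factor $N_2^{-\g}$ for the difference $Z_{N_1}-Z_{N_2}$, produced as in Lemma~\ref{LEM:Yreg}~(ii) by placing a frequency $|n_j|>N_2$. Third, a time-difference bound with a factor $|t_0|^{s'}$, obtained by interpolating the $\dl_{t_0}$ bound $|e^{-t_0\jb{n}^{2\al}}-1|\les (|t_0|\jb{n}^{2\al})^{\ta}$ against the trivial bound at the cost of $\jb{n}^{2\al\ta}$, which the slack $\eps$ absorbs.

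For (i) and (ii), stationarity reduces the variance to \eqref{lin_hvar}-type sums. We have $\E|\ft{\<1>}{}^h(t,n)|^2=\jb{n}^{-2\al}$, and by Lemma~\ref{LEM:wick} the quadratic object gives $\sum_{n_1+n_2=n}\jb{n_1}^{-2\al}\jb{n_2}^{-2\al}\les\jb{n}^{3-4\al}$ (Lemma~\ref{LEM:conv}~(ii)). This gives regularities $\al-\frac32-$ and $2\al-3-$, exactly as in Lemma~\ref{LEM:Yreg}. For (iii), we write $\<30>_N^{\,h}$ as a triple stochastic integral through \eqref{lin_hk} and Lemma~\ref{LEM:sto_fub}, and integrate out $t'$. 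The time covariance of $\<3>_N^{\,h}$ at frequencies $n_1,n_2,n_3$ is $\prod_j e^{-|t'-t''|\jb{n_j}^{2\al}}$. The heat kernel then gives the bound
\[
\E|\ft{\<30>}{}^{h}_N(t,n)|^2\les \sum_{n_1+n_2+n_3=n}\frac{1}{\jb{n}^{2\al}\,(\jb{n}^{2\al}+\sum_j\jb{n_j}^{2\al})}\prod_{j=1}^3\jb{n_j}^{-2\al}\les \jb{n}^{-4\al}\jb{n}^{6-6\al},
\]
using Lemma~\ref{LEM:conv}~(ii) with $k=3$, where $3\al>2\cdot\frac32$ holds for $\al>1$. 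This yields $s<\frac{10\al-9}{2}=5\al-\frac92$, as claimed. The difference bound follows in the same way, with $\pi_N$ contributing to the gain.

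The main obstacle is (iv), the quintic object $\<320>_N^h$. Here we expand $\<2>_N^{\,h}\cdot\<30>_N^{\,h}$ with the product formula (Lemma~\ref{LEM:sto_prod}) into chaoses of order $5$ (no pairing), order $3$ (one pairing), and order $1$ (two pairings). In the heat setting no dispersive counting is needed; only the parabolic smoothing $\jb{\cdot}^{-2\al}$ from each Duhamel integral matters. For the \emph{non-resonant} order-5 term, we bound the variance at frequency $n$ by the product of the variance of $\<2>$ at frequency $n-m$ and that of $\<30>$ at frequency $m$, times $\jb{n}^{-4\al}$ from the outer heat integral (after a Cauchy--Schwarz in time). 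This gives $\jb{n}^{-4\al}\sum_m \jb{n-m}^{3-4\al}\jb{m}^{6-10\al}\les \jb{n}^{-4\al}\jb{n}^{12-14\al}$, which corresponds to $s<7\al-\frac{15}{2}$. Since $7\al-\frac{15}{2}\ge 4\al-3$ may fail, I would instead use the spatial decay more carefully. In fact $\<2>\cdot\<30>$ makes sense deterministically exactly when $(2\al-3)+(5\al-\frac92)>0$, i.e.\ $\al>\frac{15}{14}$. This suggests that the claimed $4\al-3$ must come from exploiting the time integrals: the outer heat factor should be $(\jb{n}^{2\al}+\jb{n_{\max}}^{2\al})^{-1}$ rather than $\jb{n}^{-2\al}$, arguing as in (iii).

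The \emph{single-pairing} order-3 term is the delicate resonant one. The paired frequency $m$ between $\<2>$ and $\<30>$ produces a sum $\sum_m \jb{m}^{-2\al}\jb{m+n'}^{-2\al}(\cdots)$, which is only logarithmically convergent near the threshold. I would handle it by keeping the time kernel $\int e^{-(t'-t'')(\jb{m}^{2\al}+\dots)}$, which yields an extra $\jb{m}^{-2\al}$ and hence absolute convergence since $4\al>3$. The fully paired order-1 term is handled the same way. Finally, with the variance bound $\les\jb{n}^{-3-2(4\al-3)-}$ in hand, Lemma~\ref{LEM:reg} gives (iv) together with its tails and Cauchy property.
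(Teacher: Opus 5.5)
Your architecture matches the paper's. You feed Fourier second-moment bounds into Lemma~\ref{LEM:reg}, and (i)--(iii) are computed as in the paper. For the paired chaoses in (iv), the extra factor $\jb{m}^{-2\al}$ from the $t'$-integration, summable because $4\al>3$, is exactly what the paper exploits through the bound $1/\max\{\jb{n_j}^{2\al}\}$ and Cauchy--Schwarz in the paired frequency.

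The gap is in the non-resonant order-$5$ part of (iv), which is the term that actually fixes the regularity $4\al-3$. Your bound $\jb{n}^{-4\al}\sum_m\jb{n-m}^{3-4\al}\jb{m}^{6-10\al}$ is the right one, but you evaluated the convolution incorrectly. In Lemma~\ref{LEM:conv}~(i) with $\be=4\al-3<3$ and $\g=10\al-6>3$ one has $\ld=0$, so the sum is $\les\jb{n}^{3-4\al}$. Your claimed bound $\jb{n}^{12-14\al}$ is in fact false: the single term $m=0$ already equals $\jb{n}^{3-4\al}\gg\jb{n}^{12-14\al}$. You then read off $s<7\al-\frac{15}{2}$ from the product alone, dropping the factor $\jb{n}^{-4\al}$, and compared it with the post-Duhamel target $4\al-3$.

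On this basis you abandon a bound that works and replace it with an unexecuted suggestion, an outer factor $(\jb{n}^{2\al}+\jb{n_{\max}}^{2\al})^{-1}$. As written, the proposal therefore never derives the variance bound $\jb{n}^{-3-2(4\al-3)-}$ that its last sentence invokes.

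The correct computation is immediate. The variance $\jb{m}^{6-10\al}$ of $\<30>^{\,h}$ is summable, so the non-resonant part of $\<2>^{\,h}\cdot\<30>^{\,h}$ simply inherits the regularity $2\al-3-$ of $\<2>^{\,h}$. The deterministic threshold $\al>\frac{15}{14}$ plays no role in this chaos. The outer heat integral then gives $\jb{n}^{-4\al}\jb{n}^{3-4\al}=\jb{n}^{3-8\al}$, i.e.\ exactly $s<4\al-3$.

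This is the paper's argument. It bounds $\<32>_N^h=\pi_N(\<2>_N^{\,h}\<30>_N^{\,h})$ in $C_T\C^{2\al-3-\eps/2}$, with variances $\les\jb{n}^{3-4\al+\eps/2}$, $\jb{n}^{-2\al+\eps}$, $\jb{n}^{6-8\al}$ for the chaoses of order $5,3,1$. It then gains $2\al$ derivatives from the Schauder estimate (Lemma~\ref{LEM:Schau}), which is equivalent to your Cauchy--Schwarz-in-time factor.
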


\begin{proof}
(i) We recall the difference operator $\dl_{t_0}$ in \eqref{dh}. For any $n \in \Z^3$ and $t, t+t_0 \in [0, T]$, by using the Ito isometry (Lemma~\ref{LEM:sto_ito}~(ii)) and the mean value theorem, we have
\begin{align*}
\E \Big[ \big| \ft{\<1>_N^h} (t, n) \big|^2 \Big] \leq \jb{n}^{- 2 \al}
\end{align*}

\noi
and
\begin{align*}
\E \Big[ \big| \dl_{t_0} \ft{\<1>_N^h} (t, n) \big|^2 \Big] \les \jb{n}^{- 2 \al + 2 \al \eps'} |t_0|^{\eps'}.
\end{align*}

\noi
for $\eps' > 0$ arbitrarily small. Also, for $N_1 \geq N_2 \geq 1$, we have
\begin{align*}
\E \Big[ \big| \ft{\<1>_{N_1}^h} (t, n) - \ft{\<1>_{N_2}^h} (t, n) \big|^2 \Big] \leq \ind_{\{|n| > N_2 \}}\jb{n}^{- 2 \al} \leq N_2^{- \ta} \jb{n}^{- 2 \al + \ta}
\end{align*}

\noi
and
\begin{align*}
\E \Big[ \big| \dl_{t_0} \ft{\<1>_{N_1}^h} (t, n) - \dl_{t_0} \ft{\<1>_{N_2}^h} (t, n) \big|^2 \Big] \les N_2^{- \g} \jb{n}^{- 2 \al + \g + 2 \al \eps'} |t_0|^{\eps'}
\end{align*}

\noi
for $\ta, \eps' > 0$ arbitrarily small. Thus, by Lemma~\ref{LEM:reg}, we obtain the desired convergence property and the tail estimates for $\<1>^h$.

\smallskip \noi
(ii) For simplicity below, we only prove the uniform-in-$N$ variance bound, since the other estimates are similar to those in part (i). For any $n \in \Z^3$ and $t \in [0, T]$, by \eqref{quad_h}, the Ito isometry (Lemma~\ref{LEM:sto_ito}~(ii)), and Lemma~\ref{LEM:conv}~(i) along with $1 < \al \leq \frac 98$, we have
\begin{align*}
\E \Big[ \big| \ft{\<2>_N^{\,h}} (t, n) \big|^2 \Big] \les \sum_{\substack{n_1, n_2 \in \Z^3 \\ n_1 + n_2 = n}} \frac{1}{\jb{n_1}^{2 \al} \jb{n_2}^{2 \al}} \les \jb{n}^{- 4 \al + 3},
\end{align*}

\noi
so that Lemma~\ref{LEM:reg} gives the desired properties for $\<2>^{\,h}$.

\smallskip \noi
(iii) For any $n \in \Z^3$ and $t \in [0, T]$, by \eqref{cub_h} and Fubini's theorem (Lemma~\ref{LEM:sto_fub}),\footnote{In view of the exponential decay in time of the heat kernel, the application of Fubini's theorem here on unbounded intervals can be easily justified by using an approximation (by bounded intervals) argument. This fact will also be used in the proof of Lemma~\ref{LEM:sto_h2} below.} we have
\begin{align}
\begin{split}
\ft{\<30>_N^{\,h}} (t, n) &= 2^{\frac 32} \ind_{\{|n| \leq N\}} \sum_{\substack{|n_1|, |n_2|, |n_3| \leq N \\ n_1 + n_2 + n_3 = n}} \int_{(- \infty, t]^3} \int_{\max \{s_1, s_2, s_3\}}^t e^{- (t - t') \jb{n}^{2 \al}} \\
&\quad \times \prod_{\l = 1}^3 e^{- (t' - s_\l) \jb{n_\l}^{2 \al}} d t' d B_{n_1}^0 (s_1) d B_{n_2}^0 (s_2) d B_{n_3}^0 (s_3)
\end{split}
\label{cub_hn}
\end{align}
 
\noi
in $L^2 (\Om^h)$, so that by the Ito isometry (Lemma~\ref{LEM:sto_ito}~(ii)),
\begin{align*}
\E \Big[ \big| \ft{\<30>_N^{\,h}} (t, n) \big|^2 \Big] &\les \sum_{\substack{|n_1|, |n_2|, |n_3| \leq N \\ n_1 + n_2 + n_3 = n}} \int_{(- \infty, t]^3} \bigg( \int_{\max \{s_1, s_2, s_3\}}^t e^{- (t - t') \jb{n}^{2 \al}} \\
&\quad \times \prod_{\l = 1}^3 e^{- (t' - s_\l) \jb{n_\l}^{2 \al}} d t'  \bigg)^2 d s_1 d s_2 d s_3 
\end{align*}

\noi
Since
\begin{align*}
\int_{\max \{s_1, s_2, s_3\}}^t e^{- (t - t') \jb{n}^{2 \al}} \prod_{\l = 1}^3 e^{- (t' - s_\l) \jb{n_\l}^{2 \al}} d t' \les \frac{1}{\max\{ \jb{n_1}^{2 \al}, \jb{n_2}^{2 \al}, \jb{n_3}^{2 \al} \}},
\end{align*}

\noi
we obtain from Lemma~\ref{LEM:conv}~(i) twice along with $1 < \al \leq \frac 98$ that
\begin{align*}
\E \Big[ \big| \ft{\<30>_N^{\,h}} (t, n) \big|^2 \Big] &\les \sum_{\substack{|n_1|, |n_2|, |n_3| \leq N \\ n_1 + n_2 + n_3 = n}} \frac{1}{\max\{ \jb{n_1}^{2 \al}, \jb{n_2}^{2 \al}, \jb{n_3}^{2 \al} \}} \\
&\quad \times \int_{- \infty}^t e^{- (t - t') \jb{n}^{2 \al}} \int_{( - \infty, t']^3} \prod_{\l = 1}^3 e^{- (t' - s_\l) \jb{n_\l}^{2 \al}} ds_1 ds_2 ds_3 dt' \\
&\les \sum_{\substack{n_1, n_2, n_3 \in \Z^3 \\ n_1 + n_2 + n_3 = n}} \frac{1}{\jb{n}^{4 \al} \jb{n_1}^{2 \al} \jb{n_2}^{2 \al} \jb{n_3}^{2 \al}} \\
&\les \jb{n}^{- 10 \al + 6},
\end{align*}

\noi
so that Lemma~\ref{LEM:reg} gives the desired properties for $\<30>^{\,h}$.

\smallskip \noi
(iv) By \eqref{quin_h} and Lemma~\ref{LEM:Schau}, we only need to show the properties of 
\begin{align}
\<32>_N^{h} \deff \pi_N \big( \<2>_N^{\,h} \cdot \<30>_N^{\,h} \big)
\label{heat32}
\end{align}

\noi
in the space $C_T \mathcal{C}^{2 \al - 3 - \frac{\eps}{2}}$. Let $n \in \Z^3$ and $t \in [0, T]$. From \eqref{quad_h}, \eqref{cub_hn}, and the product formula (Lemma~\ref{LEM:sto_prod}), we can decompose
\begin{align*}
\ft{\<32>_N^h} (t, n) = \ft{\<32>_N^h}^{(5)} (t, n) + \ft{\<32>_N^h}^{(3)} (t, n) + \ft{\<32>_N^h}^{(1)} (t, n),
\end{align*}

\noi
where $\ft{\<32>_N^h}^{(k)} \in \H_k$ for $k = 1, 3, 5$ with $\H_k$ being the $k$th Wiener chaos defined in Subsection~\ref{SUB:sto}.

For $k = 5$, by the Ito isometry (Lemma~\ref{LEM:sto_ito}~(ii)), \eqref{jen}, the computations in part~(ii) and part~(iii), and Lemma~\ref{LEM:conv}~(i) several times along with $1 < \al \leq \frac 98$, we obtain
\begin{align*}
\E \Big[ &\Big| \ft{\<32>_N^h}^{(5)} (t, n) \Big|^2 \Big] \\
&\les \sum_{\substack{|n_1|, \dots, |n_5| \leq N \\ n_1 + \cdots + n_5 = n}} \int_{( - \infty, t]^5} e^{- 2 (t - s_1) \jb{n_1}^{2 \al}} e^{- 2 (t - s_2) \jb{n_2}^{2 \al}} \\
&\quad \times \bigg( \int_{\max \{s_3, s_4, s_5\}}^t e^{- (t - t') \jb{n_3 + n_4 + n_5}^{2 \al}} \prod_{\l = 3}^5 e^{- (t' - s_\l) \jb{n_\l}^{2 \al}} d t'  \bigg)^2 d s_1 d s_2 d s_3 d s_4 d s_5 \\
&\les \sum_{\substack{n_1, \dots, n_5 \in \Z^3 \\ n_1 + \cdots + n_5 = n}} \frac{1}{\max \{ \jb{n_3}^{2 \al}, \jb{n_4}^{2 \al}, \jb{n_5}^{2 \al} \} } \bigg( \prod_{\l = 1}^5 \frac{1}{\jb{n_\l}^{2 \al}} \bigg) \frac{1}{ \jb{n_3 + n_4 + n_5}^{2 \al} }  \\
&\les \sum_{\substack{n', n'' \in \Z^3 \\ n' + n'' = n}} \frac{1}{\jb{n'}^{4 \al - 3} \jb{n''}^{10 \al - 6}} \\
&\les \jb{n}^{- 4 \al + 3 + \frac{\eps}{2}},
\end{align*}

\noi
which satisfies the desired properties in view of Lemma~\ref{LEM:reg}.

For $k = 3$, by symmetry we may assume that $n_1 + n_3 = 0$, so that
\begin{align*}
\ft{\<32>_N^h}^{(3)} (t, n) &= C \ind_{\{|n| \leq N\}} \sum_{\substack{|n_2|, |n_4|, |n_5| \leq N \\ n_2 + n_4 + n_5 = n}} \sum_{\substack{n_1 \in \Z^3 \\ |n_1| \leq N}} \int_{(- \infty, t]^3} e^{- (t - s_2) \jb{n_2}^{2 \al}} \\
&\quad \times \int_{- \infty}^t e^{- (t - s_1) \jb{n_1}^{2 \al}} \int_{\max \{s_1, s_4, s_5\}}^t e^{- (t - t') \jb{n_1 + n_4 + n_5}^{2 \al}} \\
&\quad \times \prod_{\l = 1, 4, 5} e^{- (t' - s_\l) \jb{n_\l}^{2 \al}} d t' d s_1 d B_{n_2}^h (s_2) d B_{n_4}^h (s_4) d B_{n_5}^h (s_5)
\end{align*}

\noi
for some constant $C > 0$.
Thus, by the Ito isometry (Lemma~\ref{LEM:sto_ito}~(ii)), \eqref{jen}, and similar computations as in part (ii) and (iii) along with Lemma~\ref{LEM:conv}~(i) several times along with $1 < \al \leq \frac 98$, we obtain
\begin{align*}
\E \Big[ &\Big| \ft{\<32>_N^h}^{(3)} (t, n) \Big|^2 \Big] \\
&\les \sum_{\substack{|n_2|, |n_4|, |n_5| \leq N \\ n_2 + n_4 + n_5 = n}} \int_{(- \infty, t]^3} e^{- 2 (t - s_2) \jb{n_2}^{2 \al}} \bigg( \sum_{\substack{n_1 \in \Z^3 \\ |n_1| \leq N}}  \int_{- \infty}^t e^{- (t - s_1) \jb{n_1}^{2 \al}}  \\
&\quad \times \int_{\max \{s_1, s_4, s_5\}}^t e^{- (t - t') \jb{n_1 + n_4 + n_5}^{2 \al}} \prod_{\l = 1, 4, 5} e^{- (t' - s_\l) \jb{n_\l}^{2 \al}} d t' d s_1 \bigg)^2 d s_2 d s_4 d s_5 \\
&\les \sum_{\substack{|n_2|, |n_4|, |n_5| \leq N \\ n_2 + n_4 + n_5 = n}} \bigg( \sum_{n_1 \in \Z^3} \frac{1}{\max \{ \jb{n_1}^{2 \al}, \jb{n_4}^{2 \al}, \jb{n_5}^{2 \al}  \} \jb{n_1}^{2 \al}} \bigg) \\
&\quad \times \bigg( \sum_{n_1 \in \Z^3} \frac{1}{\jb{n_1}^{2 \al} \jb{n_2}^{2 \al} \jb{n_4}^{2 \al} \jb{n_5}^{2 \al} \jb{n_1 + n_4 + n_5}^{2 \al}}  \bigg) \\
&\les \sum_{\substack{n_2, n_4, n_5 \in \Z^3 \\ n_2 + n_4 + n_5 = n}} \frac{1}{\jb{n_2}^{2 \al} \jb{n_4}^{2 \al} \jb{n_5}^{2 \al} \jb{n_4 + n_5}^{8 \al - 6 - \eps}} \\
&\les \jb{n}^{- 2 \al + \eps},
\end{align*}

\noi
which satisfies the desired properties in view of Lemma~\ref{LEM:reg}.

For $k = 1$, by symmetry we may assume that $n_1 + n_3 = 0$ and $n_2 + n_4 = 0$, so that
\begin{align*}
\ft{\<32>_N^h}^{(1)} (t, n) &= C \ind_{\{|n| \leq N\}} \sum_{\substack{n_1, n_2 \in \Z^3 \\ |n_1|, |n_2| \leq N}} \int_{- \infty}^t \int_{(- \infty, t]^2} e^{- (t - s_1) \jb{n_1}^{2 \al}} e^{- (t - s_2) \jb{n_2}^{2 \al}} \\
&\quad \times \int_{\max \{ s_1, s_2, s_5 \}}^t e^{- (t - t') \jb{n_1 + n_2 + n}^{2 \al}} \prod_{\l = 1, 2, 5} e^{- (t' - s_\l) \jb{n_\l}^{2 \al}} dt' d s_1 d s_2 d B_n^0 (s_5)
\end{align*}

\noi
for some constant $C > 0$.
Thus, by the Ito isometry (Lemma~\ref{LEM:sto_ito}~(ii)) and similar computations as in part (ii) and (iii) along with Lemma~\ref{LEM:conv}~(i) several times along with $1 < \al \leq \frac 98$, we obtain
\begin{align*}
\E \Big[ &\Big| \ft{\<32>_N^h}^{(1)} (t, n) \Big|^2 \Big] \\
&\les \int_{- \infty}^t \bigg( \sum_{\substack{n_1, n_2 \in \Z^3 \\ |n_1|, |n_2| \leq N}} \int_{( - \infty, t ]^2} e^{- (t - s_1) \jb{n_1}^{2 \al}} e^{- (t - s_2) \jb{n_2}^{2 \al}} \int_{\max \{ s_1, s_2, s_5 \}}^t e^{- (t' - s_5) \jb{n}^{2 \al}} \\ 
&\quad \times  e^{- (t - t') \jb{n_1 + n_2 + n}^{2 \al}} \prod_{\l = 1, 2} e^{- (t' - s_\l) \jb{n_\l}^{2 \al}} dt' d s_1 d s_2 \bigg)^2 d s_5 \\
&\les \bigg( \sum_{n_1, n_2 \in \Z^3} \frac{1}{\max \{ \jb{n_1}^{2 \al}, \jb{n_2}^{2 \al}, \jb{n}^{2 \al} \} \jb{n_1}^{2 \al} \jb{n_2}^{2 \al} } \bigg) \\
&\quad \times \bigg( \sum_{n_1, n_2 \in \Z^3} \frac{1}{\jb{n_1}^{2 \al} \jb{n_2}^{2 \al} \jb{n}^{2 \al} \jb{n_1 + n_2 + n}^{2 \al}} \bigg) \\
&\les \jb{n}^{-  8 \al + 6},
\end{align*}

\noi
which satisfies the desired properties in view of Lemma~\ref{LEM:reg}. This finishes the proof of part~(iv).
\end{proof}

We now consider uniform-in-$N$ local well-posedness  for the equation \eqref{SNLHYNd} for $Y_N^\dagger$. 
\begin{lemma}
\label{LEM:LWP_h}
Let $1 < \al \leq \frac 98$, $\eps > 0$ be sufficiently small, $T \geq 1$, and $N \in \N$. Suppose that we have the enhanced data set
\begin{align}
\pmb{Z}_N \deff \big( \<1>_N^h, \<2>_N^{\,h}, \<30>_N^{\,h}, \<320>_N^{h} \big) \in \mathcal{Z}_T^{\al, \eps},
\label{defZN1}
\end{align}

\noi
where
\begin{align}
\mathcal{Z}_T^{\al, \eps} \deff C_T \mathcal{C}^{\al - \frac 32 - \eps} \times C_T \mathcal{C}^{2\al - 3 - \eps} \times C_T \mathcal{C}^{5\al - \frac 92 - \eps} \times C_T \mathcal{C}^{4\al - 3 - \eps}.
\label{defZTae}
\end{align}

\smallskip \noi
\textup{(i)} Given any $Y_{0, N} \in \mathcal{C}^{\al - \frac 32 - \eps}$, there exist $0 < T_0 \leq 1$ \textup{(}depending only on $\| \pmb{Z}_N \|_{\mathcal{Z}_T^{\al, \eps}}$ and $\| Y_{0, N} \|_{\mathcal{C}^{\al - \frac 32 - \eps}}$\textup{)} and a unique solution $Y_N^\dagger$ to the equation \eqref{SNLHYNd} on $[0, T_0]$ satisfying $Y_N^\dagger |_{t = 0} = Y_{0, N}$ in the class $\mathcal{Y}_{T_0}^{\al, \eps}$ defined by
\begin{align}
\| Y \|_{\mathcal{Y}_{T_0}^{\al, \eps}} \deff \| Y \|_{C_{T_0} \mathcal{C}_x^{\al - \frac 32 - \eps}} + \sup_{0 \leq t \leq T_0} t^{- \frac{\al}{2} + \frac 54 + \eps} \| Y(t) \|_{\mathcal{C}^{1 + \eps}}.
\label{defYtae}
\end{align}

\noi
Furthermore, we have the bound
\begin{align*}
\| Y_N^\dagger \|_{\mathcal{Y}_{T_0}^{\al, \eps}} \les \| Y_{0, N} \|_{\mathcal{C}^{\al - \frac 32 - \eps}} + \| \pmb{Z}_N \|_{\mathcal{Z}_T^{\al, \eps}}^3 ,
\end{align*}

\noi
and solution $Y_N^\dagger$ depends continuously on $\pmb{Z}_N$ in $\mathcal{Z}_T^{\al, \eps}$ and on $Y_{0, N}$ in $\mathcal{C}^{\al - \frac 32 - \eps}$.

\smallskip \noi
\textup{(ii)} Given any $Y_{0, N} \in \mathcal{C}^{1 + \eps}$, there exist $0 < T_0 \leq 1$ \textup{(}depending only on $\| \pmb{Z}_N \|_{\mathcal{Z}_T^{\al, \eps}}$ and $\| Y_{0, N} \|_{\mathcal{C}^{1 + \eps}}$\textup{)} and a unique solution $Y_N^\dagger$ to the equation \eqref{SNLHYNd} on $[0, T_0]$ satisfying $Y_N^\dagger |_{t = 0} = Y_{0, N}$ in the class $C([0, T_0]; \mathcal{C}^{1 + \eps} (\T^3))$. Furthermore, we have the bound
\begin{align*}
\| Y_N^\dagger \|_{C_{T_0} \mathcal{C}_x^{1 + \eps}} \les \| Y_{0, N} \|_{\mathcal{C}^{1 + \eps}} + \| \pmb{Z}_N \|_{\mathcal{Z}_T^{\al, \eps}}^3.
\end{align*}
\end{lemma}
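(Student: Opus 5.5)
We will prove both parts by a contraction argument for the Duhamel map $\Gamma$ defined by the right-hand side of \eqref{SNLHYNd}, using only deterministic estimates. The inputs are the Schauder estimate (Lemma~\ref{LEM:Schau}), the product estimates in H\"older--Besov spaces (Lemma~\ref{LEM:Bes}~(iv)), and the regularities recorded in $\mathcal{Z}_T^{\al, \eps}$. Since $\pi_N$ is not uniformly bounded on $\mathcal{C}^s$, we will instead use the smooth Littlewood--Paley cutoff, or equivalently note that the relevant bounds for $\pi_N$ composed with the heat semigroup lose only $N^0$ up to an $\eps$ of regularity. Alternatively, every estimate can be routed through $W^{s,p}$ with large $p$ and then through Lemma~\ref{LEM:Bes}~(i)--(ii); we expect this to be a technical rather than a genuine issue. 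We will work on a ball in $\mathcal{Y}_{T_0}^{\al, \eps}$ for part (i) and in $C_{T_0}\mathcal{C}^{1+\eps}$ for part (ii).

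The first step treats the purely stochastic forcing terms. The term $48\cj a_2^2\<320>_N^h$ lies in $C_T\mathcal{C}^{4\al-3-\eps}$ by assumption. Since $4\al-3>1$, this quantity belongs to $\mathcal{C}^{1+\eps}$ for $\eps$ small. The terms $\I^h(\<1>_N^h(\<30>_N^h)^2)$ and $\I^h((\<30>_N^h)^3)$ are well-defined products, because $(\al-\frac32-\eps)+(5\al-\frac92-\eps)>0$. They gain $2\al$ derivatives with an integrable time singularity, so they also lie in $\mathcal{C}^{1+\eps}$ with a factor $T_0^{\theta}$. The linear term $a\I^h(\cdots)$ is harmless in the same way. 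This gives the $\|\pmb Z_N\|^3$ contribution to the bound.

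The second step treats the terms involving $Y$. We use $Y\in\mathcal{C}^{1+\eps}$ with weight $t^{-\frac{\al}{2}+\frac54+\eps}$. The only singular product is $\<2>_N^h\cdot Y$. It is defined because $(2\al-3-\eps)+(1+\eps)>0$, which is where the choice of regularity $1+\eps$ enters: $2\al-3>-1$. The product has regularity $2\al-3-\eps$, and Schauder gives
\begin{align*}
\big\|\I^h(\<2>_N^h Y)(t)\big\|_{\mathcal{C}^{1+\eps}}\les \int_0^t (t-t')^{-\frac{4-2\al+2\eps}{2\al}} t'^{\frac\al2-\frac54-\eps}dt'\, \|\pmb Z_N\|\,\|Y\|_{\mathcal{Y}}.
\end{align*}
Both exponents are integrable: $\frac{4-2\al}{2\al}<1$, and $\frac\al2-\frac54>-1$. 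The integral therefore yields a positive power of $T_0$ after multiplying by the weight. The remaining terms, namely $\<1>_N^hY^2$, $\<1>_N^h\<30>_N^hY$, $Y^3$, $\<30>_N^hY^2$ and $(\<30>_N^h)^2Y$, have positive total regularity. They are handled in the same way, where the polynomial powers of the weight must still give integrable singularities: $Y^3$ contributes $t'^{3(\frac\al2-\frac54)}$. For the $C_{T_0}\mathcal{C}^{\al-\frac32-\eps}$ component we use the trivial embedding from $\mathcal{C}^{1+\eps}$, together with the fact that the heat semigroup is bounded on $\mathcal{C}^{\al-\frac32-\eps}$. The initial datum obeys $\|e^{-t\jb{\nb}^{2\al}}Y_{0,N}\|_{\mathcal{C}^{1+\eps}}\les t^{-\frac{1+\eps-(\al-\frac32-\eps)}{2\al}}\|Y_{0,N}\|$, and we choose the weight exponent to match this singularity.

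\textbf{Main obstacle.} We expect the hardest point to be checking that all time singularities are integrable for the cubic term $Y^3$, where the singularity is $t'^{-3(\frac54-\frac\al2)}$. This is borderline, since $3(\frac54-\frac\al2)$ is close to $2$. If it fails, we will first estimate $Y^3$ using interpolation (Lemma~\ref{LEM:Bes}~(iii)) between the $\mathcal{C}^{\al-\frac32-\eps}$ and $\mathcal{C}^{1+\eps}$ norms, placing only a small positive regularity on each factor so that the singularity is reduced. Differences are estimated the same way, which gives the contraction, uniqueness, and continuous dependence on $(\pmb Z_N, Y_{0,N})$. Part (ii) is identical but easier, because no weight is needed.
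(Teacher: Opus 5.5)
Your overall scheme is the paper's: a contraction for the Duhamel map of \eqref{SNLHYNd} in $\mathcal{Y}_{T_0}^{\al,\eps}$, using Lemma~\ref{LEM:Schau} and Lemma~\ref{LEM:Bes}~(iv). However, the main line of estimates in part (i) does not close, and the step you list as a contingency is in fact the indispensable one.

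\textbf{Terms with two or three factors of $Y$.} Suppose you control each factor of $Y$ only through
\[
\|Y(t')\|_{\mathcal{C}^{1+\eps}}\les t'^{-(\frac54-\frac\al2+\eps)}\|Y\|_{\mathcal{Y}_{T_0}^{\al,\eps}}.
\]
Then $Y^2$ and $Y^3$ carry time singularities $t'^{-(\frac52-\al+2\eps)}$ and $t'^{-(\frac{15}4-\frac{3\al}2+3\eps)}$. For $\al\le\frac98$ these exponents are at least $\frac{11}8$ and $\frac{33}{16}$. Both are non-integrable at $t'=0$, since the threshold is $1$; so $Y^3$ is not borderline.

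This already breaks $\<1>_N^h Y^2$, which you did not flag. For $\<1>_N^hY^2$ and $Y^3$ you cannot escape by placing some factors in the unweighted $\mathcal{C}^{\al-\frac32-\eps}$ norm, because the product of two negative-regularity objects is undefined. The fix is the interpolation of Lemma~\ref{LEM:Bes}~(iii), used systematically as in \eqref{interp1}--\eqref{interp2}:
\[
\sup_{0\le t\le T_0} t^{1-\frac\al2}\|Y(t)\|_{\mathcal{C}^{\frac12-\eps}}+\sup_{0\le t\le T_0} t^{\frac34-\frac\al2+\eps}\|Y(t)\|_{\mathcal{C}^{\eps}}\les\|Y\|_{\mathcal{Y}_{T_0}^{\al,\eps}}.
\]
For $\<1>_N^hY^2$ the intermediate regularity must exceed $\frac32-\al+\eps$, so ``small positive regularity'' is not enough. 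Using $\mathcal{C}^{\frac12-\eps}$, the singularity becomes $t'^{-(2-\al)}$, which is integrable. For $Y^3$ you use $\mathcal{C}^{\eps}$, giving $t'^{-(\frac94-\frac{3\al}2+3\eps)}$, which is integrable because $\al>\frac56$.

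\textbf{The low-regularity component.} The $C_{T_0}\mathcal{C}^{\al-\frac32-\eps}$ part of the $\mathcal{Y}$-norm cannot be obtained from the weighted $\mathcal{C}^{1+\eps}$ bounds by embedding. It also cannot come from boundedness of the semigroup alone. The weighted bounds control the output only up to a factor that blows up at $t=0$. For instance, your own estimate for $\I^h(\<2>_N^{\,h}Y)$ yields $\|\I^h(\<2>_N^{\,h}Y)(t)\|_{\mathcal{C}^{1+\eps}}\les t^{\gamma}$ with
\[
\gamma=1-\frac{2-\al+\eps}{\al}-\Big(\frac54-\frac\al2+\eps\Big)<0.
\]
You have to run Schauder directly into $\mathcal{C}^{\al-\frac32-\eps}$ with the smaller gain. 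For this term the gain is $\frac32-\al$, and
\[
\int_0^t (t-t')^{-(\frac34-\frac\al2)}\,t'^{-(\frac54-\frac\al2+\eps)}\,dt'\les T_0^{\al-1-\eps}.
\]
This separate list of estimates is the first half of the paper's proof.

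With these two corrections, the remaining steps go through as you describe: the difference estimates, continuous dependence, and part (ii) without weights.

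As a side remark, your proposed remedies for the sharp projection $\pi_N$ are not accurate as stated. The spherical partial sums $\pi_N$ are not uniformly bounded on $L^p(\T^3)$ for $p\neq2$, so the $W^{s,p}$ detour does not help. The paper does not address this point and simply treats $\pi_N$ as harmless, so this is not where the argument lives.
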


\begin{proof}
(i) By the interpolation estimate in Lemma~\ref{LEM:Bes}~(iii), we have
\begin{align}
\begin{split}
\sup_{0 \leq t \leq T_0} t^{- \frac{\al}{2} + 1} \| Y (t) \|_{\C^{\frac 12 - \eps}} 
&\les \sup_{0 \leq t \leq T_0} \| Y (t) \|_{\C^{\al - \frac 32 - \eps}}^{\frac{1 + 4 \eps}{- 2 \al + 5 + 4 \eps}} \big( t^{- \frac{\al}{2} + \frac 54 + \eps} \| Y (t) \|_{\C^{1 + \eps}} \big)^{\frac{- 2 \al + 4}{- 2 \al + 5 + 4 \eps}} \\
&\leq \| Y \|_{\mathcal{Y}_{T_0}^{\al, \eps}} 
\end{split}
\label{interp1}
\end{align}

\noi
and
\begin{align}
\begin{split}
\sup_{0 \leq t \leq T_0} t^{- \frac{\al}{2} + \frac 34 + \eps} \| Y (t) \|_{\C^{\eps}} 
&\les \sup_{0 \leq t \leq T_0} \| Y (t) \|_{\C^{\al - \frac 32 - \eps}}^{\frac{2}{- 2 \al + 5 + 4 \eps}} \big( t^{- \frac{\al}{2} + \frac 54 + \eps} \| Y (t) \|_{\C^{1 + \eps}} \big)^{\frac{- 2 \al + 3 + 4 \eps}{- 2 \al + 5 + 4 \eps}} \\
&\leq \| Y \|_{\mathcal{Y}_{T_0}^{\al, \eps}} .
\end{split}
\label{interp2}
\end{align}

\noi
By using the Schauder estimate (Lemma~\ref{LEM:Schau}) (since we always have $0 < t \leq 1$, we use $t^{- \frac{s_1 - s_2}{2 \alpha}} \leq t^{- \frac{s_1 - s_2}{2}}$ for simplicity), the product estimate (Lemma~\ref{LEM:Bes}~(iv)), \eqref{interp1}, and \eqref{interp2}, we obtain the following estimates in the $C_{T_0} \mathcal{C}^{\al - \frac 32 - \eps}$-norm (below we omit some similar steps):
\begin{align*}
&\big\| e^{- t \jb{\nb}^{2 \al}} Y_{0, N} \big\|_{C_{T_0} \mathcal{C}_x^{\al - \frac 32 - \eps}} \les \| Y_{0, N} \|_{\mathcal{C}^{\al - \frac 32 - \eps}},
\\
&\big\| \I^h \big( \<1>_N^h - \<30>_N^{\,h} + \pi_N Y_N^\dagger \big) \big\|_{C_{T_0} \mathcal{C}_x^{\al - \frac 32 - \eps}} \les T_0 \Big( \| \pmb{Z}_N \|_{\mathcal{Z}_T^{\al, \eps}} + \| Y_N^\dagger \|_{\mathcal{Y}_{T_0}^{\al, \eps}} \Big),
\\
&\big\| \<320>_N^h \big\|_{C_{T_0} \mathcal{C}_x^{\al - \frac 32 - \eps}} \les \| \pmb{Z}_N \|_{\mathcal{Z}_T^{\al, \eps}},
\\
&\big\| \I^h \big( \<2>_N^{\,h} \cdot \pi_N Y_N^\dagger \big) \big\|_{C_{T_0} \mathcal{C}_x^{\al - \frac 32 - \eps}}  \\
&\quad \les \sup_{0 \leq t \leq T_0} \int_0^{t} \frac{1}{(t - t')^{- \frac{\al}{2} + \frac 34}} \| \<2>_N^{\,h} (t') \cdot \pi_N Y_N^\dagger (t') \|_{\mathcal{C}^{2 \al - 3 - \eps}} dt'  \\
&\quad \les \sup_{0 \leq t \leq T_0} \int_0^{t} \frac{1}{(t - t')^{- \frac{\al}{2} + \frac 34} (t')^{- \frac{\al}{2} + \frac 54 + \eps}} dt' \cdot \| \<2>_N^{\,h} \|_{C_T \mathcal{C}_x^{2 \al - 3 - \eps}} \cdot \sup_{0 \leq t \leq T_0} t^{- \frac{\al}{2} + \frac 54 + \eps} \| Y_N^\dagger (t) \|_{\mathcal{C}^{1 + \eps}} \\
&\quad \les T_0^{\al - 1 - \eps} \| \pmb{Z}_N \|_{\mathcal{Z}_T^{\al, \eps}} \| Y_N^\dagger \|_{\mathcal{Y}_{T_0}^{\al, \eps}},
\\
&\big\| \I^h \big( \<1>_N^h (\pi_N Y_N^\dagger)^2 \big) \big\|_{C_{T_0} \mathcal{C}_x^{\al - \frac 32 - \eps}} \\
&\quad \les \sup_{0 \leq t \leq T_0} \int_0^{t} \| \<1>_N^h (t') \cdot \pi_N Y_N^\dagger (t') \cdot \pi_N Y_N^\dagger (t') \|_{\mathcal{C}^{\al - \frac 32 - \eps}} dt' \\
&\quad \les \sup_{0 \leq t \leq T_0} \int_0^{t} \frac{1}{(t')^{- \al + 2}} dt' \cdot \| \<1>_N^h \|_{C_T \mathcal{C}_x^{\al - \frac 32 - \eps}} \cdot \Big( \sup_{0 \leq t \leq T_0} t^{- \frac{\al}{2} + 1} \| Y_N^\dagger (t) \|_{\mathcal{C}^{\frac 12 - \eps}} \Big)^2 \\
&\quad \les T_0^{\al - 1} \| \pmb{Z}_N \|_{\mathcal{Z}_T^{\al, \eps}} \| Y_N^\dagger \|_{\mathcal{Y}_{T_0}^{\al, \eps}}^2, 
\\
&\big\| \I^h \big( \<1>_N^h \cdot \<30>_N^{\,h} \cdot \pi_N Y_N^\dagger \big) \big\|_{C_{T_0} \mathcal{C}_x^{\al - \frac 32 - \eps}} \les T_0^{\frac{\al}{2}} \| \pmb{Z}_N \|_{\mathcal{Z}_T^{\al, \eps}}^2 \| Y_N^\dagger \|_{\mathcal{Y}_{T_0}^{\al, \eps}},
\\
&\big\| \I^h \big( \<1>_N^h ( \<30>_N^{\,h} )^2 \big) \big\|_{C_{T_0} \mathcal{C}_x^{\al - \frac 32 - \eps}} \les T_0 \| \pmb{Z}_N \|_{\mathcal{Z}_T^{\al, \eps}}^3,
\\
&\big\| \I^h \big( (\pi_N Y_N^\dagger)^3 \big) \big\|_{C_{T_0} \mathcal{C}_x^{\al - \frac 32 - \eps}} \\
&\quad \les \sup_{0 \leq t \leq T_0} \int_0^{t} \big\| (\pi_N Y_N^\dagger (t'))^3 \big\|_{\mathcal{C}^{\al - \frac 32 - \eps}} dt' \\
&\quad \les \sup_{0 \leq t \leq T_0} \int_0^{t} \frac{1}{(t')^{- \frac{3 \al}{2} + \frac 94 + 3 \eps}} dt' \cdot \Big( \sup_{0 \leq t \leq T_0} t^{- \frac{\al}{2} + \frac 34 + \eps} \| Y_N^\dagger (t) \|_{\mathcal{C}^\eps} \Big)^3 \\
&\quad \les T_0^{\frac{3 \al}{2} - \frac 54 - 3 \eps} \| Y_N^\dagger \|_{\mathcal{Y}_{T_0}^{\al, \eps}}^3,
\\
&\big\| \I^h \big( \<30>_N^{\,h} (\pi_N Y_N^\dagger)^2 \big) \big\|_{C_{T_0} \mathcal{C}_x^{\al - \frac 32 - \eps}} \les T_0^{\al - \frac 12 - 2 \eps} \| \pmb{Z}_N \|_{\mathcal{Z}_T^{\al, \eps}} \| Y_N^\dagger \|_{\mathcal{Y}_{T_0}^{\al, \eps}}^2,
\\
&\big\| \I^h \big( (\<30>_N^{\,h})^2 \pi_N Y_N^\dagger \big) \big\|_{C_{T_0} \mathcal{C}_x^{\al - \frac 32 - \eps}} \les T_0^{\frac{\al}{2} + \frac 14 - \eps} \| \pmb{Z}_N \|_{\mathcal{Z}_T^{\al, \eps}}^2 \| Y_N^\dagger \|_{\mathcal{Y}_{T_0}^{\al, \eps}},
\\
&\big\| \I^h \big( (\<30>_N^{\,h})^3 \big) \big\|_{C_{T_0} \mathcal{C}_x^{\al - \frac 32 - \eps}} \les T_0 \| \pmb{Z}_N \|_{\mathcal{Z}_T^{\al, \eps}}^3.
\end{align*}

\noi
and also the following estimates in the $\mathcal{C}^{1 + \eps}$-norm:
\begin{align*}
&\sup_{0 \leq t \leq T_0} t^{- \frac{\al}{2} + \frac 54 + \eps} \big\| e^{- t \jb{\nb}^{2 \al}} Y_{0, N} \big\|_{\mathcal{C}^{1 + \eps}} \les \| Y_{0, N} \|_{\mathcal{C}^{\al - \frac 32 - \eps}},
\\
&\sup_{0 \leq t \leq T_0} t^{- \frac{\al}{2} + \frac 54 + \eps} \big\| \I^h (\<1>_N^h) (t) \big\|_{\mathcal{C}^{1 + \eps}} \\
&\quad \les \sup_{0 \leq t \leq T_0} \int_0^t \frac{t^{- \frac{\al}{2} + \frac 54 + \eps}}{(t - t')^{- \frac{\al}{2} + \frac 54 + \eps}} \| \<1>_N^h (t') \|_{\mathcal{C}^{\al - \frac 32 - \eps}} dt' \\
&\quad \les T_0 \| \pmb{Z}_N \|_{\mathcal{Z}_T^{\al, \eps}},
\\
&\sup_{0 \leq t \leq T_0} t^{- \frac{\al}{2} + \frac 54 + \eps} \big\| \I^h \big( \<30>_N^{\,h} \big) (t) \big\|_{\mathcal{C}^{1 + \eps}} \les T_0^{2 \al - \frac 12} \| \pmb{Z}_N \|_{\mathcal{Z}_T^{\al, \eps}},
\\
&\sup_{0 \leq t \leq T_0} t^{- \frac{\al}{2} + \frac 54 + \eps} \big\| \I^h (\pi_N Y_N^\dagger) (t) \big\|_{\mathcal{C}^{1 + \eps}} \\
&\quad \les \sup_{0 \leq t \leq T_0} \int_0^t \frac{t^{- \frac{\al}{2} + \frac 54 + \eps}}{(t')^{- \frac{\al}{2} + \frac 54 + \eps}} dt' \cdot \sup_{0 \leq t \leq T_0} t^{- \frac{\al}{2} + \frac 54 + \eps} \| Y_N^\dagger (t) \|_{\mathcal{C}^{1 + \eps}} \\
&\quad \les T_0 \| Y_N^\dagger \|_{\mathcal{Y}_{T_0}^{\al, \eps}},
\\
&\sup_{0 \leq t \leq T_0} t^{- \frac{\al}{2} + \frac 54 + \eps} \big\| \<320>_N^h (t) \big\|_{\mathcal{C}^{1 + \eps}} \les T_0^{- \frac{\al}{2} + \frac 54 + \eps} \| \pmb{Z}_N \|_{\mathcal{Z}_T^{\al, \eps}},
\\
&\sup_{0 \leq t \leq T_0} t^{- \frac{\al}{2} + \frac 54 + \eps} \big\| \I^h \big( \<2>_N^{\,h} \cdot \pi_N Y_N^\dagger \big) (t) \big\|_{\mathcal{C}^{1 + \eps}}  \\
&\quad \les \sup_{0 \leq t \leq T_0} \int_0^{t} \frac{t^{- \frac{\al}{2} + \frac 54 + \eps}}{(t - t')^{- \al + 2 + \eps}} \| \<2>_N^{\,h} (t') \cdot \pi_N Y_N^\dagger (t') \|_{\mathcal{C}^{2 \al - 3 - \eps}} dt'  \\
&\quad \les \sup_{0 \leq t \leq T_0} \int_0^{t} \frac{t^{- \frac{\al}{2} + \frac 54 + \eps}}{(t - t')^{- \al + 2 + \eps} (t')^{- \frac{\al}{2} + \frac 54 + \eps}} dt' \cdot \| \<2>_N^{\,h} \|_{C_T \mathcal{C}_x^{2 \al - 3 - \eps}} \cdot \sup_{0 \leq t \leq T_0} t^{- \frac{\al}{2} + \frac 54 + \eps} \| Y_N^\dagger (t) \|_{\mathcal{C}^{1 + \eps}} \\
&\quad \les T_0^{\al - 1 - \eps} \| \pmb{Z}_N \|_{\mathcal{Z}_T^{\al, \eps}} \| Y_N^\dagger \|_{\mathcal{Y}_{T_0}^{\al, \eps}},
\\
&\sup_{0 \leq t \leq T_0} t^{- \frac{\al}{2} + \frac 54 + \eps} \big\| \I^h \big( \<1>_N^h (\pi_N Y_N^\dagger)^2 \big) (t) \big\|_{\mathcal{C}^{1 + \eps}} \\
&\quad \les \sup_{0 \leq t \leq T_0} \int_0^{t} \frac{t^{- \frac{\al}{2} + \frac 54 + \eps}}{(t - t')^{- \frac{\al}{2} + \frac 54 + \eps}} \| \<1>_N^h (t') \cdot \pi_N Y_N^\dagger (t') \cdot \pi_N Y_N^\dagger (t') \|_{\mathcal{C}^{\al - \frac 32 - \eps}} dt' \\
&\quad \les \sup_{0 \leq t \leq T_0} \int_0^{t} \frac{t^{- \frac{\al}{2} + \frac 54 + \eps}}{(t - t')^{- \frac{\al}{2} + \frac 54 + \eps} (t')^{- \al + 2}} dt' \cdot \| \<1>_N^h \|_{C_T \mathcal{C}_x^{\al - \frac 32 - \eps}} \cdot \Big( \sup_{0 \leq t \leq T_0} t^{- \frac{\al}{2} + 1} \| Y_N^\dagger (t) \|_{\mathcal{C}^{\frac 12 - \eps}} \Big)^2 \\
&\quad \les T_0^{\al - 1} \| \pmb{Z}_N \|_{\mathcal{Z}_T^{\al, \eps}} \| Y_N^\dagger \|_{\mathcal{Y}_{T_0}^{\al, \eps}}^2, 
\\
&\sup_{0 \leq t \leq T_0} t^{- \frac{\al}{2} + \frac 54 + \eps} \big\| \I^h \big( \<1>_N^h \cdot \<30>_N^{\,h} \cdot \pi_N Y_N^\dagger \big) (t) \big\|_{\mathcal{C}^{1 + \eps}} \les T_0^{\frac{\al}{2}} \| \pmb{Z}_N \|_{\mathcal{Z}_T^{\al, \eps}}^2 \| Y_N^\dagger \|_{\mathcal{Y}_{T_0}^{\al, \eps}},
\\
&\sup_{0 \leq t \leq T_0} t^{- \frac{\al}{2} + \frac 54 + \eps} \big\| \I^h \big( \<1>_N^h ( \<30>_N^{\,h} )^2 \big) (t) \big\|_{\mathcal{C}^{1 + \eps}} \les T_0 \| \pmb{Z}_N \|_{\mathcal{Z}_T^{\al, \eps}}^3,
\\
&\sup_{0 \leq t \leq T_0} t^{- \frac{\al}{2} + \frac 54 + \eps} \big\| \I^h \big( (\pi_N Y_N^\dagger)^3 \big) (t) \big\|_{\mathcal{C}^{1 + \eps}} \\
&\quad \les \sup_{0 \leq t \leq T_0} \int_0^{t} \frac{t^{- \frac{\al}{2} + \frac 54 + \eps}}{(t - t')^{\frac 12}} \big\| (\pi_N Y_N^\dagger (t'))^3 \big\|_{\mathcal{C}^{\eps}} dt' \\
&\quad \les \sup_{0 \leq t \leq T_0} \int_0^{t} \frac{t^{- \frac{\al}{2} + \frac 54 + \eps}}{(t - t')^{\frac 12} (t')^{- \frac{3 \al}{2} + \frac 94 + 3 \eps}} dt' \cdot \Big( \sup_{0 \leq t \leq T_0} t^{- \frac{\al}{2} + \frac 34 + \eps} \| Y_N^\dagger (t) \|_{\mathcal{C}^\eps} \Big)^3 \\
&\quad \les T_0^{\al - \frac 12 - 2 \eps} \| Y_N^\dagger \|_{\mathcal{Y}_{T_0}^{\al, \eps}}^3,
\\
&\sup_{0 \leq t \leq T_0} t^{- \frac{\al}{2} + \frac 54 + \eps} \big\| \I^h \big( \<30>_N^{\,h} (\pi_N Y_N^\dagger)^2 \big) (t) \big\|_{\mathcal{C}^{1 + \eps}} \les T_0^{\frac{\al}{2} + \frac 14 - \eps} \| \pmb{Z}_N \|_{\mathcal{Z}_T^{\al, \eps}} \| Y_N^\dagger \|_{\mathcal{Y}_{T_0}^{\al, \eps}}^2,
\\
&\sup_{0 \leq t \leq T_0} t^{- \frac{\al}{2} + \frac 54 + \eps} \big\| \I^h \big( (\<30>_N^{\,h})^2 \pi_N Y_N^\dagger \big) (t) \big\|_{\mathcal{C}^{1 + \eps}} \les T_0 \| \pmb{Z}_N \|_{\mathcal{Z}_T^{\al, \eps}}^2 \| Y_N^\dagger \|_{\mathcal{Y}_{T_0}^{\al, \eps}},
\\
&\sup_{0 \leq t \leq T_0} t^{- \frac{\al}{2} + \frac 54 + \eps} \big\| \I^h \big( (\<30>_N^{\,h})^3 \big) (t) \big\|_{\mathcal{C}^{1 + \eps}} \les T_0^{- \frac{\al}{2} + \frac 74 + \eps} \| \pmb{Z}_N \|_{\mathcal{Z}_T^{\al, \eps}}^3.
\end{align*}

\noi
By denoting the right-hand-side of \eqref{SNLHYNd} by $\G^h [Y_N^\dagger]$ and gathering the above estimates, we obtain
\begin{align*}
\| \G^h [Y_N^\dagger] \|_{\mathcal{Y}_{T_0}^{\al, \eps}} \les \| Y_{0, N} \|_{\mathcal{C}^{\al - \frac 32 - \eps}} + \| \pmb{Z}_N \|_{\mathcal{Z}_T^{\al, \eps}}^3 + T_0^\ta \| Y_N^\dagger \|_{\mathcal{Y}_{T_0}^{\al, \eps}}^3
\end{align*}

\noi
for some $\ta > 0$. We can also use similar steps to obtain the following difference estimate:
\begin{align*}
\| \G^h [Y_N^{\dagger, 1}] - \G^h [Y_N^{\dagger, 2}] \|_{\mathcal{Y}_{T_0}^{\al, \eps}} \les T_0^\ta \Big( \| \pmb{Z}_N \|_{\mathcal{Z}_T^{\al, \eps}}^2 + \| Y_N^{\dagger, 1} \|_{\mathcal{Y}_{T_0}^{\al, \eps}}^2 + \| Y_N^{\dagger, 2} \|_{\mathcal{Y}_{T_0}^{\al, \eps}}^2  \Big) \| Y_N^{\dagger, 1} - Y_N^{\dagger, 2} \|_{\mathcal{Y}_{T_0}^{\al, \eps}}.
\end{align*}

\noi
Thus, by taking
\begin{align*}
0 < T_0 \ll \frac{1}{\big( \| Y_{0, N}^\dagger \|_{\mathcal{C}^{\al - \frac 32 - \eps}}^2 + \| \pmb{Z}_N \|_{\mathcal{Z}_T^{\al, \eps}}^6 \big)^{\frac{1}{\ta}}},
\end{align*}

\noi
the local well-posedness statement follows from a standard contraction argument.

\smallskip \noi
(ii) Using similar and simpler steps in part (i), we obtain the following estimates:
\begin{align*}
&\big\| e^{- t \jb{\nb}^{2 \al}} Y_{0, N} \big\|_{C_{T_0} \mathcal{C}_x^{1 + \eps}} \les \| Y_{0, N} \|_{\mathcal{C}_x^{1 + \eps}},
\\
&\big\| \I^h (\<1>_N^h) \big\|_{C_{T_0}\mathcal{C}_x^{1 + \eps}} \les T_0^{\frac{\al}{2} - \frac 14 - \eps} \| \pmb{Z}_N \|_{\mathcal{Z}_T^{\al, \eps}},
\\
&\big\| \I^h \big( \<30>_N^{\,h} \big) \big\|_{C_{T_0} \mathcal{C}_x^{1 + \eps}} \les T_0^{\frac{5 \al}{2} - \frac 74 - \eps} \| \pmb{Z}_N \|_{\mathcal{Z}_T^{\al, \eps}},
\\
&\big\| \I^h (\pi_N Y_N^\dagger) \big\|_{C_{T_0} \mathcal{C}_x^{1 + \eps}} \les T_0 \| Y_N^\dagger \|_{C_{T_0} \mathcal{C}_x^{1 + \eps}},
\\
&\big\| \<320>_N^h \big\|_{C_{T_0} \mathcal{C}_x^{1 + \eps}} \les \| \pmb{Z}_N \|_{\mathcal{Z}_T^{\al, \eps}},
\\
&\big\| \I^h \big( \<2>_N^{\,h} \cdot \pi_N Y_N^\dagger \big) \big\|_{C_{T_0} \mathcal{C}_x^{1 + \eps}} \les T_0^{\al - 1 - \eps} \| \pmb{Z}_N \|_{\mathcal{Z}_T^{\al, \eps}} \| Y_N^\dagger \|_{C_{T_0} \mathcal{C}_x^{1 + \eps}},
\\
&\big\| \I^h \big( \<1>_N^h (\pi_N Y_N^\dagger)^2 \big) \big\|_{C_{T_0} \mathcal{C}_x^{1 + \eps}} \les T_0^{\frac{\al}{2} - \frac 14 - \eps} \| \pmb{Z}_N \|_{\mathcal{Z}_T^{\al, \eps}} \| Y_N^\dagger \|_{C_{T_0} \mathcal{C}_x^{1 + \eps}}^2, 
\\
&\big\| \I^h \big( \<1>_N^h \cdot \<30>_N^{\,h} \cdot \pi_N Y_N^\dagger \big) \big\|_{C_{T_0} \mathcal{C}_x^{1 + \eps}} \les T_0^{\frac{\al}{2} - \frac 14 - \eps} \| \pmb{Z}_N \|_{\mathcal{Z}_T^{\al, \eps}}^2 \| Y_N^\dagger \|_{C_{T_0} \mathcal{C}_x^{1 + \eps}},
\\
&\big\| \I^h \big( \<1>_N^h ( \<30>_N^{\,h} )^2 \big) \big\|_{C_{T_0} \mathcal{C}_x^{1 + \eps}} \les T_0^{\frac{\al}{2} - \frac 14 - \eps} \| \pmb{Z}_N \|_{\mathcal{Z}_T^{\al, \eps}}^3,
\\
&\big\| \I^h \big( (\pi_N Y_N^\dagger)^3 \big) \big\|_{C_{T_0} \mathcal{C}_x^{1 + \eps}} \les T_0 \| Y_N^\dagger \|_{C_{T_0} \mathcal{C}_x^{1 + \eps}}^3,
\\
&\big\| \I^h \big( \<30>_N^{\,h} (\pi_N Y_N^\dagger)^2 \big) \big\|_{C_{T_0} \mathcal{C}_x^{1 + \eps}} \les T_0^{\frac{5 \al}{2} - \frac 74 - \eps} \| \pmb{Z}_N \|_{\mathcal{Z}_T^{\al, \eps}} \| Y_N^\dagger \|_{C_{T_0} \mathcal{C}_x^{1 + \eps}}^2,
\\
&\big\| \I^h \big( (\<30>_N^{\,h})^2 \pi_N Y_N^\dagger \big) \big\|_{C_{T_0} \mathcal{C}_x^{1 + \eps}} \les T_0^{\frac{5 \al}{2} - \frac 74 - \eps} \| \pmb{Z}_N \|_{\mathcal{Z}_T^{\al, \eps}}^2 \| Y_N^\dagger \|_{C_{T_0} \mathcal{C}_x^{1 + \eps}},
\\
&\big\| \I^h \big( (\<30>_N^{\,h})^3 \big) \big\|_{C_{T_0} \mathcal{C}_x^{1 + \eps}} \les T_0^{\frac{5 \al}{2} - \frac 74 - \eps} \| \pmb{Z}_N \|_{\mathcal{Z}_T^{\al, \eps}}^3.
\end{align*}

\noi
Then, local well-posedness follows from a standard contraction argument as in part (i), but in the space $C([0, T_0]; \mathcal{C}^{1 + \eps} (\T^3))$.
\end{proof}

As mentioned in Subsection~\ref{SUB:rep_intro}, to prove global well-posedness for the fractional parabolic $\Phi^4_3$-model \eqref{SNLH}, we proceed in an approach similar to that in \cite{OOT2, LTzW} for hyperbolic equations. For this purpose, let us prove the following uniform bound of the solution $Y_N^\dagger$ to \eqref{SNLHYNd} with large probability, using the invariance of the truncated Gibbs measure $\vec \rho_N$ in Lemma~\ref{LEM:inv_h} and the local well-posedness argument in the proof of Lemma~\ref{LEM:LWP_h}. For later convenience, we let $\I^h_{t_0}$ be the heat Duhamel operator as in \eqref{quin_h} but with the time integral starting from the time $t_0$ (instead of~0). We will also use the following Duhamel formulation for $Y_N^\dagger$ starting from the time $t_0$:
\begin{align}
\begin{split}
Y_N^\dagger &= \G^h_{t_0, N} [Y_N^\dagger] \\
&\deff e^{- (t - t_0) \jb{\nb}^{2 \al}} ( Y_N^\dagger (t_0) ) - a \I^h_{t_0} \big( \<1>_N^h - \<30>_N^{\,h} + \pi_N Y_N^\dagger \big) + 48 \cj{a}_2^2 \I^h_{t_0} \big( \<2>_N^{\,h} \cdot \<30>_N^{\,h} \big) \\
&\quad - 12 \cj{a}_2 \pi_N \I^h_{t_0} \big( \<2>_N^{\,h} \cdot \pi_N Y_N^\dagger \big) 
- 12 \cj{a}_2 \pi_N \I^h_{t_0} \big( \<1>_N^h (\pi_N Y_N^\dagger)^2 \big) \\
&\quad + 96 \cj{a}_2^2 \pi_N \I^h_{t_0} \big( \<1>_N^h \cdot \<30>_N^{\,h} \cdot \pi_N Y_N^\dagger \big) 
- 192 \cj{a}_2^3 \pi_N \I^h_{t_0} \big( \<1>_N^h (\<30>_N^{\,h})^2 \big) \\
&\quad - 4 \cj{a}_2 \pi_N \I^h_{t_0} \big( (\pi_N Y_N^\dagger)^3 \big) + 48 \cj{a}_2^2 \pi_N \I^h_{t_0} \big( \<30>_N^{\,h} (\pi_N Y_N^\dagger)^2 \big) \\
&\quad - 192 \cj{a}_2^3 \pi_N \I^h_{t_0} \big( (\<30>_N^{\,h})^2 \pi_N Y_N^\dagger \big) 
+ 256 \cj{a}_2^4 \pi_N \I^h_{t_0} \big( (\<30>_N^{\,h})^3 \big),
\end{split}
\label{Duh_h}
\end{align}

\begin{lemma}
\label{LEM:unibdd_h}
Let $1 < \al \leq \frac 98$, $\eps > 0$ be sufficiently small, and $T \geq 1$. Given $N \in \N$, let $Y_N^\dagger = Y_{N}^\dagger (X_{0, N}^\dagger, \pmb{Z}_{N})$ be the solution to \eqref{SNLHYNd}, where $\pmb{Z}_N$ is as defined in \eqref{defZN1} and the element $X_{0, N}^\dagger$ in the initial data is distributed by the truncated Gibbs measure $\rho_N$ in \eqref{rhoN}. Then, given any $\eta > 0$ and $0 < T_0 \leq 1$, there exist $C_0 = C_0 (T, T_0, \eta) \gg 1$ such that
\begin{align}
\rho_N \otimes \mathbb{P}^h \big( \| Y_N^\dagger \|_{C_{[T_0, T]} \mathcal{C}_x^{1 + \eps}} > C_0 \big) < \eta,
\label{YNbdd_goal}
\end{align}

\noi
uniformly in $N \in \N$.
\end{lemma}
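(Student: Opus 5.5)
The plan is a Bourgain-type invariant-measure argument, run for the parabolic truncated dynamics \eqref{SNLHNd}. It combines three ingredients: the invariance of $\rho_N$ (Lemma~\ref{LEM:inv_h}), the tail bounds for the enhanced data set $\pmb{Z}_N$ (Lemma~\ref{LEM:sto_h}), and the smoothing local theory of Lemma~\ref{LEM:LWP_h}. The point is that Lemma~\ref{LEM:LWP_h}~(i) only needs rough data $Y_{0,N} \in \mathcal{C}^{\al - \frac 32 - \eps}$. For $t>0$ it then yields $\mathcal{C}^{1+\eps}$ control with the weight $t^{-\frac{\al}{2} + \frac 54 + \eps}$, so on any interval of length $\tau$ starting at $t_0$ we get a bound on $[t_0 + \tau/2, t_0 + \tau]$ that involves only $\mathcal{C}^{\al-\frac32-\eps}$ information at time $t_0$.

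\textbf{Step 1: a priori bound at fixed times.} By invariance, $X_N^\dagger(t) \sim \rho_N$ for every $t$. Proposition~\ref{PROP:tight} (applied to $\rho_N$, which the paper notes is a special case of $\varrho_N$) gives $\rho_N(\|u\|_{\mathcal{C}^{\al-\frac32-\eps}} > R) \to 0$ uniformly in $N$ as $R \to \infty$. Combined with \eqref{tail_h}, this shows that $Y_N^\dagger(t) = X_N^\dagger(t) - \<1>^h(t) + 4\cj{a}_2 \<30>_N^{\,h}(t)$ satisfies
\[
\rho_N\otimes\mathbb{P}^h\big(\|Y_N^\dagger(t)\|_{\mathcal{C}^{\al-\frac32-\eps}}>R\big) < \eta'
\]
uniformly in $N$ and $t \in [0,T]$, for $R = R(\eta')$. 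For this to make sense, $X_{0,N}^\dagger$ and $\xi^h$ must be coupled so that the stationary $\<1>^h$ is compatible with the law of the initial data. Rather than insist on this coupling, I would simply bound $X_N^\dagger(t)$ and the stochastic objects separately; the triangle inequality is all that is needed.

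\textbf{Step 2: covering.} Let $\pmb{Z}_N$ be controlled on $[0,T]$ by $\|\pmb{Z}_N\|_{\mathcal{Z}_T^{\al,\eps}} \le K$ off a set of probability $<\eta/3$, using \eqref{tail_h}. Lemma~\ref{LEM:LWP_h}~(i) provides a local time $\tau = \tau(R, K) \le T_0$ that is uniform over all data of size $\le R$ in $\mathcal{C}^{\al-\frac32-\eps}$. Choose grid points $t_j = j\tau/2$ with $0 \le j \le J \sim T/\tau$. By Step 1 and a union bound, off a set of probability $J\eta'$ we have $\|Y_N^\dagger(t_j)\|_{\mathcal{C}^{\al-\frac32-\eps}} \le R$ for all $j$.

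On each $[t_j, t_j+\tau]$, apply the contraction from Lemma~\ref{LEM:LWP_h}~(i) to the Duhamel formulation \eqref{Duh_h}, restarted at $t_j$. Uniqueness identifies this local solution with $Y_N^\dagger$, and the resulting bound is
\[
\sup_{t\in[t_j+\tau/2,\,t_j+\tau]}\|Y_N^\dagger(t)\|_{\mathcal{C}^{1+\eps}} \lesssim \tau^{\frac{\al}{2}-\frac54-\eps}\big(R+K^3\big).
\]
Since $\tau \le T_0$, the intervals $[t_j+\tau/2, t_j+\tau]$ cover $[T_0, T]$. Taking $C_0 \sim \tau^{\frac{\al}{2}-\frac54-\eps}(R+K^3)$ then gives \eqref{YNbdd_goal}.

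\textbf{Main obstacle: the order of choices in the union bound.} The number of intervals $J \sim T/\tau(R,K)$ grows with $R$, while $\eta'$ must be small relative to $1/J$. The choices must therefore be made in the right order: fix $K$ from $\eta$, then choose $R$ so that $J(R,K)\,\eta'(R) < \eta/3$. This requires the tail $\eta'(R)$ to decay faster than $\tau(R,K)^{-1}$ grows. The local time is polynomial, $\tau \sim (R^2 + K^6)^{-1/\ta}$, so it suffices to show that the tail in Step 1 decays faster than any polynomial. I would obtain this from exponential integrability under $\rho_N$: using the Boué-Dupuis formula with $F = \lambda\|u\|_{\mathcal{C}^{\al-\frac32-\eps}}^{c}$, exactly as in the tightness proof (cf.\ \eqref{tight1}–\eqref{tight3}), one gets $\sup_N \int e^{\lambda \|u\|^{c}}\,d\rho_N < \infty$ for suitable $c$ and $\lambda$. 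This yields stretched-exponential tails. The delicate point is that the drift contributes only in $H^\al$ with quadratic cost, which caps $c$ at $2$; this is still more than enough to beat the polynomial growth of $J$.
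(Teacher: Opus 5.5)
Your argument is correct, but it is organized differently from the paper's proof.

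\textbf{What is shared.} Both proofs start from the same exponential tail for $\rho_N$ in $\mathcal{C}^{\al-\frac32-\eps}$, obtained from the Bou\'e--Dupuis formula. The paper takes $F(u)=\|u\|_{\mathcal{C}^{\al-\frac32-\eps}}$ and gets $Ce^{-K_0}$; your $\lambda\|u\|^c$ is the same computation. Both proofs then use this tail to beat the polynomially many time steps in a union bound.

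\textbf{Where you differ.} The difference is what invariance is applied to.
\begin{itemize}
\item You apply invariance only at the grid times, to the rough norm of $X_N^\dagger(t_j)$. The instantaneous parabolic smoothing of the restarted rough-data theory (Lemma~\ref{LEM:LWP_h}~(i)) then gives $\mathcal{C}^{1+\eps}$ control directly on each window $[t_j+\tau/2,t_j+\tau]$.
\item The paper runs Lemma~\ref{LEM:LWP_h}~(i) and then~(ii) once, to bound a short Duhamel piece $\pi_N\I^h_{T_1}\big(\ind_{[T_1,T_1+T_2]}\wick{(\pi_N X_N^\dagger)^3}\big)$ in $\mathcal{C}^{\frac12+\eps}$. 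It uses invariance and time translation to sum such pieces into a bound for the whole Duhamel integral of the nonlinearity on $[T_1,T]$. From the Duhamel formula restarted at $T_1$ it reads off a $\mathcal{C}^{\frac12+\eps}$ bound for $Y_N^\dagger$. Only then does it upgrade to $\mathcal{C}^{1+\eps}$, by a deterministic iteration that is linear in the top norm.
\end{itemize}

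\textbf{What each route buys.} Your route is shorter: it needs neither the intermediate $\mathcal{C}^{\frac12+\eps}$ bound nor the final iteration. The paper's route is the ``nonlinear smoothing via invariance'' scheme of the hyperbolic problem (cf.~Proposition~\ref{PROP:vN_bddX} and \cite{OOT2, LTzW}). That scheme is the one that survives when the linear propagator does not smooth rough data, which is why the paper chose it, for the parallel drawn in Remark~\ref{RMK:heat_wave}.

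\textbf{Two small points to tidy up.}
\begin{itemize}
\item When you restart at $t_j$, the quintic term is
\[
\I^h_{t_j}\big(\<2>_N^{\,h}\<30>_N^{\,h}\big)(t)=\<320>_N^h(t)-e^{-(t-t_j)\jb{\nb}^{2\al}}\<320>_N^h(t_j).
\]
This object is not itself in $\pmb{Z}_N$, but it is bounded by $2\|\<320>_N^h\|_{C_T\mathcal{C}^{4\al-3-\eps}}$. You should say so explicitly; the paper handles the analogous point in a footnote.
\item No coupling between $X_{0,N}^\dagger$ and $\xi^h$ is needed. On the event $\|\pmb{Z}_N\|_{\mathcal{Z}_T^{\al,\eps}}\le K$, the only input requiring the union bound is $\|X_N^\dagger(t_j)\|_{\mathcal{C}^{\al-\frac32-\eps}}$. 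Its law is $\rho_N$ by Lemma~\ref{LEM:inv_h}, since the initial data are independent of the noise.
\end{itemize}
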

\begin{proof}
We prove this bound by the following three steps.

\smallskip \noi
\textbf{Step 1:}
Let us first show that
\begin{align}
\rho_N \big( \| u \|_{\C^{\al - \frac 32 - \eps}} > K_0 \big) < C e^{- K_0}
\label{initbdd}
\end{align}

\noi
for any sufficiently large $K_0 \gg 1$ and some constant $C > 0$. By using the Bou\'e-Dupuis variational formula (Lemma~\ref{LEM:BD}) and analogous estimates to \eqref{BD_RNtil}, \eqref{RNtil_est}, and \eqref{ZNupbdd} (see also the beginning of the proof of Proposition~\ref{PROP:meas_conv}), we can bound
\begin{align}
\begin{split}
- &\log \int \exp \big( \| u \|_{\mathcal{C}^{\al - \frac 32 - \eps}} \big) d \rho_N (u) \\
&\geq \inf_{\dot{\Ups}^N \in \mathbb{H}_a^\al} \E \bigg[ - \| W + \Dr \|_{\mathcal{C}^{\al - \frac 32 - \eps}} + \wt{R}_N^\dia (W_N, \Ups_N - \zeta_N) \\
&\quad + \frac 12 \int_0^1 \big\| \dot{\Ups}^N (t) \big\|_{H^\al}^2 dt \bigg] + \log Z_N \\
&\geq \inf_{\dot{\Ups}^N \in \mathbb{H}_a^\al} \E \Big[ - \| W + \Dr \|_{\mathcal{C}^{\al - \frac 32 - \eps}} + \frac 14 \| \Ups^N \|_{H^\al}^2 \Big] - C'
\end{split}
\label{u0Nbdd1}
\end{align}

\noi
for some constant $C' > 0$, where we recall  that $\Law (W) = \Law (W (1)) = \mu_\al$ (defined in \eqref{mu_al}) from \eqref{defWt}, $\Dr = \Ups^N - \zeta_N$ with $\zeta_N$ being defined in \eqref{zeta}, $\Ups_N = \pi_N \Ups^N$, and $\wt R_N^\dia$ defined in \eqref{defRNd2}. By Lemma~\ref{LEM:Yreg}~(i), we have
\begin{align}
\E \big[ \| W \|_{\mathcal{C}^{\al - \frac 32 - \eps}} \big] + \E \big[ \| \zeta_N \|_{\mathcal{C}^{\al - \frac 32 - \eps}} \big] \les 1.
\label{u0Nbdd2}
\end{align}

\noi 
Also, by the Besov embedding (Lemma~\ref{LEM:Bes}~(ii)) and Young's inequality, we have
\begin{align}
\| \Ups^N \|_{\mathcal{C}^{\al - \frac 32 - \eps}} \leq C'' \| \Ups^N \|_{H^\al} \leq \dl \| \Ups^N \|_{H^\al}^2 + C(\dl)
\label{u0Nbdd3}
\end{align}

\noi
for some $\dl > 0$ arbitrarily small and some constants $C'', C(\dl) > 0$ independent of $N$. Thus, the bound \eqref{initbdd} then follows from Chebyshev's inequality, \eqref{u0Nbdd1}, \eqref{u0Nbdd2}, and \eqref{u0Nbdd3}.

\smallskip \noi
\textbf{Step 2:} 
Recalling the enhanced data set $\pmb{Z}_N$ in \eqref{defZN1} and the $\mathcal{Z}_T^{\al, \eps}$-norm in \eqref{defZTae} and using the tail estimate \eqref{tail_h} in Lemma~\ref{LEM:sto_h}, we have
\begin{align}
\mathbb{P}^h \big( \| \pmb{Z}_N \|_{\mathcal{Z}_T^{\al, \eps}} > K_1 \big) < e^{- C(T) K_1^\be}.
\label{ZNbdd}
\end{align}

\noi
for any large $K_1 \gg 1$ and some constants $C(T), \be > 0$.
Thus, from the local well-posedness argument in Lemma~\ref{LEM:LWP_h}~(i) along with \eqref{initbdd} for the initial data $X_{0, N}^\dagger$ and \eqref{ZNbdd}, by letting 
\begin{align*}
T_1 = \frac{\kappa}{(K_0^2 + K_1^6)^{\frac{1}{\ta}}}
\end{align*}

\noi
for some small constants $0 < \kappa, \ta \ll 1$, we have
\begin{align}
\rho_N \otimes \mathbb{P}^h \big( \| Y_N^\dagger \|_{\mathcal{Y}_{T_1}^{\al, \eps}} > C' (K_0 + K_1^3) \big) < C e^{- K_0} + e^{- C(T) K_1^\be}
\label{YNbdd1}
\end{align}

\noi
and in particular
\begin{align}
\rho_N \otimes \mathbb{P}^h \big( \| Y_N^\dagger (T_1) \|_{\mathcal{C}_x^{1 + \eps}} > C' T_1^{\frac{\al}{2} - \frac 54 - \eps} (K_0 + K_1^3) \big) < C e^{- K_0} + e^{- C(T) K_1^\be}
\label{YNbdd1-1}
\end{align}

\noi
for some constant $C' > 0$ (which can change from line to line below).

We now work on the event
\begin{align}
\| \pmb{Z}_N \|_{\mathcal{Z}_T^{\al, \eps}} \leq K_1 \quad \text{and} \quad \| Y_N^\dagger (T_1) \|_{\mathcal{C}^{1 + \eps}} \leq C' T_1^{\frac{\al}{2} - \frac 54 - \eps} (K_0 + K_1^3).
\label{event1}
\end{align}

\noi
Let us then show that for any $\eta' > 0$, there exists $C_1 = C_1 (T, \eta') > 0$ such that
\begin{align}
\rho_N \otimes \mathbb{P}^h \Big( \| Y_N^\dagger \|_{C_{[T_1, T]} \mathcal{C}_x^{\frac 12 + \eps}} > C_1 \Big) < \eta'.
\label{YNbdd0}
\end{align}

\noi
By the local well-posedness argument in Lemma~\ref{LEM:LWP_h}~(ii) along with \eqref{event1}, by letting
\begin{align*}
T_2 = \frac{\kappa'}{(K_0^2 + K_1^6)^{\frac{1}{\ta}}}
\end{align*}

\noi
for some small constants $0 < \kappa', \ta \ll 1$, we have
\begin{align}
\| Y_N^\dagger \|_{C_{[T_1, T_1 + T_2]} \mathcal{C}_x^{1 + \eps}} \leq C' T_1^{\frac{\al}{2} - \frac 54 - \eps} (K_0 + K_1^3).
\label{YNbdd2}
\end{align}

\noi
Let us now consider the term $\pi_N \I^h_{T_1} ( \ind_{[T_1, T_1 + T_2]} \wick{(\pi_N X_N^\dagger)^3} )$, where $X_N^\dagger$ is the solution for \eqref{SNLHNd}. By using the second order expansion \eqref{sec_XN}, we have
\begin{align*}
\pi_N &\I^h_{T_1} \big( \ind_{[T_1, T_1 + T_2]} \wick{(\pi_N X_N^\dagger)^3} \big) \\
&= \pi_N \I^h_{T_1} ( \ind_{[T_1, T_1 + T_2]} \<3>_N^{\,h}) - 12 \cj{a}_2 \pi_N \I^h_{T_1} \big( \ind_{[T_1, T_1 + T_2]} \<2>_N^{\,h} \cdot \<30>_N^{\,h} \big) \\
&\quad + 3 \pi_N \I^h_{T_1} \big( \ind_{[T_1, T_1 + T_2]} \<2>_N^{\,h} \cdot \pi_N Y_N^\dagger \big) + 3 \pi_N \I^h_{T_1} \big( \ind_{[T_1, T_1 + T_2]} \<1>_N^h (\pi_N Y_N^\dagger)^2 \big) \\
&\quad - 24 \cj{a}_2 \pi_N \I^h_{T_1} \big( \ind_{[T_1, T_1 + T_2]} \<1>_N^h \cdot \<30>_N^{\,h} \cdot \pi_N Y_N^\dagger \big) + 48 \cj{a}_2^2 \pi_N \I^h_{T_1} \big( \ind_{[T_1, T_1 + T_2]} \<1>_N^h (\<30>_N^{\,h})^2 \big) \\
&\quad + \pi_N \I^h_{T_1} \big( \ind_{[T_1, T_1 + T_2]} (\pi_N Y_N^\dagger)^3 \big) - 12 \cj{a}_2 \pi_N \I^h_{T_1} \big( \ind_{[T_1, T_1 + T_2]} \<30>_N^{\,h} (\pi_N Y_N^\dagger)^2 \big) \\
&\quad + 48 \cj{a}_2^2 \pi_N \I^h_{T_1} \big( \ind_{[T_1, T_1 + T_2]} (\<30>_N^{\,h})^2 \pi_N Y_N^\dagger \big) - 64 \cj{a}_2^3 \pi_N \I^h_{T_1} \big( \ind_{[T_1, T_1 + T_2]} (\<30>_N^{\,h})^3 \big).
\end{align*}

\noi
By similar estimates in Lemma~\ref{LEM:LWP_h}~(ii), \eqref{event1}, and \eqref{YNbdd2} along with the fact that the time restriction $\ind_{[T_1, T_1 + T_2]}$ does not affect the regularity of each term,\footnote{\label{foot} In view of the proof of Lemma~\ref{LEM:sto_h}, the term $\I^h_{T_1} (\<3>_N^{\,h})$ can be treated in the same way as $\<30>_N^{\,h}$. Similarly, the term $\I^h_{T_1} (\<2>_N^{\,h} \cdot \<30>_N^{\,h})$ can be treated in the same way as $\<320>_N^h$.} we obtain
\begin{align}
\big\| \pi_N \I^h_{T_1} \big( \ind_{[T_1, T_1 + T_2]} \wick{(\pi_N X_N^\dagger)^3} \big) \big\|_{C_{[T_1, T]} \mathcal{C}_x^{\frac 12 + \eps}} \leq C(T_1, K_0, K_1) 
\label{IXNbdd}
\end{align}

\noi
with
\begin{align*}
C(T_1, K_0, K_1) \deff C' T_1^{\frac{3\al}{2} - \frac{15}{4} - 3\eps} (K_0^3 + K_1^9) .
\end{align*}

We now let $J = \frac{T - T_1}{T_2}$, where we can further shrink $T_2$ so that $J$ is an integer. By a change of variable in time, the invariance statement in Lemma~\ref{LEM:inv_h}, and \eqref{IXNbdd} under the event \eqref{event1} along with \eqref{ZNbdd} and \eqref{YNbdd1}, we have
\begin{align*}
&\rho_N \otimes \mathbb{P}^h \Big( \big\| \pi_N \I^h_{T_1} \big( \wick{(\pi_N X_N^\dagger)^3} \big) \big\|_{C_{[T_1, T]} \mathcal{C}_x^{\frac 12 + \eps}} > J C(T_1, K_0, K_1) \Big) \\
&\leq \rho_N \otimes \mathbb{P}^h \bigg( \sum_{j = 0}^{J - 1} \big\| \pi_N \I^h_{T_1 + jT_2} \big( \ind_{[T_1 + jT_2, T_1 + (j + 1)T_2]} \wick{(\pi_N X_N^\dagger)^3} \big) \big\|_{C_{[T_1, T]} \mathcal{C}_x^{\frac 12 + \eps}} > J C(T_1, K_0, K_1) \bigg) \\
&\leq \sum_{j = 0}^{J - 1} \rho_N \otimes \mathbb{P}^h \Big( \big\| \pi_N \I^h_{T_1} \big( \ind_{[T_1, T_1 + T_2]} \wick{(\pi_N X_N^\dagger (\cdot + j T_2))^3} \big) \big\|_{C_{[T_1, T]} \mathcal{C}_x^{\frac 12 + \eps}} > C(T_1, K_0, K_1) \Big) \\
&= J \cdot \rho_N \otimes \mathbb{P}^h \Big( \big\| \pi_N \I^h_{T_1} \big( \ind_{[T_1, T_1 + T_2]} \wick{(\pi_N X_N^\dagger (\cdot))^3} \big) \big\|_{C_{[T_1, T]} \mathcal{C}_x^{\frac 12 + \eps}} > C(T_1, K_0, K_1) \Big) \\
&< J \big( C e^{- K_0} + 2 e^{- C(T) K_1^\be} \big). 
\end{align*}

\noi
Since we have
\begin{align*}
J \leq \frac{T (K_0^2 + K_1^6)^{\frac{1}{\ta}}}{\kappa'}
\end{align*}

\noi
and also the fact that $T_1$ depends only on $K_0$ and $K_1$, we can take $K_0 \sim K_1 \gg 1$ to be sufficiently large (depending only on $T$ and $\eta'$) so that there exists $C_1' = C_1' (T, \eta') > 0$ such that
\begin{align}
\rho_N \otimes \mathbb{P}^h \Big( \big\| \pi_N \I^h_{T_1} \big( \wick{(\pi_N X_N^\dagger)^3} \big) \big\|_{C_{[T_1, T]} \mathcal{C}_x^{\frac 12 + \eps}} > C_1' \Big) < \frac{\eta'}{3}.
\label{IXNbdd2}
\end{align}

\noi
By using similar and easier steps, we also obtain that there exists $C_1'' = C_1'' (T, \eta') > 0$ such that
\begin{align}
\rho_N \otimes \mathbb{P}^h \Big( \big\| \pi_N \I^h_{T_1} \big( X_N^\dagger \big) \big\|_{C_{[T_1, T]} \mathcal{C}_x^{\frac 12 + \eps}} > C_1'' \Big) < \frac{\eta'}{3}.
\label{IXNbdd3}
\end{align}

\noi
By using the Duhamel formulation \eqref{Duh_h}, we have
\begin{align*}
Y_N^\dagger (t) &= e^{- (t - T_1) \jb{\nb}^{2 \al}} (Y_N^\dagger (T_1)) - a \pi_N \I_{T_1}^h \big( X_N^\dagger \big) (t) \\
&\quad - 4 \cj{a}_2 \pi_N \I^h_{T_1} \big( \wick{(\pi_N X_N^\dagger)^3} \big) (t) + 4 \cj{a}_2 \pi_N \I^h_{T_1} (\<3>_N^{\,h}) (t)
\end{align*} 

\noi
for any $t \geq T_1$. Thus, the bound \eqref{YNbdd0} then follows from Lemma~\ref{LEM:Schau}, \eqref{YNbdd1-1} with $K_0 \sim K_1 \gg 1$ sufficiently large, \eqref{IXNbdd2}, \eqref{IXNbdd3}, and \eqref{ZNbdd} with $K_1 \gg 1$ sufficiently large (see also footnote~\ref{foot}).

\smallskip \noi
\textbf{Step 3:} 
We are now ready to prove the desired bound \eqref{YNbdd_goal}. Below, we work on the events
\begin{align}
\begin{split}
&\| \pmb{Z}_N \|_{\mathcal{Z}_T^{\al, \eps}} \leq K_1, \\ 
&\| Y_N^\dagger (T_1) \|_{\mathcal{C}^{1 + \eps}} \leq C' T_1^{\frac{\al}{2} - \frac 54 - \eps} (K_0 + K_1^3), \\  
&\| Y_N^\dagger \|_{C_{[T_1, T]} \mathcal{C}_x^{\frac 12 + \eps}} \leq C_1.
\end{split}
\label{event2}
\end{align}

\noi
Let $0 < T_3 \leq 1$. By using the Duhamel formulation \eqref{Duh_h} and slightly modifying the estimates in Lemma~\ref{LEM:LWP_h}~(ii) along with \eqref{event2}, we obtain
\begin{align}
\begin{split}
\| Y_N^\dagger \|_{C_{[T_1, T_1 + T_3]} \mathcal{C}_x^{1 + \eps}} &\les \| Y_N^\dagger (T_1) \|_{\mathcal{C}^{1 + \eps}} + \| \pmb{Z}_N \|_{\mathcal{Z}_T^{\al, \eps}}^3 \\
&\quad + T_2^\ta \Big( 1 + \| Y_N^\dagger \|_{C_{[T_1, T_1 + T_3]} \mathcal{C}_x^{\frac 12 + \eps}}^2 \Big) \| Y_N^\dagger \|_{C_{[T_1, T_1 + T_3]} \mathcal{C}_x^{1 + \eps}} \\
&\leq C' T_1^{\frac{\al}{2} - \frac 54 - \eps} (K_0 + K_1^3) + K_1^3 + T_3^\ta (1 + C_1^2)  \| Y_N^\dagger \|_{C_{[T_1, T_1 + T_3]} \mathcal{C}_x^{1 + \eps}}
\end{split}
\label{YN_lin}
\end{align}

\noi
for some $\ta > 0$. Here, when there are more than one $Y_N^\dagger$'s in a nonlinear estimate, we put one $Y_N^\dagger$ in the $\mathcal{C}^{1 + \eps}$ space and put other $Y_N^\dagger$'s in the $\mathcal{C}^{\frac 12 + \eps}$ space. Thus, from \eqref{YN_lin}, by choosing $T_3 > 0$ small enough such that $T_3^\ta (1 + C_1^2) \ll 1$, we obtain the bound
\begin{align*}
\| Y_N^\dagger \|_{C_{[T_1, T_1 + T_3]} \mathcal{C}_x^{1 + \eps}} \les T_1^{\frac{\al}{2} - \frac 54 - \eps} (K_0 + K_1^3) .
\end{align*}

\noi
Note that we can choose $T_3$ in a way such that $T_3$ depends only on $C_1$. Thus, we can iterate the above steps on subsequent intervals $[T_1 + T_3, T_1 + 2 T_3]$, $[T_1 + 2 T_3, T_1 + 3 T_3]$, ..., so that we obtain the bound
\begin{align}
\| Y_N^\dagger \|_{C_{[T_1, T]} \mathcal{C}_x^{1 + \eps}} \leq C_2,
\label{YNbdd3}
\end{align}

\noi
where $C_2 > 0$ is a constant depending on $T$, $T_1$, $K_0$, $K_1$, and $C_1$. From Step 2 above, we know that $C_2$ depends only on $T$ and $\eta'$ given by \eqref{YNbdd0}.

From \eqref{YNbdd3} under the event \eqref{event2} along with \eqref{ZNbdd}, \eqref{YNbdd1-1}, and \eqref{YNbdd0} (with $K_0$ and $K_1$ sufficiently large), we obtain 
\begin{align}
\rho_N \otimes \mathbb{P}^h \big( \| Y_N^\dagger \|_{C_{[T_1, T]} \mathcal{C}_x^{1 + \eps}} > C_2 \big) < 2 \eta'.
\label{YNbdd4}
\end{align}

\noi
Therefore, by taking $\eta' = \frac{\eta}{4}$, we obtain the desired bound \eqref{YNbdd_goal} from \eqref{YNbdd4} and \eqref{YNbdd1} (with $K_0$ and $K_1$ sufficiently large) with a separate discussion on the cases $T_0 \leq T_1$ and $T_0 \geq T_1$.
\end{proof}

We will also need the following stability estimate.
\begin{lemma}
\label{LEM:stb_h}
Let $1 < \al \leq \frac 98$, $\eps > 0$ be sufficiently small, $T \geq 1$, and $0 < T_0 \leq 1$. Let $K \gg 1$ and $C_0 \gg 1$. For $j = 1, 2$, let $Y_{N_j}^\dagger$ be the solution to the equation \eqref{SNLHYNd} with $N = N_j$ on $[0, T]$ with the enhanced data set $\pmb{Z}_{N_j}$ defined in \eqref{defZN1}. Then, there exist a large $N_0 = N_0 (T, K, C_0) \in \N$ and a small $\kappa_0 = \kappa_0 (T, K, C_0) > 0$ such that the following statement holds true. Given $N_1 \geq N_2 \geq N_0$ and suppose that
\begin{align}
\| \pmb{Z}_{N_1} \|_{\mathcal{Z}_T^{\al, \eps}} \leq K
\label{ZN1_bdd}
\end{align}

\noi
and
\begin{align}
\| Y_{N_1}^\dagger \|_{C_{[T_0, T]} \mathcal{C}_x^{1 + \eps}} \leq C_0,
\label{YN1_bdd}
\end{align}

\noi
where the $\mathcal{Z}_T^{\al, \eps}$-norm is as defined in \eqref{defZTae}. Furthermore, suppose that
\begin{align}
\| \pmb{Z}_{N_1} - \pmb{Z}_{N_2} \|_{\mathcal{Z}_T^{\al, \eps}} \leq N_2^{-\ta}
\label{Z_diff}
\end{align}

\noi
for some $\ta > 0$, and
\begin{align}
\| Y_{N_1}^\dagger (T_0) - Y_{N_2}^\dagger (T_0) \|_{\C^{1 + \eps}} \leq \kappa
\label{Xinit_diff}
\end{align}

\noi
for some $0 \leq \kappa \leq \kappa_0$.
Then, we have the difference estimate
\begin{align}
\| Y_{N_1}^\dagger - Y_{N_2}^\dagger \|_{C_{[T_0, T]} \mathcal{C}_x^{1 + \eps}} \leq C(T, K, C_0) (\kappa + N_2^{- \ta'})
\label{YN_diff}
\end{align}

\noi
for some constant $C (T, K, C_0) > 0$ and some $\ta' > 0$.
\end{lemma}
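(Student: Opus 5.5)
The plan is a Gronwall-type iteration on short subintervals of $[T_0, T]$, using the Duhamel formulation \eqref{Duh_h} started at the left endpoint of each subinterval. Set $D = Y_{N_1}^\dagger - Y_{N_2}^\dagger$. Subtracting the two versions of \eqref{Duh_h} (with $N = N_1$ and $N = N_2$) on $[t_0, t_0 + \tau] \subset [T_0, T]$ expresses $D$ as the sum of three kinds of terms:
\begin{itemize}
\item[(a)] the free evolution $e^{-(t-t_0)\jb{\nb}^{2\al}} D(t_0)$;
\item[(b)] terms in which a difference $\pmb{Z}_{N_1} - \pmb{Z}_{N_2}$ of enhanced data appears;
\item[(c)] terms coming from the mismatch of projections, $\pi_{N_1} - \pi_{N_2}$, applied either outside the Duhamel integrals or to $Y_{N_j}^\dagger$ inside the nonlinearity;
\end{itemize}
together with terms that are multilinear in $(\pmb{Z}_{N_j}, Y_{N_j}^\dagger)$ with exactly one factor equal to $D$.

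\textbf{Step 1: a-priori bound on $Y_{N_2}^\dagger$.} I would first use a continuity/bootstrap argument to show that $\|Y_{N_2}^\dagger\|_{C_{[T_0,t]}\mathcal{C}^{1+\eps}_x} \le 2C_0$ as long as $\|D\|_{C_{[T_0,t]}\mathcal{C}^{1+\eps}_x} \le 1$. Since $\|\pmb{Z}_{N_2}\|_{\mathcal{Z}_T^{\al,\eps}} \le K+1$ by \eqref{ZN1_bdd} and \eqref{Z_diff}, all the objects entering the estimates are then bounded by constants depending only on $(K, C_0)$.

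\textbf{Step 2: estimating each type of term in $C_{[t_0, t_0+\tau]}\mathcal{C}^{1+\eps}$.} I would reuse the estimates from the proof of Lemma~\ref{LEM:LWP_h}~(ii), which is possible because on $[T_0, T]$ no time weights are needed.
\begin{itemize}
\item For the multilinear terms containing one factor $D$, I put $D$ in $\mathcal{C}^{1+\eps}$ and the remaining factors in their natural spaces. Each such estimate gains a positive power of $\tau$, giving the bound $C(K, C_0)\,\tau^{\theta}\,\|D\|$.
\item For the terms of type (b), the same estimates with one enhanced-data factor replaced by $\pmb{Z}_{N_1} - \pmb{Z}_{N_2}$ give $C(K, C_0)\,N_2^{-\ta}$, by \eqref{Z_diff}.
\item For the terms of type (c), I write $\pi_{N_1} - \pi_{N_2} = \pi_{N_2}^{\perp}\pi_{N_1}$ and sacrifice a small amount of regularity. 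Every term is estimated in a space slightly stronger than $\mathcal{C}^{1+\eps}$, for instance using the smoothing room in the Schauder estimate (Lemma~\ref{LEM:Schau}). This gives $\|\pi_{N_2}^\perp f\|_{\mathcal{C}^{1+\eps}} \lesssim N_2^{-\dl}\|f\|_{\mathcal{C}^{1+\eps+\dl}}$.
\end{itemize}

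\textbf{Step 3: iteration.} Combining the bounds from Step 2,
\[
\|D\|_{C_{[t_0,t_0+\tau]}\mathcal{C}^{1+\eps}_x} \le C\|D(t_0)\|_{\mathcal{C}^{1+\eps}} + C(K,C_0)\bigl(\tau^\theta\|D\|_{C_{[t_0,t_0+\tau]}\mathcal{C}^{1+\eps}_x} + N_2^{-\ta'}\bigr).
\]
I then choose $\tau = \tau(K, C_0)$ so small that the middle term can be absorbed into the left-hand side. Iterating over the $\sim T/\tau$ subintervals, starting from \eqref{Xinit_diff}, gives
\[
\|D\|_{C_{[T_0,T]}\mathcal{C}^{1+\eps}_x} \le C^{T/\tau}(\kappa + N_2^{-\ta'}).
\]
Finally, I choose $\kappa_0$ small and $N_0$ large so that the right-hand side is at most $1$, which closes the bootstrap of Step 1 and yields \eqref{YN_diff}.

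\textbf{Main obstacle.} I expect the main difficulty to be the projection mismatch in Step 2: finding a negative power of $N_2$ for $\pi_{N_2}^\perp$ applied to the nonlinear terms, particularly those involving the rough objects $\<1>^h_N$ and $\<2>^{\,h}_N$. The first thing I would try is to use the strict gaps already present in the local theory. The Schauder gain $2\al$ leaves room beyond $1+\eps$ in each Duhamel term, for example $2\al + (2\al - 3 - \eps) > 1 + 2\eps$. This room allows each Duhamel output to be estimated in $\mathcal{C}^{1+2\eps}$, and the projection difference then costs only a factor $N_2^{-\eps}$.
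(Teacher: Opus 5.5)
Your proof is correct, but you globalize the difference estimate differently from the paper.

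\textbf{The paper's route.} The paper writes a single Duhamel formula for $\dl Y^\dagger = Y^\dagger_{N_2} - Y^\dagger_{N_1}$ starting at $T_0$. It then works on all of $[T_0,T]$ at once, in the exponentially weighted norm $\| e^{-\ld (t - T_0)} u \|_{C_{[T_0,T]} \mathcal{C}_x^{1+\eps}}$. Every term linear in $\dl Y^\dagger$ gains a factor $\ld^{-1/p'}$, obtained by applying H\"older's inequality to $e^{-\ld(t-T_0)} \int_{T_0}^t e^{\ld(t'-T_0)} (t-t')^{-\g}\, dt'$. Choosing $\ld = \ld(T,K,C_0)$ large absorbs all these terms in one stroke, and the weight is removed at the cost $e^{\ld T}$.

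\textbf{Your route.} You use the short-time gain $\tau^{\theta}$ already present in the estimates of Lemma~\ref{LEM:LWP_h}~(ii). You absorb on intervals of length $\tau(K,C_0)$ and iterate, paying $C^{T/\tau}$. Both routes give a constant depending only on $(T,K,C_0)$. Your bootstrap on $\|D\| \le 1$, which gives $\|Y^\dagger_{N_2}\| \le C_0 + 1$, plays the role of the paper's continuity argument for $\|Y^\dagger_{N_2}\| \le 4C_0$. To close it, either make the final bound $\le \frac12$ rather than $\le 1$, or run it as a discrete induction over the nodes.

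\textbf{What each buys.} Your route is more elementary and needs no new inequality. However, it restarts the Duhamel formula at every node $t_0$. At each restart the quintic term appears as $\pi_N \I^h_{t_0}(\<2>^{\,h}_N \cdot \<30>^{\,h}_N)$ rather than as the enhanced-data element $\<320>^h_N$. You therefore need the identity $\pi_N \I^h_{t_0}(\<2>^{\,h}_N \cdot \<30>^{\,h}_N)(t) = \<320>^h_N(t) - e^{-(t-t_0)\jb{\nb}^{2\al}} \<320>^h_N(t_0)$ each time, so that this contribution stays of your type (b). The weighted norm needs this only once, at $T_0$, and avoids tracking how the errors accumulate over the $\sim T/\tau$ steps.
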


\begin{proof}
Following the idea in \cite[Proposition~6.8]{OOT2}, we define the norm
\begin{align*}
\| u \|_{C_{[T_0, T]}^\ld \mathcal{C}_x^s} \deff \| e^{- \ld (t - T_0)} u \|_{C_{[T_0, T]} \mathcal{C}_x^s}
\end{align*}

\noi
for any $T > 0$, $\ld \geq 1$, and $s \in \R$. Note that we have the obvious bound
\begin{align}
\| u \|_{C_{[T_0, T]}^\ld \mathcal{C}_x^s} \leq \| u \|_{C_{[T_0, T]} \mathcal{C}_x^s}.
\label{CTld1}
\end{align}

By \eqref{Z_diff} and \eqref{ZN1_bdd}, we have
\begin{align}
\| \pmb{Z}_{N_2} \|_{\mathcal{Z}_T^{\al, \eps}} \leq K + 1.
\label{ZN2_bdd}
\end{align}

\noi
We also assume that
\begin{align}
\| Y_{N_2}^\dagger \|_{C_{[T_0, T]} \mathcal{C}_x^{1 + \eps}} \leq 4 C_0.
\label{YN2_bdd}
\end{align}

\noi
By setting 
\begin{align*}
\dl Y^\dagger = Y_{N_2}^\dagger - Y_{N_1}^\dagger,
\end{align*}

\noi
we have
\begin{align*}
Y_{N_2}^\dagger = \dl Y^\dagger + Y_{N_1}^\dagger.
\end{align*}

\noi
Thus, in view of the Duhamel formulation \eqref{Duh_h}, we have
\begin{align}
\dl Y^\dagger = \G^h_{T_0, N_2} [ \dl Y^\dagger + Y_{N_1}^\dagger ] - \G^h_{T_0, N_1} [ Y_{N_1}^\dagger ].
\label{stb_Duh}
\end{align}


\noi
Note that we can write
\begin{align*}
e^{- \ld (t - T_0)} \dl Y^\dagger (t) = e^{- \ld (t - T_0)} \textup{I}_1 (t) + e^{- \ld (t - T_0)} \textup{I}_2 (t) + e^{- \ld (t - T_0)} \textup{I}_3 (t),
\end{align*}

\noi
where each term in $\textup{I}_1$ contains $Y_{N_1}^\dagger (T_0) - Y_{N_2}^\dagger (T_0)$ or the difference of one of the elements in the enhanced data sets $\pmb{Z}_{N_1}$ and $\pmb{Z}_{N_2}$, each term in $\textup{I}_2$ contains the difference $\pi_{N_1} - \pi_{N_2}$, and each term in $\textup{I}_3$ contains $\dl Y^\dagger$. We would like to modify the estimates in the proof of Lemma~\ref{LEM:LWP_h}~(ii) to deal with $\textup{I}_1$, $\textup{I}_2$, and $\textup{I}_3$.
For each term in $\textup{I}_2$, we produce a factor $N_2^{- \ta'}$ for some $\ta' > 0$ by losing some small amount of spatial regularity. For each term in $\textup{I}_3$, we put a $e^{- \ld (t' - T_0)}$ factor on one of $\dl Y^\dagger$ and then use the estimate (with $0 < \g < 1$, $p > 1$, $p \g < 1$, and $\frac{1}{p} + \frac{1}{p'} = 1$)
\begin{align}
e^{- \ld (t - T_0)} \int_{T_0}^t \frac{e^{\ld (t' - T_0)}}{(t - t')^{\g}} dt' \leq e^{- \ld (t - T_0)} \| e^{\ld (t' - T_0)} \|_{L^{p'}_{t'} ([T_0, t])} \| (t - t')^{- \g} \|_{L^p_{t'} ([T_0, t])} \les_T \ld^{- \frac{1}{p'}}
\label{CTld2}
\end{align}

\noi
for all $T_0 \leq t \leq T$, which follows from H\"older's inequality. Thus, from the estimates in the proof of Lemma~\ref{LEM:LWP_h}~(ii) along with \eqref{CTld1}, \eqref{ZN1_bdd}, \eqref{YN1_bdd}, \eqref{Z_diff}, \eqref{Xinit_diff}, \eqref{ZN2_bdd}, \eqref{YN2_bdd}, and \eqref{CTld2}, we obtain
\begin{align}
\| \dl Y^\dagger \|_{C_{[T_0, T]}^\ld \mathcal{C}_x^{1 + \eps}} \leq C (T, K, C_0) (\kappa + N_2^{- \ta'}) + C(T, K, C_0) \ld^{- \ta''}  \| \dl Y^\dagger \|_{C_{[T_0, T]}^\ld \mathcal{C}_x^{1 + \eps}}
\label{stb1}
\end{align}

\noi
for some constant $C(T, K, C_0) > 0$ (which may vary from line to line below) and $0 < \ta'' \ll 1$. We choose $\ld = \ld (T, K, C_0) \gg 1$ to be large enough such that
\begin{align*}
C(T, K, C_0) \ld^{- \ta''} \leq \frac 12,
\end{align*}

\noi
so that we obtain
\begin{align}
\| \dl Y^\dagger \|_{C_{[T_0, T]} \mathcal{C}_x^{1 + \eps}} \leq e^{\ld T} \| \dl Y^\dagger \|_{C_{[T_0, T]}^\ld \mathcal{C}_x^{1 + \eps}} \leq C (T, K, C_0) (\kappa + N_2^{- \ta'}) .
\label{YN_dif1}
\end{align}

It remains to verify the bound \eqref{YN2_bdd}. We assume for the sake of contradiction that there exists $T_0 < T_* < T$ such that $T_*$ is the largest number such that the bound
\begin{align*}
\| Y_{N_2}^\dagger \|_{C_{[T_0, T_*]} \mathcal{C}_x^{1 + \eps}} \leq 2 C_0
\end{align*}

\noi
holds. Then, by continuity, we know that there exists $T_1 > 0$ with $T_* + T_1 \leq T$ such that
\begin{align*}
\| Y_{N_2}^\dagger \|_{C_{[T_0, T_* + T_1]} \mathcal{C}_x^{1 + \eps}} \leq 4 C_0,
\end{align*}

\noi
so that the above argument that leads to \eqref{YN_dif1} gives
\begin{align}
\| \dl Y^\dagger \|_{C_{[T_0, T_* + T_1]} \mathcal{C}_x^{1 + \eps}} \leq C (T, K, C_0) (\kappa + N_2^{- \ta'}) .
\label{YN_dif2}
\end{align}

\noi
Then, from \eqref{YN_dif2} and \eqref{YN1_bdd}, we can take $\kappa = \kappa (T, K, C_0) \ll 1$ to be small enough and $N = N (T, K, C_0) \gg 1$ to be large enough such that
\begin{align*}
\| Y_{N_2}^\dagger \|_{C_{[T_0, T_* + T_1]} \mathcal{C}_x^{1 + \eps}} 
\leq \| \dl Y^\dagger \|_{C_{[T_0, T_* + T_1]} \mathcal{C}_x^{1 + \eps}} + \| Y_{N_1}^\dagger \|_{C_{[T_0, T]} \mathcal{C}_x^{1 + \eps}} \leq 2 C_0,
\end{align*}

\noi
which contradicts the definition of $T_*$. Thus, we must have $T_* = T$, and so we have finished the proof.
\end{proof}

In view of the weak convergence of the truncated Gibbs measure $\rho_N$ in \eqref{rhoN} to the limiting Gibbs measure $\rho$ (see \eqref{rho}), it follows from the Skorokhod representation theorem (Lemma~\ref{LEM:skorok}) that there exist a probability space $(\Om', \mathcal{F}', \PP')$ and random variables $X_{0, N}^\dagger$ and $X_0^\dagger$ such that
\begin{align}
\Law (X_{0, N}^\dagger) = \rho_N \quad \text{and} \quad \Law (X_0^\dagger) = \rho,
\label{defX0d}
\end{align}

\noi
and $X_{0, N}^\dagger$ converges $\PP'$-almost surely to $X_0^\dagger$ in $\C^{\al - \frac 32 - \eps} (\T^3)$. \footnote{To avoid the issue that the space $\mathcal{C}^{\al - \frac 32 - \eps} (\T^3)$ is non-separable, one can view the underlying metric space of the Gibbs measures $\rho_N$ and $\rho$ as $W^{\al - \frac 32 - \frac{\eps}{2}, p} (\T^3)$ with $\frac{6}{\eps} < p < \infty$ (see the proof of Proposition~\ref{PROP:tight}), on which one can apply the Skorokhod representation theorem. The almost sure convergence in the space $\mathcal{C}^{\al - \frac 32 - \eps} (\T^3)$ then follows from the embedding properties in Lemma~\ref{LEM:Bes}.}

We are now ready to show almost sure global well-posedness of the fractional parabolic $\Phi_3^4$-model \eqref{SNLH} and invariance of the Gibbs measure $\rho$ in \eqref{rho} under the flow of \eqref{SNLH}. 
\begin{proposition}
\label{PROP:GWP_h}
Let $1 < \al \leq \frac 98$ and $\eps > 0$. Let $X_{0, N}^\dagger$ and $X_0^\dagger$ be random variables on the probability space $(\Om', \F', \PP')$ as given by \eqref{defX0d}. Then, given any $T \geq 1$, the solution $X_N^\dagger$ to the truncated fractional parabolic $\Phi_3^4$-model \eqref{SNLHNd} with initial data $X_{0, N}^\dagger$ converges $\PP' \otimes \PP^h$-almost surely to some non-trivial stochastic process $X$ with $X (0) = X_0^\dagger$ in $C([0, T]; \mathcal{C}^{\al - \frac 32 - \eps} (\T^3))$ as $N \to \infty$. Furthermore, we have $\Law (X (t)) = \rho$ for any $t \in \R_+$.
\end{proposition}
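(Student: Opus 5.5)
We follow the approach of \cite{OOT2, LTzW} for hyperbolic equations, now in the parabolic setting: combine the uniform probabilistic bound of Lemma~\ref{LEM:unibdd_h}, the stability estimate of Lemma~\ref{LEM:stb_h}, and the almost sure convergence of the enhanced data sets $\pmb{Z}_N$ from Lemma~\ref{LEM:sto_h}. Write $X_N^\dagger = \<1>^h - 4\cj{a}_2\<30>_N^{\,h} + Y_N^\dagger$ as in \eqref{sec_XN}. Since $\<1>^h$ and $\<30>_N^{\,h}$ converge almost surely in $C_T\mathcal{C}^{\al-\frac32-\eps}$ by Lemma~\ref{LEM:sto_h}, it suffices to show that $\{Y_N^\dagger\}$ is Cauchy in $C_T\mathcal{C}^{\al-\frac32-\eps}$ with $\PP'\otimes\PP^h$-probability tending to one along sufficiently sparse subsequences. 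Almost sure convergence of the full sequence would then follow by a Borel–Cantelli argument with summable tolerances, combined with the fact that all bounds are quantitative.

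\textbf{Short times $[0,T_0]$.} Here we use Lemma~\ref{LEM:LWP_h}~(i) and its continuous dependence on the data. On the event $\|X_0^\dagger\|_{\mathcal{C}^{\al-\frac32-\eps}}\le K_0$ and $\|\pmb{Z}_{N}\|\le K_1$, which has large probability by Step~1 of Lemma~\ref{LEM:unibdd_h} and by \eqref{tail_h}, the local existence time $T_0$ is uniform in $N$. The initial differences $Y_{0,N_1}-Y_{0,N_2}$ tend to zero almost surely because $X_{0,N}^\dagger\to X_0^\dagger$ and $\<30>_N^{\,h}(0)$ converges. The contraction argument then gives smallness of $\|Y_{N_1}^\dagger-Y_{N_2}^\dagger\|_{\mathcal{Y}_{T_0}^{\al,\eps}}$. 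In particular, since $\mathcal{Y}_{T_0}^{\al,\eps}$ controls the $\mathcal{C}^{1+\eps}$ norm at time $T_0$, we obtain smallness of $\|Y_{N_1}^\dagger(T_0)-Y_{N_2}^\dagger(T_0)\|_{\mathcal{C}^{1+\eps}}$.

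\textbf{Long times $[T_0,T]$.} Given $\eta>0$, Lemma~\ref{LEM:unibdd_h} provides $C_0$ such that \eqref{YN1_bdd} holds for $N_1$ outside a set of probability at most $\eta$. Note that the law of $(X_{0,N}^\dagger,\xi^h)$ on $\Om'\times\Om^h$ is exactly $\rho_N\otimes\PP^h$, so that lemma transfers directly. The hypotheses \eqref{ZN1_bdd} and \eqref{Z_diff} hold with large probability by \eqref{tail_h} and \eqref{taildiff_h}, and \eqref{Xinit_diff} holds with large probability by the short-time step above. Lemma~\ref{LEM:stb_h} then gives \eqref{YN_diff}, so $Y_N^\dagger$ is Cauchy in $C_{[T_0,T]}\mathcal{C}^{1+\eps}$ off a set of probability $O(\eta)$. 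Since $\eta$ is arbitrary, we obtain convergence in probability. To upgrade to almost sure convergence, we would use the quantitative rates $N_2^{-\ta}$ and $N_2^{-\ta'}$ together with the almost sure Cauchy property of $\pmb{Z}_N$. The main obstacle I expect is exactly this upgrade, together with the gluing at time $T_0$: the hypothesis \eqref{Xinit_diff} requires $\kappa\le\kappa_0(T,K,C_0)$, while $C_0$ depends on $\eta$. To handle this, I would fix $\eta$ first, then take $N_2$ large enough that the initial difference is below $\kappa_0$ outside an $\eta$-set, and finally argue that the limit is independent of the subsequence.

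\textbf{Invariance.} We identify the limit $X=\<1>^h-4\cj{a}_2\<30>^{\,h}+Y$, which satisfies $X(0)=X_0^\dagger$. For each fixed $t$, Lemma~\ref{LEM:inv_h} gives $\Law(X_N^\dagger(t))=\rho_N$. The almost sure convergence $X_N^\dagger(t)\to X(t)$ implies weak convergence of the laws, and $\rho_N\to\rho$ weakly, so $\Law(X(t))=\rho$. Non-triviality of $X$ follows from the fact that its law at each time is $\rho$.
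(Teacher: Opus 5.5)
Your short-time and long-time steps give convergence in probability of $X_N^\dagger$, and hence almost sure convergence along a subsequence. Your invariance and non-triviality arguments are fine, since they only use convergence in law at each fixed $t$. However, the proposition asserts $\PP'\otimes\PP^h$-almost sure convergence of the \emph{full} sequence. The upgrade you flag as the ``main obstacle'' is a genuine gap, and your Borel--Cantelli repair does not close it.

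The a priori bound \eqref{YN1_bdd} from Lemma~\ref{LEM:unibdd_h} holds for each fixed $N$ only outside an exceptional set of probability $\eta$. These sets are not summable in $N$. Moreover, $C_0(T,\eta)$, and hence $\kappa_0(T,K,C_0)$ and $C(T,K,C_0)$ in Lemma~\ref{LEM:stb_h}, have no quantified dependence on $\eta$. So the rates $N_2^{-\ta}$ and $N_2^{-\ta'}$ cannot be turned into summable tolerances. For the same reason, your claim that $Y_N^\dagger$ is Cauchy ``off a set of probability $O(\eta)$'' holds only pair by pair, not for all pairs on a single event. Borel--Cantelli along a sparse subsequence says nothing about the intermediate indices.

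The missing observation, which is how the paper proceeds, is that Lemma~\ref{LEM:stb_h} needs the a priori bound only for the \emph{larger} index $N_1$. First reduce almost sure Cauchy-ness to
\[
\lim_{N_0\to\infty}\lim_{N\to\infty}\PP'\otimes\PP^h\Big(\|Y_{N_1}^\dagger-Y_{N_2}^\dagger\|_{C_T\mathcal{C}_x^{\al-\frac32-\eps}}\le\kappa\ \text{for all } N\ge N_1\ge N_2\ge N_0\Big)=1 .
\]
This suffices because the events decrease in $N$, so the inner limit is the probability of the event for all $N_1\ge N_2\ge N_0$, and they increase in $N_0$.

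Now fix a top index $N$ and apply Lemma~\ref{LEM:stb_h} with $N_1=N$ and $N_2=N'$ for every $N'\in[N_0,N]$.
\begin{itemize}
\item The hypotheses \eqref{YN1_bdd} and \eqref{ZN1_bdd} are then needed only for the single index $N$.
\item The hypotheses \eqref{Z_diff} and \eqref{Xinit_diff} hold simultaneously for all $N'\in[N_0,N]$ on one event of large probability. This follows from the almost sure convergence of $X_{0,N}^\dagger$, the difference tail bounds for $\pmb{Z}_N$, and your short-time continuous-dependence step.
\end{itemize}
The triangle inequality through $Y_N^\dagger$ then gives the Cauchy property for all pairs in $[N_0,N]$ outside a set of probability $O(\eta)$, uniformly in $N$.

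Alternatively, you could first construct the limit in probability, transfer the a priori bound to that limit, and use a stability estimate with the bound placed on the limit, in the spirit of Lemma~\ref{LEM:stb_h2}. Some device of this kind is needed either way.
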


\begin{proof}
We prove the proposition in the following two steps.

\smallskip \noi
\textbf{Step 1:} Let us first show almost sure global well-posedness for the fractional parabolic $\Phi^4_3$-model \eqref{SNLH}. Our goal is to show that for any $T \geq 1$ and $\kappa > 0$, we have
\begin{align*}
\lim_{N_0 \to \infty} \PP' \otimes \mathbb{P}^h \Big( \| X_{N_1}^\dagger - X_{N_2}^\dagger \|_{C_T \mathcal{C}_x^{\al - \frac 32 - \eps}} \leq \kappa \text{ for all } N_1 \geq N_2 \geq N_0 \Big) = 1.
\end{align*}

\noi
Note that it suffices to show
\begin{align*}
\lim_{N_0 \to \infty} \lim_{N \to \infty} \PP' \otimes \mathbb{P}^h \Big( \| X_{N_1}^\dagger - X_{N_2}^\dagger \|_{C_T \mathcal{C}_x^{\al - \frac 32 - \eps}} \leq \kappa \text{ for all } N \geq N_1 \geq N_2 \geq N_0 \Big) = 1.
\end{align*}

\noi
By using the second order expansion \eqref{sec_XN} and Lemma~\ref{LEM:sto_h} for the $\<30>^{\,h}$ term, we only need to show
\begin{align}
\begin{split}
\lim_{N_0 \to \infty} \lim_{N \to \infty} \PP' \otimes \mathbb{P}^h \Big( \| Y_{N_1}^\dagger - Y_{N_2}^\dagger \|_{C_T \mathcal{C}_x^{\al - \frac 32 - \eps}} \leq \kappa \text{ for all } N \geq N_1 \geq N_2 \geq N_0 \Big) = 1,
\end{split}
\label{inv_h2}
\end{align}

\noi
where $Y_{N_j}^\dagger = Y_{N_j}^\dagger (X_{0, N_j}^\dagger, \pmb{Z}_{N_j})$ is the solution to the equation \eqref{SNLHYNd} with enhanced data set $\pmb{Z}_{N_j}$ defined in \eqref{defZN1}, $j = 1, 2$.

Let $\eta > 0$ be arbitrary. 
From \eqref{initbdd}, we know that there exists $K_1 = K_1 (\eta)$ such that for any $N \in \N$,
\begin{align}
\PP' \big( \| X_{0, N}^\dagger \|_{\C^{\al - \frac 32 - \eps}} > K_1 \big) < \frac{\eta}{16} .
\label{inv_data00}
\end{align}

\noi
In view of the $\PP'$-almost sure convergence of $X_{0, N}^\dagger$ to $X_0^\dagger$ in $\C^{\al - \frac 32 - \eps} (\T^3)$, we know that for any $\kappa_1 > 0$ small, there exists $N_0 = N_0 (\kappa_1, \eta) \gg 1$ such that for any $N \geq N_0$,
\begin{align}
\PP' \big( \| X_{0, N}^\dagger - X_{0, N'}^\dagger \|_{\C^{\al - \frac 32 - \eps}} > \kappa_1 \text{ for some } N \geq N' \geq N_0 \big) < \frac{\eta}{16} .
\label{inv_data0}
\end{align}

\noi
Recalling the $\mathcal{Z}_T^{\al, \eps}$-norm in \eqref{defZTae} and applying the tail estimates in Lemma~\ref{LEM:sto_h}, we deduce that there exist $K_2 = K_2 (T, \eta) \gg 1$ sufficiently large, $N_0 = N_0 (T, \eta) \gg 1$ sufficiently large, and small $\ta > 0$ such that for any $N \geq N_0$,
\begin{align}
\mathbb{P}^h \big( \| \pmb{Z}_N \|_{\mathcal{Z}_T^{\al, \eps}} > K_2 \big) < \frac{\eta}{16}
\label{inv_data1}
\end{align}

\noi
and
\begin{align}
\mathbb{P}^h \big( \| \pmb{Z}_N - \pmb{Z}_{N'} \|_{\mathcal{Z}_T^{\al, \eps}} > (N')^{- \ta} \text{ for some } N \geq N' \geq N_0 \big) < \frac{\eta}{16}.
\label{inv_data2}
\end{align}

\noi
From the continuous dependence in Lemma~\ref{LEM:LWP_h}~(i) along with \eqref{inv_data00}, \eqref{inv_data0}, \eqref{inv_data1}, and \eqref{inv_data2}, we know that for any $\kappa_2 > 0$ small, there exist $0 < T_0 = T_0 (K_1, K_2) = T_0 (T, \eta) \leq 1$ and $N_0 = N_0 (\kappa_2, T, \eta) \gg 1$ sufficiently large such that for any $N \geq N_0$,
\begin{align*}
\mathbb{P}' \otimes \mathbb{P}^h \big( \| Y_N^\dagger - Y_{N'}^\dagger \|_{\mathcal{Y}_{T_0}^{\al, \eps}} > \kappa_2 \text{ for some } N \geq N' \geq N_0 \big) < \frac{\eta}{4} ,
\end{align*}

\noi
where the $\mathcal{Y}_{T_0}^{\al, \eps}$-norm is defined in \eqref{defYtae}. In particular, we have
\begin{align}
\mathbb{P}' \otimes \mathbb{P}^h \Big( \| Y_N^\dagger - Y_{N'}^\dagger \|_{C_{T_0} \C_x^{\al - \frac 32 - \eps}} > \kappa_2 \text{ for some } N \geq N' \geq N_0 \Big) < \frac{\eta}{4}
\label{inv_Ydiff1}
\end{align}

\noi
and
\begin{align}
\mathbb{P}' \otimes \mathbb{P}^h \Big( \| Y_N^\dagger (T_0) - Y_{N'}^\dagger (T_0) \|_{\C^{1 + \eps}} > T_0^{\frac{\al}{2} - \frac 54 - \eps} \kappa_2 \text{ for some } N \geq N' \geq N_0 \Big) < \frac{\eta}{4} .
\label{inv_Ydiff2}
\end{align}

By Lemma~\ref{LEM:unibdd_h}, there exists $C_0 = C_0 (T, \eta) \gg 1$ such that
\begin{align}
\PP' \otimes \mathbb{P}^h \big( \| Y_N^\dagger \|_{C_{[T_0, T]} \mathcal{C}_x^{1 + \eps}} > C_0 \big) < \frac{\eta}{8}
\label{inv_Rbdd}
\end{align}

\noi 
for any $N \geq N_0$. Thus, for any $\kappa > 0$, by Lemma~\ref{LEM:stb_h} along with \eqref{inv_Ydiff2} with $\kappa_2 = \kappa_2 (\kappa, T, K_2, C_0) = \kappa_2 (\kappa, T, \eta) > 0$ sufficiently small, \eqref{inv_data1}, \eqref{inv_data2}, and \eqref{inv_Rbdd}, there exists $N_0 = N_0 (\kappa, T, \eta) \gg 1$ sufficiently large such that for any $N \geq N_0$,
\begin{align*}
\PP' \otimes \mathbb{P}^h \big( \| Y_N^\dagger - Y_{N'}^\dagger \|_{C_{[T_0, T]} \mathcal{C}_x^{1 + \eps}} > \tfrac 12 \kappa \text{ for some } N \geq N' \geq N_0 \big) < \frac{\eta}{2} .
\end{align*}

\noi
This shows that for any $\kappa > 0$ and $N \geq N_0$, we have
\begin{align*}
\PP' \otimes \mathbb{P}^h \big( \| Y_{N_1}^\dagger - Y_{N_2}^\dagger \|_{C_{[T_0, T]} \mathcal{C}_x^{1 + \eps}} > \kappa \text{ for some } N \geq N_1 \geq N_2 \geq N_0 \big) < \eta,
\end{align*}

\noi
which implies the desired bound \eqref{inv_h2}.

\smallskip \noi
\textbf{Step 2:} We now prove the invariance part. Let $t > 0$ and $F: \mathcal{C}^{-100} (\T^3) \to \R$ be bounded and Lipschitz. From Step 1, we let $X$ be the $\PP' \otimes \mathbb{P}^h$-almost sure limit of the truncated solutions $X_N^\dagger$. Our goal is to show that
\begin{align}
\int F (X (t)) d (\PP' \otimes \mathbb{P}^h) = \int F (u) d \rho (u).
\label{inv_hgoal}
\end{align}

\noi
By the dominated convergence theorem, Lemma~\ref{LEM:inv_h}, and the weak convergence of $\rho_N$ to $\rho$, we have
\begin{align}
\begin{split}
\int F (X (t)) d (\PP' \otimes \mathbb{P}^h) &= \lim_{N \to \infty} \int F (X_N^\dagger (t)) d (\PP' \otimes \mathbb{P}^h) \\
&= \lim_{N \to \infty} \int F (u) d \rho_N (u) \\
&= \int F (u) d \rho (u),
\end{split}
\label{inv_h3}
\end{align}

\noi
which gives the desired invariance result \eqref{inv_hgoal}.
\end{proof}

Note that from Lemma~\ref{LEM:sto_h}, we deduce that the limiting process $Y$ of $\{ Y_N^\dagger \}_{N \in \N}$ shown in the proof of Proposition~\ref{PROP:GWP_h} satisfies the following equation:
\begin{align}
\begin{split}
Y &= e^{- t \jb{\nb}^{2 \al}} \big( X_0 - \<1>^h (0) + 4 \cj{a}_2 \<30>^{\,h} (0) \big) - a \I^h \big( \<1>^h - 4 \cj{a}_2 \<30>^{\,h} + Y \big) + 48 \cj{a}_2^2 \<320>^h \\
&\quad - 12 \cj{a}_2 \I^h (\<2>^{\,h} Y) - 12 \cj{a}_2 \I^h (\<1>^h Y^2) + 96 \cj{a}_2^2 \I^h \big( \<1>^h \<30>^{\,h} Y \big) - 192 \cj{a}_2^3 \I^h \big( \<1>^h (\<30>^{\,h})^2 \big) \\
&\quad - 4 \cj{a}_2 \I^h (Y^3) + 48 \cj{a}_2^2 \I^h \big( \<30>^{\,h} Y^2 \big) - 192 \cj{a}_2^3 \I^h \big( (\<30>^{\,h})^2 Y \big) + 256 \cj{a}_2^4 \I^h \big( (\<30>^{\,h})^3 \big),
\end{split}
\label{SNLHY}
\end{align}

\noi
where $X_0$ is distributed by the fractional $\Phi^4_3$-measure $\rho$ in \eqref{rho}. The equation \eqref{SNLHY} along with the second order expansion
\begin{align}
X = \<1>^h - 4 \cj{a}_2 \<30>^{\,h} + Y
\label{sec_exph}
\end{align} 
then gives a meaning to the formal equation \eqref{SNLH}, where the Wick ordering $\wick{X^3}$ is defined as
\begin{align}
\begin{split}
\wick{X^3} \, &\deff \<3>^{\,h} - 12 \cj{a}_2 \<32>^{h} + 3 \<2>^{\,h} Y + 3 \<1>^h Y^2 - 24 \cj{a}_2 \<1>^h \<30>^{\,h} Y + 48 \cj{a}_2^2 \<1>^h (\<30>^{\,h})^2 \\
&\quad + Y^3 - 12 \cj{a}_2 \<30>^{\,h} Y^2 + 48 \cj{a}_2^2 (\<30>^{\,h})^2 Y - 64 \cj{a}_2^3 (\<30>^{\,h})^3,
\end{split}
\label{defX3}
\end{align}

\noi
where $\<32>^h$ is the limit of $\<32>_N^h$ given by \eqref{heat32} as $N \to \infty$.

Before moving on, let us show the following a priori bound for the process $Y$ with large probability, which is a limiting version of Lemma~\ref{LEM:unibdd_h}. For convenience of notations, let us define the limiting enhanced data set 
\begin{align}
\pmb{Z} \deff (\<1>^h, \<2>^{\,h}, \<30>^{\, h}, \<320>^h).
\label{defZlim}
\end{align}
\begin{proposition}
\label{PROP:Y_bdd}
Let $1 < \al \leq \frac 98$, $\eps > 0$ be sufficiently small, and $T \geq 1$. Let $Y = Y (X_0, \pmb{Z})$ be the solution to \eqref{SNLHY} with the enhanced data set $\pmb{Z}$ defined in \eqref{defZlim} and $X_0$ distributed by the Gibbs measure $\rho$ in \eqref{rho}. Then, given any $\eta > 0$, there exist $0 < T_0 = T_0 (T, \eta) \leq 1$ and $C_0 = C_0 (T, \eta) \gg 1$ such that
\begin{align}
\rho \otimes \mathbb{P}^h \big( \| Y \|_{\mathcal{Y}_{T_0}^{\al, \eps}} + \| Y \|_{C_{[T_0, T]} \mathcal{C}_x^{1 + \eps}} > C_0 \big) < \eta.
\label{Y_bdd}
\end{align}
\end{proposition}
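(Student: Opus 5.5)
The plan is to transfer the uniform-in-$N$ bounds for $Y_N^\dagger$ to the limit $Y$, using the almost sure convergence established in the proof of Proposition~\ref{PROP:GWP_h}. We work on the probability space $(\Om', \F', \PP') \otimes (\Om^h, \F^h, \PP^h)$ from \eqref{defX0d}. There $\Law (X_0^\dagger) = \rho$ and $Y = Y (X_0^\dagger, \pmb{Z})$, so the probability in \eqref{Y_bdd} equals the $\PP' \otimes \PP^h$-probability of the same event. We treat the two pieces of the norm separately and take $C_0$ as the sum of the two thresholds.

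For the part $\| Y \|_{\mathcal{Y}_{T_0}^{\al, \eps}}$, we first use the limiting analogue of Lemma~\ref{LEM:LWP_h}~(i). Its estimates are uniform in $N$ and pass to $N = \infty$ with $\pmb{Z}$ in place of $\pmb{Z}_N$. They give the following: whenever $\| X_0^\dagger \|_{\C^{\al - \frac 32 - \eps}} \leq K_0$ and $\| \pmb{Z} \|_{\mathcal{Z}_T^{\al, \eps}} \leq K_1$, the bound $\| Y \|_{\mathcal{Y}_{T_0}^{\al, \eps}} \les K_0 + K_1^3$ holds for $T_0 = \kappa (K_0^2 + K_1^6)^{-1/\ta}$. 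The initial datum here is $Y(0) = X_0^\dagger - \<1>^h(0) + 4 \cj a_2 \<30>^{\,h}(0)$, which is controlled by $K_0$ and $K_1$.
\begin{itemize}
\item The tail of $\| X_0^\dagger \|_{\C^{\al - \frac 32 - \eps}}$ under $\rho$ follows from \eqref{initbdd} together with weak convergence, via the portmanteau theorem applied to the closed set $\{ \| u \| \leq K_0 \}$ complement. Equivalently, it follows by Fatou from the almost sure convergence $X_{0,N}^\dagger \to X_0^\dagger$.
\item The tail of $\| \pmb{Z} \|_{\mathcal{Z}_T^{\al, \eps}}$ is given by \eqref{tail_h} with $N = \infty$.
\end{itemize}
Choosing $K_0, K_1$ depending on $\eta$ fixes $T_0 = T_0 (T, \eta)$ and controls this part with probability at least $1 - \eta/2$.

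For the part $\| Y \|_{C_{[T_0, T]} \mathcal{C}_x^{1 + \eps}}$, we use Lemma~\ref{LEM:unibdd_h} with this $T_0$. It gives $C_0'$ with $\PP' \otimes \PP^h ( \| Y_N^\dagger \|_{C_{[T_0, T]} \C^{1 + \eps}_x} > C_0' ) < \eta/4$ uniformly in $N$. This transfers to the limit once we know that $Y_N^\dagger \to Y$ almost surely in $C_{[T_0, T]} \C^{1+\eps}_x$, or at least in a weaker topology in which the $C_{[T_0,T]}\C^{1+\eps}$ ball is closed.

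Step 1 of the proof of Proposition~\ref{PROP:GWP_h} gives exactly this: $\{ Y_N^\dagger \}$ is Cauchy in $C_{[T_0, T]} \C^{1 + \eps}_x$ with probability tending to one. The one care needed is that the $T_0$ there was chosen depending on $(T, \eta)$ through $K_1, K_2$. We need the statement for our fixed $T_0$, so we rerun Step 1 with this $T_0$ (or any smaller one). We do this by applying Lemma~\ref{LEM:stb_h} at time $T_0$, combined with the continuous dependence in Lemma~\ref{LEM:LWP_h}~(i), which controls $Y_N^\dagger (T_0) - Y_{N'}^\dagger (T_0)$ in $\C^{1+\eps}$.

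The limit coincides with $Y$ by uniqueness for \eqref{SNLHY}. Then the lower semicontinuity of the norm under this convergence gives
\[
\PP' \otimes \PP^h \big( \| Y \|_{C_{[T_0,T]} \C^{1+\eps}_x} > C_0' \big) \leq \liminf_{N \to \infty} \PP' \otimes \PP^h \big( \| Y_N^\dagger \|_{C_{[T_0,T]} \C^{1+\eps}_x} > C_0' \big) + \eta/4 .
\]
The extra $\eta/4$ accounts for the exceptional set where convergence fails. The main obstacle is this convergence bookkeeping: making sure the convergence of $Y_N^\dagger$ on $[T_0, T]$ in $\C^{1+\eps}$ is available for the same $T_0$ chosen in the first part. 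Taking $T_0$ to be the minimum of the two choices resolves it.
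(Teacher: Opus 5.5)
Your proposal is correct and is essentially the paper's argument: both transfer the uniform-in-$N$ bounds on $Y_N^\dagger$ to the limit $Y$ on the Skorokhod space $(\Om', \F', \PP') \otimes (\Om^h, \F^h, \PP^h)$. The bounds come from Lemma~\ref{LEM:unibdd_h} together with the local bound \eqref{YNbdd1}, and the transfer uses the continuous dependence of Lemma~\ref{LEM:LWP_h} and the stability estimate of Lemma~\ref{LEM:stb_h}. The only differences are bookkeeping: you bound $\| Y \|_{\mathcal{Y}_{T_0}^{\al, \eps}}$ directly from the $N = \infty$ local theory (with a Fatou tail bound for $X_0 \sim \rho$) and pass to the limit on $[T_0, T]$ through the Cauchy property from Step~1 of Proposition~\ref{PROP:GWP_h} plus lower semicontinuity, whereas the paper compares $Y_N^\dagger$ with $Y$ directly via a variant of Lemma~\ref{LEM:stb_h} and concludes by the triangle inequality.
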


\begin{proof}
Let $X_{0, N}^\dagger$ and $X_0^\dagger$ be random variables on the probability space $(\Om', \F', \PP')$ as given by \eqref{defX0d}.
Then, from \eqref{initbdd}, we know that there exists $K_1 = K_1 (\eta)$ such that for any $N \in \N$, 
\begin{align}
\PP' \big( \| X_{0, N}^\dagger \|_{\C^{\al - \frac 32 - \eps}} > K_1 \big) < \frac{\eta}{8}.
\label{P'0}
\end{align}

\noi
Also, in view of the $\PP'$-almost sure convergence of $X_{0, N}^\dagger$ to $X_0^\dagger$ in $\C^{\al - \frac 32 - \eps} (\T^3)$, we know that for any $\kappa_1 > 0$ small, there exists $N_0 = N_0 (\kappa, \eta) \gg 1$ such that for any $N \geq N_0$, 
\begin{align}
\PP' \big( \| X_{0, N}^\dagger - X_0^\dagger \|_{\C^{\al - \frac 32 - \eps}} > \kappa \big) < \frac{\eta}{8}.
\label{P'1}
\end{align}

\noi
Given $N \in \N$, let $\pmb{Z}_N$ be the enhanced data set defined in \eqref{defZN1}. Recalling the $\mathcal{Z}_T^{\al, \eps}$-norm in \eqref{defZTae} and applying the tail estimates in Lemma~\ref{LEM:sto_h}, we deduce that there exist $K_2 = K_2 (T, \eta) \gg 1$ sufficiently large, $N_0 = N_0 (T, \eta) \gg 1$ sufficiently large, and small $\ta > 0$ such that for any $N \geq N_0$,
\begin{align}
\PP^h \big( \| \pmb{Z}_N \|_{\mathcal{Z}_T^{\al, \eps}} > K_2 \big) < \frac{\eta}{8}
\label{P'2}
\end{align}

\noi
and
\begin{align}
\PP^h \big( \| \pmb{Z}_N - \pmb{Z} \|_{\mathcal{Z}_T^{\al, \eps}} > N^{- \ta} \big) < \frac{\eta}{8}.
\label{P'3}
\end{align}

Given $N \in \N$, we let $Y_N^\dagger = Y_N^\dagger (X_{0, N}^\dagger , \pmb{Z}_N)$ be the solution to the equation \eqref{SNLHNd} with enhanced data set $\pmb{Z}_N$ in \eqref{defZN1}, which exists globally in time thanks to Lemma~\ref{LEM:inv_h}. From the continuous dependence in Lemma~\ref{LEM:LWP_h} along with \eqref{P'0}, \eqref{P'1}, \eqref{P'2}, and \eqref{P'3}, we know that for any $\kappa_2 > 0$ small, there exist $0 < T_0 = T_0 (K_1, K_2) = T_0 (T, \eta) \leq 1$ and $N_0 = N_0 (\kappa_2, T, \eta) \gg 1$ sufficiently large such that for any $N \geq N_0$,
\begin{align}
\PP' \otimes \PP^h \Big( \| Y_N^\dagger - Y  \|_{\mathcal{Y}_{T_0}^{\al, \eps}} > \kappa_2 \Big) < \frac{\eta}{2} ,
\label{P'Y1}
\end{align}

\noi
where $Y = Y(X_0^\dagger, \pmb{Z})$ satisfies \eqref{SNLHY} with $X_0 = X_0^\dagger$.
In particular, we have
\begin{align}
\PP' \otimes \PP^h \Big( \| Y_N^\dagger (T_0) - Y (T_0) \|_{\C^{1 + \eps}} > T_0^{\frac{\al}{2} - \frac 54 - \eps} \kappa_2 \Big) < \frac{\eta}{2} .
\label{P'Y}
\end{align}

\noi
From Lemma~\ref{LEM:unibdd_h}, we know that there exists $C_1 = C_1 (T, \eta) \gg 1$ such that
\begin{align}
\PP' \otimes \PP^h \big( \| Y_N^\dagger \|_{\mathcal{Y}_{T_0}^{\al, \eps}} + \| Y_N^\dagger \|_{C_{[T_0, T]} \C_x^{1 + \eps}} > C_1 \big) < \frac{\eta}{8}.
\label{P'4}
\end{align}

\noi
for any $N \geq N_0$. Thus, combining \eqref{P'2}, \eqref{P'3}, \eqref{P'Y}, and \eqref{P'4} and using a slight variant of Lemma~\ref{LEM:stb_h}, we know that there exist $C = C (T, K_2, C_1) = C (T, \eta) > 0$ and $\ta' > 0$ such that for any $N \geq N_0$,
\begin{align}
\PP' \otimes \PP^h \big( \| Y_N^\dagger - Y \|_{C_{[T_0, T]} \C_x^{1 + \eps}} > C (\kappa_2 + N^{- \ta'}) \big) < \frac{7\eta}{8} .
\label{P'5}
\end{align}

\noi
The desired a priori bound \eqref{Y_bdd} then follows from \eqref{P'Y1}, \eqref{P'4}, and \eqref{P'5}.
\end{proof}

\begin{remark} \rm
\label{RMK:heat_wave}
One can compare the global well-posedness argument for the fractional parabolic $\Phi^4_3$-model \eqref{SNLH} and that in \cite[Section~6]{LTzW} for the fractional hyperbolic $\Phi^4_3$-model. Compared to the hyperbolic model, the parabolic model has stronger smoothing properties, so that the number of stochastic terms that require stochastic analysis tools is fewer. Also, again thanks to the stronger smoothing properties, for the parabolic model one can directly solve the equation (for the remainder term $Y_N^\dagger$) using the Gibbs measure initial data; in the hyperbolic model, however, one has to incorporate the Gibbs measure initial data into stochastic objects $\<1>$, $\<30>$, $\<320>$, ..., which result in more complicated analysis in establishing the regularities for the stochastic objects (see \cite[Proposition~6.4]{LTzW}).

An interesting observation is that the time regularity is used in both the parabolic setting and the hyperbolic setting, although in different manners. In the parabolic model, the time regularity is mainly used to trade for spatial regularity, which is manifested in the use of Lemma~\ref{LEM:Schau}. Because of this, a separate analysis with less loss of time regularity is required if one wants to extend the local-in-time solution to a global-in-time solution (see Lemma~\ref{LEM:unibdd_h}). In the hyperbolic model, however, one has gain of time regularity (via the $X^{s, b}$-spaces) in addition to the gain of spatial regularity, which allows one to treat the local-in-time theory and the globalization argument in a slightly more unified manner. Nevertheless, in both models, the time regularity can be effectively used in the stability estimates (Lemma~\ref{LEM:stb_h} in this paper and Proposition~6.5 in \cite{LTzW}), where we lose some slight amount of time regularity to obtain a decay from the exponential time weight.
\end{remark}

\subsection{Convergence of the fractional parabolic macroscopic model}
\label{SUB:heat_wu}
In this section, we study the fractional parabolic macroscopic model \eqref{SNLHN}. Similar to the truncated fractional parabolic $\Phi_3^4$-model \eqref{SNLHNd}, the equation \eqref{SNLHN} can be written as
\begin{align*}
\dt X_N = - \mathcal{E}_N' (X_N) + \sqrt{2} \xi^h,
\end{align*}

\noi
where the energy $\mathcal{E}_N$ is defined by
\begin{align*}
\mathcal{E}_N (u) = \int_{\T^3} \frac 12 |\jb{\nb}^\al u|^2 + V_N (\pi_N u) dx.
\end{align*}

\noi
Note that the energy $\mathcal{E}_N$ yields the truncated Gibbs measure $\varrho_N$ in \eqref{nuN}. Thus, we obtain the following lemma similar to Lemma~\ref{LEM:inv_h}.
\begin{lemma}
\label{LEM:inv_h2}
Let $\al > 1$ and $N \in \N$. Then, the parabolic macroscopic model \eqref{SNLHN} is almost surely globally well-posed with the random initial data distributed by the truncated Gibbs measure $\varrho_N$ in \eqref{nuN}. Moreover, $\varrho_N$ is invariant under the resulting dynamics of \eqref{SNLHN}.
\end{lemma}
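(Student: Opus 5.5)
The statement is Lemma~\ref{LEM:inv_h2}: for each fixed $N$, \eqref{SNLHN} is almost surely globally well-posed with $\varrho_N$-distributed data, and $\varrho_N$ is invariant. I would argue exactly as for Lemma~\ref{LEM:inv_h}, exploiting the fact that for fixed $N$ the nonlinearity only sees finitely many Fourier modes.

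\textbf{Step 1 (splitting off the high modes).} Write $X_N = \pi_N X_N + \pi_N^\perp X_N$. The high-frequency part solves the linear equation
\begin{align*}
\dt \pi_N^\perp X_N + (1-\Dl)^\al \pi_N^\perp X_N = \sqrt 2\, \pi_N^\perp \xi^h .
\end{align*}
This is an infinite family of independent Ornstein--Uhlenbeck processes. It is globally defined for any initial data in $\mathcal{C}^{\al - \frac 32 - \eps}$, and it leaves the Gaussian measure $(\pi_N^\perp)_\# \mu_\al$ invariant.

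\textbf{Step 2 (the low modes).} The low-frequency part $\pi_N X_N$ solves a finite-dimensional SDE on $\pi_N L^2(\T^3) \cong \R^{d_N}$:
\begin{align*}
d\, \pi_N X_N = -\nabla_{\pi_N u} \mathcal{E}_N^{\textup{low}}(\pi_N X_N)\, dt + \sqrt 2\, dW^h_N ,
\qquad
\mathcal{E}_N^{\textup{low}}(v) = \int_{\T^3} \tfrac 12 |\jb{\nb}^\al v|^2 + V_N(v)\, dx .
\end{align*}
The drift is a polynomial, hence locally Lipschitz, so strong solutions exist up to a possible explosion time.

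To rule out explosion, I use the Lyapunov function $\mathcal{E}_N^{\textup{low}}$. Its coercivity, for fixed $N$, comes from the top-degree term $a_m N^{-(2m-4)(\frac 32 - \al)} z^{2m}$ of $V_N$. Since $a_m = \cj a_m$ and \eqref{pos} forces $\cj a_m > 0$, this term is positive. Itô's formula gives
\begin{align*}
d \mathcal{E}_N^{\textup{low}} = -|\nabla \mathcal{E}_N^{\textup{low}}|^2\, dt + \Delta \mathcal{E}_N^{\textup{low}}\, dt + \textup{martingale}.
\end{align*}
The Laplacian term $\Delta \mathcal{E}_N^{\textup{low}}$ is a polynomial of lower growth than $|\nabla \mathcal{E}_N^{\textup{low}}|^2$, so it is bounded by $C + \tfrac12 |\nabla \mathcal{E}_N^{\textup{low}}|^2$. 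A standard Khasminskii argument then yields non-explosion for every initial datum. This step is the only one requiring care, and it reduces to confirming the sign of the leading coefficient as just described.

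\textbf{Step 3 (invariance).} The finite-dimensional Langevin dynamics is reversible with respect to $e^{-\mathcal{E}_N^{\textup{low}}(v)}\, dv$. Indeed, its generator $L = \Delta - \nabla \mathcal{E}_N^{\textup{low}} \cdot \nabla$ is symmetric in $L^2(e^{-\mathcal{E}_N^{\textup{low}}} dv)$ with $L1 = 0$, and $e^{-\mathcal{E}_N^{\textup{low}}}$ is integrable by the coercivity above. The normalized measure is exactly $(\pi_N)_\# \varrho_N$, because $\mathcal{R}^\dia_N$ depends only on $\pi_N u$ and the constant $\al_N$ cancels upon normalization.

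Since the low- and high-frequency components are driven by independent noises and $\varrho_N = (\pi_N)_\#\varrho_N \otimes (\pi_N^\perp)_\#\mu_\al$, the product of the two invariant measures is invariant for the full flow. This is the content of the classical results cited for Lemma~\ref{LEM:inv_h} (\cite{KS, Oks, DaP}), and the proof follows that argument verbatim with $E_N$ replaced by $\mathcal{E}_N$.
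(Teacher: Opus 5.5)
Your proof is correct and follows the paper's route. The paper simply writes \eqref{SNLHN} as $\dt X_N = -\mathcal{E}_N'(X_N) + \sqrt{2}\xi^h$ and appeals to the same classical results \cite{KS, Oks, DaP} used for Lemma~\ref{LEM:inv_h}; your decoupling into a finite-dimensional Langevin SDE for $\pi_N X_N$ (non-explosion via the Lyapunov function $\mathcal{E}_N^{\textup{low}}$, reversibility with respect to $e^{-\mathcal{E}_N^{\textup{low}}}$) plus independent Ornstein--Uhlenbeck high modes is exactly what those results contain. One small fix: cite \eqref{pos0} rather than \eqref{pos} for $\cj a_m > 0$, since \eqref{pos0} gives this for every $m \geq 2$, whereas for $m = 2$ the sum in \eqref{pos} is empty.
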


We now consider the situation as $N \to \infty$. In view of the weak convergence of the truncated Gibbs measure $\varrho_N$ in \eqref{nuN} to the limiting Gibbs measure $\rho$ in \eqref{rho} (Theorem~\ref{THM:meas}), it follows from the Skorokhod representation theorem (Lemma~\ref{LEM:skorok}) that there exist a probability space $(\Om_0, \F_0, \mathbb{P}_0)$ and random variables $X_{0, N}$ with $N \in \N$ and $\Law (X_{0, N}) = \varrho_N$ such that $X_{0, N}$ converges $\mathbb{P}_0$-almost surely to some $X_0$ with $\Law (X_0) = \rho$ in $\mathcal{C}^{\al - \frac 32 - \eps} (\T^3)$.

For each $N \in \N$, we equip the equation \eqref{SNLHN} with initial data $X_{0, N}$ given above. 
As in Subsection~\ref{SUB:SNLH}, we want to proceed with a second order expansion similar to \eqref{sec_XN}, but we need to extend the definition of the cubic stochastic object $\<30>_N^{\,h}$. Similar to \eqref{cubic}, we define the following extended cubic stochastic object in the heat setting:
\begin{align}
Z_N^{(3)} (t) \deff \sum_{j = 2}^m 2j \cj{a}_{j, N} N^{- (2j - 4) (\frac 32 - \al)} \pi_N \int_{- \infty}^t e^{- (t - t') \jb{\nb}^{2 \al}} \big( \wick{(\<1>_N^h (t'))^{2j - 1}} \big) d t'
\label{cub_h2}
\end{align}

\noi
for $t \geq 0$. We proceed with the following second order expansion
\begin{align}
X_N = \<1>^h - Z_N^{(3)} + Y_N,
\label{sec_expN}
\end{align}

\noi
where $\<1>^h$ is the linear stochastic object in \eqref{lin_h} and $Y_N$ is a remainder term satisfying
\begin{align}
\begin{split}
&\dt Y_N + (1 - \Dl)^\al Y_N = - 2 \cj{a}_{1, N} N^{2 (\frac 32 - \al)} \big( \<1>_N^{h} - Z_N^{(3)} + \pi_N Y_N \big) \\
&\quad - \sum_{j = 2}^m \sum_{\l = 1}^{2j - 1} \sum_{k = 0}^\l C_{j, \l, k} \cj{a}_{j, N} N^{(2j - 4) (\al - \frac 32)} \wick{(\<1>_N^{h})^{2j - 1 - \l}} (- Z_N^{(3)})^k (\pi_N Y_N)^{\l - k}
\end{split}
\label{SNLHYN1}
\end{align}

\noi
where $C_{j, \l, k} = 2j \binom{2j - 1}{\l} \binom{\l}{k}$. To deal with the terms on the right-hand-side of \eqref{SNLHYN1} we define the following extended quadratic stochastic object
\begin{align}
z_N^{(2)} (t) \deff \sum_{j = 2}^m C_{j, 1, 0} \cj{a}_{j, N} N^{(2j - 4) (\al - \frac 32)} \wick{( \<1>_N^h )^{2j - 2}}
\label{quad_h2}
\end{align}

\noi
and the following extended quintic stochastic object
\begin{align}
Z_N^{(5)} (t) \deff \sum_{j = 2}^m C_{j, 1, 1} \cj{a}_{j, N} N^{(2j - 4) (\al - \frac 32)} \pi_N \I^h \big( \wick{(\<1>_N^h)^{2j - 2}} Z_N^{(3)} \big) (t)
\label{quin_h2}
\end{align}

\noi
for $t \geq 0$, where $\I^h$ is as defined in \eqref{quin_h}. Note that we deliberately use the little case $z$ in \eqref{quad_h2} to emphasize that there is no operator $\I^h$ in it.

Let us first establish the regularity properties of the above stochastic terms in the following lemma.
\begin{lemma}
\label{LEM:sto_h2}
Let $1 < \al \leq \frac 98$, $\eps > 0$ be sufficiently small, and $T \geq 1$. We consider the following stochastic objects $Z_N$ and $Z$ (some of which are defined in Lemma~\ref{LEM:sto_h}) along with their regularity $s$ as follows:
\begin{enumerate}
\item[(i)] $Z_N = \<1>^h_N = \pi_N \<1>^h$, $Z = \<1>^h$, $s = \al - \frac 32 - \eps$;  \smallskip

\item[(ii)] $Z_N = N^{k (\al - \frac 32)} \wick{( \<1>^h_N )^{k + 1}}$ with $k \geq 1$, $Z = 0$, $s = \al - \frac 32 - \eps$;  \smallskip

\item[(iii)] $Z_N = z_N^{(2)}$ as in \eqref{quad_h2}, $Z = 12 \cj{a}_2 \<2>^{\,h}$, $s = 2 \al - 3 - \eps$; \smallskip

\item[(iv)] $Z_N = Z_N^{(3)}$ as in \eqref{cub_h2}, $Z = 4 \cj{a}_2 \<30>^{\,h}$, $s = 5 \al - \frac 92 - \eps$;  \smallskip

\item[(v)] $Z_N = Z_N^{(5)}$ as in \eqref{quin_h2}, $Z = 48 \cj{a}_2^2 \<320>^{h}$, $s = 4 \al - 3- \eps$.
\end{enumerate}

\noi
Then, $Z_N$ converges almost surely to $Z$ in the space $C ([0, T]; \C^s (\T^3))$ as $N \to \infty$. We also have the tail estimate
\begin{align}
\mathbb{P}^h \big( \| Z_N \|_{C_T \C_x^s} > \ld \big) \leq C(T) e^{- c (T) \ld^\be}
\label{tail_h2}
\end{align}

\noi
for any $\ld > 0$ and some constants $C(T), c(T), \be > 0$, uniformly in $N \in \N \cup \{\infty\}$, with the understanding that $Z_\infty = Z$. Moreover, there exists small $\ta > 0$ such that
\begin{align}
\mathbb{P}^h \big( N_2^\ta \| Z_{N_1} - Z_{N_2} \|_{C_T \C_x^s} > \ld \big) \leq C(T) e^{- c (T) \ld^\be}
\label{taild_h2}
\end{align}

\noi
for any $\ld > 0$ and some constants $C(T), c(T), \be > 0$, uniformly in $N_1 \geq N_2 \geq 1$.
\end{lemma}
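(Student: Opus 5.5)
The plan is to reduce every item to second-moment (variance) bounds at fixed frequency and then apply Lemma~\ref{LEM:reg}, exactly as in the proof of Lemma~\ref{LEM:sto_h}. Each $Z_N$ is a finite sum of elements of Wiener chaoses of bounded order (at most $2m+\dots$, finite, after the product formula). So the tail estimates \eqref{tail_h2} and \eqref{taild_h2} follow once we show three things: a variance bound $\E|\jb{n}^s \ft Z_N(t,n)|^2$ summable in $n$ uniformly in $N$; the analogous time-difference bound with a factor $|t_0|^{s'}$; and a difference bound with a factor $N_2^{-\g}$. Item (i) is already Lemma~\ref{LEM:sto_h}~(i). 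Convergence of coefficients is handled separately. We have $\cj a_{j,N}\to \cj a_j$ with rate $N^{-(3-2\al-\eps)}$ by \eqref{sigma_diff}, and the weights $N^{-(2j-4)(\frac32-\al)}$ for $j\ge3$ are negative powers of $N$. Hence in (iii)--(v) the $j=2$ contribution converges to the limiting object by Lemma~\ref{LEM:sto_h}, and all $j\ge 3$ contributions must be shown to vanish at a polynomial rate $N^{-\ta}$. Item (ii) is exactly the prototype of this vanishing.

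For (ii) and (iii) there is no Duhamel integral. By Lemma~\ref{LEM:wick} and Lemma~\ref{LEM:conv}~(ii) (as in Lemma~\ref{LEM:Yreg}~(i)), $\E|\F(\wick{(\<1>^h_N)^{k}})(n)|^2\les \jb{n}^{(3-2\al)k-3}$ restricted to $|n_\l|\le N$. Writing $N^{-2p(\frac32-\al)}\le \prod_\l \jb{n_\l}^{-2p(\frac32-\al)/k}$, the weight $N^{(k-1)(\al-\frac32)}$ in (ii) reduces the effective count to that of a single factor, giving $\jb{n}^{-2\al}$ up to $\eps$. Keeping a spare $N^{-\ta}$ then yields the claim at regularity $\al-\frac32-\eps$ with limit $0$. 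For (iii), the term $N^{(2j-4)(\al-\frac32)}\wick{(\<1>^h_N)^{2j-2}}$ behaves like $\<2>^h$ at regularity $2\al-3-\eps$, and for $j\ge3$ it has a spare negative power.

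For (iv), use stochastic Fubini (Lemma~\ref{LEM:sto_fub}) and the bound on the time integral by $\max_\l\jb{n_\l}^{-2\al}$ exactly as in Lemma~\ref{LEM:sto_h}~(iii). This gives a variance bound $\jb{n}^{-4\al}\sum_{n_1+\dots+n_{2j-1}=n}\prod\jb{n_\l}^{-2\al}$. After absorbing $N^{-2(2j-4)(\frac32-\al)}$ this becomes $\jb{n}^{-4\al}\jb{n}^{6-6\al}$, the same as for the cubic object, hence regularity $5\al-\frac92-\eps$.

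The main obstacle is (v). Following Lemma~\ref{LEM:sto_h}~(iv), Lemma~\ref{LEM:Schau} reduces it to the product $\wick{(\<1>^h_N)^{2j-2}}\,Z_N^{(3)}$ (with inner index $j'$) in $\mathcal C^{2\al-3-\eps/2}$. We expand it with the product formula (Lemma~\ref{LEM:sto_prod}) over all pairings $\mathcal P$ between the $2j-2$ and the $2j'-1$ frequencies. For each chaos component we use the Ito isometry, \eqref{jen}, and the heat-kernel time-integral bound, which turns every $Z^{(3)}$ node into a factor $\jb{n_{\rm sum}}^{-2\al}\max\jb{n_\l}^{-2\al}$. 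We then distribute the powers of $N$ onto unpaired frequencies as in Lemma~\ref{LEM:RNstoh}, Cases 1--3. The pairings consume frequencies that carry no $N$-weight; this is handled by the same case split ($|\mathcal P|=1$, intermediate, full on one side). The nearly fully paired cases would rely on the extra $\jb{n_{\rm sum}}^{-2\al}$ decay, with Lemma~\ref{LEM:conv} applied repeatedly and using $1<\al\le\frac98$.

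The heat setting is much simpler than the wave counting lemmas, since no phase restriction appears. I expect the bookkeeping of exponents to close with a margin $\ta>0$ for $j+j'>4$, which gives the vanishing of the non-$(2,2)$ contributions. Time-difference and $N$-difference bounds are obtained by the same interpolation tricks as in part (i) of Lemma~\ref{LEM:sto_h}.
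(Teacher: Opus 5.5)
Your proposal is correct and follows essentially the same route as the paper. Both reduce to uniform variance bounds fed into Lemma~\ref{LEM:reg}, treat the $j=2$ pieces via Lemma~\ref{LEM:sto_h}, kill the $j\ge 3$ pieces by spreading the $N$-weights over the factors with Lemma~\ref{LEM:conv}~(ii), and handle (v) by reducing through Lemma~\ref{LEM:Schau} to the product of the Wick power with $Z_N^{(3)}$, followed by a case analysis over pairings.

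One caution for (v): once $|\mathcal P|\ge 3$, the inner sums over the paired frequencies themselves grow with $N$ when $\alpha\le\frac98$. So part of the $N$-weight must be placed on the paired frequencies, as the paper does in its case $|\mathcal P|\ge 2$ and as in Cases~2--3 of Lemma~\ref{LEM:RNstoh}. Putting it only on the unpaired frequencies, as your wording suggests, does not close the estimate.
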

\begin{proof}
As in the proof of Lemma~\ref{LEM:sto_h}, we only focus on proving the uniform-in-$N$ variance bound.

\smallskip \noi
(i) This is the same as Lemma~\ref{LEM:sto_h}~(i).

\smallskip \noi
(ii) As in the proof of Lemma~\ref{LEM:sto_h}, we only focus on the estimate at a fixed time $t \in [0, T]$, since the continuity in time can be obtained by using similar time difference estimate as in the proof of Lemma~\ref{LEM:sto_h}~(i). 

For any $n \in \Z^3$ and $t \in [0, T]$, from \eqref{lin_hk}, the Ito isometry (Lemma~\ref{LEM:sto_ito}~(ii)), and Lemma~\ref{LEM:conv}~(ii) along with $1 < \al \leq \frac 98$, we obtain
\begin{align*}
\E \Big[ &\big| N^{k (\al - \frac 32) + \eps'} \F_x \big( \wick{( \<1>^h_N )^{k + 1}} \big) (t, n) \big|^2 \Big] \\
&\les N^{k (2 \al - 3) + 2 \eps'} \sum_{\substack{ |n_1|, \dots, |n_{k + 1}| \leq N \\ n_1 + \cdots + n_{k + 1} = n}} \prod_{\l = 1}^{k + 1} \frac{1}{\jb{n_\l}^{2 \al}} \\
&\les \sum_{\substack{n_1, \dots, n_{k + 1} \in \Z^3 \\ n_1 + \cdots + n_{k + 1} = n}} \prod_{\l = 1}^{k + 1} \frac{1}{\jb{n_\l}^{2 \al + \frac{k}{k + 1} (3 - 2 \al) - \frac{2 \eps'}{k + 1}}} \\
&\les \jb{n}^{- 2 \al + 2 \eps'}
\end{align*}

\noi
for any $\eps' > 0$ arbitrarily small. Also, from the above computation, for any $N_1 \geq N_2 \geq 1$, we have
\begin{align*}
\E \Big[ &\big| N_1^{k (\al - \frac 32)} \F_x \big( \wick{( \<1>^h_{N_1} )^{k + 1}} \big) - N_2^{k (\al - \frac 32)} \F_x \big( \wick{( \<1>^h_{N_2} )^{k + 1}} \big) (t, n) \big|^2 \Big] \\
&\les N_1^{-2 \eps'} \E \Big[ \big| N_1^{k (\al - \frac 32) + \eps'} \F_x \big( \wick{( \<1>^h_{N_1} )^{k + 1}} \big) (t, n) \big|^2 \Big] \\
&\quad + N_2^{-2 \eps'} \E \Big[ \big| N_2^{k (\al - \frac 32) + \eps'} \F_x \big( \wick{( \<1>^h_{N_2} )^{k + 1}} \big) (t, n) \big|^2 \Big] \\
&\les N_2^{-2 \eps'} \jb{n}^{- 2 \al + 2 \eps'}
\end{align*}

\noi
for any $\eps' > 0$ arbitrarily small. Thus, from Lemma~\ref{LEM:reg}, we obtain the desired regularity properties of $N^{k (\al - \frac 32)} \wick{ (\<1>_N^h)^{k + 1} }$.

\smallskip \noi
(iii) From \eqref{quad_h2}, when $j = 2$, we have $C_{2, 1, 0} = 12$, so that the estimate for this term is the same as that in Lemma~\ref{LEM:sto_h}~(ii) with the additional observation that $\cj{a}_{2, N} \to \cj{a}_2$ as $N \to \infty$ in view of \eqref{aj}, \eqref{ajN}, and \eqref{sigma_diff}. When $j \geq 3$, for any $n \in \Z^3$ and $t \in [0, T]$, we use similar steps from part (ii) to obtain
\begin{align*}
\E \Big[ &\big| N^{(2j - 4) (\al - \frac 32) + \eps'} \F_x \big( \wick{ (\<1>_N^h)^{2j - 2} } \big) (t, n) \big|^2 \Big] \\
&\les N^{(2j - 4) (2 \al - 3) + 2 \eps'} \sum_{\substack{|n_1|, \dots, |n_{2j - 2}| \leq N \\ n_1 + \cdots + n_{2j - 2} = n}} \prod_{\l = 1}^{2j - 2} \frac{1}{\jb{n_\l}^{2 \al}} \\
&\les \sum_{\substack{n_1, \dots, n_{2j - 2} \in \Z^3 \\ n_1 + \cdots + n_{2j - 2} = n}} \prod_{\l = 1}^{2j - 2} \frac{1}{\jb{n_\l}^{2 \al + \frac{2j - 4}{2j - 2} (3 - 2 \al) - \frac{2 \eps'}{2j - 2}}} \\
&\les \jb{n}^{- 4 \al + 3 + 2 \eps'}
\end{align*}

\noi
for any $\eps' > 0$ arbitrarily small. Thus, from Lemma~\ref{LEM:reg}, we obtain the desired regularity properties of $z_N^{(2)}$.

\smallskip \noi
(iv) Similar to part (iii), the term in \eqref{cub_h2} with $j = 2$ can be treated in the same way as in Lemma~\ref{LEM:sto_h}~(iii) with the additional observation that $\cj{a}_{2, N} \to \cj{a}_2$ as $N \to \infty$ in view of \eqref{aj}, \eqref{ajN}, and \eqref{sigma_diff}. Let $n \in \Z^3$ and $t \in [0, T]$. When $j \geq 3$, by \eqref{cub_h2} and Fubini's theorem (Lemma~\ref{LEM:sto_fub}), the $n$th Fourier coefficient of each term in \eqref{cub_h2} is essentially given by
\begin{align}
\begin{split}
&N^{- (2j - 4) (\frac 32 - \al)} \ft{Z_{N, j}^{(3)}} (t, n) \\
&\quad \deff  N^{- (2j - 4) (\frac 32 - \al)} \ind_{\{|n| \leq N\}} \sum_{\substack{ |n_1|, \dots, |n_{2j - 1}| \leq N \\ n_1 + \cdots + n_{2j - 1} = n}} \int_{(-\infty, t]^{2j - 1}} \int_{\max\{s_1, \dots, s_{2j - 1}\}}^t e^{- (t - t') \jb{n}^{2 \al}} \\
&\quad \quad \times \prod_{\l = 1}^{2j - 1} e^{- (t' - s_\l) \jb{n_\l}^{2 \al}} dt' dB_{n_1}^h (s_1) \cdots dB_{n_{2j - 1}}^h (s_{2j - 1})
\end{split}
\label{Z3Nj}
\end{align}

\noi
in $L^2 (\Om^h)$, where we omit the constant coefficient. Thus, by the Ito isometry (Lemma~\ref{LEM:sto_ito}~(ii)), similar computations in Lemma~\ref{LEM:sto_h}~(iii), and Lemma~\ref{LEM:conv}~(ii) along with $1 < \al \leq \frac 98$, we obtain
\begin{align*}
\E \Big[ &\big| N^{- (2j - 4) (\frac 32 - \al)} \ft{Z_{N, j}^{(3)}} (t, n) \big|^2 \Big] \\
&\les N^{- (2j - 4) (3 - 2 \al)} \sum_{\substack{ |n_1|, \dots, |n_{2j - 1}| \leq N \\ n_1 + \cdots + n_{2j - 1} = n}} \int_{(- \infty, t]^{2j - 1}} \bigg( \int_{\max \{ s_1, \dots, s_{2j - 1} \}}^t e^{- (t - t') \jb{n}^{2 \al}} \\
&\quad \times \prod_{\l = 1}^{2j - 1} e^{- (t' - s_\l) \jb{n_\l}^{2 \al}} dt'  \bigg)^2 d s_1 \cdots d s_{2j - 1} \\
&\les N^{- (2j - 4) (3 - 2 \al)} \sum_{\substack{ |n_1|, \dots, |n_{2j - 1}| \leq N \\ n_1 + \cdots + n_{2j - 1} = n}} \frac{1}{\max \{ \jb{n_1}^{2 \al}, \dots, \jb{n_{2j - 1}}^{2 \al} \}} \\
&\quad \times \bigg( \prod_{\l = 1}^{2j - 1} \frac{1}{\jb{n_\l}^{2 \al}} \bigg) \frac{1}{\jb{n}^{2 \al}} \\
&\les N^{- \eps'} \frac{1}{\jb{n}^{4 \al}} \sum_{\substack{n_1, \dots, n_{2j - 1} \in \Z^3 \\ n_1 + \cdots + n_{2j - 1} = n}} \prod_{\l = 1}^{2j - 1} \frac{1}{\jb{n_\l}^{2 \al + \frac{2j - 4}{2j - 1} (3 - 2 \al) - \frac{\eps'}{2j - 1}}} \\
&\les N^{- \eps'} \jb{n}^{- 10 \al + 6 + \eps'}
\end{align*}

\noi
for any $\eps' > 0$ arbitrarily small. Thus, from Lemma~\ref{LEM:reg}, we obtain the desired regularity properties of $Z_N^{(3)}$.

\smallskip \noi
(v) From \eqref{quin_h2} and Lemma~\ref{LEM:Schau}, we need to show the properties of
\begin{align*}
z_{N, j, k}^{(5)} &\deff N^{- (2j + 2k - 8) (\frac 32 - \al) + \eps'}  \pi_N \big( \wick{ (\<1>_N^h)^{2j - 2} } Z_{N, k}^{(3)} \big)
\end{align*}

\noi
in the space $C_T \mathcal{C}^{2 \al - 3 - \frac{\eps}{2}}$, where $Z_{N, k}^{(3)}$ is as defined in \eqref{Z3Nj}. Here, we let $\eps' = 0$ if $j = 2$ and $k = 2$; otherwise, we let $\eps' > 0$ be small. We denote $n_1, \dots, n_{2j - 2}$ as the frequencies of the nodes of $\wick{ (\<1>_N^h)^{2j - 2} }$ and denote $n_{2j - 1}, \dots, n_{2j + 2k - 3}$ as the frequencies of the nodes of $Z_{N, k}^{(3)}$. From the product formula in Lemma~\ref{LEM:sto_prod}, we need to discuss the number of pairs between $\wick{ (\<1>_N^h)^{2j - 2} }$ and $Z_{N, k}^{(3)}$.

If there are no pairs, by the Ito isometry (Lemma~\ref{LEM:sto_ito}~(ii)), \eqref{jen}, and Lemma~\ref{LEM:conv}~(ii) and then (i) along with $1 < \al \leq \frac 98$, we obtain the following contribution for the variance of $z_{N, j, k}^{(5)}$ at frequency $n$:
\begin{align*}
&\sum_{\substack{|n_1|, \dots, |n_{2j + 2k - 3}| \leq N \\ n_1 + \cdots n_{2j + 2k - 3} = n}} \int_{( - \infty, t ]^{2j + 2k - 3}} N^{- (2j + 2k - 8) (3 - 2 \al) + 2 \eps'} \prod_{\l = 1}^{2j - 2} e^{- 2 (t - s_\l) \jb{n_\l}^{2 \al}} \\
&\quad \quad \times \bigg( \int_{\max \{ s_{2j - 1}, \dots, s_{2j + 2k - 3} \}}^t e^{- (t - t') \jb{ \sum_{\l' = 2j - 1}^{2j + 2k - 3} n_{\l'} }^{2 \al}} \\
&\quad \quad \times \prod_{\l' = 2j - 1}^{2j + 2k - 3} e^{- (t' - s_{\l'}) \jb{n_{\l'}}^{2 \al}} dt' \bigg)^2 ds_{1} \cdots ds_{2j + 2k - 3} \\
&\quad \les \sum_{\substack{n_1, \dots, n_{2j + 2k - 3} \in \Z^3 \\ n_1 + \cdots + n_{2j + 2k - 3} = n}} \frac{1}{\max \{ \jb{n_{2j - 1}}^{2 \al}, \dots, \jb{n_{2j + 2k - 3}}^{2 \al} \} \jb{\sum_{\l' = 2j - 1}^{2j + 2k - 3} n_{\l'}}^{2 \al}} \\
&\quad \quad \times \prod_{\l = 1}^{2j - 2} \frac{1}{\jb{n_\l}^{2 \al + \frac{2j - 4}{2j - 2} (3 - 2 \al - \eps'')}} \prod_{\l' = 2j - 1}^{2j + 2k - 3} \frac{1}{\jb{n_{\l'}}^{2 \al + \frac{2k - 4}{2k - 1} (3 - 2 \al - \eps'')}} \\
&\quad \les \sum_{\substack{n', n'' \in \Z^3 \\ n' + n'' = n}} \frac{1}{\jb{n'}^{4 \al - 3 - (2j - 4) \eps''} \jb{n''}^{10 \al - 6 - (2k - 4)\eps''}} \\
&\quad \les \jb{n}^{- 4 \al + 3 + (2j - 4) \eps''}
\end{align*}

\noi
for some $\eps'' > 0$ small. This gives the desired properties in view of Lemma~\ref{LEM:reg}.

If there are pairs, recalling Definition~\ref{DEF:pair}, we let $\mathcal P \neq \varnothing$ be a pairing of $\{ 1, \dots, 2j + 2k - 3 \}$ which respects the partition $S_1 \deff \{ 1, \dots, 2j - 2 \}$ and $S_2 \deff \{ 2j - 1, \dots, 2j + 2k - 3 \}$. By the Ito isometry (Lemma~\ref{LEM:sto_ito}~(ii)) and \eqref{jen}, we know that our main task is to estimate the following term:
\begin{align}
\begin{split}
&N^{- (2j + 2k - 8) (3 - 2 \al) + 2 \eps'} \sum_{\substack{(n_\l)_{\l \notin \mathcal P}, |n_\l| \leq N \\ \sum_{\l \notin \mathcal P} n_\l = n}} \int_{( - \infty, t ]^{2j + 2k - 3 - 2 |\mathcal P|}} \Big( \prod_{\l \notin \mathcal{P} \wedge \l \in S_1} e^{- 2 (t - s_\l) \jb{n_\l}^{2 \al}} \Big) \\
&\quad \times \bigg( \sum_{\substack{(n_{\l'})_{\l' \in \mathcal{P} \wedge \l' \in S_2} \\ |n_{\l'}| \leq N}} \ind_{\substack{(n_1, \dots, n_{2j + 2k - 3}) \\ \text{admissible}}} \int_{(- \infty, t]^{|\mathcal P|}} \Big( \prod_{\l' \in \mathcal{P} \wedge \l' \in S_2} e^{- (t - s_{\l'}) \jb{n_{\l'}}^{2 \al}} \Big)  \\
&\quad \times \int_{\max_{\l' \in S_2} \{ s_{\l'} \}}^t e^{- (t - t') \jb{\sum_{\l' \in S_2} n_{\l'}}^{2 \al}} \prod_{\l' \in S_2} e^{- (t' - s_{\l'}) \jb{n_{\l'}}^{2 \al}} dt' d (s_\l)_{\l \in \mathcal{P} \wedge \l \in S_2} \bigg)^2 d (s_\l)_{\l \notin \mathcal P} \\
&\les N^{- (2j + 2k - 8) (3 - 2 \al) + 2 \eps'} \sum_{\substack{(n_\l)_{\l \notin \mathcal P}, |n_\l| \leq N \\ \sum_{\l \notin \mathcal P} n_\l = n}} \bigg( \sum_{\substack{(n_{\l'})_{\l' \in \mathcal{P} \wedge \l' \in S_2} \\ |n_{\l'}| \leq N}} \frac{1}{\max_{\l' \in S_2} \{ \jb{n_{\l'}}^{2 \al} \}}  \\
&\quad \times  \prod_{\l' \in \mathcal{P} \wedge \l' \in S_2} \frac{1}{\jb{n_{\l'}}^{2 \al}} \bigg) \bigg( \sum_{\substack{(n_{\l'})_{\l' \in \mathcal{P} \wedge \l' \in S_2} \\ |n_{\l'}| \leq N}} \frac{1}{\jb{\sum_{\l' \in S_2} n_{\l'}}^{2 \al}} \prod_{\l \in S_2 \vee \l \notin \mathcal{P} } \frac{1}{\jb{n_\l}^{2 \al}} \bigg).
\end{split}
\label{5jk}
\end{align}

\noi
We consider the following two cases.

\smallskip \noi
\textbf{Case 1:} $|\mathcal P| = 1$.

By Lemma~\ref{LEM:conv}~(i) and (ii) several times along with $1 < \al \leq \frac 98$, we obtain the following contribution for the bound of \eqref{5jk}:
\begin{align*}
&\sum_{\substack{(n_\l)_{\l \notin \mathcal P} \\ \sum_{\l \notin \mathcal P} n_\l = n}} \bigg( \sum_{ (n_{\l'})_{\l' \in \mathcal{P} \wedge \l' \in S_2} } \frac{1}{\max_{\l' \in S_2} \{ \jb{n_{\l'}}^{2 \al} \}} \prod_{\l' \in \mathcal{P} \wedge \l' \in S_2} \frac{1}{\jb{n_{\l'}}^{2 \al}} \bigg)  \bigg( \sum_{ (n_{\l'})_{\l' \in \mathcal{P} \wedge \l' \in S_2} } \frac{1}{\jb{\sum_{\l' \in S_2} n_{\l'}}^{2 \al}}  \\
&\quad \times \prod_{ \l' \in \mathcal{P} \wedge \l' \in S_2 } \frac{1}{\jb{n_{\l'}}^{2 \al}} \prod_{ \l' \notin \mathcal{P} \wedge \l' \in S_2 } \frac{1}{\jb{n_{\l'}}^{2 \al + \frac{2k - 4}{2k - 2} (3 - 2 \al - \eps'')}}  \prod_{\l \notin \mathcal{P} \wedge \l \in S_1} \frac{1}{\jb{n_\l}^{2 \al + \frac{2j - 4}{2j - 3} (3 - 2 \al - \eps'')}} \bigg) \\
&\les \sum_{\substack{(n_\l)_{\l \notin \mathcal P} \\ \sum_{\l \notin \mathcal P} n_\l = n}} \frac{1}{\jb{\sum_{\l' \notin \mathcal{P} \wedge \l' \in S_2} n_{\l'}}^{8 \al - 6}} \prod_{ \l' \notin \mathcal{P} \wedge \l' \in S_2} \frac{1}{\jb{n_{\l'}}^{2 \al + \frac{2k - 4}{2k - 2} (3 - 2 \al - \eps'')}}  \\
&\quad \times \prod_{\l \notin \mathcal{P} \wedge \l \in S_1} \frac{1}{\jb{n_\l}^{2 \al + \frac{2j - 4}{2j - 3} (3 - 2 \al - \eps'')}} \\
&\les \sum_{\substack{n', n'' \in \Z^3 \\ n' + n'' = n}} \frac{1}{\jb{n'}^{12 \al - 9 - (2k - 4) \eps''} \jb{n''}^{2 \al - (2j - 4) \eps''}} \\
&\les \jb{n}^{- 2 \al + (2j - 4) \eps''}
\end{align*}

\noi
for some small $\eps'' > 0$. This gives the desired properties in view of Lemma~\ref{LEM:reg}.

\smallskip \noi
\textbf{Case 2:} $|\mathcal P| \geq 2$.

By Lemma~\ref{LEM:conv}~(ii) several times and then (i) along with $1 < \al \leq \frac 98$, we obtain the following bound for \eqref{5jk}:
\begin{align*}
&\sum_{\substack{(n_\l)_{\l \notin \mathcal P} \\ \sum_{\l \notin \mathcal P} n_\l = n}} \bigg( \sum_{ (n_{\l'})_{\l' \in \mathcal{P} \wedge \l' \in S_2} } \frac{1}{\max_{\l' \in S_2} \{ \jb{n_{\l'}}^{2 \al} \}} \prod_{\l' \in \mathcal{P} \wedge \l' \in S_2} \frac{1}{\jb{n_{\l'}}^{2 \al + \frac{|\mathcal P| - 2}{|\mathcal P|} (3 - 2 \al - \eps'')}} \bigg) \\
&\quad \times \bigg( \sum_{ (n_{\l'})_{\l' \in \mathcal{P} \wedge \l' \in S_2} } \frac{1}{\jb{\sum_{\l' \in S_2} n_{\l'}}^{2 \al}} \prod_{ \l' \in \mathcal{P} \wedge \l' \in S_2 } \frac{1}{\jb{n_{\l'}}^{2 \al + \frac{|\mathcal P| - 2}{|\mathcal P|} (3 - 2 \al - \eps'')}} \\
&\quad \quad \times \prod_{ \l' \notin \mathcal{P} \wedge \l' \in S_2} \frac{1}{\jb{n_{\l'}}^{2 \al + \frac{2k - 2 - |\mathcal P|}{2k - 1 - |\mathcal P|} (3 - 2 \al - \eps'')}}  \prod_{\l \notin \mathcal{P} \wedge \l \in S_1} \frac{1}{\jb{n_\l}^{3 - \eps''}} \bigg) \\
&\les \sum_{\substack{(n_\l)_{\l \notin \mathcal P} \\ \sum_{\l \notin \mathcal P} n_\l = n}} \frac{1}{\jb{\sum_{\l' \notin \mathcal{P} \wedge \l' \in S_2} n_{\l'}}^{12 \al - 12 - 2 (|\mathcal P| - 2) \eps''}} \prod_{ \l' \notin \mathcal{P} \wedge \l' \in S_2} \frac{1}{\jb{n_{\l'}}^{2 \al + \frac{2k - 2 - |\mathcal P|}{2k - 1 - |\mathcal P|} (3 - 2 \al - \eps'')}}  \\
&\quad \times \prod_{\l \notin \mathcal{P} \wedge \l \in S_1} \frac{1}{\jb{n_\l}^{3 - \eps''}} \\
&\les \sum_{\substack{n', n'' \in \Z^3 \\ n' + n'' = n}} \frac{1}{\jb{n'}^{14 \al - 12 - (2k + |\mathcal P| - 6) \eps''} \jb{n''}^{3 - (2j - 2 - |\mathcal P|) \eps''}} \\
&\les \jb{n}^{-14 \al + 12 + (2j + 2k - 8) \eps''} 
\end{align*}

\noi
for some small $\eps'' > 0$. This gives the desired properties in view of Lemma~\ref{LEM:reg}.
\end{proof}

We now show a key stability estimate of the equation \eqref{SNLHYN1} for $Y_N$. Let us write out the Duhamel formulation of the equation \eqref{SNLHYN1}:
\begin{align}
\begin{split}
Y_N &= e^{- t \jb{\nb}^{2 \al}} \big( X_{0, N} - \<1>^h (0) + Z_N^{(3)} (0) \big) - 2 \cj{a}_{1, N} N^{2 (\frac 32 - \al)} \I^h \big( \<1>_N^{h} - Z_N^{(3)} + \pi_N Y_N \big) \\
&\quad + Z_N^{(5)} - \pi_N \I^h \big( z_N^{(2)} \cdot \pi_N Y_N \big) - \sum_{j = 2}^m \sum_{\l = 2}^{2j - 1} \sum_{k = 0}^\l C_{j, \l, k} \cj{a}_{j, N} N^{(2j - 4) (\al - \frac 32)} \\
&\qquad \times \pi_N \I^h \big( \wick{(\<1>_N^{h})^{2j - 1 - \l}} (- Z_N^{(3)})^k (\pi_N Y_N)^{\l - k} \big).
\end{split}
\label{SNLHYN}
\end{align}

\noi
We also define the enhanced data sets
\begin{align}
\cj{\pmb{Z}}_N \deff \big( \<1>_N^h, N^{\al - \frac 32} \wick{( \<1>_N^h )^2}, \dots, N^{(2m - 4) (\al - \frac 32)} \wick{( \<1>_N^h )^{2m - 3}}, z_N^{(2)}, Z_N^{(3)}, Z_N^{(5)} \big),
\label{defZN2}
\end{align}
\begin{align}
\cj{\pmb{Z}} \deff \big( \<1>^h, 0, \dots, 0, 12 \cj{a}_2  \<2>^{\,h}, 4 \cj{a}_2 \<30>^{\,h},48 \cj{a}_2^2 \<320>^h \big)
\label{defZ2}
\end{align}

\noi
and the space
\begin{align}
\cj{\mathcal{Z}}_T^{\al, \eps} \deff ( C_T \mathcal{C}^{\al - \frac 32 - \eps} )^{\otimes (2m - 3)} \times C_T \mathcal{C}^{2\al - 3 - \eps} \times C_T \mathcal{C}^{5\al - \frac 92 - \eps} \times C_T \mathcal{C}^{4\al - 3 - \eps}.
\label{defZTae2}
\end{align}

\begin{lemma}
\label{LEM:stb_h2}
Let $1 < \al \leq \frac 98$, $\eps > 0$ be sufficiently small, and $T \geq 1$. Let $K_0, K_1 \gg 1$. Then, there exist $N_0 = N_0 (T, K, C_0) \in \N$ sufficiently large and $0 < \kappa_0 = \kappa_0 (T, K, C_0) \leq 1$ sufficiently small such that the following holds true. Suppose that
\begin{align}
\| \cj{\pmb{Z}} \|_{\cj{\mathcal{Z}}_T^{\al, \eps}} \leq K_0
\label{Z_bdd2}
\end{align}

\noi
with the $\cj{\mathcal{Z}}_T^{\al, \eps}$-norm defined in \eqref{defZTae2} and the limiting equation \eqref{SNLHY} with the enhanced data set $\cj{\pmb{Z}}$ has a solution $Y = Y (X_0, \cj{\pmb{Z}})$ on $[0, T]$, satisfying
\begin{align}
\| Y \|_{\mathcal{Y}_{T'}^{\al, \eps}} + \| Y \|_{C_{[T', T]} \mathcal{C}_x^{1 + \eps}} \leq K_1
\label{Y_bdd2}
\end{align}

\noi
for some $0 < T' \leq 1$ with the $\mathcal{Y}_{T'}^{\al, \eps}$-norm defined in \eqref{defYtae}.
We also let $N \geq N_0$ and $Y_N = Y_N (X_{0, N}, \cj{\pmb{Z}}_N)$ be the solution to the equation \eqref{SNLHYN} on $[0, T]$ with the truncated enhanced data set $\cj{\pmb{Z}}_N$ and suppose that
\begin{align}
\| \cj{\pmb{Z}}_{N} - \cj{\pmb{Z}} \|_{\cj{\mathcal{Z}}_T^{\al, \eps}} \leq N^{- \ta}.
\label{Z_diff2}
\end{align}

\noi
and
\begin{align}
\| X_{0, N} - X_0 \|_{\mathcal{C}^{\al - \frac 32 - \eps}} \leq \kappa
\label{X0_diff}
\end{align}

\noi
for some $0 < \ta \ll 1$ and $0 < \kappa \leq \kappa_0$. Then, there exists $0 < T_0 = T_0 (K_0, K_1) \leq T' \leq 1$ small such that
\begin{align}
\| Y_{N} - Y \|_{C_{[T_0, T]} \mathcal{C}_x^{1 + \eps}} \leq C(T, K_0, K_1) (\kappa + N^{- \ta'})
\label{YN_diff2}
\end{align}

\noi
for some constant $C (T, K_0, K_1) > 0$ and some $\ta' > 0$.
\end{lemma}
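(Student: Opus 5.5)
We compare $Y_N$ with $Y$ in two stages, mirroring the proofs of Lemma~\ref{LEM:LWP_h} and Lemma~\ref{LEM:stb_h}: a short-time stage on $[0,T_0]$ in the weighted class $\mathcal{Y}_{T_0}^{\al,\eps}$, followed by a long-time stage on $[T_0,T]$ in $C\mathcal{C}^{1+\eps}$ with an exponential time weight.

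\textbf{First step: comparison on $[0,T_0]$.}
\begin{itemize}
\item Rewrite the limiting equation \eqref{SNLHY} in terms of $\cj{\pmb{Z}}$. Then \eqref{SNLHYN} and \eqref{SNLHY} share the same structure once we make the identifications $z_N^{(2)}\leftrightarrow 12\cj a_2\<2>^{\,h}$, $Z_N^{(3)}\leftrightarrow 4\cj a_2\<30>^{\,h}$, $Z_N^{(5)}\leftrightarrow 48\cj a_2^2\<320>^h$ and $2\cj a_{1,N}N^{2(\frac32-\al)}\to a$ (by \eqref{a1N}).
\item Every remaining term in \eqref{SNLHYN} is either a $j=2$ term with $\cj a_{2,N}\to\cj a_2$, or a term carrying a factor $N^{k(\al-\frac32)}\wick{(\<1>^h_N)^{k+1}}$ with $k\ge1$, which is a component of $\cj{\pmb Z}_N$ whose limit is $0$ (Lemma~\ref{LEM:sto_h2}~(ii)).
\item For terms with $j\ge3$ and no such Wick factor, i.e.\ pure products $N^{(2j-4)(\al-\frac32)}(Z_N^{(3)})^k(\pi_NY_N)^{\l-k}$ with $\l\ge 4$, the negative power of $N$ must control the high powers of $Y_N$. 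In $\mathcal{C}^{1+\eps}$ one has $\|Y_N^{p}\|\les\|Y_N\|_{\mathcal{C}^{1+\eps}}^p$, so on a bootstrap ball of radius $2K_1$ these terms are $O(N^{-\ta'})$.
\item Redo the contraction estimates of Lemma~\ref{LEM:LWP_h}~(i) for the difference $Y_N-Y$ in $\mathcal{Y}_{T_0}^{\al,\eps}$. Using \eqref{Z_bdd2}, \eqref{Z_diff2}, \eqref{X0_diff} and the bound on $Y$ from \eqref{Y_bdd2}, we choose $T_0=T_0(K_0,K_1)$ small so that the terms linear in the difference are absorbed. This gives
\[
\|Y_N-Y\|_{\mathcal{Y}_{T_0}^{\al,\eps}}\les \kappa+N^{-\ta'}.
\]
\item In particular, $\|Y_N(T_0)-Y(T_0)\|_{\mathcal{C}^{1+\eps}}\les T_0^{\frac\al2-\frac54-\eps}(\kappa+N^{-\ta'})$.
\end{itemize}
The initial-data difference itself is controlled by $\|X_{0,N}-X_0\|$ together with the differences of $\<1>^h(0)$ and of $Z_N^{(3)}(0)$ versus $4\cj a_2\<30>^{\,h}(0)$, all of which are covered by \eqref{Z_diff2}.

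\textbf{Second step: comparison on $[T_0,T]$.}
\begin{itemize}
\item Follow Lemma~\ref{LEM:stb_h} with the weighted norm $C^\ld_{[T_0,T]}\mathcal{C}^{1+\eps}_x$.
\item Assume a priori that $\|Y_N\|_{C_{[T_0,T]}\mathcal{C}^{1+\eps}}\le 4K_1$. Decompose the Duhamel difference starting at time $T_0$ into three groups:
 \begin{itemize}
 \item terms containing a data difference or the difference $Y_N(T_0)-Y(T_0)$;
 \item terms containing $\pi_N-\mathrm{Id}$, or coefficient differences via \eqref{a1N} and \eqref{sigma_diff}, or the $N^{-(2j-4)(\frac32-\al)}$-suppressed higher-degree terms, or the vanishing components of $\cj{\pmb Z}_N$;
 \item terms linear in $\dl Y=Y_N-Y$, multiplied by polynomials in $K_0$ and $K_1$.
 \end{itemize}
\item The first two groups are bounded by $C(\kappa+N^{-\ta'})$. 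The third group gains $\ld^{-\ta''}$ through \eqref{CTld2}.
\item Choosing $\ld$ large absorbs the third group and yields \eqref{YN_diff2}.
\item A continuity argument, as at the end of Lemma~\ref{LEM:stb_h}, removes the a priori assumption, provided $\kappa_0$ is small and $N_0$ is large.
\end{itemize}

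\textbf{Main obstacle.} The hardest part is the uniform control of the high-degree terms: products of up to $2m-1$ factors mixing $N^{k(\al-\frac32)}\wick{(\<1>^h_N)^{k+1}}$ (regularity only $\al-\frac32-\eps$), $Z_N^{(3)}$ and $Y_N$. The plan is to group the Wick powers and all but one power of $N^{\al-\frac32}$ into a single rescaled chaos from Lemma~\ref{LEM:sto_h2}~(ii), which has regularity $\al-\frac32-\eps$ with norm $O(N^{-\eps'})$. All other factors are placed in $\mathcal{C}^{1+\eps}$ or $\mathcal{C}^{5\al-\frac92-\eps}$, where they are bounded and positive-regularity. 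The product estimate Lemma~\ref{LEM:Bes}~(iv) applies because $\al-\frac32-\eps+5\al-\frac92-\eps>0$. The Schauder estimate Lemma~\ref{LEM:Schau} then recovers $\mathcal{C}^{1+\eps}$ with an integrable time singularity, exactly as in the $\<1>^h_N(\pi_NY_N)^2$ term of Lemma~\ref{LEM:LWP_h}. Near $t=0$ this must be carried out in the weighted $\mathcal{Y}$-norm, using the interpolations \eqref{interp1}–\eqref{interp2} to check that the time weights remain integrable even for high powers of $Y_N$. This may force $T_0$ small and a slightly stronger weight bookkeeping.
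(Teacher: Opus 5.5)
\textbf{Gap in your first step.} The problem is the treatment, on $[0,T_0]$, of the high-degree terms $N^{(2j-4)(\al-\frac32)}\wick{(\<1>_N^h)^{2j-1-\l}}(-Z_N^{(3)})^k(\pi_NY_N)^{p}$ with $p=\l-k$, $j\ge3$, and $p$ as large as $2m-1$.

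Since $X_{0,N}$ is only in $\mathcal{C}^{\al-\frac32-\eps}$, there is no bootstrap ball in $\mathcal{C}^{1+\eps}$ near $t=0$. The $\mathcal{Y}_{T_0}^{\al,\eps}$-norm \eqref{defYtae} only gives $\|Y_N(t)\|_{\mathcal{C}^{1+\eps}}\les t^{-(\frac54-\frac\al2+\eps)}$. The interpolations \eqref{interp1}--\eqref{interp2} give at best $\|Y_N(t)\|_{\mathcal{C}^{\eps}}\les t^{-(\frac34-\frac\al2+\eps)}$, with $\frac34-\frac\al2\in[\frac3{16},\frac14)$.

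Putting $p$ factors of $Y_N$ into these weighted norms produces a singularity $(t')^{-p(\frac34-\frac\al2+\eps)}$ in the Duhamel integral. This is not integrable at $t'=0$ for $p\ge5$ when $\al\le\frac{11}{10}$, and for $p\ge6$ for every $\al\le\frac98$. Keeping the Wick factor at regularity $\al-\frac32-\eps$, as you propose, only makes this worse, since some $Y_N$ must then sit in $\mathcal{C}^{s}$ with $s>\frac32-\al$. Shrinking $T_0$ cannot cure a non-integrable singularity, so the check announced in your last paragraph fails. Moreover, folding all but one of the powers $N^{\al-\frac32}$ into the chaos normalization wastes exactly the resource that is needed here.

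\textbf{The missing idea.} The paper (for $\textup{I}_5^2$) uses Bernstein's inequality on the frequency-truncated $\pi_NY_N$: $\|\pi_NY_N(t')\|_{\mathcal{C}^{\eps}}\les N^{\frac32-\al+2\eps}\|Y_N\|_{C_{T_0}\mathcal{C}_x^{\al-\frac32-\eps}}$. This uses only the \emph{unweighted} part of the $\mathcal{Y}$-norm.

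For $\l\ge4$, split $N^{(2j-4)(\al-\frac32)}=N^{(2j-1-\l)(\al-\frac32)}\cdot N^{(\l-4)(\al-\frac32)}\cdot N^{\al-\frac32}$. The first factor, again via Bernstein, puts the Wick factor into $\mathcal{C}^{-\eps}$; this is one power more than the normalization in Lemma~\ref{LEM:sto_h2}~(ii). The second factor pays for Bernstein on all but four copies of $\pi_NY_N$. The third yields the gain $N^{-\ta'}$ after absorbing the $2(\l-4)\eps$ losses.

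Only four factors then carry the weight from \eqref{interp2}, giving $(t')^{-(3-2\al+4\eps)}$, which is integrable since $\al>1$. For $\l=3$, three weighted $\mathcal{C}^{\eps}$ factors against the Wick factor in $\mathcal{C}^{-\eps}$ suffice, with the smallness coming from \eqref{Z_diff2}.

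\textbf{Your second step.} With this fix your first step closes, and your second step is sound. On $[T_0,T]$ every $Y_N$ can be placed in $\mathcal{C}^{1+\eps}$, and your exponential weight $C^{\ld}_{[T_0,T]}\mathcal{C}^{1+\eps}_x$ is a legitimate alternative to the paper's iteration over short intervals of length $T_1(K_0,K_1)$.

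One bookkeeping correction: \eqref{Y_bdd2} only gives $\|Y(t)\|_{\mathcal{C}^{1+\eps}}\le T_0^{\frac\al2-\frac54-\eps}K_1$ on $[T_0,T']$. So the a priori bound for $Y_N$ on $[T_0,T]$ must be of that size (the paper uses $4T_0^{-1}K_1$) rather than $4K_1$. Since $T_0=T_0(K_0,K_1)$, this is harmless.
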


\begin{proof}
From \eqref{Z_bdd2} and \eqref{Z_diff2}, we have the following uniform bound for the truncated enhanced data set:
\begin{align}
\| \cj{\pmb{Z}}_N \|_{\cj{\mathcal{Z}}_T^{\al, \eps}} \leq K + 1.
\label{ZN_bdd2}
\end{align}

By using the Duhamel formulations \eqref{SNLHY} and \eqref{SNLHYN}, we perform the following grouping for estimating $Y_N - Y$:
\begin{align*}
Y_N - Y = \sum_{j = 1}^{5} \textup{I}_j,
\end{align*}

\noi
where
\begin{align*}
\textup{I}_1 &\deff e^{- t \jb{\nb}^{2 \al}} (X_{0, N} - X_0) + e^{- t \jb{\nb}^{2 \al}} \big( Z_N^{(3)} (0) - 4 \cj{a}_2 \<30>^{\,h} (0) \big), \\
\textup{I}_2 &\deff - 2 \cj{a}_{1, N} N^{2 (\frac 32 - \al)} \I^h \big( \<1>_N^h - Z_N^{(3)} + \pi_N Y_N \big) + a \I^h \big( \<1>^h - 4 \cj{a}_2 \<30>^{\,h} + Y \big), \\
\textup{I}_3 &\deff \big( Z_N^{(5)} - 48 \cj{a}_2^2 \<320>^h \big) 
- \Big( \pi_N \I^h \big( z_N^{(2)} \cdot \pi_N Y_N \big) - 12 \cj{a}_2 \I^h (\<2>^{\,h} Y) \Big) \\
&\quad - \Big( 12 \cj{a}_{2, N} \pi_N \I^h (\<1>^h (\pi_N Y_N)^2) - 12 \cj{a}_2 \I^h (\<1>^h Y^2) \Big) \\
&\quad + \Big( 96 \cj{a}_{2, N} \pi_N \I^h (\<1>_N^h Z_N^{(3)} \cdot \pi_N Y_N) - 96 \cj{a}_2^2 \I^h (\<1>^h \<30>^{\,h} Y) \Big) \\
&\quad - \Big( 192 \cj{a}_{2, N} \pi_N \I^h \big( \<1>_N^h (Z_N^{(3)})^2 \big) - 192 \cj{a}_2^3 \I^h \big( \<1>^h (\<30>^{\,h})^2 \big) \Big) \\
&\quad - \Big( 4 \cj{a}_{2, N} \pi_N \I^h \big( (\pi_N Y_N)^3 \big) - 4 \cj{a}_2 \I^h (Y^3) \Big) \\
&\quad + \Big( 48 \cj{a}_{2, N} \pi_N \I^h \big( Z_N^{(3)} (\pi_N Y_N)^2 \big) - 48 \cj{a}_2^2 \I^h \big( \<30>^{\,h} Y^2 \big) \Big) \\
&\quad - \Big( 192 \cj{a}_{2, N} \pi_N \I^h \big( (Z_N^{(3)})^2 \pi_N Y_N \big) - 192 \cj{a}_2^3 \I^h \big( (\<30>^{\,h})^2 Y \big) \Big) \\
&\quad + \Big( 256 \cj{a}_{2, N} \pi_N \I^h \big( (Z_N^{(3)})^3 \big) - 256 \cj{a}_2^4 \I^h \big( (\<30>^{\,h})^3 \big) \Big), \\
\textup{I}_4 &\deff - \sum_{j = 3}^m C_{j, 2, 0} \cj{a}_{j, N} N^{(2j - 4) (\al - \frac 32)} \pi_N \I^h \big( \wick{(\<1>_N^h)^{2j - 3}} (\pi_N Y_N)^2 \big), \\
\textup{I}_5 &\deff - \sum_{j = 3}^m \sum_{\l = 3}^{2j - 1} \sum_{k = 0}^\l C_{j, \l, k} \cj{a}_{j, N} N^{(2j - 4) (\al - \frac 32)} \pi_N \I^h \big( (- Z_N^{(3)})^k \wick{(\<1>_N^h)^{2j - 1 - \l}} (\pi_N Y_N)^{\l - k} \big).
\end{align*}

\noi
For $\textup{I}_1$, $\textup{I}_2$, $\textup{I}_3$, and $\textup{I}_4$, we slightly modify the estimates in the proof of Lemma~\ref{LEM:LWP_h}~(i) in the following way: for $X_{0, N} - X_0$ we use \eqref{X0_diff}; for $2 \cj{a}_{1, N} N^{2 (\frac 32 - \al)} - a$ we use \eqref{a1N} to create a factor $N^{- \ta'}$ for some $\ta' > 0$; for $\<1>_N^h - \<1>^h$, $z_N^{(2)} - \<2>^{\,h}$, $Z_N^{(3)} - \<30>^{\,h}$, and $N^{(2j - 4) (\al - \frac 32)} \wick{(\<1>_N^h)^{2j - 3}}$, we use \eqref{Z_diff2}; for $\cj{a}_{2, N} - \cj{a}_2$ we use \eqref{aj}, \eqref{ajN}, and \eqref{sigma_diff} to create a factor $N^{- \ta'}$; for the appearance of $\pi_N^\perp = \Id - \pi_N$, we can also produce a factor $N^{- \ta'}$ by losing some small amount of spatial regularity. Thus, by \eqref{Z_bdd2}, \eqref{Y_bdd2}, and \eqref{ZN_bdd2}, we obtain that for any $0 < T_0 \leq T'$,
\begin{align}
\begin{split}
\| &\textup{I}_1 \|_{\mathcal{Y}_{T_0}^{\al, \eps}} + \| \textup{I}_2 \|_{\mathcal{Y}_{T_0}^{\al, \eps}} + \| \textup{I}_3 \|_{\mathcal{Y}_{T_0}^{\al, \eps}} + \| \textup{I}_4 \|_{\mathcal{Y}_{T_0}^{\al, \eps}} \\
&\les (\kappa + N^{- \ta'}) K_0^3 K_1^3 \Big( 1 + \| Y_N \|_{\mathcal{Y}_{T_0}^{\al, \eps}}^3 \Big) + T_0^{\dl} K_0^2 \Big( K_1^2 + \| Y_N \|_{\mathcal{Y}_{T_0}^{\al, \eps}}^2 \Big) \| Y_N - Y \|_{\mathcal{Y}_{T_0}^{\al, \eps}}.
\end{split}
\label{stb_h1}
\end{align}

\noi
For $\textup{I}_5$, we only consider the case $k = 0$, since the case $k \geq 1$ is simpler. Let us write
\begin{align*}
\textup{I}_5 = \textup{I}_5^1 + \textup{I}_5^2,
\end{align*}

\noi
where $\textup{I}_5^1$ denotes the contribution by $\l = 3$ and $\textup{I}_5^2$ denotes the contribution by $4 \leq \l \leq 2j - 1$. Then, by the Schauder estimate (Lemma~\ref{LEM:Schau}), the product estimate (Lemma~\ref{LEM:Bes}~(iv)), Bernstein's inequality, \eqref{Z_diff2}, and \eqref{interp2}, we have
\begin{align}
\begin{split}
\| \textup{I}_5^1 \|_{\mathcal{Y}_{T_0}^{\al, \eps}} &\les \sum_{j = 3}^m \sup_{0 \leq t \leq T_0} \int_0^t \bigg( \frac{1}{(t')^{- \frac{3 \al}{2} + \frac 94 + 3 \eps}} + \frac{t^{- \frac{\al}{2} + \frac 54 + \eps}}{(t - t')^{\frac 12 + \eps} (t')^{- \frac{3 \al}{2} + \frac 94 + 3 \eps}} \bigg) dt' \\
&\quad \times N^{(2j - 4) (\al - \frac 32)} \| \wick{(\<1>_N^h)^{2j - 4}} \|_{C_{T_0} \mathcal{C}_x^{-\eps}} \Big( \sup_{0 \leq t \leq T_0} t^{- \frac{\al}{2} + \frac 34 + \eps} \| Y_N (t) \|_{\mathcal{C}^\eps} \Big)^3 \\
&\les T_0^{\dl} N^{- \ta'} \| Y_N \|_{\mathcal{Y}_{T_0}^{\al, \eps}}^3,
\end{split}
\label{stb_h2}
\end{align}

\noi
and
\begin{align}
\begin{split}
\| \textup{I}_5^2 \|_{\mathcal{Y}_{T_0}^{\al, \eps}} 
&\les \sum_{j = 3}^m \sum_{\l = 4}^{2j - 1} \sup_{0 \leq t \leq T_0} \int_0^t \bigg( \frac{1}{(t')^{- 2 \al + 3 + 4 \eps}} + \frac{t^{- \frac{\al}{2} + \frac 54 + \eps}}{(t - t')^{\frac 12 + \eps} (t')^{- 2 \al + 3 + 4 \eps}} \bigg) dt' \\
&\quad \times N^{(2j - 1 - \ell) (\al - \frac 32)} \| \wick{(\<1>_N^h)^{2j - 1 - \l}} \|_{C_{T_0} \mathcal{C}_x^{-\eps}} \Big( \sup_{0 \leq t \leq T_0} t^{- \frac{\al}{2} + \frac 34 + \eps} \| Y_N (t) \|_{\mathcal{C}^\eps} \Big)^4 \\
&\quad \times N^{(\ell - 4) (\al - \frac 32 - 2 \eps)} \| \pi_N Y_N \|_{C_{T_0} \mathcal{C}_x^\eps}^{\l - 4} \cdot N^{\al - \frac 32 + 2 (\ell - 4) \eps} \\
&\les T_0^{\dl} N^{- \ta'} \Big( 1 + \| Y_N \|_{\mathcal{Y}_{T_0}^{\al, \eps}}^{2m - 1} \Big),
\end{split}
\label{stb_h3}
\end{align}

\noi
for some $\dl > 0$ and $\ta' > 0$, where we used
\begin{align*}
\al - \frac 32 + 2 (\ell - 4) \eps < 0
\end{align*}

\noi
given $1 < \al \leq \frac 98$ and $\eps > 0$ sufficiently small. Combining \eqref{stb_h1}, \eqref{stb_h2}, and \eqref{stb_h3}, we obtain
\begin{align}
\begin{split}
\| Y_N - Y \|_{\mathcal{Y}_{T_0}^{\al, \eps}} &\leq C_1 (\kappa + N^{- \ta'}) K_0^3 K_1^3 \Big( K_0^{2m - 1} + \| Y_N \|_{\mathcal{Y}_{T_0}^{\al, \eps}}^{2m - 1} \Big) \\
&\quad + C_1 T_0^\dl K_0^2 \Big( K_1^2 + \| Y_N \|_{\mathcal{Y}_{T_0}^{\al, \eps}}^2 \Big) \| Y_N - Y \|_{\mathcal{Y}_{T_0}^{\al, \eps}}
\end{split}
\label{stb_hdiff}
\end{align}

\noi
for some constant $C_1 \geq 1$. We now choose $T_0 = T_0 (K_0, K_1) > 0$ small in such a way that $17 C_1 T_0^\dl K_0^2 K_1^2 = \frac 12$. We also note that the function $t \mapsto \| u \|_{\mathcal{Y}_t^{\al, \eps}}$ is continuous for any fixed $u \in \mathcal{Y}_t^{\al, \eps}$. Thus, by choosing $\kappa = \kappa (K_0, K_1) > 0$ to be sufficiently small and $N = N (K_0, K_1) \gg 1$ to be suffciently large and using a standard continuity argument along with \eqref{stb_hdiff} and \eqref{Y_bdd2}, we obtain the bound
\begin{align*}
\| Y_N \|_{\mathcal{Y}_{T_0}^{\al, \eps}} \leq 4 K_1,
\end{align*}

\noi
and so
\begin{align}
\| Y_N (T_0) - Y (T_0) \|_{\mathcal{C}_x^{1 + \eps}} \leq C'(K_0, K_1) (\kappa + N^{- \ta'})
\label{stb_hdiff2}
\end{align}

\noi
for some constant $C' (K_0, K_1) > 0$, and we can assume that $C' (K_0, K_1) \geq 4^{2m - 1} T_0^{-2m + 1} K_0^{2m + 2} K_1^{2m + 2}$.

We now start from time $T_0$. Let $T_1 > 0$. We can use similar (and much easier) argument as above to modify the estimates in the proof of Lemma~\ref{LEM:LWP_h}~(ii), and so we obtain 
\begin{align}
\begin{split}
\| &Y_N - Y \|_{C_{[T_0, T_0 + T_1]} \mathcal{C}_x^{1 + \eps}} \\
&\leq C_2 C' (K_0, K_1) (\kappa + N^{- \ta'}) + C_2 (\kappa + N^{- \ta'}) K_0^3 K_1^3 \Big( K_0^{2m - 1} + \| Y_N \|_{C_{[T_0, T]} \mathcal{C}_x^{1 + \eps}}^{2m - 1} \Big) \\
&\quad + C_2 T_1^\dl K_0^2 \Big( K_1^2 + \| Y_N \|_{C_{[T_0, T]} \mathcal{C}_x^{1 + \eps}}^2 \Big) \| Y_N - Y \|_{C_{[T_0, T_0 + T_1]} \mathcal{C}_x^{1 + \eps}}
\end{split}
\label{stb_hdiff3}
\end{align}

\noi
for some constant $C_2 > 0$. Let us first assume that
\begin{align}
\| Y_N \|_{C_{[T_0, T]} \mathcal{C}_x^{1 + \eps}} \leq 4 T_0^{-1} K_1.
\label{stb_hbdd}
\end{align}

\noi
Then, from \eqref{stb_hdiff3} and \eqref{stb_hbdd}, by taking $T_1 = T_1 (K_0, K_1) > 0$ to be small enough such that $17 C_2 T_1^\dl T_0^{-2} K_0^2 K_1^2 = \frac 12$, we obtain 
\begin{align*}
\| Y_N - Y \|_{C_{[T_0, T_0 + T_1]} \mathcal{C}_x^{1 + \eps}} \leq 4 C_2 C' (K_0, K_1) (\kappa + N^{- \ta'}).
\end{align*}

\noi
Recall that $T_0$ depends only on $K_0$ and $K_1$, so we know that $T_1$ depends only on $K_0$ and $K_1$. Thus, we can iterate the above argument to obtain
\begin{align}
\| Y_N - Y \|_{C_{[T_0, T]} \mathcal{C}_x^{1 + \eps}} \leq 4^J C_2 C' (K_0, K_1) (\kappa + N^{- \ta'}),
\label{stb_hdiff4}
\end{align}

\noi
where $J \in \N$ satisfies $J \geq \frac{T - T_0}{T_1}$ so that $J$ depends only on $K_0$ and $K_1$. The estimates \eqref{stb_hdiff4} and \eqref{stb_hdiff2} then gives the desired \eqref{YN_diff2}.

It remains to verify the bound \eqref{stb_hbdd}. Assume that $T_* < T$ is the largest number such that
\begin{align*}
\| Y_N \|_{C_{[T_0, T_*]} \mathcal{C}_x^{1 + \eps}} \leq 2 T_0^{-1} K_1.
\end{align*}

\noi
Then, by continuity, there exists $t > 0$ such that $T_* + t \leq T$ and
\begin{align*}
\| Y_N \|_{C_{[T_0, T_* + t]} \mathcal{C}_x^{1 + \eps}} \leq 4 T_0^{-1} K_1.
\end{align*}

\noi
Thus, by the same iterative argument above, we obtain the estimate \eqref{stb_hdiff4} with $T$ replaced by $T_* + t$. Thus, we obtain from \eqref{Y_bdd2} that
\begin{align*}
\| Y_N \|_{C_{[T_0, T_* + t]} \mathcal{C}_x^{1 + \eps}} &\leq \| Y_N - Y \|_{C_{[T_0, T_* + t]} \mathcal{C}_x^{1 + \eps}} + \| Y \|_{C_{[T_0', T_* + t]} \mathcal{C}_x^{1 + \eps}} \\
&\leq 4^J C_2 C' (K_0, K_1) (\kappa + N^{- \ta'}) + T_0^{\frac{\al}{2} - \frac 54 - \eps} K_1 \\
&\leq 2 T_0^{-1} K_1
\end{align*}

\noi
given $\al > 1$, as long as we take $\kappa = \kappa (T, K_0, K_1) > 0$ to be sufficiently small and $N = N (T, K_0, K_1) \gg 1$ to be sufficiently large. This contradicts the definition of $T_*$, so that we must have $T_* = T$. This finishes the proof. 
\end{proof}

\subsection{Construction of random variables distributed by the Gibbs measures}
\label{SUB:rep}
Our main task in this subsection is to prove Theorem~\ref{THM:rep}.
\begin{proof}[Proof of Theorem \ref{THM:rep}]
We take the following probability space:
\begin{align*}
(\Sigma, \mathcal{G}, \mathbb{Q}) = (\Om_0 \times \Om^h, \mathcal{F}_0 \otimes \mathcal{F}^h, \mathbb{P}_0 \otimes \mathbb{P}^{h})
\end{align*}

\noi
and also the random variables:
\begin{align*}
z_0 = \<1>^h (1), \quad v_{0, N} = - Z_N^{(3)} (1) + Y_N (1), \quad v_0 = - 4 \cj{a}_2 \<30>^{\,h} (1) + Y (1), 
\end{align*}

\noi
where $\<1>^h$ is defined in \eqref{lin_h}, $Z_N^{(3)}$ is defined in \eqref{cub_h2}, $\<30>^{\,h}$ is defined in Lemma~\ref{LEM:sto_h}, $Y_N$ is the solution to the equation \eqref{SNLHYN}, and $Y$ is the solution to the equation \eqref{SNLHY}.

From \eqref{lin_hvar}, we can easily verify that $\Law (z_0) = \Law (\<1>^h) = \mu_\al$, where $\mu_\al$ is as defined in \eqref{mu_al}. By \eqref{sec_expN} and Lemma~\ref{LEM:inv_h2}, we have
\begin{align*}
\Law (z_0 + v_{0, N}) = \Law \big( \<1>^h (1) - Z_N^{(3)} (1) + Y_N (1) \big) = \Law (X_N (1)) = \varrho_N.
\end{align*}

\noi
By \eqref{sec_exph} and Proposition~\ref{PROP:GWP_h}, we have
\begin{align*}
\Law (z_0 + v_{0}) = \Law \big( \<1>^h (1) - 4 \cj{a}_2 \<30>^{\,h} (1) + Y (1) \big) = \Law (X (1)) = \rho.
\end{align*}

It remains to show the convergence in probability of $v_{0, N}$ to $v_0$ in $\mathcal{C}^{\al - \frac 12 + \eps}$ for some $\eps > 0$. From Lemma~\ref{LEM:sto_h2} and the fact that $5 \al - \frac 92 - \eps > \al - \frac 12 + \eps$ given $\eps > 0$ sufficiently small, we see that $Z_N^{(3)} (1)$ converges $\mathbb{P}^h$-almost surely to $4 \cj{a}_2 \<30>^{\,h} (1)$ in $\mathcal{C}^{\al - \frac 12 + \eps}$. Also, given any $0 < \eta \ll 1$, from Lemma~\ref{LEM:stb_h2}, Lemma~\ref{LEM:sto_h2}, Proposition~\ref{PROP:Y_bdd}, and the fact that $X_{0, N}$ converges $\mathbb{P}_0$-almost surely to $X_0$ (see the beginning of Subsection~\ref{SUB:heat_wu}), we obtain
\begin{align*}
\mathbb{Q} &\big( \| Y_N (1) - Y (1) \|_{\mathcal{C}^{\al - \frac 12 + \eps}} > C (T, K_0, K_1) (\kappa + N^{- \ta'}) \big) \\
&\leq \mathbb{P}^h \big( \| \cj{\pmb{Z}} \|_{\cj{\mathcal{Z}}_T^{\al, \eps}} > K_0 \big) + \mathbb{Q} \big( \| Y \|_{\mathcal{Y}_{T'}^{\al, \eps}} + \| Y \|_{C_{[T', T]} \mathcal{C}_x^{1 + \eps}} > K_1 \big) \\
&\quad + \mathbb{P}^h \big( \| \cj{\pmb{Z}}_N - \cj{\pmb{Z}} \|_{\cj{\mathcal{Z}}_T^{\al, \eps}} > N^{- \ta} \big) + \mathbb{P}_0 \big( \| X_{0, N} - X_0 \|_{\mathcal{C}^{\al - \frac 32 - \eps}} > \kappa \big) \\
&< \eta
\end{align*}

\noi
for any $\kappa > 0$ sufficiently small and some $\ta, \ta' > 0$, where $K_0 = K_0 (T, \eta) \gg 1$ is sufficiently large, $0 < T' = T' (T, \eta) \leq 1$ is the $T_0$ given in the statement of Proposition~\ref{PROP:Y_bdd}, $K_1 = K_1 (T, \eta) \gg 1$ is sufficiently large, $N \geq N_0$ with $N_0 \gg 1$ sufficiently large, and $C (T, K_0, K_1) > 0$ is the constant as in the statement of Lemma~\ref{LEM:stb_h2}. This finishes the convergence part of Theorem~\ref{THM:rep}. The bound \eqref{v0bdd_goal} follows directly from Lemma~\ref{LEM:sto_h} for $\<30>^{\,h}$ and Proposition~\ref{PROP:Y_bdd}.
\end{proof}

\section{Dynamical weak universality}

In this section, we prove Theorem \ref{THM:dyn}, convergence of the solution $u_N$ of the macroscopic model \eqref{macro}.

\subsection{On the truncated dynamics}

In this subsection, we briefly go over global well-posedness of the macroscopic model
\begin{align}
\dt^2 u_N + \dt u_N + (1 - \Dl)^\al u_N + \pi_N V_N' (\pi_N u_N) = \sqrt{2} \xi
\label{SdNLWmain}
\end{align}

\noi
and the invariance of the truncated Gibbs measure $\vec \varrho_N = \varrho_N \otimes \mu_0$ as defined in \eqref{nuN} and \eqref{mu_al} under the flow of~\eqref{SdNLWmain}.

Let $N \in \N$. Let us write the truncated dynamics \eqref{SdNLWmain} as a superposition of the deterministic fractional NLW:
\begin{align}
\dt^2 u_N + (1 - \Dl)^\al u_N + \pi_N V_N' (\pi_N u_N) = 0
\label{fNLW}
\end{align}

\noi
and the Ornstein-Uhlenbeck process for $\dt u_N$:
\begin{align}
\dt (\dt u_N) = - \dt u_N + \sqrt{2} \xi.
\label{OU}
\end{align}

\noi
We observe that the truncated Gibbs measure $\vec \varrho_N$ is formally invariant under the dynamics of \eqref{SdNLWmain}. This is essentially due to the fact that $\vec \varrho_N$ is invariant under the dynamics of the fractional NLW \eqref{fNLW} and the white noise measure $\mu_0$ on $\dt u_N$ (and hence $\vec \varrho_N$ on $\vec u_N$) is invariant under the Ornstein-Uhlenbeck dynamics \eqref{OU}. Then, we can apply Bourgain's invariant measure argument \cite{Bour94} to obtain the following almost sure global well-posedness result for the macroscopic model \eqref{SdNLWmain} and invariance of the truncated Gibbs measure $\vec \varrho_N$. For a proof, see \cite[Lemma 9.3]{OOT1}.
\begin{lemma}
\label{LEM:invN}
Let $\al > 1$, $\eps > 0$, and $N \in \N$. Then, the macroscopic model \eqref{SdNLWmain} is almost surely globally well-posed with respect to the random initial data distributed by the truncated Gibbs measure $\vec \varrho_N = \varrho_N \otimes \mu_0$. Moreover, $\vec \varrho_N$ is invariant under the resulting dynamics of~\eqref{SdNLWmain}. 
\end{lemma}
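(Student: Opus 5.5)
The proof follows Bourgain's invariant measure argument for a finite-dimensional truncated system; we sketch the standard route of \cite[Lemma 9.3]{OOT1}. First, decompose $u_N = \pi_N u_N + \pi_N^\perp u_N$. The high-frequency part $\pi_N^\perp \vec u_N$ solves the linear damped stochastic fractional wave equation, since the nonlinearity $\pi_N V_N'(\pi_N u_N)$ is supported on $\{|n|\le N\}$. It therefore exists globally; its law $\pi_N^\perp$-marginal of $\vec\mu_\al$ is invariant, because the linear dynamics preserve the Gaussian measure $\vec \mu_\al$ frequency by frequency. The low-frequency part $\pi_N \vec u_N$ is a finite-dimensional SDE in $\R^{2\#\{|n|\le N\}}$ with locally Lipschitz (polynomial) drift and additive noise acting only on the velocity component. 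Because $\varrho_N\otimes\mu_0$ factorizes as (density on $\pi_N$-modes)$\times$(Gaussian on $\pi_N^\perp$-modes), it suffices to treat the finite-dimensional system together with the measure $Z^{-1}e^{-H_N(\pi_N u,\pi_N\dt u)}\,d(\pi_N u)\,d(\pi_N \dt u)$. Here
\begin{align*}
H_N(u,v)=\tfrac12\|u\|_{H^\al}^2+\tfrac12\|v\|_{L^2}^2+\int_{\T^3}V_N(\pi_N u)\,dx .
\end{align*}
The Wick/renormalization constants and $\al_N$ are harmless at this level: they are finite constants for fixed $N$ and get absorbed into the normalization.

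Next, establish global existence for the finite-dimensional SDE. Local existence follows from Picard iteration. For globality, apply It\^o's formula to $H_N$: the Hamiltonian part conserves $H_N$, while the Ornstein--Uhlenbeck part contributes $-\|\pi_N \dt u_N\|_{L^2}^2 + C_N\,dt$ plus a martingale. This yields $\E[H_N(t)] \le H_N(0)+C_N t$. Since $\jb{V}$ satisfies the positivity condition \eqref{pos0}, $V_N$ has positive leading coefficient. Hence $V_N$ is bounded from below, so $H_N$ is coercive and prevents blow-up. Thus the dynamics are globally well-posed for every initial datum, and in particular almost surely with respect to $\vec\varrho_N$.

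For invariance, write the generator as $\mathcal L=\mathcal L_{\rm Ham}+\mathcal L_{\rm OU}$. The Liouville part $\mathcal L_{\rm Ham}$ is divergence-free (Liouville's theorem) and preserves $H_N$, so it annihilates $e^{-H_N}$ in the adjoint sense. The part $\mathcal L_{\rm OU}$ acts only on $v$ and is reversible with respect to the Gaussian $e^{-\frac12\|v\|^2}$. Therefore $\mathcal L^* e^{-H_N}=0$. To justify this rigorously, verify $\int\mathcal L F\,e^{-H_N}=0$ for $F\in C_c^\infty$, and use the uniqueness and global existence of the Markov semigroup, e.g.\ Echeverr\'ia's theorem or a direct Kolmogorov equation argument, to conclude invariance. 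Finally, tensor this with the invariant Gaussian marginal on the high frequencies.

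The main obstacle is the rigorous passage from infinitesimal invariance to invariance of the semigroup, because of the non-Lipschitz polynomial drift. We would handle it with the energy bound above, which gives non-explosion and lets us localize with stopping times. Alternatively, we can use a Lie–Trotter splitting between the Hamiltonian flow, which preserves $e^{-H_N}$ by Liouville's theorem and energy conservation, and the explicitly solvable OU flow, which preserves $\mu_0$ on $\dt u$. Both pieces preserve the measure, and convergence of the splitting on compact sets follows from the local Lipschitz bounds combined with the energy control.
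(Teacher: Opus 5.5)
Your proposal is correct and is essentially the argument the paper invokes through \cite[Lemma~9.3]{OOT1}. The high frequencies evolve by the linear damped equation, which preserves the Gaussian marginal. The low frequencies form a finite-dimensional Hamiltonian system plus an Ornstein--Uhlenbeck part in $\dt u$, so $\vec\varrho_N$ is preserved by Liouville's theorem together with the invariance of white noise under the OU flow.

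The only minor difference is in how you get global existence. You obtain it directly from It\^o's formula on the coercive energy $H_N$. This is legitimate here because $V_N$ has positive leading coefficient $N^{(4-2m)(\frac 32-\al)}a_m$, with $a_m=\cj a_m>0$ by \eqref{pos0}. The paper instead phrases the globalization via Bourgain's invariant measure argument; both work.
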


\subsection{Some notations on the stochastic convolution}
In this subsection, we introduce some more notations for convenience. 

Before proceeding, let us be more precise on the probability space that we are going to work on. We recall that the space-time white noise $\xi$ is independent with the random initial data distributed by the Gibbs measures. Let $(\Sigma, \mathcal{G}, \mathbb{Q})$ be the underlying probability space for the random initial data given by Theorem~\ref{THM:rep}. We then view $\xi$ as a space-time white noise on a new probability space $(\Om^w, \F^w, \mathbb{P}^w)$ (the superscript ``$w$'' stands for ``wave''), and we study our equation \eqref{macro} on the following probability space:
\begin{align*}
(\Sigma \times \Om^w, \mathcal{G} \otimes \F^w, \mathbb{Q} \otimes \mathbb{P}^w).
\end{align*}

Let us denote $\vec z_0 = (z_0, z_1)$, where $z_0$ is the random variable on $(\Sigma, \mathcal{G}, \mathbb{Q})$ provided by Theorem~\ref{THM:rep} with $\Law (z_0) = \mu_\al$ (see \eqref{mu_al}) and $z_1$ is a random variable on $(\Sigma, \mathcal{G}, \mathbb{Q})$ with $\Law (z_1) = \mu_0$ independent with $z_0$. From now on, we focus on the following version of the stochastic convolution for the linear stochastic damped wave equation:
\begin{align}
\<1> (t; \vec z_0, \o) \deff S(t) \vec z_0 + \sqrt{2} \int_0^t \mathcal{D} (t - t') dW^w (t', \o),
\label{sto}
\end{align}

\noi
for $\o \in \Om^w$, where $\mathcal{D}(t)$ is defined in \eqref{defD}, $S(t)$ is defined in \eqref{defS}, and $W^w$ is a cylindrical Wiener process on $L^2 (\T^3)$ given by
\begin{align}
W^w (t) = \frac{1}{(2 \pi)^{\frac 32}} \sum_{n \in \Z^3} B_n^w (t) e^{i n \cdot x}
\label{defWw}
\end{align}

\noi
with $B_n^w (t) = (2 \pi)^{-3/2} \jb{\xi, \ind_{[0, t]} \cdot e^{in \cdot x}}_{t, x}$. Here, $\jb{\cdot, \cdot}_{t, x}$ denotes the duality pairing on $\R_+ \times \T^3$. Note that $\{B_n^w\}_{n \in \Z^3}$ is a family of Gaussian processes satisfying the three properties at the beginning of Subsection~\ref{SUB:mult}. 
For simplicity, we may omit the dependence on $\vec z_0$ and/or $\o$ if it is clear from the context.

Given $N \in \N$, we write $\<1>_N = \pi_N \<1>$. A tedious but direct computation shows that
\begin{align*}
\E [ |\<1>_N (t, x)|^2 ] = \sum_{\substack{n \in \N \\ |n| \leq N}} \frac{1}{\jb{n}^{2 \al}} = \s_N
\end{align*}

\noi
for each $t \geq 0$ and $x \in \T^3$, where $\s_N$ is as defined in \eqref{sigma}. Given an integer $k \geq 1$, we define the Wick ordered product
\begin{align}
\wick{\<1>_N^k (\vec z_0, \o)} \, \deff H_k (\<1>_N (\vec z_0, \o); \s_N),
\label{wick}
\end{align}

\noi
where $H_k$ is the $k$th Hermite polynomial defined in \eqref{hermite}.

Recall that we have an explicit expression for $z_0$ given by the proof of Theorem~\ref{THM:rep} in Subsection~\ref{SUB:rep} (see \eqref{lin_h}):
\begin{align*}
z_0 (x) = \sum_{n \in \Z^3} \frac{\sqrt{2}}{(2 \pi)^{\frac 32}} \int_{- \infty}^1 e^{- (1 - s) \jb{n}^{2 \al}} \, d B_n^0 (s) e^{in \cdot x}, 
\end{align*}

\noi
where we denote $B_n^0 = B_n^h$ for notational convenience at a later point.
In order to treat $z_0$ and $z_1$ in a uniform manner, we write
\begin{align*}
z_1 (x) = \sum_{n \in \Z^3} \frac{1}{(2 \pi)^{\frac 32}} \int_{0}^1 1 \, d B_n^1 (s) e^{in \cdot x},
\end{align*}

\noi
where $\{B_n^1\}_{n \in \Z^3}$ is a family of Gaussian process on $(\Sigma, \mathcal{G}, \mathbb{Q})$ independent with $z_0$, $v_{0, N}$, and $v_0$ provided by Theorem~\ref{THM:rep} and satisfying the three properties at the beginning of Subsection~\ref{SUB:mult}.
We also denote $B_n^2 = B_n^w$ for each $n \in \Z^3$, where $B_n^w$ is given by \eqref{defWw}. Thus, from \eqref{sto}, we can write
\begin{align}
\<1>_N (t, x) = \sum_{\substack{n \in \Z^3 \\ |n| \leq N}} \sum_{j \in \{ 0, 1, 2 \}} \int_{\R} \phi_{j, n} (t, s) d B_n^j (s) e^{in \cdot x},
\label{sto_phi}
\end{align}

\noi
where
\begin{align}
\begin{split}
\phi_{0, n} (t, s) &\deff \frac{\sqrt{2}}{(2 \pi)^{\frac 32}} e^{- \frac t2} \Big( \cos (t \jbb{n}) + \frac{\sin (t \jbb{n})}{2 \jbb{n}} \Big) e^{- (1 - s) \jb{n}^{2 \al}} \ind_{(- \infty, 1]} (s), \\
\phi_{1, n} (t, s) &\deff \frac{1}{(2 \pi)^{\frac 32}} e^{- \frac t2} \frac{\sin (t \jbb{n})}{\jbb{n}} \ind_{[0, 1]} (s), \\
\phi_{2, n} (t, s) &\deff \frac{\sqrt{2}}{(2 \pi)^{\frac 32}} e^{- \frac{t - s}{2}} \frac{\sin ((t - s) \jbb{n})}{\jbb{n}} \ind_{[0, t]} (s).
\end{split}
\label{def_phi}
\end{align}

\noi
The advantage of the above notations is that the Wick ordered product $\wick{\<1>_N^k}$ defined in \eqref{wick} can be written in the following neat form in view of Lemma~\ref{LEM:sto_ito}~(i) and the binomial expansion \eqref{her_binom}:
\begin{align}
\wick{\<1>_N^k} (t, x) = \sum_{\substack{n \in \Z^3 \\ |n| \leq kN}} \sum_{\substack{n_1 + \cdots + n_k = n \\ |n_1|, \dots, |n_k| \leq N}} \sum_{\substack{\iota_1, \dots, \iota_k \\ \in \{0, 1, 2\}}} \int_{\R^k} \prod_{\l = 1}^k \phi_{\iota_\l, n_\l} (t, s_\l) d B_{n_1}^{\iota_1} (s_1) \cdots d B_{n_k}^{\iota_k} (s_k) e^{in \cdot x}.
\label{sto_phi2}
\end{align}

\subsection{Full ansatz and enhanced data set}
\label{SUB:ansatz}

As mentioned in Subsection~\ref{SUB:intro_dyn}, we proceed with the following second order expansion:
\begin{align}
u_N = \<1> - \Xi_N^{(3)} + v_N,
\label{sec_exp2}
\end{align}

\noi
where $\Xi_N^{(3)}$ is the extended cubic stochastic term defined in \eqref{cubic} and $v_N$ is a remainder term with initial data $v_{0, N}$ given by Theorem~\ref{THM:rep}.

Let us now present our full ansatz. From the definition of $V_N$ in \eqref{defVN}, the second order expansion \eqref{sec_exp2}, and the binomial expansion \eqref{her_binom}, we have
\begin{align}
\begin{split}
V_N' &(\pi_N u_N) \\
&= 2 \cj{a}_{1, N} N^{2 (\frac 32 - \al)} \big( \<1>_N - \Xi_N^{(3)} + \pi_N v_N \big) \\
&\quad + \sum_{j = 2}^m 2j \cj{a}_{j, N} N^{(2j - 4) (\al - \frac 32)} \wick{\<1>_N^{2 j - 1}} \\
&\quad + \sum_{j = 2}^m 2j \cj{a}_{j, N} N^{(2j - 4) (\al - \frac 32)} \bigg( \sum_{\l = 1}^{2j - 1} \binom{2j - 1}{\l} :\! \<1>_N^{2j - 1 - \l} \!: \big( - \Xi_N^{(3)} + \pi_N v_N \big)^\l \bigg).
\end{split} 
\label{VNuN}
\end{align}

\noi
Using the definition of $\Xi_N^{(3)}$ in \eqref{cubic}, the second order expansion \eqref{sec_exp2}, and \eqref{VNuN}, we obtain the following equation for $v_N$:
\begin{align}
\begin{split}
&\dt^2 v_N + \dt v_N + (1 - \Dl)^\al v_N + 2 \cj{a}_{1, N} N^{2 (\frac 32 - \al)} \big( \<1>_N - \Xi_N^{(3)} + \pi_N v_N \big) \\
&\quad + \sum_{j = 2}^m 2j \cj{a}_{j, N} N^{(2j - 4) (\al - \frac 32)} \pi_N \bigg( \sum_{\l = 1}^{2j - 1} \binom{2j - 1}{\l} :\! \<1>_N^{2j - 1 - \l} \!: \big( - \Xi_N^{(3)} + \pi_N v_N \big)^\l \bigg) = 0
\end{split}
\label{SdNLWv1}
\end{align}

\noi
with initial data $\vec v_N |_{t = 0} = (v_{0, N}, 0)$.
Let us write \eqref{SdNLWv1} in the following Duhamel formulation starting at time $t_0 \geq 0$:
\begin{align}
\begin{split}
\vec v_N &= \vec S(\cdot) (v_{N} (t_0), \dt v_N (t_0)) - 2 \cj{a}_{1, N} N^{2 (\frac 32 - \al)} \pi_N \vec \I_{t_0} \big( \<1>_N - \Xi_N^{(3)} + \pi_N v_N \big) \\
&\quad - \sum_{j = 2}^m \sum_{\l = 1}^{2j - 1} \sum_{k = 0}^\l C_{j, \l, k} \cj{a}_{j, N}  N^{(2 j - 4) (\al - \frac 32)} \pi_N \vec \I_{t_0} \big( (- \Xi_N^{(3)})^k \wick{\<1>_N^{2j - 1 - \l}} (\pi_N v_N)^{\l - k} \big)
\end{split}
\label{SdNLWv2}
\end{align}

\noi
for some constants $C_{j, \l, k} > 0$, where $S(\cdot)$ is as defined in \eqref{defS}, $\I_{t_0}$ is the wave Duhamel operator as defined in \eqref{defI}, and their vector versions $\vec S (\cdot)$ and $\vec \I_{t_0}$ are defined in \eqref{vecSI}. For convenience, we define the following stochastic terms (including $\Xi_N^{(3)}$ for readers' convenience):
\begin{align}
\begin{split}
\Xi^{(3)}_N &\deff \sum_{j = 2}^m 2j \cj{a}_{j, N} N^{(2j - 4) (\al - \frac 32)} \pi_N \I \big( :\! \<1>_N^{2j - 1} \!: \big), \\
\Xi^{(2)}_{t_0, N} (u) &\deff \sum_{j = 2}^m C_{j, 1, 0} \cj{a}_{j, N} N^{(2j - 4) (\al - \frac 32)} \pi_N \I_{t_0} \big( \wick{\<1>_N^{2j - 2}} u \big), \\
\Xi^{(3, 1)}_{t_0, N} (u) &\deff \sum_{j = 2}^m C_{j, 2, 1} \cj{a}_{j, N} N^{(2j - 4) (\al - \frac 32)} \pi_N \I_{t_0} \big( \Xi_N^{(3)} \wick{\<1>_N^{2j - 3}} u \big), \\
\Xi^{(5)}_{t_0, N} &\deff \sum_{j = 2}^m C_{j, 1, 1} \cj{a}_{j, N} N^{(2j - 4) (\al - \frac 32)} \pi_N \I_{t_0} \big( \Xi_N^{(3)} \wick{\<1>_N^{2j - 2}} \big), \\
\Xi^{(7)}_{t_0, N} &\deff \sum_{j = 2}^m C_{j, 2, 2} \cj{a}_{j, N} N^{(2j - 4) (\al - \frac 32)} \pi_N \I_{t_0} \big( (\Xi_N^{(3)})^2 \wick{\<1>_N^{2j - 3}} \big).
\end{split}
\label{sto_all}
\end{align}

\noi
The superscripts in \eqref{sto_all} refer to the degree of their corresponding limiting stochastic objects in \eqref{sto_lim} below. Using the notations in \eqref{sto_all}, we can then rewrite \eqref{SdNLWv2} as
\begin{align}
\begin{split}
\vec v_N &= \vec S(\cdot) (v_{N} (t_0), \dt v_N (t_0)) - 2 \cj{a}_{1, N} N^{2 (\frac 32 - \al)} \pi_N \vec \I_{t_0} \big( \<1>_N - \Xi_N^{(3)} + \pi_N v_N \big) \\
&\quad - \vec \Xi_{t_0, N}^{(2)} (\pi_N v_N) + \vec \Xi_{t_0, N}^{(3, 1)} ( \pi_N v_N ) + \vec \Xi_{t_0, N}^{(5)} - \vec \Xi_{t_0, N}^{(7)} \\
&\quad - \sum_{j = 2}^m C_j \cj{a}_{j, N} N^{(2 j - 4) (\al - \frac 32)} \pi_N \vec \I_{t_0} \big( \wick{\<1>_N^{2j - 3}} (\pi_N v_N)^2 \big) \\
&\quad - \sum_{j = 2}^m \sum_{\l = 3}^{2j - 1} \sum_{k = 0}^\l C_{j, \l, k} \cj{a}_{j, N} N^{(2 j - 4) (\al - \frac 32)} \pi_N \vec \I_{t_0} \big( (- \Xi_N^{(3)})^k \wick{\<1>_N^{2j - 1 - \l}} (\pi_N v_N)^{\l - k} \big)
\end{split}
\label{SdNLWv3}
\end{align}

\noi
for some constants $C_j, C_{j, \l, k} > 0$, where $\vec \Xi_{t_0, N} = (\Xi_{t_0, N}, \dt \Xi_{t_0, N})$ (we also use this notation for the tree-structure objects below). For conciseness in later steps, we simply write
\begin{align*}
\Xi_N^{(2)} = \Xi_{\cdot, N}^{(2)}, ~ \Xi_N^{(3, 1)} = \Xi_{\cdot, N}^{(3, 1)}, ~ \Xi_N^{(5)} = \Xi_{\cdot, N}^{(5)}, ~ \Xi_N^{(7)} = \Xi_{\cdot, N}^{(7)}.
\end{align*}


In order to discuss the regularity properties of the above stochastic terms, we need to define two norms. Given $s, b \in \R$, $T > 0$, and $u_\cdot$ such that $u_{t_0} \in X_{[t_0, t_1]}^{s, b}$ for any $0 \leq t_0 < t_1 \leq T$, we define
\begin{align}
\| u_\cdot \|_{Y_T^{s, b}} \deff \sup_{0 \leq t_0 < t_1 \leq T} \| u_{t_0} \|_{X_{[t_0, t_1]}^{s, b}} .
\label{Ysb}
\end{align}

\noi
Also, given $s_1, s_2, b \in \R$, $\dl, T > 0$, and $S_\cdot$ such that $S_{t_0} \in \L (X_{[t_0, t_1]}^{s_1, b}; X_{[t_0, t_1]}^{s_2, b})$ for any $0 \leq t_0 < t_1 \leq T$, we define
\begin{align}
\| S_\cdot \|_{\L_T^{s_1, s_2, b, \dl}} \deff \sup_{0 \leq t_0 < t_1 \leq T} (t_1 - t_0)^{- \dl} \| S_{t_0} \|_{\L (X_{[t_0, t_1]}^{s_1, b}; X_{[t_0, t_1]}^{s_2, b})} .
\label{LXsb}
\end{align}
 
\noi
We also denote $\vec Y_T^{s, b} = Y_T^{s, b} \times Y_T^{s - \al, b}$ and $\vec \L_T^{\,s_1, s_2, b, \dl} = \L_T^{s_1, s_2, b, \dl} \times \L_T^{s_1, s_2 - \al, b, \dl}$.

To gather all stochastic terms, we define the truncated enhanced data set $\pmb{\Xi}_N$ as
\begin{align}
\pmb{\Xi}_N &\deff \big( \<1>_N, N^{\al - \frac 32} \wick{\<1>_N^2}, \dots, N^{(2m - 4) (\al - \frac 32)} \wick{\<1>_N^{2m - 3}}, \Xi_N^{(3)}, \vec \Xi_N^{(2)}, \vec \Xi_N^{(3, 1)}, \vec \Xi_N^{(5)}, \vec \Xi_N^{(7)} \big).
\label{enhN}
\end{align}

\noi
In view of \cite[Lemma~5.1]{LTzW}, we define the following stochastic objects:\footnote{With a slight abuse of notations, by writing $\I$, we actually mean $t_0 \mapsto \I_{t_0}$ in a few places. This simplifies the presentation in later argument.}
\begin{align}
\begin{split}
\<30> &\deff \lim_{N \to \infty} \pi_N \I (\wick{\<1>_N^3}) \quad \text{in } C_T W^{3\al - 3 - \eps, \infty} \cap X_T^{3 \al - 3 - \eps, \frac 12 + \dl}, \\
\If^{\<2>} (u) &= \If_\cdot^{\<2>} (u) \deff \lim_{N \to \infty} \pi_N \I (\wick{\<1>_N^2} u) \quad \text{in } \L_T^{\al - \frac 12 - \eps, \al - \frac 12 + \eps, \frac 12 + \dl, \dl}, \\
\If^{\<31>} (u) &= \If_\cdot^{\<31>} (u) \deff \lim_{N \to \infty} \pi_N \I \big( \pi_N \I (\wick{\<1>_N^3}) \<1>_N \cdot u \big) \quad \text{in } \L_T^{\al - \frac 12 - \eps, \al - \frac 12 + \eps, \frac 12 + \dl, \dl}, \\
\<320> &= \<320>_\cdot \deff \lim_{N \to \infty} \pi_N \I ( \pi_N \I (\wick{\<1>_N^3}) \wick{\<1>_N^2} ) \quad \text{in } Y_T^{\al - \frac 12, \frac 12 + \dl}, \\
\<70> &= \<70>_\cdot \deff \lim_{N \to \infty} \pi_N \I \big( (\pi_N \I (\wick{\<1>_N^3}))^2 \<1>_N \big) \quad \text{in } Y_T^{\al - \frac 12, \frac 12 + \dl}
\end{split}
\label{sto_lim}
\end{align}

\noi
for $\eps, \dl > 0$ sufficiently small, where the convergence holds almost surely.
We will show in the next subsection that $\pmb{\Xi}_N$ converges to the enhanced data set $\pmb{\Xi}$ defined by
\begin{align}
\begin{split}
\pmb{\Xi} \deff \big( \<1>, 0, \dots, 0, 4 \cj{a}_2 \<30>, 12 \cj{a}_2 \vec \If^{\,\<2>}, 96 \cj{a}_2^2 \vec \If^{\,\<31>},  48 \cj{a}_2^2 \vec{\<320>}, 192 \cj{a}_2^3 \vec{\<70>} \big)
\end{split}
\label{enh}
\end{align}

\noi
in the space
\begin{align}
\begin{split}
\mathcal{X}_T^{\al, \eps, \dl} &\deff (C_T W^{\al - \frac 32 - \eps, \infty})^{\otimes (2m - 3)} \times (C_T W^{3 \al - 3 - \eps, \infty} \cap X_T^{3 \al - 3 - \eps, \frac 12 + \dl} ) \\
&\quad \times \vec{\L}_T^{\,\al - \frac 12 - \eps, \al - \frac 12 + \eps, \frac 12 + \dl, \dl} \times \vec \L_T^{\,\al - \frac 12 - \eps, \al - \frac 12 + \eps, \frac 12 + \dl, \dl} \\
&\quad  \times \vec Y_T^{\al - \frac 12, \frac 12 + \dl} \times \vec Y_T^{\al - \frac 12, \frac 12 + \dl}.
\end{split}
\label{XTaed}
\end{align}

\subsection{Regularity of stochastic terms}
\label{SUB:reg}
Our goal in this subsection is to show that the truncated enhanced data set $\pmb{\Xi}_N$ in \eqref{enhN} converges almost surely to the enhanced data set $\pmb{\Xi}$ in \eqref{enh} in the space $\mathcal{X}_T^{\al, \eps, \dl}$ defined in \eqref{XTaed}. This is obtained via the following lemma.
\begin{lemma}
\label{LEM:data}
Let $1 < \al \leq \frac 98$, $\eps, \dl > 0$ be sufficiently small, and $T \geq 1$. We consider the following stochastic terms $\Xi_N$ and $\Xi$ along with their function space $B$ as follows:
\begin{enumerate}
\item[(i)] $\Xi_N = \<1>_N$, $\Xi = \<1>$, $B = C_T W^{\al - \frac 32 - \eps, \infty}$; \smallskip

\item[(ii)] $\Xi_N = N^{k ( \al - \frac 32 )} \wick{\<1>_N^{k + 1}}$ with $k \geq 1$, $\Xi = 0$, $B = C_T W^{\al - \frac 32 - \eps, \infty}$; \smallskip

\item[(iii)] $\Xi_N = \Xi_N^{(3)}$, $\Xi = 4 \cj{a}_2 \<30>$, $B = C_T W^{3 \al - 3 - \eps, \infty} \cap X_T^{3 \al - 3 - \eps, \frac 12 + \dl}$; \smallskip

\item[(iv)] $\Xi_N = \vec \Xi_N^{(2)}$, $\Xi = 12 \cj{a}_2 \vec \If^{\,\<2>}$, $B = \vec \L_T^{\,\al - \frac 12 - \eps, \al - \frac 12 + \eps, \frac 12 + \dl, \dl}$; \smallskip

\item[(v)] $\Xi_N = \vec \Xi_N^{(3, 1)}$, $\Xi = 96 \cj{a}_2^2 \vec \If^{\,\<31>}$, $B = \vec \L_T^{\,\al - \frac 12 - \eps, \al - \frac 12 + \eps, \frac 12 + \dl, \dl}$; \smallskip

\item[(vi)] $\Xi_N = \vec \Xi_N^{(5)}$, $\Xi = 48 \cj{a}_2^2 \vec{\<320>}$, $B = \vec Y_T^{\al - \frac 12, \frac 12 + \dl}$; \smallskip

\item[(vii)] $\Xi_N = \vec \Xi_N^{(7)}$, $\Xi = 192 \cj{a}_2^3 \vec{\<70>}$, $B = \vec Y_T^{\al - \frac 12, \frac 12 + \dl}$.
\end{enumerate}

\noi
Then, $\Xi_N$ converges $\mathbb{Q} \otimes \mathbb{P}^w$-almost surely to $\Xi$ in the space $B$ as $N \to \infty$. We also have the tail estimate
\begin{align}
\mathbb{Q} \otimes \mathbb{P}^w \big( \| \Xi_N \|_{B} > \ld \big) \leq C(T) e^{- c (T) \ld^\be}
\label{tail_w}
\end{align}

\noi
for any $\ld > 0$ and some constants $C(T), c(T), \be > 0$, uniformly in $N \in \N \cup \{ \infty \}$, with the understanding that $\Xi_\infty = \Xi$. Moreover, there exists small $\ta > 0$ such that
\begin{align}
\mathbb{Q} \otimes \mathbb{P}^w \big( N^\ta \| \Xi_{N} - \Xi \|_B > \ld \big) \leq C(T) e^{- c (T) \ld^\be}
\label{tail_w2}
\end{align}

\noi
for any $\ld > 0$ and some constants $C(T), c(T), \be > 0$, uniformly in $N \geq 1$.
\end{lemma}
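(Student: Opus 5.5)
The plan is to reduce every item to Wiener-chaos variance bounds on fixed frequencies, fed into the counting estimates of Section~\ref{SEC:count}. The representation \eqref{sto_phi}--\eqref{sto_phi2} writes each $\wick{\<1>_N^k}$ as a multiple stochastic integral against the three independent families $B^0_n,B^1_n,B^2_n$. The kernels $\phi_{\iota,n}$ in \eqref{def_phi} are, up to the bounded prefactor $e^{-t/2}$ and the decaying heat factor, of the same form as in \cite[Proposition~6.4]{LTzW}: $\phi_{0,n}$ and $\phi_{1,n}$ carry $e^{\pm it\jbb{n}}$ and $\phi_{2,n}$ is the usual wave kernel. Moreover, $\sum_\iota\int|\phi_{\iota,n}(t,s)|^2ds\les\jb{n}^{-2\al}$ uniformly in $t\in[0,T]$. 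Hence every object in (i)--(vii) lies in a finite sum of Wiener chaoses of bounded order. The tail estimates \eqref{tail_w}, \eqref{tail_w2} and the almost sure convergence will then follow from Lemma~\ref{LEM:hyp} and Lemma~\ref{LEM:reg} once second-moment bounds are shown, together with a gain $N^{-\ta}$ for differences. That gain comes, as in Lemma~\ref{LEM:Yreg}~(ii), from the open conditions in the counting lemmas and from $\cj a_{j,N}\to\cj a_j$ via \eqref{sigma_diff}.

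Items (i)--(iii) are direct. For (i), Itô isometry gives $\E|\ft{\<1>}_N(t,n)|^2\les\jb{n}^{-2\al}$, together with the time-difference bound. For (ii), I would repeat the computation of Lemma~\ref{LEM:sto_h2}~(ii) with Lemma~\ref{LEM:conv}~(ii), which gives $\jb{n}^{-2\al+2\eps'}$ times a negative power of $N$. For (iii), the $j=2$ term is exactly \cite[Lemma~5.1/Proposition~6.4]{LTzW} for $\<30>$, now with Gaussian data $\vec z_0$. For $j\ge3$, I would write $\pi_N\I(\wick{\<1>_N^{2j-1}})$ in the space-time Fourier variables. The $X^{s,b}$ bound then reduces to a sum over $n_1,\dots,n_{2j-1}$ with a phase constraint $|\kappa-\zeta|\le1$, and Lemma~\ref{LEM:count_sum} with $k=2j-1$, $s=3\al-3$ gives $N_{\max}^{(2j-4)(3-2\al)+\dots}$. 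This loss is exactly absorbed by the prefactor $N^{-(2j-4)(3/2-\al)}$ (squared), with an $\eps$-margin for the difference estimate. The $C_TW^{3\al-3-\eps,\infty}$ part follows as in Lemma~\ref{LEM:RNstoh} via Lemma~\ref{LEM:reg}.

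For (vi) and (vii) I would expand the products $\Xi_N^{(3)}\wick{\<1>_N^{2j-2}}$ and $(\Xi_N^{(3)})^2\wick{\<1>_N^{2j-3}}$ with the product formula (Lemma~\ref{LEM:sto_prod}) and sort by pairings $\mathcal P$ respecting the partition. The non-resonant part ($\mathcal P=\varnothing$) of the quintic object is controlled by Lemma~\ref{LEM:count_quin}~(ii), with $k_1=2j'-1$ and $k_2=2j-2$. The resonant parts are controlled by Lemma~\ref{LEM:count_quin2}: \eqref{count_quin} for $k_2\ge2$, and \eqref{count_quin-} when everything is paired, giving the deterministic correction. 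In every case the power $N_{\max}^{(k_{12}-5)(3-2\al)-\ta}$ is cancelled by the $N$-prefactors, which total exactly $N^{-(k_{12}-5)(3/2-\al)}$ after squaring. The septic object is handled in the same way through Lemma~\ref{LEM:count_sep}, with the functions $\Ld^{k}$ encoding the two inner Duhamel integrals. The $Y_T^{s,b}$ norm, with its supremum over $t_0$, is handled as in \cite[Lemma~5.1]{LTzW}: I would bound for fixed $t_0$ and use the time-difference continuity and a chaining or $\eps$-net in $t_0$.

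The main obstacle I expect is (iv)--(v), the random operators $\Xi^{(2)}_N$ and $\Xi^{(3,1)}_N$ in the operator norm $\L_T^{s_1,s_2,b,\dl}$. These cannot be handled by moment bounds on a fixed function. Following \cite{LTzW} and \cite{Bring2}, I would bound the operator norm by a Hilbert--Schmidt or tensor norm of the random kernel $h_{nn'}(\tau,\tau')$ after a dyadic decomposition. The quadratic case $\wick{\<1>_N^{2j-2}}$ uses Lemma~\ref{LEM:count_twoball} together with the factor $N^{-(2j-4)(3/2-\al)}$, which repays $2j-4$ extra linear factors via Lemma~\ref{LEM:conv}~(ii). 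The cubic-linear case $\Xi_N^{(3)}\wick{\<1>_N^{2j-3}}$ splits into a paired part, estimated by \eqref{count_quin+} (case $k_2=1$), and an unpaired part, estimated by \eqref{count_nr1}, where the second phase $\kappa_2$ or $\kappa_3$ comes from the output modulation. The delicate point is to verify that the $t_0$-uniformity and the small gain $(t_1-t_0)^{\dl}$ survive. I would try first to reuse the time-localization argument of \cite[Lemma~5.1]{LTzW} (Lemma~\ref{LEM:time}) after establishing the kernel bounds for a fixed $t_0$.
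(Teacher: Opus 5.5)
Your overall scheme (chaos expansion, pairings, counting lemmas, then Lemma~\ref{LEM:reg}) matches the paper. However, you have interchanged the two non-resonant counting estimates between (v) and (vi), and as assigned, (vi) does not close.

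\textbf{The gap in (vi).} You bound the unpaired ($\mathcal P=\varnothing$) part of $\pi_N\I(\wick{\<1>_N^{2j_1-1}})\wick{\<1>_N^{2j_2-2}}$ by Lemma~\ref{LEM:count_quin}~(ii), i.e.\ \eqref{count_nr2}.
\begin{itemize}
\item That estimate has a single phase constraint and weight $\jb{\sum_j n_j}^{2\al-3-\eps}$. Its bound is $N_{\textup{max}}^{(k_{12}-4)(3-2\al)-\ta}$, not the $(k_{12}-5)$ power you quote.
\item Here $k_{12}=2j_1+2j_2-3$, so the squared prefactor is only $N^{-(k_{12}-5)(3-2\al)}$. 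A factor $N^{3-2\al}$ is left over and the sum diverges.
\item This is not a bookkeeping slip. When the Duhamel factor is at low frequency, the unpaired product behaves like $\wick{\<1>_N^{2j_2-2}}$. It therefore has fixed-time spatial regularity only $2\al-3-\eps<-\frac12$.
\item Consequently no fixed-time or $L^2_TH^s_x$ bound can place it in $X_T^{-\frac12,-\frac12+\dl}$, with or without chaining in $t_0$.
\end{itemize}

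\textbf{The missing idea: the gain from the output modulation.}
\begin{itemize}
\item The paper computes the space-time Fourier transform of $\xi^{(5,*)}_{N,j_1,j_2}\ind_{[0,T]}$. It bounds this in $X_T^{-\frac12-\eps,-\frac12-\dl}$ and afterwards interpolates with the trivial $L^2_TL^2_x$ bound.
\item Integrating in $t'$, $t$, the $s_\l$'s, and then $\tau$ (Lemma~\ref{LEM:conv_int}) produces, besides the inner phase $\kappa$, a second phase $\kappa_1$ or $\kappa_2$ containing $\eps_0\jbb{\sum_j n_j}$.
\item This two-phase sum is exactly \eqref{count_nr1}, i.e.\ Lemma~\ref{LEM:count_quin}~(i). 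It applies since $k_2=2j_2-2\ge 2$.
\item Only the paired parts of the quintic are handled at fixed time, in $L^2_TH^{-\frac12-\eps}_x\subset X_T^{-\frac12-\eps,-\frac12-\dl}$, via \eqref{count_quin}.
\item Full pairing never occurs in (vi), since $2j_1-1$ is odd and $2j_2-2$ is even. So \eqref{count_quin-} plays no role there.
\end{itemize}

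\textbf{Item (v).} Conversely, \eqref{count_nr1} is the wrong tool here. It requires $k_2\ge2$, so it is unavailable in the basic case $j_2=2$, where $\wick{\<1>_N^{2j_2-3}}=\<1>_N$. You also do not need a random-operator (Hilbert--Schmidt) bound. The paper instead shows that the function
\[
\xi^{(3,1)}_{N,j_1,j_2}=N^{-(2j_1+2j_2-8)(\frac32-\al)+\eps'}\,\pi_N\I(\wick{\<1>_N^{2j_1-1}})\,\wick{\<1>_N^{2j_2-3}}
\]
has regularity $\al-\frac32-\eps$, using fixed-time variance bounds:
\begin{itemize}
\item the unpaired part via \eqref{count_nr2} with $k_{12}=2j_1+2j_2-4$, where the exponent $(k_{12}-4)(3-2\al)$ now matches the prefactor exactly;
\item partial pairings via \eqref{count_quin} and \eqref{count_quin+};
\item the fully paired mean $\E[\xi^{(3,1)}_{N,j_1,j_2}]$, which arises when $j_2=j_1+1$, via \eqref{count_quin-}.
\end{itemize}
The operator bound in $\L_T^{\al-\frac12-\eps,\al-\frac12+\eps,\frac12+\dl,\dl}$ then follows deterministically as in \cite[Lemma~5.2~(vi)]{LTzW}, because $(\al-\frac32-\eps)+(\al-\frac12-\eps)>0$. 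A genuine random-operator estimate is needed only for (iv), which is imported from \cite{LTzW}.

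\textbf{Uniformity in $t_0$.} No chaining in $t_0$ is needed for the $Y_T$ or $\L_T$ norms. By Lemma~\ref{LEM:nhomo} and Lemma~\ref{LEM:time}, together with the fact that restriction norms on $[t_0,t_1]\subset[0,T]$ are dominated by those on $[0,T]$, it suffices to bound the products in $X_T$-norms.
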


\begin{proof}
Part~(i) has been proved in \cite[Lemma~5.1~(i)]{LTzW}; see also \cite[Lemma~3.1~(i)]{GKO2} and \cite[Lemma~2.3]{GKOT}. Part~(ii) was essentially proved in \cite[Lemma~7.1]{LTzW}, and the continuity in time can be shown similarly by the time difference estimate as in the proof of Lemma~\ref{LEM:sto_h}~(i). Part~(iii) follows from \cite[Lemma~7.3]{LTzW} with minor modifications. Part~(iv) follows from \cite[Lemma~7.2 and Lemma~5.2~(iii)]{LTzW}. Thus, we only focus on proving (v), (vi), and (vii). Also, from Lemma~\ref{LEM:nhomo}, we only focus on $\Xi_N^{(3, 1)}$, $\Xi_N^{(5)}$, and $\Xi_N^{(7)}$, as the regularity properties of their time derivatives follow from almost the same steps.

\smallskip \noi
(v) From \eqref{sto_all}, we need to show the regularity properties of the following term:
\begin{align*}
\Xi_{t_0, N, j_1, j_2}^{(3, 1)} (u) \deff N^{- (2 j_1 + 2 j_2 - 8) (\frac 32 - \al) + \eps'} \pi_N \I_{t_0} \big( \pi_N \I \big( \wick{\<1>_N^{2 j_1 - 1}} \big) \wick{\<1>_N^{2 j_2 - 3}} u \big),
\end{align*}

\noi
where $j_1 \geq 2$ and $j_2 \geq 2$. Here, we let $\eps' = 0$ if $j_1 = 2$ and $j_2 = 2$; otherwise, we let $\eps' > 0$ be small. For this purpose, let us consider the following term:
\begin{align*}
\xi_{N, j_1, j_2}^{(3, 1)} \deff N^{- (2 j_1 + 2 j_2 - 8) (\frac 32 - \al) + \eps'} \pi_N \I \big( \wick{\<1>_N^{2 j_1 - 1}} \big) \wick{\<1>_N^{2j_2 - 3}}
\end{align*}

\noi
in the space $L_T^q W^{\al - \frac 32 - \eps, r} (\T^3)$ for some $1 < q, r < \infty$. 
Once we have this, we then follow the same steps as \cite[Lemma~5.2~(vi)]{LTzW} to show the regularity properties for $\Xi_{\cdot, N, j_1, j_2}^{(3, 1)}$ in the space $\L_T^{\al - \frac 12 - \eps, \al - \frac 12 + \eps, \frac 12 + \dl, \dl}$, for which we omit details.

In view of the product formula in Lemma~\ref{LEM:prod}, we write
\begin{align*}
\xi_{N, j_1, j_2}^{(3, 1)} = \xi_{N, j_1, j_2}^{(3, 1)} - \E \big[ \xi_{N, j_1, j_2}^{(3, 1)} \big] + \E \big[ \xi_{N, j_1, j_2}^{(3, 1)} \big] \deff \xi_{N, j_1, j_2}^{(3, 1, *)} + \E \big[ \xi_{N, j_1, j_2}^{(3, 1)} \big],
\end{align*}

\noi
where $\E \big[ \xi_{N, j_1, j_2}^{(3, 1)} \big]$ is the term where all frequency nodes of $\I (\wick{\<1>_N^{2 j_1 - 1}})$ are paired with those of $\wick{\<1>_N^{2 j_2 - 3}}$ (recalling Definition~\ref{DEF:pair}).

We first deal with the $\xi_{N, j_1, j_2}^{(3, 1, *)}$ term. 
From Lemma~\ref{LEM:reg}, we aim to show
\begin{align}
\sum_{n \in \Z^3} \E \Big[ \big| \jb{n}^{\al - \frac 32 - \eps} \F_x \big( \xi_{N, j_1, j_2}^{(3, 1, *)} \big) (t, n) \big|^2 \Big] < \infty.
\label{cub-lin_goal}
\end{align}

\noi
We denote $n_1, \dots, n_{2j_1 - 1}$ as the nodes of the frequencies of $\I (\wick{\<1>_N^{2 j_1 - 1}})$ and denote $n_{2 j_1}, \dots, n_{2 j_1 + 2 j_2 - 4}$ as the nodes of the frequencies of $\wick{\<1>_N^{2 j_2 - 3}}$. From \eqref{sto_phi2}, \eqref{defI}, and Fubini's theorem (Lemma~\ref{LEM:sto_fub}), we have
\begin{align*}
\F_x \big( \I \big( \wick{\<1>_N^{2 j_1 - 1}} \big) \big) (t, n) &= \sum_{\substack{n_1 + \cdots + n_{2 j_1 - 1} = n \\ |n_1|, \dots, |n_{2 j_1 - 1}| \leq N}} \sum_{\substack{\iota_1, \dots, \iota_k \\ \in \{0, 1, 2\}}} \int_{\R^{2 j_1 - 1}} \int_0^t e^{- \frac{t - t'}{2}} \frac{\sin ( (t - t') \jbb{n} )}{\jbb{n}} \\
&\quad \times \prod_{\l = 1}^{2 j_1 - 1} \phi_{\iota_\l, n_\l} (t', s_\l) dt' d B_{n_1}^{\iota_1} (s_1) \cdots d B_{n_{2 j_1 - 1}}^{\iota_{2 j_1 - 1}} (s_{2 j_1 - 1})
\end{align*}

\noi
for any $t \in [0, T]$ and $n \in \Z^3$. From the product formula (Lemma~\ref{LEM:sto_prod}), we need to discuss whether there are pairs between $\I (\wick{\<1>_N^{2 j_1 - 1}})$ and $\wick{\<1>_N^{2j_2 - 3}}$.

We first consider the case when there are no pairs in $\xi_{N, j_1, j_2}^{(3, 1, *)}$. By dyadic decompositions, we can assume that $|n_\l| \sim N_\l$ for some dyadic numbers $1 \leq N_\l \leq N$ for all $1 \leq \l \leq 2 j_1 + 2 j_2 - 4$. By the Ito isometry (Lemma~\ref{LEM:sto_ito}~(ii)) and Jensen's inequality \eqref{jen}, we obtain the following contribution for \eqref{cub-lin_goal} (with all frequencies dyadically localized and the summation on $\iota_\l$'s omitted):
\begin{align}
\begin{split}
&N^{- (2 j_1 + 2 j_2 - 8) (3 - 2 \al) + 2 \eps'} \sum_{n \in \Z^3} \sum_{\substack{n_1, \dots, n_{2 j_1 + 2 j_2 - 4} \in \Z^3 \\ n_1 + \cdots + n_{2 j_1 + 2 j_2 - 4} = n}} \bigg( \prod_{\l = 1}^{2 j_1 + 2 j_2 - 4} \ind_{\{ |n_\l| \sim N_\l \}} \bigg) \jb{n}^{2 \al - 3 - 2 \eps}  \\ 
&\quad \times \int_{\R^{2 j_1 + 2 j_2 - 4}}  \bigg( \int_0^t e^{- \frac{t - t'}{2}} \frac{\sin ( (t - t') \jbb{\sum_{\l = 1}^{2j_1 - 1} n_\l} )}{\jbb{\sum_{\l = 1}^{2j_1 - 1} n_\l}} \prod_{\l = 1}^{2 j_1 - 1} \phi_{\iota_\l, n_\l} (t', s_\l) dt' \bigg)^2 \\
&\quad \times \prod_{\l' = 2 j_1}^{2 j_1 + 2 j_2 - 4} \phi_{\iota_{\l'}, n_{\l'}} (t, s_{\l'})^2  d s_1 \cdots d s_{2 j_1 + 2 j_2 - 4}
\end{split}
\label{cub-lin1}
\end{align}

\noi
We then expand the sine and cosine functions into complex exponentials. Thus, by integration in $t'$, integration in $s_\ell$'s, and a level set decomposition, we bound
\begin{align}
\begin{split}
\eqref{cub-lin1} &\les N^{- (2 j_1 + 2 j_2 - 8) (3 - 2 \al) + 2 \eps'} \sum_{n_1, \dots, n_{2 j_1 + 2 j_2 - 4} \in \Z^3} \bigg( \prod_{\l = 1}^{2 j_1 + 2 j_2 - 4} \ind_{\{ |n_\l| \sim N_\l \}} \jbb{n_\l}^{-2} \bigg) \\
&\quad \times \bigg\langle \sum_{\l = 1}^{2 j_1 + 2 j_2 - 4} n_\l \bigg\rangle^{2 \al - 3 - 2 \eps} \bigg\llbracket \sum_{\l = 1}^{2 j_1 - 1} n_\l \bigg\rrbracket^{- 2} \cdot \frac{1}{\jb{\kappa (\cj n)}^2} \\
&\les N^{- (2 j_1 + 2 j_2 - 8) (3 - 2 \al) + 2 \eps'} \sup_{\zeta \in \Z} \sum_{n_1, \dots, n_{2 j_1 + 2 j_2 - 4} \in \Z^3} \bigg( \prod_{\l = 1}^{2 j_1 + 2 j_2 - 4} \ind_{\{ |n_\l| \sim N_\l \}} \jbb{n_\l}^{-2} \bigg) \\
&\quad \times \bigg\langle \sum_{\l = 1}^{2 j_1 + 2 j_2 - 4} n_\l \bigg\rangle^{2 \al - 3 - 2 \eps}  \bigg\llbracket \sum_{\l = 1}^{2 j_1 - 1} n_\l \bigg\rrbracket^{- 2} \ind_{\{ |\kappa (\cj n) - \zeta| \leq 1 \}},
\end{split}
\label{cub-lin2}
\end{align}

\noi
where
\begin{align}
\kappa (\cj n) = \kappa (n_1, \dots, n_{2 j_1 - 1}) \deff \eps_{\text{sum}} \bigg\llbracket \sum_{\l = 1}^{2 j_1 - 1} n_\l \bigg\rrbracket + \sum_{\l = 1}^{2 j_1 - 1} \eps_\l \jbb{n_\l}
\label{kappa_cl}
\end{align}

\noi
with $\eps_{\text{sum}}, \eps_1, \dots, \eps_{2 j_1 - 1} \in \{ \pm 1 \}$. Here, we omitted the summation on $\eps_{\text{sum}}, \eps_1, \dots, \eps_{2 j_1 - 1}$ for simplicity. Thus, by using \eqref{cub-lin2} and the extended non-resonant counting estimate (Lemma~\ref{LEM:count_quin}~(ii)) and then summing over dyadic scales, we obtain the desired estimate.

We now consider the case when there are pairs in $\xi_{N, j_1, j_2}^{(3, 1, *)}$, but not all frequencies are paired. Recalling Definition~\ref{DEF:pair}, we let $\mathcal{P}$ be a pairing of $\{ 1, \dots, 2 j_1 + 2 j_2 - 4 \}$ which respects the partition $\mathcal{K}_1 \deff \{ 1, \dots, 2 j_1 - 1 \}$ and $\mathcal{K}_2 \deff \{ 2 j_1, \dots, 2 j_1 + 2 j_2 - 4 \}$. As before, we assume that $|n_\l| \sim N_\l$ for some dyadic numbers $1 \leq N_\l \leq N$ for all $1 \leq \l \leq 2 j_1 + 2 j_2 - 4$. By the Ito isometry (Lemma~\ref{LEM:sto_ito}~(ii)) and Jensen's inequality \eqref{jen}, we obtain the following contribution for \eqref{cub-lin_goal} (with all frequencies dyadically localized and the summation on $\iota_\l$'s omitted):
\begin{align}
\begin{split}
&N^{- (2 j_1 + 2 j_2 - 8) (3 - 2 \al) + 2 \eps'} \sum_{ (n_\l)_{\l \notin \mathcal P} } \bigg( \prod_{\l \notin \mathcal P} \ind_{\{|n_\l| \sim N_\l \}} \bigg) \bigg\langle \sum_{\l \notin \mathcal P} n_\l \bigg\rangle^{2 \al - 3 - 2 \eps} \\
&\quad \times \int_{\R^{2 j_1 + 2 j_2 - 4 - 2 |\mathcal P|}}  \bigg(  \sum_{(n_\l)_{\l \in \mathcal{P}}}  \ind_{\substack{(n_1, \dots, n_{2 j_1 + 2 j_2 - 4}) \\ \text{admissible}}}  \bigg( \prod_{\l \in \mathcal{P}} \ind_{\{|n_\l| \sim N_\l \}} \bigg)  \int_{\R^{2 |\mathcal P|}}  \int_0^t  e^{- \frac{t - t'}{2}}     \\
&\quad \times  \frac{ \sin ( (t - t') \jbb{\sum_{\l \in \mathcal{K}_1} n_\l} ) }{\jbb{\sum_{\l \in \mathcal{K}_1} n_\l}}  \prod_{\l \in \mathcal{K}_1}  \phi_{\iota_\l, n_\l} (t', s_\l)  dt' \cdot \prod_{\l' \in \mathcal{K}_2 \wedge \l' \in \mathcal{P}} \phi_{\iota_{\l'}, n_{\l'}} (t, s_{\l'})  d (s_\l)_{\l \in \mathcal{P}} \bigg)^2  \\
&\quad \times \prod_{\l' \in \mathcal{K}_2 \wedge \l' \notin \mathcal{P}} \phi_{\iota_{\l'}, n_{\l'}} (t, s_{\l'})^2  d (s_\l)_{\l \notin \mathcal P}.
\end{split}
\label{cub-lin3}
\end{align}

\noi
Here, if $\mathcal P$ has the form $\{ (\l_1, \l_2), (\l_3, \l_4), \dots, (\l_{2 q' - 1}, \l_{2q'}) \}$ for some integer $q' \geq 1$, we used the notation
\begin{align}
d (s_\l)_{\l \in \mathcal P} = \delta (s_{\l_1} - s_{\l_2}) \cdots \delta (s_{\l_{2 q' - 1}} - s_{\l_{2 q'}}) d s_{\l_1} \cdots d s_{\l_{2 q'}}.
\label{def_dsl}
\end{align}

\noi
By expanding the sine and cosine functions into complex exponentials, integrating in $t'$ and $s_\ell$'s, and using a level set decomposition, we can bound
\begin{align*}
&\eqref{cub-lin3} \\
&\les N^{- (2 j_1 + 2 j_2 - 8) (3 - 2 \al) + 2 \eps'} \sum_{(n_\l)_{\l \notin \mathcal P}} \bigg( \prod_{\l \notin \mathcal P} \ind_{\{ |n_\l| \sim N_\l \}} \jbb{n_\l}^{- 2} \bigg) \bigg\langle \sum_{\l \notin \mathcal P} n_\l \bigg\rangle^{2 \al - 3 - 2 \eps} \\
&\quad \times \bigg( \sum_{(n_\l)_{\l \in \mathcal P}}  \ind_{\substack{(n_1, \dots, n_{2 j_1 + 2 j_2 - 4}) \\ \text{admissible}}}  \bigg( \prod_{\l \in \mathcal P} \ind_{\{ |n_\l| \sim N_\l \}} \jbb{n_\l}^{-1} \bigg)  \bigg\llbracket \sum_{\l \in \mathcal{K}_1} n_\l \bigg\rrbracket^{-1}  \frac{1}{\jb{\kappa (\cj n)}}  \bigg)^2 \\
&\les N^{- (2 j_1 + 2 j_2 - 8) (3 - 2 \al) + 2 \eps'} \sum_{(n_\l)_{\l \notin \mathcal P}} \bigg( \prod_{\l \notin \mathcal P} \ind_{\{ |n_\l| \sim N_\l \}} \jbb{n_\l}^{- 2} \bigg) \bigg\langle \sum_{\l \notin \mathcal P} n_\l \bigg\rangle^{2 \al - 3 - 2 \eps} \\
&\quad \times \bigg( \sum_{\zeta \in \Z} \frac{1}{\jb{\zeta}}  \sum_{(n_\l)_{\l \in \mathcal P}}  \ind_{\substack{(n_1, \dots, n_{2 j_1 + 2 j_2 - 4}) \\ \text{admissible}}}  \bigg( \prod_{\l \in \mathcal P} \ind_{\{ |n_\l| \sim N_\l \}} \jbb{n_\l}^{-1} \bigg)  \bigg\llbracket \sum_{\l \in \mathcal{K}_1} n_\l \bigg\rrbracket^{-1} \ind_{\{ |\kappa (\cj n) - \zeta| \leq 1 \}}  \bigg)^2 ,
\end{align*}

\noi
so that by applying the Cauchy-Schwarz inequality in $\zeta$ followed by H\"older's inequality in $\zeta$, we get
\begin{align}
\begin{split}
&\eqref{cub-lin3} \\
&\les N^{- (2 j_1 + 2 j_2 - 8) (3 - 2 \al) + 2 \eps''} \sup_{\zeta \in \Z} \sum_{(n_\l)_{\l \notin \mathcal P}} \bigg( \prod_{\l \notin \mathcal P} \ind_{\{ |n_\l| \sim N_\l \}} \jbb{n_\l}^{- 2} \bigg) \bigg\langle \sum_{\l \notin \mathcal P} n_\l \bigg\rangle^{2 \al - 3 - 2 \eps} \\
&\quad \times \bigg( \sum_{(n_\l)_{\l \in \mathcal P}}  \ind_{\substack{(n_1, \dots, n_{2 j_1 + 2 j_2 - 4}) \\ \text{admissible}}}  \bigg( \prod_{\l \in \mathcal P} \ind_{\{ |n_\l| \sim N_\l \}} \jbb{n_\l}^{-1} \bigg) \bigg\llbracket \sum_{\l \in \mathcal{K}_1} n_\l \bigg\rrbracket^{-1}  \ind_{\{ |\kappa (\cj n) - \zeta| \leq 1 \}}  \bigg)^2
\end{split}
\label{cub-lin4}
\end{align}

\noi
for some $\eps'' > 0$ arbitrarily small, where $\kappa (\cj n)$ is as defined in \eqref{kappa_cl}. Thus, by using \eqref{cub-lin4} and the extended resonant counting estimates (\eqref{count_quin} and \eqref{count_quin+} in Lemma~\ref{LEM:count_quin2}) and then summing over dyadic scales, we obtain the desired estimate.

It remains to deal with $\E \big[ \xi_{N, j_1, j_2}^{(3, 1)} \big]$, but this is similar to the case of pairs with $|\mathcal{K}_1| = |\mathcal{K}_2| = |\mathcal{P}|$ in the last paragraph. The only essential change is that we need to use the counting estimate \eqref{count_quin-} in Lemma~\ref{LEM:count_quin2} at the end since all frequencies are in the pairing $\mathcal{P}$.

\smallskip \noi
(vi) From \eqref{sto_all}, we need to show the regularity properties of the following term:
\begin{align*}
\Xi_{t_0, N, j_1, j_2}^{(5)} \deff N^{- (2 j_1 + 2 j_2 - 8) (\frac 32 - \al) + \eps'} \I_{t_0} \big( \pi_N \I \big( \wick{\<1>_N^{2 j_1 - 1}} \big) \wick{\<1>_N^{2 j_2 - 2}} \big),
\end{align*}

\noi
where $j_1 \geq 2$ and $j_2 \geq 2$. Here, we let $\eps' = 0$ if $j_1 = 2$ and $j_2 = 2$; otherwise, we let $\eps' > 0$ be small. From \eqref{Ysb} and Lemma~\ref{LEM:nhomo}, it suffices to consider the term
\begin{align*}
\xi_{N, j_1, j_2}^{(5)} \deff N^{- (2 j_1 + 2 j_2 - 8) (\frac 32 - \al) + \eps'} \I \big( \wick{\<1>_N^{2 j_1 - 1}} \big) \wick{\<1>_N^{2 j_2 - 2}}
\end{align*}

\noi
in the space $X_T^{- \frac 12, - \frac 12 + \dl}$. In fact, by an interpolation with a trivial $X_T^{0, 0} = L_T^2 L_x^2$ bound, we only need to study $\xi_{N, j_1, j_2}^{(5)}$ in the space $X_T^{- \frac 12 - \eps, - \frac 12 - \dl}$ for $\eps, \dl > 0$ sufficiently small.

As in part (v), in view of the product formula (Lemma~\ref{LEM:sto_prod}), we need to discuss whether there are pairs between $\I ( \wick{\<1>_N^{2 j_1 - 1}} )$ and $\wick{\<1>_N^{2 j_2 - 2}}$. When there are pairs, in view of the embedding $L_T^2 H^{- \frac 12 - \eps} \subset X_T^{- \frac 12 - \eps, - \frac 12 - \dl}$, we can argue in the same way as in part (v) to obtain the desired estimate. Here, since $2 j_2 - 2 \geq 2$ and $2 j_1 - 1 \neq 2 j_2 - 2$, we can always apply the resonant counting estimate \eqref{count_quin} in Lemma~\ref{LEM:count_quin2}.

It remains to deal with the case when there are no pairs between $\I ( \wick{\<1>_N^{2 j_1 - 1}} )$ and $\wick{\<1>_N^{2 j_2 - 2}}$, and we denote this contribution by $\xi_{N, j_1, j_2}^{(5, *)}$. Note that for any $\tau \in \R$ and $n \in \Z^3$, by Fubini's theorem (Lemma~\ref{LEM:sto_fub}), we have
\begin{align*}
\F_{t, x} &\big( \xi_{N, j_1, j_2}^{(5, *)} \ind_{[0, T]} \big) (\tau, n) \\
&= N^{- (2 j_1 + 2 j_2 - 8) (\frac 32 - \al) + \eps'} \sum_{\substack{|n_1|, \dots, |n_{2 j_1 + 2 j_2 - 3}| \leq N \\ n_1 + \cdots + n_{2 j_1 + 2 j_2 - 3 = n}}}  \sum_{\substack{ \iota_1, \dots, \iota_{2 j_1 + 2 j_2 - 3} \\ \in \{0, 1, 2\} }}  \int_{\R^{2 j_1 + 2 j_2 - 3}} \int_0^T e^{- i t \tau}  \\
&\quad \times \int_0^t e^{- \frac{t - t'}{2}} \frac{\sin ( (t - t') \jbb{\sum_{\l = 1}^{2j_1 - 1} n_\l} )}{\jbb{\sum_{\l = 1}^{2j_1 - 1} n_\l}} \prod_{\l = 1}^{2 j_1 - 1} \phi_{\iota_\l, n_\l} (t', s_\l) dt'  \\
&\quad \times \prod_{\l' = 2 j_1}^{2 j_1 + 2 j_2 - 3} \phi_{\iota_{\l'}, n_{\l'}} (t, s_{\l'})
dt  d B_{n_1}^{\iota_1} (s_1) \cdots d B_{n_{2 j_1 + 2 j_2 - 3}}^{\iota_{2 j_1 + 2 j_2 - 3}} (s_{2 j_1 + 2 j_2 - 3})
\end{align*}

By the Wiener chaos estimate (Lemma~\ref{LEM:hyp}), the Ito isometry (Lemma~\ref{LEM:sto_ito}~(ii)), and \eqref{jen}, we have (with the summation of $\iota_\l$'s omitted)
\begin{align*}
\E \Big[ &\big\| \xi_{N, j_1, j_2}^{(5, *)} \big\|_{X_T^{- \frac 12 - \eps, - \frac 12 - \dl}}^p \Big]^{\frac 2p} \\
&\les \sum_{\eps_0 \in \{\pm 1\}} \E \Big[ \big\| \jb{n}^{- \frac 12 - \eps} \jb{\tau}^{- \frac 12 - \dl} \F_{t, x} \big( \xi_{N, j_1, j_2}^{(5, *)} \ind_{[0, T]} \big) (\tau - \eps_0 \jbb{n}, n) \big\|_{\l_n^2 L_\tau^2}^2 \Big] \\
&\les N^{- (2 j_1 + 2 j_2 - 8) (3 - 2 \al) + 2 \eps'} \sum_{\eps_0 \in \{\pm 1\}} \sum_{n \in \Z^3} \jb{n}^{-1 - 2 \eps} \sum_{\substack{|n_1|, \dots, |n_{2 j_1 + 2 j_2 - 3}| \leq N \\ n_1 + \cdots + n_{2 j_1 + 2 j_2 - 3 = n}}}  \\
&\quad  \int_\R \jb{\tau}^{-1 - 2 \dl} \int_{\R^{2 j_1 + 2 j_2 - 3}} \bigg( \int_0^T e^{- i t (\tau - \eps_0 \jbb{n})} \int_0^t e^{- \frac{t - t'}{2}} \frac{\sin ( (t - t') \jbb{\sum_{\l = 1}^{2j_1 - 1} n_\l} )}{\jbb{\sum_{\l = 1}^{2j_1 - 1} n_\l}}  \\
&\quad \times \prod_{\l = 1}^{2 j_1 - 1} \phi_{\iota_\l, n_\l} (t', s_\l) dt'  \prod_{\l' = 2 j_1}^{2 j_1 + 2 j_2 - 3} \phi_{\iota_{\l'}, n_{\l'}} (t, s_{\l'})
dt  \bigg)^2 d s_1 \cdots d s_{2 j_1 + 2 j_2 - 3} d \tau.
\end{align*}

\noi
As before, we expand the sine and cosine functions into complex exponentials. Thus, from \eqref{def_phi}, the integration in $t'$ in the above expression gives
\begin{align*}
\frac{e^{( \frac{2 j_1 - 2}{2} + i \kappa (\cj n) ) t} - 1}{\frac{2 j_1 - 2}{2} + i \kappa (\cj n)} C(s_1, \dots, s_{2 j_1 - 1}) ,
\end{align*}

\noi
where $\kappa (\cj n)$ is as defined in \eqref{kappa_cl} and $C(s_1, \dots, s_{2 j_1 - 1})$ is a bounded constant independent of $t$. Thus, after integration in $t$ and in $s_\ell$'s, we obtain (with the summation of $\eps_0$ omitted)
\begin{align}
\begin{split}
\E \Big[ &\big\| \xi_{N, j_1, j_2}^{(5, *)} \big\|_{X_T^{- \frac 12 - \eps, - \frac 12 - \dl}}^p \Big]^{\frac 2p} \\
&\les_T N^{- (2 j_1 + 2 j_2 - 8) (3 - 2 \al) + 2 \eps'}  \sum_{n \in \Z^3} \jb{n}^{-1 - 2 \eps} \sum_{\substack{|n_1|, \dots, |n_{2 j_1 + 2 j_2 - 3}| \leq N \\ n_1 + \cdots + n_{2 j_1 + 2 j_2 - 3 = n}}} \prod_{\l = 1}^{2 j_1 + 2 j_2 - 3} \jbb{n_\l}^{- 2} \\
&\quad \times \bigg\llbracket \sum_{\l = 1}^{2 j_1 - 1} n_\l \bigg\rrbracket^{-2} \cdot \frac{1}{\jb{\kappa (\cj n)}^2} \int_\R \jb{\tau}^{-1 - 2 \dl} \Big( \frac{1}{ \jb{\tau - \kappa_1 (\cj n)}^2 } + \frac{1}{ \jb{\tau - \kappa_2 (\cj n)}^2 } \Big) d \tau,
\end{split}
\label{quin1}
\end{align}

\noi
where
\begin{align*}
\kappa_1 (\cj n) &= \kappa_1 (n_1, \dots, n_{2 j_1 + 2 j_2 - 3}) \deff \eps_0 \bigg\llbracket \sum_{\l = 1}^{2 j_1 + 2 j_2 - 3} n_\l \bigg\rrbracket + \sum_{\l = 1}^{2 j_1 + 2 j_2 - 3} \eps_\l \jbb{n_\l}, \\
\kappa_2 (\cj n) &= \kappa_2 (n_1, \dots, n_{2 j_1 + 2 j_2 - 3}) \deff \eps_0 \bigg\llbracket \sum_{\l = 1}^{2 j_1 + 2 j_2 - 3} n_\l \bigg\rrbracket + \eps_{\text{sum}} \bigg\llbracket \sum_{\l = 1}^{2 j_1 - 1} n_\l \bigg\rrbracket + \sum_{\l' = 2 j_1}^{2 j_1 + 2 j_2 - 3} \eps_{\l'} \jbb{n_{\l'}}.
\end{align*}

\noi
with $\eps_{\text{sum}}, \eps_1, \dots, \eps_{2 j_1 + 2 j_2 - 3} \in \{ \pm 1 \}$. After dyadically localizing $|n_\l| \sim N_\l$ for some dyadic numbers $1 \leq N_\l \leq N$ for all $1 \leq \l \leq 2 j_1 + 2 j_3 - 3$, by level set decompositions on $\kappa$, $\kappa_1$, and $\kappa_2$ and Lemma~\ref{LEM:conv_int}, we obtain the following bound for \eqref{quin1} (with all frequencies dyadically localized and the summations on $\eps_\l$'s omitted):
\begin{align*}
&N^{- (2 j_1 + 2 j_2 - 8) (3 - 2 \al) + 2 \eps'}\sum_{n_1, \dots, n_{2 j_1 + 2 j_2 - 3} \in \Z^3} \bigg( \prod_{\l = 1}^{2 j_1 + 2 j_2 - 3} \ind_{\{ |n_\l| \sim N_\l \}} \jbb{n_\l}^{- 2} \bigg) \bigg\langle \sum_{\l = 1}^{2 j_1 + 2 j_2 - 3} n_\l \bigg\rangle^{-1 - 2 \eps}  \\
&\quad \quad \times \bigg\llbracket \sum_{\l = 1}^{2 j_1 - 1} n_\l \bigg\rrbracket^{-2} \cdot \frac{1}{\jb{\kappa (\cj n)}^2} \Big( \frac{1}{\jb{\kappa_1 (\cj n)}^{1 + 2 \dl}} + \frac{1}{\jb{\kappa_2 (\cj n)}^{1 + 2 \dl}} \Big) \\
&\quad \les  N^{- (2 j_1 + 2 j_2 - 8) (3 - 2 \al) + 2 \eps'} \sup_{\zeta, \zeta \in \Z}  \sum_{n_1, \dots, n_{2 j_1 + 2 j_2 - 3} \in \Z^3} \bigg( \prod_{\l = 1}^{2 j_1 + 2 j_2 - 3} \ind_{\{ |n_\l| \sim N_\l \}} \jbb{n_\l}^{- 2} \bigg)  \\
&\quad \quad \times \bigg\langle \sum_{\l = 1}^{2 j_1 + 2 j_2 - 3} n_\l \bigg\rangle^{-1 - 2 \eps}  \bigg\llbracket \sum_{\l = 1}^{2 j_1 - 1} n_\l \bigg\rrbracket^{-2} \ind_{\{ |\kappa (\cj n) - \zeta| \leq 1 \}} \cdot \big( \ind_{\{ |\kappa_1 (\cj n) - \zeta'| \leq 1 \}} + \ind_{\{ |\kappa_2 (\cj n) - \zeta'| \}} \big).
\end{align*}

\noi
Thus, by using the extended non-resonant counting estimate (Lemma~\ref{LEM:count_quin}~(i)) and then summing over dyadic scales, we obtain the desired estimate.

\smallskip \noi
(vii) From \eqref{sto_all}, \eqref{Ysb}, and Lemma~\ref{LEM:nhomo}, it suffices to consider the term
\begin{align*}
\xi_{N, j_1, j_2, j_3}^{(7)} \deff N^{- (2 j_1 + 2 j_2 + 2 j_3 - 12) (\frac 32 - \al) + \eps'} \I \big( \wick{\<1>_N^{2 j_1 - 1}} \big) \I \big( \wick{\<1>_N^{2 j_2 - 1}} \big) \wick{\<1>_N^{2 j_3 - 3}}
\end{align*}

\noi
in the space $L_T^2 H^{- \frac 12} (\T^3)$, where $j_1 \geq 2$, $j_2 \geq 2$, and $j_3 \geq 2$. Here, we let $\eps' = 0$ if $j_1 = 2$, $j_2 = 2$, and $j_3 = 2$; otherwise, we let $\eps' > 0$ be small. Let $\mathcal{K}_1 \deff \{ n_1, \dots, n_{2 j_1 - 1} \}$ be the frequencies of the nodes of $\I ( \wick{\<1>_N^{2 j_1 - 1}} )$, $\mathcal{K}_2 \deff \{ n_{2 j_1}, \dots, 2 j_1 + 2 j_2 - 2 \}$ be the frequencies of the nodes of $\I ( \wick{\<1>_N^{2 j_2 - 1}} )$, and $\mathcal{K}_3 \deff \{ 2 j_1 + 2 j_2 - 1, \dots, 2 j_1 + 2 j_2 + 2 j_3 - 5 \}$ be the frequencies of the nodes of $\wick{\<1>_N^{2 j_3 - 3}}$. Recalling Definition~\ref{DEF:pair}, we let $\mathcal{P}$ (which can be empty) be a pairing of $\{1, \dots, 2 j_1 + 2 j_2 + 2 j_3 - 5 \}$ which respects the partition $\mathcal{K}_1$, $\mathcal{K}_2$, and $\mathcal{K}_3$. 

As in part (v), we obtain the following formulation for the variance of $\xi_{N, j_1, j_2, j_3}^{(7)}$ in the space $H^{- \frac 12}$ at a fixed time $t \in [0, T]$:
\begin{align*}
&N^{- (2 j_1 + 2 j_2 + 2 j_3 - 12) (3 - 2 \al) + 2 \eps'} \sum_{(n_\l)_{\l \notin \mathcal P} } \bigg\langle \sum_{\l \notin \mathcal P} n_\l \bigg\rangle^{- 1} \\
&\quad \times \int_{\R^{2 j_1 + 2 j_2 + 2 j_3 - 5 - 2 |\mathcal P|}}  \bigg(  \sum_{(n_\l)_{\l \in \mathcal P}}  \ind_{\substack{(n_1, \dots, n_{2 j_1 + 2 j_2 + 2 j_3 - 5}) \\ \text{admissible}}}   \\
&\quad \times \int_{\R^{2 |\mathcal P|}}  \int_0^t e^{- \frac{t - t'_1}{2}} \frac{\sin ( (t - t'_1) \jbb{\sum_{\l \in \mathcal{K}_1} n_\l} )}{\jbb{\sum_{\l \in \mathcal{K}_1} n_\l}}  \prod_{\l \in \mathcal{K}_1} \phi_{\iota_\l, n_\l} (t'_1, s_\l) dt'_1  \\
&\quad \times \int_0^t e^{- \frac{t - t'_2}{2}} \frac{\sin ( (t - t'_2) \jbb{\sum_{\l' \in \mathcal{K}_2} n_{\l'}} )}{\jbb{\sum_{\l' \in \mathcal{K}_2} n_{\l'}}}  \prod_{\l' \in \mathcal{K}_2} \phi_{\iota_{\l'}, n_{\l'}} (t'_2, s_\l) dt'_2 \\
&\quad \times  \prod_{\l'' \in \mathcal{K}_3 \wedge \l'' \in \mathcal{P}} \phi_{\iota_{\l''}, n_{\iota_{\l''}}} (t, s_{\l''})  d (s_\l)_{\l \in \mathcal{P}} \bigg)^2 \prod_{\l'' \in \mathcal{K}_3 \wedge \l'' \notin \mathcal{P}} \phi_{\iota_{\l''}, n_{\iota_{\l''}}} (t, s_{\l''})^2  d (s_\l)_{\l \notin \mathcal P},
\end{align*}

\noi
where we used the notation $d (s_\l)_{\l \in \mathcal P}$ as in \eqref{def_dsl}. Then, as in part (v), we dyadically localize $|n_\l| \sim N_\l$, expand the sine and cosine functions into complex exponentials, and integrate in $t_1'$, $t_2'$, and $s_\ell$'s to obtain the following contribution (with all frequencies dyadically localized and the summations on $\eps_\l$'s omitted):
\begin{align*}
&N^{- (2 j_1 + 2 j_2 + 2 j_3 - 12) (3 - 2 \al) + 2 \eps'} \sum_{(n_\l)_{\l \notin \mathcal P} } \bigg\langle \sum_{\l \notin \mathcal P} n_\l \bigg\rangle^{- 1} \bigg( \sum_{(n_\l)_{\l \in \mathcal P}}  \ind_{\substack{(n_1, \dots, n_{2 j_1 + 2 j_2 + 2 j_3 - 5}) \\ \text{admissible}}}  \\  
&\quad \times \bigg( \prod_{\l = 1}^{2 j_1 + 2 j_2 + 2 j_3 - 5} \ind_{\{ |n_\l| \sim N_\l \}} \bigg)  \bigg\llbracket \sum_{\l \in \mathcal{K}_1} n_\l \bigg\rrbracket^{-1}  \bigg(  \prod_{\l \in \mathcal{K}_1} \jbb{n_\l}^{-1}  \bigg) \cdot \frac{1}{\jb{\kappa_{j_1} (\cj n)}}  \\
&\quad \times  \bigg\llbracket \sum_{\l' \in \mathcal{K}_2} n_{\l'} \bigg\rrbracket^{-1}  \bigg(  \prod_{\l' \in \mathcal{K}_2} \jbb{n_{\l'}}^{-1}  \bigg) \cdot \frac{1}{\jb{\kappa_{j_2} (\cj n)}}  \cdot \prod_{\l'' \in \mathcal{K}_3} \jbb{n_{\l''}}^{-1} \bigg)^2,
\end{align*}

\noi
where
\begin{align*}
\kappa_{j_1} (\cj n) &= \kappa (n_1, \dots, n_{2j_1 - 1}) \deff \eps_{\text{sum}}^1  \bigg\llbracket \sum_{\l \in \mathcal{K}_1} n_{\l} \bigg\rrbracket + \sum_{\l \in \mathcal{K}_1} \eps_\l^1 \jbb{n_\l}, \\
\kappa_{j_2} (\cj n) &= \kappa (n_{2j_1}, \dots, n_{2j_1 + 2 j_2 - 2}) \deff \eps_{\text{sum}}^2 \bigg\llbracket \sum_{\l' \in \mathcal{K}_2} n_{\l'} \bigg\rrbracket + \sum_{\l' \in \mathcal{K}_2} \eps_{\l'}^2 \jbb{n_{\l'}}
\end{align*}

\noi
with $\eps_{\text{sum}}^1, \eps_{\text{sum}}^2, \eps_{\l}^1, \eps_{\l'}^2 \in \{\pm 1\}$. The desired estimate then follows from level set decompositions of $\kappa_{j_1}$ and $\kappa_{j_2}$ and the extended septic counting estimate in Lemma~\ref{LEM:count_sep}.
\end{proof}

From Lemma~\ref{LEM:data}, we immediately obtain the following proposition.
\begin{proposition}
\label{PROP:data}
Let $1 < \al \leq \frac 98$, $\eps, \dl > 0$ be sufficiently small, and $T \geq 1$. Then, the truncated enhanced data set $\pmb{\Xi}_N$ in \eqref{enhN} converges $\mathbb{Q} \otimes \mathbb{P}^w$-almost surely to the enhanced data set $\pmb{\Xi}$ in \eqref{enh} in the space $\mathcal{X}_T^{\al, \eps, \dl}$ defined in \eqref{XTaed}. We also have the tail estimate
\begin{align*}
\mathbb{Q} \otimes \mathbb{P}^w \big( \| \pmb{\Xi}_{N} \|_{\mathcal{X}_T^{\al, \eps, \dl}} > \ld \big) \leq C(T) e^{- c (T) \ld^\be}
\end{align*}

\noi
for any $\ld > 0$ and some constants $C(T), c(T), \be > 0$, uniformly in $N \in \N \cup \{\infty\}$. Moreover, there exists small $\ta > 0$ such that
\begin{align*}
\mathbb{Q} \otimes \mathbb{P}^w \big( N^\ta \| \pmb{\Xi}_{N} - \pmb{\Xi} \|_{\mathcal{X}_T^{\al, \eps, \dl}} > \ld \big) \leq C(T) e^{- c (T) \ld^\be}
\end{align*}

\noi
for any $\ld > 0$ and some constants $C(T), c(T), \be > 0$, uniformly in $N \geq 1$.
\end{proposition}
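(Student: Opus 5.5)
The plan is to assemble Proposition~\ref{PROP:data} component by component from Lemma~\ref{LEM:data}, since the space $\mathcal{X}_T^{\al, \eps, \dl}$ in \eqref{XTaed} is a finite product. There is one thing to notice first. The truncated enhanced data set $\pmb{\Xi}_N$ in \eqref{enhN} has $2m - 3$ scalar entries $N^{k(\al - \frac 32)} \wick{\<1>_N^{k+1}}$ with $0 \le k \le 2m - 4$, and the entry $k = 0$ is $\<1>_N$ itself. The remaining five entries are the extended objects $\Xi_N^{(3)}$, $\vec \Xi_N^{(2)}$, $\vec \Xi_N^{(3,1)}$, $\vec \Xi_N^{(5)}$, $\vec \Xi_N^{(7)}$. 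These correspond exactly to items (i)--(vii) of Lemma~\ref{LEM:data}, and the limits listed there match the entries of $\pmb{\Xi}$ in \eqref{enh}: the entry $\<1>$, then zeros, then $4\cj a_2 \<30>$, $12 \cj a_2 \vec\If^{\,\<2>}$, and so on. The norms also match, so the product norm is dominated by the sum of the $2m+2$ component norms (a fixed number depending only on $m$).

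For almost sure convergence, each component converges $\mathbb{Q} \otimes \mathbb{P}^w$-almost surely by Lemma~\ref{LEM:data}. A finite intersection of full-measure events has full measure, so $\pmb{\Xi}_N \to \pmb{\Xi}$ almost surely in $\mathcal{X}_T^{\al,\eps,\dl}$.

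For the tail estimates, I will use a union bound. Write $M = 2m+2$ for the number of components. Then
\[
\mathbb{Q}\otimes\mathbb{P}^w\big(\|\pmb{\Xi}_N\|_{\mathcal{X}_T^{\al,\eps,\dl}} > \ld\big) \le \sum_{i=1}^{M} \mathbb{Q}\otimes\mathbb{P}^w\big(\|\Xi_N^{[i]}\|_{B_i} > \ld/M\big),
\]
where $\Xi_N^{[i]}$ denotes the $i$th component and $B_i$ its space. Each summand is controlled by \eqref{tail_w}, so the sum is at most $M\, C(T) e^{-c(T) M^{-\be_i} \ld^{\be_i}}$.

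To combine the different exponents, take $\be = \min_i \be_i$. For $\ld \ge 1$ this gives a bound of the form $C'(T) e^{-c'(T)\ld^\be}$. For $\ld < 1$, the bound holds trivially after enlarging $C'(T)$ so that $C'(T) e^{-c'(T)} \ge 1$.

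The difference estimate \eqref{tail_w2} is handled the same way. I take $\ta = \min_i \ta_i$ and use $N^{\ta} \le N^{\ta_i}$ for $N \ge 1$, then repeat the union bound. The uniformity in $N \in \N \cup \{\infty\}$ is inherited directly from the lemma.

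I do not expect a real obstacle; all the analytic difficulty was already absorbed into Lemma~\ref{LEM:data}. The only care needed is bookkeeping: the scalar Wick entries with $k \ge 1$ tend to $0$ rather than to a nontrivial object, which is covered by item (ii), and the exponents must be harmonized as described above.
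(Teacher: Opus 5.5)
Your proposal is correct and takes the paper's approach: the paper simply states that the proposition follows immediately from Lemma~\ref{LEM:data}. Your component-by-component matching of \eqref{enhN}, \eqref{enh} and \eqref{XTaed}, followed by a union bound with $\be = \min_i \be_i$ and $\ta = \min_i \ta_i$, just spells out that one-line deduction.
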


\subsection{Stability estimate}
In this subsection, we establish a key stability estimate. We recall the equation \eqref{SdNLWv3} for $v_N$ starting at time $t_0 \geq 0$:
\begin{align}
\begin{split}
\vec v_N &= \vec \G_{t_0, N} [v_N] \\
&\deff \vec S(\cdot) (\vec v_{N} (t_0)) - 2 \cj{a}_{1, N} N^{2 (\frac 32 - \al)} \pi_N \vec \I_{t_0} \big( \<1>_N - \Xi_N^{(3)} + \pi_N v_N \big) \\
&\quad - \vec \Xi_{t_0, N}^{(2)} (\pi_N v_N) + \vec \Xi_{t_0, N}^{(3, 1)} ( \pi_N v_N ) + \vec \Xi_{t_0, N}^{(5)} - \vec \Xi_{t_0, N}^{(7)} \\
&\quad - \sum_{j = 2}^m C_j \cj{a}_{j, N} N^{(2 j - 4) (\al - \frac 32)} \pi_N \vec \I_{t_0} \big( \wick{\<1>_N^{2j - 3}} (\pi_N v_N)^2 \big) \\
&\quad - \sum_{j = 2}^m \sum_{\l = 3}^{2j - 1} \sum_{k = 0}^\l C_{j, \l, k} \cj{a}_{j, N} N^{(2 j - 4) (\al - \frac 32)} \pi_N \vec \I_{t_0} \big( (- \Xi_N^{(3)})^k \wick{\<1>_N^{2j - 1 - \l}} (\pi_N v_N)^{\l - k} \big),
\end{split}
\label{SdNLWv4a}
\end{align}

\noi
where $\vec v_N |_{t = 0} = (v_{0, N}, 0)$ with $v_{0, N}$ being given by Theorem~\ref{THM:rep} and $\Xi_{t_0, N}^{(2)}$, $\Xi_{t_0, N}^{(3)}$, $\Xi_{t_0, N}^{(3, 1)}$, $\Xi_{t_0, N}^{(5)}$, and $\Xi_{t_0, N}^{(7)}$ are defined in \eqref{sto_all}. We also recall the truncated enhanced data set $\pmb{\Xi}_N$ in \eqref{enhN} and the limiting enhanced data set $\pmb{\Xi}$ in \eqref{enh}.
In view of the convergence of $v_{0, N}$ to $v_0$ as stated in Theorem~\ref{THM:rep}, the convergence of $\pmb{\Xi}_N$ to $\pmb{\Xi}$ in Proposition~\ref{PROP:data}, \eqref{a1N}, and the convergence of $\cj a_{2, N}$ to $\cj a_2$, we expect $\vec v_N$ to converge to a limit $\vec v$ satisfying the equation
\begin{align}
\begin{split}
\vec v &= \vec \G_{t_0} [v] \\
&\deff \vec S(\cdot) (\vec v(t_0)) - a \vec \I_{t_0} \big( \<1> - 4 \cj{a}_2 \<30> + v \big) - 12 \cj{a}_2 \vec \If_{t_0}^{\,\<2>} (v) + 96 \cj{a}_2^2 \vec \If_{t_0}^{\,\<31>} ( v ) \\
&\quad + 48 \cj{a}_2^2 \vec{\<320>}_{t_0} - 192 \cj{a}_2^3 \vec{\<70>}_{t_0} - 12 \cj{a}_2 \vec \I_{t_0} (\<1> v^2) - 4 \cj{a}_2 \vec \I_{t_0} (v^3) \\
&\quad + 48 \cj{a}_2^2 \vec \I_{t_0} \big( \<30> v^2 \big) - 192 \cj{a}_2^3 \vec \I_{t_0} \big( \<30>^2 v \big) + 256 \cj{a}_2^4 \vec \I_{t_0} \big( \<30>^3 \big).
\end{split}
\label{SdNLWv4b}
\end{align}

\noi
where $\vec v |_{t = 0} = (v_0, 0)$

We now state and prove the following stability result.
\begin{proposition}
\label{PROP:stb}
Let $1 < \al \leq \frac 98$, $\eps, \dl > 0$ be sufficiently small, $1 \ll p < \infty$ be sufficiently large, $T > 0$, and $K_0, K_0', K_1, K_2 \geq 1$ with $K_0 + K_0' \ll K_1$. Then, there exist $0 < \kappa_0 = \kappa_0 (T, K_1) \leq 1$ sufficiently small and $N_0 = N_0 (T, K_1, K_2) \in \N$ sufficiently large such that the following holds true. 
Let $N \geq N_0$ and $\vec v_N = \vec v_N (v_{0, N}, \pmb{\Xi}_N)$ be the solution to the equation \eqref{SdNLWv4a} on $[0, T]$ with the truncated enhanced data set $\pmb{\Xi}_N$ and initial data $\vec v_N |_{t = 0} = (v_{0, N}, 0)$ and suppose that
\begin{align}
&\| \pmb{\Xi} \|_{\mathcal{X}_T^{\al, \eps, \dl}} \leq K_0,
\label{enh_bdd} \\
&\| v_0 \|_{H^{\al - \frac 12}} \leq K_0',
\label{init_bdd} \\
&\| \vec v_N \|_{\vec X_T^{\al - \frac 12, \frac 12 + \dl}} \leq K_1,
\label{vN_bddXT} \\
&\| v_N \|_{L_T^p W_x^{\al - \frac 32 - \eps, \infty}} \leq K_2,
\label{vN_bddW} \\
&\| \pmb{\Xi}_N - \pmb{\Xi} \|_{\mathcal{X}_T^{\al, \eps, \dl}} \leq N^{- \ta},
\label{enh_diff} \\
&\| v_{0, N} - v_0 \|_{H^{\al - \frac 12}} \leq \kappa
\label{init_diff}
\end{align}

\noi
for some $0 < \ta \ll 1$ and $0 < \kappa \leq \kappa_0$, where the $\mathcal{X}_T^{\al, \eps, \dl}$-norm is defined in \eqref{XTaed}. Then, there exists a solution $v = v (v_0, \pmb{\Xi})$ to the limiting equation \eqref{SdNLWv4b} on $[0, T]$ with the enhanced data set $\pmb{\Xi}$ and initial data $\vec v |_{t = 0} = (v_0, 0)$. Moreover, we have
\begin{align}
\| \vec v_N - \vec v \|_{\vec X_T^{\al - \frac 12, \frac 12 + \dl}} \leq C (T, K_1, K_2) (\kappa + N^{-\ta'})
\label{vNv_diff2}
\end{align}

\noi
and
\begin{align}
\| \vec v \|_{\vec X_T^{\al - \frac 12, \frac 12 + \dl}} \leq 2 K_1
\label{v_bddX}
\end{align}

\noi
for some constants $C (T, K_1, K_2) > 0$ and $\ta' > 0$.
\end{proposition}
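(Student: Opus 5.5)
The plan mirrors the parabolic stability argument (Lemma~\ref{LEM:stb_h2}) and the hyperbolic one in \cite[Proposition~6.5]{LTzW}, now in $X^{s,b}$-spaces. Rather than first constructing $v$ and then comparing, I would study $\vec w \deff \vec v - \vec v_N$ and write an equation for it, so that the existence of $v$ on $[0,T]$ follows from a continuity/bootstrap argument once $w$ is controlled. Subtracting \eqref{SdNLWv4a} from \eqref{SdNLWv4b}, I split $\vec \G_{t_0}[v_N + w] - \vec \G_{t_0, N}[v_N]$ into four groups:
\begin{itemize}
\item[(a)] terms containing a difference of data: $v_{0,N}-v_0$, an entry of $\pmb{\Xi}_N - \pmb{\Xi}$, the coefficient difference $2\cj a_{1,N}N^{2(\frac32-\al)} - a$ via \eqref{a1N}, $\cj a_{2,N}-\cj a_2$ via \eqref{sigma_diff}, or $\pi_N^\perp$;
\item[(b)] the genuinely high-order terms with $j\ge 3$, $\l \ge 3$;
\item[(c)] the terms with $j\geq 3$, $\l = 2$, i.e.\ $N^{(2j-4)(\al-\frac32)}\wick{\<1>_N^{2j-3}}(\pi_N v_N)^2$, which have no limit counterpart;
\item[(d)] terms linear or higher in $w$, with coefficients given by $v_N$, $\<1>$, $\<30>$, and the operators $\If^{\<2>}$, $\If^{\<31>}$.
\end{itemize}

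Groups (a) and (d) are handled exactly as in the local theory of \cite[Section~6]{LTzW}. Group (a) uses the homogeneous estimate (Lemma~\ref{LEM:homo}), the inhomogeneous estimate (Lemma~\ref{LEM:nhomo}), the operator norms $\L_T$ and $Y_T$ from \eqref{enh_diff}, and the trilinear bounds of Lemma~\ref{LEM:Str_mul1}. This gives a contribution $C(T,K_1)(\kappa + N^{-\ta'})$. Here $K_0 + K_0'\ll K_1$ is used to absorb the limiting data norms into $K_1$.

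For (d), Lemma~\ref{LEM:Str_mul1}, the Strichartz estimate (Lemma~\ref{LEM:Str}) and the time localization Lemma~\ref{LEM:time} on short intervals $I$ give
\[
\|\vec w\|_{\vec X_I^{\al-\frac12,\frac12+\dl}} \le C\,\|\vec w(t_0)\|_{\vec H^{\al-\frac12}} + |I|^{\theta} C(K_1)\big(\|\vec w\|_{\vec X_I} + \|\vec w\|_{\vec X_I}^3\big) + \text{(small terms)}.
\]
Group (c) is treated with \eqref{str_mul1-1}. The $\wick{\<1>_N^{2j-3}}$ factor, weighted by $N^{(2j-4)(\al-\frac32)}$, sits in $L_T^\infty W^{\al-\frac32-\eps,\infty}$ with norm $\les N^{-\ta}$ by Proposition~\ref{PROP:data}\,(ii), since its limit is $0$. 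The two $v_N$ factors are bounded via \eqref{vN_bddXT} and Lemma~\ref{LEM:Str}. The result is $N^{-\ta'}C(K_1)$.

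The main obstacle is group (b). It contains $(\pi_N v_N)^{\l-k}$ with $\l-k$ up to $2j-1$, and the wave dynamics gives no $L^\infty$ control. This is exactly where \eqref{vN_bddW} enters, the $L^p$-based bound that will later come from invariance of $\vec\varrho_N$ and Theorem~\ref{THM:rep}. For $\l-k \ge 4$, I would apply \eqref{str0} of Lemma~\ref{LEM:Str_mul2}, and \eqref{str00} in the quartic case:
\begin{itemize}
\item four factors of $v_N$ (or of $\Xi_N^{(3)}$) go in $L^r_{t,x}$, controlled by $K_1$ through Lemma~\ref{LEM:Str};
\item one factor goes in the high $L^{r(1-\ta)/\ta}$ norm;
\item the remaining powers go in the $L^{r/(r-4)}$ slot.
\end{itemize}
The remaining powers are controlled by interpolating \eqref{vN_bddW} with Bernstein's inequality, since $\pi_N v_N$ is frequency localized, losing $N^{(\frac32-\al+\eps)(\l-k-4)}$ at worst. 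This loss is paid for by the prefactor $N^{(2j-4)(\al-\frac32)}$. I expect that checking that the exponents always leave a net $N^{-\ta'}$, uniformly over all $(j,\l,k)$, is the delicate bookkeeping step; mixed terms with $k\ge1$ are easier, since $\Xi_N^{(3)}$ is bounded in $L^\infty$. The outcome is a bound $C(T,K_1,K_2)N^{-\ta'}$ for group (b).

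Finally, I cover $[0,T]$ by intervals of length $\tau=\tau(K_1)$ chosen so that $\tau^\theta C(K_1)\ll1$, and iterate the estimate, using $\|\vec w(t_0)\|_{\vec H}\les\|\vec w\|_{\vec X}$ (Lemma~\ref{LEM:HsXsb}). Gluing the intervals with Lemma~\ref{LEM:glue} gives \eqref{vNv_diff2} with $C$ growing like $C^{T/\tau}$. A continuity argument in $T$ (Lemma~\ref{LEM:time_cty}) keeps $\|\vec w\|\le1$, which ensures that $v = v_N + w$ exists on all of $[0,T]$. Taking $\kappa_0$ small and $N_0$ large then yields \eqref{v_bddX}.
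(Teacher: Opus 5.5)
Your proposal is correct and is essentially the paper's proof: your groups (a)--(d) are the paper's terms $\textup{I}_1$--$\textup{I}_{12}$ in the splitting of $\vec\G_0[v_N+\dl v_N]-\vec\G_{0,N}[v_N]$, group (b) is handled exactly as in \eqref{I12-1}--\eqref{I12-3} via \eqref{str00}, \eqref{str0}, \eqref{vN_bddW} and Bernstein, and the globalization is the same short-interval iteration glued with Lemma~\ref{LEM:glue}. The only organizational difference is that the paper first constructs $v$ locally by contraction and then linearizes the difference, whereas you bootstrap $w$ directly; this needs the same contraction estimate to get local existence of $w$.

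One correction to your count in group (b), for $k=0$. Only $3+\ta$ of the $\l$ factors of $\pi_N v_N$ are controlled by Strichartz in \eqref{str0}, not four, so the remaining $\l-3-\ta$ factors each cost $N^{\frac32-\al+\eps}$. You also omitted the Wick factor $\wick{\<1>_N^{2j-1-\l}}$, which must sit in the $L^{r/(r-4)}$ slot and costs $N^{(2j-1-\l)(\frac32-\al)+\eps}$. Against the prefactor $N^{(2j-4)(\al-\frac32)}$, the net gain is therefore only $N^{-\ta(\frac32-\al)+O(\eps)}$. This still closes, but only because $\eps$ is chosen small depending on $\ta$.
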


\begin{proof}
We prove the proposition by the following three steps.

\smallskip \noi
{\bf Step 1:}
Let us first show that there exists $0 < T_0 = T_0 (K_1) \leq 1$ such that a unique solution $\vec v$ to \eqref{SdNLWv4b} exists in $X_{T_0}^{\al - \frac 12, \frac 12 + \dl}$ and that 
\begin{align}
\| \vec v \|_{\vec X_{T_0}^{\al - \frac 12, \frac 12 + \dl}} \leq C_0 K_1.
\label{v4K}
\end{align}

\noi
for some constant $C_0 > 0$ independent of $T_0$.

Given $0 < T_0 \leq 1$, by using the estimates from \cite[Proposition~5.4]{LTzW} along with the following additional estimate using Lemma~\ref{LEM:nhomo} and Lemma~\ref{LEM:time}:
\begin{align*}
\| \vec \I (\<1> - 4 \cj{a}_2 \<30> + v) \|_{\vec X_{T_0}^{\al - \frac 12, \frac 12 + \dl}} \les T_0^{\frac 12 - \dl} \Big( \| \<1> \|_{L_{T_0}^2 H_x^{- \frac 12}} + \| \<30> \|_{L_{T_0}^2 H_x^{- \frac 12}} + \| v \|_{X_{T_0}^{\al - \frac 12, \frac 12 + \dl}} \Big) ,
\end{align*}

\noi
we obtain
\begin{align*}
\| \vec \Gamma_0 [v] \|_{\vec X_{T_0}^{\al - \frac 12, \frac 12 + \dl}} \leq C T_0^\gamma \Big( \| v \|_{X_{T_0}^{\al - \frac 12, \frac 12 + \dl}}^3 + \| \pmb{\Xi} \|_{\mathcal{X}_T^{\al, \eps, \dl}}^3 \Big) + C \big( \| \pmb{\Xi} \|_{\mathcal{X}_T^{\al, \eps, \dl}} + \| v_0 \|_{H^{\al - \frac 12}} \big)
\end{align*}

\noi
and
\begin{align*}
\| &\vec \Gamma_0 [v_1] - \vec \Gamma_0 [v_2] \|_{\vec X_{T_0}^{\al - \frac 12, \frac 12 + \dl}} \\
&\leq C T_0^\gamma \Big( \| v_1 \|_{X_{T_0}^{\al - \frac 12, \frac 12 + \dl}}^2 + \| v_2 \|_{X_{T_0}^{\al - \frac 12, \frac 12 + \dl}}^2 + \| \pmb{\Xi} \|_{\mathcal{X}_T^{\al, \eps, \dl}}^2 \Big) \| v_1 - v_2 \|_{X_{T_0}^{\al - \frac 12, \frac 12 + \dl}} 
\end{align*}

\noi
for some constant $C > 0$ and $\gamma > 0$. 
Thus, a standard contraction argument along with \eqref{enh_bdd}, \eqref{init_bdd}, and the fact that $K_0 + K_0' \ll K_1$ shows local well-posedness of \eqref{SdNLWv4b} on $[0, T_0]$ for some $0 < T_0 = T_0 (K_1) \leq 1$ and the bound \eqref{v4K}.

\smallskip \noi
{\bf Step 2:}
We now show that with a possibly smaller value of $T_0 = T_0 (K_1) > 0$ from Step 1, we have the following improvement of the bound \eqref{v4K}:
\begin{align}
\| \vec v \|_{\vec X_{T_0}^{\al - \frac 12, \frac 12 + \dl}} \leq 2 K_1 .
\label{v2K}
\end{align}

From \eqref{enh_bdd} and \eqref{enh_diff}, we have the following uniform bound for the truncated enhanced data set:
\begin{align}
\| \pmb{\Xi}_N \|_{\mathcal{X}_T^{\al, \eps, \dl}} \leq K_0 + 1.
\label{enhN_bdd}
\end{align}

We set $\dl v_N = v - v_N$. From \eqref{SdNLWv4a} and \eqref{SdNLWv4b}, we see that $\dl v_N$ satisfies
\begin{align}
 \vec \dl v_N = \vec \G_0 [\dl v_N + v_N] - \vec \G_{0, N} [v_N]. 
\label{dvN}
\end{align}

\noi
Note that we have the following grouping:
\begin{align}
\vec \G_0 [\dl v_N + v_N] - \vec \G_{0, N} [v_N] = \vec S(t) (v_{0} - v_{0, N}, 0) + \sum_{j = 1}^{12} \vec{\textup{I}}_j,
\label{group}
\end{align}

\noi
where
\begin{align*}
\textup{I}_1 &\deff - a \I \big( \<1> - 4 \cj{a}_2 \<30> + \dl v_N + v_N \big) + 2 \cj{a}_{1, N} N^{2 (\frac 32 - \al)} \pi_N \I \big( \<1>_N - \Xi_N^{(3)} + \pi_N v_N \big), \\
\textup{I}_2 &\deff - 12 \cj{a}_2 \If_0^{\<2>} (\dl v_N + v_N) + \Xi_{0, N}^{(2)} (\pi_N v_N) , \\
\textup{I}_3 &\deff 96 \cj{a}_2^2 \If_0^{\<31>} (\dl v_N + v_N) - \Xi_{0, N}^{(3, 1)} (\pi_N v_N) , \\
\textup{I}_4 &\deff 48 \cj{a}_2^2 \<320>_0 - \Xi_{0, N}^{(5)} , \\
\textup{I}_5 &\deff - 192 \cj{a}_2^3 \<70>_0 + \Xi_{0, N}^{(7)} , \\
\textup{I}_6 &\deff - 12 \cj{a}_2 \I (\<1> (\dl v_N + v_N)^2) + 12 \cj{a}_{2, N} \pi_N \I ( \<1>_N (\pi_N v_N)^2 ) , \\
\textup{I}_7 &\deff - 4 \cj{a}_2 \I ( (\dl v_N + v_N)^3 ) + 4 \cj{a}_{2, N} \pi_N \I ( (\pi_N v_N)^3 ) , \\
\textup{I}_8 &\deff 48 \cj{a}_2^2 \I \big( \<30> (\dl v_N + v_N)^2 \big) - 12 \cj{a}_{2, N} \pi_N \I \big( \Xi_N^{(3)} (\pi_N v_N)^2 \big) , \\
\textup{I}_9 &\deff - 192 \cj{a}_2^3 \I \big( \<30>^2 (\dl v_N + v_N) \big) + 12 \cj{a}_{2, N} \pi_N \I \big( (\Xi_N^{(3)})^2 \pi_N v_N \big) , \\
\textup{I}_{10} &\deff 256 \cj{a}_2^4 \I \big( \<30>^3 \big) - 4 \cj{a}_{2, N} \pi_N \I \big( (\Xi_N^{(3)})^3 \big) , \\
\textup{I}_{11} &\deff \sum_{j = 3}^m C_j \cj{a}_{j, N} N^{(2j - 4) (\al - \frac 32)} \pi_N \I \big( \wick{\<1>_N^{2j - 3}} (\pi_N v_N)^2 \big), \\
\textup{I}_{12} &\deff \sum_{j = 3}^m \sum_{\l = 3}^{2j - 1} \sum_{k = 0}^\l C_{j, \l, k} \cj{a}_{j, N} N^{(2 j - 4) (\al - \frac 32)} 
\pi_N \I \big( (- \Xi_N^{(3)})^k \wick{\<1>_N^{2j - 1 - \l}} (\pi_N v_N)^{\l - k} \big).
\end{align*}

\noi
We now deal with the above terms by giving details for some of the terms and briefly discussing other terms. 

\smallskip \noi
\textbf{Term} $\textup{I}_1$: 
We write
\begin{align*}
\textup{I}_1 &= -a \I (\dl v_N) + \big( 2 \cj{a}_{1, N} N^{2 (\frac 32 - \al)} - a \big) \I \big( \<1> - 4 \cj{a}_2 \<30> + v_N \big) \\
&\quad - 2 \cj{a}_{1, N} N^{2 (\frac 32 - \al)} \pi_N^\perp \I \big( \<1> - 4 \cj{a}_2 \<30> + v_N \big) + 2 \cj{a}_{1, N} N^{2 (\frac 32 - \al)} \pi_N \I (\<1>_N - \<1>) \\
&\quad 
- 2 \cj{a}_{1, N} N^{2 (\frac 32 - \al)} \pi_N \I \big( \Xi_N^{(3)} - 4 \cj{a}_2 \<30> \big) - 2 \cj{a}_{1, N} N^{2 (\frac 32 - \al)} \pi_N \I ( \pi_N^\perp v_N )  \\
&\deff \textup{I}_{1, 1} + \textup{I}_{1, 2} + \textup{I}_{1, 3} + \textup{I}_{1, 4} + \textup{I}_{1, 5} + \textup{I}_{1, 6},
\end{align*}

\noi
where $\pi_N^\perp = \Id - \pi_N$. For $\textup{I}_{1, 1}$, by the inhomogeneous linear estimate in Lemma~\ref{LEM:nhomo} and the time localization estimate in Lemma~\ref{LEM:time}, we obtain
\begin{align}
\| \vec{\textup{I}}_{1, 1} \|_{\vec X_{T_0}^{\al - \frac 12, \frac 12 + \dl}} \les T_0^{\frac 12 - \dl} \| \dl v_N \|_{L_{T_0}^2 H_x^{- \frac 12}} \leq T_0^{\frac 12 - \dl} \| \dl v_N \|_{X_{T_0}^{\al - \frac 12, \frac 12 + \dl}} .
\label{I1-0}
\end{align}

\noi
For $\textup{I}_{1, 2}$, by \eqref{a1N}, the inhomogeneous linear estimate in Lemma~\ref{LEM:nhomo}, the time localization estimate in Lemma~\ref{LEM:time}, \eqref{enh_bdd}, and \eqref{vN_bddXT}, we obtain
\begin{align}
\begin{split}
\| \vec{\textup{I}}_{1, 2} \|_{\vec X_{T_0}^{\al - \frac 12, \frac 12 + \dl}} 
&\les N^{- \ta'} \Big( \| \<1> \|_{L_{T_0}^2 H_x^{- \frac 12}} + \| \<30> \|_{L_{T_0}^2 H_x^{- \frac 12}} + \| v_N \|_{X_{T_0}^{\al - \frac 12, \frac 12 + \dl}} \Big) \\
&\les N^{- \ta'} (K_0 + K_1)
\end{split}
\label{I1-1}
\end{align}

\noi
for some $\ta' > 0$. For $\textup{I}_{1, 3}$ and $\textup{I}_{1, 6}$, due to $\pi_N^\perp$, we may lose some slight amount of regularity to create a negative power of $N$. Thus, by using similar steps as in \eqref{I1-1}, we obtain
\begin{align}
\| \vec{\textup{I}}_{1, 3} \|_{\vec X_{T_0}^{\al - \frac 12, \frac 12 + \dl}} + \| \vec{\textup{I}}_{1, 6} \|_{\vec X_{T_0}^{\al - \frac 12, \frac 12 + \dl}} \les N^{- \ta'} (K_0 + K_1).
\label{I1-2}
\end{align}

\noi
For $\textup{I}_{1, 4}$ and $\textup{I}_{1, 5}$, we use \eqref{enh_diff} and similar steps in \eqref{I1-1} to obtain
\begin{align}
\| \vec{\textup{I}}_{1, 4} \|_{\vec X_{T_0}^{\al - \frac 12, \frac 12 + \dl}} + \| \vec{\textup{I}}_{1, 5} \|_{\vec X_{T_0}^{\al - \frac 12, \frac 12 + \dl}} \les N^{- \ta}.
\label{I1-3}
\end{align}

\noi
Combining \eqref{I1-0}, \eqref{I1-1}, \eqref{I1-2}, and \eqref{I1-3}, we obtain (assuming that $\ta' < \ta$, same below)
\begin{align}
\| \vec{\textup{I}}_1 \|_{\vec X_{T_0}^{\al - \frac 12, \frac 12 + \dl}} \les N^{- \ta'} (K_0 + K_1) + T_0^{\frac 12 - \dl} \| \dl v_N \|_{X_{T_0}^{\al - \frac 12, \frac 12 + \dl}}.
\label{I1_bdd}
\end{align}

\smallskip \noi
\textbf{Term} $\textup{I}_2$: We write
\begin{align*}
\textup{I}_2 = - 12 \cj{a}_2 \If^{\<2>} (\dl v_N) + \big( \Xi_N^{(2)} (v_N) - 12 \cj{a}_2 \If^{\<2>} (v_N) \big) - \Xi_N^{(2)} (\pi_N^\perp v_N).
\end{align*}

\noi
Thus, in the same spirit of $\textup{I}_1$ along with \eqref{vN_bddXT} and \eqref{enhN_bdd}, we obtain
\begin{align}
\| \vec{\textup{I}}_2 \|_{\vec X_{T_0}^{\al - \frac 12, \frac 12 + \dl}} \les N^{- \ta'} (K_0 K_1 + K_1) + T_0^\dl K_0 \| \dl v_N \|_{X_{T_0}^{\al - \frac 12, \frac 12 + \dl}}
\label{I2_bdd}
\end{align} 

\noi
for some $\ta' > 0$.

\smallskip \noi
\textbf{Term} $\textup{I}_3$: Similar to term $\textup{I}_2$, we obtain
\begin{align}
\| \vec{\textup{I}}_3 \|_{\vec X_{T_0}^{\al - \frac 12, \frac 12 + \dl}} \les N^{- \ta'} (K_0 K_1 + K_1) + T_0^{\dl} K_0 \| \dl v_N \|_{X_{T_0}^{\al - \frac 12, \frac 12 + \dl}}.
\label{I3_bdd}
\end{align}

\smallskip \noi
\textbf{Term} $\textup{I}_4$: From \eqref{enh_diff}, we obtain
\begin{align}
\| \vec{\textup{I}}_4 \|_{\vec X_{T_0}^{\al - \frac 12, \frac 12 + \dl}} \les N^{- \ta}.
\label{I4_bdd}
\end{align}

\smallskip \noi
\textbf{Term} $\textup{I}_5$: From \eqref{enh_diff}, we obtain
\begin{align}
\| \vec{\textup{I}}_5 \|_{\vec X_{T_0}^{\al - \frac 12, \frac 12 + \dl}} \les N^{- \ta}.
\label{I5_bdd}
\end{align}

\smallskip \noi
\textbf{Term} $\textup{I}_6$: In a similar manner of term $\textup{I}_1$, we can write
\begin{align*}
\textup{I}_6 &= - 12 \cj{a}_2 \I ( \<1> (\dl v_N)^2 ) - 24 \cj{a}_2 \I ( \<1> \cdot \dl v_N \cdot v_N ) + 12 (\cj{a}_{2, N} - \cj{a}_{2}) \I (\<1> v_N^2) \\
&\quad - 12 \cj{a}_{2, N} \pi_N^\perp \I (\<1> v_N^2) + 12 \cj{a}_{2, N} \pi_N \I \big( (\<1>_N - \<1>) v_N^2 \big) \\
&\quad - 12 \cj{a}_{2, N} \pi_N \I \big( \<1>_N \cdot v_N \cdot \pi_N^\perp v_N \big) - 12 \cj{a}_{2, N} \pi_N \I \big( \<1>_N \cdot \pi_N^\perp v_N \cdot \pi_N v_N \big).
\end{align*}

\noi
Note that by \eqref{aj}, \eqref{ajN}, and \eqref{sigma_diff}, we have
\begin{align}
|\cj{a}_{2, N} - \cj{a}_2| \les N^{-\ta'}
\label{I6help}
\end{align}

\noi
for some $\ta' > 0$. Also, from \eqref{vN_bddXT} and \eqref{v4K}, we have 
\begin{align}
\| \dl v_N \|_{X_{T_0}^{\al - \frac 12, \frac 12 + \dl}} \leq (C_0 + 1) K_1.
\label{dvN_bdd}
\end{align}

\noi
Thus, by the inhomogeneous linear estimate in Lemma~\ref{LEM:nhomo}, the time localization estimate in Lemma~\ref{LEM:time}, the trilinear estimate \eqref{str_mul1-2} in Lemma~\ref{LEM:Str_mul1}, \eqref{dvN_bdd}, \eqref{I6help},  \eqref{enh_diff}, \eqref{enhN_bdd}, and \eqref{vN_bddXT}, we obtain
\begin{align}
\begin{split}
\| \vec{\textup{I}}_6 \|_{\vec X_{T_0}^{\al - \frac 12, \frac 12 + \dl}} \les N^{-\ta'} K_0 K_1^2 + T_0^{\dl} K_0 K_1 \| \dl v_N \|_{X_{T_0}^{\al - \frac 12, \frac 12 + \dl}}.
\end{split}
\label{I6_bdd}
\end{align}

\smallskip \noi
\textbf{Term} $\textup{I}_7$: In a similar manner of term $\textup{I}_1$, we write
\begin{align*}
\textup{I}_7 &= - 4 \cj{a}_2 \I ( (\dl v_N)^3 ) - 12 \cj{a}_2 \I ( (\dl v_N)^2 v_N ) - 12 \cj{a}_2 \I ( \dl v_N (v_N)^2 ) \\
&\quad + 4 (\cj{a}_{2, N} - \cj{a}_2) \I (v_N^3) - 4 \cj{a}_{2, N} \pi_N^\perp \I (v_N^3) - 4 \cj{a}_{2, N} \pi_N \I (v_N^2 \cdot \pi_N^\perp v_N) \\
&\quad - 4 \cj{a}_{2, N} \pi_N \I (v_N \cdot \pi_N^\perp v_N \cdot \pi_N v_N) - 4 \cj{a}_{2, N} \pi_N \I (\pi_N^\perp v_N (\pi_N v_N)^2 ) .
\end{align*}

\noi
Thus, by the inhomogeneous linear estimate in Lemma~\ref{LEM:nhomo}, the time localization estimate in Lemma~\ref{LEM:time}, \eqref{str00} in Lemma~\ref{LEM:Str_mul2} (with $v \equiv 1$), the Strichartz estimate in Lemma~\ref{LEM:Str}, \eqref{dvN_bdd}, \eqref{I6help}, and \eqref{vN_bddXT}, we obtain
\begin{align}
\begin{split}
\| \vec{\textup{I}}_7 \|_{\vec X_{T_0}^{\al - \frac 12, \frac 12 + \dl}} \les N^{- \ta'} K_1^3  + T_0^\dl K_1^2 \| \dl v_N \|_{X_{T_0}^{\al - \frac 12, \frac 12 + \dl}}
\end{split}
\label{I7_bdd}
\end{align}

\noi
for some $\ta' > 0$.

\smallskip \noi
\textbf{Term} $\textup{I}_8$: In a similar manner of term $\textup{I}_1$, we write
\begin{align*}
\textup{I}_8 &= 48 \cj{a}_2^2 \I \big( \<30> (\dl v_N)^2 \big) + 96 \cj{a}_2^2 \I \big( \<30> \cdot \dl v_N \cdot v_N \big) + 12 (\cj{a}_{2} - \cj{a}_{2, N}) \I \big( 4 \cj{a}_2 \<30> v_N^2 \big) \\
&\quad + 12 \cj{a}_{2, N} \pi_N^\perp \I \big( 4 \cj{a}_2 \<30> v_N^2 \big) + 12 \cj{a}_{2, N} \pi_N \I \big( (4 \cj{a}_2 \<30> - \Xi_N^{(3)}) v_N^2 \big) \\
&\quad + 12 \cj{a}_{2, N} \pi_N \I \big( \Xi_N^{(3)} \cdot v_N \cdot \pi_N^\perp v_N \big) + 12 \cj{a}_{2, N} \pi_N \I \big( \Xi_N^{(3)} \cdot \pi_N^\perp v_N \cdot \pi_N v_N \big) .
\end{align*}

\noi
Thus, by the inhomogeneous linear estimate in Lemma~\ref{LEM:nhomo}, the time localization estimate in Lemma~\ref{LEM:time}, \eqref{str00} in Lemma~\ref{LEM:Str_mul2} (with $v \equiv 1$), the fact that $\eps < 3 \al - 3 - \eps$ for $\eps > 0$ sufficiently small, the Strichartz estimate in Lemma~\ref{LEM:Str}, \eqref{dvN_bdd}, \eqref{I6help}, \eqref{vN_bddXT}, \eqref{enh_bdd}, and \eqref{enh_diff}, we obtain
\begin{align}
\begin{split}
\| \vec{\textup{I}}_8 \|_{\vec X_{T_0}^{\al - \frac 12, \frac 12 + \dl}} \les N^{- \ta'} K_0  K_1^2 + T_0^\dl K_0 K_1 \| \dl v_N \|_{X_{T_0}^{\al - \frac 12, \frac 12 + \dl}} 
\end{split}
\label{I8_bdd}
\end{align}

\noi
for some $\ta' > 0$.

\smallskip \noi
\textbf{Term} $\textup{I}_9$: Similar to $\textup{I}_8$, we obtain
\begin{align}
\begin{split}
\| \vec{\textup{I}}_9 \|_{\vec X_{T_0}^{\al - \frac 12, \frac 12 + \dl}} \les N^{- \ta'} K_0^2  K_1 + T_0^\dl K_0^2 \| \dl v_N \|_{X_{T_0}^{\al - \frac 12, \frac 12 + \dl}}
\end{split}
\label{I9_bdd}
\end{align}

\noi
for some $\ta' > 0$.

\smallskip \noi
\textbf{Term} $\textup{I}_{10}$: Similar to $\textup{I}_8$, we obtain
\begin{align}
\begin{split}
\| \vec{\textup{I}}_{10} \|_{\vec X_{T_0}^{\al - \frac 12, \frac 12 + \dl}} \les N^{- \ta'} K_0^3
\end{split}
\label{I10_bdd}
\end{align}

\noi
for some $\ta' > 0$.

\smallskip \noi
\textbf{Term} $\textup{I}_{11}$: From \eqref{ajN} and \eqref{sigma_diff}, we know that $\{ \cj{a}_{j, N} \}_{N \in \N}$ is bounded for each $3 \leq j \leq m$. By the inhomogeneous linear estimate in Lemma~\ref{LEM:nhomo}, the trilinear estimate \eqref{str_mul1-2} in Lemma~\ref{LEM:Str_mul1}, \eqref{enh_diff}, and \eqref{vN_bddXT}, we obtain
\begin{align}
\begin{split}
\| \vec{\textup{I}}_{11} \|_{\vec X_{T_0}^{\al - \frac 12, \frac 12 + \dl}} &\les \sum_{j = 3}^m \big\| N^{(2j - 4) (\al - \frac 32)} \wick{\<1>_N^{2j - 3}} (\pi_N v_N)^2 \big\|_{X_{T_0}^{- \frac 12, - \frac 12 + \dl}} \\
&\les \sum_{j = 3}^m \big\| N^{(2j - 4) (\al - \frac 32)} \wick{\<1>_N^{2j - 3}} \big\|_{L_{T_0}^\infty W_x^{\al - \frac 32 - \eps, \infty}} \| v_N \|_{X_{T_0}^{\al - \frac 12, \frac 12 + \dl}}^2 \\
&\les N^{- \ta} K_1^2 .
\end{split}
\label{I11_bdd}
\end{align}

\smallskip \noi
\textbf{Term} $\textup{I}_{12}$: We write
\begin{align*}
\textup{I}_{12} = \textup{I}_{12, 1} + \textup{I}_{12, 2} + \textup{I}_{12, 3},
\end{align*}

\noi
where $\textup{I}_{12, 1}$ denotes the contribution by $\l = 3$, $\textup{I}_{12, 2}$ denotes the contribution by $4 \leq \l \leq 2j - 2$, and $\textup{I}_{12, 3}$ denotes the contribution by $\l = 2j - 1$.
For $\textup{I}_{12, 1}$, by the boundedness of $\{ \cj{a}_{j, N} \}_{N \in \N}$, the inhomogeneous linear estimate in Lemma~\ref{LEM:nhomo}, the multilinear estimate \eqref{str00} in Lemma~\ref{LEM:Str_mul2}, the fact that $\eps < 3 \al - 3 - \eps$ for $\eps > 0$ sufficiently small, the Strichartz estimate in Lemma~\ref{LEM:Str}, \eqref{enhN_bdd}, \eqref{vN_bddXT}, and \eqref{enh_diff}, we obtain
\begin{align}
\begin{split}
\| \vec{\textup{I}}_{12, 1} \|_{\vec X_{T_0}^{\al - \frac 12, \frac 12 + \dl}} 
&\les \sum_{j = 3}^m \sum_{k = 0}^3 \| \Xi_N^{(3)} \|_{L_{T_0}^4 W_x^{\eps, 4}}^k \| v_N \|_{L_{T_0}^4 W_x^{\eps, 4}}^{3 - k} \big\| N^{(2j - 4) (\al - \frac 32)} \wick{\<1>_N^{2j - 4}} \big\|_{L_{T_0}^\infty W_x^{- \eps, \infty}} \\
&\les \sum_{j = 3}^m \sum_{k = 0}^3 K_0^k  \| v_N \|_{X_{T_0}^{\al - \frac 12, \frac 12 + \dl}}^{3 - k} \big\| N^{(2j - 5) (\al - \frac 32)} \wick{\<1>_N^{2j - 4}} \big\|_{L_{T_0}^\infty W_x^{\al - \frac 32 - \eps, \infty}} \\
&\les N^{- \ta} K_0^3 K_1^3 .
\end{split}
\label{I12-1}
\end{align}

\noi
For $\textup{I}_{12, 2}$, we have either $k \geq 2$ or $\l - k \geq 2$, and we assume the slightly harder case $\l - k \geq 2$ without loss of generality. Let $k' = \min \{\l - k - 1, 3\}$. By the boundedness of $\{ \cj{a}_{j, N} \}_{N \in \N}$, the inhomogeneous linear estimate in Lemma~\ref{LEM:nhomo}, the multilinear estimate \eqref{str0} in Lemma~\ref{LEM:Str_mul2}, the Strichartz estimate in Lemma~\ref{LEM:Str}, \eqref{enhN_bdd}, \eqref{vN_bddXT}, \eqref{enh_diff}, and \eqref{vN_bddW}, we obtain
\begin{align}
\begin{split}
\| &\vec{\textup{I}}_{12, 2} \|_{\vec X_{T_0}^{\al - \frac 12, \frac 12 + \dl}} \\
&\les \sum_{j = 3}^m \sum_{\l = 4}^{2j - 2} \sum_{k = 0}^\l N^{(2j - 4) (\al - \frac 32)} \| \Xi_N^{(3)} \|_{L_{T_0}^r L_x^r}^{3 - k'} \| v_N \|_{L_{T_0}^r L_x^r}^{k' + \ta'} \| \wick{\<1>_N^{2j - 1 - \l}} \|_{L_{T_0}^\infty L_x^\infty} \\
&\quad \times \| \Xi_N^{(3)} \|_{L_{T_0}^\infty L_x^\infty}^{k + k' - 3} \| v_N \|_{L_{T_0}^p L_x^p}^{\l - k - k' - \ta'} \\
&\les \sum_{j = 3}^m \sum_{\l = 4}^{2j - 2} \sum_{k = 0}^\l N^{(\l - 2) (\al - \frac 32) - (1 + \l - k - k' - \ta') (\al - \frac 32 - \eps)} \| \Xi_N^{(3)} \|_{L_{T_0}^\infty W_x^{3 \al - 3 - \eps, \infty}}^{k} \\
&\quad \times \| v_N \|_{X_{T_0}^{\al - \frac 12, \frac 12 + \dl}}^{k' + \ta'} \big\| N^{(2j - 2 - \l) (\al - \frac 32)} \wick{\<1>_N^{2j - 1 - \l}} \big\|_{L_{T_0}^\infty W_x^{\al - \frac 32 - \eps, \infty}}  \| v_N \|_{L_{T_0}^p W_x^{\al - \frac 32 - \eps, p}}^{\l - k - k' - \ta'} \\
&\les N^{- \ta} K_0^{2m - 2} K_1^{3 + \ta'} K_2^{2m - 2} 
\end{split}
\label{I12-2}
\end{align}

\noi
for some $\ta' > 0$ small, $r > 4$ satisfying $\frac 12 > \frac 32 - \frac{\al + 3}{r}$, and $1 \ll p < \infty$ sufficiently large, where we used 
$$(\l - 2) \Big( \al - \frac 32 \Big) - (1 + \l - k - k' - \ta') \Big( \al - \frac 32 - \eps \Big) < 0$$ 
given $k + k' \geq 3$ and $\eps = \eps (\ta') > 0$ sufficiently small. For $\textup{I}_{12, 3}$, we use similar steps as in \eqref{I12-2} to obtain
\begin{align}
\begin{split}
\| &\vec{\textup{I}}_{12, 3} \|_{\vec X_{T_0}^{\al - \frac 12, \frac 12 + \dl}} \\
&\les \sum_{j = 3}^m \sum_{k = 0}^{2j - 1} N^{(2j - 4) (\al - \frac 32)} \| \Xi_N^{(3)} \|_{L_{T_0}^r L_x^r}^{3 - k'} \| v_N \|_{L_{T_0}^r L_x^r}^{k' + \ta'} \| \Xi_N^{(3)} \|_{L_{T_0}^\infty L_x^\infty}^{k + k' - 3}  \| v_N \|_{L_{T_0}^p L_x^p}^{2j - 1 - k - k' - \ta'} \\
&\les \sum_{j = 3}^m \sum_{k = 0}^{2j - 1} N^{(2j - 4) (\al - \frac 32) - (2j - 1 - k - k' - \ta') (\al - \frac 32 - \eps)} \| \Xi_N^{(3)} \|_{L_{T_0}^\infty W_x^{3 \al - 3 - \eps, \infty}}^{k} \\
&\quad \times \| v_N \|_{X_{T_0}^{\al - \frac 12, \frac 12 + \dl}}^{k' + \ta'} \| v_N \|_{L_{T_0}^p W_x^{\al - \frac 32 - \eps, p}}^{2j - 1 - k - k' - \ta'} \\
&\les N^{-\ta'} K_0^{2m - 1} K_1^{3 + \ta'} K_2^{2m - 1} 
\end{split}
\label{I12-3}
\end{align}

\noi
for some $\ta' > 0$. Thus, combining \eqref{I12-1}, \eqref{I12-2}, and \eqref{I12-3}, we obtain
\begin{align}
\| \vec{\textup{I}}_{12} \|_{\vec X_{T_0}^{\al - \frac 12, \frac 12 + \dl}} \les N^{- \ta'} K_0^{2m - 1} K_1^{3 + \ta'} K_2^{2m - 1} .
\label{I12_bdd}
\end{align}

\smallskip
By using \eqref{group}, the homogeneous linear estimate in Lemma~\ref{LEM:homo}, and \eqref{init_diff} and gathering all the above 12 estimates: \eqref{I1_bdd}, \eqref{I2_bdd}, \eqref{I3_bdd}, \eqref{I4_bdd}, \eqref{I5_bdd}, \eqref{I6_bdd}, \eqref{I7_bdd}, \eqref{I8_bdd}, \eqref{I9_bdd}, \eqref{I10_bdd}, \eqref{I11_bdd}, and \eqref{I12_bdd}, we obtain
\begin{align}
\begin{split}
\| \vec \dl v_N \|_{\vec X_{T_0}^{\al - \frac 12, \frac 12 + \dl}} &\leq C \kappa + C K_0^{2m - 1} K_1^{3 + \ta'} K_2^{2m - 1} N^{- \ta'} \\
&\quad + C T_0^\dl (K_0 K_1 + K_1^2) \| \dl v_N \|_{X_{T_0}^{\al - \frac 12, \frac 12 + \dl}}
\end{split}
\label{vN_diff}
\end{align}

\noi
for some constant $C > 0$. 
Since $K_0 \ll K_1$, we can take $T_0 = T_0 (K_1) > 0$ sufficiently small such that $C T_0^\dl (K_0 K_1 + K_1^2) \leq \frac 12$, so that \eqref{vN_diff} gives
\begin{align}
\| \vec \dl v_N \|_{\vec X_{T_0}^{\al - \frac 12, \frac 12 + \dl}} \leq 2 C \kappa + 2 C K_0^{2m - 1} K_1^{3 + \ta'} K_2^{2m - 1} N^{- \ta'} .
\label{dvN1-1}
\end{align}

\noi
Thus, by taking $\kappa$ to be sufficiently small and $N = N (K_1, K_2)$ to be sufficiently large, we get
\begin{align}
\| \vec \dl v_N \|_{\vec X_{T_0}^{\al - \frac 12, \frac 12 + \dl}} \leq K_1 .
\label{dvN1-2}
\end{align}

\noi
Thus, from \eqref{vN_bddXT} and \eqref{dvN1-2}, we obtain
\begin{align*}
\| \vec v \|_{\vec X_{T_0}^{\al - \frac 12, \frac 12 + \dl}} \leq \| \vec v_N \|_{\vec X_{T_0}^{\al - \frac 12, \frac 12 + \dl}} + \| \vec \dl v_N \|_{X_{T_0}^{\al - \frac 12, \frac 12 + \dl}} \leq 2 K_1 ,
\end{align*}

\noi
which is the desired bound \eqref{v2K}.

\smallskip \noi
{\bf Step 3:} 
Let $T_0' = \frac{T_0}{2}$. From \eqref{v2K} and Lemma~\ref{LEM:HsXsb}, we have
\begin{align}
\| \vec v (T_0') \|_{\vec H^{\al - \frac 12}} \leq C_1 \| \vec v \|_{\vec X_{T_0}^{\al - \frac 12, \frac 12 + \dl}} \leq 2 C_1 K_1 
\label{v0bdd2}
\end{align}

\noi
for some constant $C_1 \geq 1$. Thus, by using \eqref{v0bdd2} and similar argument in Step 1 with a possibly smaller value of $T_0 = T_0 (K_1) > 0$, we know that there exists a unique solution $v$ to \eqref{SdNLWv4b} on $[T_0', T_0' + T_0]$ and that
\begin{align}
\| \vec v \|_{\vec X_{[T_0', T_0' + T_0]}^{\al - \frac 12, \frac 12 + \dl}} \leq 4 C_1 K_1 .
\label{v4K2}
\end{align}

\noi
Also, from \eqref{dvN1-1} and Lemma~\ref{LEM:HsXsb}, we have
\begin{align}
\begin{split}
\| \vec v_N (T_0') - \vec v (T_0') \|_{\vec H^{\al - \frac 12}} &\leq C_1 \| \vec \dl v_N \|_{\vec X_{T_0}^{\al - \frac 12, \frac 12 + \dl}} \\
&\leq 2 C C_1 \kappa + 2 C C_1 K_0^{2m - 1} K_1^{3 + \ta'} K_2^{2m - 1} N^{- \ta'} .
\end{split}
\label{dvN0bdd2}
\end{align}

\noi
Thus, by using \eqref{v4K2} (needed for getting an analog of the bound \eqref{dvN_bdd} on $[T_0', T_0' + T_0]$), \eqref{dvN0bdd2}, and similar argument in Step 2, we get
\begin{align}
\| \vec \dl v_N \|_{\vec X_{[T_0', T_0' + T_0]}^{\al - \frac 12, \frac 12 + \dl}} 
&\leq 4 C^2 C_1 \kappa + (4 C^2 C_1 + 2 C) K_0^{2m - 1} K_1^{3 + \ta'} K_2^{2m - 1} N^{- \ta'} ,
\label{dvN2T}
\end{align}

\noi
so that using Lemma~\ref{LEM:glue} and the difference estimates \eqref{dvN1-1} and \eqref{dvN2T}, we obtain
\begin{align*}
\| \vec \dl v_N \|_{\vec X_{\frac 32 T_0}^{\al - \frac 12, \frac 12 + \dl}} 
&\leq (4 C^2 C_1 + 2 C) \kappa + (4 C^2 C_1 + 4 C) K_0^{2m - 1} K_1^{3 + \ta'} K_2^{2m - 1} N^{- \ta'} .
\end{align*}

\noi
Thus, since $K_0 \ll K_1$, by taking $\kappa$ to be sufficiently small and $N = N (K_1, K_2)$ to be sufficiently large, we get
\begin{align}
\| \vec \dl v_N \|_{\vec X_{\frac 32 T_0}^{\al - \frac 12, \frac 12 + \dl}} \leq K_1 ,
\label{dvN2-2}
\end{align}

\noi
so that from \eqref{vN_bddXT} and \eqref{dvN2-2}, we obtain the following extension of the bound \eqref{v2K}:
\begin{align*}
\| \vec v \|_{\vec X_{\frac 32 T_0}^{\al - \frac 12, \frac 12 + \dl}} \leq 2 K_1 .
\end{align*}

We repeat the above procedure $J = 2 [\frac{T}{T_0}]$ times, so that we get
\begin{align*}
\| \vec \dl v_N \|_{\vec X_T^{\al - \frac 12, \frac 12 + \dl}} \leq C (J) \kappa + C (J) K_0^{2m - 1} K_1^{3 + \ta'} K_2^{2m - 1} N^{- \ta'} 
\end{align*}

\noi
for some constant $C(J) > 0$, which gives the difference estimate \eqref{vNv_diff2}. Thus, since $T_0$ depends only on $K_1$ and $K_0 \ll K_1$, by taking $\kappa = \kappa (T, K_1)$ to be sufficiently small and $N = N (T, K_1, K_2)$ to be sufficiently large, we get
\begin{align}
\| \vec \dl v_N \|_{\vec X_{T}^{\al - \frac 12, \frac 12 + \dl}} \leq K_1 ,
\label{dvNT-2}
\end{align}

\noi
so that from \eqref{vN_bddXT} and \eqref{dvNT-2}, we obtain \eqref{v_bddX}.
\end{proof}

\begin{remark} \rm
One may compare the stability result for the parabolic macroscopic model in Lemma~\ref{LEM:stb_h2} and the one for the hyperbolic macroscopic model in Proposition~\ref{PROP:stb}. The key difference is that the hyperbolic model requires an additional a priori bound \eqref{vN_bddW} in an $L^\infty$-based norm. Due to the lack of $L^\infty$-based estimates for hyperbolic equations, this a priori bound cannot be obtained purely from the analysis of the dynamics but, at this point, can only be proved from the invariance point of view (see Lemma~\ref{LEM:uN_bdd} and Proposition~\ref{PROP:vN_bddW} below).
\end{remark}

\subsection{A priori bounds with large probability}
In this subsection, we prove some a priori bounds that are essential for proving the global-in-time convergence of $u_N$ satisfying \eqref{SdNLWmain}. We recall from Theorem~\ref{THM:rep} 
that $z_0$ and $v_{0, N}$ are random variables on $(\Sigma, \mathcal{G}, \mathbb{Q})$ such that
\begin{align*}
\Law (z_0 + v_{0, N}) = \varrho_N.
\end{align*}

\begin{lemma}
\label{LEM:uN_bdd}
Let $1 < \al \leq \frac 98$, $T > 0$, $\eps > 0$, and $1 \leq p < \infty$. Let $u_N$ be the solution to the macroscopic model \eqref{SdNLWmain} with $u_N |_{t = 0} = z_0 + v_{0, N}$. Then, given any $\eta > 0$, there exists $K = K (T, \eta) \gg 1$ such that
\begin{align}
\mathbb{Q} \otimes \mathbb{P}^w \Big( \| u_N \|_{L_T^p W_x^{\al - \frac 32 - \eps, \infty}} > K \Big) < \eta,
\label{uNbdd_goal}
\end{align}

\noi
uniformly in $N \in \N$.
\end{lemma}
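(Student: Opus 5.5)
The plan is to use the invariance of $\vec\varrho_N = \varrho_N \otimes \mu_0$ under the flow of \eqref{SdNLWmain} (Lemma~\ref{LEM:invN}) together with a Fubini argument in time, reducing the space-time bound to a fixed-time moment bound under the Gibbs measure.

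First, by Chebyshev's inequality it suffices to show
\begin{align*}
\sup_{N \in \N} \E_{\mathbb{Q} \otimes \mathbb{P}^w} \Big[ \| u_N \|_{L_T^p W_x^{\al - \frac 32 - \eps, \infty}}^p \Big] \leq C(T, p) < \infty,
\end{align*}
after which we take $K^p = C(T,p)/\eta$. Since the initial data $(z_0 + v_{0,N}, z_1)$ has law $\varrho_N \otimes \mu_0$, Lemma~\ref{LEM:invN} gives $\Law(\vec u_N(t)) = \vec\varrho_N$ for every $t \in [0,T]$. By Fubini,
\begin{align*}
\E \Big[ \| u_N \|_{L_T^p W_x^{\al - \frac 32 - \eps, \infty}}^p \Big] = \int_0^T \E \big[ \| u_N(t) \|_{W^{\al - \frac 32 - \eps, \infty}}^p \big] dt = T \int \| u \|_{W^{\al - \frac 32 - \eps, \infty}}^p \, d\varrho_N(u).
\end{align*}
By Lemma~\ref{LEM:Bes}~(i) we may replace the $W^{\al - \frac 32 - \eps, \infty}$-norm by the $\mathcal{C}^{\al - \frac 32 - \eps/2}$-norm. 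It therefore remains to show a uniform-in-$N$ moment bound $\sup_N \int \|u\|_{\mathcal{C}^{\al - \frac 32 - \eps}}^p d\varrho_N < \infty$. For this I would use the exponential-moment argument from Step~1 of the proof of Lemma~\ref{LEM:unibdd_h}, now carried out for $\varrho_N$ instead of $\rho_N$. Concretely, apply the Bou\'e--Dupuis formula (Lemma~\ref{LEM:BD}) to $-\log \int \exp(\|u\|_{\mathcal{C}^{\al - \frac 32 - \eps}}) d\varrho_N$, write it as in \eqref{BD_RNtil} with the change of variables \eqref{defUps}, and use the lower bound \eqref{RNtil_est} together with the upper bound \eqref{ZNupbdd} on $-\log \mathcal{Z}_N$.

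The main obstacle in this step is controlling the term $\E[\|W + \Ups^N - \ZZ_N\|_{\mathcal{C}^{\al - \frac 32 - \eps}}]$ uniformly. Here $\E\|W\|_{\mathcal C^{\al - \frac32 - \eps}} + \E\|\ZZ_N\|_{\mathcal C^{5\al - \frac92 - \eps}} \les 1$ by Lemma~\ref{LEM:Yreg}~(i) (the $\ZZ_N$ bound is uniform thanks to the negative powers of $N$). The drift part is absorbed via $\|\Ups^N\|_{\mathcal{C}^{\al - \frac 32 - \eps}} \les \|\Ups^N\|_{H^\al} \leq \dl \|\Ups^N\|_{H^\al}^2 + C(\dl)$, using the $\frac14 \E\|\Ups^N\|_{H^\al}^2$ coercivity in \eqref{RNtil_est}. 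This yields $\int \exp(\|u\|_{\mathcal{C}^{\al - \frac 32 - \eps}}) d\varrho_N \leq C$ uniformly for $N \geq N_0$, hence all polynomial moments are uniformly bounded. The finitely many $N < N_0$ are handled trivially, since each $\varrho_N$ has finite moments, as \eqref{L1bdd} and Gaussian tails give.

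Combining the two displays then gives the claim \eqref{uNbdd_goal}. The only subtlety is measurability, and the use of Fubini in time requires $u_N$ to be jointly measurable, which holds since $u_N$ is continuous in time by Lemma~\ref{LEM:invN}.
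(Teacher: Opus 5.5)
Your proposal is correct and follows the paper's proof essentially step for step. Both use Chebyshev, then Fubini in time together with the invariance of $\vec\varrho_N$ from Lemma~\ref{LEM:invN} to reduce to a uniform moment bound under $\varrho_N$. Both then run the Bou\'e--Dupuis exponential-moment argument using \eqref{RNtil_est}, \eqref{ZNupbdd} and Lemma~\ref{LEM:Yreg}~(i), absorbing $\|\Ups^N\|_{H^\al}$ into the coercive $\frac14\E\|\Ups^N\|_{H^\al}^2$ term.

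Your extra treatment of the finitely many $N < N_0$, which the paper passes over, is fine, but the justification should be changed. For fixed $N$ the density $e^{-\mathcal{R}_N^\dia}$ is bounded above, because $V_N$ has positive leading coefficient and is therefore bounded below. That boundedness, rather than \eqref{L1bdd} combined with Gaussian tails, is what gives finite moments of $\varrho_N$.
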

\begin{proof}
By Chebyshev's inequality, Lemma~\ref{LEM:invN}, and the fact that $\Law (z_0 + v_{0, N}) = \varrho_N$, we have
\begin{align}
\begin{split}
\mathbb{Q} &\otimes \mathbb{P}^w \Big( \| u_N \|_{L_T^p W_x^{\al - \frac 32 - \eps, \infty}} > K \Big) \\
&\leq \frac{1}{K^p} \int_{\Sigma \times \Om^w} \| u_N \|_{L_T^p W_x^{\al - \frac 32 - \eps, \infty}}^p d (\mathbb{Q} \otimes \mathbb{P}^w) \\
&\leq \frac{1}{K^p} \bigg\| \bigg( \int_{\Sigma \times \Om^w} \| u_N (t) \|_{W^{\al - \frac 32 - \eps, \infty}}^p d (\mathbb{Q} \otimes \mathbb{P}^w) \bigg)^{\frac{1}{p}} \bigg\|_{L_T^p}^p \\
&= \frac{1}{K^p} \bigg\| \bigg( \int_{\Sigma \times \Om^w} \| z_0 + v_{0, N} \|_{W^{\al - \frac 32 - \eps, \infty}}^p d (\mathbb{Q} \otimes \mathbb{P}^w) \bigg)^{\frac{1}{p}} \bigg\|_{L_T^p}^p \\
&= \frac{1}{K^p} \bigg\| \bigg( \int \| u \|_{W^{\al - \frac 32 - \eps, \infty}}^p d \varrho_N (u) \bigg)^{\frac{1}{p}} \bigg\|_{L_T^p}^p.
\end{split}
\label{uNbdd1}
\end{align}

\noi 
By the Bou\'e-Dupuis variational formula (Lemma~\ref{LEM:BD}), \eqref{BD_RNtil}, \eqref{RNtil_est}, and \eqref{ZNupbdd}, we have
\begin{align}
\begin{split}
- &\log \int \| u \|_{W^{\al - \frac 32 - \eps, \infty}}^p d \varrho_N (u) \\
&\geq - \log \int \exp \Big( \| u \|_{W^{\al - \frac 32 - \eps, \infty}} \Big) d \varrho_N (u) - C(p) \\
&\geq \inf_{\dot{\Ups}^N \in \mathbb{H}_a^\al} \E \bigg[ - \| W + \Dr \|_{W^{\al - \frac 32 - \eps, \infty}} + \wt{\mathcal{R}}^\dia_N (W_N, \Ups_N - \ZZ_N) \\
&\quad + \frac 12 \int_0^1 \big\| \dot{\Ups}^N (t) \big\|_{H^\al}^2 dt \bigg] + \log \mathcal{Z}_N - C(p) \\
&\geq \inf_{\dot{\Ups}^N \in \mathbb{H}_a^\al} \E \Big[ - \| W + \Dr \|_{W^{\al - \frac 32 - \eps, \infty}} + \frac 14 \| \Ups^N \|_{H^\al}^2 \Big] - C(p)
\end{split}
\label{uNbdd2}
\end{align}

\noi
for some constant $C(p) > 0$, where we recall from \eqref{defWt} that $\Law (W) = \Law (W (1)) = \mu_\al$ (defined in \eqref{mu_al}), $\Dr = \Ups^N - \ZZ_N$ with $\ZZ_N$ being defined in \eqref{defZZ}, and $\Ups_N = \pi_N \Ups^N$. By Lemma~\ref{LEM:Yreg}~(i) and the embedding in Lemma~\ref{LEM:Bes}~(i), we have
\begin{align}
\E \big[ \| W \|_{W^{\al - \frac 32 - \eps, \infty}} \big] + \E \big[ \| \ZZ_N \|_{W^{\al - \frac 32 - \eps, \infty}} \big] \leq C
\label{uNbdd3}
\end{align}

\noi
for some constant $C > 0$ independent of $N$. Also, by the Sobolev embedding and Cauchy's inequality, we have
\begin{align}
\| \Ups^N \|_{W^{\al - \frac 32 - \eps, \infty}} \leq C \| \Ups^N \|_{H^\al} \leq \frac 14 \| \Ups^N \|_{H^\al}^2 + C
\label{uNbdd4}
\end{align}

\noi
for some constant $C > 0$ independent of $N$. Combining \eqref{uNbdd2}, \eqref{uNbdd3}, and \eqref{uNbdd4}, we obtain the bound
\begin{align}
\int \| u \|_{W^{\al - \frac 32 - \eps, \infty}}^p d \varrho_N (u) \les 1,
\label{uNbdd5}
\end{align}

\noi
where the underlying constant is independent of $N$. The desired estimate \eqref{uNbdd_goal} then follows from \eqref{uNbdd1} and \eqref{uNbdd5}.
\end{proof}

As a consequence of Lemma~\ref{LEM:uN_bdd}, we obtain the following $L^\infty$-a priori bound for the solution $v_N$ to the equation \eqref{SdNLWv4a}.
\begin{proposition}
\label{PROP:vN_bddW}
Let $1 < \al \leq \frac 98$, $T > 0$, $\eps > 0$, and $1 \leq p < \infty$. Let $v_N$ be the solution to the equation \eqref{SdNLWv4a} on $[0, T]$. Then, given any $\eta > 0$, there exists $K = K (T, \eta) \gg 1$ such that
\begin{align*}
\mathbb{Q} \otimes \mathbb{P}^w \Big( \| v_N \|_{L_T^p W_x^{\al - \frac 32 - \eps, \infty}} > K \Big) < \eta,
\end{align*}

\noi
uniformly in $N \in \N$.
\end{proposition}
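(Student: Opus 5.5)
The plan is to reduce the bound for $v_N$ to the bound for $u_N$ in Lemma~\ref{LEM:uN_bdd}, using the second order expansion \eqref{sec_exp2}:
\[
v_N = u_N - \<1> + \Xi_N^{(3)}.
\]
By the triangle inequality,
\begin{align*}
\| v_N \|_{L_T^p W_x^{\al - \frac 32 - \eps, \infty}} \leq \| u_N \|_{L_T^p W_x^{\al - \frac 32 - \eps, \infty}} + \| \<1> \|_{L_T^p W_x^{\al - \frac 32 - \eps, \infty}} + \| \Xi_N^{(3)} \|_{L_T^p W_x^{\al - \frac 32 - \eps, \infty}} .
\end{align*}
It therefore suffices to bound each of the three terms on the right by $K/3$ outside an event of probability at most $\eta/3$, uniformly in $N$.

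The first term is handled directly by Lemma~\ref{LEM:uN_bdd}. Its hypotheses hold because $v_N$ solves \eqref{SdNLWv4a} with initial data $(v_{0,N},0)$, so $u_N = \<1> - \Xi_N^{(3)} + v_N$ solves \eqref{SdNLWmain} with $u_N|_{t=0} = z_0 + v_{0,N}$. Here we use that $\Xi_N^{(3)}$ and $\partial_t \Xi_N^{(3)}$ vanish at $t=0$ (it is a Duhamel integral from $0$), and that $\<1>|_{t=0} = z_0$ with $\dt \<1>|_{t=0} = z_1$. This requires $u_N$ to have initial velocity $z_1$, i.e.\ the data distributed by $\varrho_N \otimes \mu_0$ as in Lemma~\ref{LEM:invN}. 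I would check the velocity component carefully against \eqref{SdNLWv4a}; if necessary one simply takes $\dt v_N(0) = 0$ and absorbs $z_1$ into $\<1>$, which is exactly the convention of \eqref{sto}.

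For the stochastic terms we use the trivial embedding $L_T^p \supset C_T$ on $[0,T]$, with a factor $T^{1/p}$. Then Lemma~\ref{LEM:data}~(i) gives the tail bound \eqref{tail_w} for $\| \<1> \|_{C_T W^{\al - \frac 32 - \eps, \infty}}$. Lemma~\ref{LEM:data}~(iii) gives the same for $\| \Xi_N^{(3)} \|_{C_T W^{3\al - 3 - \eps, \infty}}$, which dominates the $W^{\al - \frac 32 - \eps, \infty}$ norm since $3\al - 3 > \al - \frac 32$. Both bounds are uniform in $N$, so choosing $K$ large makes each failure probability below $\eta/3$.

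There is no real obstacle: the only delicate point is confirming that the initial data and probability space for $u_N$ reconstructed from $v_N$ match those in Lemma~\ref{LEM:uN_bdd}, so that invariance of $\vec\varrho_N$ applies. That compatibility is built into the setup of Subsection~\ref{SUB:ansatz}.
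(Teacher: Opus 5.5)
Your proposal is correct and matches the paper's proof. The paper likewise writes $v_N = u_N - (\text{linear term}) + \Xi_N^{(3)}$ via \eqref{sec_exp2}, bounds $u_N$ by Lemma~\ref{LEM:uN_bdd}, and bounds the linear stochastic convolution and $\Xi_N^{(3)}$ by the $N$-uniform tail estimate \eqref{tail_w} of Lemma~\ref{LEM:data}~(i) and~(iii), using $C_T \subset L^p_T$. Your check that $u_N$ starts from $(z_0 + v_{0,N}, z_1)$ with law $\varrho_N \otimes \mu_0$ is exactly the compatibility the paper leaves implicit. It holds because $\Xi_N^{(3)}$ and its time derivative vanish at $t=0$ and $z_1$ is independent of $(z_0, v_{0,N})$.
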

\begin{proof}
In view of the second order expansion \eqref{sec_exp2}, the proof follows immediately from Lemma~\ref{LEM:uN_bdd} and the tail estimates \eqref{tail_w} in Lemma~\ref{LEM:data} for $\<1>$ and $\Xi_N^{(3)}$.
\end{proof}

We also have the following $X_T^{s,b}$-a priori bound for the solution $v_N$ to the equation \eqref{SdNLWv4a}.
\begin{proposition}
\label{PROP:vN_bddX}
Let $1 < \al \leq \frac 98$, $T > 0$, and $\dl > 0$ be sufficiently small. Let $\vec v_N$ be the solution to the equation \eqref{SdNLWv4a} on $[0, T]$. Then, given any $\eta > 0$, there exists $K = K (T, \eta) \gg 1$ such that
\begin{align*}
\mathbb{Q} \otimes \mathbb{P}^w \Big( \| \vec v_N \|_{\vec X_T^{\al - \frac 12, \frac 12 + \dl}} > K \Big) < \eta,
\end{align*}

\noi
uniformly in $N \in \N$.
\end{proposition}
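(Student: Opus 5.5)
The plan is a bootstrap/globalization argument in the spirit of \cite[Section~6]{LTzW} and Lemma~\ref{LEM:unibdd_h}, combining the local theory for \eqref{SdNLWv4a} with the invariance of $\vec \varrho_N$ (Lemma~\ref{LEM:invN}) through the uniform $L^\infty$-type bound of Proposition~\ref{PROP:vN_bddW}.

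\textbf{Step 1 (probabilistic inputs).} Fix $\eta>0$. By Proposition~\ref{PROP:data}, there is $K_0=K_0(T,\eta)$ with $\|\pmb{\Xi}_N\|_{\mathcal{X}_T^{\al,\eps,\dl}}\le K_0$ outside a set of probability $\eta/4$, uniformly in $N$. By Theorem~\ref{THM:rep} and \eqref{v0bdd_goal}, together with convergence in probability of $v_{0,N}\to v_0$ in $\mathcal{C}^{\al-\frac12+\eps}\subset H^{\al-\frac12}$, we get $\|v_{0,N}\|_{H^{\al-\frac12}}\le K_0'$ outside probability $\eta/4$, uniformly in $N$. By Proposition~\ref{PROP:vN_bddW} with $p\gg1$, we get $\|v_N\|_{L^p_TW_x^{\al-\frac32-\eps,\infty}}\le K_2$ outside probability $\eta/4$. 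Work on the intersection of these events.

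\textbf{Step 2 (local estimate with linear control of the top-order terms).} Redo the estimates of the proof of Proposition~\ref{PROP:stb}, Step~2, for $\vec\G_{t_0,N}[v_N]$ itself rather than for a difference, on an interval $I=[t_0,t_0+\tau]$. The terms $\Xi^{(2)}$, $\Xi^{(3,1)}$, $\Xi^{(5)}$, $\Xi^{(7)}$, the $\I(\<1>_N v_N^2)$-type terms, and the cubic terms $(\pi_Nv_N)^3$, $\Xi^{(3)}v_N^2$, $(\Xi^{(3)})^2v_N$ are controlled via Lemmas~\ref{LEM:nhomo}, \ref{LEM:time}, \ref{LEM:Str_mul1}, and \ref{LEM:Str_mul2}. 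The high-degree terms $\textup{I}_{11},\textup{I}_{12}$ are handled as in \eqref{I11_bdd}--\eqref{I12_bdd}, placing all but about three copies of $v_N$ in the $L^p_TW^{\al-\frac32-\eps,p}$ norm, bounded by $K_2$; the negative powers of $N$ absorb the derivative loss. The goal is a bound of the form
\[
\|\vec v_N\|_{\vec X_I^{\al-\frac12,\frac12+\dl}} \le C\|\vec v_N(t_0)\|_{\vec H^{\al-\frac12}} + C(K_0,K_2)\Big(1+\tau^\g\|\vec v_N\|_{\vec X_I^{\al-\frac12,\frac12+\dl}}^{3+\ta'}\Big).
\]
Lemma~\ref{LEM:time_cty} then gives a continuity argument on short intervals.

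\textbf{Step 3 (globalization).} This is the main obstacle. The cubic self-interaction $(\pi_Nv_N)^3$ prevents a linear Gronwall iteration from the $H^{\al-\frac12}$ norm alone, and the $L^p$ a priori bound only controls rough norms. The first approach I would try mimics Lemma~\ref{LEM:unibdd_h}, Step~2, or the corresponding argument in \cite{LTzW}. Use invariance of $\vec\varrho_N$ and time-translation: on each interval $[jT_1,(j+1)T_1]$, the law of $\vec u_N(jT_1)$ is $\vec\varrho_N$, so the local estimates above hold with the same probability on every subinterval. Sum over $J\sim T/T_1$ intervals, choosing $K_0$ large so that $J e^{-cK_0^\be}<\eta$. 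Then glue with Lemma~\ref{LEM:glue}.

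The subtlety is that $\vec v_N(jT_1)$ is not itself distributed like $\vec v_N(0)$. To handle this, I would re-decompose at each time $jT_1$ via the Duhamel formula \eqref{SdNLWv4a}, rewriting $v_N$ on $[jT_1,T]$ as a linear part plus nonlinear parts controlled in $H^{\al-\frac12}$. If this fails, I would instead control $\|v_N\|_{L^\infty_T H^{\al-\frac12}}$ using Lemma~\ref{LEM:uN_bdd}-type invariance arguments combined with Proposition~\ref{PROP:vN_bddW}, and then iterate Step~2 with step size depending only on $(K_0,K_2)$.
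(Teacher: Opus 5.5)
Your Steps~1 and~2 agree with the paper: the same three probabilistic inputs, and the local bound on a short interval $[0,T_0]$ obtained by a continuity argument. Step~3, however, contains a genuine gap. You correctly observe that $\vec v_N(jT_1)$ is not distributed like $(v_{0,N},0)$, but neither of your fixes resolves this.

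Invariance of $\vec\varrho_N$ (Lemma~\ref{LEM:invN}), i.e.\ stationarity of the process $\vec u_N$, only controls quantities that are functionals of the path of $\vec u_N$ itself. The value $v_N(t)$ is not such a functional: it also depends on the Gaussian linear part fixed by the decomposition at time $0$. So the Gibbs measure, which lives at regularity $\al-\frac32-$, gives no control of $\|v_N(t)\|_{H^{\al-\frac12}}$. Your fallback of bounding $\|v_N\|_{L^\infty_T H^{\al-\frac12}}$ by a Lemma~\ref{LEM:uN_bdd}-type argument is therefore not available.

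Re-decomposing at time $jT_1$ does not help either. A remainder built from new linear and cubic objects started at $jT_1$ differs from $v_N$ by differences of linear and cubic stochastic objects, which are rougher than $\al-\frac12$. Its $X^{\al-\frac12,\frac12+\dl}$ bounds therefore do not transfer to $v_N$. Writing $v_N$ via \eqref{SdNLWv4a} from $t_0=jT_1$ instead again requires $\|\vec v_N(jT_1)\|_{\vec H^{\al-\frac12}}$, which is circular. Consequently your superlinear Step~2 estimate only permits time steps that shrink as $\|\vec v_N(t_0)\|_{\vec H^{\al-\frac12}}$ grows, and it cannot be iterated up to a fixed $T$.

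The missing idea is to apply stationarity not to $v_N$ but to the Duhamel integral of the full nonlinearity.

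\begin{itemize}
\item By \eqref{VNuN}, $\pi_N\I(V_N'(\pi_N u_N))$ equals $\Xi_N^{(3)}$ plus exactly the nonlinear terms on the right-hand side of \eqref{SdNLWv4a} with $t_0=0$. The local estimates on $[0,T_0]$ therefore give $\|\pi_N\I(\ind_{[0,T_0]}V_N'(\pi_N u_N))\|_{L^\infty_TW_x^{3\al-3-\eps,\infty}+X_T^{\al-\frac12,\frac12+\dl}}\le M$ on the good event. Here $\Xi_N^{(3)}$ is placed in the first space and everything else in the second.
\item This quantity is a functional of $\vec u_N$ on $[0,T_0]$ alone. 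Its translates $\pi_N\I_{jT_0}(\ind_{[jT_0,(j+1)T_0]}V_N'(\pi_N u_N))$ thus obey the same bound with the same probability. A union bound over the $J\sim T/T_0$ pieces gives the global bound $\|\pi_N\I(V_N'(\pi_N u_N))\|_{L^\infty_TW_x^{3\al-3-\eps,\infty}+X_T^{\al-\frac12,\frac12+\dl}}\le C_0$ with large probability.
\item Since $v_N=S(t)(v_{0,N},0)+\Xi_N^{(3)}-\pi_N\I(V_N'(\pi_N u_N))$, this controls $v_N$ globally in the same sum space. By Lemma~\ref{LEM:Str} it then controls $v_N$ in $L^r_TW_x^{\eps,r}$.
\item This bound is weaker than $L^\infty_TH^{\al-\frac12}$, but it closes the argument. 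Redo the Duhamel estimates on a short interval $I$ with all but one factor of $v_N$ placed in $L^r_IW_x^{\eps,r}$. For the high-degree terms, place them instead in $L^p_TW_x^{\al-\frac32-\eps,\infty}$ via $K_2$. The resulting bound is linear in $\|\vec v_N\|_{\vec X_I^{\al-\frac12,\frac12+\dl}}$, so $|I|$ depends only on $(C_0,K_0,K_2)$ and not on $\|\vec v_N(t_0)\|_{\vec H^{\al-\frac12}}$.
\item Iterating with Lemma~\ref{LEM:HsXsb} and gluing with Lemma~\ref{LEM:glue} then gives the proposition.
\end{itemize}
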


\begin{proof}
The steps are similar to those in \cite[Subsection~6.4]{LTzW}, and so we will be very brief. From Proposition~\ref{PROP:data}, we may assume that the truncated enhanced data set $\pmb{\Xi}_N$ in \eqref{enhN} equipped with the $\mathcal{X}_T^{\al, \eps, \dl}$-norm in \eqref{XTaed} satisfies
\begin{align}
\| \pmb{\Xi}_N \|_{\mathcal{X}_T^{\al, \eps, \dl}} \leq K_0
\label{bddxi}
\end{align}

\noi
for some constant $K_0 \geq 1$.
From Theorem~\ref{THM:rep} and Lemma~\ref{LEM:Bes}~(i), we may assume that the initial data $v_{0, N}$ satisfies
\begin{align}
\| v_{0, N} \|_{H^{\al - \frac 12}} \leq K_0'
\label{bddinit}
\end{align}

\noi
for some constant $K_0' \geq 1$.
From Proposition~\ref{PROP:vN_bddW}, we may assume that 
\begin{align}
\| v_N \|_{L_T^p W_x^{\al - \frac 32 - \eps, \infty}} \leq K_2
\label{bddW}
\end{align}

\noi
for $1 \ll p < \infty$ sufficiently large and $K_2 \geq 1$ being a constant. 

We recall that $v_N$ satisfies the equation \eqref{SdNLWv4a}. Let $0 < T_0 \leq 1$ be chosen later. By the linear estimates in Lemma~\ref{LEM:homo} and Lemma~\ref{LEM:nhomo}, the time localization estimate in Lemma~\ref{LEM:time}, the multilinear estimates similar to the treatment of $\textup{I}_6$ -- $\textup{I}_{12}$ in the proof of Proposition~\ref{PROP:stb}, \eqref{bddxi}, \eqref{bddinit}, and \eqref{bddW}, we obtain
\begin{align}
\| \vec v_N \|_{\vec X_{T_0}^{\al - \frac 12, \frac 12 + \dl}} \leq C K_0^{2 m - 1} K_0' + C T_0^\dl K_0^{2 m - 1} K_2^{2 m - 1} \| v_N \|_{X_{T_0}^{\al - \frac 12, \frac 12 + \dl}}^{3 + \ta'}
\label{vNnl}
\end{align}

\noi
for some $C > 0$ and $\ta' > 0$. Using a standard continuity argument along with Lemma~\ref{LEM:time_cty}, we see that if we take $T_0 = T_0 (K_0, K_0', K_2) \ll 1$ to be sufficiently small, we obtain the local-in-time bound
\begin{align}
\| \vec v_N \|_{\vec X_{T_0}^{\al - \frac 12, \frac 12 + \dl}} \leq 2 C K_0^{2m - 1} K_0'.
\label{vNnl2}
\end{align}

\noi
We also note from \eqref{VNuN} that $\pi_N \I (V'_N (\pi_N u_N))$ contains all of the terms on the right-hand-side of \eqref{SdNLWv4a} (with $t_0 = 0$) with an additional $\Xi_N^{(3)}$ term. 
Thus, by using similar estimates that lead to \eqref{vNnl} and the bound \eqref{vNnl2}, 
we obtain the local-in-time nonlinear smoothing estimate
\begin{align*}
\big\| \pi_N \I \big( \ind_{[0, T_0]} V_N' (\pi_N u_N) \big) \big\|_{L_T^\infty W_x^{3 \al - 3 - \eps, \infty} + X_T^{\al - \frac 12, \frac 12 + \dl}} \leq 16 C^5 K_0^{10 m - 5} (K_0')^4 K_2^{2m - 1},
\end{align*}

\noi
where, as in the proof of \cite[Proposition~6.5]{LTzW}, the appearance of $\ind_{[0, T_0]}$ does not affect any of the multilinear estimates or the regularity properties of stochastic objects.
Then, using the invariance property in Lemma~\ref{LEM:invN} and proceeding as in the proof of \cite[Proposition~6.5]{LTzW}, we obtain the global-in-time nonlinear smoothing estimate
\begin{align}
\big\| \pi_N \I \big( V_N' (\pi_N u_N) \big) \big\|_{L_T^\infty W_x^{3 \al - 3 - \eps, \infty} + X_T^{\al - \frac 12, \frac 12 + \dl}} \leq C_0
\label{ns_bdd1}
\end{align}

\noi
for some constant $C_0 = C_0 (T) > 0$ with large probability.

We now proceed as in the proof of \cite[Proposition~6.6]{LTzW}. From \eqref{ns_bdd1}, the second order expansion \eqref{sec_exp2}, and \eqref{bddxi} (for $\Xi_N^{(3)}$), we get the following uniform bound:
\begin{align}
\| v_N \|_{L_T^\infty W_x^{3 \al - 3 - \eps, \infty} + X_T^{\al - \frac 12, \frac 12 + \dl}} \leq C_0 + K_0.
\label{ns_bdd2}
\end{align}

\noi
Let $I = [t_0, t_1] \subset [0, T]$ be a closed interval with $0 < |I| \leq 1$. 
From the Strichartz estimate in Lemma~\ref{LEM:Str} and \eqref{ns_bdd2}, we deduce that for any $r \geq 1$ satisfying $\al - \frac 12 > \frac 32 - \frac{\al + 3}{r}$, we have the following useful bound:
\begin{align}
\| v_N \|_{L_I^r W_x^{\eps, r}} \les \| v_N \|_{L_T^\infty W_x^{3 \al - 3 - \eps, \infty} + X_T^{\al - \frac 12, \frac 12 + \dl}} \leq C_0 + K_0
\label{ns_bdd3}
\end{align}

\noi
for $\eps > 0$ sufficiently small. We then use the Duhamel formulation \eqref{SdNLWv4a} again to estimate $\| \vec v_N \|_{\vec X_I^{\al - \frac 12, \frac 12 + \dl}}$ by performing linear and multilinear estimates. Note that most of the terms can be estimated in the same way as those in \cite[Proposition~6.6]{LTzW} along with \eqref{ns_bdd3}, except for the following two terms:
\begin{align*}
\textup{I}_{11} &= \sum_{j = 3}^m C_j \cj{a}_{j, N} N^{(2j - 4) (\al - \frac 32)} \pi_N \I_{t_0} \big( \wick{\<1>_N^{2j - 3}} (\pi_N v_N)^2 \big), \\
\textup{I}_{12} &= \sum_{j = 3}^m \sum_{\l = 3}^{2j - 1} \sum_{k = 0}^\l C_{j, \l, k} \cj{a}_{j, N} N^{(2 j - 4) (\al - \frac 32)} 
\pi_N \I_{t_0} \big( (- \Xi_N^{(3)})^k \wick{\<1>_N^{2j - 1 - \l}} (\pi_N v_N)^{\l - k} \big).
\end{align*}

\noi
For $\textup{I}_{11}$, by the fact that $\{ \cj{a}_{j, N} \}_{N \in \N}$ is bounded for each $3 \leq j \leq m$ (using \eqref{ajN} and \eqref{sigma_diff}), the inhomogeneous linear estimate in Lemma~\ref{LEM:nhomo}, the time localization estimate in Lemma~\ref{LEM:time}, the trilinear estimate \eqref{str_mul1-1} in Lemma~\ref{LEM:Str_mul1}, \eqref{bddxi}, and \eqref{ns_bdd3}, we obtain
\begin{align*}
\| \vec{\textup{I}}_{11} \|_{\vec X_I^{\al - \frac 12, \frac 12 + \dl}} &\les |I|^\dl \sum_{j = 3}^m \big\| N^{(2j - 4) (\al - \frac 32)} \wick{\<1>_N^{2j - 3}} (\pi_N v_N)^2 \big\|_{X_I^{- \frac 12, - \frac 12 + \dl}} \\
&\les |I|^\dl \sum_{j = 3}^m \big\| N^{(2j - 4) (\al - \frac 32)} \wick{\<1>_N^{2j - 3}} \big\|_{L_T^\infty W_x^{\al - \frac 32 - \eps, \infty}} \| v_N \|_{L_I^r L_x^r} \| v_N \|_{X_I^{\al - \frac 12, \frac 12 + \dl}} \\
&\leq |I|^\dl C_0 K_0^2 \| v_N \|_{X_I^{\al - \frac 12, \frac 12 + \dl}},
\end{align*}

\noi
where $r \geq \frac{10}{3}$ satisfies $\al - \frac 12 > \frac 32 - \frac{\al + 3}{r}$. For $\textup{I}_{12}$, we use similar steps as in \eqref{I12-1}, \eqref{I12-2}, and \eqref{I12-3} (with the use of the Strichartz estimate in Lemma~\ref{LEM:Str} replaced by \eqref{ns_bdd3}) along with \eqref{bddxi} and \eqref{bddW} to obtain
\begin{align*}
\| \vec{\textup{I}}_{12} \|_{\vec X_I^{\al - \frac 12, \frac 12 + \dl}} &\les |I|^\dl  K_0^{4} \Big( 1 + \| v_N \|_{L_T^\infty W_x^{3 \al - 3 - \eps, \infty} + X_T^{\al - \frac 12, \frac 12 + \dl}}^{3} \Big) \\
&\quad + |I|^\dl K_0^{2 m - 1} K_2^{2m - 2} \Big( 1 + \| v_N \|_{L_T^\infty W_x^{3 \al - 3 - \eps, \infty} + X_T^{\al - \frac 12, \frac 12 + \dl}}^{3 + \ta'} \Big) \\
&\quad + |I|^\dl K_0^{2 m - 1} K_2^{2 m - 1} \Big( 1 + \| v_N \|_{L_T^\infty W_x^{3 \al - 3 - \eps, \infty} + X_T^{\al - \frac 12, \frac 12 + \dl}}^{3 + \ta'} \Big) \\
&\les |I|^\dl K_0^{2m} K_2^{2m - 1} (C_0 K_0)^{3 + \ta'}
\end{align*}

\noi
for some $\ta' > 0$. Thus, gathering all the estimates, we obtain
\begin{align*}
\| \vec v_N \|_{\vec X_I^{\al - \frac 12, \frac 12 + \dl}} &\leq C \| \vec v_N (t_0) \|_{\vec H^{\al - \frac 12}} + C(T) C_0^4 K_0^{2m + 4} K_2^{2m - 1} \\
&\quad + C(T) |I|^\dl C_0 K_0^2 \| v_N \|_{X_I^{\al - \frac 12, \frac 12 + \dl}}
\end{align*}

\noi
for some constants $C, C (T) > 0$. By letting $|I|$ be small enough, we get
\begin{align*}
\| \vec v_N \|_{\vec X_I^{\al - \frac 12, \frac 12 + \dl}} \leq 2 C \| \vec v_N (t_0) \|_{\vec H^{\al - \frac 12}} + 2 C(T) C_0^4 K_0^{2m + 4} K_2^{2m - 1} .
\end{align*}

\noi
Then, as in the proof of \cite[Proposition~6.6]{LTzW}, by using a simple iteration along with Lemma~\ref{LEM:HsXsb} and 
applying the gluing lemma in Lemma~\ref{LEM:glue} to concatenate all time intervals, we obtain the desired $\vec X_T^{\al - \frac 12, \frac 12 + \dl}$-a priori bound for $\vec v_N$.
\end{proof}

\subsection{Proof of Theorem \ref{THM:dyn}}

In this subsection, we prove Theorem~\ref{THM:dyn}, the main result in this paper. 
Let us first prove the following proposition.

\begin{proposition}
\label{PROP:conv_vN}
Let $1 < \al \leq \frac 98$, $T > 0$, and $\eps, \dl > 0$ be sufficiently small. Let $v_{0, N}$ and $v_0$ be given by Theorem~\ref{THM:rep}. Let $\pmb{\Xi}_N$ be the truncated enhanced data set in \eqref{enhN} and let $\pmb{\Xi}$ be the enhanced data set in \eqref{enh}. Let $\vec v_N = \vec v_N (v_{0, N}, \pmb{\Xi}_N)$ be the solution to the equation \eqref{SdNLWv4a} with the truncated enhanced data set $\pmb{\Xi}_N$ and initial data $\vec v_N |_{t = 0} = (v_{0, N}, 0)$. Then, $\vec v_N$ converges in probability to a process $\vec v$ in the space $\vec X_T^{\al - \frac 12, \frac 12 + \dl}$, which is the solution to the limiting equation \eqref{SdNLWv4b}. Moreover, for any $\eta > 0$, there exists $C = C(T, \eta) > 0$ such that
\begin{align}
\mathbb{Q} \otimes \mathbb{P}^w \Big( \| \vec v \|_{\vec X_T^{\al - \frac 12, \frac 12 + \dl}} > C \Big) < \eta.
\label{v_bddXT}
\end{align}
\end{proposition}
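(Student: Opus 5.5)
The plan is to combine the a priori bounds of Propositions~\ref{PROP:vN_bddW} and \ref{PROP:vN_bddX}, the convergence of the enhanced data set (Proposition~\ref{PROP:data}), the convergence of the initial data (Theorem~\ref{THM:rep}), and the stability result Proposition~\ref{PROP:stb}. The point is that Proposition~\ref{PROP:stb} is deterministic once its hypotheses \eqref{enh_bdd}--\eqref{init_diff} hold. So it suffices to show that, for any $\eta>0$, all these hypotheses hold simultaneously with $\mathbb{Q}\otimes\mathbb{P}^w$-probability at least $1-\eta$ for all large $N$, with constants depending only on $T$ and $\eta$.

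\textbf{Step 1: fixing the constants.} Fix $\eta>0$. By the tail estimate in Proposition~\ref{PROP:data} with $N=\infty$, choose $K_0=K_0(T,\eta)$ so that \eqref{enh_bdd} fails with probability $<\eta/6$. By \eqref{v0bdd_goal} and the embedding $\mathcal{C}^{\al-\frac12+\eps}\subset H^{\al-\frac12}$ (Lemma~\ref{LEM:Bes}~(i)), choose $K_0'$ so that \eqref{init_bdd} fails with probability $<\eta/6$. By Proposition~\ref{PROP:vN_bddX}, choose $K_1$ with $K_1\gg K_0+K_0'$ so that \eqref{vN_bddXT} fails with probability $<\eta/6$, uniformly in $N$. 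By Proposition~\ref{PROP:vN_bddW}, choose $K_2$ so that \eqref{vN_bddW} fails with probability $<\eta/6$, uniformly in $N$. The difference tail estimate in Proposition~\ref{PROP:data} gives \eqref{enh_diff} with $\ta$ slightly smaller than the exponent there, except on a set of probability $<\eta/6$ once $N$ is large.

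\textbf{Step 2: applying stability and passing to the limit.} Given $\kappa\le\kappa_0(T,K_1)$, the convergence in probability of $v_{0,N}$ to $v_0$ in $\mathcal{C}^{\al-\frac12+\eps}\subset H^{\al-\frac12}$ (Theorem~\ref{THM:rep}) gives \eqref{init_diff} outside a set of probability $<\eta/6$ for $N\ge N_1(\kappa)$. On the good event, Proposition~\ref{PROP:stb} produces the solution $v=v(v_0,\pmb{\Xi})$ of \eqref{SdNLWv4b} on $[0,T]$, together with \eqref{vNv_diff2} and \eqref{v_bddX}. The limit $v$ depends only on $(v_0,\pmb{\Xi})$ and not on $N$; this is guaranteed by uniqueness of the local theory for \eqref{SdNLWv4b} from Step 1 of the proof of Proposition~\ref{PROP:stb}, iterated in time. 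Hence
\begin{align*}
\mathbb{Q}\otimes\mathbb{P}^w\Big(\|\vec v_N-\vec v\|_{\vec X_T^{\al-\frac12,\frac12+\dl}}>C(T,K_1,K_2)(\kappa+N^{-\ta'})\Big)<\eta
\end{align*}
for all large $N$. Letting $\kappa\to0$ and $N\to\infty$ yields convergence in probability. The bound \eqref{v_bddXT} follows with $C=2K_1(T,\eta)$.

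\textbf{Step 3: global existence of $v$.} The delicate point is that $v$ must be defined as a global process rather than only on a good event depending on $\eta$. I would handle this by taking a sequence $\eta_k\to0$. The corresponding good events increase to a set of full measure up to null sets, and by uniqueness the solutions $v$ coincide wherever the events overlap. This defines $v$ almost surely on $[0,T]$, and convergence in probability then follows.

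Apart from this bookkeeping, I expect no real obstacle: the only genuine subtlety is checking that the constants in Step 1 are chosen in the right order, namely $K_0,K_0'$ first, then $K_1,K_2$, then $\kappa$, then $N$. This order is compatible with the dependencies $\kappa_0(T,K_1)$ and $N_0(T,K_1,K_2)$ in Proposition~\ref{PROP:stb}.
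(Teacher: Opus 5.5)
Your proposal is correct and follows essentially the same route as the paper's proof. Both intersect the good events coming from Proposition~\ref{PROP:data}, Theorem~\ref{THM:rep}, and Propositions~\ref{PROP:vN_bddW} and \ref{PROP:vN_bddX}, fix the constants in the order $K_0, K_0'$, then $K_1 \gg K_0 + K_0'$ and $K_2$, then $\kappa$, then $N$, and apply the deterministic stability result Proposition~\ref{PROP:stb}. Your Step~3, which defines $v$ almost surely via uniqueness along $\eta_k \to 0$, is bookkeeping the paper leaves implicit. One small correction there: the good events depend on $N$ and so need not be nested, but their union has full measure, which is all you need.
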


\begin{proof}
Let $0 < \eta, \g \ll 1$ be arbitrary. Let $p \gg 1$ be as given by Proposition~\ref{PROP:stb}. Given $K_0, K_0', K_1, K_2 > 0$, $N \in \N$, $\ta > 0$, $\kappa > 0$, we define the following subsets of $\Sigma \times \Om^w$:
\begin{align*}
A_1 (K_0) &\deff \big\{ \| \pmb{\Xi} \|_{\mathcal{X}_T^{\al, \eps, \dl}} \leq K_0 \big\}, \\
A_2 (K_0') &\deff \big\{ \| v_{0} \|_{H^{\al - \frac 12}} \leq K_0' \big\}, \\
A_3 (N, K_1) &\deff \Big\{ \| \vec v_N \|_{\vec X_T^{\al - \frac 12, \frac 12 + \dl}} \leq K_1 \Big\}, \\
A_4 (N, K_2) &\deff \Big\{ \| v_N \|_{L_T^p W_x^{\al - \frac 32 - \eps, \infty}} \leq K_2 \Big\}, \\
A_5 (N) &\deff \big\{ \| \pmb{\Xi}_N - \pmb{\Xi} \|_{\mathcal{X}_T^{\al, \eps, \dl}} < N^{- \ta} \big\}, \\
A_6 (N, \kappa) &\deff \big\{ \| v_{0, N} - v_0 \|_{H^{\al - \frac 12}} < \kappa \big\}.
\end{align*}

\noi
By the tail estimates in Proposition \ref{PROP:data} with a fixed small enough $\ta > 0$, we have
\begin{align}
\mathbb{Q} \otimes \mathbb{P}^w (A_1 (K_0)^c) + \mathbb{Q} \otimes \mathbb{P}^w (A_5 (N)^c) < \frac{\eta}{5}
\label{final1}
\end{align}

\noi
given sufficiently large $K_0 \gg 1$ and $N \geq N_0$ with $N_0 \gg 1$ large enough. 
By the tail bound in Theorem~\ref{THM:rep}, we have
\begin{align}
\mathbb{Q} \otimes \mathbb{P}^w (A_2 (K_0')^c) < \frac{\eta}{5}
\label{final1.1}
\end{align}

\noi
given sufficiently large $K_0' \gg 1$ and $N \geq N_0$ with $N_0 \gg 1$ large enough. By Proposition~\ref{PROP:vN_bddX}, we have
\begin{align}
\mathbb{Q} \otimes \mathbb{P}^w (A_3 (N, K_1)^c) < \frac{\eta}{5}
\label{final2}
\end{align}

\noi
given sufficiently large $K_1 \gg 1$. We may further enlarge the value of $K_1$ so that $K_0 + K_0' \ll K_1$. By Proposition~\ref{PROP:vN_bddW}, we have
\begin{align}
\mathbb{Q} \otimes \mathbb{P}^w (A_4 (N, K_2)^c) < \frac{\eta}{5}
\label{final3}
\end{align}

\noi
given sufficiently large $K_2 \gg 1$ and $N \geq N_0$ with $N_0 \gg 1$ large enough. Let $C_0 (T, K_1, K_2) > 0$ and $\ta' > 0$ be the constants given by Proposition~\ref{PROP:stb} and let $0 < \kappa = \kappa (T, K_1, K_2) \ll 1$ be small enough and $N_0 = N_0 (T, K_1, K_2) \gg 1$ be large enough such that
\begin{align}
C_0 (T, K_1, K_2) (\kappa + N^{- \ta'}) \leq \g
\label{finalh}
\end{align}

\noi
for any $N \geq N_0$. By Theorem~\ref{THM:rep}, we have
\begin{align}
\mathbb{Q} \otimes \mathbb{P}^w (A_6 (N, \kappa)^c) < \frac{\eta}{5}
\label{final4}
\end{align}

\noi
for any $N \geq N_0$ with $N_0 \gg 1$ large enough.

By Proposition~\ref{PROP:stb}, \eqref{final1}, \eqref{final1.1}, \eqref{final2}, \eqref{final3}, \eqref{final4}, and \eqref{finalh}, we obtain
\begin{align*}
\mathbb{Q} \otimes \mathbb{P}^w \Big( \| \vec v_N - \vec v \|_{\vec X_T^{\al - \frac 12, \frac 12 + \dl}} \geq \g \Big) < \eta
\end{align*}

\noi
for any $0 < \eta, \g \ll 1$ and also the bound \eqref{v_bddXT}. This finishes the proof.
\end{proof}

We are now ready to prove our main result in this paper.

\begin{proof}[Proof of Theorem~\ref{THM:dyn}]
Let $T > 0$. As mentioned in Subsection~\ref{SUB:intro_dyn}, we proceed with the second order expansion \eqref{sec_exp2}:
\begin{align}
u_N = \<1> - \Xi_N^{(3)} + v_N ,
\label{uN_exp}
\end{align}

\noi
where $\<1>$ is given by \eqref{sto} and \eqref{sto_phi}, $\Xi_N^{(3)}$ is defined in \eqref{sto_all}, and $v_N$ satisfies the equation \eqref{SdNLWv4a}. From Proposition~\ref{PROP:conv_vN}, we know that $\vec v_N$ converges in probability to a process $\vec v$ in $\vec X_T^{\al - \frac 12, \frac 12 + \dl}$ for some $\dl > 0$ sufficiently small, where $v$ satisfies the equation \eqref{SdNLWv4b}. 

Using a slight variant of the stability estimate in \cite[Proposition~6.4]{LTzW} (see also the second part of \cite[Proposition~6.8]{OOT2}) along with the probabilistic bound \eqref{v_bddXT} and Proposition~\ref{PROP:data}, we may identify $\vec v$ as the limit in probability of the processes $\vec v_N^\dagger$ as $N \to \infty$, where $\vec v_N^\dagger$ satisfies the frequency truncated version of \eqref{SdNLWv4b}:
\begin{align*}
\vec v_N^\dagger &= \vec S(t) (v_0, 0) - a \pi_N \vec \I \big( \<1>_N - 4 \cj{a}_2 \<30>_N + \pi_N v_N^\dagger \big) - 12 \cj{a}_2 \vec \If^{\,\<2>_N} (\pi_N v_N^\dagger) + 96 \cj{a}_2^2 \vec \If^{\,\<31>_N} (\pi_N v_N^\dagger) \\
&\quad + 48 \cj{a}_2^2 \vec{\<320>}_N - 192 \cj{a}_2^3 \vec{\<70>}_N - 12 \cj{a}_2 \pi_N \vec \I \big( \<1>_N (\pi_N v_N^\dagger)^2 \big) - 4 \cj{a}_2 \pi_N \vec \I \big( (\pi_N v_N^\dagger)^3 \big) \\
&\quad + 48 \cj{a}_2^2 \pi_N \vec \I \big( \<30>_N (\pi_N v_N^\dagger)^2 \big) - 192 \cj{a}_2^3 \pi_N \vec \I \big( \<30>_N^2 \pi_N v_N^\dagger \big) + 256 \cj{a}_2^4 \pi_N \vec \I \big( \<30>_N^3 \big),
\end{align*}

\noi
where $\If^{\<2>_N}$, $\<30>_N$, $\If^{\<31>_N}$, $\<320>_N$, $\<70>_N$ are the frequency truncated versions of $\If^{\<2>}$, $\<30>$, $\If^{\<31>}$, $\<320>$, $\<70>$. Then, by letting
\begin{align}
u_N^\dagger = \<1> - 4 \cj{a}_2 \<30>_N + v_N^\dagger,
\label{uN_exp2} 
\end{align}

\noi
we see that $u_N^\dagger$ is a solution to the truncated fractional hyperbolic $\Phi^4_3$-model \eqref{SdfNLW_N} on $[0, T]$ with $\vec u_N^\dagger |_{t = 0} = (z_0 + v_0, z_1)$. In view of Theorem~\ref{THM:rep} and the fact that $\Law (z_1) = \mu_0$ is the white noise measure defined in \eqref{mu_al}, we have $\Law (\vec u_N^\dagger |_{t = 0}) = \vec \rho = \rho \otimes \mu_0$, where $\rho$ is the fractional $\Phi^4_3$-measure in \eqref{rho}.  Then, Theorem~\ref{THM:dyn} follows from the second order expansions \eqref{uN_exp} and \eqref{uN_exp2}, the convergence of $\vec u_N^\dagger$ to $\vec u$ as stated in Theorem~\ref{THM:GWP}, the convergence of $\Xi_N^{(3)}$ and $\<30>_N$ to $\<30>$ in Lemma~\ref{LEM:data}~(iv) and \eqref{sto_lim}, and the convergence of $\vec v_N$ and $\vec v_N^\dagger$ to $\vec v$ along with Lemma~\ref{LEM:HsXsb}.
\end{proof}

\begin{ackno}\rm
The author would like to thank Tadahiro Oh for the initial proposal for the study of the fractional hyperbolic $\Phi^4_3$-model, which leads to this work, and also for his help and support throughout this work. The author also thanks Leonardo Tolomeo, Nikolay Tzvetkov, and Yuzhao Wang for helpful discussions and suggestions. The author acknowledges the use of AI tools for improvement of the presentation and for correction of typographical errors. All mathematical arguments were verified and written entirely by the author. R.L. was supported by the European Research Council (grant no. 864138 ``SingStochDispDyn''). R.L. also acknowledge support from the Deutsche Forschungsgemeinschaft (DFG, German Research Foundation) under Germany's Excellence Strategy -- EXC-2047/1 -- 390685813. R.L. was also funded by the Deutsche Forschungsgemeinschaft (DFG, German Research Foundation) -- Project-ID 211504053 - SFB 1060. R.L. was also funded by the Deutsche Forschungsgemeinschaft (DFG, German Research Foundation) -- Project-ID 539309657 -- SFB 1720.
\end{ackno}

\end{document}